\setlist[enumerate]{label=(\thethm.\arabic*), before={\setcounter{enumi}{\value{equation}}}, after={\setcounter{equation}{\value{enumi}}}}
\theoremstyle{plain}
\newlist{thmlist}{enumerate}{1}
\setlist[thmlist]{wide = 0pt, labelwidth = 2em, labelsep*=0em, itemindent = 0pt, leftmargin = \dimexpr\labelwidth + \labelsep\relax, noitemsep,topsep = 1ex, font=\normalfont, label=(\roman*), ref=\thethm.(\roman{thmlisti})}
\newcommand\numberthis{\addtocounter{equation}{1}\tag{\theequation}}
\renewcommand{\P}{\mathbb{P}}
\newcommand{\R}{\mathbb{R}}
\newcommand{\CC}{\mathbb{C}}
\newcommand{\Q}{\mathbb{Q}}
\newcommand{\DD}{\mathbb D}
\newcommand{\Z}{\mathbb{Z}}
\renewcommand{\d}{\partial}
\newcommand{\db}{\bar{\partial}}
\newcommand{\ddbar}{\partial\bar{\partial}}
\newcommand{\wh}{\widehat}
\newcommand{\wt}{\widetilde}
\newcommand{\cX}{\mathcal{X}}
\newcommand{\cF}{\mathcal{F}}
\newcommand{\Lie}{\mathcal{L}}
\newcommand{\bH}{\mathbb{H}}
\newcommand{\cC}{\mathcal{C}}
\newcommand{\cA}{\mathcal{A}}
\newcommand{\cI}{\mathcal{I}}
\newcommand{\CR}{\mathcal{R}}
\renewcommand{\O}{\mathcal{O}}
\renewcommand{\cD}{\mathcal{D}}
\newcommand{\ep}{\varepsilon}
\renewcommand{\epsilon}{\varepsilon}
\newcommand{\im}{\mathrm{Im} \,}
\newcommand{\rel}{\mathrm{Re} \,}
\renewcommand{\ker}{\mathrm{Ker} \,}
\newcommand{\ol}{\overline}
\renewcommand{\leq}{\leqslant}
\renewcommand{\geq}{\geqslant}
\newcommand{\Id}{\mathrm{Id}}
\newcommand{\Jac}{\mathrm{Jac}}
\newcommand{\Res}{\mathrm{Res} \,}
\newcommand{\dom}{\mathrm{Dom}}
\newcommand{\IM}{\mathrm{Im}}
\newcommand{\D}{D}
\newcommand{\cE}{\mathcal{E}}
\newcommand{\Supp}{\mathrm {Supp}}
\newcommand{\Hom}{\mathrm{Hom}_B}
\newcommand{\res }{\mathrm{Res }}
\newcommand{\dbar}{\bar \partial}
\newtheorem{thm}{Theorem}[section]
\newtheorem{lemme}[thm]{Lemma}
\newtheorem{claim}[thm]{Claim}
\newtheorem{proposition}[thm]{Proposition}
\newtheorem{conjecture}[thm]{Conjecture}
\newtheorem{convent}[thm]{Convention}
\newtheorem{defn}[thm]{Definition}
\newtheorem{cor}[thm]{Corollary}
\theoremstyle{remark}
\newtheorem{remark}[thm]{Remark}
\newtheorem{property}[thm]{Property}
\numberwithin{equation}{thm}
\title[Hodge theory for local systems]{Hodge theory for local systems and cohomological support loci}
\author[J. Cao]{Junyan Cao}
\address{Laboratoire de Mathématiques J.A. Dieudonné UMR 7351 CNRS, Université Côte d'Azur Parc Valrose 06108, Nice, France} 
\email{junyan.cao@unice.fr}
\urladdr{https://sites.google.com/site/junyancao} 
\author[Y. Deng]{Ya Deng}
\address{CNRS,  
	Institut de Math\'ematiques de Jussieu-Paris Rive Gauche,
	Sorbonne Universit\'e, Campus Pierre et Marie Curie,
	4 place Jussieu, 75252 Paris Cedex 05, France}
\email{ya.deng@math.cnrs.fr, deng@imj-prg.fr}    
\urladdr{https://ydeng.perso.math.cnrs.fr}
\author[C. Hacon]{Christopher D. Hacon}
\address{Department of Mathematics, University of Utah, Salt Lake City, UT 84112,
	USA}
\email{hacon@math.utah.edu}
\urladdr{https://www.math.utah.edu/~hacon/}
\author[M. Paun]{Mihai Paun}
\address{Universit\"at Bayreuth, Mathematisches Institut, Lehrstuhl Mathematik VIII, Universit\"atsstrasse 30,
	D-95447, Bayreuth, Germany}
	\email{mihai.paun@uni-bayreuth.de}
	\urladdr{https://www.mathe8.uni-bayreuth.de/de/team/prof-paun/index.php}
 \thanks{The first and the second authors are partially supported by the ANR grant Karmapolis (ANR-21-CE40-0010). The third author was partially supported by  NSF research grant   DMS-2301374 and
by a grant from the Simons Foundation SFI-MPS-MOV-00006719-07. The fourth author gratefully acknowledge support from \emph{Deutsche Forschungsgemeinschaft}}
\begin{document}

\maketitle

\bigskip

\noindent 
In this article, we pursue two main objectives. The first is to demonstrate that the fundamental results of Green–Lazarsfeld \cite{GL87,GL91} and Budur–Wang \cite{BW15,BW20} can be established in a unified framework, using various versions of the 
$\ddbar$-lemma. Our second—and primary—goal is to develop the technical tools necessary to achieve this. We begin by highlighting some of our key contributions, and conclude this introductory section with a discussion of additional applications, which will be explored in a companion piece to the present work.
\smallskip

Let \(X\) be a compact Kähler manifold endowed with a holomorphic line bundle \((L,\dbar_L)\), and let \(\alpha\) be a \((0,1)\)-form whose complex conjugate is holomorphic. For each \(t \in (\CC,0)\), consider the twisted \(\dbar\)-operator
\begin{equation}\label{intr1}
	\dbar_t := \dbar_L + t\alpha
\end{equation}
acting on the space of smooth \(L\)-valued \((p,q)\)-forms on $X$.  
By basic Hodge theory, this operator satisfies the integrability condition \(\dbar_t^2 = 0\).  
 A natural problem is to study how the finite-dimensional spaces
\begin{equation}\label{intr2}
	\ker(\dbar_t) \big/ \im(\dbar_t)
\end{equation}
vary with \(t\). General results (see \cite{Kod86}) ensure that the dimension of the spaces in \eqref{intr2} defines an upper semicontinuous function of \(t\).

\smallskip

As starting point for the type of results we obtain in this article, we re-interpret next a beautiful and important 
statement due to Green-Lazarsfeld, \cite{GL87}. Let $u\in \ker \dbar$ be a holomorphic $(n, 0)$-form on $X$, that is to say, a section of the canonical bundle. We say that $u$ \emph{extends to order one in the direction $\alpha$} if the wedge product $u\wedge \alpha$ is $\dbar$-exact, i.e. we can solve the equation
\begin{equation}\label{intr3}
	\dbar v= u\wedge \alpha. \end{equation} 
\smallskip

\noindent Following \cite[\S 1]{GL87}, we have the following.
\begin{thm}
	Let $u$ be a holomorphic section of $K_X$, which extends to order one in the direction $\alpha$. Then there exists a smooth family $\displaystyle (u_t)_{t\in (\mathbb C, 0)}$ of $(n, 0)$-forms, such that 
	the following hold
	\begin{equation}\label{intr4}
		u_0:= u, \qquad u_t\in \ker\dbar_t,
	\end{equation} 
	so that $u$ can be extended to a holomorphic section for the operator $\dbar_t$. 
\end{thm}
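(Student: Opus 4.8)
The plan is to construct the family as a convergent power series $u_t=\sum_{k\ge 0}t^ku_k$ of smooth $(n,0)$-forms solving $\dbar_tu_t=0$, and then to prove convergence by elliptic estimates. Expanding $\dbar_tu_t=\dbar u_t+t\,\alpha\wedge u_t$ and collecting powers of $t$, the equation $\dbar_tu_t=0$ becomes the recursive system
\[
\dbar u_0=0,\qquad \dbar u_k=-\alpha\wedge u_{k-1}\quad(k\ge 1).
\]
We take $u_0:=u$; the order-one equation $\dbar u_1=-\alpha\wedge u_0$ is solvable precisely because $u$ extends to order one in the direction $\alpha$ (up to a harmless sign). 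The difficulty is that the obstruction to solving the $k$-th equation a priori sits in $H^1(X,K_X)=H^{n,1}(X)$, and the induction must be organized so that all these obstructions vanish.

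First note that, since $\bar\alpha$ is a holomorphic $1$-form on the compact Kähler manifold $X$, it is $d$-closed, hence so is $\alpha$; in particular $\partial\alpha=\dbar\alpha=0$. I would then select at each step the distinguished solution
\[
u_k:=\dbar^{*}G\bigl(-\alpha\wedge u_{k-1}\bigr),
\]
$G$ being the Green operator of the $\dbar$-Laplacian, and prove by induction on $k\ge 1$ the three properties: (a) $\dbar u_k=-\alpha\wedge u_{k-1}$; (b) $\partial u_k=0$, and in fact $u_k$ is $\partial$-exact; (c) $u_k$ is $L^2$-orthogonal to the space of holomorphic $(n,0)$-forms. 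Here (c) is immediate because $\dbar^{*}G$ takes values in the image of $\dbar^{*}$; given (c), the Hodge decomposition for $\partial$ on $(n,0)$-forms together with the Kähler coincidence of $\partial$- and $\dbar$-harmonic spaces yields the $\partial$-exactness in (b); and $\partial u_k=0$ follows from the Kähler identity $\partial\dbar^{*}=-\dbar^{*}\partial$ applied to $-\alpha\wedge u_{k-1}$, whose $\partial$ vanishes by the inductive hypothesis (b) and $\partial\alpha=0$.

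The point of (b) is to guarantee solvability at the next step. Writing $u_k=\partial\zeta_k$ by (b), we obtain $-\alpha\wedge u_k=\partial(\alpha\wedge\zeta_k)$, which is $\partial$-exact; moreover it is $\dbar$-closed since $\dbar(-\alpha\wedge u_k)=\alpha\wedge\dbar u_k=-\alpha\wedge\alpha\wedge u_{k-1}=0$ by (a), hence it is $d$-closed. A $d$-closed, $\partial$-exact form is $\ddbar$-exact by the $\ddbar$-lemma on $X$, in particular $\dbar$-exact, so $\dbar\dbar^{*}G(-\alpha\wedge u_k)=-\alpha\wedge u_k$, which is (a) at step $k+1$. (One can also bypass the $\ddbar$-lemma and check directly that a $d$-closed $\partial$-exact $(n,1)$-form is $L^2$-orthogonal to $\dbar$-harmonic forms, using that harmonic forms on a compact Kähler manifold are annihilated by $\partial^{*}$.) The base case $k=1$ uses the hypothesis on $u$ in place of this argument. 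This produces the formal solution, with $u_0=u$ and all $u_k$ smooth.

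For convergence I would combine the elliptic bound $\|\dbar^{*}G\eta\|_{W^s}\le C_s\|\eta\|_{W^s}$ ($\dbar^{*}G$ being a pseudodifferential operator of order $-1$ on the compact manifold $X$) with $\|\alpha\wedge v\|_{W^s}\le C_s'\|v\|_{W^s}$ (as $\alpha$ is smooth), giving $\|u_k\|_{W^s}\le(C_sC_s')^k\|u\|_{W^s}$ and hence convergence of $\sum_kt^ku_k$ in $W^s$ for $|t|$ small. Choosing $s$ large enough for the Sobolev embedding into $C^2$, the sum $u_t$ then solves $\dbar_tu_t=0$; being a holomorphic section of the holomorphic line bundle $(K_X,\dbar_t)$ it is automatically smooth, and the holomorphic dependence of $t\mapsto u_t$ as a $W^s$-valued map makes the family smooth, with $u_0=u$. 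The main obstacle is precisely steps (a)–(c): organizing the induction so that the successive obstructions in $H^{n,1}(X)$ cancel, which is exactly where the compact Kähler hypothesis — through the $\ddbar$-lemma and the Kähler identities — is indispensable; the convergence step is routine.
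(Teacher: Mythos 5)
Your proof is correct and follows essentially the same route the paper takes for the higher-rank analogue in \Cref{HR}: reduce to the recursion $\dbar u_k=-\alpha\wedge u_{k-1}$, keep $u_k$ in the $\partial$-exact part of $A^{n,0}$ so that $\alpha\wedge u_k$ is $d$-closed and $\partial$-exact, and invoke the Kähler $\ddbar$-lemma to kill the obstruction class in $H^{n,1}(X)$ at every step. Your choice $u_k=\dbar^{*}G(-\alpha\wedge u_{k-1})$ and the paper's $u_k=D'v_k$ pick out the same element (the unique solution orthogonal to $\mathcal H^{n,0}$), and the convergence estimate you supply is the same kind of bound the paper establishes in \Cref{twistedversion1}.
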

\noindent Phrased in more colloquial terms, this shows that extension to order one implies extension to arbitrary order. 
Basically all the results we will discuss here have this flavor. We will present them in an increasing degree of complexity, but we first introduce some notations. 

Let $L\to X\times \Jac(X)$ be the holomorphic line bundle whose restriction to the fiber $X\times \gamma$ is the flat line bundle corresponding to the $(0,1)$-form $\gamma$. 

The higher rank analogue of this construction involves a hermitian flat vector bundle $E\to X$, together with a harmonic $(0,1)$-form $\xi$ with values in $End(E)$ such that $\xi\wedge \xi= 0$. Then the twisted $\dbar$ operator 
\begin{equation}\label{intr4}
	\dbar_E+ t\xi \end{equation}
is integrable, and induces a deformation $\cE\to X\times \mathbb C$ of $E$. 
\smallskip

\noindent In sections 1-4 we give a proof of the next well-known statements from the perspective we promote here.

\begin{thm}\cite{YTS}, \cite{Wang}, \cite{Bud09} Let $u$ be one of the following objects:
\begin{enumerate}
	\item [\rm (1)] a holomorphic section of $K_X+L|_{X\times \gamma}$,
	
	 \item [\rm (2)] a holomorphic section of $K_X+M+L|_{X\times \gamma}$, where $M$ is a line bundle on $X$ whose first Chern class contains the snc divisor $\sum a_i D_i$, with $0< a_i< 1$, or
	 
	 \item [\rm (3)] a $\dbar$-closed, $E$-valued $(n, p)$-form. 
\end{enumerate}	
We assume that $u$ extends to order one. Then it extends to any arbitrary order.	
\end{thm}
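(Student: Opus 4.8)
The plan is to prove all three cases by a single mechanism, in the spirit of the Green--Lazarsfeld argument recalled above: a formal power--series Ansatz, an induction driven by the appropriate $\ddbar$-lemma, and a uniform elliptic estimate to force convergence. I will use uniform notation. Write $\dbar$ for the $\dbar$-operator at hand --- the flat $\dbar_L$ on $K_X\otimes L|_{X\times\gamma}$ in case (1); the same operator viewed on the $L^2$-Dolbeault complex of $M$ equipped with a singular Hermitian metric $h_M$ adapted to $\sum a_i D_i$, in case (2); and $\dbar_E$ in case (3). Write $\d$ for the $(1,0)$-part of the associated flat connection, so $\ddbar=\d\dbar$, and write $\theta$ for the twisting datum ($\alpha$ in (1)--(2), $\xi$ in (3)), so that $\dbar_t=\dbar+t\theta$ and integrability $\dbar_t^2=0$ is equivalent to $\dbar\theta=0$ together with $\theta\wedge\theta=0$. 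Looking for $u_t=\sum_{k\ge 0}t^k u_k$ with $u_0=u$, the equation $\dbar_t u_t=0$ unravels to the recursion $\dbar u_k=-\theta\wedge u_{k-1}$ for $k\ge 1$ (together with $\dbar u_0=0$, which holds by assumption).

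The core is the inductive claim that \emph{for each $k\ge 1$ the recursion can be solved with, moreover, $u_k\in\operatorname{im}\d$}. Granting Hodge theory and the $\ddbar$-lemma for the relevant complex --- classical $L^2$-Hodge theory for a unitary local system on a compact Kähler manifold in cases (1) and (3), and the weighted $L^2$-Dolbeault complex attached to $h_M$ in case (2) --- let $G$ be the associated Green operator and set $u_k:=-\dbar^{*}G(\theta\wedge u_{k-1})$. One checks three points. First, $\theta\wedge u_{k-1}$ is $\dbar$-closed: this is $\dbar\theta=0$ for $k=1$, and for $k\ge 2$ it follows from the recursion together with $\theta\wedge\theta=0$. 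Second, $\theta\wedge u_{k-1}$ is $\d$-exact: for $k=1$ this is exactly where the hypothesis that $u$ \emph{extends to order one} enters --- then $\theta\wedge u_0$ is $\dbar$-exact and, having bidegree $(n,\bullet)$, is $\d$-closed for degree reasons, so the $\ddbar$-lemma places it in $\operatorname{im}(\d\dbar)\subset\operatorname{im}\d$; for $k\ge 2$ one writes $u_{k-1}=\d w$ from the previous step and uses $\d\theta=0$ ($\theta$ being anti-holomorphic, resp.\ harmonic) to get $\theta\wedge u_{k-1}=-\d(\theta\wedge w)$. Since a $\d$-exact form is orthogonal to the harmonic space, the first two points show that $\theta\wedge u_{k-1}$ is $\dbar$-closed and orthogonal to harmonic forms, hence $\dbar$-exact with $\dbar\dbar^{*}G(\theta\wedge u_{k-1})=\theta\wedge u_{k-1}$, so that $\dbar u_k=-\theta\wedge u_{k-1}$ as wanted. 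Third, $u_k\in\operatorname{im}\d$: by the Kähler identities $\dbar^{*}$ and $\d$ anticommute and $G$ commutes with $\d$, so $\dbar^{*}G$ carries $\operatorname{im}\d$ into itself, and $\theta\wedge u_{k-1}\in\operatorname{im}\d$ by the second point. This closes the induction and produces a formal solution.

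To turn the formal series into an honest family one estimates. The operator $\dbar^{*}G$ has order $-1$, hence is bounded --- indeed smoothing --- on the relevant Sobolev or Hölder scale, and wedging with the fixed form $\theta$ is bounded there as well; consequently $\|u_k\|\le C^{k}\|u_0\|$ with $C=C(X,\om,\theta)$ independent of $k$. Hence $u_t=\sum_k t^k u_k$ converges for $|t|<1/C$ and defines a family of $(n,0)$-forms (resp.\ $(n,p)$-forms in case (3)) that is smooth on $X$ --- on $X\sm\Supp(D)$ in case (2) --- and holomorphic in $t$, with $u_0=u$ and $\dbar_t u_t=0$. This is precisely the statement that $u$ extends to arbitrary order.

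The main obstacle is not this skeleton but the inputs it consumes. In cases (1) and (3) the only genuine verification is that $\theta$ is annihilated by \emph{both} components of the flat connection --- automatic for $\alpha$ anti-holomorphic, and a consequence of $\xi$ being harmonic with $\xi\wedge\xi=0$ over a compact Kähler manifold for a unitary local system --- after which the usual $\ddbar$-lemma and Hodge decomposition apply verbatim. Case (2) is the delicate one: the whole argument must be run inside the $L^2$-Dolbeault complex of $M$ with a metric having poles of weights $a_i$ along the $D_i$, so that $G$, $\dbar^{*}$ and the $\ddbar$-lemma now live on weighted spaces, and the convergence estimate must be performed in a norm strong enough to recover smoothness on $X\sm\Supp(D)$ together with the correct behaviour across $D$. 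Supplying this weighted $\ddbar$-lemma and the accompanying elliptic package in the singular (parabolic) setting is exactly the technical work developed earlier in the paper, and is where essentially all the real difficulty resides.
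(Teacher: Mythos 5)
Your proposal is correct and uses essentially the same mechanism as the paper: a formal Taylor expansion, the $\ddbar$-lemma to close the recursion (the paper's proofs of cases (3) and (2) in Sections 2 and 3 are nearly verbatim your argument, including the key inductive invariant $u_k\in\operatorname{im}\d$), and $L^2$/Hodge-theoretic bounds to obtain convergence. The paper's treatment of case (1) is phrased differently --- via Lie derivatives $\Lie_\Xi$ on the family $X\times\DD$ --- but, as the introduction itself says, the argument is ``ultimately'' the same appeal to Hodge decomposition plus the $\ddbar$-lemma that your uniform presentation makes explicit.
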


\noindent Before stating our next result, notice that the "extension to order one" hypothesis is the same as assuming that $u$ admits an extension to the first infinitesimal neighborhood of the fiber on which it is defined.
\medskip

\noindent A somehow weaker theorem holds in the pluricanonical case.
\begin{thm} [\protecting{\cite[Section 10]{HPS18}}] 
Let $u$ be a section of $kK_X+ L|_{X\times \gamma_0}$, and let
$\gamma: (\mathbb C, 0)\to \Jac(X)$ be a smooth path, such that $\gamma(0)= \gamma_0$. Assume further that the function
\[t\to h^0(X, kK_X+ L|_{X\times \gamma(t)})\]
is constant. Then $u$ deforms along the linear path $t\to \gamma(0)+ t\gamma'(0)$.	 
\end{thm}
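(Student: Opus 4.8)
The plan is to reduce, via a power-series Ansatz, to a statement of the shape \emph{extension to order one implies extension to all orders}, and then to supply that step with the pluricanonical version of the $\ddbar$-lemma built from the section $u$ itself. Write $\mathbb{L}_0 := kK_X + L|_{X\times\gamma_0}$, set $v := \gamma'(0)\in H^1(X,\O_X)\cong H^{0,1}_{\dbar}(X)$, and let $\alpha$ be the harmonic representative of $v$, a $(0,1)$-form with $\bar\alpha$ holomorphic. The deformation of $\mathbb{L}_0$ along the linear path $t\mapsto\gamma(0)+t\gamma'(0)$ is realised by the integrable operators $\dbar_t := \dbar_{\mathbb{L}_0}+t\alpha$ on $\mathbb{L}_0$-valued $(0,\bullet)$-forms, and a deformation of $u$ along that path is exactly a convergent series $u_t=\sum_{j\ge 0}t^j u_j$ of smooth sections of $\mathbb{L}_0$ with $u_0=u$ and $\dbar_t u_t=0$. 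The latter unwinds to the recursion $\dbar u_j = -\alpha\,u_{j-1}$ for $j\ge 1$; using $\dbar\alpha=0$ and $\alpha\wedge\alpha=0$ one checks inductively that each right-hand side is $\dbar$-closed, so that at every order solvability amounts to the vanishing of the class $[\alpha\,u_{j-1}]\in H^{0,1}_{\dbar}(X,\mathbb{L}_0)=H^1(X,\mathbb{L}_0)$. The whole problem is to make these obstructions vanish, choosing the $u_j$ in a convenient Hodge gauge, and to obtain convergence.

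The order-one obstruction is handled by the hypothesis. Since $t\mapsto h^0(X,kK_X+L|_{X\times\gamma(t)})$ is constant and the source of $\gamma$ is a disc, the direct image $R^0\pi_*$ of the corresponding family of line bundles over that disc is locally free of constant rank and commutes with base change; hence $u$ --- and in fact every section of $\mathbb{L}_0$ --- extends to a family along $\gamma$. Restricting that family to the first infinitesimal neighbourhood of $\gamma_0$, and using that $\gamma$ and the linear path have the same $1$-jet at $0$ (so the first-order deformation of the line bundle is the same, namely $v$), we obtain $u_1$ with $\dbar u_1=-\alpha\,u$: the class $[\alpha\,u]$ vanishes in $H^1(X,\mathbb{L}_0)$ and $u$ extends to order one along the linear path.

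The remaining --- and main --- step is to propagate this from order one to all orders, the pluricanonical analogue of Green--Lazarsfeld, which is genuinely ``weaker'' than cases (1)--(3) above in that it is not purely formal. I would use $u$ to produce a singular metric adapted to it: the volume form $(|u|^2)^{1/k}$ (with the flat metric understood on $L$) endows $K_X$, hence the line bundle $(k-1)K_X+L$, and hence $\mathbb{L}_0$ viewed as $K_X$ twisted by $(k-1)K_X+L$, with a singular metric $h$ whose curvature current is the non-negative multiple of $[Z_u]$, and with respect to which $u$ becomes a holomorphic $(n,0)$-form with values in the pseudoeffective line bundle $(k-1)K_X+L$, of constant pointwise norm. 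One is then in the situation of the canonical and log-canonical cases, but with an extra singular semipositive twist, and the induction runs as there: having solved $\dbar u_{j-1}=-\alpha\,u_{j-2}$ with $u_{j-1}$ in the $L^2(h)$-harmonic gauge, one shows that the $\dbar$-closed form $\alpha\,u_{j-1}$ is $\dbar$-exact because the relevant $L^2(h)$-cohomology class vanishes --- this is where one invokes the singular version of the $\ddbar$-lemma developed in this paper, for the twisted $(n,\bullet)$-complex with a singular semipositive metric carrying a flat part, together with the fact that, by the hypothesis, the degree-one $L^2(h)$-cohomology has the expected dimension. Convergence of $\sum t^j u_j$ on a small disc follows from uniform elliptic estimates for the family $\dbar_t$ (as in \cite{Kod86}), and $u_t$ is the desired deformation.

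I expect the difficulty to be concentrated entirely in the last step, and within it in two linked points. First, formulating and proving the correct $\ddbar$-lemma: for $k\ge 2$ the metric $(|u|^2)^{1/k}$ is genuinely singular --- its singularities encode the divisor $Z_u$ --- so the naive compact-K\"ahler $\ddbar$-lemma must be replaced by a weighted/singular version, which is exactly the technical heart of the paper. Second, the global hypothesis enters a second time: unlike the canonical case, vanishing of the first-order obstruction does not by itself force the higher-order obstructions to vanish, and one uses constancy of $h^0$ (equivalently, non-jumping to all orders for the full space $H^0$) to control the $L^2(h)$-cohomology in which the inductive vanishing is checked. This is what ``weaker'' refers to, and it explains why the conclusion is reached only along the linearised path and not along $\gamma$ itself.
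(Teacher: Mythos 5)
The overall strategy---reduce to a power-series Ansatz, get order-one extension from constancy via local freeness of the direct image, then propagate via singular-metric Hodge theory---is the same as the paper's second proof of Theorem~\ref{jumppluri}. But your choice of singular metric is substantively different from the paper's, and I believe that choice makes the argument break down at the crucial first step.

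You equip $(k-1)K_X+L$ with the weight $\varphi = \tfrac{k-1}{k}\log|u|^2$, built from the single section $u$ alone, so that the relevant $L^2$-space is defined by $\int_X |\cdot|^2\,|u|^{-2(k-1)/k}\,dV$. The Hodge-theoretic iteration requires the first-order solution $u_1$ (with $\dbar u_1=-\alpha u$, provided by the constancy hypothesis) to lie in this $L^2$-space, i.e.\ to vanish to fractional order along $Z_u$. There is no reason for this: $u_1$ is a first-order extension of $u$ and has nothing to do with the divisor $Z_u$. Nor does constancy of $h^0$ along $\gamma$ fix this, since the obstruction is the compatibility of $u_1$ with your choice of weight, not the existence of $u_1$. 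This is precisely the issue the paper raises in the remark following Theorem~\ref{t-42}: when the metric carries a nontrivial multiplier ideal $\mathcal I(h)$, it is a priori unclear that the first-order extension of $u$ induces a first-order extension of the corresponding section in $H^0(mK_X\otimes\mathcal I(h))$, and in general it does not.

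The paper circumvents this by building $h_{m,L}$ (equation~\eqref{simp47}) from a finite generating set $\{s_i\}$ of the ideal sheaf $\mathfrak I$ spanned by \emph{all} sections in $H^0(X,kK_X+L_\tau)$ for all $\tau\in\Jac(X)$. This gives two properties that $(|u|^2)^{1/k}$ lacks (Lemma~\ref{metric}): the singular part of the local weights is independent of the Jacobian parameter, and every section of $kK_X+L_\tau$ has uniformly bounded pointwise $h_{m,L}$-norm. Both are used critically in Lemma~\ref{L2}: one first produces a holomorphic family $U_\gamma$ of sections over the disc (via Grauert and the constancy hypothesis), checks via Lemma~\ref{metric}(2) that $U_\gamma$ is uniformly $L^2(h_{m,L})$, and then applies Cauchy estimates together with the explicit formula~\eqref{simp41} for the Lie derivative of the Chern connection---the $\tau$-independence of the singular part is exactly what makes the final integral~\eqref{simp37} finite. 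With your metric there is no analogue of this computation, because no generating family of sections was used and the singularity of the weight does not match $u_1$. Once the $L^2$-bound on the first-order solution is secured, the iteration step (that $\dot\alpha_0\wedge u_1$ is $\dbar$-exact) is correctly sketched in your proposal and runs as in~\eqref{simp53}--\eqref{simp54}, via the semi-positivity of Lemma~\ref{metric}(3) and the Bochner formula. The difficulty is concentrated one step earlier than you locate it: not in the $\ddbar$-lemma itself, but in getting $u_1$ into the weighted $L^2$-space at all. (Incidentally, the heavy quasi-projective $\ddbar$-machinery of Sections~\ref{S-qc}--\ref{hypercoho} is not used here; the pluricanonical case relies only on the twisted $L^2$-Hodge decomposition from \cite{CP}.)
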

\medskip

\noindent These results are established in Sections 1-4, ultimately by using appropriate versions of the $\ddbar$--lemma in Hodge theory. In Section 5 we consider the following problem.

\begin{conjecture}\label{invariance}
Let $p:\mathcal X\to \mathbb D$ be a smooth, proper family of Calabi-Yau manifolds and let $L\to \mathcal X$ be a line bundle, whose restriction to the central fiber of $p$ is globally generated. We assume that $h^{1,0}(X)=0$ where $X$ is the central fiber. Then the function
\[t\to h^0(\mathcal X_t, L|_{\mathcal X_t})\] 
is constant.
\end{conjecture}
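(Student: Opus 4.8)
\medskip
\noindent\textbf{A proposed strategy for Conjecture~\ref{invariance}.}
Write $X=\mathcal X_0$, $L_0=L|_X$ and $L_t=L|_{\mathcal X_t}$. Since $t\mapsto h^0(\mathcal X_t,L_t)$ is upper semicontinuous and $\mathbb D$ is connected, it suffices to prove lower semicontinuity near every point, i.e. to show that every $s_0\in H^0(X,L_0)$ extends to a holomorphic section of $L$ on a neighbourhood of $X$. As the fibres are Calabi--Yau, $h^0(\mathcal X_t,K_{\mathcal X_t})\equiv 1$, so $p_*K_{\mathcal X/\mathbb D}$ is invertible near $0$; after shrinking $\mathbb D$ we fix a relative holomorphic volume form $\Omega$ with $\Omega_t:=\Omega|_{\mathcal X_t}$ nowhere vanishing. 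Multiplication by $\Omega_t$ identifies $H^0(\mathcal X_t,L_t)$ with $H^0(\mathcal X_t,K_{\mathcal X_t}+L_t)$, so it is enough to extend $u_0:=s_0\otimes\Omega_0$ as a holomorphic family $(u_t)$ of $L_t$-valued $(n,0)$-forms with $u_t\in\ker\dbar_{K_{\mathcal X_t}+L_t}$. A $\mathcal C^{\infty}$ trivialisation $\mathcal X\cong X\times\mathbb D$ turns these operators into $\dbar+\theta_t$ with $\theta_0=0$, $\theta_t$ depending holomorphically on $t$ and with first-order term $\theta_1$ representing the Kodaira--Spencer class $\rho\in H^1(X,\Tx)$ of $p$; here $h^{1,0}(X)=0$ enters, ensuring that $L$ does not move in $\operatorname{Pic}^0$, so that $\theta_t$ records only the variation of the complex structure and the Hodge theory of $(n,\bullet)$-forms is at our disposal.

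\medskip
The first step will be to show that $u_0$ extends to \emph{order one}. The obstruction is a class in $H^1(X,K_X+L_0)\cong H^1(X,L_0)$ which one identifies, using $h^{1,0}(X)=0$, with a twisted cup product of the harmonic representative of $\rho$ against $u_0$; we claim it vanishes, uniformly in $s_0$. Endow $X$ with its Ricci-flat Kähler metric; then $\iota_\rho\Omega_0$ is harmonic, hence $\partial$- and $\dbar$-closed. Combining this with the vanishing $H^{n-1,0}(X)=0$ — which holds because on a Kähler Calabi--Yau with $h^{1,0}(X)=0$ one has $h^{n-1,0}(X)=h^0(X,\Omega^{n-1}_X)=h^0(X,\Tx)=0$ (for instance via the Beauville--Bogomolov decomposition, the last quantity being the dimension of the connected automorphism group) — an integration by parts shows that the Serre-duality pairing of the obstruction against any class in $H^{n-1}(X,L_0^{-1})\cong H^1(X,L_0)^{\vee}$ vanishes, hence so does the obstruction.

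\medskip
The second and main step will be the passage from order one to all orders, together with convergence. Here one runs the machinery of Sections 1--4 in the form suited to a deformation of the complex structure of a Calabi--Yau carrying a line bundle: one solves $\dbar_{K_{\mathcal X_t}+L_t}u_t=0$ order by order, the obstruction at stage $k$ being a $\dbar$-closed $L_0$-valued $(n,1)$-form built from $u_0,\dots,u_{k-1}$ which the relevant $\ddbar$-type identities force to be $\dbar$-exact, so that $u_k$ may be taken to be its canonical (Green-operator) primitive. Global generation of $L_0$ is indispensable here: the bare $\ddbar$-lemma is not available for $L$-valued forms when $L$ is not flat, and must be replaced by a Bochner--Kodaira--Nakano type identity requiring a semi-positive metric on $L$ — furnished by $\Phi_0^{*}\om_{\mathrm{FS}}$ for the morphism $\Phi_0\colon X\to\P\!\left(H^0(X,L_0)^{\vee}\right)$ attached to the globally generated bundle $L_0$, propagated to a neighbourhood of $X$ in $\mathcal X$ — and the same positivity provides the uniform estimates $\|u_k\|\le C^{k}$ that make $\sum_k t^k u_k$ converge to a holomorphic section of $K_{\mathcal X/\mathbb D}+L$ near $X$. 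Dividing by $\Omega$ gives the extension of $s_0$, so that $t\mapsto h^0(\mathcal X_t,L_t)$ is locally constant, hence constant on $\mathbb D$.

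\medskip
The principal obstacle is precisely this last step: establishing the correct positivity-assisted $\ddbar$-lemma for $L$-valued forms along a Calabi--Yau degeneration and carrying out the attendant uniform estimates for the corrections $u_k$. This is the point at which the semi-positivity coming from global generation, together with the rigidity of $L$ provided by $h^{1,0}(X)=0$, is genuinely used; for a line bundle that is merely effective the analogous estimates, and indeed the conclusion, cannot be expected to hold.
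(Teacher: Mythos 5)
\noindent
The statement you are proving is labelled a \emph{conjecture} in the paper, and the authors only establish it under an additional hypothesis (Theorem~\ref{ext0}: some $f\in H^0(X,mL)$ with $m\geq 2$ and $\operatorname{Div}(f)$ smooth extends to $\mathcal X$). Your argument claims a full proof, so something must give. There are in fact two genuine gaps, both located where the flatness of the line bundle is silently assumed.

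\smallskip\noindent\emph{First-order step.} Your assertion that the obstruction in $H^1(X,K_X+L_0)$ always vanishes is not substantiated. The obstruction form is (up to sign and lower-order correction) $\partial(s_0\,\iota_\rho\Omega_0)$ where $\partial$ is the $(1,0)$-part of a Chern connection on $L_0$. It is $\partial$-exact and $\dbar$-closed, and to conclude $\dbar$-exactness you would need a $\partial\dbar$-lemma for $L_0$-valued $(n,1)$-forms. For $L_0$ globally generated but not trivial this lemma fails: the commutator $[\partial,\dbar]=\Theta(L_0)\wedge\cdot$ is nonzero, so the standard proof breaks down. The integration by parts you invoke, $\int_X\partial(s_0\iota_\rho\Omega_0)\wedge\beta=\pm\int_X s_0\iota_\rho\Omega_0\wedge\partial\beta$, does not terminate: for a harmonic $\beta\in A^{0,n-1}(X,L_0^{-1})$ one does not have $\partial\beta=0$ (Kähler identities only force $\partial^{\ast}\beta=0$, and $\dbar\partial\beta=\Theta(L_0^{-1})\wedge\beta\neq 0$), so the resulting pairing is simply another Serre-duality pairing, not manifestly zero, and the invoked vanishing $H^{n-1,0}(X)=0$ plays no visible role. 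Without this step the obstruction map $H^0(X,L_0)\to H^1(X,L_0)$ may be nonzero whenever $H^1(X,L_0)\ne 0$, which certainly occurs for globally generated $L_0$ with $\kappa(L_0)<n$.

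\smallskip\noindent\emph{Higher orders.} You correctly note that the bare $\ddbar$-lemma is unavailable for $L$-valued forms when $L$ is not flat, but replacing it by ``a Bochner--Kodaira--Nakano type identity requiring a semi-positive metric'' does not repair the argument. Semi-positivity gives vanishing of certain harmonic spaces, not exactness of $\dbar$-closed, $\partial$-exact forms; the inductive obstruction at stage $k$ is not forced into $\operatorname{Im}\dbar$ by positivity alone, and the uniform bounds $\|u_k\|\leq C^k$ are asserted but never derived. By contrast, Theorem~\ref{ext0} avoids any $\ddbar$-lemma and instead invokes Theorem~\ref{007} (an $L^2$-jet-extension result) together with Lemma~\ref{int}; the extending smooth divisor $\operatorname{Div}(f)$ is precisely what produces a singular weight whose multiplier ideal is trivial and for which the required $L^2$-integrability estimate can be verified via Ohsawa--Takegoshi. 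That hypothesis is not removable by your global generation argument, and in its absence the statement remains open.
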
 
The main results in \cite{JCMP} are used to verify \Cref{invariance} in certain special cases. 
For example, in Theorem \ref{ext0}, we show that if there exists an integer $m\geq 2$ and an element $f \in H^0(X, mL)$ such that $\operatorname{Div}(f)$ is smooth and $f$ can be extended to $\mathcal X$, then Conjecture~\ref{invariance} holds. 

\medskip

\noindent We come now to the main part of our work, concerning the quasi-compact version of this type of theorems. Let $(X, D)$ be a pair consisting of a compact K\"ahler manifold $X$ and an snc divisor $D$. We denote by $M_{\rm B}(X_0)$ the space of rank one local systems on $X\setminus D$.
Then the following holds true.
\begin{thm}[\cite{BW20}]\label{introcomp}
	Consider the germ of a holomorphic disk $\gamma: (\CC, 0)\to M_{\rm B} (X_0)$, and fix $\gamma(0)= \tau$. Let $k$ be a positive integer so that we have $\displaystyle \dim  H^p (X\setminus D, \gamma ( t)) \geq k$ for every $|t| \ll 1$.  Then we have
	\begin{equation} 
		\dim H^p (X\setminus D, \tau + t\dot\alpha )\geq k
	\end{equation}
	for $ |t|\ll 1$, where $\dot\alpha$ is the infinitesimal deformation  $d\gamma(0)$. 
\end{thm}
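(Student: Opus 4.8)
\medskip

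\noindent\textbf{Strategy.} The plan is to \emph{linearise} the jump locus at $\tau$ and then reduce the statement to an elementary property of cones. Write $\Sigma:=\Sigma^p_k:=\{\rho\in M_{\rm B}(X\setminus D):\dim H^p(X\setminus D,\rho)\geq k\}$, realise $\tau$ as a flat line bundle $(\mathcal L_\tau,\nabla_\tau)$, and let $\exp_\tau\colon (H^1(X\setminus D,\CC),0)\to(M_{\rm B}(X\setminus D),\tau)$ be the standard local uniformisation of the Betti moduli space by its tangent space; thus $d(\exp_\tau)_0=\mathrm{id}$, and "$\tau+t\dot\alpha$" is by definition $\exp_\tau(t\dot\alpha)$. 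The first and crucial step is to invoke the version of the $\ddbar$-lemma supplied by the technical part of this paper — concretely, the formality of the twisted logarithmic de Rham complex of $(X,D,\tau)$, which carries the mixed Hodge structure of $H^\bullet(X\setminus D,\tau)$ — in order to identify, as analytic germs, $\exp_\tau\colon (\mathcal R,0)\xrightarrow{\ \sim\ }(\Sigma,\tau)$, where
\[
\mathcal R:=\mathcal R^p_k:=\bigl\{\,c\in H^1(X\setminus D,\CC): \dim H^p\bigl(H^\bullet(X\setminus D,\tau),\ c\cup{-}\bigr)\geq k\,\bigr\}
\]
is the $k$-th jump locus of the Aomoto complex of $\tau$ (this is a complex since $c\cup c=0$ for a degree-one class in characteristic zero). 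Equivalently, this is the principle "first order implies all orders": a class $u\in H^p(X\setminus D,\tau)$ extends along a germ of disk with velocity $c$ if and only if $c\cup u=0$, and the $\ddbar$-lemma is what upgrades the first-order extension to a convergent one.

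\medskip

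\noindent\textbf{The cone argument.} Granting the linearisation I would finish as follows. First I record two properties of $\mathcal R$. It is (Zariski) closed, since $\dim H^p(H^\bullet(X\setminus D,\tau),c\cup{-})=\dim H^p(X\setminus D,\tau)-\mathrm{rank}(c\cup{-}|_{H^p})-\mathrm{rank}(c\cup{-}|_{H^{p-1}})$ and the ranks are lower semicontinuous in $c$; and it is a cone, because for $\lambda\in\CC^*$ the rescaling $x\mapsto\lambda^{\deg x}x$ is an isomorphism of complexes from $(H^\bullet(X\setminus D,\tau),c\cup{-})$ to $(H^\bullet(X\setminus D,\tau),\lambda c\cup{-})$, while $0\in\mathcal R$ by upper semicontinuity of $\dim H^p(X\setminus D,-)$ on $M_{\rm B}(X\setminus D)$ together with the hypothesis. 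Next I transport $\gamma$ through the uniformisation: since $d(\exp_\tau)_0=\mathrm{id}$, the germ $\tilde\gamma:=\exp_\tau^{-1}\circ\gamma$ satisfies $\tilde\gamma(0)=0$, $\tilde\gamma'(0)=\dot\alpha$, and — by the germ isomorphism and the hypothesis — $\tilde\gamma(t)\in\mathcal R$ for $0<|t|\ll1$. Using the cone property, $\tfrac1t\,\tilde\gamma(t)\in\mathcal R$ for such $t$, and as $t\to0$ this tends to $\tilde\gamma'(0)=\dot\alpha$; since $\mathcal R$ is closed, $\dot\alpha\in\mathcal R$, hence $t\dot\alpha\in\mathcal R$ for all $t$. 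Applying the germ isomorphism in the reverse direction gives $\exp_\tau(t\dot\alpha)=\tau+t\dot\alpha\in\Sigma$ for $|t|\ll1$, i.e. $\dim H^p(X\setminus D,\tau+t\dot\alpha)\geq k$, which is the assertion.

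\medskip

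\noindent\textbf{Where the work lies.} The main obstacle is the linearisation step itself; everything after it is the chain rule and the two elementary properties of determinantal cones recorded above. The model situation — $D=\varnothing$ and $\tau$ unitary — is the Green–Lazarsfeld/Deligne picture, which rests on the $\ddbar$-lemma for a compact Kähler manifold twisted by a flat unitary bundle. Here one must instead work with (i) the logarithmic de Rham complex of the pair $(X,D)$ together with an appropriate canonical extension of $\mathcal L_\tau$ and precise growth control near $D$, holding uniformly as $\rho$ ranges over a neighbourhood of $\tau$ in $M_{\rm B}(X\setminus D)$, and (ii) a flat bundle $\tau$ that need not be unitary — harmless for rank one once the residues are normalised, but still requiring the \emph{twisted} form of the $\ddbar$-lemma/formality statement. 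Securing this Hodge-theoretic input for the open pair is precisely what the earlier technical sections are designed to provide; with it in hand, the theorem follows from the cone argument above.
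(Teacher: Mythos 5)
Your cone argument is correct as far as it goes, and it is pleasant; but the linearisation step on which everything rests is not supplied by this paper, and asserting that it is constitutes the gap. You invoke a germ isomorphism $\exp_\tau:(\mathcal R,0)\xrightarrow{\sim}(\Sigma,\tau)$ with the Aomoto jump locus, which is a Goldman--Millson/Budur--Wang type statement. Such an isomorphism requires formality of the full DG \emph{pair} controlling deformations of the flat bundle together with a Kuranishi argument that packages every higher obstruction into the cup product. What the paper's $\ddbar$-technology actually yields is the weaker extension statement \Cref{solveequs}: a $D_K$-closed log form $u_0$ extends to all orders \emph{along a linear segment} provided it extends to order one \emph{and} the same first-order extendability holds recursively on every stratum $Y_I$ of $\Delta_1$. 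This is weaker than your linearisation in two ways you do not address: it produces only segments, not an identification of germs; and it carries extra hypotheses on the strata $Y_I$, which your $\mathcal R$ (defined purely from $H^\bullet(X\setminus D,\tau)$) does not see. The paper handles the stratum hypothesis by working at a \emph{generic} point of a component of the de Rham jump locus, where cohomology-and-base-change makes the dimensions locally constant on $X$ and on all $Y_I$ simultaneously, so that the first-order data is automatic; this is \Cref{nearbygeneric}.

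The paper then reaches the conclusion not through a germ isomorphism but through \Cref{lem:criteria}: an irreducible Zariski-closed $W\subset(\CC^*)^m$ is a translate of a subtorus as soon as, at a generic smooth point, every tangent direction exponentiates into $W$. Combined with \Cref{nearbygeneric}, this gives \Cref{complete2} (each component of $\Sigma$ is a translate of a subtorus), from which \Cref{introcomp} is immediate. Your cone argument plays the same structural role as \Cref{lem:criteria} but requires a stronger input (the germ isomorphism) than the paper is willing to prove; indeed the introduction explicitly defers the precise description of the analytic germ to the companion work \cite{CDP25}. To close the gap you would either need to establish the germ isomorphism — at which point you have essentially re-proved the main technical theorem of \cite{BW20}, which this paper is designed to avoid — or replace the germ isomorphism by \Cref{solveequs} at a generic point, in which case you recover the paper's own proof and the cone lemma is subsumed by \Cref{lem:criteria}.
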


\noindent In the statement above, we denote by \(H^\bullet(X\setminus D, \tau)\) the cohomology groups with coefficients in \(\tau\).  
Our proof of \Cref{introcomp} is self-contained and differs substantially from the original argument in \cite{BW20}, which relies on global considerations involving moduli spaces and the Riemann–Hilbert correspondence (with some ideas tracing back to \cite{Sim93}).  
We now briefly outline our main ideas of the proof of \Cref{introcomp}.

Let $M_{\rm DR}(X/D)$ be the space of logarithmic flat bundles, i.e. pairs $(L, \nabla)$ consisting of line bundles $L\to X$ together with a holomorphic connection   
\[\nabla: L\to L\otimes \Omega_X(\log D)\]
with log poles along $D$ such that $\nabla^2= 0$. We have a natural map
\begin{equation}\label{introsurjj}
\rho: M_{\rm DR}(X/D)\to M_{\rm B}(X\setminus D),
\end{equation}
which is surjective, by a well-known result due to Deligne, see \cite[II, 6.10]{Del}. As shown in the same reference, the correspondence between the 
two spaces in \eqref{introsurjj} goes much further: if we consider the complex 
\begin{equation}\label{introderham}
	0 \to \mathcal{O} (L) \to\mathcal{O} (L) \otimes \Omega_X^{1}(\log D) \to  \mathcal{O} (L) \otimes \Omega_X^{2}(\log D) \to \cdots \mathcal{O} (L) \otimes \Omega_X^{n}(\log D) \to 0 
	\end{equation}
induced by the connection $\nabla$, then we have $\displaystyle \mathbb H^p (X, L \otimes \Omega_X ^{\bullet }(\log D) )\simeq H^p (X_0, \tau)$,
provided that the residue of $(L, \nabla)$ on any component of $D$ is not a positive integer. Here $\tau$ is the image of $(L, \nabla)$ via the map $\rho$. 
\smallskip

\noindent The strategy we adopt for the proof of Theorem \ref{introcomp} consists in developing the necessary tools in order to construct a resolution of  \eqref{introderham}. Then we show that the cohomology of the resulting complex is computed by the kernel of a Laplace-type operator corresponding to $\nabla + \dbar$. After that, implementing the principle \emph{extension to order one implies extension to infinite order} becomes relatively easy, and it is based on a version of $\ddbar$-lemma we establish here. We next provide a few details about all of this.

Let $h$ be a metric on $L$, which is non-singular when restricted to
$X\setminus D$, with at most analytic singularities along $D$. If $D'_h$ is the $(1, 0)$-part of the corresponding Chern connection, then the difference
\[\theta_0:= \frac{1}{2}(\nabla- D'_h)\] 
is a $\partial$-closed $(1, 0)$-form with log poles on $D$. Recall that the metric $h$ is said to be \emph{harmonic} if the operator
\[D''_K:= \dbar+ \theta_0- \ol\theta_0\]
verifies the integrability condition $D''_K\circ D''_K= 0$. Its conjugate reads as 
\[D'_K:= D'_h+ \theta_0+ \ol\theta_0\]
so that $D_K:= D'_K+ D''_K$ equals $\nabla + \dbar$. 
\medskip

\noindent Next, given $\tau \in M_{\rm B}(\cX)$ we show that we can construct a logarithmic flat bundle $(L, \nabla)$ such that $\rho(L, \nabla)= \tau$ and such that the following hold:
\begin{itemize}

\item $L$ admits a harmonic metric $h$;

\item  the divisor $D$ can be decomposed as follows $D= \Delta_1+ \Delta_2+ \Delta_3$, such that we have:
\begin{enumerate}

\item[\rm (a) ] the residue $\res(\theta_0)$ of the Higgs field along the components of $\Delta_1+ \Delta_3$ is zero, and it is non-zero along each component of $\Delta_2$;

\item[\rm (b) ] the support of the curvature current $\sqrt{-1}\Theta(L, h)$ is contained in $D$. All its Lelong numbers are rational, and those corresponding to components of $\Delta_2+ \Delta_3$ belong to $]-1, 0[$. The Lelong numbers along the components of $\Delta_1$ are equal to zero.  
\end{enumerate}
\end{itemize}
\smallskip

\noindent The resolution of the complex \eqref{introderham} mentioned above involves $D_K$-closed forms of fixed total degree with values in $L$ and log poles on $D$. In order to analyze their properties, we endow $X_0:= X\setminus\Delta_2$ with a Kähler metric $\omega_D$ which has Poincaré singularities on $\Delta_2$ and conic singularities 
along $\Delta_3$, cf. Section \ref{S-qc}. 
Let $\xi$ be a log, $L$-twisted differential form. 

\noindent $\bullet$ In case $\Delta_1= 0$, we see that 
its $L^2$-norm $\displaystyle \int_{X_0}|\xi|^2_{\omega_D}e^{-\varphi_L}dV< \infty$ of $\xi$ is finite (due to property (b) above), and therefore it is conceivable that a version of the familiar Hodge theory for the (Higgs version of the) Laplace operator 
\begin{equation}\label{intro30} 
\Delta_K= [D_K, D_K^\star]
	\end{equation}
still holds. In Section \ref{s-tech-I} we show that this is indeed the case: our main contribution here is the a-priori inequality \S \ref{apriori}, which incorporates the 
Higgs field $\theta_0$. Based on it, we establish a Hodge decomposition for the space of $L^2$-forms, and we give a very precise description  
of the space of harmonic forms $\ker \Delta_K$.  
\smallskip

\noindent $\bullet$ In case $\Delta_1\neq 0$, the crucial $L^2$ integrability of forms $\xi$ as above fails to hold in general. There are at least two manners in which one can deal with this situation.
\begin{enumerate}

\item[\rm (i)] The form $\xi$ writes as $\displaystyle \xi= \sum_{p+q= k} \xi_{p, q}$, where $\xi_{p, q}$ is an $L$-valued form of type $(p, q)$ with log-poles along $D$. Consider the vector bundle $T_X\langle \Delta_1\rangle$ corresponding to the sheaf of logarithmic vector fields on $(X, D)$, endowed with a Hermitian metric $g_D$ which has the same type of singularities as $\omega_D$  (this is sometimes referred to as \emph{metric with cylindrical singularities}, cf. \cite{LZ14}). Then $\xi_{p, q}$ can be seen as $(0, q)$-form with values in 
$\Omega^p_X\langle \Delta_1\rangle\otimes L$ and log-poles along $\Delta_2+ \Delta_3$. It follows that we have 
$\displaystyle \int_{X_0}|\xi|^2_{g_D, \omega_D}e^{-\varphi_L}dV< \infty$. The differential $D_K$ still makes sense in this context, and so does its formal adjoint $D_K^\star$ (computed by using the metric data $g_D, \omega_D$ and $h$). We establish in Section \ref{HodgeDek} that the 
Hodge decomposition holds, again based on an a-priori inequality for $\Delta_K$. Notice that the usual commutation relations in Kähler geometry are no longer true, which causes a few additional complications.

\item[\rm (ii)] Our form $\xi$ can equally be interpreted as $L$-valued current on $X_0$, a 
point of view which goes back to the work of Noguchi in \cite{Nog95}.
 This turns out to be extremely useful: we show that any current admits a Hodge decomposition (in the same spirit as Kodaira-deRham results in \cite{KdeR50}), and as consequence, that the analogue of the $\ddbar$-lemma holds. For this to be done it is indispensable that $D_K^c:= D'_K- D''_K$ and $D_K^\star$ commute -- which is not the case in (i), because the curvature of $(T_X\langle \Delta_1\rangle, g_D)$ comes into play.  
\end{enumerate}
\smallskip

\noindent With all this at hand, the proof of  \Cref{introcomp} reduces to solving certain differential equations, cf. \Cref{sec:solve}. This is done in two steps: first we obtain solutions in the sense of currents, by using the results mentioned at (ii) above. Then we prove that among the solutions obtained we can find forms with log poles -- here the techniques discussed in (i) are important. 

\noindent It is understood that there are many details \emph{we sweep under the rug} in this short presentation of our article, hoping that the reader will enjoy discovering them in the coming pages.

\medskip

\noindent Anyway, we conclude the introduction by mentioning further applications. The results in (i) and (ii) can be seen as new analytic techniques for non-abelian Hodge theory in the quasi-compact setting. While introduced here by necessity, they are of independent interest and we expected them to have a broad range of applications, e.g. in the study of representation varieties and their Hodge  structures.
	
In fact, in a subsequent work \cite{CDP25} we intend to address the following two fundamental problems, concerning the deformation theory for representations of $\pi_1(X_0)$ into ${\rm GL}_N(\CC)$.    
	\begin{itemize}
		\item \emph{Analytic germs of representation varieties.}  
		
	Let $R_{\rm B}(X_0,{\rm GL}_N)$ denote the affine $\Q$-scheme of finite type representing the functor 
	\[
	A \longmapsto {\rm Hom}(\pi_1(X_0),{\rm GL}_N(A)),
	\] 
	for commutative $\CC$-algebras $A$. A central problem is to describe precisely the analytic germ of $R_{\rm B}(X_0,{\rm GL}_N)(\CC)$ at a semisimple representation $\varrho:\pi_1(X_0)\to {\rm GL}_N(\CC)$.  
	
	This circle of ideas originates in  work of Deligne--Goldman--Millson \cite{GM88}, who showed, in the compact case $D=\varnothing$, that the germ has quadratic singularities. Subsequently, Kapovich--Millson \cite[Theorem~1.13]{KM98} (see also Lef\`evre \cite{Lef19}) proved, under the additional assumption that the representation $\varrho$ has \emph{finite image}, that the analytic germ can be described as a quasi-homogeneous cone with generators of weights $1$ and $2$ and relations of weights $2$, $3$, and $4$, though without providing explicit equations. Their argument relies essentially on Sullivan’s theory of minimal models together with the foundational work of Morgan \cite{Mor78}.  
	
	More recently, Pridham \cite[Proposition 14.28]{Pri16}, building on the $L^2$-methods of Timmerscheidt \cite{Tim87}, obtained a refined description of the germ at $\varrho$ \emph{without} assuming finite image, but under the assumption that $\varrho$ has \emph{unitary} local monodromy around $D$. The analytic techniques developed in the present paper allow us to remove this restrictive unitarity condition and to derive explicit defining equations for the singularity of the germ in the most general quasi-projective setting.
	
	\item \emph{Extension of Eyssidieux--Simpson’s construction of mixed Hodge structures attached to the deformation theory of complex variations of Hodge structures on compact K\"ahler manifolds.}
	
	In the special case $D=\varnothing$, when $\varrho$ underlies a complex variation of Hodge structure, Eyssidieux--Simpson \cite{ES11}, based on the work  of Goldman--Millson, constructed a universal variation of mixed Hodge structure for the \emph{tautological representation}  
	\[
	\hat{\varrho}:\pi_1(X_0)\to {\rm GL}_N(\hat{\O}_T),
	\] 
	where $T$ is a certain formal germ contained in the formal germ ${\rm Spf}(\hat{\O}_\varrho)$ of $\varrho$. This result plays a key role in the proof of the \emph{linear Shafarevich conjecture} by Eyssidieux et al. \cite{EKPR12}.
	
	Building on the analytic methods introduced in the present work, we will extend Eyssidieux--Simpson’s theorem to the quasi-projective setting in our future work \cite{CDP25}.   
	\end{itemize}    

\tableofcontents

\section*{Notations and conventions}
\begin{itemize}
	\item Unless otherwise specified, throughout this paper, \( X \) denotes a compact K\"ahler manifold, \( D =\sum D_i \) is a simple normal crossing divisor on \( X \).

    \item We denote by $\Omega_X(\log D)$ the sheaf of holomorphic forms with logarithmic poles along $D$, and with $\Omega_X\langle D\rangle$
	the corresponding vector bundle. We will use the same type of notations for the duals, exterior/symmetric powers, etc.
    
	\item \( M_{\rm DR}(X/D) \) denotes the moduli space of pairs \( (L, \nabla) \), where \( L \) is a holomorphic line bundle on \( X \), and \( \nabla: L \to L \otimes \Omega_X(\log D) \) is a logarithmic integrable connection.
	
	\item A Hermitian metric on \( L|_{X\setminus D} \) is said to be \emph{harmonic} if it satisfies the conditions described in \Cref{def:harmonic}. Let \( D_K'' \), \( D_K' \), and \( D_K \) be the associated connections on \( L \) as defined in \Cref{def:connection}. Let \( \mathcal{D}_K \) be the connection incorporating the current defined in \eqref{end3}.
	
	\item For any given \( (L, \nabla) \in M_{\rm DR}(X/D) \) with $\rel (\Res{D_i} (L, \nabla)) \in ]-1,0]$ for every component $D_i$, a \emph{harmonic metric} often refers to the specific construction provided in \Cref{specialhar}. In this context, we decompose the divisor \( D = \Delta_1 + \Delta_2 + \Delta_3 \) as described therein.
 \item With this setup, let $\omega_D$ be a K\"ahler metric on $X \setminus \Delta_1$ with suitable asymptotic behavior as defined in \eqref{m1}.  
If we decompose \(\Delta_2 = \sum_{i \in I} D_i\) into prime components, then for every multi-index \(a = (a_i)_{i \in I} \in \mathbb N^I\) we define the volume form \(d\mu_a\) on \(X \setminus \Delta_1\) as in \eqref{eq:mu}.  
	
	\item  In the same setting, we denote by 
	\(\cC^\infty_{i,K}(X,L)\) and \(\cC^\infty_{i,K}(X,\Delta_1,L)\) for the sheaves on \(X\) introduced in \Cref{defend1}.  
	When the degree \(i\) is not relevant, we abbreviate them as \(\cC^\infty_{K}(X,L)\) and \(\cC^\infty_{K}(X,\Delta_1,L)\).  
	We denote their spaces of global sections by \(C^\infty_{i,K}(X,L)\) and \(C^\infty_{i,K}(X,\Delta_1,L)\), respectively.  
	In other words, the calligraphic symbol \(\cC\) always refers to a sheaf, whereas the plain \(C\) refers to its global sections.  
\end{itemize}

\section{Families of flat line bundles} 

\subsection{Preliminaries} Let $X$ be a compact K\"ahler manifold. By elementary Hodge theory, the image of the map
\begin{equation}\label{simp1}
\iota: H^1(X, \Z)\to H^1(X, \O)
\end{equation}
is a lattice. The quotient
\begin{equation}\label{simp2}
\Jac(X):= H^1(X, \O)/\IM(\iota)
\end{equation}
is a complex torus called the Jacobian variety of $X$. Via the exponential sequence, it is isomorphic to the subgroup of 
$H^1(X, \O^\star)$ corresponding to bundles with trivial Chern class. 
\smallskip

\noindent We consider the holomorphic bundle 
\begin{equation}\label{simp3} 
L\to X\times \Jac(X)
\end{equation} 
such that the restriction $\displaystyle L|_{X\times \{\gamma\}}$ is the flat bundle induced by the $(0,1)$-form $\gamma\in \Jac(X)$. Following \cite{YTS} we describe next the transition functions of $L$. As a consequence, we will see that this bundle admits a natural metric whose associated curvature form plays a crucial role in what follows.
\medskip

\noindent Let $\alpha_1,\dots,\alpha_q$ be a basis of holomorphic $(0,1)$-forms on $X$ and let $(U_i)_{i\in I}$ be a family of open subsets of $X$ such that any finite intersection of them if Stein and simply connected.
It follows that
the restriction of each $\alpha_j$ to any finite intersection 
$\displaystyle U_{i_1}\cap\dots\cap U_{i_N}$ is $\dbar$-exact. We then write 
\begin{equation}\label{simp4}
\alpha_j|_{U_p}= \ol{\partial f_{jp}}
\end{equation}
for some holomorphic function $f_{jp}$ defined on $U_p$. For each fixed $j$ the difference function $f_{jp}- f_{js}$ defined on $U_p\cap U_s$ is a constant which we denote by $c_{jps}$. 
The family of constants $\displaystyle \left(\exp({\ol c_{jps}})\right)_{p, s}$ are the transition function for the bundle $\displaystyle L|_{X\times \{\alpha_j\}}$.
\smallskip

\noindent Now an arbitrary element of $\Jac(X)$ is represented by a form $\sum t^j\alpha_j$, and it follows that all in all the transition functions for $L$ are given by 
\begin{equation}\label{simp5}
\Big(\exp \sum_{j=1}^{q_X} t^j\ol c_{jps}\Big)_{p, s}
\end{equation}
on the intersection $U_s\cap U_p$.
\smallskip

\noindent Moreover, it follows that the set of functions 
\begin{equation}\label{simp6}
\Big(\exp2\Re\big(\sum_{j=1}^{q_X} t^j\ol f_{jp}\big)\Big)_p
\end{equation}
define a Hermitian metric $h_L$ on the holomorphic bundle $L$. The $(1,0)$--part of the Chern connection writes as 
\begin{equation}\label{simp8}
D'\sigma|_{U_j}:= \partial \sigma_j+ \sum_{j=1}^{q_X} \ol f_{jp}dt^j\wedge \sigma_j+ \sum_{j=1}^{q_X} \ol t^j df_{jp}\wedge \sigma_j
\end{equation}
and thus the corresponding 
curvature form is written as
\begin{equation}\label{simp7}
\Theta(L, h_L)= -\sum dt^j\wedge \alpha_j- \sum \ol \alpha_j\wedge d\ol t^j.
\end{equation}
In particular, we see that $\sqrt{-1}\Theta(L, h_L)$ doesn't have any positivity properties. 
\bigskip

\subsection{An extension result}\label{ext}

\noindent For each positive number $m$ we consider the set 
\begin{equation}\label{simp9}
Z_m:= \{\alpha\in \Jac(X)/ h^0(X, K_X+ L_{\alpha})\geq m\}
\end{equation}
where for simplicity we denote by $L_{\alpha}$ the restriction $\displaystyle L|_{X\times \{\alpha\}}$.
\smallskip

\noindent We consider a germ of holomorphic disk $\gamma: (\CC, 0)\to \Jac(X)$ such that $\gamma(0)\in Z_m$. 
Let $\alpha_0, \dot \alpha_0$ the forms $\gamma(0)$ and $d\gamma(0)$, respectively, where we identify $\dot \alpha_0$ with its projection to 
$\Jac(X)$.
Then we prove the following statement.
\medskip

\begin{thm}\label{fs} Suppose that $u\in H^0(X, K_X+ L_{\alpha_0})$ extends to order one in the direction $\dot \alpha_0$.	
We consider the segment $l(z):= \alpha_0+ z\dot\alpha_0$ in $\Jac(X)$. Then there exists a positive 
real $\ep_0$ and $U(z)\in H^0(X, K_X+ L_{\alpha_0+z\dot \alpha _0})$ such that $U(0)=u$, where $z\in \CC$ such that $|z|\leq \ep_0$.\end{thm}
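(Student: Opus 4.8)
The plan is to set up a power-series (Kuranishi-type) recursion in the deformation parameter $z$, exactly in the spirit of the Green--Lazarsfeld argument recalled in the first theorem of the introduction, and to make each step of the recursion solvable by invoking the $\ddbar$-lemma for the flat bundle $L\to X\times\Jac(X)$ equipped with the metric $h_L$ of \eqref{simp6}. Concretely, I would look for a formal solution $U(z)=\sum_{k\ge 0} u_k z^k$, with $u_0=u$, where each $u_k$ is a smooth $L_{\alpha_0}$-valued $(n,0)$-form on $X$ (i.e. a section of $K_X\otimes L_{\alpha_0}$ after a fixed smooth trivialization), and where the operator $\dbar$ has been twisted by the restriction of the curvature form \eqref{simp7} along the segment $l(z)$, so that the twisted $\dbar$-closedness of $U(z)$ along $l(z)$ unwinds, order by order in $z$, into a sequence of equations of the shape
\[
\dbar u_k = -\,u_{k-1}\wedge\dot\alpha_0 .
\]
The $k=1$ equation is solvable by the hypothesis that $u$ extends to order one in the direction $\dot\alpha_0$; the content of the theorem is that the obstruction to solving all the higher ones vanishes.

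The mechanism that makes the higher-order equations solvable is the standard Hodge-theoretic one: $u_{k-1}\wedge\dot\alpha_0$ is automatically $\dbar$-closed because $\dot\alpha_0$ has holomorphic conjugate (so $\dbar\dot\alpha_0=0$) and $u_{k-1}$ will have been constructed to be $\dbar$-closed; hence, by the $\ddbar$-lemma (or equivalently by Hodge decomposition on the Kähler manifold $X$ with values in the flat bundle $L_{\alpha_0}$, which is where one uses that $L_{\alpha_0}$ is a unitary flat bundle), it is $\dbar$-exact if and only if it is orthogonal to the $\dbar$-harmonic $(n,0)$-forms, i.e. to $H^0(X,K_X\otimes L_{\alpha_0})$. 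So the key point to check is that $u_{k-1}\wedge\dot\alpha_0 \perp H^0(X, K_X\otimes L_{\alpha_0})$ for every $k\ge 2$. Here is where one has to work: one shows inductively that the pairing $\langle u_{k-1}\wedge\dot\alpha_0,\, s\rangle$ for $s\in H^0(X,K_X\otimes L_{\alpha_0})$ can be rewritten, using $\dbar u_{k-1}=-u_{k-2}\wedge\dot\alpha_0$ and integration by parts, in terms of a pairing one degree lower, together with a "coboundary" term; iterating, everything is reduced to the order-one relation $\dbar u_1 = -u\wedge\dot\alpha_0$ together with the closedness of $s$. In other words, the obstruction to order $k$ lies in a quotient that is killed once order one is assumed — this is the precise form, in this setting, of the slogan \emph{extension to order one implies extension to infinite order}. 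One should be a little careful to choose the $u_k$ canonically (e.g. the minimal-norm solution, orthogonal to the harmonic space) so that the bookkeeping of the integration-by-parts terms closes up.

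The remaining issue, once a formal solution is produced, is convergence: one needs $\ep_0>0$ such that $\sum u_k z^k$ converges in a suitable (say $C^\infty$) topology for $|z|\le\ep_0$. This is handled by the usual elliptic a priori estimates for the chosen right inverse to $\dbar$ (the Green operator), which give $\|u_k\|_{C^{l,\beta}} \le C^{k}$-type bounds by induction from $\|u_k\|\le C\|u_{k-1}\wedge\dot\alpha_0\|\le C'\|u_{k-1}\|$; the linearity of the recursion (the right-hand side is linear, not quadratic, in the previously constructed terms) makes this estimate especially clean, and one gets an honest radius of convergence. Finally, $U(z)$ being twisted-$\dbar_z$-closed of type $(n,0)$ means precisely that it is a holomorphic section of $K_X\otimes L_{\alpha_0+z\dot\alpha_0}$ after undoing the trivialization, with $U(0)=u$, which is the assertion.

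\textbf{Main obstacle.} The genuinely non-formal step is showing that the right-hand side $u_{k-1}\wedge\dot\alpha_0$ stays orthogonal to $H^0(X,K_X\otimes L_{\alpha_0})$ at every order, i.e. that no new obstruction is created beyond order one; this is where the structure of the problem (the interplay between $\dbar$-closedness of the $u_k$, the relation $\dbar\dot\alpha_0=0$, and the pairing with holomorphic forms) has to be exploited, and it is exactly the place where a version of the $\ddbar$-lemma is indispensable. The convergence estimate is routine once the recursion is linear and the right inverse is fixed.
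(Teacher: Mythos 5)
Your overall framework — a formal power series $U(z)=\sum u_k z^k$ with the recursion $\dbar u_k = -u_{k-1}\wedge\dot\alpha_0$, solved order by order using Hodge theory on the compact K\"ahler $X$, with convergence handled separately — is in the right spirit and matches the structure of the paper's argument (the paper packages the recursion via the Lie derivative $\Lie_\Xi=D'\circ\iota_\Xi$ on the total space $\cX$ and the commutation formula \eqref{simp14}, and dispenses with convergence by invoking \cite{ML}, but these are cosmetic differences). However, there are two real problems in the crucial step where you argue that the obstruction at order $k\ge 2$ vanishes.

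First, your reason for the $\dbar$-closedness of $u_{k-1}\wedge\dot\alpha_0$ is wrong: you assert ``$u_{k-1}$ will have been constructed to be $\dbar$-closed,'' but for $k\ge 2$ the previous term satisfies $\dbar u_{k-1}=-u_{k-2}\wedge\dot\alpha_0\neq 0$. The closedness actually comes from $\dot\alpha_0\wedge\dot\alpha_0=0$, since $\dbar(u_{k-1}\wedge\dot\alpha_0)=\dbar u_{k-1}\wedge\dot\alpha_0=\pm\,u_{k-2}\wedge\dot\alpha_0\wedge\dot\alpha_0=0$. Second, and more seriously, your mechanism for killing the obstruction — ``orthogonality of $u_{k-1}\wedge\dot\alpha_0$ to $H^0(X,K_X\otimes L_{\alpha_0})$, established by iterated integration by parts'' — does not work as stated: $u_{k-1}\wedge\dot\alpha_0$ is an $(n,1)$-form and cannot be $L^2$-orthogonal to $(n,0)$-forms (wrong bidegree; you would need orthogonality to the $\Delta''$-harmonic $(n,1)$-forms), and the integration-by-parts reduction you sketch does not obviously close up. What the paper does at this point is the genuinely non-formal step you must supply: choose each solution $u_k$ to be $D'_{h}$-exact, $u_k=D'w_k$ (equivalently, decompose via the $D'$-Hodge decomposition $u_k=v+D'w$ and treat the two pieces separately). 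Then $\dot\alpha_0\wedge u_k=\pm D'(\dot\alpha_0\wedge w_k)$ is $D'$-exact because $\dot\alpha_0$ is $D'$-closed (it has holomorphic conjugate), and it is $\dbar$-closed for the reason above; the $\ddbar$-lemma then gives $\dot\alpha_0\wedge u_k=-\dbar D'w_{k+1}$, so you may take $u_{k+1}=D'w_{k+1}$ and the induction propagates. The start of the induction (getting $u_1\in\IM D'$) again uses the $\ddbar$-lemma: $\dot\alpha_0\wedge u_0$ is $\dbar$-exact by hypothesis and $D'$-closed because $u_0$ is an $(n,0)$-form. Without this use of the $D'$-operator and the $\ddbar$-lemma, your proposal has a gap at precisely the step you correctly identified as the crux.
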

\medskip

\begin{proof} 
Let $u$ be a holomorphic section of the bundle $\displaystyle K_X+ L_{\alpha_0}$. We show next that we can extend $u$ in direction $\dot \alpha_0$ by proving that $u$ admits an extension to an infinitesimal neighborhood of the fiber $X:= X\times \{l(0)\}$ inside the family 
\begin{equation}\label{simp11}\cX:= \big(X\times \{l(z)\}\big)_{|z|\leq 1}.
\end{equation} 
If we can do that, the conclusion follows from \cite{ML}. 
\smallskip

\noindent In all cases treated in \cite{JCMP} the extension of a holomorphic section defined on the central fiber $X$ to the first infinitesimal neighborhood of $X$ in $\cX$ is far easier then the general case --since the problem we have to solve concerns the central fiber $X$ with its reduced structure. We will see that in the current context the obstruction to lift $u$ to an infinitesimal neighborhood of arbitrary order is virtually the same as the one corresponding to the first order.

\medskip

\subsubsection{Extension to the infinitesimal neighborhood of $1^{\rm st}$ order} \smallskip
We still use in what follows the notation $(L, h_L)$ for the holomorphic line bundle we have introduced above together with its Hermitian structure restricted to $\cX$. The curvature formula \eqref{simp7} implies that the equality 
\begin{equation}\label{simp12}
\Theta(L, h_L)= -dz\wedge \dot\alpha_0- \ol{\dot\alpha_0}\wedge d\ol z
\end{equation}
holds. 
\medskip

\noindent Let $\displaystyle \Xi:= \frac{\partial}{\partial z}$ be the holomorphic vector field giving the derivative in the base direction on $\cX$. We introduce the following operator
\begin{equation}\label{simp13}
\Lie_\Xi:= D'\circ \iota_\Xi
\end{equation}
acting on $L$-valued $(n+1, q)$ forms on the total space $\cX$, that is to say we first contract our form with the vector field $\Xi$ and then we apply the covariant derivative. 
\smallskip

\noindent The commutation formula (cf. \cite{JCMP} for the general case) shows that we have 
\begin{equation}\label{simp14}
\Lie_\Xi\circ \dbar= \dbar\circ\Lie_\Xi- \dot\alpha_0\wedge. 
\end{equation}
Note that in our case the equality \eqref{simp14} is greatly simplified by the fact that $\Xi$ is holomorphic, together with the shape of the curvature form in \eqref{simp12}. 
\smallskip

\noindent Let $U_0$ be a smooth extension of $u$ to the total space $\cX$; it is a $(n+1, 0)$ form
such that 
\begin{equation}\label{simp15}
\dbar U_0= z\Lambda_0
\end{equation}
for some $L$-valued $(n+1, 1)$ form $\Lambda_0$ on $\cX$. We apply the Lie derivative $\Lie_\Xi$ to 
\eqref{simp15} and restrict the result to the central fiber $X$. The RHS of \eqref{simp15} becomes simply $\displaystyle \lambda_0:= \frac{\Lambda_0}{dz}\Big|_X$; the formula \eqref{simp14} implies that we have 
\begin{equation}\label{simp16}
\dbar v- \dot\alpha_0\wedge u=  \lambda_0
\end{equation}
where $v$ is the restriction of $\Lie_\Xi(U_0)$ to $X$. 
\smallskip

\noindent We see that the obstruction form 
$\lambda_0$ is $\dbar$-exact if and only if this is the case $\dot\alpha_0\wedge u$. Recall that $\dot\alpha_0\wedge u$ is $\dbar$-exact by hypothesis, and therefore $u$ admits an extension across the first infinitesimal neighborhood of $X$ in 
$\cX$.



\medskip

\subsubsection{Extension to the infinitesimal neighborhood of $2^{\rm nd}$ order} We argue in a similar way, by applying the Lie derivative twice to the equality
\begin{equation}\label{simp19}
\dbar U_1= z^2\Lambda_1
\end{equation}
obtained at the previous step. After applying $\Lie_\Xi$ a first time the LHS is equal to
\begin{equation}\label{simp20}
\dbar \big(\Lie_\Xi(U_1)\big)- \dot \alpha_0\wedge U_1
\end{equation}
and a new derivation of \eqref{simp20} shows that the new obstruction form 
$\displaystyle\lambda_1:= \frac{\Lambda_1}{dz}\Big|_X$ is $\dbar$-exact precisely when 
\begin{equation}\label{simp21}
\dot \alpha_0\wedge \frac{\Lie_\Xi(U_1)}{dz}\Big|_X
\end{equation}
is $\dbar$-exact on $X$. In order to show this we argue as follows.\medskip

\noindent As consequence of Hodge decomposition on $X$ we have 
\begin{equation}\label{simp22}
\frac{\Lie_\Xi(U_1)}{dz}\Big|_X= v+ D'w
\end{equation}
where $v$ is holomorphic $\displaystyle L_{\alpha_0}$-valued $(n,0)$ form and $w$ is
a $(n-1, 0)$ form with values in the same bundle.
\smallskip

\noindent We already know that  
\begin{equation}\label{simp23}
\dot \alpha_0\wedge v\in \IM(\dbar)
\end{equation}
by hypothesis. 

\noindent For the other term, we note that the form 
\begin{equation}\label{simp25}
\dot \alpha_0\wedge D'w
\end{equation}
is $\dbar$-closed, as a consequence of the equality \eqref{simp22}. On the other hand, we have the equality 
\begin{equation}\label{simp24}
\dot \alpha_0\wedge D'w= -D'\big(\dot \alpha_0\wedge w\big)
\end{equation}
so the form \eqref{simp25} is $\dbar$-closed and $D'$-exact. Hodge theory shows that the form
$\displaystyle \dot \alpha_0\wedge D'w$ is $\dbar$-exact, so we are done with the second order.

\medskip

\begin{remark}\label{simp26} The arguments before show that if $\beta$ is any $\displaystyle L_{\alpha_0}$-valued $(n,0)$ form on the central fiber such that $\dbar (\dot\alpha_0\wedge \beta)= 0$, then 
$\dot\alpha_0\wedge \beta$ is automatically $\dbar$-exact.
\end{remark}

\medskip

\subsubsection{Higher order extensions} We argue by induction in order to extend $u$ to an arbitrary order.
Assume that we have 
\begin{equation}\label{simp27}
\dbar U_k= z^{k+1}\Lambda_k
\end{equation}
on $\cX$. We apply the Lie derivative to \eqref{simp27} and infer that we have
\begin{equation}\label{simp28}
\dbar \Lie_\Xi (U_k)- \dot \alpha_0\wedge U_k= (k+1)z^{k}\Lambda_k+ \O(z^{k+1}).
\end{equation}
Another derivative gives
\begin{equation}\label{simp29}
\dbar \Lie_\Xi^2 (U_k)- 2\dot \alpha_0\wedge \Lie_\Xi(U_k)= (k+1)kz^{k-1}\Lambda_k+ \O(z^{k}),
\end{equation}
so it is clear that after $k+1$ such operations we obtain
\begin{equation}\label{simp30}
\dbar \Lie_\Xi^{k+1} (U_k)- (k+1)\dot \alpha_0\wedge \Lie_\Xi^k(U_k)= (k+1)!\Lambda_k+ \O(z).
\end{equation}
\smallskip

\noindent By Remark \ref{simp26} the $\dbar$-closed form
\begin{equation}\label{simp31}
\dot \alpha_0\wedge \frac{\Lie_\Xi^k(U_k)}{dz}\Big|_X
\end{equation}
is $\dbar$-exact, and this is the end of the proof.
\end{proof}

\section{Higher rank version}
The local structure of moduli of hermitian flat bundles has been obtained by Nadel in \cite{Nad88}. The main statement for vector bundles is due to Wang \cite{Wang}: cohomology jump loci of hermitian flat vector bundles are defined by linear equations. In this section we shall give a direct proof for that. Actually it is sufficient to show that the following hold.
\medskip

\begin{thm}\label{HR}
Assume that $E$ is a hermitian flat vector bundle on a compact K\"ahler manifold $(X,\omega)$.	Assume that $\xi\in H^{0,1}(X, End(E))$ is a harmonic $(0,1)$-form  with value in $End(E)$ so that $\xi\wedge\xi=0$. Then $\dbar_E+z\xi$ defines the deformation of flat bundles of  $E$, which we denote by $\mathcal{E}\to X\times \mathbb{C}$. Assume that the  section $[u]\in H^p(X,K_X\otimes E)$ extends to the first order   of $\mathcal{E}$ at $X\times\{0\}$. Then it extends to arbitrary order.
\end{thm}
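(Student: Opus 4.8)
The plan is to follow the proof of \Cref{fs} almost verbatim, with the flat line bundle $L_{\alpha_0}$ and the deformation class $\dot\alpha_0$ replaced by the hermitian flat bundle $E$ and by $\xi$; the only genuinely new ingredient is the non-commutativity of $\mathrm{End}(E)$, which will be kept in check by the two hypotheses $\xi\wedge\xi=0$ and harmonicity of $\xi$. First I would record that $\dbar_E+z\xi$ is integrable: harmonicity of $\xi$ gives $\dbar_{\mathrm{End}(E)}\xi=0$, and, since $X$ is Kähler and $\mathrm{End}(E)$ carries a unitary flat structure, the principle of two Laplacians yields in addition $D'_{\mathrm{End}(E)}\xi=0$; together with $\xi\wedge\xi=0$ this gives $(\dbar_E+z\xi)^2=0$, so $\mathcal{E}\to X\times\CC$ is a genuine holomorphic deformation of $E$, and the conclusion of the theorem is literally the statement that $[u]$ lifts to the $k$-th infinitesimal neighbourhood of $X\times\{0\}$ in $\mathcal{X}:=X\times\DD$ for every $k$. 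I would then equip $\mathcal{E}$ with the smooth hermitian metric $h$ which is fibrewise the flat metric $h_E$; a direct computation — the analogue of \eqref{simp12} — gives
\[
\Theta(\mathcal{E},h)\;=\;dz\wedge\xi\;-\;\xi^{\dagger}\wedge d\bar z\;-\;z\bar z\,[\xi,\xi^{\dagger}],
\]
where $\xi^{\dagger}$ is the $h_E$-adjoint $(1,0)$-form (itself harmonic). The last term — absent in \eqref{simp7} because $\mathrm{End}(L)=\O$ is commutative — is of type $(1,1)$ along $X$, carries neither $dz$ nor $d\bar z$, and vanishes on $X\times\{0\}$, so it is invisible to all the computations below.

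Writing $\Xi:=\partial/\partial z$ and $\Lie_{\Xi}:=D'\circ\iota_{\Xi}$, I would note that $\Xi$ being holomorphic forces $\iota_{\Xi}\dbar_{\mathcal{E}}=-\dbar_{\mathcal{E}}\iota_{\Xi}$, hence $\Lie_{\Xi}\circ\dbar_{\mathcal{E}}-\dbar_{\mathcal{E}}\circ\Lie_{\Xi}=-\,\Theta(\mathcal{E},h)\wedge\iota_{\Xi}(\cdot)$; upon extracting the coefficient of $dz$ along $X\times\{0\}$ only the curvature term $dz\wedge\xi$ survives, which is the precise analogue of \eqref{simp14}: $\Lie_{\Xi}\dbar_{\mathcal{E}}\equiv\dbar_{\mathcal{E}}\Lie_{\Xi}-\xi\wedge(\cdot)$. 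With this at hand the induction is the one of the paragraph on higher order extensions. Start from a smooth $\mathcal{E}$-valued $(n+1,p)$-form $U_0$ extending $u$ (its $dz$-coefficient restricting to $u$ on $X\times\{0\}$) with $\dbar_{\mathcal{E}}U_0=z\Lambda_0$; applying $\Lie_{\Xi}$ and restricting gives $\dbar_E v-\xi\wedge u=\lambda_0$, where $\lambda_0$ is the $dz$-coefficient of $\Lambda_0$ along $X\times\{0\}$, so the first-order hypothesis is precisely that $\lambda_0\in\IM\dbar_E$, which lets one pass to $U_1$ with $\dbar_{\mathcal{E}}U_1=z^2\Lambda_1$. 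Assuming $\dbar_{\mathcal{E}}U_k=z^{k+1}\Lambda_k$, applying $\Lie_{\Xi}$ a total of $k+1$ times and restricting to $X\times\{0\}$ produces, exactly as in \eqref{simp30}, the identity of $E$-valued $(n,p+1)$-forms
\[
\dbar_E\,\frac{\Lie_{\Xi}^{k+1}(U_k)}{dz}\Big|_X\;-\;(k+1)\,\xi\wedge\beta_k\;=\;(k+1)!\,\lambda_k,\qquad \beta_k:=\frac{\Lie_{\Xi}^{k}(U_k)}{dz}\Big|_X,
\]
so that $\lambda_k$ is $\dbar_E$-exact — unlocking the passage from $U_k$ to $U_{k+1}$ — as soon as $\xi\wedge\beta_k$ is, and $\dbar_E(\xi\wedge\beta_k)=0$ follows from $\dbar_{\mathcal{E}}^2U_k=0$.

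The remaining point is the analogue of \Cref{simp26}: for an $E$-valued $(n,p)$-form $\beta$ with $\dbar_E(\xi\wedge\beta)=0$ whose $\dbar_E$-harmonic part represents a class still subject to the first-order condition, $\xi\wedge\beta$ is $\dbar_E$-exact. Here I would use Hodge theory for the hermitian flat bundle $E$ on the compact Kähler manifold $X$: since $\Omega^{n+1}_X=0$, the $D'_E$-Hodge decomposition of $\beta$ has no co-exact term, so $\beta=\beta^{\mathrm h}+D'_E w$ with $\beta^{\mathrm h}$ harmonic (hence, by two Laplacians, also $\dbar_E$-harmonic, so $[\beta^{\mathrm h}]\in H^p(X,K_X\otimes E)$). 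Then $\xi\wedge D'_E w=-D'_E(\xi\wedge w)$ (using $D'_E\xi=0$) is $D'_E$-exact and $\dbar_E$-closed, hence $\dbar_E$-exact by the $\ddbar$-lemma for $E$-valued forms; and the class of $\xi\wedge\beta^{\mathrm h}$ in $H^{p+1}(X,K_X\otimes E)$ is a multiple of $[\xi\wedge u]=0$ — the bookkeeping of \eqref{simp30} forces $[\beta^{\mathrm h}]$ to be a multiple of $[u]$ — so $\xi\wedge\beta^{\mathrm h}$ is $\dbar_E$-exact as well. This closes the induction and proves the theorem.

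The main obstacle I expect is the set-up in the first paragraph: checking that the non-abelian corrections — the $[\xi,\xi^{\dagger}]$-term in $\Theta(\mathcal{E},h)$ and the Leibniz identity $\xi\wedge D'_E w=-D'_E(\xi\wedge w)$ — are genuinely harmless. This hinges entirely on $\xi\wedge\xi=0$ and on harmonicity forcing both $\dbar_{\mathrm{End}(E)}\xi=0$ and $D'_{\mathrm{End}(E)}\xi=0$; once it is in place, the argument is formally identical to that of \Cref{fs}. A secondary subtlety, absent when $p=0$, is that the last step invokes the full $\ddbar$-lemma for $E$-valued forms (legitimate since $E$ is a unitary local system on a compact Kähler manifold) rather than merely the fact that $\dbar_E$-closed $(n,0)$-forms are holomorphic, and one must check that the $\dbar_E$-harmonic part of $\beta_k$ keeps satisfying the first-order vanishing.
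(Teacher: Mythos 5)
Your proof takes a genuinely different route from the paper. You transplant the Lie-derivative/Kodaira-commutation machinery of Section~1 (the family $\cX=X\times\DD$, the operator $\Lie_\Xi=D'\circ\iota_\Xi$, the iterated application of $\Lie_\Xi$ to $\dbar_\cE U_k=z^{k+1}\Lambda_k$) to the higher-rank and higher-degree setting. The paper's proof of Theorem~\ref{HR} does not use any of this: it works directly with the formal Taylor expansion $U=\sum z^i u_i$, which reduces the problem to solving the system $\dbar_E u_{i+1}=-\xi\wedge u_i$ on the fixed manifold $X$. The structural difference matters, because it is what lets the paper control the harmonic part of each $u_i$ — and this is exactly where your argument has a gap.

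The gap is in the last paragraph, at the claim that ``the bookkeeping of \eqref{simp30} forces $[\beta^{\mathrm h}]$ to be a multiple of $[u]$.'' There is no reason for this. The form $\beta_k=\Lie_\Xi^k(U_k)/dz|_X$ is obtained by iterated differentiation of an arbitrary smooth extension $U_k$, and its $D'_E$-Hodge decomposition $\beta_k=\beta_k^{\mathrm h}+D'_Ew_k$ produces a harmonic piece $\beta_k^{\mathrm h}$ whose class in $H^p(X,K_X\otimes E)$ is, a priori, an arbitrary element of that space. The hypothesis of the theorem says only that $\xi\wedge u$ is $\dbar_E$-exact for the given $u$, not for every class in $H^p(X,K_X\otimes E)$, so you cannot conclude that $\xi\wedge\beta_k^{\mathrm h}\in\IM\dbar_E$. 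You flag this yourself as a ``secondary subtlety,'' but it is in fact the obstruction to closing the induction, and it is not resolved in what you wrote. (The same issue is already present, in muted form, in the line bundle argument of Section~1, where \eqref{simp23} quietly uses the first-order hypothesis for a section $v$ that need not equal $u$; in the $p=0$, rank-one, Jacobian setting there is extra structure available, but in Theorem~\ref{HR} there is none.)

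The paper circumvents this entirely by the choice of solution at each step: by the Hodge decomposition $A^{n,p}(E)=\mathcal H^{n,p}(E)\oplus\IM D'_E$, one may take $u_1=D'_Ev_1$, and then inductively $u_{i+1}=D'_Ev_{i+1}$. Since $D'_{\mathrm{End}(E)}\xi=\dbar_{\mathrm{End}(E)}\xi=0$ and $\xi\wedge\xi=0$, the form $\xi\wedge u_i=-D'_E(\xi\wedge v_i)$ is simultaneously $D'_E$-exact, $D'_E$-closed, and $\dbar_E$-closed, so the $\ddbar$-lemma applies with no harmonic obstruction whatsoever — there simply is no $\beta^{\mathrm h}$ to worry about because the $u_i$ for $i\geq1$ are constructed to be $D'_E$-exact from the outset. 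If you want to keep the Lie-derivative formalism you would need to build the same $D'_E$-exactness into the choice of the lifts $U_k$, which amounts to reproducing the paper's argument in different clothing; as written, the step is not justified.
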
 
\medskip

\begin{proof}
 We argue via a formal Taylor expansion: our aim is to solve this equation
 $$
 (\dbar_E+z\xi)U=0
 $$
 for 
$$
U=\sum_{i=0}^{\infty}z^iu_i$$
with $u_0=u$ and $ u_i=A^{n,p}(X,E)$ is the smooth $(n,p)$-form with value  in $L$. To this end, we show that the following equations
\begin{equation}\label{simp46}
\begin{cases}
\dbar_E(u_1)+\xi\wedge u_0=0\\
\dbar_E(u_2)+\xi\wedge u_1=0\\
\ldots\\
\dbar_E(u_{i+1})+\xi\wedge  u_i=0\\
\ldots
\end{cases} 
\end{equation}
admit a solution.
The condition that $u$ extends to the first order means that existence of $u_1\in A^{n,p}(E)$ so that the first $\dbar_E$-equation is solved. One can take $u_1=D_E'v_1$ with $v_1\in A^{n-1,p}(X,E)$ by the Hodge decomposition
$$
A^{n,p}(E)=\mathcal{H}^{n,p}(E)\oplus im(D_E').
$$
 Since $\xi\in \mathcal{H}^{0,1}(X,End(E))$, one has
$$
D_{End(E)}'(\xi)=\db_{End(E)}(\xi)=0.
$$ 
For $\xi \wedge u_1\in A^{n,p+1}(X,E)$,  one has 
$$
D'_E(\xi \wedge u_1)=D_{End(E)}'(\xi)\wedge u_1-\xi\wedge D_E' u_1=0 $$
and
$$
\db_E(\xi\wedge u_1)=\db_{End(E)}(\xi)\wedge u_1-\xi\wedge \xi \wedge u_0=0
$$
by $\xi\wedge \xi =0$. Moreover 
$$D_E' (\xi\wedge v_1)=D_{End(E)}'(\xi)\wedge v_1-\xi\wedge D_E' v_1=-\xi\wedge u_1.$$
By the $\partial\dbar$-lemma one concludes that $\xi\wedge u_1=-\dbar_E D'_E v_2$ for some $v_2\in A^{n-1,p}(E)$.  Write $u_2:=D'_E v_2$, which is our desired second order extension. The  argument for higher order extensions is exactly the same.
	\end{proof} 
	
\begin{remark}
Notice that in order to establish the result in \cite{Wang}, it is sufficient to prove the next statement (in which the hypothesis is stronger than the one in Theorem \ref{HR}).	
\end{remark}	

\begin{lemme}
	Let $\mathcal{E}\to X\times \mathbb{D}$ be a family of hermitian flat vector bundles with $E_z:=\mathcal{E}|_{X\times \{z\}}$. Write $E:=E_0$.  If for some $[u]\in H^p(X, K_X\otimes E)$, there are smooth sections $U(x,z)\in A^{n,p}(X, K_X\otimes E)$ parametrized by $z\in \mathbb{D}$ so that $U_z(x):=U(x,z)$ in $H^p(X, K_X\otimes E_z)$ with $U_0(x)=u(x)$. Then $u$ extends to first order.  
\end{lemme}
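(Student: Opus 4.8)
The plan is to derive the first-order extension equation for $u$ by differentiating, at $z=0$, the $\dbar$-closedness satisfied by the given family $(U_z)$; this is precisely the converse of the first step in the proof of \Cref{HR}, so I would keep the same notation. Choosing a $C^\infty$-trivialization of $\mathcal E$ over $X\times\mathbb D$, the holomorphic structure of $E_z$ is encoded by a Dolbeault operator $\dbar_{E_z}=\dbar_E+\theta_z$ on $A^{n,\bullet}(X,E)$, where $(\theta_z)_{z\in\mathbb D}$ is a smooth family of $\mathrm{End}(E)$-valued $(0,1)$-forms with $\theta_0=0$; the integrability $\dbar_{E_z}^2=0$ reads $\dbar_{\mathrm{End}(E)}\theta_z+\theta_z\wedge\theta_z=0$, and differentiating it at $z=0$ shows that $\xi:=\partial_z\theta_z|_{z=0}$ is $\dbar_{\mathrm{End}(E)}$-closed. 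This $\xi$ is the infinitesimal deformation of $E$ inside $\mathcal E$, so — comparing with \eqref{simp46} — the statement that $u$ \emph{extends to first order} means precisely that $\xi\wedge u\in\IM(\dbar_E)$.

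Next I would invoke the hypothesis: each $U_z$ defines a class in $H^p(X,K_X\otimes E_z)$, i.e. $\dbar_{E_z}U_z=0$, which in the chosen trivialization becomes $\dbar_E U_z+\theta_z\wedge U_z=0$. Since $(U_z)$ and $(\theta_z)$ are differentiable in $z$, differentiating this identity at $z=0$ and using $\theta_0=0$, $\partial_z\theta_z|_{z=0}=\xi$ and $U_0=u$ yields
\[
\dbar_E\dot U_0+\xi\wedge u=0,\qquad \dot U_0:=\partial_z U_z|_{z=0}\in A^{n,p}(X,E).
\]
Hence $\xi\wedge u=-\dbar_E\dot U_0\in\IM(\dbar_E)$, so $u_1:=\dot U_0$ solves the first equation of \eqref{simp46}, which is exactly what it means for $u$ to extend to first order.

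I do not expect a real obstacle here: the core of the argument is this single differentiation. The points that need care are of bookkeeping nature — checking that a smooth trivialization of $\mathcal E$ genuinely produces a differentiable family $\theta_z$ with $\theta_0=0$, so that $\xi$ exists and agrees with the infinitesimal deformation used in \Cref{HR}; noting that $\partial/\partial z$ is the relevant derivative (the anti-holomorphic direction plays no role, since $\mathcal E\to X\times\mathbb D$ is a holomorphic family); and observing that the class $\xi\wedge[u]\in H^{p+1}(X,K_X\otimes E)$ is independent of the chosen representatives, so the exactness obtained does not depend on the trivialization.
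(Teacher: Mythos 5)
Your proposal is correct and takes essentially the same route as the paper: the paper Taylor--expands $U_z$ and $\dbar_{E_z}=\dbar_E+z\xi+O(z^2)$ and reads off the coefficient of $z$ in $\dbar_{E_z}U_z=0$, which is exactly your differentiation at $z=0$. The extra preliminaries you include (trivializing $\mathcal E$, checking $\dbar_{\mathrm{End}(E)}\xi=0$, well-definedness of $\xi\wedge[u]$) are fine and merely make explicit what the paper leaves implicit.
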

\begin{proof}
	We do a Taylor expansion $U(x,z)=\sum_{i=0}^{\infty}z^iu_i$ with $u_i\in A^{n,p}(X, K_X\otimes E)$.  We write
	$$
	\db_{E_z}=\db_E+ t \xi + O(t^2)
	$$ 
	By $	\db_{E_z} U_z(x)=0$ one has 
	$$
	\db_E u_1+\xi\wedge u_0=0.
	$$
	This implies the first order extension. 
	\end{proof}

\medskip

\section{Adjoint version}

\noindent In this section our target is the following result.
\medskip 

	\begin{thm}\label{twistedversion1}
		Let $X$ be a compact K\"ahler manifold, and let $M$ be a line bundle on $X$ so that $c_1(M)=[\sum_{i=0}^{\ell}a_iD_i]$ with $0<a_i<1$ and $D:= \sum_{i=0}^{\ell}a_iD_i$ is SNC.  We consider a germ of holomorphic disk $\gamma: (\CC, 0)\to \Jac(X)$ such that $\gamma(0)= \O_X$ and such that the any element of $H^0(X, K_X+M+ L_0)$ extends to order one in the direction $\dot\alpha_0\in H^{0,1}(X)$ (i.e. the infinitesimal deformation  $d\gamma(0)$).
		Then the function
		\begin{equation} 
			h^0(X, K_X+M+ L_{z\dot\alpha_0})
		\end{equation}
		is constant for $0\leq |z|\ll 1$. 
	\end{thm}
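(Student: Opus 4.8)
The plan is to adapt the formal Taylor expansion strategy of \Cref{fs} and \Cref{HR} to the twisted adjoint setting, where the new input is the presence of the $\R$-divisor $D = \sum a_i D_i$ with coefficients in $(0,1)$. Concretely, we seek to extend a given $u \in H^0(X, K_X + M + L_0)$ to a holomorphic section $U(z)$ of $K_X + M + L_{z\dot\alpha_0}$ for $|z| \ll 1$. By the same reduction as in the proof of \Cref{fs} (via \cite{ML}), it suffices to extend $u$ to an infinitesimal neighborhood of arbitrary order of the central fiber $X \times \{0\}$ inside the family $\cX := (X \times \{z\dot\alpha_0\})_{|z| \le 1}$. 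We write $U = \sum_{i \ge 0} z^i u_i$ with $u_0 = u$, where each $u_i$ is a smooth $(M + L_0)$-twisted $(n,0)$-form, and we must inductively solve the cascade $\dbar u_{i+1} + \dot\alpha_0 \wedge u_i = 0$, exactly as in \eqref{simp46} but with the coefficient system $K_X + M + L_0$ in place of $K_X + E$.

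First I would set up the correct $L^2$ framework. Equip $M$ with a singular metric $h_M = e^{-\varphi_M}$ whose curvature is a (possibly singular) smooth form and whose multiplier ideal is $\cI(h_M) = \O_X(-\lfloor D \rfloor) = \O_X$ (since $0 < a_i < 1$), so that $H^0(X, K_X + M + L_0) = H^0(X, K_X \otimes \cI(h_M) \otimes (M + L_0)$ computes the correct space and $\dbar$-closed $L^2$ forms with this weight are precisely the relevant ones. Combining $h_M$ with the flat metric $h_L$ on $L_0$ and a fixed Kähler metric $\omega$ on $X$, the key technical ingredient is the twisted $\ddbar$-lemma in this singular-weight setting: for an $(M + L_0)$-valued $(n,q)$-form that is both $\dbar$-closed and $D'$-exact (where $D'$ is the $(1,0)$-part of the Chern connection of $h_M \otimes h_L$), one obtains that it is $\dbar$-exact. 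This is the adjoint/twisted analogue of the $\ddbar$-lemma invoked in \Cref{HR}, and it is exactly the kind of statement recorded in \cite{JCMP}; I would cite it from there rather than reprove it.

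With this in hand, the induction runs as in \Cref{fs}: the first-order solvability is the hypothesis; for the inductive step one applies $\Lie_\Xi = D' \circ \iota_\Xi$ to the equation $\dbar U_k = z^{k+1}\Lambda_k$ on $\cX$, using the commutation relation $\Lie_\Xi \circ \dbar = \dbar \circ \Lie_\Xi - \dot\alpha_0 \wedge(\cdot)$ (valid here because the base is one-dimensional, $\Xi$ is holomorphic, and the curvature of $L$ on $\cX$ has the simple shape \eqref{simp12}, while the extra factor $M$ is pulled back from $X$ and contributes nothing to the $\Xi$-direction). After $k+1$ differentiations and restriction to $X$, the obstruction to the $(k+1)$-st step is the $\dbar$-exactness of $\dot\alpha_0 \wedge \frac{\Lie_\Xi^k(U_k)}{dz}\big|_X$. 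This form is $(M + L_0)$-valued of type $(n, q)$; writing $\frac{\Lie_\Xi^k(U_k)}{dz}|_X = v + D'w$ by the Hodge decomposition for the twisted Laplacian (with $v$ harmonic), the term $\dot\alpha_0 \wedge v$ is $\dbar$-exact by the first-order hypothesis applied to the harmonic representative (one needs that wedging a harmonic adjoint form with $\dot\alpha_0$ and then being $\dbar$-closed forces $\dbar$-exactness — the analogue of \Cref{simp26}), and $\dot\alpha_0 \wedge D'w = -D'(\dot\alpha_0 \wedge w)$ is $\dbar$-closed and $D'$-exact, hence $\dbar$-exact by the twisted $\ddbar$-lemma. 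Summing the (convergent, after the reduction to finite order) series yields the desired $U(z)$, and constancy of $h^0$ follows from upper semicontinuity together with the existence of extensions.

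\textbf{Main obstacle.} The delicate point is not the formal recursion but establishing the twisted $\ddbar$-lemma with the singular weight $e^{-\varphi_M}$ attached to the fractional divisor $D$, and matching it with a Hodge decomposition of the corresponding $L^2$-complex whose harmonic space has the right dimension $h^0(X, K_X + M + L_0)$. One must check that the Chern connection $D'$ of $h_M \otimes h_L$ interacts correctly with the $L^2$-structure near $\Supp D$ — i.e. that $D'$-exactness of an $L^2$ form can be realized with an $L^2$ primitive — and that the harmonic forms are smooth up to the boundary in the appropriate sense. Since $0 < a_i < 1$ the weight is integrable and no boundary term appears in the integration by parts, so this should go through by the arguments already developed in \cite{JCMP}; the remaining work is bookkeeping to confirm that every form produced by $\Lie_\Xi^j$ stays in the $L^2$ class and that the Hodge-theoretic input (existence of harmonic representatives, the $\ddbar$-lemma) applies verbatim.
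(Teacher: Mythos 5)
Your proposal follows a genuinely different route from the paper, and I think it has a gap at its central step.

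The paper does \emph{not} stay on the compact manifold with a singular weight. Instead, it moves to the non-compact manifold $X^\circ := X \setminus D$ equipped with a complete Poincar\'e-type metric $\omega$, and endows $M|_{X^\circ}$ with a metric $h_M$ (induced by the equations of $\sum a_i D_i$) that is \emph{smooth and flat} on $X^\circ$. The flatness is essential: the $L^2$-Hodge decomposition on $(X^\circ, \omega)$ and the $L^2$ version of the $\partial\dbar$-lemma hold because the relevant bundle metric has \emph{zero} curvature there. Convergence of the formal series $\sum z^i u_i$ is then not deferred to \cite{ML} but obtained quantitatively from the $L^2$-norm estimate of \cite[Theorem 3.3]{CP}, which gives $\|u_{i+1}\| \le C\|\xi_1\|\cdot\|u_i\|$, so that after rescaling $\xi_1$ (so $C\|\xi_1\|\le 1/2$) one has $\|u_i\| \le 2^{-i}\|u\|$ and the series converges in $L^2(X^\circ)$; the resulting section is $L^2$ against the Poincar\'e volume and the weight $h_M$, and hence extends holomorphically over $D$ by integrability.

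The gap in your proposal is the twisted $\ddbar$-lemma you invoke on the compact $X$ with the singular weight $e^{-\varphi_M}$. The curvature of $(M, h_M)$, computed as a current on $X$, is $\sum a_i [D_i]$ (up to a smooth correction) and is \emph{not} zero. The $\ddbar$-lemma fundamentally requires the $\partial$- and $\dbar$-Laplacians to coincide, which forces vanishing curvature; with a nonvanishing singular curvature current, the Bochner--Kodaira identity produces a residual curvature term and the two Laplacians differ. The triviality of the multiplier ideal $\cI(h_M)=\O_X$ guarantees that $L^2$-Dolbeault cohomology computes the right groups, but it says nothing about the $\ddbar$-lemma. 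Indeed, even in the pluricanonical case (Theorem \ref{jumppluri}) the paper does \emph{not} use a $\ddbar$-lemma: there the metric is merely semi-positive and the argument proceeds by showing orthogonality to harmonic forms via the Bochner formula \eqref{simp53b}, which is a different mechanism. The statement you want to cite from \cite{JCMP} is therefore not the statement that exists. To make your approach rigorous you would either have to pass to $X^\circ$ with a complete metric as the paper does (at which point you have essentially reproduced the paper's proof), or produce a genuinely new singular $\ddbar$-lemma for compact $X$ with klt-type weights of nonvanishing curvature current, which is not available.

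A secondary remark: relying on \cite{ML} for the ``formal to actual'' step should be fine (Levine's argument is about coherent sheaves on $X\times\DD$ and is not special to pluricanonical bundles), so that part of your reduction is legitimate; but it doesn't help with the $\ddbar$-lemma, which is where the argument breaks.
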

\begin{proof}
	It suffices to prove that any section $u\in H^0(X, K_X+M)$ can be extended \emph{smoothly} to sections in 	$H^0(X, K_X+M+ L_{z\dot\alpha_0})$ for $|z|$ small enough.
	For $L_{\gamma(\tau)}$, there is $\xi_i\in A^{0,1}(X)$ so that
	$$
	\db+\sum_{i=1}^{\infty}\tau^i\xi_i
	$$	
	defines the complex structure for $L_{\gamma(\tau)}$.  Since 
		$(\db+\sum_{i=1}^{\infty}\tau^i\xi_i)^2=0$, this means that 
		$$
		\db\xi_1=0
		$$
		which is the infinitesimal deformation of $L_{\gamma(\tau)}$ at $0$, namely $[\xi_1]=\dot\alpha_0$. By the assumption that 	$h^0(X, K_X+M+ L_{\gamma(\tau)})$ is constant, there is  $\tilde{U}_{\tau}\in H^0(X, K_X+M+ L_{\gamma(\tau)})$ which depends on $\tau$ smoothly  so that $\tilde{U}_{0}(x)=u$. Hence one can write
$$
\tilde{U}_\tau(x)=\sum_{i=0}^{\infty}\tau^i\tilde{u}_i$$
with $\tilde{u}_0=u$ and $ \tilde{u}_i=A^{n,0}(X,M)$ is the smooth $(n,0)$-form with value  in $M$, such that
$$
(\db_M+\sum_{i=1}^{\infty}\tau^i\xi_i)(\sum_{i=0}^{\infty}\tau^i\tilde{u}_i)=0.
$$
This implies that 
\begin{eqnarray}\label{1}
\db_M \tilde{u}_1+\xi_1 \wedge u=0
\end{eqnarray} 
We can choose $[\xi_1]\in H^{0,1}(X)$  the harmonic representative with respect to some K\"ahler metric $\omega_X$ on $X$.  In particular, 
\begin{eqnarray}\label{2}
\db \xi_1=\partial \xi_1=0
\end{eqnarray}

We next use a few results in $L^2$-cohomology. Consider the manifold $X^\circ:=X-D$ and equip it with the Poincar\'e type metric $\omega$. The restriction $M|_{X^\circ}$ is endowed with the smooth metric $h_M$ induced by $\sum_{i=0}^{\ell}a_iD_i$ so that its curvature is equal to zero. The 
equation to be solved is as follows
$$
(\db_M+\sum_{i=1}^{\infty}z^i\xi_1)(\sum_{i=0}^{\infty}z^iu_i)=0.
$$
		with $u_0=u|_{X^\circ} $ and $u_i\in L^{n,0}_{(2)}(X^\circ, M)$. By identifying the coefficients of $z^i$, this is equivalent with the following system,
		\begin{equation}\label{equation}
			\begin{cases}
				\dbar_M(u_1)+\xi_1\wedge u_0=0\\
				\dbar_M(u_2)+\xi_1\wedge u_1=0\\
				\ldots\\
				\dbar_M(u_{i+1})+\xi_1\wedge  u_i=0\\
				\ldots
			\end{cases} 
		\end{equation}
modulo convergence issues.

	 By the Hodge decomposition for $L^2$-spaces, one has
		$$
		L^{n,p}_{(2)}(X^\circ, M)=	\mathcal{H}^{n,p}(X^\circ, M)\oplus im D_M'
		$$ 
		for $p\geq 0$, 
	where $D_M'$ is the $(1,0)$-connection of $(M,h_M)$ over $X^\circ$.  Note that $\xi|_{X^\circ}$ is $L^2$-integrable with respect to the complete metric $\omega$ on $X^\circ$, and 
	$$
	\tilde{u}_1\in A^{n,0}(X,M)\subset 	L^{n,0}_{(2)}(X^\circ, M).
	$$
	Hence $\xi_1\wedge u \in L^{n,1}_{(2)}(X^\circ, M)$. One can take 
	$$
	u_1=D_M' v_1
	$$
	for some $v_1\in   L^{n-1,0}_{(2)}(X^\circ, M)$ with $u_1-\tilde{u}_1\in \mathcal{H}^{n,0}(X^\circ, M)$. Therefore,
by \eqref{1} $$
\db_M  {u}_1+\xi_1 \wedge u=0
$$
By the $L^2$-norm estimate in \cite[Theorem 3.3]{CP}, one has
$$
\lVert {u}_1\rVert\leq C \lVert \xi_1 \wedge u\rVert\leq C \lVert \xi_1\rVert\cdot  \lVert  u\rVert
$$
where $C$ is some universal constant. 
We can thus make a rescaling of $\xi_1$ a priori so that $C \lVert \xi_1\rVert\leq \frac{1}{2}$. 

For $\xi_1\wedge u_1\in  L^{n,1}_{(2)}(X^\circ, M)$,  by \eqref{2} one has
$$
\db_M(\xi_1\wedge u_1)=-\xi_1\wedge \xi_1 \wedge u_1=0
$$
$$
-D_M'(\xi_1\wedge v_1)=-\partial\xi_1\wedge  v_1+\xi_1 \wedge D_M'v_1=\xi_1\wedge u_1.
$$
Hence
$$
D_M'(\xi_1\wedge u_1)=0. 
$$
By the $L^2$ version of the $\partial\db$-lemma, there is 
$$
v_2\in  L^{n-1,0}_{(2)}(X^\circ, M)
$$
such that $D_M' v_2\in  L^{n,0}_{(2)}(X^\circ, M)$ and 
$$
\xi_1\wedge u_1=-\db_MD_M' v_2
$$
By \cite[Theorem 3.3]{CP} again, one has
$$
\lVert D_M' v_2\rVert \leq C \lVert \xi_1 \wedge u_1\rVert\leq \frac{1}{2}  \lVert  u_1\rVert\leq \frac{1}{4}\lVert u\rVert
$$
Write $u_2:=D_M' v_2$, which is our desired second order extension. 
We do this inductively to find $u_i\in L^{n,0}(X^\circ, M)$ so that
$$
\lVert u_i\rVert \leq   \frac{1}{2^i}\lVert u\rVert
$$
and the equations \eqref{equation} solves. Then the sum
$$
U_z:=\sum_{i=0}^{\infty}z^iu_i
 $$
 belongs to $L^{n,0}_{(2)}(X^\circ, M)$ for any $|z|\leq 1$ so that
 $$
 (\db_M+z\xi_1)(U_z)=0
 $$
 Note that $\db_M+z\xi_1$ defines the complex structure for the line bundle $M+L_{z\dot\alpha_0}$.  Hence
 $$
 U_z\in H^0(X^\circ, K_X+M+L_{z\dot\alpha_0}|_{X^\circ}) 
 $$
 which is $L^2$-integrable with respect to the Poincare metric and $h_M$. It thus extends to a holomorphic section  in $H^0(X, K_X+M+L_{z\dot\alpha_0}) $. 
	\end{proof}
	
	
	\section{Pluricanonical version}
	
	\noindent Let $L \to X \times \Jac(X)$ be the universal holomorphic line bundle from \eqref{simp3}. 
In this section, we consider the pluricanonical case. 
Our main object of study is the set
\begin{equation}\label{jump1}
	Z_{k,m} := \{ \alpha \in \Jac(X) \mid h^0(X, mK_X + L_\alpha) \geq k \},
\end{equation}
where $k \geq 1$ is an integer. 
We show that $Z_{k,m}$ has the same flatness properties as the analogous sets in the preceding sections. 
That is, we have the following:
\smallskip

\begin{thm}\label{jumppluri}
Let $X$ be a compact Kähler manifold. Then, for every $k, m \in \mathbb{N}$, the jumping locus
$Z_{k,m}$ defined in \eqref{jump1} 
is a finite union of translates of subtori in $\Jac(X)$.
\end{thm}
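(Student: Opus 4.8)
The plan is to deduce the statement, in a by-now standard way, from the upper semicontinuity of cohomology together with the pluricanonical extension theorem recalled in the introduction (the result of \cite{HPS18}), via a linearity argument on the complex torus $\Jac(X)$. First, since $mK_X+L$ is a holomorphic line bundle on $X\times\Jac(X)$, the function $\alpha\mapsto h^0(X,mK_X+L_\alpha)$ is upper semicontinuous (see \cite{Kod86}), so $Z_{k,m}$ is a closed analytic subset of $\Jac(X)$ with finitely many irreducible components; it will suffice to show that each component $Z$ is a translate of a subtorus.

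Next I would show that $Z$ is locally affine near a general point. Let $j\ge k$ be the generic, hence minimal, value of $h^0(X,mK_X+L_\alpha)$ on $Z$; it is attained on a Zariski-dense open $U\subseteq Z$, which I shrink so that $U$ consists of smooth points of $Z$ lying on no other component of $Z_{k,m}$, so that $h^0(X,mK_X+L_\alpha)\equiv j$ on $U$ and the germ of $Z_{k,m}$ at each point of $U$ equals that of $Z$. For $\beta\in U$ and $v\in T_\beta Z\subseteq T_\beta\Jac(X)=H^1(X,\O)$, I choose a germ of holomorphic curve $\gamma$ in $Z$ through $\beta$ with $\dot\gamma(0)=v$ (which stays in $U$, so $h^0(X,mK_X+L_{\gamma(t)})\equiv j$ along it) and apply the pluricanonical extension theorem to a basis of $H^0(X,mK_X+L_\beta)$; this produces deformations of these sections along the linear path $z\mapsto\beta+zv$, hence $h^0(X,mK_X+L_{\beta+zv})\ge j$, so the short straight segment $\{\beta+zv:|z|\ll1\}$ lies in $Z_{k,m}$, and therefore in $Z$. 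Thus the connected complex submanifold $U$ contains a straight segment issuing from each of its points in each of its tangent directions, hence is totally geodesic, so it is an open subset of a single translate $\beta_0+W$ of a complex linear subspace $W\subseteq H^1(X,\O)$. In particular $Z=\overline U$ coincides, near every point of the dense open set $U$, with a translate of the fixed subspace $W$.

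Finally I would invoke the classical torus lemma underlying \cite{GL87,GL91,BW15,BW20}: an irreducible closed analytic subset $Z$ of a complex torus $T=V/\Lambda$ which coincides with a translate of a fixed complex linear subspace $W\subseteq V$ near every point of a dense open subset is a translate of a subtorus, and $W$ is automatically rational with respect to $\Lambda$. The argument lifts to $\widetilde Z:=\pi^{-1}(Z)\subseteq V$ (with $\pi\colon V\to T$ the universal cover) and propagates the local affine structure by analytic continuation to get $z+W\subseteq\widetilde Z$ for every $z\in\widetilde Z$, so that $\widetilde Z$ is a disjoint union of affine subspaces parallel to $W$; the associated set of sheets in $V/W$ is discrete and invariant under the image of $\Lambda$, which forces that image to be discrete, equivalently $W\cap\Lambda$ to be a lattice of rank $\dim_{\R}W$. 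Then $S:=\pi(W)$ is a closed subtorus of $\Jac(X)$ and, by irreducibility, $Z=\beta_0+S$; applying this to each component finishes the proof.

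The hard part will be the middle step: converting the extension theorem, which only asserts that a fixed section deforms along a prescribed linear direction, into the geometric statement that $Z_{k,m}$ is, near a general point of each component, exactly its affine tangent space. This requires careful bookkeeping with upper semicontinuity and genericity---so that ``$h^0$ is constant along $\gamma$'' genuinely holds and so that the segment $\beta+zv$ cannot leak into a different component---and it is precisely there that the analytic input of the paper is used. The torus lemma of the last step is comparatively soft; its one delicate point is the equivalence between discreteness of the image of $\Lambda$ in $V/W$ and the rationality of $W$, which is elementary linear algebra over the lattice.
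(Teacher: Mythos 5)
Your proof is correct and shares the overall architecture of the paper's second proof of Theorem~\ref{jumppluri} --- establish that for each component $Z$ and each general point $\beta\in Z$ the straight segment $\beta+zv$ (for $v\in T_\beta Z$) stays in $Z_{k,m}$, then invoke the torus lemma --- but you handle the crucial deformation step differently. You cite the pluricanonical extension result of \cite[Section~10]{HPS18} (quoted in the introduction) as a black box, whereas the paper's purpose is precisely to \emph{re-derive} it. Its second proof builds the singular metric $h_{m,L}$ of \eqref{simp47} on $(m-1)K_X+L$, proves Lemma~\ref{L2} (an $L^2$-estimate extracting from the constancy of $h^0$ along a germ an $L^2$ solution of $\dbar w=\dot\alpha_0\wedge u$), and solves the full hierarchy \eqref{addseqeq} by iterated use of the twisted Hodge decomposition of \cite[Thm.~2.2]{CP}; the first proof (Theorem~\ref{t-42}) is structurally different again, going through the Albanese morphism and the direct image decomposition $a_*(\omega_X^m)=\bigoplus U_i\otimes P_i$ of \cite{HPS18}, and it delivers the strictly stronger conclusion that the components are \emph{torsion} translates and that the deformations are governed by a derivative complex. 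What your write-up buys is precision at the tail end: your genericity bookkeeping (shrinking $U$ so that $h^0$ is constant and the germ of $Z_{k,m}$ equals that of $Z$), the totally-geodesic argument, and the rationality of $W$ are exactly what the paper compresses into ``This implies that $Z_{k,m}$ is a finite union of translates of subtori,'' and your closing torus lemma is the compact-torus analogue of the paper's Lemma~\ref{lem:criteria}, which is stated there only for $(\CC^\star)^m$ in the Betti setting.
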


	\medskip
    
    We provide in what follows two proofs of \Cref{jumppluri}. The first approach relies on a decomposition theorem for direct images proved in \cite{HPS18}.
\smallskip

\begin{thm}\label{t-42}
    Let $X$ be a smooth projective variety, $s\in H^0(mK_X)$ and $v\in H^1(\mathcal O _X)$.
    If $s$ deforms to 1-st order in the direction of $v$, then it deforms to arbitrary order. In particular, the jumping locus $Z_{k, m}$ is a union of subtori in $\Jac (X)$.
\end{thm}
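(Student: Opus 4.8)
The plan is to transfer the statement to a coherent sheaf on an abelian variety and then feed in the decomposition theorem for pluricanonical direct images of \cite{HPS18}. Let $a\colon X\to A:=\mathrm{Alb}(X)$ be the Albanese morphism and set $\mathcal F:=a_*\omega_X^{\otimes m}$, a coherent sheaf on $A$. Since $\mathrm{Pic}^0(X)=a^*\mathrm{Pic}^0(A)$, writing $L_\alpha=a^*P_\alpha$ with $P_\alpha\in\widehat A\cong\Jac(X)$, the projection formula gives $H^0(X,mK_X+L_\alpha)\cong H^0(A,\mathcal F\otimes P_\alpha)$ functorially in $\alpha$, while $H^1(\mathcal O_X)\cong H^1(\mathcal O_A)=T_0\widehat A$ by the universal property of the Albanese. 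Hence the whole extension problem for $s\in H^0(X,mK_X)$ along the disc $z\mapsto L_{zv}$ coincides with the extension problem for $s\in H^0(A,\mathcal F)$ along $z\mapsto P_{zv}$, the first-order obstruction being $v\cup s\in H^1(A,\mathcal F)$, and $Z_{k,m}=V^0_k(\mathcal F):=\{\alpha\in\widehat A:h^0(A,\mathcal F\otimes P_\alpha)\ge k\}$. It thus suffices to prove: if $s\in H^0(A,\mathcal F)$ satisfies $v\cup s=0$ then $s$ deforms to all orders along $v$, and $V^0_k(\mathcal F)$ is a finite union of translates of subtori of $\widehat A$.

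By the decomposition theorem of \cite{HPS18} there is an isomorphism $\mathcal F\cong\bigoplus_j\,(p_j^*\mathcal F_j)\otimes Q_j$, where the $p_j\colon A\twoheadrightarrow A_j$ are surjective homomorphisms of abelian varieties, the $\mathcal F_j$ are $M$-regular sheaves on $A_j$, and the $Q_j\in\widehat A$ are torsion. Write $s=\sum_j s_j$ accordingly. A summand carries a nonzero section only if $Q_j\in p_j^*\widehat{A_j}$; writing then $Q_j=p_j^*Q_j'$ we have $(p_j^*\mathcal F_j)\otimes Q_j=p_j^*(\mathcal F_j\otimes Q_j')$, so $s_j\in H^0\bigl(A,p_j^*(\mathcal F_j\otimes Q_j')\bigr)\cong H^0(A_j,\mathcal F_j\otimes Q_j')$, say $s_j=p_j^*s_j'$. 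Passing to an isogenous product $A_j\times B_j$ (Poincar\'e reducibility), the K\"unneth formula shows that $H^1\bigl(A,p_j^*(\mathcal F_j\otimes Q_j')\bigr)$ has $H^0(A_j,\mathcal F_j\otimes Q_j')\otimes H^1(\mathcal O_{B_j})$ as a direct summand, and the image of $v\cup s_j$ there is $s_j'\otimes\bar v_j$, where $\bar v_j$ is the image of $v$ in $H^1(\mathcal O_{B_j})$. Since $s_j'\neq0$, the vanishing $v\cup s=0$ forces $\bar v_j=0$ for every $j$ with $s_j\neq0$; hence $v=p_j^*v_j$ for some $v_j\in H^1(\mathcal O_{A_j})$, and the remaining part of the obstruction becomes $v_j\cup s_j'=0$ in $H^1(A_j,\mathcal F_j\otimes Q_j')$. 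As both the deformation $P_{zv}$ and the $j$-th summand now descend along $p_j$, the order-by-order extension problem for $s_j$ is the pullback along $p_j$ of the analogous problem on $A_j$ for the $M$-regular sheaf $\mathcal F_j$.

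We are thereby reduced to the base case of an $M$-regular sheaf $\mathcal G$ on an abelian variety $B$: a section $\sigma\in H^0(B,\mathcal G\otimes N)$ with $w\cup\sigma=0$ (for $w\in H^1(\mathcal O_B)$) should deform to all orders along $w$, and the loci $V^0_k(\mathcal G)$ should be finite unions of torsion translates of subtori of $\widehat B$ — this is the structural input on $M$-regular sheaves furnished by \cite{HPS18}. Granting it, pulling back along the $p_j$ and summing over $j$ yields the desired formal deformation of $s$, hence of the original pluricanonical section on $X$; and the ensuing identity $V^0_k(\mathcal F)=\bigcup_{\sum_j k_j=k}\bigcap_j\bigl(-[Q_j]+p_j^*V^0_{k_j}(\mathcal F_j)\bigr)$, together with the standard fact that an intersection of torsion translates of subtori of an abelian variety is again a finite union of torsion translates of subtori, exhibits $Z_{k,m}$ as a finite union of (torsion) translates of subtori of $\Jac(X)$ — in particular a union of subtori. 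I expect the main difficulty to be the $M$-regular base case, together with the careful matching of cup products and torsion twists $Q_j$ across the summands; by contrast the Albanese reduction and the Leray--K\"unneth bookkeeping are routine, and the deep geometric content — the positivity of $a_*\omega_X^{\otimes m}$ — is entirely absorbed into the cited decomposition theorem.
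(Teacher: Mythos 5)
Your approach is genuinely different from the paper's. The paper reduces to the case $\kappa(X)=0$ via the Iitaka fibration $f\colon X\to Y$: restricting $s$ and $v$ to a general fiber $F$ of Kodaira dimension zero, it shows (using that $a_*(\omega_X^m)=U\otimes P$ is an indecomposable unipotent bundle twisted by torsion, so that by \cite{CP17} any surjection $U\to P$ splits and $U\cong P$) that $s|_F\wedge v|_F\neq 0$ unless $v|_F=0$, forcing $v\in H^1(\O_Y)$; then \cite[Theorem 11.2]{HPS18}, which gives \emph{constancy} of $h^0(mK_X\otimes P^\vee\otimes Q)$ for $Q\in\mathrm{Pic}^0(Y)$, finishes the argument. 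You instead feed $a_*\omega_X^{\otimes m}$ into the decomposition theorem $\mathcal F\cong\bigoplus_j (p_j^*\mathcal F_j)\otimes Q_j$ with $\mathcal F_j$ $M$-regular, and try to isolate the obstruction componentwise by K\"unneth. The Albanese/projection-formula reduction, the identification of summands supporting sections with those for which $Q_j\in p_j^*\widehat{A_j}$, and the K\"unneth bookkeeping (using that cohomology pulls back injectively along isogenies in characteristic zero) are all fine.

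The gap is in the step you flag yourself: the $M$-regular base case. You need, for an indecomposable $M$-regular sheaf $\mathcal G$ on an abelian variety $B$, \emph{both} that a nonzero $\sigma\in H^0(\mathcal G\otimes N)$ with $w\cup\sigma=0$ deforms to all orders along $w$, \emph{and} that $V^0_k(\mathcal G)$ is a finite union of torsion translates of subtori of $\widehat B$. Neither statement is in \cite{HPS18} in the form you invoke it, and neither is a formal consequence of $M$-regularity: $M$-regularity controls codimensions of the support loci $V^i$, not the cup product $H^0\otimes H^1\to H^1$ on the nose, and $V^0_k$ for $k>\chi(\mathcal G)$ sits inside $V^1(\mathcal G)$, which has codimension $\ge 2$ but has no a priori reason to be a union of subtori. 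In effect the decomposition theorem factors out exactly the ``torus-direction'' part of the structure of $\mathcal F$ into the $p_j$ and $Q_j$, leaving residual pieces $\mathcal F_j$ about which the subtorus statement you need is precisely what still has to be proved --- so your reduction has not actually made the problem easier, and without a proof of the base case (or a precise citation covering it) the argument is incomplete. The paper avoids this by never leaving the world of pluricanonical direct images: the Iitaka fibration converts the first-order condition into $v\in H^1(\O_Y)$, after which the strong \emph{global} constancy statement \cite[Theorem 11.2]{HPS18} (which is specific to $a_*\omega_X^{\otimes m}$, not to general $M$-regular sheaves) delivers the extension to all orders with no residual deformation-theoretic input.
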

\begin{proof}
  Suppose first that $\kappa (X)=0$. Let $a:X\to A$ be the Albanese morphism, and identify $H^1(\mathcal O _X)=H^1(\mathcal O _A)$. In this case $V^0(mK_X):=Z_m\ne  \emptyset$ consists of isolated points for some $m>0$ and therefore $a_* (\omega _X^m)$ is a direct sum of torsion translates of unipotent vector bundles $a_* (\omega _X^m)=\oplus  U_i\otimes P_i$ (see \cite[Proposition 7.5]{HPS18}). Since $\kappa (X)=0$ and the $P_i$ are torsion, then $a_* (\omega _X^m)=  U\otimes P$ is indecomposable (see \cite[Corollary 4.3]{HPS18}). 
  By \cite{CP17} (see also \cite[Corollary 27.2]{HPS18}), any surjective homomorphism $U\to  P$ where $P\in {\rm Pic}^0(A)$ is split and hence $U\cong P$. The claim is now immediate because if $s\in H^0(mK_X\otimes P^\vee)$, then $a_*s=1\in H^0(\mathcal O_A)=H^0(a_*(mK_X\otimes P^\vee))$. But then $a_*(s\wedge v)=v\in H^1(\mathcal O _A)$ does not vanish.

  In the general case, consider the Iitaka fibration $f:X\to Y$ with general fiber $F$ of Kodaira dimension 0. We denote by $\phi:A_X\to A_Y$ the corresponding surjective map between the Albanese varieties of $X$ and $Y$. By \cite[Lemma 11.1]{HPS18}, and the kernel of ${\rm Pic}^0(X)\to {\rm Pic}^0(F)$ is a finite union of torsion translates of
  ${\rm Pic}^0(Y)$. 
  Suppose that $0\ne s\in H^0(mK_X\otimes P^\vee)$ and $s\wedge v=0$ for some $v\in H^1(\mathcal O _A)$, then $s|_F\wedge v|_F=0$. Since $s|_F\in H^0(mK_F\otimes P|_F^\vee)$, it follows that $v|_F=0$ and hence that $v\in H^1(\mathcal O _Y)$.
  However, by \cite[Theorem 11.2]{HPS18}, we know that $h^0(mK_X\otimes P^\vee\otimes Q)$ is constant for $Q\in {\rm Pic}^0(Y)$ and hence $s$ deforms to arbitrary order in the direction of $v$.
\end{proof}
\begin{remark}
    We remark that Theorem \ref{t-42} actually strengthens Theorem \ref{jumppluri} in 2 important ways. First of all, it's proof shows that all the components of $Z_m$ are torsion translates of subtori of ${\rm Jac}(X)$ and using similar arguments one can show that the same holds for $Z_{k,m}$. Secondly, the proof implies that if $S$ is the symmetric algebra generated by $H^1(\mathcal O _X)^\vee $ then locally around any point $P\in {\rm Jac}(X)$ the deformations of $H^0(mK_X+P)$ are formally determined by the derivative complex \[0\to S\otimes _{\mathbb C}H^0(mK_X+P)\to S\otimes _{\mathbb C}H^1(mK_X+P)\to \ldots \to S\otimes _{\mathbb C}H^n(mK_X)\to 0.\]
    Note that this is not a consequence of the second proof we give below (see Lemma \ref{L2}), because in that proof we make use of the metric $h_{m,L}$ and it is a priori not clear that if $s\in H^0(mK_X)$ deforms to first order then so does the corresponding section in $s'\in H^0(mK_X\otimes \mathcal I(h_{m,L}))$. For example, if $Z_m=\{\mathcal O _X\}$ and $s=\sigma ^m$ where $\sigma \in H^0(K_X)$, then $H^0(mK_X\otimes \mathcal I(h_{m,L}))=H^0(K_X)$ so that $\sigma '=\sigma$. In this case it is conceivable that $[\sigma ^m\wedge v]=0$ whilst $[\sigma \wedge v]\ne 0$. Theorem \ref{t-42} excludes these cases. 
\end{remark}
    
We now provide another proof of \Cref{jumppluri}, using the idea of the extension argument developed in the previous sections. 
To begin with, we write
\[
mK_X + L = K_X + (m-1)K_X + L,
\]
and our first task will be to construct a “natural” metric on the bundle
\begin{equation}
	L_m := (m-1)K_X + L.
\end{equation}
We then proceed as in the case $m=1$, that is, by showing that the system of equations of type \eqref{simp46} admits a solution.

	\medskip
	
	\noindent Let $\mathfrak I\subset \O_X$ be the ideal generated by the set of global sections of the 
	bundles $m K_X + L_\tau$, for all $\tau\in \Jac(X)$. In other words, the fiber of $\mathfrak I$ at an arbitrary point $x\in X$ is given by
	\begin{equation}
		\mathfrak I_x:= \{f\in \O_{X, x}/ \exists \tau\in \Jac(X), \exists s\in H^0(X, m K_X + L_\tau), s= f \hbox{ near } x\}.
	\end{equation} 
	By the Noetherian property of $\O_X$ there exists a finite number of sections 
	\begin{equation}
		s_i\in H^0(X, m K_X + L_{\alpha_i})
	\end{equation}
	generating $\mathfrak I$ locally near every point of $X$, where $i=1,\dots, N$. 
	\smallskip
	
	\noindent Now let $h_i= e^{-\varphi_i}$ be the restriction of the metric $h_L$ of the bundle $L$ to the fiber $X\times \{\alpha_i\}$. The local weights 
	\begin{equation}
		\psi_i:= \frac{m-1}{m}\left(\log|f_i|^2-\varphi_i\right)
	\end{equation}
	define a metric for the bundle $(m-1)K_X$, for every $i=1,\dots, N$. We introduce the metric 
	$h_{m, L}$ on the bundle $(m-1)K_X+ L$ via the local weights
	\begin{equation}\label{simp47}
		\varphi_{m, L}:= \log\big(\sum_i e^{\psi_i}\big)+ \varphi_L
	\end{equation} 
	and we summarise its properties in the following statement.
	\medskip
	\begin{lemme}\label{metric} { The following hold.
			\begin{enumerate}
				\smallskip
				
				\item[\rm (1)] The metric $h_{m, L}$ has logarithmic singularities and moreover the singular part of the weights in \eqref{simp47} is independent of the variable $t\in \Jac(X)$ (cf. notations in section 1).
				\smallskip
				
				\item[\rm (2)] Let $\omega$ be a fixed metric on $X$. For any global holomorphic section $\sigma$ of $m K_X + L_\tau$ there exists a positive constant $C> 0$ so that we have 
				\begin{equation}\nonumber
					\sup_X\frac{|\sigma|^2_{h_{m,L}}}{dV_\omega}\leq C.
				\end{equation} 
				\smallskip
				
				\item[\rm (3)] The curvature of the bundle $\displaystyle \big(m K_X + L, h_{m, L}\big)$   
				is semi-positive when restricted to the fiber ${X\times \{\tau\}}$ for any $\tau\in \Jac(X)$.
			\end{enumerate}
		}
	\end{lemme}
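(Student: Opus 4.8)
The plan is to analyze the three assertions in turn, all of which follow from unwinding the definition \eqref{simp47} of $\varphi_{m,L}$ together with the basic properties of the metric $h_L$ recorded in Section~1.

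For item (1), I would first recall that by \eqref{simp6} the local weight $\varphi_L$ on $U_p$ is $-2\Re\big(\sum_j t^j\ol f_{jp}\big)$, which is \emph{smooth} on each $U_p$, so $\varphi_L$ contributes no singularities and its dependence on $t$ is real-analytic with no singular part. Hence the singular locus of $h_{m,L}$ is governed entirely by the terms $\psi_i=\frac{m-1}{m}(\log|f_i|^2-\varphi_i)$, where $f_i$ is a local holomorphic function representing $s_i$ and $\varphi_i$ is the (smooth) restriction of $\varphi_L$ to $X\times\{\alpha_i\}$. Thus $\psi_i=\frac{m-1}{m}\log|f_i|^2+(\text{smooth})$, and since $\log\big(\sum_i e^{\psi_i}\big)=\max_i\psi_i+O(1)$ in the sense that it differs from $\frac{m-1}{m}\log\big(\sum_i|f_i|^2\big)$ by a bounded smooth term, the weight $\varphi_{m,L}$ has logarithmic singularities along the common zero scheme $\mathbf V(\mathfrak I)$. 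Crucially the singular part $\frac{m-1}{m}\log\big(\sum_i|f_i|^2\big)$ depends only on the $f_i$ and not on the base variable $t\in\Jac(X)$, which is precisely the independence claimed. This is the only point that needs a small argument, since one must check that the identification of $s_i$ with a local function $f_i$ can be made $t$-independent; this is clear because the singular part of $\varphi_L$ itself vanishes.

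For item (2), let $\sigma\in H^0(X, mK_X+L_\tau)$. Near a point $x$, write $\sigma=g\cdot(dz)^{\otimes m}$ in a local trivialization. The key is that $\mathfrak I$ is generated near $x$ by the $f_i$ (after clearing the $(dz)^{\otimes m}$ factors and the bundle trivializations of $L$), so that $g=\sum_i a_i f_i$ for bounded holomorphic $a_i$; therefore $|g|^2\le C\sum_i|f_i|^2$ on a neighborhood. Since $|\sigma|^2_{h_{m,L}}$ is comparable to $|g|^2 e^{-\varphi_{m,L}}$, and $e^{-\varphi_{m,L}}$ is comparable to $\big(\sum_i|f_i|^2\big)^{-\frac{m-1}{m}}$ times a smooth $t$-dependent factor, we obtain $|\sigma|^2_{h_{m,L}}\le C\big(\sum_i|f_i|^2\big)^{1-\frac{m-1}{m}}=C\big(\sum_i|f_i|^2\big)^{1/m}$, which is bounded. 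By compactness of $X$ the constant can be chosen uniformly, giving the stated sup bound against $dV_\omega$.

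For item (3), I would restrict everything to a fixed fiber $X\times\{\tau\}$ and compute $\sqrt{-1}\,\partial\bar\partial\varphi_{m,L}|_{X\times\{\tau\}}$. On the fiber, $\varphi_L$ restricts to the smooth weight $\varphi_\tau$ with $\sqrt{-1}\,\partial\bar\partial\varphi_\tau=0$ (the restriction of a flat bundle), so $\sqrt{-1}\,\partial\bar\partial\varphi_{m,L}|_{X\times\{\tau\}}=\sqrt{-1}\,\partial\bar\partial\log\big(\sum_i e^{\psi_i}\big)$. Now each $\psi_i$ restricted to the fiber is $\frac{m-1}{m}\big(\log|f_i|^2-\varphi_{\alpha_i}\big)$ up to the difference coming from $\varphi_\tau-\varphi_{\alpha_i}$, which is smooth; writing this out, $\sqrt{-1}\,\partial\bar\partial\psi_i=\frac{m-1}{m}\big(2\pi[\operatorname{Div}(f_i)]-\sqrt{-1}\,\partial\bar\partial\varphi_{\alpha_i}\big)+(\text{smooth, curvature-zero terms})$, and since $\sqrt{-1}\,\partial\bar\partial\varphi_{\alpha_i}=0$ on the fiber this is $\ge 0$ as a current. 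The standard fact that $\log$ of a sum of exponentials of psh (or here: locally-bounded-below with semipositive $dd^c$) functions is again psh then shows $\sqrt{-1}\,\partial\bar\partial\log\big(\sum_i e^{\psi_i}\big)\ge 0$ on the fiber. The main obstacle, which is really the only subtle point, is bookkeeping the difference between $\varphi_\tau$ and the $\varphi_{\alpha_i}$ appearing in the individual weights $\psi_i$: one must verify that this discrepancy is a \emph{global} smooth function on the fiber (not merely local), so that it does not destroy the psh-ness; this follows because the $c_{jps}$-cocycle from \eqref{simp5} is locally constant, so $\varphi_\tau-\varphi_{\alpha_i}$ differs from a global smooth function by the $\log$-modulus of a flat (hence locally constant modulus up to the cocycle) section, and the semipositivity is unaffected. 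Once these identifications are in place, all three statements are immediate. $\qed$
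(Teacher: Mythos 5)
The paper does not actually provide a proof of this lemma: the authors state that ``all these properties (1)--(3) are simple consequence of the definitions and we will not provide any explanation,'' so there is nothing in the text to compare your argument against step by step. Your proposal is a correct expansion of what the authors leave to the reader, and the underlying structure (singular part of the weight lives in $\log(\sum e^{\psi_i})$ only; generation of $\mathfrak I$ by the $f_i$ gives the $L^\infty$ bound; semipositivity of $\log\sum e^{\psi_i}$ from psh-ness of the $\psi_i$) is exactly the intended chain of reasoning.

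One place where you make the argument harder than it is: in item (3) you spend a paragraph worrying about the discrepancy $\varphi_\tau - \varphi_{\alpha_i}$ and whether it is a global smooth function. But the $\psi_i = \tfrac{m-1}{m}(\log|f_i|^2 - \varphi_i)$ are fixed functions on $X$ --- the $f_i$ are chosen once and for all as sections of $mK_X + L_{\alpha_i}$, and $\varphi_i$ is the smooth weight of $h_L|_{X\times\{\alpha_i\}}$, so neither depends on $\tau$. After restricting $\varphi_{m,L} = \log(\sum e^{\psi_i}) + \varphi_L$ to the fiber $X\times\{\tau\}$, the only $\tau$-dependent piece is $\varphi_\tau$, which has $\sqrt{-1}\,\partial\bar\partial\varphi_\tau = 0$; the $\psi_i$-part is the same for all fibers. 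Since each $\psi_i$ is psh on $X$ (indeed $\sqrt{-1}\,\partial\bar\partial\psi_i = \tfrac{m-1}{m}\cdot 2\pi[\operatorname{Div}(f_i)]\ge 0$), the log-sum-exp of the $\psi_i$ is psh and item (3) follows directly, with no extra bookkeeping needed.

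A second, smaller caveat on item (1): the statement that $\log(\sum_i e^{\psi_i})$ differs from $\tfrac{m-1}{m}\log(\sum_i|f_i|^2)$ by a ``bounded smooth'' function deserves a word of care. The difference
\[
\log \frac{\sum_i |f_i|^{2(m-1)/m}\, e^{-\frac{m-1}{m}\varphi_i}}{\bigl(\sum_i |f_i|^2\bigr)^{(m-1)/m}}
\]
is certainly locally bounded (each factor is comparable to the other up to constants), but it need not extend smoothly across the common zero set of the $f_i$. What is actually needed --- and what you correctly identify as the content of item (1) --- is only that the \emph{singular} part, i.e.\ the part contributing unbounded terms or currents with mass on $\mathbf V(\mathfrak I)$, is independent of $t\in\Jac(X)$. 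That is exactly the observation $\partial_t\psi_j = -\tfrac{m-1}{m}\partial_t\varphi_j$ used later in the proof of Lemma \ref{L2}, and your argument establishes it; I would just avoid asserting smoothness of the remainder, since ``bounded'' is what the application requires.
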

	
	\noindent All these properties (1)-(3) are simple consequence of the definitions and we will not provide any explanation.
	\medskip

	\noindent We need the following lemma.

\begin{lemme}\label{L2}
In the setting of \Cref{jumppluri}, consider a germ of a holomorphic disk 
$\gamma: (\mathbb{D}, 0) \to Z_{k,m}$ such that 
\begin{equation}\label{simp34}
	t \longmapsto h^0\big(X, mK_X + L_{\gamma(t)}\big)
\end{equation}
is constant for all $t \in \mathbb{C}$ with $|t|$ sufficiently small. 
We denote by $\alpha_0$ and $\dot{\alpha}_0$ the elements $\gamma(0)$ and $\gamma'(0)$, respectively. 
Let $u \in H^0(X, mK_X + L_{\alpha_0})$. Then the equation
\begin{equation}\label{simp36}
	\dbar w = \dot{\alpha}_0 \wedge u
\end{equation}
admits a solution $w$ that is $L^2$ with respect to the restriction of the 
metric $\displaystyle h_{m,L}$ to the fiber $X \times \{\alpha_0\}$.
\end{lemme}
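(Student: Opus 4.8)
The plan is to use the constancy hypothesis to produce a holomorphic‑in‑$t$ family of sections extending $u$, read off its first‑order term $\widetilde u_1$, which solves \eqref{simp36} almost by construction, and then verify that $\widetilde u_1$ is $L^2$ for the singular metric $h_{m,L}$ — this last verification being the only genuine difficulty.

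First I would set $p\colon\cX=X\times\mathbb D\to\mathbb D$, let $q\colon\cX\to X$ be the first projection, and let $\mathcal M:=q^*(mK_X)\otimes(\mathrm{id}_X\times\gamma)^*L$, so that $\mathcal M|_{X\times\{t\}}=mK_X+L_{\gamma(t)}$. Since, after shrinking $\mathbb D$, the function $t\mapsto h^0(X,mK_X+L_{\gamma(t)})=h^0(\cX_t,\mathcal M|_{\cX_t})$ is constant, Grauert's base‑change theorem shows that $p_*\mathcal M$ is locally free and commutes with base change; a holomorphic frame then provides $\widetilde{\mathcal U}\in H^0(\cX,\mathcal M)$ with $\widetilde{\mathcal U}|_{\cX_0}=u$. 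Next I would use the global $C^\infty$ unitary frame $e$ of $(\mathrm{id}_X\times\gamma)^*L$ associated with the metric \eqref{simp6} (given locally by $e_p=s_p\exp(-\sum_j\gamma^j(t)\bar f_{jp})$, where $\gamma(t)=\sum_j\gamma^j(t)\alpha_j$ with $\gamma^j$ holomorphic); in this frame the $\dbar$‑connection form of $(\mathrm{id}_X\times\gamma)^*L$ is $-\sum_j\gamma^j(t)\alpha_j$, which has no $d\bar t$‑component, so writing $\widetilde{\mathcal U}=\widetilde u(x,t)\,dz^{\otimes m}\otimes e$ locally, the $d\bar t$‑part of $\dbar_{\mathcal M}\widetilde{\mathcal U}=0$ forces $\partial_{\bar t}\widetilde u=0$, i.e. $\widetilde u$ is holomorphic in $t$. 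Expanding $\widetilde u=\sum_{i\ge0}t^i\widetilde u_i$ (so each $\widetilde u_i$ is smooth, $\widetilde u_0=u$) and matching the coefficients of $t$ in $\dbar\widetilde u=\big(\sum_j\gamma^j(t)\alpha_j\big)\wedge\widetilde u$ yields
\[
\dbar\,\widetilde u_1=\dot\alpha_0\wedge u,\qquad \dot\alpha_0=\textstyle\sum_j(\gamma^j)'(0)\,\alpha_j,
\]
the $\dbar$ here being the operator attached to $mK_X+L_{\alpha_0}$. Thus $w:=\widetilde u_1$ already solves \eqref{simp36} in the smooth category.

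The hard part is that $\widetilde u_1$ need not a priori be $L^2$ for $h_{m,L}|_{X\times\{\alpha_0\}}$, since the weight $\varphi_{m,L}$ of \eqref{simp47} blows up along the base locus $Z:=\bigcap_i\operatorname{Div}(s_i)$ — exactly the subtlety flagged in the Remark following \Cref{t-42}. Away from a neighbourhood of $Z$ the weight $\varphi_{m,L}$ is bounded, so there is nothing to check there. Near a point $x\in Z$, the germ of $\widetilde{\mathcal U}|_{\cX_t}$ lies in $\mathfrak I_x\cdot\mathcal M_x$ for every $t$ — this is the very definition of $\mathfrak I$ — and depends holomorphically on $t$; since $\mathfrak I_x$ is a finitely generated $\O_{X,x}$‑module, hence closed, differentiating at $t=0$ shows that the germ of $\widetilde u_1$ (in a local holomorphic frame of $mK_X+L_{\alpha_0}$) lies in $\mathfrak I_x=(f_1,\dots,f_N)_x$. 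Consequently $|\widetilde u_1|^2\lesssim\sum_i|f_i|^2$ near $x$, whereas \eqref{simp47} gives $e^{-\varphi_{m,L}}\asymp\big(\sum_i|f_i|^{2(m-1)/m}\big)^{-1}$; combining these with $|f_i|\le1$ near $Z$ and the elementary bound $\sum_i|f_i|^2\le\big(\max_i|f_i|^{2/m}\big)\sum_i|f_i|^{2(m-1)/m}$ we obtain $|\widetilde u_1|^2e^{-\varphi_{m,L}}\lesssim1$ near $Z$. Hence $\widetilde u_1$ is bounded for $h_{m,L}$ and, $X$ being compact, lies in $L^2$; so $w=\widetilde u_1$ is the desired solution. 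The mechanism is that $\widetilde u_1$ inherits membership in the base‑ideal $\mathfrak I$ from the whole family, and $\mathfrak I$ is — essentially by its definition — adapted to the singular locus of $h_{m,L}$.
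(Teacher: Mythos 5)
Your proposal is correct and opens with the same move as the paper (Grauert's base‑change to obtain a holomorphic family $U_\gamma$ extending $u$, whose first‑order Taylor coefficient in $t$ solves \eqref{simp36}), but it verifies the crucial $L^2$‑finiteness by a genuinely different mechanism. The paper's proof applies Cauchy estimates on a $t$‑disk to control $\partial_t v(\cdot,0)$ in $L^2$, and then computes $\Lie_\Xi(U_\gamma)/dt$ via the Chern connection of $h_{m,L}$, absorbing the connection term through the identity $\partial_t\psi_j=-\tfrac{m-1}{m}\,\partial_t\varphi_j$; this is a direct analytic estimate. You instead observe that each restriction $U_\gamma|_{\cX_t}$ has germs lying in the base ideal $\mathfrak I$ by the very definition of $\mathfrak I$, and invoke closedness of finitely generated submodules of $\O_{X,x}$ to conclude that the first Taylor coefficient also lies in $\mathfrak I_x$; since $e^{-\varphi_{m,L}}$ is built from exactly the generators of $\mathfrak I$, this yields the \emph{pointwise} bound $|w|^2e^{-\varphi_{m,L}}\lesssim 1$ near the base locus, which is sharper than $L^2$. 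Your route is the more conceptual of the two — it explains \emph{why} the estimate holds (the weight and the Taylor coefficient are both adapted to $\mathfrak I$) and makes transparent exactly where the constancy hypothesis enters — at the cost of using the closedness theorem for ideals of $\O_{X,x}$ rather than an elementary Cauchy inequality. One small imprecision to tidy: your $\widetilde u_1$ is computed in a $C^\infty$ frame and so is not itself holomorphic; what actually lands in $\mathfrak I_x$ is $v_1=\partial_t v(\cdot,0)$ in a \emph{holomorphic} frame of $\mathcal M$, and $\widetilde u_1$ differs from $v_1$ by a smooth (nowhere‑vanishing) frame‑change factor applied to $v_0$ and $v_1$, both of which lie in $\mathfrak I_x$; this still gives $|\widetilde u_1|^2\lesssim\sum_i|f_i|^2$, so the conclusion is unaffected, but it is worth stating explicitly.
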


	\begin{proof}
    We consider the familly $X\times \mathbb D$ together with the line bundle $L \to X\times \mathbb D$ induced by $\gamma$. 
		By the semi-continuity theorem of Grauert there exists a holomorphic section $U_\gamma$ of $\displaystyle m K_{X\times \mathbb D}+ L$ extending $u$. In particular, $U_\gamma$ varies smoothly with respect to the base variable 
		$t$ and it follows that we have 
		\begin{equation}\label{simp35}
			\int_{X\times \mathbb D}|U_\gamma|^2e^{-\varphi_{m, L}}< \infty
		\end{equation}
		by the property $(2)$ of Lemma \ref{metric}

		%
		
		\noindent Let $v$ be the holomorphic function representing $U_\gamma$ on some open subset 
		$\Omega\times \DD$. 
		Cauchy inequalities show that we have
		\begin{equation}\label{simp43}
			|\partial_tv(x, 0)|^2e^{-\frac{1}{2}\varphi_{m, L}(z)}\leq C\int_{\DD}|v(x, t)|^2e^{-\varphi_{m, L}}d\lambda(t)
		\end{equation}
		for some constant $C$, and by integrating this equality with respect to $z$ it follows that
		\begin{equation}\label{simp44}
			\int_{\Omega}|\partial_tv(x, 0)|^2e^{-\varphi_{m, L}}d\lambda(x)< \infty
		\end{equation}
		since the RHS of \eqref{simp43} is bounded by \eqref{simp35}.

		\medskip
		
		
		
		\noindent In order to obtain the Lie derivative operator $\Lie_\Xi$ we are using the Chern connection 
		$D'$ associated to the metric $h_{m, L}$ on the line bundle $(m-1)K_{\cX_\gamma}+ L$, whose local expression is given by the following
		\begin{equation}\label{simp42}
			\partial - \sum_j \frac{e^{\psi_j}}{\sum_i e^{\psi_i}}\partial \psi_j- \partial\varphi_L. 
		\end{equation}
		Then we have the equality
		\begin{equation}\label{simp41}
			\frac{\Lie_\Xi(U_\gamma)}{dt}= \partial_tv(x, 0)- v(x,0)\sum_j \frac{e^{\psi_j}}{\sum_i e^{\psi_i}}\partial_t \psi_j -v(x, 0)\partial_t\varphi_L
		\end{equation}
		locally on $\Omega$. 
		
		\noindent By equality \eqref{simp47} we see that $\displaystyle \partial_t \psi_j= -\frac{m-1}{m}\partial_t \varphi_j$ so we finally infer that we have
		\begin{equation}\label{simp37}
			\int_X\left|\frac{\Lie_\Xi(U_\gamma)}{dt}\right|^2e^{-\varphi_{m, L}}< \infty,
		\end{equation}
		which shows that Lemma \ref{L2} holds true. 
	\end{proof}
	\medskip
	
\noindent After these preparations, we next present our second proof of \Cref{jumppluri}.
\smallskip

\begin{proof}[Proof of \Cref{jumppluri}]
Let $\alpha_0 \in Z_{k,m}$ be a generic point. 
Consider a germ of a holomorphic disk $\gamma: (\mathbb{D}, 0) \to Z_{k,m}$ such that $\gamma(0) = \alpha_0$.
We denote by $\dot{\alpha}_0$ the derivative $\gamma'(0)$. 
Let $u \in H^0(X, mK_X + L_{\alpha_0})$.

\noindent We aim to solve the following system of equations:
\begin{equation}\label{addseqeq}
	\begin{cases}
		\dbar u_1 = \dot{\alpha}_0 \wedge u, \\
		\dbar u_2 = \dot{\alpha}_0 \wedge u_1, \\
		\vdots \\
		\dbar u_{i+1} = \dot{\alpha}_0 \wedge u_i, \\
		\vdots
	\end{cases}
\end{equation}
such that each $u_i$ is $L^2$ with respect to $h_{m,L}$ for every $i \in \mathbb{N}$.

Since $\alpha_0$ is generic in $Z_{k,m}$, we can apply Lemma~\ref{L2} to $\gamma$. 
Hence, the first equation of \eqref{addseqeq} admits an $L^2$ solution with respect to $h_{m,L}$.
Let $u_1$ be the solution of
\begin{equation}\label{simp51}
	\dbar u_1 = \dot{\alpha}_0 \wedge u,
\end{equation}
such that the $L^2$-norm of $u_1$ is minimal. 
Then $u_1$ is orthogonal to the harmonic part $\ker \Delta''$. 
By \cite[Thm.~2.2]{CP}, $u_1$ is $D'_{h_{m,L}}$-exact, and there exists a $(n-1,0)$-form $v$ with values in $(m-1)K_X + L_{\alpha_0}$ such that
\begin{equation}\label{simp52}
	u_1 = D'_{h_{m,L}} v, 
	\qquad 
	\int_X |v|_{\omega_0}^2 e^{-\varphi_{m,L}}\, dV_{\omega_0} < \infty.
\end{equation}
\smallskip

\noindent 
Now we solve the second equation. 
We claim that
\[
	\dot{\alpha}_0 \wedge u_1 \in \operatorname{Im}(\dbar).
\]
Indeed, the form $\dot{\alpha}_0 \wedge u_1$ is $L^2$ and $\dbar$-closed. 
It suffices to show that
\begin{equation}\label{simp53} 
	\int_X \langle \dot{\alpha}_0 \wedge D'_{h_{m,L}} v, \beta \rangle_{\omega_0} 
	e^{-\varphi_{m,L}}\, dV_{\omega_0} = 0
\end{equation}
for any $\Delta''$-harmonic $(n,1)$-form $\beta$ that is $L^2$. 
This is clear since $\beta$ automatically belongs to the domain of $(D'_{h_{m,L}})^\star$, and by the Bochner formula we have
\begin{equation}\label{simp53b} 
	(D'_{h_{m,L}})^\star \beta = 0,
\end{equation}
by the curvature property~(3) of Lemma~\ref{metric}. 

\noindent The form $\dot{\alpha}_0 \wedge v$ is $L^2$ by \eqref{simp52}, so we can write
\begin{equation}\label{simp54} 
	\int_X \langle \dot{\alpha}_0 \wedge D'_{h_{m,L}} v, \beta \rangle_{\omega_0} 
	e^{-\varphi_{m,L}}\, dV_{\omega_0}
	= - \int_X \langle \dot{\alpha}_0 \wedge v, (D'_{h_{m,L}})^\star \beta \rangle_{\omega_0} 
	e^{-\varphi_{m,L}}\, dV_{\omega_0} = 0,
\end{equation}
where the last equality follows from \eqref{simp53b}. 
As a consequence, $\dot{\alpha}_0 \wedge u_1 \in \operatorname{Im}(\dbar)$, and we can find $u_2$ such that $u_2$ is $L^2$ with respect to $h_{m,L}$ and
\[
	\dbar u_2 = \dot{\alpha}_0 \wedge u_1.
\]
By repeating the same argument, we can solve the system \eqref{addseqeq} to any order.

Finally, using \cite{ML} or the convergence argument in the proof of \Cref{twistedversion1}, we know that the segment $\alpha_0 + z \cdot \dot {\alpha}_0$ lies in $Z_{k,m}$. 
This implies that $Z_{k,m}$ is a finite union of translates of subtori in $\Jac(X)$.
\end{proof}

 \begin{remark}
 Note that we have the twisted strong Hodge decomposition from \cite[Thm.~2.2]{CP}. 
Together with the above argument, we can easily obtain the following twisted version of \Cref{jumppluri}.

Let $X$ be a compact Kähler manifold, and let $F$ be a pseudo-effective line bundle on $X$. 
Let $h_F$ be a possibly singular Hermitian metric on $F$ such that $i\Theta_{h_F}(F) \geq 0$. 
Assume that $m \in \mathbb{N}$ satisfies $\mathcal{I}_X(h_F^{\frac{2}{m}}) = \mathcal{O}_X$. 
Then, for every $k\in \mathbb{N}$, the jumping locus
\begin{equation}
	Z_{k,m} := \{ \alpha \in \Jac(X) \mid h^0(X, mK_X + F + L_\alpha) \geq k \}
\end{equation}
is a union of subtori in $\Jac(X)$.

In particular, let $F = F_1 + \sum a_i E_i$, where $F_1$ is a semipositive $\mathbb{Q}$-line bundle and 
$\sum \frac{a_i}{k} [E_i]$ is a klt $\mathbb{Q}$-effective divisor. 
Then $Z_{k,m}$
is a finite union of translates of subtori in $\Jac(X)$.

\end{remark}%



\section{Further results involving infinitesimal extension of sections}

\smallskip

\noindent In this section, we consider the following data.
\begin{itemize}
	
	\item $p:\cX\to \DD$ is a non-singular, proper family of Calabi-Yau Kähler manifolds, in the sense that $\displaystyle K_{X}\equiv 0$ is trivial, where $X$ is the central fiber.
	
	\item $L\to \cX$ is a holomorphic line bundle on $\cX$, and $\displaystyle L_0:= L|_{X}$ be the restriction of $L$ to the central fiber $X$ of the family $p$.
	
	\item There exists some $m\in\mathbb N^\star$ and $s_0,\dots, s_N \in H^0 (X, m L|_X)$ a basis such that $\{s_i= 0\}_{i=0}^N$ has no common zeroes. 
\end{itemize}
\medskip

\noindent An important problem in this context is the following.

\begin{conjecture}\label{qCH}
	The function $\displaystyle t\to h^0(\cX_t, L|_{\cX_t})$ is constant.
\end{conjecture}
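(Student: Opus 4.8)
The plan is to reduce the statement to the kind of ``extension to order one implies extension to arbitrary order'' argument developed in the earlier sections, applied to the pluricanonical-type bundle $mL$ rather than to $mK_X$. First I would observe that since $K_X \equiv 0$ on the central fiber $X$ and $p$ is a smooth proper family of Calabi--Yau manifolds, after possibly shrinking $\DD$ one has $K_{\cX/\DD}$ trivial, so for each $t$ the restriction $L|_{\cX_t}$ differs from $mL|_{\cX_t}$ only by the (trivial) canonical twist, and one can use the basis $s_0, \dots, s_N \in H^0(X, mL|_X)$ to build a natural singular metric $h_{m,L}$ on $mL|_X$, exactly in the spirit of \eqref{simp47}: set $\varphi_{m,L} := \tfrac1m \log\big(\sum_i |s_i|^2_{h_L^{\otimes m}}\big)$, using any fixed smooth background metric $h_L$ on $L$. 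Because $\{s_i = 0\}$ has no common zeroes, this metric is \emph{smooth} and its curvature is semi-positive (it is the pullback of the Fubini--Study metric under the morphism $\Phi_{|mL|}$), so the analogue of the curvature property (3) of \Cref{metric} holds with no correction needed; moreover global sections of $L|_X$ are automatically $L^2$ against it. The Iitaka-type subtleties that appear in the pluricanonical setting do not arise here because global generation is assumed outright.

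Next I would run the deformation argument. Given a section $u \in H^0(X, L|_X) = H^0(X, K_X + L|_X)$ (using $K_X$ trivial), the goal is to solve, order by order in $z$, the system
\begin{equation}\label{prop-system}
\begin{cases}
\dbar u_1 + \dot{L} \cdot u = 0,\\
\dbar u_2 + \dot{L} \cdot u_1 = 0,\\
\ \vdots\\
\dbar u_{i+1} + \dot{L} \cdot u_i = 0,\\
\ \vdots
\end{cases}
\end{equation}
where $\dot{L} \in H^1(X, \O_X)$ (after using $h^{1,0}(X) = 0$ in the relevant hypothesis of \Cref{qCH}, or more generally the relevant Dolbeault class) encodes the infinitesimal deformation of the complex structure of $L$ along the family $p$; here I would first reduce from the abstract family $p$ to a linear path in $\Jac(X) = H^1(\O_X)$ as in the proof of \Cref{twistedversion1}, which is legitimate because deforming $L$ along $p$ and deforming its complex structure by the associated $\bar\partial$-perturbation $\dbar_L + t\dot L + O(t^2)$ have the same first-order data. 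The first equation of \eqref{prop-system} is exactly the ``extends to order one'' hypothesis — which in the setting of \Cref{qCH} should come for free from the assumed constancy of $t \mapsto h^0$, via Grauert semicontinuity giving a smooth family of sections whose $\bar\partial$-derivative supplies $u_1$, just as in \Cref{L2}. Then, mimicking the second-order step in the proof of \Cref{jumppluri}: choose $u_1$ of minimal $L^2$-norm (so $u_1 \perp \ker \Delta''$), invoke the twisted strong Hodge decomposition of \cite[Thm.~2.2]{CP} to write $u_1 = D'_{h_{m,L}} v$ with $v$ $L^2$; then $\dot L \wedge u_1$ is $L^2$, $\dbar$-closed, and pairing against any $\Delta''$-harmonic $(n,1)$-form $\beta$ and using $(D'_{h_{m,L}})^\star \beta = 0$ (Bochner, semi-positive curvature) gives $\int \langle \dot L \wedge D' v, \beta\rangle = -\int \langle \dot L \wedge v, (D'_{h_{m,L}})^\star \beta\rangle = 0$, so $\dot L \wedge u_1 \in \IM(\dbar)$ and $u_2$ exists. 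Induction, together with an $L^2$-norm estimate forcing $\|u_i\| \le C^i \|u\|$ after rescaling $\dot L$ (as in the proof of \Cref{twistedversion1}), produces a convergent series $U_z = \sum z^i u_i \in H^0(\cX_t, L|_{\cX_t})$ for $|z| \ll 1$; by upper semicontinuity of $h^0$ this forces $t \mapsto h^0(\cX_t, L|_{\cX_t})$ to be locally constant, hence constant on $\DD$.

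The main obstacle I anticipate is precisely the point flagged in the \emph{Remark} after \Cref{t-42}: the passage to the metric $h_{m,L}$ is not innocent. Even if $u \in H^0(X, L|_X)$ deforms to first order in the ``naive'' sense (i.e. $\dot L \wedge u = \dbar(\text{smooth})$), one must make sure that the \emph{same} is true for the class computed $L^2$-ly against $h_{m,L}$, and more importantly one has to identify, at each stage, the $L^2$ cohomology $H^\bullet_{(2)}(X, K_X + L|_X, h_{m,L})$ with the honest coherent cohomology $H^\bullet(X, L|_X)$ — this is what lets one conclude that the constructed $L^2$ section $U_z$ is an actual holomorphic section of $L|_{\cX_t}$. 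When $h_{m,L}$ is smooth (which is the case here, thanks to base-point-freeness) this identification is standard, but one must check the relevant $L^2$ estimates and the applicability of \cite[Thm.~2.2]{CP} in the twisted setting with a merely semi-positive, possibly degenerate, curvature; the degeneracy locus of $i\Theta_{h_{m,L}}$ is exactly where the Bochner argument needs care. A secondary technical point is verifying that the first-order obstruction really does vanish in the hypotheses of \Cref{qCH} — this uses $h^{1,0}(X) = 0$ to control $H^1(\O_X)$-valued obstructions, and in the weaker statement (the $f \in H^0(X, mL)$ with $\operatorname{Div}(f)$ smooth and extending to $\cX$, as in the referenced \Cref{ext0}) one instead derives order-one extendability directly from the extension of $f$, reducing to a computation on $\operatorname{Div}(f)$ and an application of the adjoint-type argument of \Cref{twistedversion1}.
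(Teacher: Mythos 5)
Your proposal has a fundamental circularity. The ``extends to order one'' step, which launches the inductive ladder, is obtained in your write-up from ``the assumed constancy of $t\mapsto h^0$, via Grauert semicontinuity.'' But constancy of $h^0$ is precisely the \emph{conclusion} of Conjecture~\ref{qCH}, not a hypothesis: \ref{qCH} is stated in the paper as an open conjecture, and the paper only verifies it under the additional assumption of Theorem~\ref{ext0} (that some non-zero section $s\in H^0(X,L_0)$ admits an extension across $\cX$). Without an independent source of first-order extendability your induction never starts; supplying that source is exactly the role of the extra hypothesis in Theorem~\ref{ext0}.

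There is a second, independent difficulty. The family $p:\cX\to\DD$ deforms the manifold $X$ itself, not a line bundle on a fixed $X$; the infinitesimal datum is the Kodaira--Spencer class in $H^1(X,T_X)$, and the $\dbar$-operator on a nearby fibre of $\cX$ is \emph{not} of the form $\dbar_L+z\dot L$ for a class $\dot L\in H^{0,1}(X)$, since the complex structure of the ambient manifold is moving. The reduction to a ``linear path in $\Jac(X)$,'' which is the engine of Sections 1--4, therefore does not apply here; and if one invokes $h^{1,0}(X)=0$ as you do, then $\Jac(X)$ is a point, so the proposed path is not merely illegitimate but vacuous. Accordingly, the paper's actual argument for Theorem~\ref{ext0} has a different flavour from what you sketch: it applies the $L^2$ extension criterion of Theorem~\ref{007} (an Ohsawa--Takegoshi-type statement from \cite{CP}), verifying the $L^2$ hypothesis \eqref{intr6} by the explicit integrability estimate of Lemma~\ref{int}, using the singular metric built from the section $s$ that extends by assumption. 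The metric $h_{m,L}$ of \eqref{simp47} and the twisted-$\dbar$/Hodge machinery of the Jacobian sections play no role in that argument, and the concern you raise at the end about the compatibility of first-order data across metrics is indeed a symptom that the approach does not transfer.
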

\medskip

Conjecture \ref{qCH} holds when $X$ is hyperkähler by a result of Matsushita, \cite{Mat16} (see also \cite{Voi92}). The case $\kappa (L |_X) = \dim X -1$ is due to Kollár \cite{Kol15}.

\medskip

\noindent In this context, our result is as follows.

\begin{thm}\label{ext0} We assume that there exists a non-identically zero section $s$ of $L_0$ that admits an extension to $\cX$. Then every holomorphic section 
	of $\displaystyle L_0$ above extends to arbitrary order.
\end{thm}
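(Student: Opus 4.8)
The plan is to mimic the deformation strategy developed in the previous sections, but now for an arbitrary section $u \in H^0(X, L_0)$, using the fixed section $s$ (which extends to $\cX$ by hypothesis) to build a good metric on $L_0$ with semipositive curvature along the central fiber. First I would set up the family: choosing a trivialization $K_{\cX/\DD}$ (possible after shrinking $\DD$ since $K_X \equiv 0$), I identify $L_{\cX_t} \cong L_{\cX_t} + K_{\cX_t}$, so that deformation-of-sections questions become questions about $(n,0)$-forms twisted by $L$, exactly as in \Cref{fs} and \Cref{twistedversion1}. The infinitesimal direction of the family induces, via the Kodaira–Spencer class, a $\dbar$-closed $(0,1)$-form $\dot\alpha_0$ with values in $\operatorname{End}$-type data (here simply $T_X$, paired into the $L$-twisted forms), and the Taylor-expansion heuristic reduces the problem to solving a system
\begin{equation}\label{ext0eq}
\begin{cases}
\dbar u_1 = \dot\alpha_0 \wedge u,\\
\dbar u_2 = \dot\alpha_0 \wedge u_1,\\
\ \vdots\\
\dbar u_{i+1} = \dot\alpha_0 \wedge u_i,\\
\ \vdots
\end{cases}
\end{equation}
with each $u_i$ in $L^2$ with respect to a suitable metric, the first equation being solvable precisely because $s$ (hence $u$, by a genericity/ideal-sheaf argument as in \Cref{jumppluri}) extends to first order.

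Next I would construct the metric. Using the extension $\tilde s \in H^0(\cX, mL)$ of $s$, the local weights $\frac{1}{m}\log|\tilde s|^2$ define a singular Hermitian metric $h_L$ on $L$ over the total space, whose curvature is a current of integration along the (relative) zero divisor of $\tilde s$ — in particular semipositive, and with restriction to each fiber $\cX_t$ a closed positive current. The key point, analogous to property (3) of \Cref{metric}, is that the curvature of $(L_0, h_L|_X)$ is semipositive, so that by the twisted strong Hodge decomposition of \cite[Thm.~2.2]{CP} we get: $L^2$-Hodge decomposition for $L_0$-valued $(n,p)$-forms on $X \setminus \operatorname{Div}(s)$, the equality $(D'_{h_L})^\star\beta = 0$ for any $\Delta''$-harmonic $(n,1)$-form $\beta$ via the Bochner formula, and the $L^2$ version of the $\partial\dbar$-lemma. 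One also needs an analogue of \Cref{L2}: the extension $U_\gamma$ of $u$ to the total space is automatically $L^2$ for $h_L$ (sections of $mL$ are bounded by $h_L$ essentially by construction), and Cauchy estimates in the base direction show $\Lie_\Xi(U_\gamma)/dt$ is $L^2$ on the central fiber, which is exactly the statement that $\dbar u_1 = \dot\alpha_0\wedge u$ has an $L^2$ solution.

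With these tools in place, the induction runs as in the second proof of \Cref{jumppluri}: pick $u_1$ the minimal-$L^2$-norm solution of $\dbar u_1 = \dot\alpha_0\wedge u$, so $u_1 \perp \ker\Delta''$; by \cite[Thm.~2.2]{CP} write $u_1 = D'_{h_L} v$ with $v$ in $L^2$; then $\dot\alpha_0 \wedge u_1$ is $\dbar$-closed and $L^2$, and testing against a harmonic $\beta$ gives
\[
\int_X \langle \dot\alpha_0 \wedge D'_{h_L}v, \beta\rangle \, e^{-\varphi_L}\,dV = -\int_X \langle \dot\alpha_0 \wedge v, (D'_{h_L})^\star\beta\rangle\, e^{-\varphi_L}\,dV = 0
\]
using $(D'_{h_L})^\star\beta = 0$; hence $\dot\alpha_0 \wedge u_1 \in \operatorname{Im}(\dbar)$ and one solves for $u_2$, and so on, with $L^2$-bounds $\|u_i\| \le C^i\|u\|$ after an a priori rescaling of $\dot\alpha_0$ to control the constant in \cite[Thm.~3.3]{CP}. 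Summing $U_z = \sum z^i u_i$ gives an $L^2$ holomorphic section of $K_X + L_0$ (twisted by the deformed complex structure) on $X \setminus \operatorname{Div}(s)$, which extends across $\operatorname{Div}(s)$ by $L^2$-extension, and then one concludes by \cite{ML} (or the direct convergence argument of \Cref{twistedversion1}) that $u$ deforms to all orders along the family.

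The main obstacle I anticipate is verifying that the section $u$ we wish to deform — not just the distinguished section $s$ — actually extends to first order, i.e. that $\dot\alpha_0 \wedge u \in \operatorname{Im}(\dbar)$, since a priori the hypothesis only gives this for $s$. The resolution should be, as in \Cref{jumppluri}, to pass to the ideal sheaf $\mathfrak I$ generated by extendable sections and argue that $H^0(X, L_0)$ itself is spanned by sections extending to first order (using that $s, \dots$ together with the semipositive metric $h_L$ and the Calabi–Yau hypothesis force the relevant cohomology jump locus to be flat); alternatively, one works fiberwise with the metric $h_L$ built from $\tilde s$ and shows directly that every $u \in H^0(X, L_0) = H^0(X, L_0 \otimes \mathcal I(h_L))$ pairs trivially with $\dot\alpha_0$ modulo $\dbar$, which is where the curvature semipositivity and the Bochner argument above must be invoked carefully — in particular handling the singularities of $h_L$ along $\operatorname{Div}(\tilde s)$ and the fact that $\dot\alpha_0$ is only the Kodaira–Spencer form of a general Calabi–Yau family, not an abelian one, so the clean commutation identities of Section 1 must be replaced by their Higgs/log counterparts from the later sections.
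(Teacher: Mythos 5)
Your approach — setting up the Green–Lazarsfeld-type system $\dbar u_{i+1} = \dot\alpha_0 \wedge u_i$ and solving it by the $L^2$ Hodge decomposition and $\partial\dbar$-lemma for a singular metric built from $\tilde s$ — is genuinely different from the paper's argument, and unfortunately the obstacle you flag at the end is a real, unclosable gap in this route. To start the induction you must solve $\dbar u_1 = \dot\alpha_0 \wedge u$, which requires $\dot\alpha_0 \wedge u$ to be $\dbar$-exact in the relevant $L^2$ space; but the only hypothesis is that $s$ extends, not $u$. Since $L_0$ is merely semipositive, Kodaira/Nadel vanishing does not kill $H^1(X, K_X+L_0)$, so there is no automatic reason for $\dot\alpha_0\wedge u$ to pair trivially with harmonic $(n,1)$-forms. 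Both of your proposed fixes are circular: the analogue of \Cref{L2} you invoke requires a \emph{holomorphic} extension $U_\gamma$ of $u$ to $\cX$ (there obtained by Grauert from the constancy of $h^0$ along $\gamma$, which has no counterpart here), and the ideal-sheaf/jump-locus argument from \Cref{jumppluri} relies on the torus structure of $\Jac(X)$ that the Calabi–Yau family does not have.

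The paper sidesteps the first-order obstruction entirely. It reduces to the case of a generic section $s_0 \in H^0(X,L_0)$ with $\operatorname{Div}(s_0)$ smooth and $s|_{\operatorname{Div}(s_0)}\not\equiv 0$, and then iterates Theorem \ref{007} (the $L^2$-extension statement from \cite{CP}/\cite{JCMP}), whose hypothesis is not $\dbar$-exactness of the obstruction but merely the \emph{$L^2$-integrability} of $\Lambda_k/dt$ against a weight $e^{-(1-\ep)\varphi_L}$ built from the extending section $s$ — i.e. a small fractional power of $|s|^{-2}$. That integrability is verified purely locally by Lemma \ref{int}, an Ohsawa–Takegoshi integrability estimate, using only the genericity of $s_0$ and the nontriviality of $s|_{\operatorname{Div}(s_0)}$. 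No $\partial\dbar$-lemma, Bochner formula, or first-order unobstructedness of $u$ enters the argument. (Minor slip: $\tilde s$ lives in $H^0(\cX, L)$, not $H^0(\cX, mL)$.)
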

\smallskip

\begin{proof} To begin with, recall the following result established in \cite{CP}. Let $\cE_1,\dots, \cE_K$ be a set of line bundles 
	on $\cX$ such that 
	\begin{equation}\label{intr5}
		(1- r_0)c_1(L)= r_0c_1(K_\cX)+ \sum_{i=1}^K r_ic_1(\cE_i),
	\end{equation}
	where $0\leq r_0< 1$ are rational numbers and  $r_j\geq 0$ for $j=2,\dots, K$.
	In \eqref{intr5} we denote by $c_1(\cE)$ the first Chern class of $\cE$. 
	Let $s$ be a section of 
	\[\cF_k:= \left(K_\cX+ L\right)\otimes \O_\cX/t^{k+1}\O_\cX,\]
	and and for each $i= 1,\dots ,K$ let $\sigma_i$ be a section of the sheaf $\displaystyle \cE_i\otimes \O_\cX/t^{k+2}\O_\cX$ (notice that this is one order higher than $s$). We denote by $h_L$ the metric induced on the bundle $L_0$ by the set of holomorphic sections $(s, \sigma_i)$ restricted to the central fiber
	--notice that this makes sense thanks to \eqref{intr5}. 
	\medskip
	
	\noindent Then the following holds true.
	\begin{thm}\label{007}\cite{CP} We consider $\displaystyle (s_0, \sigma_i)_{i=1,\dots, K}$ a family of holomorphic sections of $\displaystyle (\cF_k, \cE_i)_{i=1,\dots, K}$ respectively as above. We assume that $s_0$ admits a $\mathcal C^\infty$ extension $s_k$ such that if we write
		$\displaystyle \dbar s_k= t^{k+1}\Lambda_k$
		then \begin{equation}\label{intr6}\int_X\left|\frac{\Lambda_k}{dt}\right|^2e^{-(1-\ep)\varphi_L}dV< \infty\end{equation} for any positive $\ep> 0$. Then there exists a section $\wh s$ of $\cF_{k+1}$ such that $s= \pi_k(\wh s)$.
	\end{thm}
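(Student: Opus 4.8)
The plan is to reinterpret the lifting problem as a single $\dbar$-equation on the central fibre $X$ and to solve it by an $L^2$ argument of Bochner--Kodaira type in which the integrability bound \eqref{intr6} is exactly the quantity that must be finite. First, since $p\colon\cX\to\DD$ is a submersion the normal bundle of $X$ in $\cX$ is trivial, so $K_\cX|_X\simeq K_X$, and tensoring $0\to\O_X\xrightarrow{\;\cdot\,t^{k+1}\;}\O_\cX/t^{k+2}\O_\cX\to\O_\cX/t^{k+1}\O_\cX\to 0$ with $K_\cX+L$ yields $0\to K_X+L_0\to\cF_{k+1}\xrightarrow{\pi_k}\cF_k\to 0$. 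Hence $s$ admits a lift $\wh s$ with $\pi_k(\wh s)=s$ if and only if the image $\delta(s)\in H^1(X,K_X+L_0)$ of the connecting homomorphism vanishes. Using the given $\mathcal C^\infty$ extension $s_k$ of $s_0$, with $\dbar s_k=t^{k+1}\Lambda_k$, the class $\delta(s)$ is represented in the Dolbeault complex by $\lambda_k:=(\Lambda_k/dt)|_X$, an $L_0$-valued $(n,1)$-form on $X$; so it suffices to produce a smooth $L_0$-valued $(n,0)$-form $w$ on $X$ with $\dbar w=\lambda_k$, since then $s_k-t^{k+1}w$ provides $\wh s$.

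Next I would determine the precise shape of $\lambda_k$, following the device used in the proofs of \Cref{fs} and \Cref{jumppluri}. Let $\Xi=\partial/\partial t$ and $\Lie_\Xi=D'_{h_L}\circ\iota_\Xi$ acting on $(K_\cX+L)$-valued forms, where $D'_{h_L}$ is the $(1,0)$-part of the Chern connection of the metric $h_L$ on $K_\cX+L$ built from $(s_0,\sigma_i)$ through \eqref{intr5}. Applying the commutation identity $\Lie_\Xi\circ\dbar=\dbar\circ\Lie_\Xi+(\iota_\Xi\Theta_{h_L})\wedge(\,\cdot\,)$ a total of $k+1$ times to $\dbar s_k=t^{k+1}\Lambda_k$ and restricting to $X$, one sees that $\lambda_k$ equals, up to a nonzero constant and a $\dbar$-exact term, $(\iota_\Xi\Theta_{h_L})|_X\wedge\gamma$, where $\gamma$ is assembled from the restrictions $\Lie_\Xi^{\,j}(s_k)|_X$. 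Here a Cauchy estimate for the $t$-derivatives of a section holomorphic to order $k+1$ shows that all the terms produced by the iteration are controlled in $L^2$ against $e^{-(1-\ep)\vp_L}$ for every $\ep>0$; indeed the leading such bound is precisely $\int_X|\Lambda_k/dt|^2 e^{-(1-\ep)\vp_L}\,dV<\infty$, which is the hypothesis \eqref{intr6}.

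It then remains to solve $\dbar w=\lambda_k$. I would regularise: fix a Kähler metric $\omega$ on $X$ and smooth weights $\vp_\nu\downarrow\vp_L$ with $\sqrt{-1}\ddbar\vp_\nu\ge-\ep_\nu\omega$. For $L_0$-valued forms of top holomorphic degree the Bochner--Kodaira--Nakano identity has no Ricci term, so the only positivity needed is the semipositivity $\sqrt{-1}\Theta_{h_L}(L_0)\ge 0$ coming from \eqref{intr5}, supplemented by the extra positivity gained from twisting the weight by the bounded function $\log\!\big(|s_k|_{h_L}^2+|t|^{2(k+1)}\big)$ --- a Donnelly--Fefferman/Berndtsson device. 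For the $\dbar$-closed form $\lambda_k$ of the special shape found above this produces solutions $w_\nu$ of $\dbar w_\nu=\lambda_k$ with $\int_X|w_\nu|^2 e^{-(1-\ep)\vp_\nu}\,dV\le C\int_X|\lambda_k|^2 e^{-(1-\ep)\vp_\nu}\,dV$, the right-hand side being uniformly bounded by \eqref{intr6}; passing to the limit $\nu\to\infty$ gives $w$ with $\dbar w=\lambda_k$ and $\int_X|w|^2 e^{-(1-\ep)\vp_L}\,dV<\infty$ for some fixed small $\ep>0$. Because $\vp_L$ has at worst logarithmic poles along the analytic set $\{s_0=0\}\cup\bigcup_i\{\sigma_i=0\}$, this integrability forces $w$ to be smooth off that set and to vanish along it; consequently $s_k-t^{k+1}w$ is a $\mathcal C^\infty$ section of $K_\cX+L$ on a neighbourhood of $X$, holomorphic modulo $t^{k+2}$, whose class $\wh s\in\cF_{k+1}$ satisfies $\pi_k(\wh s)=s$.

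The main obstacle is exactly the $\dbar$-solvability in the third step: the bundle $K_\cX+L$, equivalently $\sqrt{-1}\Theta_{h_L}(L_0)$, is only semipositive, so $\dbar w=\lambda_k$ cannot be solved by a naive Hörmander estimate, and the whole point of the argument is the twisted estimate whose auxiliary weight is manufactured from the extending section $s_k$ itself --- whose existence is precisely the hypothesis --- with the unavoidable $\ep$-loss accounting for the ``for every $\ep>0$'' in \eqref{intr6}. A secondary technical point, to be checked with care, is the regularity of $w$ across the singularities of the metric $h_L$: one needs $w$ (and hence $\wh s$) to extend as a genuine section of $\cF_{k+1}$ and not merely of a multiplier-ideal subsheaf, which is what the $L^2$ integrability, forcing vanishing along the polar set, is there to guarantee.
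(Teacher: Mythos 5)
First, a point of order: the paper does not actually prove Theorem \ref{007} — it is imported from \cite{CP} and used as a black box in the proof of Theorem \ref{ext0} — so there is no in-paper proof to compare against. Your first two steps (reduction to the vanishing of the obstruction class $[\lambda_k]\in H^1(X,K_X+L_0)$ with $\lambda_k=(\Lambda_k/dt)|_X$, and the identification of $\lambda_k$ via iterated Lie derivatives and the curvature of $h_L$) are consistent with the strategy the paper itself deploys in Sections 1 and 4, e.g.\ \eqref{simp30}. The problem is your third step.

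You propose to solve $\dbar w=\lambda_k$ by a twisted Hörmander/Donnelly--Fefferman estimate, asserting that the device ``produces solutions $w_\nu$ of $\dbar w_\nu=\lambda_k$'' with bounds controlled by \eqref{intr6}. This is precisely the step that fails: the curvature $\sqrt{-1}\Theta_{h_L}$ is only \emph{semi}positive (cf.\ \eqref{intr5} and Lemma \ref{metric}(3)), so $H^1$ with these coefficients is in general nonzero, and a $\dbar$-closed $(n,1)$-form that is $L^2$ against $e^{-(1-\ep)\vp_L}$ need not be $\dbar$-exact. A priori estimates of Donnelly--Fefferman type bound a solution \emph{assuming one exists} (equivalently, for forms orthogonal to the harmonic space); they do not establish exactness, which is the entire content of the theorem. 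The mechanism that actually closes this gap — visible in the analogous argument the paper does carry out, in the second proof of Theorem \ref{jumppluri}, \eqref{simp53}--\eqref{simp54} — is structural: one shows the obstruction form is simultaneously $\dbar$-closed and $D'_{h_L}$-exact inside the correct weighted $L^2$ space, pairs it against $\Delta''$-harmonic $(n,1)$-forms $\beta$, and uses the Bochner identity together with the semipositivity to get $(D'_{h_L})^\star\beta=0$, whence the pairing vanishes and $\lambda_k\in\IM(\dbar)$. The hypothesis \eqref{intr6}, with its $\ep$-loss, is exactly what legitimises that integration by parts (and trivialises the relevant multiplier ideal); it is not the input to an existence theorem for $\dbar$. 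Your proposal never establishes $[\lambda_k]=0$, so the lift $\wh s$ is not produced.
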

	\medskip
	
	\noindent In order to conclude the proof, it is sufficient to show that every section $s_0$ of $L_0$ with the following properties
	\begin{itemize}
		\smallskip
		
		\item the zero set $\Xi_0:= (s_0= 0)$ is non-singular;
		\smallskip
		
		\item the restriction $\displaystyle s|_{\Xi_0}$ is not identically zero
	\end{itemize}
	extends to $\cX$. This is clear, because these requirements are satisfied by the generic section of $L_0$, given the set-up at the beginning 
	of the current section.
	\medskip
	
	\noindent The fact that the $L^2$-hypothesis \eqref{intr6} in Theorem \ref{007} is satisfied follows from the following well-known statement.
	
	\begin{lemme}\label{int} Let $f\in \O_{(\CC^n, 0)}$ be a holomorphic function defined near the origin, whose restriction to $z_1= 0$ is not identically zero. 
		Then there exists $\ep_0> 0$ such that the integral
		\[\int_{(\CC^n, 0)}\frac{d\lambda}{|f|^{2\ep_0}|z_1|^{2-2\ep_0}}< \infty\]
		is convergent locally near zero.
	\end{lemme}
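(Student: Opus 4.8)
The plan is to reduce the multivariable integral to a one‑variable estimate by a combination of Fubini's theorem and a careful choice of $\ep_0$ governed by the order of vanishing of $f$ along $\{z_1=0\}$. First I would shrink the polydisc of integration to a product $P = \Delta_r \times \Delta_r^{n-1}$ on which $f$ is holomorphic, and observe that since $f|_{z_1=0}\not\equiv 0$, after a generic linear change of the coordinates $z_2,\dots,z_n$ (which does not affect the $|z_1|$ factor) the restriction $f(0,z')$ is, up to a unit, a Weierstrass polynomial in $z_2$; this is the standard device that lets one control $|f|$ from below off a small neighborhood of $z_1=0$. Concretely, for $|z_1|$ away from $0$ we have $|f|\ge c>0$ on a suitable subregion, so the only genuine difficulty is the behavior as $z_1\to 0$, where the two singular factors $|f|^{-2\ep_0}$ and $|z_1|^{-2+2\ep_0}$ compete.

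The key quantitative input is the following: because $f|_{z_1=0}\not\equiv 0$, we have $f = z_1^{k} g + (\text{terms with }f|_{z_1=0}\neq 0)$ — more precisely, writing $f(z_1,z') = \sum_{j\ge 0} z_1^j a_j(z')$ with $a_0\not\equiv 0$, there is no vanishing of $f$ forced along $z_1=0$, so $|f|$ does not degenerate there. Hence there is a constant $C_1>0$ with $|f(z_1,z')| \ge C_1 |z'-w(z_1)|$ only near the zero locus, but away from $\Xi_0$ one simply has $|f|\ge C_1 >0$. Thus I would split $P$ into the region $\{|f|\ge C_1\}$, where the integrand is bounded by $C_1^{-2\ep_0}|z_1|^{-2+2\ep_0}$ and $\int_{\Delta_r}|z_1|^{-2+2\ep_0}\,d\lambda(z_1) < \infty$ as soon as $\ep_0>0$ (polar coordinates: $\int_0^r \rho^{-2+2\ep_0}\rho\,d\rho = \int_0^r \rho^{-1+2\ep_0}\,d\rho<\infty$); and the region near $\Xi_0$, where $|z_1|$ is bounded below by the distance from $\Xi_0$ to $\{z_1=0\}$, hence $|z_1|^{-2+2\ep_0}$ is bounded, and $\int |f|^{-2\ep_0}\,d\lambda<\infty$ for $\ep_0$ small by the standard fact that $\log|f|\in L^1_{\mathrm{loc}}$ and, more than that, $|f|^{-2\ep_0}$ is locally integrable for $\ep_0 < \mathrm{lct}(f)$ (Skoda / the openness of the log canonical threshold, or just the explicit toric computation after resolving $f$). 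Choosing $\ep_0$ smaller than this threshold and positive simultaneously finishes the estimate.

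The one step requiring genuine care — and the main obstacle — is making the two regions genuinely complementary with uniform constants, i.e. quantifying "away from $z_1=0$, $|f|$ is bounded below" versus "near $z_1=0$, $|z_1|$ is bounded below on the support of $|f|^{-2\ep_0}$": these are in tension precisely along $\Xi_0\cap\{z_1=0\}$, a lower‑dimensional set, and one must check the integral does not blow up there. The clean way to handle this uniformly is to pass to a log resolution $\pi\colon Y\to (\CC^n,0)$ of the divisor $(f)+(z_1)$; on $Y$ both $\pi^*f$ and $\pi^*z_1$ become monomials in local coordinates, the Jacobian of $\pi$ contributes another monomial, and the finiteness of $\int_Y |\pi^*f|^{-2\ep_0}|\pi^*z_1|^{-2+2\ep_0}|\Jac\pi|\,d\lambda$ reduces to a finite collection of elementary one‑variable integrals $\int_0^1 \rho^{a - 2\ep_0 b - 2(1-\ep_0)c}\,d\rho$ over the exceptional coordinates, each of which converges once $\ep_0>0$ is small enough, using crucially that the exponent $a$ coming from the discrepancy-plus-pullback is $\ge 1$ in the $z_1$-direction because the strict transform of $\{z_1=0\}$ meets $\pi^*f$'s zero set in the expected (reduced, transverse) way thanks to $f|_{z_1=0}\not\equiv 0$. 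This resolution argument is clean but is exactly where the hypothesis "$f|_{z_1=0}\not\equiv 0$" is used, so I would present it carefully rather than hand‑wave the competing singularities.
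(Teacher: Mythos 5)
Your proposal is correct, but it follows a genuinely different path from the paper's argument. The paper's proof is an application of the Ohsawa--Takegoshi $L^2$-extension theorem: since $f|_{z_1=0}\not\equiv 0$, one can choose $\ep_0>0$ so that $\int_{\Omega_0}|f|^{-2\ep_0}\,d\lambda<\infty$ on the hyperplane $\Omega_0=\Omega\cap\{z_1=0\}$, then extend the constant function $1$ from $\Omega_0$ to a holomorphic $F$ on $\Omega$ satisfying
\[\int_\Omega |F|^2\,\frac{|f|^{-2\ep_0}\,d\lambda}{|z_1|^2\log^2|z_1|^2}<\infty;\]
since $F$ is bounded below near $\Omega_0$ and $|z_1|^{2-2\ep_0}\ge c\,|z_1|^2\log^2|z_1|^2$ near $z_1=0$, the stated integral is finite on a tube around $\Omega_0$, while away from that tube $|z_1|^{-(2-2\ep_0)}$ is bounded and $|f|^{-2\ep_0}$ is locally integrable. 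This is a four-line proof once OT is taken as a black box, with no resolution of singularities needed.

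Your log-resolution route also works, and it is instructive, but two points of the bookkeeping are stated imprecisely and should be made explicit. First, the crucial inequality you need is: for every \emph{genuine} exceptional divisor $E_i$ of a log resolution $\pi$ of $(f)+(z_1)$, the discrepancy $c_i:=\mathrm{mult}_{E_i}K_{Y/\CC^n}$ satisfies $c_i\ge b_i:=\mathrm{mult}_{E_i}\pi^*z_1$. This is a consequence of the \emph{smoothness of $\{z_1=0\}$} (the pair $(\CC^n,\{z_1=0\})$ is plt, so the only divisor with discrepancy $-1$ is the strict transform of $\{z_1=0\}$ itself), not of the hypothesis on $f$. Second, the hypothesis $f|_{z_1=0}\not\equiv 0$ is used at exactly one place: it forces the multiplicity $a_i:=\mathrm{mult}_{E_i}\pi^*f$ to vanish along the strict transform $E_i$ of $\{z_1=0\}$ (where $b_i=1$, $c_i=0$), which turns the monomial exponent there into $-(1-\ep_0)$, integrable precisely when $\ep_0>0$. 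With this split of responsibilities between the two hypotheses, the verification that
\[c_i+1>\ep_0 a_i+(1-\ep_0)b_i\]
holds for all divisors when $\ep_0>0$ is sufficiently small is routine: on the strict transform of $\{z_1=0\}$ the inequality reads $1>1-\ep_0$; on strict transforms of components of $\{f=0\}$ it reads $1>\ep_0 a_i$; on genuine exceptional divisors one has $c_i\ge b_i$, so $c_i+1\ge b_i+1>b_i+\ep_0(a_i-b_i)$ for small $\ep_0$. Your sentence attributing the key slack ``to the $z_1$-direction because the strict transform of $\{z_1=0\}$ meets $\pi^*f$'s zero set \dots thanks to $f|_{z_1=0}\not\equiv 0$'' conflates these two separate mechanisms and should be rewritten accordingly, though the underlying argument is sound.
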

	
	\begin{proof}
		Let $\Omega\subset \CC^n$ be an open subset such that $\overline \Omega$ is contained in the definition domain of $f$. We denote by $\Omega_0:= \Omega\cap (z_1=0)$ the intersection of $\Omega$ with the hyperplane $(z_1=0)$. Then there exists a positive constant 
		$\ep_0> 0$ such that the integral
		\[\int_{\Omega_0}\frac{d\lambda}{|f|^{2\ep_0}}< \infty\]
		is convergent. This means that the function identically $1$ is integrable with respect to the weight $\displaystyle \varphi:= \ep_0\log|f|^2$. By Ohsawa-Takegoshi there exists a holomorphic function $F\in \O(\Omega)$ such that 
		\[F|_{\Omega_0}= 1, \qquad \int_{\Omega}|F|^2\frac{e^{-\varphi}d\lambda}{|z_1|^{2}\log^2|z_1|^2}< \infty.\]
		From this, our lemma follows immediately. 
	\end{proof}
	
	\noindent We now apply Theorem \ref{007} with the following data
	\[K:= 1, \quad \cE_1:= L, \quad \sigma_1:= s, \quad 0<r_1\ll 1,\]
	and by Lemma \ref{int}, the integrability requirements in \ref{007} are automatically satisfied. Repeated applications of this procedure shows that 
	$s_0$ extends to an open neighborhood of the central fiber $\cX_0$. The proof of \ref{ext0} is complete.
\end{proof}
\medskip



\section{Quasi-compact version}\label{S-qc}
In the remaining sections, we would like to generalise the previous results to the quasi-compact case. To begin with,
let $X$ be a compact K\"ahler manifold and let $D=\sum_{i=1}^{m}D_i$ be a SNC divisor on $X$. Write $X\setminus D :=X-\sum D_i$. A logarithmic flat bundle $(L,\nabla)$ consists of a holomorphic line bundle $L$ on $X$ and a logarithmic holomorphic connection
\begin{equation}\label{logholoconnecion}
\nabla:L\to L\otimes \Omega_X^{1}(\log D)
\end{equation}
so that $\nabla^2=0$.  The space of logarithmic flat bundles is denoted by $M_{\rm DR}(X/D)$. 
We consider $M_{\rm B}(X\setminus D)$ the space of rank $1$ local system on $X\setminus D$. Then we have 
$$M_{\rm B}(X\setminus D) \simeq \Hom(\pi_1(X\setminus D), \CC^\star)\simeq (\CC^\star)^\ell$$ 
where $\ell = b_1 (X\setminus D)$. 
Then there  is a natural map
\begin{equation}\label{surjj}
\rho: M_{\rm DR}(X/D)\to M_{\rm B}(X\setminus D) . 
\end{equation}

Note that  \cite[II, 6.10]{Del} implies that this map is surjective, namely give a flat bundle on $X_0$, we can extend it to be a holomorphic line bundle on the total space $X$ and the flat connection might have simple poles along $D$.
Moreover the kernel of this map is isomorphic to $\mathbb{Z}^m$: given a log flat bundle $(L, \nabla)$, we can construct  new log flat bundles $L\otimes \prod \mathcal{I}_{D_i} ^{a_i}$ for any $a_i \in \mathbb Z$ and $\nabla$ induces a  flat connection $\nabla'$ on
$L\otimes \prod \mathcal{I}_{D_i} ^{a_i}$ with log pole along $D$. We have $\rho ((L,\nabla)) =\rho (L\otimes \prod \mathcal{I}_{D_i} ^{a_i} , \nabla')$.

\medskip

Given a local system $\tau \in M_{\rm B} (X\setminus D)$, and let $(L, \nabla) \in M_{\rm DR} (X/D)$ such that $\rho ((L, \nabla)) =\tau$.  
Let $H^p (X\setminus D, \tau)$ be the cohomology of the locally constant sheaf of flat sections of $\tau$.
Note that the connection $\nabla$ induces a complex
\begin{equation}\label{derham}
	0 \to \mathcal{O} (L) \to\mathcal{O} (L) \otimes \Omega_X^{1}(\log D) \to  \mathcal{O} (L) \otimes \Omega_X^{2}(\log D) \to \cdots \mathcal{O} (L) \otimes \Omega_X^{n}(\log D) \to 0 .
	\end{equation}
Let  $\mathbb H^p (X, L \otimes \Omega_X ^{\bullet } (\log D))$ be the hypercohomology of the complex. 
We have the following result due to Deligne

\begin{thm}\cite[II, 6.10]{Del}\label{Deli}
	If the residue of $(L, \nabla)$ on $D_i$ is not a positive integer for every $i$, then we have
	$\mathbb H^p (X, L \otimes \Omega_X ^{\bullet }(\log D) )\simeq H^p (X\setminus D , \tau)$. 
	\end{thm}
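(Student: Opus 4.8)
\textbf{Proof proposal for Theorem~\ref{Deli} (Deligne's comparison theorem).}

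The plan is to identify $H^p(X\setminus D,\tau)$ with the hypercohomology of the logarithmic de Rham complex by working on the open part and then controlling the behaviour at the boundary. First I would recall that $\tau$, viewed as a locally constant sheaf on $U:=X\setminus D$, has cohomology computable by the holomorphic de Rham complex of the flat bundle $(\L|_U,\nabla|_U)$, i.e. $H^p(U,\tau)\simeq\mathbb H^p(U,\O(\L)\otimes\Omega^\bullet_U)$; this is the Poincaré lemma in the flat (twisted) setting and is entirely local. So if $j:U\hookrightarrow X$ denotes the inclusion, the task reduces to showing that the natural inclusion of complexes $\O(\L)\otimes\Omega^\bullet_X(\log D)\hookrightarrow Rj_*\big(\O(\L)\otimes\Omega^\bullet_U\big)$ is a quasi-isomorphism, after which $\mathbb H^p(X,\L\otimes\Omega^\bullet_X(\log D))\simeq\mathbb H^p(X,Rj_*(\cdots))\simeq\mathbb H^p(U,\cdots)\simeq H^p(U,\tau)$ and we are done.

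The quasi-isomorphism statement is local on $X$, so I would pass to a polydisc $\Delta^n$ with $D=\{z_1\cdots z_k=0\}$, choose a local frame $e$ of $\L$ with $\nabla e=e\otimes\big(\sum_{i=1}^k \lambda_i\tfrac{dz_i}{z_i}+\text{holomorphic}\big)$, and after a further trivialization reduce to the model connection $\nabla e=e\otimes\sum_{i=1}^k\lambda_i\tfrac{dz_i}{z_i}$ where $\lambda_i=-\Res_{D_i}(\L,\nabla)\in\CC$. The residue hypothesis says $-\lambda_i\notin\Z_{>0}$, equivalently $\lambda_i\notin\Z_{<0}$. The heart of the argument is then the computation, on $\Delta^*\!{}^k\times\Delta^{n-k}$, of the cohomology sheaves of $Rj_*$ of the twisted de Rham complex and the verification that the natural map from the logarithmic complex is an isomorphism on stalks; by a Künneth-type reduction this comes down to the one-variable case $\Delta^*$ with connection $d+\lambda\tfrac{dz}{z}$, where one checks directly that $\tfrac{d}{dz}+\tfrac{\lambda}{z}$ acting on convergent Laurent series (or on the relevant function spaces computing $Rj_*$) has kernel and cokernel controlled precisely by whether $\lambda\in\Z_{<0}$: when $\lambda\notin\Z_{<0}$ the logarithmic complex $[\O_\Delta\xrightarrow{d+\lambda dz/z}\O_\Delta\cdot\tfrac{dz}{z}]$ already computes the full $Rj_*$, and when $\lambda\in\Z_{\le 0}$ both sides have the expected one-dimensional $H^0$ and $H^1$ in the appropriate degrees. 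I would organize this via the filtration of $Rj_*$ by the subsheaves of sections with moderate growth / prescribed pole order and a weight argument on the monodromy eigenvalue $e^{-2\pi i\lambda}$.

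The main obstacle is precisely this local boundary computation: making rigorous the identification of $Rj_*(\O(\L)\otimes\Omega^\bullet_U)$ near a point of $D$ and isolating the exact role of the residue hypothesis. The subtle point is that without the hypothesis $-\lambda_i\notin\Z_{>0}$ the inclusion fails — a positive integer residue produces an extra class in $R^1j_*$ coming from $z^{-\text{(pos.\ integer)}}$ terms not captured by a single log pole — so the proof must genuinely use it, and the cleanest route is to reduce, by an elementary but careful power-series manipulation, to the invertibility of the operators $u\mapsto zu'+\lambda u$ on appropriate spaces. Everything else — the twisted Poincaré lemma on $U$, the spectral sequence / hypercohomology formalism passing from local quasi-isomorphism to the global statement, and the Künneth reduction to one variable — is formal once this local input is in hand. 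I would therefore present the one-variable computation as the key lemma, deduce the several-variables case by induction, and then conclude by the hypercohomology comparison.
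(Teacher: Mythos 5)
The paper does not prove this theorem: it is quoted directly from \cite[II, 6.10]{Del}, and the only text following it is a remark fixing the sign convention for the residue. So there is no internal proof to compare against. Your outline, however, reproduces Deligne's own argument essentially verbatim---the twisted holomorphic Poincaré lemma on $U = X\setminus D$, the reduction to the claim that the inclusion
$\O(L)\otimes\Omega^\bullet_X(\log D)\hookrightarrow Rj_*\bigl(\O(L)\otimes\Omega^\bullet_U\bigr)$
is a quasi-isomorphism, localization to a polydisc, gauging away the holomorphic part of the connection form to reach the model $\nabla = d + \sum_i\lambda_i\,dz_i/z_i$, a Künneth reduction to one variable, and finally the kernel/cokernel analysis of $z\,d/dz + \lambda$ on power series versus Laurent series. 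That is exactly the proof the citation points to, and the hypercohomology formalism is handled correctly.

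There is one concrete slip that you should fix before trusting the one-variable step. You set $\lambda_i = -\Res_{D_i}(L,\nabla)$ and then write the model as $\nabla e = e\otimes\sum_i\lambda_i\,dz_i/z_i$, concluding that the hypothesis translates to $\lambda_i\notin\Z_{<0}$. But with the paper's stated convention ($\nabla e_L = a_i\,\tfrac{dz_i}{z_i}\otimes e_L + C^\infty$, with $a_i$ \emph{being} $\Res_{D_i}$), the coefficient in front of $dz_i/z_i$ in the gauged model is $\Res_{D_i}$, not $-\Res_{D_i}$. And the operator analysis forces the other sign anyway: the operator $f\mapsto zf' + \lambda f$ on $\O_\Delta$ has kernel $\CC\cdot z^{-\lambda}$ precisely when $\lambda\in\Z_{\le 0}$, while the locally constant sheaf of flat sections on $\Delta^\star$ extends across the puncture precisely when $\lambda\in\Z$; the stalks of $H^0$ agree iff $\lambda\notin\Z_{>0}$, and the same condition governs $H^1$. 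Thus the identity you need is $\lambda_i = \Res_{D_i}$, the relevant exclusion is $\lambda_i\notin\Z_{>0}$, and following your stated sign through the argument would ``prove'' the theorem under the wrong hypothesis $\Res\notin\Z_{<0}$. Dropping the minus sign makes your one-variable computation line up with the theorem's hypothesis; the rest of the outline is sound.
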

The definition of the residue is as follows:
\begin{remark}
		Here if $D_i$ is locally defined by $z_1 =0$, by the compactness of $D_i$, there exists some constant $a_i$ such that $\nabla e_L = \frac{a_i dz_1}{z_1} \wedge e_L + C^\infty$. We call $a_i$ be the residue of $(L, \nabla)$ on $D_i$. It is independent of the choice of the base $e_L$. 
		
		If there is a metric $h$ on $L$ such that $\nabla =D' _h$, then $a_i$ equals to the Lelong number of $h$ on $D_i$ by using the connection formula. In the case when $\nabla$ does not come from a Chern connection, we have a similar result due to Y. Deng \cite[Lemma 2.3]{Deng}.
	\end{remark}
\medskip

\noindent The (lengthy) proof of the following result unfolds in the coming sections. More precisely, it follows directly from Theorem \ref{thm:convergence}.

\begin{thm}\label{complete}[=\Cref{introcomp}]
	Let $\tau \in M_{\rm B} (X\setminus D)$ and we consider a germ of holomorphic disk $\gamma: (\CC, 0)\to M_{\rm B} (X\setminus D)$ such that $\gamma(0)= \tau$ and such that 
\begin{equation} 
	\dim  H^p (X\setminus D, \gamma  (t)) \geq k
\end{equation} 
for every $|t| \ll 1$. 
We denote by $\dot\alpha$ the infinitesimal deformation  $d\gamma(0)$.  Then
\begin{equation} 
		\dim H^p (X\setminus D, \tau +z \dot\alpha )\geq k
\end{equation}
  for $ |z|\ll 1$.  Here $H^p (X\setminus D, \tau )$ is cohomology with respect to the local system $\tau$.
\end{thm}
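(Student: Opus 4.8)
The plan is to execute the program sketched in the introduction: move the statement from Betti cohomology to a $D_K$-harmonic model attached to a logarithmic flat bundle, develop the Hodge decomposition and a $\ddbar$-type lemma for the operators $D_K,D_K^c$, and then upgrade ``extension to first order'' to ``extension to all orders'' along the \emph{linear} path $\tau+z\dot\alpha$ by a Taylor-series argument whose convergence is controlled by the a priori inequality. To start, choose $(L,\nabla)\in M_{\rm DR}(X/D)$ with $\rho(L,\nabla)=\tau$ and with the real part of every residue in $]-1,0]$, so that \Cref{Deli} identifies $H^p(X\setminus D,\tau)$ with the logarithmic de Rham hypercohomology $\mathbb H^p(X,L\otimes\Omega_X^\bullet(\log D))$, and likewise $\mathbb H^p$ of the complex with differential $\nabla+z\eta$ computes $H^p(X\setminus D,\tau+z\dot\alpha)$ for $|z|\ll1$, where $\eta$ is a harmonic $1$-form representing $\dot\alpha$. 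Equip $L|_{X\setminus D}$ with the harmonic metric $h$ of \Cref{specialhar}, with its operators $D_K''=\dbar+\theta_0-\overline\theta_0$, $D_K'=D'_h+\theta_0+\overline\theta_0$, $D_K=D_K'+D_K''=\nabla+\dbar$, $D_K^c=D_K'-D_K''$, and with the decomposition $D=\Delta_1+\Delta_2+\Delta_3$ satisfying the residue and Lelong-number properties (a)--(b). On $X\setminus\Delta_1$ fix the Kähler metric $\omega_D$ with Poincaré singularities along $\Delta_2$ and conic singularities along $\Delta_3$, and, when $\Delta_1\neq0$, the cylindrical metric $g_D$ on $T_X\langle\Delta_1\rangle$.

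The first --- and principal --- task is to realise $\mathbb H^p$ analytically. First one checks that the sheaves $\cC^\infty_K(X,L)$, resp. $\cC^\infty_K(X,\Delta_1,L)$, of smooth $L$-valued forms with the prescribed singularities, together with the total differential $D_K$, form a fine resolution of $L\otimes\Omega_X^\bullet(\log D)$, so that $\mathbb H^p$ is the $D_K$-cohomology of the complex of global sections. The analytic heart is the Hodge theory for $\Delta_K=[D_K,D_K^\star]$: using the a priori inequality of \S\ref{apriori} --- designed to absorb the Higgs field $\theta_0$ --- one proves that this complex admits a Hodge decomposition, identifies its degree-$p$ cohomology with $\ker\Delta_K$, and, crucially, obtains the $\ddbar$-lemma: an $L$-valued log form (or current) that is $D_K$-closed and $D_K^c$-exact lies in $\operatorname{Im}(D_KD_K^c)$. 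When $\Delta_1\neq0$ the relevant log forms fail to be $L^2$ for $(\omega_D,h)$, so this is carried out twice: in the cylindrical framework of~(i), where norms are finite at the price of the Kähler identities, which yields the regularity needed to produce honest log forms; and at the level of currents as in~(ii), following Noguchi and Kodaira--de Rham, where the decisive point is that $D_K^c$ commutes with $D_K^\star$, making the current-theoretic $\ddbar$-lemma available.

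With this machinery the argument runs as in Sections~2--4. By upper semicontinuity $\dim H^p(X\setminus D,\tau)\ge k$, and a standard spectral-sequence argument (as in \cite{GL87}) upgrades the hypothesis along $\gamma$ to the statement that the first-order obstruction map $[u]\mapsto[\eta\wedge u]$, $H^p(X\setminus D,\tau)\to H^{p+1}(X\setminus D,\tau)$, vanishes on a $k$-dimensional subspace; represented by $\Delta_K$-harmonic forms $u^{(1)},\dots,u^{(k)}$, this means $\eta\wedge u^{(j)}\in\operatorname{Im}(D_K)$ for each $j$. Fix such a $u=u^{(j)}$ and solve inductively
\begin{equation*}
D_Ku_{i+1}=-\,\eta\wedge u_i,\qquad u_0=u.
\end{equation*}
Since $D_Ku_0=D_K^cu_0=0$ by harmonicity, and $d\eta=d^c\eta=0$, $\eta\wedge\eta=0$, each $\eta\wedge u_i$ is $D_K$-closed, and the $\ddbar$-lemma produces $g_i$ with $\eta\wedge u_i=D_KD_K^cg_i$: for $i=0$ from $\eta\wedge u_0\in\operatorname{Im}(D_K)$ together with $D_K^c(\eta\wedge u_0)=0$; for $i\ge1$ because $u_i:=-D_K^cg_{i-1}$ satisfies $D_K^cu_i=0$, so $\eta\wedge u_i=\pm D_K^c(\eta\wedge g_{i-1})$ is $D_K^c$-exact. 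Put $u_{i+1}:=-D_K^cg_i$. These solutions are first obtained as currents via~(ii), and then the regularity results of~(i) show they may be taken to be $L$-valued log forms. Choosing each solution of minimal norm, the a priori inequality gives $\lVert u_{i+1}\rVert\le C\,\lVert\eta\wedge u_i\rVert\le C'\,\lVert u_i\rVert$ with $C'$ independent of $i$, so $U_z:=\sum_{i\ge0}z^iu_i$ converges for $|z|\ll1$ and satisfies $(D_K+z\eta)U_z=0$.

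Since $D_K+z\eta$ is the twisted differential computing $H^p(X\setminus D,\tau+z\dot\alpha)$, each $U_z^{(j)}$ defines a class there; because $U_z^{(j)}=u^{(j)}+O(z)$ in norm and the $u^{(j)}$ are independent $\Delta_K$-harmonic forms, a routine semicontinuity argument (comparing with the harmonic spaces of $[D_K+z\eta,(D_K+z\eta)^\star]$) shows the $U_z^{(j)}$ stay linearly independent in cohomology for $|z|\ll1$, whence $\dim H^p(X\setminus D,\tau+z\dot\alpha)\ge k$; this is the assertion, and it is exactly the content of \Cref{thm:convergence}. The step I expect to be the main obstacle is the Hodge theory for $\Delta_K$ when $\Delta_1\neq0$: proving an a priori inequality strong enough to yield the Hodge decomposition and the $\ddbar$-lemma in the absence of the Kähler identities and of $L^2$-integrability, and then matching the current-valued solutions of the extension system with genuine logarithmic forms, so that the limit $U_z$ truly represents a class in logarithmic de Rham --- hence Betti --- cohomology.
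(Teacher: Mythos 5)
Your architecture is the right one and your reading of the machinery (Deligne's isomorphism, harmonic metrics, the $D_K/D_K^c$ formalism, Hodge theory for $\Delta_K$, currents versus cylindrical $L^2$, convergence from the a priori estimate) matches the paper. But the induction in the fourth paragraph has a genuine gap, and it propagates to the overall strategy.

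When $\Delta_1\ne 0$ the inductive step ``$\eta\wedge u_i$ is $D_K$-closed and $D_K^c$-exact, hence $D_KD_K^c$-exact'' is not available: the $\ddbar$-lemma in the logarithmic setting \emph{fails} in the form you use it, because for a form with poles along $\Delta_1$ the current-theoretic $\cD_K$-derivative (equation \eqref{end3}) contains residue contributions supported on the strata of $\Delta_1$, and $\Delta_K\ne\Delta_K^c$ for the cylindrical metric $g_D$ since $T_X\langle\Delta_1\rangle$ has curvature. The paper's Theorem \ref{solveequs} therefore carries a second hypothesis: that every $D_K$-closed form on every stratum $Y_I$ of $\Delta_1$ \emph{also} extends to order one in the direction $\xi+\beta$. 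This hypothesis is the engine of Lemma \ref{initiald} and Proposition \ref{solucurrent}, which run a downward induction over the strata of $\Delta_1$ and correct the residue currents $\CR_{i_1\dots i_p;k}$ floor by floor before the current solution $u_k$ on $X$ can be produced at all; the regularity step in Proposition \ref{regularprop} only kicks in after the current solution exists. Your proposal omits this entirely, and the acknowledged ``main obstacle'' at the end misdiagnoses it as a regularity issue, when it is in fact a cohomological obstruction on the intersections $Y_I$.

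This creates a second, structural problem: you apply the extension argument directly at the given point $\tau$, but at $\tau$ you have no control over the jumping of $\dim\mathbb H^{p-|I|}(Y_I,\cdot)$ along $\gamma$, so the strata hypothesis may fail. The paper avoids this by routing through the global statement (Theorem \ref{complete2}): pick a \emph{generic} point of an irreducible component $W$ of $\Sigma^p_k(X\setminus D)_{\rm B}$ containing $\gamma$, where, by upper-semicontinuity and genericity, all the relevant dimensions --- on $X$ and on every $Y_I$ --- are locally constant so that the strata hypothesis of Theorem \ref{solveequs} is automatically satisfied (Theorem \ref{nearbygeneric}). The subtorus criterion (Lemma \ref{lem:criteria}) then shows $W$ is a translate of a subtorus, and only now does one return to the possibly non-generic $\tau$: since $\gamma(\DD_\ep)\subset W$ and $W$ is affine over a subtorus, the entire line $\tau+z\dot\alpha$ lies in $W$. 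Your direct attack at $\tau$ cannot supply the strata hypothesis, so the proof does not close without this detour.
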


\medskip


\subsection{Differential operators and harmonic metrics in log setting}\label{subsec:harmonic}
We discuss here the existence of harmonic metrics in the log setting, i.e. corresponding to $(X, D)$. For a general presentation  
we refer to \cite{Moc07b}. Since we are aiming at a reasonable self-contained paper, the proof of the special case we are interested is presented here in detail.
\smallskip

Let $X$ be a compact Kähler manifold and let 
$$\nabla: L \to L\otimes \Omega_X ^1 (\log \sum D_i)$$ 
be our flat holomorphic connection. Set $D_K : =\dbar + \nabla$, where $\dbar$ defines the complex structure of $L$. Then $D_K ^2=0$ on $X_0 := X\setminus \sum D_i$.  

We now recall the definition of harmonic metrics. 
Let $h$ be a metric on $L$ smooth on $X_0$ and with possible analytic singularity along $\sum D_i$. Let $D'_{h}$ be the $(1,0)$-part of the Chern connection on $(X, L, h)$. 
Set
$$-2\theta_0 :=D' _h - \nabla.$$
Then  $\theta_0$ is a $(1,0)$-form with log pole along $\sum D_i$. Since $\nabla^2=0$ and $(D' _h )^2 =0$, we have 
\begin{equation}\label{partialclose}
	\partial \theta_0 =0 .
\end{equation}
We now follow the definitions of \cite[Section 1]{Sim92} (but in order to be coincide with the our contexts, the notations of the connections are not the same). Note that 
$$\{ \theta_0 e, e\} = \{e,  \ol \theta_0 e\} .$$
Then the two connections $ D' _h + \dbar $ and $\nabla +(\dbar -2\ol \theta_0)$ preserve the metric $h$. We can thus define $D_K ''$ as follows:

\begin{defn}\label{def:connection}
	We set 
	\begin{equation}\label{dbarop}
		D'' _K := \frac{\dbar+ (\dbar -2\ol\theta_0)}{2} + \frac{\nabla - D' _{h}}{2} = \dbar + {\theta_0 - \ol \theta_0}  
	\end{equation}
	and 
	\begin{equation}\label{e1}
 D'_K:= D'_{h} +\theta_0 +\ol \theta_0
\end{equation}

\end{defn}
From the definition,  we know that 
$$D'' _K +D'_K =D_K =\dbar + \nabla .$$
however,  $D'' _K +D'_K$ does not preserve the metric.

\begin{defn}[Harmonic metric]\label{def:harmonic}
We set $G_K := (D'' _K )^2$. 
We say that $h$ is a harmonic metric, if $G_K =0$ on $X_0$.
\end{defn}

In our rank $1$ case,  we have the following direct proposition

\begin{proposition}
	$h$ is harmonic if and only if $i\Theta_{h}= 0$ on $X_0$.
	\end{proposition}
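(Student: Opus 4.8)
The plan is to write everything out in a local holomorphic frame $e$ of $L$ over an open set $U \subset X_0$, compute the curvatures of the two relevant connections, and then use the defining relations of $D'_K$ and $D''_K$ together with the flatness hypotheses $\nabla^2 = 0$ and $(D'_h)^2 = \Theta_h(L)$.

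First I would expand $G_K = (D''_K)^2 = (\dbar + \theta_0 - \ol\theta_0)^2$. Since $\dbar^2 = 0$, this equals $\dbar(\theta_0 - \ol\theta_0) + (\theta_0 - \ol\theta_0)^2$; in the rank-one case the wedge of two $1$-forms $\theta_0 \wedge \theta_0 = 0$ and $\ol\theta_0 \wedge \ol\theta_0 = 0$, and the cross terms $\theta_0 \wedge \ol\theta_0 + \ol\theta_0 \wedge \theta_0 = 0$ as well (ordinary scalar-valued forms commute up to sign, and these cancel), so the quadratic term vanishes identically. Hence $G_K = \dbar\theta_0 - \dbar\ol\theta_0$. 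Because $\ol{\partial\theta_0} = \dbar\ol\theta_0$ and $\partial\theta_0 = 0$ by \eqref{partialclose}, we get $\dbar\ol\theta_0 = 0$, so $G_K = \dbar\theta_0$, a $(1,1)$-form. Thus $h$ is harmonic, i.e. $G_K = 0$, if and only if $\dbar\theta_0 = 0$ on $X_0$.

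Next I would relate $\dbar\theta_0$ to the curvature $\Theta_h(L)$. From $-2\theta_0 = D'_h - \nabla$ we have $\dbar\theta_0 = -\tfrac12(\dbar D'_h - \dbar\nabla)$. Now $\dbar\nabla = D_K^2$-type terms on the holomorphic part: since $\nabla$ is a holomorphic connection, in a local holomorphic frame $\nabla e = \eta \otimes e$ with $\eta$ a holomorphic (hence $\dbar$-closed) $(1,0)$-form with log poles, so $\dbar\nabla = 0$ acting in this frame (the frame is $\dbar$-closed). Meanwhile $\dbar D'_h + D'_h\dbar = \Theta_h(L)$ is the Chern curvature, and acting on the holomorphic frame $e$ one has $\dbar(D'_h e) = \Theta_h(L)\otimes e$ since $\dbar e = 0$. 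Therefore $\dbar\theta_0 = -\tfrac12 \Theta_h(L)$ in this frame, and globally $\dbar\theta_0 = -\tfrac12 \Theta_h(L)$ as a $(1,1)$-form on $X_0$. Combining with the previous paragraph, $G_K = 0$ on $X_0$ if and only if $\Theta_h(L) = 0$ on $X_0$, which is exactly the claim (multiplying by $i$ changes nothing).

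The only genuine point requiring care — and the main obstacle to watch — is bookkeeping the factors of $\tfrac12$ and the signs coming from the conventions in \Cref{def:connection}, and making sure the manipulations $\dbar\nabla = 0$, $\dbar\ol\theta_0 = \ol{\partial\theta_0}$ are performed consistently in a \emph{holomorphic} frame so that $\dbar e = 0$. Since all identities are local and the log poles lie along $\sum D_i \subset X \setminus X_0$, there is no analytic subtlety: everything is a smooth computation on $X_0$. I would therefore present the proof as a short frame computation: expand $G_K$, observe the quadratic term vanishes in rank one, use \eqref{partialclose} to kill $\dbar\ol\theta_0$, and identify $\dbar\theta_0 = -\tfrac12\Theta_h(L)$, concluding the equivalence.
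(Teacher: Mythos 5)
Your proof is correct and follows essentially the same route as the paper: expand $G_K = (D''_K)^2$, note the quadratic term vanishes in rank one, use $\partial\theta_0 = 0$ to kill $\dbar\ol\theta_0$ and conclude $G_K = \dbar\theta_0$, then identify $\dbar\theta_0$ with $-\tfrac12\Theta_h(L)$ via $D'_h - \nabla = -2\theta_0$ and $[\dbar,\nabla]=0$. The paper states the same two facts more tersely as operator identities, while you carry out the check in a local holomorphic frame, but the content is identical.
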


\begin{proof}
	Thanks to \eqref{partialclose}, we have $\dbar \ol \theta=0$. Then we obtain 
	\begin{equation}\label{curvformula}
		G_K = \dbar \theta_0  . 
	\end{equation}
	Note that $i\Theta_{h} = [\dbar, D'_{h}] = -2\dbar \theta_0$. Then $G_K =0$ on $X_0$ if and only if $i\Theta_{h}=0$ on $X_0$.
	\end{proof}
\medskip
 
\noindent We have the following important remarks.

\begin{proposition}\cite[section 1]{Sim92}
	Let $\omega$ be a Kähler metric on $X_0$ and let $(D'_K)^\star  , (D''_K)^\star $ be the formal adjoint of $D'_K, D''_K$ with respect to $(h, \omega)$.
Then the equalities
 \begin{equation}\label{kahleridentity}
  (D'_K)^\star  = i [\Lambda_\omega, D'' _K] \qquad\text{and} \qquad (D''_K)^{\star}  = -i [\Lambda_\omega, D'_K] 
 \end{equation}
 hold true.
 
 \noindent If $h$ is harmonic, then 
  \begin{equation}\label{comparelaplace}
   \Delta'' _K =\Delta'_K =\frac{1}{2} \Delta_K \qquad\text{on } X_0,  
  \end{equation}
where $\Delta_K :=[D_K, D^\star  _K]= [\partial +\nabla, (\partial +\nabla)^\star ]$. Moreover, $D''_K, D'_K,  (D''_K)^\star, (D'_K)^\star$ commute with the Lapalces $\Delta''_K, \Delta'_K$.
\end{proposition}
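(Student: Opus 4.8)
The plan is to establish the K\"ahler identities \eqref{kahleridentity} first, \emph{without} any harmonicity hypothesis, and then to deduce the Laplacian comparison \eqref{comparelaplace} and the commutation statement as purely formal consequences. Everything takes place on $X_0$, where $h$ and $\omega$ are smooth, and the identities involved are either pointwise or depend only on $d\omega = 0$ and on \eqref{partialclose}; in particular the analytic singularities of $h$ along $\sum D_i$ are irrelevant at this stage, and no global/integration argument is needed.

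For \eqref{kahleridentity} I would split, following \eqref{e1} and \eqref{dbarop},
\[ D'_K = D'_h + e(\theta_0) + e(\ol\theta_0), \qquad D''_K = \dbar + e(\theta_0) - e(\ol\theta_0), \]
where $e(\eta)$ denotes exterior multiplication by a form $\eta$, and treat the Chern-connection summands $D'_h,\dbar$ and the zeroth-order summands $e(\theta_0), e(\ol\theta_0)$ separately. For $(L,h)$ over the K\"ahler manifold $(X_0,\omega)$ the classical K\"ahler identities give $(D'_h)^\star = i[\Lambda_\omega,\dbar]$ and $\dbar^\star = -i[\Lambda_\omega, D'_h]$. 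For the exterior-multiplication operators one invokes the corresponding \emph{algebraic} (pointwise) K\"ahler identities --- the zeroth-order analogues of those for $\d$ and $\dbar$ --- relating $e(\theta_0)^\star$ to $[\Lambda_\omega, e(\ol\theta_0)]$ and $e(\ol\theta_0)^\star$ to $[\Lambda_\omega, e(\theta_0)]$; here one uses that $\theta_0$ does not act on the $L$-factor, so that its $h$-adjoint is controlled by $\ol\theta_0$ via $\{\theta_0 e, e\} = \{e, \ol\theta_0 e\}$. Adding the two groups of identities and matching bidegrees yields both equalities in \eqref{kahleridentity}; the only delicate point is keeping the signs straight, consistently with \eqref{dbarop}--\eqref{e1}.

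Now suppose $h$ is harmonic. By \eqref{curvformula} this says $(D''_K)^2 = G_K = \dbar\theta_0 = 0$ on $X_0$. Expanding $(D'_K)^2 = (D'_h + e(\theta_0) + e(\ol\theta_0))^2$ and using \eqref{partialclose} together with $\theta_0\wedge\theta_0 = 0$, every term vanishes except $e(\d\ol\theta_0)$, so $(D'_K)^2 = \d\ol\theta_0 = \ol{\dbar\theta_0} = \ol{G_K} = 0$ as well. Since also $D_K^2 = 0$ on $X_0$ by construction, we get $\{D'_K, D''_K\} = D_K^2 - (D'_K)^2 - (D''_K)^2 = 0$. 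One then substitutes \eqref{kahleridentity} into $\Delta'_K = \{D'_K, (D'_K)^\star\}$ and expands using the graded Jacobi identity: with $(D'_K)^2 = (D''_K)^2 = 0$ and $\{D'_K,D''_K\} = 0$ this gives $\Delta'_K - \Delta''_K = i[\Lambda_\omega, \{D'_K, D''_K\}] = 0$, while the two ``cross'' terms $\{D'_K, (D''_K)^\star\}$ and $\{D''_K, (D'_K)^\star\}$ turn out to be proportional to $[\Lambda_\omega, (D'_K)^2]$ and $[\Lambda_\omega, (D''_K)^2]$ respectively, hence vanish. Expanding $\Delta_K = \{D'_K + D''_K,\ (D'_K)^\star + (D''_K)^\star\}$ and dropping the cross terms then yields $\Delta_K = \Delta'_K + \Delta''_K = 2\Delta''_K$, which is \eqref{comparelaplace}.

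Finally, each of the four operators $D'_K, D''_K, (D'_K)^\star, (D''_K)^\star$ squares to zero --- the adjoints because $D'_K, D''_K$ do --- and for any square-zero operator $A$ the Laplacian $\{A, A^\star\}$ automatically commutes with both $A$ and $A^\star$ (immediate from $A^2 = 0 = (A^\star)^2$). Since $\Delta'_K = \Delta''_K$, this gives the asserted commutations with both Laplacians. The main obstacle is concentrated in the first step: pinning down the algebraic K\"ahler identities for the Higgs operators $e(\theta_0)$ and $e(\ol\theta_0)$ with the correct signs, and with due care to the fact that $D'_K, D''_K$ are neither of pure bidegree nor metric connections; once \eqref{kahleridentity} is in hand, the comparison of Laplacians and the commutation relations follow mechanically from the square-zero relations and the graded Jacobi identity.
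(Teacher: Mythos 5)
Your argument is correct and follows essentially the same route as the paper: the Kähler identities \eqref{kahleridentity} come from the standard identity for $(D'_h,\dbar)$ plus the sign-flipped pointwise identities $\theta_0^\star = -i[\Lambda_\omega,\ol\theta_0]$, $(\ol\theta_0)^\star = i[\Lambda_\omega,\theta_0]$, and the Laplacian comparison and commutation relations then follow via the graded Jacobi identity from $(D'_K)^2 = (D''_K)^2 = \{D'_K, D''_K\} = 0$. Your explicit computation $(D'_K)^2 = e(\partial\ol\theta_0) = \ol{G_K}$ makes precise a vanishing the paper only asserts; conversely, the appeal to $\{\theta_0 e, e\} = \{e, \ol\theta_0 e\}$ is not really what produces the sign flip in $\theta_0^\star$ (that is a purely pointwise $\omega$-identity, independent of $h$, since $\theta_0$ is a scalar form), but this does not affect the substance of the proof.
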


\begin{proof}
 Since $\theta$ is a $(1,0)$-form, we have 
 $$ \theta^\star  = -i [\Lambda_\omega, \ol \theta] \qquad \text{and}\qquad
 (\ol\theta)^\star = i [\Lambda_\omega,  \theta].$$
 The curical point is that the second equation changes the sign.
 Together with the basic Kähler identiy $(D'_h)^\star  = i [\Lambda_\omega, \dbar]$, we obtain
 $$(D'_K )^\star = i [\Lambda_\omega,  D''_K] .$$
 The second equality in \eqref{kahleridentity} is proved in the same way.
 \medskip

Suppose next that $h$ is harmonic. Since $D''_K +D'_K =D_K =\dbar + \nabla$, we have
 $$(D''_K +D'_K )^2 = (\dbar + \nabla)^2 =0 .$$
 Since $h$ is harmonic, we have $(D''_K)^2 =(D'_K)^2=0$.
 Then we obtain 
 $$[D''_K , D'_K] = D''_K D'_K + D'_K D''_K =0 .$$
 Together with \eqref{kahleridentity} and the Jacobi equality, we know that 
 $$\Delta'' _K = [D''_K, (D''_K) ^\star] =[D'_K, (D'_K) ^\star]  =\Delta' _K .$$
 
 Finally, \eqref{kahleridentity} and the Jacobi equality imply that $[(D'_K)^\star, D''_K]=0$.  Then
 $$ \Delta_K = [D'_K +D''_K , (D'_K + D''_K)^\star] = \Delta'' _K + \Delta'_K =2 \Delta''_K .$$ 
 
 By the Jacobi equality, we also have
 $$[D''_K , \Delta''_K]= 0 .$$
\end{proof}

\medskip

Before proving the existence of harmonic metrics, we first explain the relationship between $c_1 (L)$ and $\sum a_i [D_i]$ where $a_i$ is the residue of $\nabla$ along $D_i$.

\begin{proposition}\label{residuerelation}
	Let $a_i$ be the residue of $\nabla$ along $D_i$. Then we have 
	\begin{equation}\label{class}
		\sum \rel (a_i) [D_i] = c_1 (L) \in H^{1,1} (X, \mathbb R)  
	\end{equation} 
	and 
	\begin{equation}\label{class2}
		\sum \im (a_i) [D_i]=0 \in H^{1,1} (X, \mathbb R) .
	\end{equation} 
	\end{proposition}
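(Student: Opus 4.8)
The plan is to compare the flat logarithmic connection $\nabla$ with the Chern connection of an auxiliary \emph{globally smooth} metric, and then to read off $c_1(L)$ from a Poincar\'e--Lelong computation at the level of currents. First I fix any Hermitian metric $h_0$ on $L$ which is smooth on all of $X$, and denote by $D_0 = \dbar + D'_{h_0}$ its Chern connection, with curvature $\Theta_0 := \Theta(L, h_0)$; thus $\tfrac{i}{2\pi}\Theta_0$ is a smooth closed $(1,1)$-form representing $c_1(L)$. Set $\beta := \nabla - D'_{h_0}$. Since $\nabla$ and $D'_{h_0}$ are both $(1,0)$-connections on $L$, their difference is an $\mathrm{End}(L)$-valued $(1,0)$-form, and because $L$ is a line bundle, $\mathrm{End}(L) = \O_X$, so $\beta$ is just a scalar $(1,0)$-form, smooth on $X_0$. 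Using the local form of the logarithmic connection near $D$ — if $D_i = \{z_1 = 0\}$ locally then $\nabla e_L = (\tfrac{a_i\,dz_1}{z_1} + C^\infty)\,e_L$ in a holomorphic frame — together with the fact that $D'_{h_0}$ has no poles, we see that $\beta$ has at most logarithmic poles along $D$ with $\Res_{D_i}\beta = a_i$. In particular $\beta \in L^1_{\mathrm{loc}}$, hence defines a current on $X$, and $\beta \wedge \beta = 0$ since $\beta$ is scalar-valued.

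Next I record the curvature identity on $X_0$. Writing $D_K = \dbar + \nabla = D_0 + \beta$ and observing that the connection induced on $\mathrm{End}(L) = \O_X$ is the trivial flat one, the Leibniz rule gives, as an operator on $L$-valued forms on $X_0$,
\[
D_K^2 = D_0^2 + d\beta + \beta \wedge \beta = \Theta_0 + d\beta .
\]
Since $D_K^2 = 0$ on $X_0$ (recorded above), it follows that $\Theta_0 = -\,d\beta$ on $X_0$.

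Now I pass to currents on $X$. Away from $D$ the current $d\beta$ coincides with the smooth form $-\Theta_0$, while along each $D_i$ the one-variable Poincar\'e--Lelong formula $d\big(\tfrac{dz_1}{z_1}\big) = 2\pi i\,[\{z_1 = 0\}]$, applied transversally (this is where the simple normal crossing hypothesis enters), shows that the singular part of $d\beta$ equals $\sum_i a_i[D_i]$ up to the universal constant relating curvature forms to Chern classes. Taking de Rham cohomology classes — currents compute $H^\bullet(X,\CC)$, the class of $d\beta$ vanishes, $[D_i]$ is the fundamental class of $D_i$, and $\tfrac{i}{2\pi}[\Theta_0] = c_1(L)$ — one obtains the single \emph{complex} identity
\[
c_1(L) = \sum_i a_i\,[D_i] \qquad \text{in } H^{1,1}(X, \CC) .
\]
Since $c_1(L) \in H^{1,1}(X, \R)$ is a real class, separating real and imaginary parts immediately gives \eqref{class} and \eqref{class2}.

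The hard part will be the current-level step above: identifying the singular part of $d\beta$ along $D$ with (a multiple of) $\sum_i a_i[D_i]$. This rests on the simple normal crossing assumption — which reduces it to the one-variable Poincar\'e--Lelong formula transverse to each component — and on the fact that $\Res_{D_i}\beta = a_i$ is well defined, independently of the chosen local frame (cf.\ the Remark following \Cref{Deli}). An alternative that avoids currents is to restrict $(L, \nabla)$ to a generic complete intersection curve $C \subset X$ and invoke the classical residue theorem for logarithmic connections on the compact Riemann surface $C$, after checking that the residues along $D \cap C$ are the $a_i$; the current-theoretic argument, however, is cleaner in arbitrary dimension.
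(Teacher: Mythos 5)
Your proof is correct and is essentially the paper's own argument: the paper likewise fixes a globally smooth metric $h_0$, sets $\theta_0 := D'_{h_0}-\nabla$ (your $-\beta$), uses the holomorphy of $\nabla$ to derive $i\Theta_{h_0}(L)=i\dbar\theta_0$ on $X_0$, identifies the divisorial part of the current $i\dbar\theta_0$ from the residues $a_i$ via Poincar\'e--Lelong, uses that $\dbar\theta_0=d\theta_0$ is exact as a current to get the complex cohomological identity, and then separates real and imaginary parts. The only differences are cosmetic: you package the curvature step as $D_K^2=\Theta_0+d\beta=0$ rather than writing $[\dbar,\nabla]=0\Rightarrow i\Theta_{h_0}=i\dbar\theta_0$, and you mention (without carrying out) the alternative of restricting to a generic curve and invoking the residue theorem.
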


\begin{proof}
Fix a smooth metric $h_0$ on $L$. Then $D' _{h_0} = \nabla +\theta_0$ for some $(1,0)$-form $\theta_0$ with log pole along $\sum D_i$. Since $h_0$ is smooth on $(X, L)$, we know that
near $D_i = \{ z_i =0\}$, $\theta_0 = - a_i \frac{dz_i}{z_i} + C^\infty$, where $a_i$ is the residue of $\nabla$ along $D_i$. 

Now as $\nabla$ is holomorphic, we have $[\dbar, \nabla]=0$ on $X_0$. Then
\begin{equation}\label{curv}
	i\Theta_{h_0} (L) = [\dbar, D'_{h_0}]=i \dbar \theta_0 \qquad\text{ on } X_0 .
\end{equation}
On the other hand, since $\theta_0 = - a_i \frac{dz_i}{z_i} + C^\infty$, the divisorial part of the current $i\dbar \theta_0$ is $- \sum a_i [D_i]$.
Note that $\dbar \theta_0$ is exact in the sense of currents on $X$. Then  \eqref{curv} implies that  
$$c_1 (L) = [ i \Theta_{h_0} (L) ] = - i [\dbar \theta_0]_{\text{Div part} } =\sum a_i [D_i] .$$
Note that $a_i$ is a complex number, the RHS is written as $\sum \rel (a_i) [D_i] + i (\sum  \im (a_i) [D_i])$. As $c_1 (L) \in H^{1,1} (X,\mathbb R)$,  we obtain \eqref{class} and \eqref{class2}.
\end{proof}
\medskip

\noindent We are now ready to establish the existence of harmonic metrics for line bundles.

\begin{thm}\label{existence}
Let $(L, \nabla) \in M_{\rm DR} (X/D)$. Then there exists a unique harmonic metric (modulo constant) $h$ on $L$ such that the Lelong number of $h$ along $D_i$ is $\rel (a_i)$ for every $i$, where $a_i$ is the residue of $\nabla$ along $D_i$.

Moreover, if we have $c_1 (L) = \sum b_i D_i$ for some $b_i \in \mathbb R$, then there exists a unique harmonic metric (modulo constant) $h'$ on $L$ such that the Lelong number of $h'$ along $D_i$ is $b_i$ for every $i$.
	\end{thm}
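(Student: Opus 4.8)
Here is how I would prove \Cref{existence}.

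The plan is to deduce the first assertion from the second. Indeed, \Cref{residuerelation} gives $\sum_i\rel(a_i)[D_i]=c_1(L)$ in $H^{1,1}(X,\R)$, so the first claim follows from the ``Moreover'' part applied with $b_i=\rel(a_i)$, once one recalls that a metric is harmonic precisely when its curvature vanishes on $X_0$. Thus it is enough to prove the following: for any real numbers $(b_i)$ with $\sum_i b_i[D_i]=c_1(L)$ there is a metric $h$ on $L$, smooth on $X_0$ with analytic singularities along $D$, such that $i\Theta_h(L)=0$ on $X_0$ and the Lelong number of $h$ along $D_i$ equals $b_i$ for each $i$; and such $h$ is unique up to a positive multiplicative constant.

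For existence I would use a standard potential-theoretic construction. Fix a smooth metric $h_0$ on $L$ with curvature $\omega_L:=i\Theta_{h_0}(L)$, and for each $i$ a smooth metric $h_i$ on $\O_X(D_i)$, with curvature form $\omega_i$ and canonical section $s_i$, so that $i\ddbar\log|s_i|^2_{h_i}=[D_i]-\omega_i$ by the Lelong--Poincaré formula. As $\sum_i b_i\omega_i$ and $\omega_L$ are smooth forms both representing $c_1(L)$, the $\ddbar$-lemma on the compact Kähler manifold $X$ provides $f\in C^\infty(X,\R)$ with $\sum_i b_i\omega_i-\omega_L=i\ddbar f$. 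Setting
\[
h:=h_0\,e^{-f}\prod_i|s_i|_{h_i}^{-2b_i},
\]
one checks that $h$ is smooth on $X_0$ with logarithmic poles along $D$, that its Lelong number along $D_i$ is $b_i$, and that
\[
i\Theta_h(L)=\omega_L+i\ddbar f+\sum_i b_i\bigl([D_i]-\omega_i\bigr)=\sum_i b_i[D_i],
\]
a current supported on $D$; in particular $i\Theta_h(L)=0$ on $X_0$, so $h$ is harmonic.

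For uniqueness, let $h,h'$ be two such metrics with the same Lelong numbers $(b_i)$. Near a general point of $D_i=\{z_i=0\}$, the assumption of analytic singularities along $D$ together with $i\Theta_h(L)=0$ on $X_0$ forces the local weight of $h$ to be $b_i\log|z_i|^2$ plus a smooth pluriharmonic function, and the same for $h'$; hence the globally defined function $u$ on $X$ given by the difference of the two local weights is smooth and pluriharmonic near a general point of $D$, while it is already pluriharmonic on $X_0$. Since $u\in L^1_{\mathrm{loc}}(X)$ (the local weights being, up to sign, quasi-plurisubharmonic) and pluriharmonic functions are removable across analytic subsets of codimension $\geq 2$ (via Hartogs extension of the closed holomorphic $1$-form $\partial u$ and of a local primitive), $u$ is pluriharmonic on all of $X$, hence constant by compactness. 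Therefore $h'=c\,h$ for some constant $c>0$.

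The one genuinely delicate point is the local normal form used in the uniqueness step: that harmonicity together with the hypothesis of analytic singularities along the SNC divisor $D$ really forces the weight of $h$ to equal $\sum_i b_i\log|z_i|^2$ plus a pluriharmonic term near every stratum $\bigcap_{i\in I}D_i$, with no leftover smooth curvature contribution and with the coefficient along $D_i$ matching the prescribed Lelong number $b_i$. This rests on the SNC structure---so that, near a general point of $D_i$, an analytic singularity supported on $D$ is literally $c\log|z_i|^2+C^\infty$---and on the removable-singularity theorem for pluriharmonic functions. Everything else, namely the construction of $h$ and the rigidity of harmonic functions on a compact manifold, is routine once the $\ddbar$-lemma is available.
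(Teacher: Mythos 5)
Your existence argument is correct and takes a slightly different, more direct route than the paper's. The paper works through the Higgs field $\theta_0 = D'_{h_0}-\nabla$: it identifies the current $T = i\dbar\theta_0 + i\sum\im(a_i)[D_i]$, checks that $T$ is a real, $d$-closed, $\dbar$-exact $(1,1)$-current (using \Cref{residuerelation}), and then solves $i\ddbar\phi = -T$ on $X$ by the $\ddbar$-lemma for currents, reading off the Lelong numbers from the divisorial part of $-T$. You instead build the metric explicitly from the Lelong--Poincar\'e formula, $h = h_0\,e^{-f}\prod_i|s_i|_{h_i}^{-2b_i}$, which only requires the $\ddbar$-lemma for the \emph{smooth} closed form $\sum_i b_i\omega_i-\omega_L$. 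The two approaches are equivalent in substance — both ultimately rest on \Cref{residuerelation} and on the $\ddbar$-lemma on the compact K\"ahler $X$ — but yours avoids manipulating the singular current $T$, which is a small simplification, and it cleanly reduces the first assertion to the second, whereas the paper does the reverse (proves the first, then perturbs by $\psi$ with $i\ddbar\psi = \sum(b_i-\rel(a_i))[D_i]$ to get the second).

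One genuine difference worth highlighting: the paper's proof \emph{does not actually address uniqueness}; after reducing to the $\ddbar$-equation it simply says ``it is thus sufficient to solve the above equation,'' which only gives existence. Your argument fills that gap, and it is correct. Near a general point of $D_i$, the local weight of a harmonic metric with analytic singularities and prescribed Lelong number $b_i$ must be $b_i\log|z_i|^2$ plus a smooth pluriharmonic function (the smooth part is pluriharmonic precisely because the weight is taken in a local \emph{holomorphic} trivialization of $L$ and the curvature vanishes on $X_0$), so the difference of two such weights is a globally defined function, pluriharmonic off a codimension-$\ge 2$ analytic set, $L^1_{\mathrm{loc}}$, hence pluriharmonic on all of $X$ by Hartogs-type removability and therefore constant. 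You were right to single this out as the delicate step; it is the one place where the SNC hypothesis and the normalization of ``analytic singularities'' really do work.
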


\begin{proof}
	We fix a smooth metric $h_0$ on $L$ and set $\theta_0 := D' _{h_0}- \nabla$ be a smooth $(1,0)$-form of log pole along $\sum D_i$.
Let $h := h_0 e^{-\phi}$ for some real function $\phi$ of analytic singularity along $\sum D_i$. Thanks to \eqref{curvformula}, we have 
$$G_K= \dbar (\theta_0 - \partial\phi) .$$Then  $h$ is harmonic if and only if we have
$$i\ddbar \phi = - i\dbar \theta_0 \qquad\text{ on } X_0 .$$
To prove the theorem, it is thus sufficient to solve the above $\ddbar$-equation.

In Proposition \ref{residuerelation}, we showed that the divisorial part of $i \dbar \theta_0$ is $-\sum a_i [D_i]$. Together with \eqref{curv}, we know that the current 
$$T: = i \dbar \theta_0  + i ( \sum  \im (a_i) [D_i])$$ is a real $(1,1)$-current on $X$. Recall that \eqref{class2} says that the class of $( \sum  \im (a_i) [D_i])$ is $0$. Then $[T] =0 \in H^{1,1} (X)$. In other word,  $T$ is $\dbar$-exact in the sense of current on $X$. Moreover, thanks to \eqref{partialclose}, $T$ is also $d$-closed.  Then $T$ is a real $(1,1)$-current, $d$-closed and $\dbar$-exact on $X$. By the $\ddbar$-lemma,  we can always find a real function $\phi$ such that 
\begin{equation}\label{ddbareq}
	i\ddbar \phi = -T =-i\dbar \theta_0  - i ( \sum  \im (a_i) [D_i]) \qquad\text{ on } X.
\end{equation}
As a consequence, for the metric $h:= h_0 e^{-\phi}$, the curvature 
$$2G_K = \dbar (\theta_0- \partial \phi) \equiv 0 \qquad\text{ on }X_0 .$$ 
Then $h$ is a harmonic metric. Moreover, as the divisorial part of $-T$ is $\sum \rel (a_i) [D_i]$, \eqref{ddbareq} implies that the Lelong number of $h$ along $D_i$ equals to $\rel (a_i)$.

\medskip

For the second part,   by using Proposition \ref{residuerelation}, we know that 
$$\sum \rel (a_i) [D_i] =\sum b_i [D_i] =c_1 (L) \in H^{1,1}  (X, \mathbb R) .$$
Then there exists a real function $\psi$ such that $i\ddbar \psi = \sum b_i [D_i]  -\sum \rel (a_i) [D_i] $. Then $h':= h \cdot e^{-\psi}$ is also harmonic and the Lelong number of $h'$ along $D_i$ is $b_i$.
\end{proof}
\smallskip

\noindent Let $h$ be a harmonic metric on $L |_{X\setminus D}$. By using \eqref{curvformula}, we know that
$$\theta := D'_{h} -\nabla \in H^{0} (X, \Omega_{X} (\log D))$$ 
is a holomorphic $1$-form with log pole.  We have the following easy observation.

\begin{proposition}
	Let $\theta \in  H^{0} (X, \Omega_{X} (\log D))$.
	There exists a complex-valued function $\phi$ of $\log |s_{D}|$-type singularity along  $\sum D_i $ such that $\partial \phi \in  H^{0} (X, \Omega_{X} (\log D))$ and
	$$\theta -\partial \phi \in H^0 (X ,\Omega^1 _X) .$$
\end{proposition}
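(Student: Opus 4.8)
The plan is to build $\phi$ in the form $\phi = \sum_i \lambda_i \log|s_i|^2_{h_i} + \rho$, where $\lambda_i := \Res_{D_i}(\theta)$, the $s_i$ are canonical sections cutting out the $D_i$ equipped with smooth metrics $h_i$, and $\rho$ is a smooth correction term; then $\partial\phi$ will be a holomorphic logarithmic $1$-form with the same residues as $\theta$, so that $\theta-\partial\phi$ has vanishing residues and is therefore an honest holomorphic $1$-form by the residue exact sequence.

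The only input that is really used is the cohomological identity $\sum_i \lambda_i c_1(\O_X(D_i)) = 0$ in $H^1(X,\Omega^1_X)$, so I would establish that first. Since each prime component $D_i$ is compact and connected, the residue $\lambda_i \in H^0(D_i,\O_{D_i}) = \CC$ is a constant. The residue exact sequence
\[
0 \to \Omega^1_X \to \Omega^1_X(\log D) \xrightarrow{\ (\Res_{D_i})_i\ } \bigoplus_i \O_{D_i} \to 0
\]
has connecting homomorphism $\delta\colon \bigoplus_i H^0(D_i,\O_{D_i}) \to H^1(X,\Omega^1_X)$ carrying the $i$-th generator to $c_1(\O_X(D_i))$ (up to sign); since $(\lambda_i)_i = \Res(\theta)$ lies in the image of the residue map, it lies in $\ker\delta$, which gives the identity. (Alternatively, viewing $\theta$ as an $L^1_{\rm loc}$ current on $X$ one computes $\db\theta = 2\pi i\sum_i\lambda_i[D_i]$, exhibiting $\sum_i\lambda_i[D_i]$ as $\db$-exact, with the same conclusion.)

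Next, for each $i$ choose $s_i \in H^0(X,\O_X(D_i))$ with divisor $D_i$ and a smooth metric $h_i$ on $\O_X(D_i)$ with curvature $\Theta_{h_i}$, and set $\beta := \sum_i \lambda_i \Theta_{h_i}$. This is a smooth $d$-closed $(1,1)$-form whose Dolbeault class vanishes by the previous step, so the $\ddbar$-lemma on the compact Kähler manifold $X$ produces a smooth complex function $\rho$ with $\ddbar\rho = \beta$. Put $\phi := \sum_i \lambda_i \log|s_i|^2_{h_i} + \rho$; it is complex-valued, smooth on $X\setminus D$, and of $\log|s_D|$-type singularity along $D$ since $\rho$ is bounded. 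On $X\setminus D$, Poincaré–Lelong gives $\ddbar\log|s_i|^2_{h_i} = -\Theta_{h_i}$, hence $\ddbar\phi = -\beta + \beta = 0$, so $\partial\phi$ is $\db$-closed there, i.e. holomorphic on $X\setminus D$. In a local chart with $D_i = \{z_i = 0\}$, $s_i = z_i$ and $h_i = e^{-\varphi_i}$, $\varphi_i$ smooth, one has $\partial\phi = \sum_i \lambda_i \frac{dz_i}{z_i} + \eta$ with $\eta$ smooth; being moreover holomorphic off $D$, $\eta$ extends holomorphically across $D$ by Riemann's extension theorem. Thus $\partial\phi \in H^0(X,\Omega^1_X(\log D))$ with $\Res_{D_i}(\partial\phi) = \lambda_i$, so $\theta - \partial\phi$ is a global holomorphic logarithmic $1$-form all of whose residues vanish, whence $\theta - \partial\phi \in H^0(X,\Omega^1_X)$ by the residue sequence.

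The substantive point — and essentially the only one — is the vanishing $\sum_i\lambda_i c_1(\O_X(D_i)) = 0$, which is precisely what makes the smooth correction $\rho$ available; it follows formally from exactness of the residue sequence (or from the current computation for $\db\theta$), while everything else is routine bookkeeping about Poincaré–Lelong and Riemann extension.
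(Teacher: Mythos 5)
Your proof is correct, but it takes a more explicit route than the paper's. The paper's own argument is terse and stays entirely at the level of currents: it interprets $\theta$ as an $L^1_{\rm loc}$ current, observes (citing Deligne and Noguchi for $\partial\theta = 0$) that $\dbar[\theta]$ is a $d$-closed, $\dbar$-exact $(1,1)$-current supported on $D$, invokes the $\ddbar$-lemma for currents to get $\phi$ with $\dbar\theta = \dbar\partial\phi$, and concludes that $\theta - \partial\phi$ is a $\dbar$-closed $(1,0)$-current and hence holomorphic. You instead extract the cohomological content first --- the relation $\sum_i \lambda_i\, c_1(\mathcal O_X(D_i)) = 0$ --- and then build $\phi$ by hand as $\sum_i \lambda_i \log|s_i|^2_{h_i}$ plus a smooth $\ddbar$-potential, so that only the $\ddbar$-lemma for \emph{smooth} forms is needed. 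What your version buys is that the asserted $\log|s_D|$-type singularity of $\phi$ is manifest from the construction, whereas the paper obtains $\phi$ as an abstract current and leaves the (standard) elliptic-regularity upgrade to $\log$-plus-smooth implicit. What the paper's version buys is brevity, and that it reuses machinery the article has already set up for working with currents elsewhere. The two are logically equivalent: your class identity is exactly the content of ``$\dbar[\theta]$ is $\dbar$-exact'', and your Poincar\'e--Lelong step is the explicit inversion of $\ddbar$ on the divisorial current that the abstract lemma performs invisibly.
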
	

\begin{proof}
	Note that $\dbar \theta $ is a $d$-closed by \cite{Del71, Nog95} and $\dbar$-exact current. By the $\ddbar$-lemma, we have $\dbar \theta =\dbar \partial \phi $ on $X$ for some $\phi$. Then $\theta -\partial \phi$ is a $\dbar$-closed $(1,0)$-current on $X$. Then $\theta -\partial \phi \in H^0 (X ,\Omega^1 _X)$.
\end{proof}

\begin{lemme}\label{lem:auxiliary}
	Let $X$ be a compact K\"ahler manifold and  let $D$ be a simple normal crossing divisor. Let $A$ be a connected component of $M_{\rm DR}(X/D)$. 
	We decompose 
	$$D = \Delta_1 +\Delta_2 +\Delta_3,$$
	where $\Delta_1 +\Delta_3$ corresponds to the components such that $\Res_{\Delta_1 +\Delta_3} (L, \nabla)$ is invariant for every $(L, \nabla) \in A$.
	Here $\Delta_1$ (resp. $\Delta_3$) corresponds to the components such that $\Res_{\Delta_1} (L, \nabla)\in\mathbb Z$ ($\Res_{\Delta_3} (L, \nabla)\notin\mathbb Z$)
	
	We denote $\Delta_2 = \sum_{i=1}^k D_i$. Then there exists $(a_1, \cdots, a_k)\in \Q^k$ such that $\sum_{i=1}^{k}a_i c_1(D_i)=0$ and $a_i \neq 0$ for every $1\leq i \leq k$.
\end{lemme}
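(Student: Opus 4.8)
The plan is to extract the required rational relation from the constancy of $c_1(L)$ along the connected component $A$, combined with Proposition~\ref{residuerelation}.

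First I would record two preliminary facts. On the one hand, since the first Chern class of a holomorphic line bundle is a deformation invariant lying in the discrete group $H^2(X,\Z)$, the map $(L,\nabla)\mapsto c_1(L)$ is locally constant on $M_{\rm DR}(X/D)$; hence $c_1(L)$ equals a fixed class $c\in H^{1,1}(X,\R)$ for all $(L,\nabla)\in A$. On the other hand, for each prime component $D_i$ of $D$ the residue $\Res_{D_i}(L,\nabla)\in\CC$ is a well-defined function on $M_{\rm DR}(X/D)$, and by the very definition of the decomposition $D=\Delta_1+\Delta_2+\Delta_3$ it is constant on $A$ whenever $D_i$ is a component of $\Delta_1+\Delta_3$, whereas for each component $D_i$ of $\Delta_2$ there exist two points of $A$ on which $\Res_{D_i}$ takes distinct values.

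Next, given two points $(L,\nabla)$ and $(L',\nabla')$ of $A$, write $a_i$ and $a_i'$ for the residues of $\nabla$ and $\nabla'$ along $D_i$. By Proposition~\ref{residuerelation} we have $\sum_i\rel(a_i)\,c_1(D_i)=c_1(L)$ and $\sum_i\im(a_i)\,c_1(D_i)=0$ in $H^{1,1}(X,\R)$, the sums running over all prime components of $D$, and likewise for $(L',\nabla')$. Subtracting, using $c_1(L)=c=c_1(L')$ and $a_i=a_i'$ for every $D_i\subseteq\Delta_1+\Delta_3$, we obtain, with $\Delta_2=\sum_{i=1}^{k}D_i$,
\[
	\sum_{i=1}^{k}\rel(a_i-a_i')\,c_1(D_i)=0\qquad\text{and}\qquad\sum_{i=1}^{k}\im(a_i-a_i')\,c_1(D_i)=0.
\]
If we fix an index $i_0\in\{1,\dots,k\}$ and choose the two points so that $\Res_{D_{i_0}}$ differs on them (possible since $D_{i_0}\subseteq\Delta_2$), then $a_{i_0}\neq a_{i_0}'$, and so at least one of the two displayed identities is a linear relation among $c_1(D_1),\dots,c_1(D_k)$ whose coefficient in front of $c_1(D_{i_0})$ is nonzero.

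Finally I would conclude by a standard linear-algebra argument over $\Q$. Put $V:=\{(b_1,\dots,b_k)\in\Q^{k}:\ \sum_{i=1}^{k}b_i\,c_1(D_i)=0\}$, a $\Q$-vector subspace of $\Q^{k}$; since the $c_1(D_i)$ are rational classes, the space of real relations among them is $V\otimes_\Q\R$. The previous step therefore shows that for each $i_0$ the $i_0$-th coordinate functional is not identically zero on $V$, so $V\cap\{b_{i_0}=0\}$ is a proper subspace of $V$ (in particular $V\neq\{0\}$ as soon as $k\geq 1$). Since a vector space over the infinite field $\Q$ is never a finite union of proper subspaces, there is an element $(a_1,\dots,a_k)\in V$ lying outside $\bigcup_{i=1}^{k}\{b_i=0\}$; this tuple satisfies $\sum_{i=1}^{k}a_i\,c_1(D_i)=0$ with $a_i\neq0$ for every $i$, which is what we wanted. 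The only non-formal ingredient is the constancy of $c_1(L)$ on $A$, and the step most worth spelling out carefully is the passage from ``$\Res_{D_i}$ is non-constant on $A$ for each component of $\Delta_2$'' to ``the rational relation space of the $c_1(D_i)$ surjects onto every coordinate line''.
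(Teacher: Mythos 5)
Your proof is correct and follows essentially the same route as the paper: compare the residues of pairs of points in the connected component $A$ (using Proposition~\ref{residuerelation} and the constancy of $c_1(L)$), observe that for each component $D_{i_0}$ of $\Delta_2$ the resulting relation can be taken with nonvanishing $i_0$-coefficient, and use the rationality of the classes $c_1(D_i)\in H^2(X,\Q)$ to descend from real to rational relations. The only genuine difference is in the final step: where you invoke the abstract fact that a $\Q$-vector space cannot be a finite union of proper subspaces, the paper instead forms an explicit weighted sum $\sum_i \ep_i\bigl(\sum_j a_{i,j}c_1(D_j)\bigr)$ with rational weights $\ep_k\gg\cdots\gg\ep_1>0$ chosen so that no cancellation occurs in any coordinate. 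Your version is a bit cleaner and avoids the (somewhat informal) choice of hierarchy of weights, whereas the paper's version is constructive; both are perfectly adequate here since $k$ is finite.
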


\begin{proof}
	
	Let $(L, \nabla), (L', \nabla') $ be two points in $A$. Since $c_1 (L)= c_1 (L')$ and $\Res$ is invariant on $\Delta_1 +\Delta_3$. We have
	$$0 = \sum_{i=1}^k (\rel (\Res (L, \nabla)_{D_i})  - \rel (\Res (L', \nabla')_{D_i}))  [D_i] .$$

	We claim that for every $1\leq i \leq k$, there exists a $\{a_{i,j}\}_{j=1,\ldots, k} \subset \mathbb \Q^k$ such that 
	$$\sum_{j=1}^{k}  a_{i,j } c_1(D_j) =0 $$
	and $a_{i,i} \neq 0$.   In fact, since $\Res_{D_i}$ is not invariant, we can find two $(L,\nabla), (L',\nabla') \in A$ such that $\Res_{D_i} ((L,\nabla)) \neq \Res_{D_i} ((L',\nabla'))$. Then we can find some real numbers $b_{i,j}$ such that
	$$\sum_{j=1}^{k}  b_{i,j} c_1(D_j) =0  $$ 	
	and $b_{i,i}\neq 0$.  Note that $c_1(D_1),\ldots,c_1(D_k)\in H^2(X,\Q)=H^2(X,\Z)\otimes_{\Q} \Q$. Then for every $i$, we can find $\{a_{i,j}\}_{j=1,\ldots, k} \subset \mathbb \Q^k$ such that 
	$$\sum_{j=1}^{k}  a_{i,j } c_1(D_j) =0 $$
	and $a_{i,i} \neq 0$. 
	
	By the above result, we can find rational numbers $\ep_k\gg \cdots\gg \ep_1>0$ such that for the sum 
	$$
	\ep_k  (\sum_{j=1}^{k}a_{k,j}c_1(D_j))+\cdots+ \ep_1 (\sum_{j=1}^{k}a_{1,j}c_1(D_j))=:\sum_{j=1}^{k}a_j'c_1(D_j),
	$$
	we have $a_j'\neq 0$ for any $j\in \{1,\ldots,k\}$.   
	The lemma  is thus proved. 
\end{proof}

\begin{proposition}\label{specialhar}
	Let $X$ be a compact K\"ahler manifold and let $D=\sum_{i=1}^{k}D_i$ be a simple normal crossing divisor. Let  $\Res: M_{\rm DR}(X/D)\to \mathbb C^k$ be the residue map, namely for every $(L, \nabla) \in A$, 
	$$\Res(L, \nabla) := (\Res_{D_1} (L,\nabla), \cdots, \Res_{D_k} (L,\nabla)) .$$ 
	We shall decompose 
	$$D = \Delta_1 +\Delta_2 +\Delta_3$$ 
	as in Lemma \ref{lem:auxiliary}, namely $\Delta_1 +\Delta_3$ corresponds to the components such that $\Res_{\Delta_1 +\Delta_3} (L, \nabla)$ is invariant for every $(L, \nabla) \in A$.
	Here $\Delta_1$ (resp. $\Delta_3$) corresponds to the components such that $\Res_{\Delta_1} (L, \nabla)\in\mathbb Z$ ($\Res_{\Delta_3} (L, \nabla)\notin\mathbb Z$)

Let $A$ be any connected component of $M_{\rm B}(X_0)$. 	Then for any representation $\varrho\in A$, there exists some  $(L,\nabla)\in M_{\rm DR}(X/D)$ whose monodromy representation is $\varrho$ and there is a (non-unique) harmonic metric $h$ for $(L,\nabla)$ such that   
\begin{itemize}
	\item For every $\beta\in H^0 (X, \Omega_X ^1 (\log D))$, $\Res\beta =0$ along $\Delta_1 +\Delta_3$.
	
	\item  $\Res\theta =0$ on $\Delta_1+\Delta_3$ and $\Res\theta \neq 0$ on every component of $\Delta_2$.
	
	\item The Lelong number $\nu (h_L) \in ]-1, 0] \cap \mathbb Q$ on every component of $D$.  Moroever,  $\nu (h_L)=0$ on $\Delta_1$ and  $\nu (h_L) \in ]-1,0[ \cap \mathbb Q$ for every component of $\Delta_2 +\Delta_3$.
	\end{itemize}
\end{proposition}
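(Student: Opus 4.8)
The plan is to build the logarithmic flat bundle $(L,\nabla)$ together with its harmonic metric $h$ explicitly, keeping tight control of the residues $a_i:=\Res_{D_i}(L,\nabla)$ and of the Lelong numbers $\nu_i:=\nu(h,D_i)$, and then to read off the three assertions. \emph{Choice of a representative.} By Deligne's surjectivity (\cite[II, 6.10]{Del}, cf.\ \eqref{surjj}) we may fix $(L_0,\nabla_0)\in M_{\rm DR}(X/D)$ with monodromy $\varrho$. Since $\rho$ is a group homomorphism with discrete kernel $\Z^m$, the connected component of $M_{\rm DR}(X/D)$ containing any lift of $\varrho$ surjects onto the connected component $A$ of $M_{\rm B}(X_0)$ containing $\varrho$; consequently the decomposition $D=\Delta_1+\Delta_2+\Delta_3$ of \Cref{lem:auxiliary} depends only on $A$, and may be described as follows: writing $\gamma_i$ for the boundary loop around $D_i$, the components of $\Delta_1+\Delta_3$ are exactly those with $[\gamma_i]$ torsion in $H_1(X_0,\Z)$ (equivalently $\varrho(\gamma_i)$ constant on $A$, equivalently $\Res_{D_i}$ invariant along the corresponding component of $M_{\rm DR}$), among which $\Delta_1$ has $\varrho(\gamma_i)=1$ and $\Delta_3$ has $\varrho(\gamma_i)\ne 1$. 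Twisting $(L_0,\nabla_0)$ by suitable $\prod_i\mathcal I_{D_i}^{n_i}$ — which changes neither $\varrho$ nor the decomposition, and shifts each residue by an integer — we may assume $\rel(a_i)\in\,]-1,0]$ on $\Delta_1+\Delta_3$; on $\Delta_2$ we retain this twisting freedom.

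\emph{The first bullet, and the residues on $\Delta_1+\Delta_3$.} Let $\beta\in H^0(X,\Omega^1_X(\log D))$. As global logarithmic $1$-forms on a compact Kähler manifold are $d$-closed (\cite{Del71,Nog95}), $\nabla+t\beta$ is again a logarithmic flat connection on $L$ for every $t\in\CC$, so $t\mapsto(L,\nabla+t\beta)$ is a path inside the component of $(L,\nabla)$; invariance of $\Res_{D_i}$ along $\Delta_1+\Delta_3$ therefore forces $\Res_{D_i}\beta=0$ there, which is the first bullet. Moreover, on $\Delta_1+\Delta_3$ the eigenvalue $\varrho(\gamma_i)$ is a fixed root of unity, so the (invariant) residue $a_i$ is rational, and real — compatibly with \eqref{class2} of \Cref{residuerelation}. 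With the normalisation above this gives $a_i=0$ on $\Delta_1$ and $a_i\in\,]-1,0[\cap\Q$ on $\Delta_3$.

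\emph{Construction of the harmonic metric.} We prescribe the Lelong numbers $\nu_i$ and appeal to the second part of \Cref{existence}, which produces a harmonic metric $h=h_L$ on $L$ with $\nu(h_L,D_i)=\nu_i$ as soon as $\sum_i\nu_i[D_i]=c_1(L)$. On $\Delta_1+\Delta_3$ we set $\nu_i:=a_i$ (which lies in $]-1,0]\cap\Q$, and equals $0$ on $\Delta_1$). By \eqref{class} of \Cref{residuerelation} and the reality of $a_i$ on $\Delta_1+\Delta_3$, we have $c_1(L)=\sum_j\rel(a_j)[D_j]$, so $\sum_i\nu_i[D_i]=c_1(L)$ is equivalent to $\sum_{i\in\Delta_2}\nu_i[D_i]=\gamma$ with $\gamma:=c_1(L)-\sum_{i\in\Delta_1\cup\Delta_3}a_i[D_i]=\sum_{i\in\Delta_2}\rel(a_i)[D_i]$, a rational class in the $\Q$-span of $\{[D_i]\}_{i\in\Delta_2}$; hence this equation has a rational solution. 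Adding a small rational multiple of the relation $\sum_{i\in\Delta_2}\lambda_i[D_i]=0$ with all $\lambda_i\ne 0$ furnished by \Cref{lem:auxiliary}, and retwisting the $\Delta_2$-components by $\prod\mathcal I_{D_i}^{n_i}$ to slide each coordinate into $]-1,0[$, we obtain a rational solution $(\nu_i)_{i\in\Delta_2}\subset\,]-1,0[$, and a further generic choice of the multiple ensures $\nu_i\ne a_i$. With $\sum_i\nu_i[D_i]=c_1(L)$ now in force, \Cref{existence} yields $h_L$ with $\nu(h_L)\in\,]-1,0]\cap\Q$ on every component of $D$, $=0$ on $\Delta_1$, and $\in\,]-1,0[\cap\Q$ on $\Delta_2+\Delta_3$.

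\emph{The Higgs field, and the main obstacle.} For the metric $h_L$ just built, $\theta:=D'_{h_L}-\nabla$ is a global holomorphic logarithmic $1$-form, and comparing leading terms along $D_i$ yields $\Res_{D_i}\theta=\nu_i-a_i$; by the previous paragraphs this vanishes on $\Delta_1+\Delta_3$ (also directly from the first bullet applied to $\beta=\theta$) and is nonzero on every component of $\Delta_2$. Together with the first bullet this establishes the second assertion, the third having been proved above. The delicate point is the combinatorial step of the third paragraph: one must simultaneously respect the forced values $\nu_i=a_i$ on $\Delta_1+\Delta_3$ and the integrality of $\sum_i\nu_i[D_i]$, push every $\Delta_2$-Lelong number strictly inside $]-1,0[$ while keeping it rational, and keep $\nu_i\ne a_i$ on $\Delta_2$. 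This is a lattice-point argument that relies essentially on the relation supported on all of $\Delta_2$ with nonzero coefficients from \Cref{lem:auxiliary} and on the freedom to retwist the $\Delta_2$-components; the conceptual input that makes these constraints compatible is the identification of $\Delta_1+\Delta_3$ with the boundary divisors whose loop is torsion (equivalently, those killed by every cohomological relation among the $[D_j]$), which forces the residues there to be rational and real.
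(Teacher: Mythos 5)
Your proof is correct and follows essentially the same route as the paper: Deligne surjectivity plus $\Z^m$-twisting to normalize residues, \Cref{existence} to manufacture a harmonic metric with prescribed rational Lelong numbers, the first bullet applied to $\theta$ to kill $\Res\theta$ on $\Delta_1+\Delta_3$, and a rational perturbation along the all-nonzero relation from \Cref{lem:auxiliary} to push the $\Delta_2$-Lelong numbers into $]-1,0[\setminus\{a_i\}$. The one genuine variation is cosmetic: you establish $a_i\in\Q$ on $\Delta_1+\Delta_3$ a priori via the observation that invariance of $\Res_{D_i}$ on the component is equivalent to $[\gamma_i]$ being torsion (hence $\varrho(\gamma_i)$ a root of unity), whereas the paper obtains it a posteriori from the identity $\Res\theta=\nu_i-a_i=0$ once the rational Lelong numbers have been chosen — both derivations are sound.
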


\begin{proof}
	
	For the first point, by \cite{Del71, Nog95}, we know that $\dbar \beta$ is $d$-closed and $\dbar$-exact on $X$. Then $\dbar \beta$ is $\ddbar$-exact.
	On the other hand,  we have $\dbar \beta= \sum \Res_{D_i} (\beta) [D_i]$. Then we have
	$$ \sum \Res_{D_i} (\beta)  c_1 ([D_i])=0  .$$
We now prove that $\Res(\beta)=0$ over $\Delta_1 +\Delta_3$.
Let $a_i \in \mathbb R$ such that $\sum a_i [D_i] =0$.  Let $L_0$ be a trivial line bundle on $X$. Then $e^{-\prod a_i \log |s_{D_i}|^2}$ defines a harmonic metric on $L_0$. It induces a $(1,0)$-connection $D'$. Then $(L \otimes L_0, \nabla \otimes D')$ is a flat log holomorphic connection whose residue is 
$$\Res(L \otimes L_0, \nabla \otimes D' )= (\Res_{D_1} (L,\nabla) + a_1, \cdots, \Res_{D_k} (L,\nabla) +a_k) .$$
In particular,  the pair
$$ (\Res_{D_1} (L,\nabla) + \rel (\Res_{D_i} (\beta)), \cdots, \Res_{D_k} (L,\nabla) +\rel (\Res_{D_k} (\beta)))$$ and 
$$ (\Res_{D_1} (L,\nabla) + \im (\Res_{D_i} (\beta)), \cdots, \Res_{D_k} (L,\nabla) +\im (\Res_{D_k} (\beta)))$$
belong to the image of $\Res: A \to \mathbb C^k$.
Then $\Res(\beta)=0$ over $\Delta_1 +\Delta_3$.
	
	\medskip
	
	For the second part,	 
we first fix a $(L, \nabla) \in M_{\rm DR} (X/D)$ such that its monodromy representation is $\rho$. 
By applying \Cref{lem:auxiliary}, there exists a harmonic metric $h_L$ such that the Lelong number $\nu (h_L)$ is rational on every component of $D$.
Set $\theta := \nabla - D'_{h_L}$.  By applying the first point, we know that $\Res\theta =0$ on $\Delta_1 +\Delta_3$. We assume that $\Delta_2 = \sum_{i \in I} [D_i]$. 
By \Cref{lem:auxiliary}, there exists $b_i\in \Q$ such that $\sum_{i\in I} b_i c_1 [D_i]=0$ and $b_i \in \mathbb Q^\star$.
Then $h_L \cdot e^{- \ep \sum_{i\in I} b_i\log |s_i|^2}$ defines also a harmonic metric on $L$.  For $\ep \in \mathbb Q$ generic, we know that $\Res(\nabla - D' _{h_L \cdot e^{- \ep \sum_{i\in I} b_i\log |s_i|^2}} ) \neq 0$ on $\Delta_2$. 
Replacing $h_L$ by $h_L \cdot e^{- \ep \sum_{i\in I} b_i\log |s_i|^2}$, we know that $h_L$ satisfies the second point.

\medskip

For the third point, by construction, $\nu (h_L)$ is rational on every component of $D$. 
	Then $i\Theta_h(L)=\sum_{i=1}^{k}a_i[D_i]$ where $a_i \in \mathbb Q$.  Replacing $L$ by $L\otimes  \O_X(-\sum_{i=1}^{k}\lfloor a_i+1\rfloor D_i))$, we may assume that $a_i\in ]-1,0]\cap \Q$ for each $i$. Now as the monodromy is zero along $\Delta_1$, we know that $a_i =0$ on $\Delta_1$. 
	
	For $\nu (h_L)$ along $\Delta_2$,   since $b_i \neq 0$ for every $i\in I$, for a generic $\ep\in \mathbb Q$, we know that the Lelong number of $h_L$ along $\Delta_2$ is not an integer. Then we know that $a_i \in ]-1,0[ \cap \mathbb Q$. 
\end{proof}
\smallskip

\begin{remark}
In general, we cannot modify further the component $\Delta_1$. For example, let $X$ be a compact Kähler manifold and let $D_1 \subset X$ be a smooth irreducible divisor. 
We consider $M_{\rm DR} (X/D_1)$.  Let $A$ be a connected component of $M_{\rm DR} (X/D_1)$ such that for every $(L, \nabla) \in A$, $c_1 (L) = [D_1] \in H^{1,1} (X, \mathbb R)$. Then the only harmonic metric on $L$ is the singular metric $h_L$ such that $i\Theta_{h_L} (L) =[D_1]$.  We cannot find a harmonic metric such that the Lelong number along $D_1$ is not integer.

Note that in this case,  for every $(L, \nabla) \in A$, the residue of $(L, \nabla)$ is an integer and the monodromy of  $(L, \nabla)$ along $D_1$ is trivial. Let $\tau$ be the local system of $(L, \nabla)$. If we would like to study the cohomology of $H^\bullet (X_0, \tau )$, since $\tau$ is trivial along $D_1$, if $X$ is simply connected, we have $H^\bullet (X_0, \tau )\simeq H^\bullet (X_0, \mathbb C)$. In general, it is not isomorphic to $H^\bullet (X, \mathbb C)$. In other words, the $\log D_1$ in the hypercohomology $\mathbb H^\bullet (X, \Omega^\bullet (\log D_1) )$ is necessary. 
\end{remark}
\smallskip

\begin{remark}
As we can see from the previous results, the three components $\displaystyle (\Delta_i)_{i}$ of the divisor $D$ have different geometric interpretations. Loosely speaking, the singularities of the metrics we will consider in the next subsections reflect this. 
\end{remark}

\medskip

\section{Technicalities I: elliptic estimates for $\Delta_K$ }\label{s-tech-I}

The aim of this section is to establish some elliptic estimates for the Laplace operator $\Delta_K$. 
In this section, we are always in the following setting.

Let $X$ be a compact K\"ahler manifold and let $D=\sum_{i=1}^{m}D_i$ a SNC divisor on $X$. Write $X_0 :=X-\sum D_i$. A logarithmic flat bundle $(L,\nabla)$ consists of a holomorphic line bundle $L$ on $X$ and a logarithmic holomorphic connection
$$
\nabla:L\to L\otimes \Omega_X^{1}(\log D)
$$
so that $\nabla^2=0$. 
Recall that thanks to Proposition \ref{specialhar}, after replacing $(L, \nabla)$ by another $(L', \nabla')$ with the same monodromy representation, 
we can find a harmonic metric $h_L$ on $L$ such that we have the decomposition 
\[D= \Delta_1 + \Delta_2 +\Delta_3\]
with the following properties.
Set $\theta:= D'_{h_L} -\nabla \in H^0 (X, \Omega^1_X (\log D))$. 
\begin{property}\label{property} We assume that the following hold.
	\begin{enumerate}
		
		\item[(1)] At a generic point of $\Delta_1$ we have  $\Res \theta =0$ and $h_L$ is smooth. 
		
		\item[(2)] We have $\Res \theta \neq 0$ for every component of $\Delta_2$, and $\Res \theta = 0$ for every component of $\Delta_3$.
		
		\item[(3)] The Lelong number of $h_L$ belongs to $]-1,0[ \cap \mathbb Q$ for every component of $\Delta_2+ \Delta_3$.
		
	\end{enumerate} 
\end{property}

\subsection{Laplace operators on smooth/log forms}\label{ellipticest}

Here we first define two types of Laplace operators $\Delta_K$, acting on $L$-valued smooth forms, and on 
$L$-valued smooth forms with log poles along the divisor $\Delta_1$, respectively. 

For the latter, our strategy is to take advantage of the vector bundle structure of the logarithmic tangent bundle $T_X\langle \Delta_1\rangle$. More precisely, a $(p, q)$-form with log poles is seen as $(0, q)$-form with values in the vector bundle $\Omega^p_X(\log \Delta_1)$. 
If we disregard $L$, as soon as we fix a Hermitian metric on $T_X\langle \Delta_1\rangle$ we can define the adjoint of the exterior derivative 
\[\partial: \Omega^p_X(\log \Delta_1)\to \Omega^{p+1}_X(\log \Delta_1)\]  and a corresponding Laplace operator.
\smallskip

Next we show that Hodge decomposition in the presence of the divisor $D$ still holds:
this will be our main result in Section \ref{HodgeDek}. Our arguments  mainly rely on the new a-priori inequalities we establish in the next subsection, cf. \ref{apriori}. This is the reason why we need a detailed analysis of the properties of the metrics we introduce next next. 
\smallskip

\noindent 

\subsubsection{Metric structure on the (orbifold) tangent bundle of $X_0$}\label{metricTX} Let $\omega_D$ be a metric with Poincaré singularities along the support of the divisor $\Delta_2$, and with conic singularities of angles 
$\displaystyle 2\pi(1-1/k)$ on each component of $\Delta_3$, where $k\geq 3$ is a positive integer, sufficiently divisible.  
In particular, $\omega_D$ is smooth at a generic point of $\Delta_1$.
\smallskip

More specifically, we set $\displaystyle \Delta_2 := \sum_{i=1}^m D_i$ and $ \Delta_3 := \sum_{i=m+1}^N D_i.$
Then we define 
\begin{equation}\label{m1} 
	\omega_{D}:= \omega_X+ \sum_{i=1}^m\sqrt{-1}\ddbar |s_i|^{2/k} - \sum_{i=m+1}^N\sqrt{-1}\ddbar\log\log \frac{1}{|s_i|^{2}}
\end{equation} 
where $\omega_X$ is a fixed Kähler metric on $X$, sufficiently positive so that $\omega_D$ above is greater than
$1/2\omega_X$. In the equality \eqref{m1} we denote by $s_i$ the defining equation for $D_i$, measured with respect to a non-singular metric
on $\O(D_i)$. The motivation for working with this precise type of metric will become clear later, in Theorem \ref{hodgedec}.
\medskip

\noindent Our next goal is to estimate the coefficients of the metric $\omega_D$ and their inverses. By a direct calculation, one easily sees that the following equalities 
\begin{equation}\label{m2}\ddbar |s_i|^{2/k}= \frac{1}{k^2}\frac{\langle D's_i, D's_i\rangle}{|s_i|^{2-2/k}}- \frac{|s_i|^{2/k}}{k}\theta_i\end{equation}
and 
\begin{equation}\label{m3}-\ddbar\log\log \frac{1}{|s_i|^{2}}= \frac{\langle D's_i, D's_i\rangle}{|s_i|^{2}\log ^2{|s_i|^{2}}}+ \frac{\theta_i}{\log {|s_i|^{2}}} \end{equation}
hold. Here we denote by $\theta_i$ the curvature form of a fixed, non-singular metric on $\mathcal O(D_i)$.
\medskip

\noindent We have the following statement.

\begin{lemme}\label{coeff}
Let $\Omega\subset X$ be an open subset. Assume that we can choose the coordinate system $z= (z_1,\dots, z_n)$ on $\Omega$ such that 
\[\Delta_2\cap \Omega= (z_1\dots z_p= 0), \qquad \Delta_3\cap \Omega= (z_{p+1}\dots z_{p+q}= 0).\]
Let $(\omega_{i\ol j})_{i,j}$ be the coefficients of the metric $\omega_D$ with respect to $z$. Then the following are true.
\begin{enumerate}
\smallskip

\item[\rm (1)] There exist smooth functions $A_k, A_{\ol m}, C_l$ and $\wt A_k, \wt A_{\ol m}, \wt C_l$ so that we have
\begin{equation}\label{m3}
\begin{split}
\omega_{i\ol j}(z)= & g_{i\ol j}+ \frac{\delta_{ij}\delta_{i,p}}{|z_i|^2\log^2|z_i|^2}+ \frac{\delta_{i,p}A_i}{z_i\log^2|z_i|^2}+
\frac{\delta_{j,q}A_{\ol j}}{\ol z_j\log^2|z_j|^2}\\
+ & \frac{\delta_{ij}\delta_{i,q}e^{\varphi_i}}{|z_i|^{2-2/k}}+ \frac{\delta_{i,q}|z_i|^{2/k}\wt A_i}{z_i}+ 
\frac{\delta_{j,q}|z_j|^{2/k}\wt A_{\ol j}}{\ol z_j}\\
+ & \sum_{l=1}^p\frac{C_l+ C_{l, 1}\log|z_l|^2}{\log^2|z_l|^2}+ \sum_{l= p+1}^{p+q}\wt C_l |z_l|^{2/l}\\
\end{split}
\end{equation}
where the matrix $(g_{\beta\ol \alpha})$ is positive definite and has smooth coefficients. We denote by $\delta_{ij}$ the Kronecker symbol, and $\delta_{i,p}$ equals 1 if $i\in \{1,\dots, p\}$ and zero if not. Also, $\delta_{i,q}$ equals 1 if $i\in \{p+1,\dots, p+q\}$ and zero if not.
\smallskip

\item[\rm (2)] Let $(\omega^{\ol j i})_{i, j}$ be the inverse of $(\omega_{i\ol j})$. Then for any $i, j\in \{1, \dots, p\}$ such that $i\neq j$ we have 
\[\omega^{\ol i i}(z)= (1+  \psi_i(z))|z_i|^2\log^2|z_i|^2, \qquad \omega^{\ol j i}(z)= \psi_{ij}(z)z_i\ol z_j.\]
The functions $\psi_i, \psi_{ij}$ are smooth on $\Omega\setminus D$, we have $|\psi_i|< 1/2$ and 
\[\sup_{\Omega\setminus D}(|d\psi_i|_{\omega_D}+ |d\psi_{ij}|_{\omega_D})< \infty.\] 
\smallskip

\item[\rm (3)] For each $i\in \{1, \dots, p\}$ and $j\in \{p+1,\dots, p+q\}$ we have 
\[ \omega^{\ol j i}(z)= \psi_{ij}(z)z_i\ol z_j.\]
\smallskip

\item[\rm (4)] For each $i\in \{1, \dots, p\}$ and $j\geq p+q+1$ we have 
\[ \omega^{\ol j i}(z)= \psi_{ij}(z)z_i.\]
\smallskip

\item[\rm (5)] For each $i, j\in \{p+1, \dots, p+q\}$ such that $i\neq j$ we have 
\[\omega^{\ol i i}(z)= (1+  \psi_i(z))|z_i|^{2-2/k}, \qquad \omega^{\ol j i}(z)= \psi_{ij}(z)z_i\ol z_j.\]

\item[\rm (6)] For each $i\in \{p+1, \dots, p+q\}$ and $j\geq p+q+1$ we have 
\[\omega^{\ol j i}(z)= \psi_{ij}(z)z_i.\]
\smallskip

\item[\rm (7)] Finally, in case both indexes $i\neq j$ are bigger than $p+1$, then we have 
\[\omega^{i\ol i}(z)= 1+  \psi_i(z), \qquad \omega^{i\ol j}(z)= \psi_{ij}(z)\]
where the functions $\psi$ have the same regularity properties as in {\rm (2)} and moreover are small in $\mathcal C^1$-norm.
\end{enumerate}
\end{lemme}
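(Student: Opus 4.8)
The plan is to obtain (1) by substituting the local identities \eqref{m2} (for $\sqrt{-1}\ddbar|s_i|^{2/k}$) and its companion for $-\sqrt{-1}\ddbar\log\log\frac{1}{|s_i|^2}$ into the definition \eqref{m1}, and then to deduce (2)--(7) by inverting the Gram matrix of $\omega_D$ in a frame rescaled according to the Poincaré/conic blow-up. Everything being local near $D\cap\Omega$, I would first shrink $\Omega$ and, after an additional change of the coordinates $z_i$ in the smooth directions $i>p+q$, assume that on $\Omega\sm D$ each singular factor $\tfrac{1}{|z_i|^2\log^2|z_i|^2}$ ($i\le p$) resp. $\tfrac{1}{|z_i|^{2-2/k}}$ ($p<i\le p+q$) dominates the bounded coefficients, and that the smooth part of $\omega_D$ is in normal form at the centre of $\Omega$; here one uses that $x\mapsto x^2\log^2x$ and $x\mapsto x^{2/k}$ tend to $0$ as $x\to0$. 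On the complement of a neighbourhood of $D$ the metric $\omega_D$ has bounded geometry and the assertions are immediate.

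\emph{Step 1 (the coefficients).} Writing $|s_i|^2=|z_i|^2e^{-\phi_i}$ with $\phi_i$ smooth, one has $\langle D's_i,D's_i\rangle=\sqrt{-1}\,|z_i|^2e^{-\phi_i}\bigl(\tfrac{dz_i}{z_i}-\partial\phi_i\bigr)\wedge\overline{\bigl(\tfrac{dz_i}{z_i}-\partial\phi_i\bigr)}$. Inserting this into \eqref{m2} shows that the conic part $\sqrt{-1}\ddbar|s_i|^{2/k}$ has leading term $\tfrac{e^{\varphi_i}}{|z_i|^{2-2/k}}\sqrt{-1}\,dz_i\wedge d\bar z_i$ with $e^{\varphi_i}:=\tfrac1{k^2}e^{-\phi_i/k}$, cross terms carrying a factor $\tfrac{|z_i|^{2/k}}{z_i}$ or $\tfrac{|z_i|^{2/k}}{\bar z_i}$ times a smooth function, and a smooth remainder; likewise $-\sqrt{-1}\ddbar\log\log\tfrac1{|s_i|^2}$ produces the Poincaré leading term $\tfrac{\sqrt{-1}\,dz_i\wedge d\bar z_i}{|z_i|^2\log^2|z_i|^2}$, cross terms $\tfrac{1}{z_i\log^2|z_i|^2}$ or $\tfrac1{\bar z_i\log^2|z_i|^2}$ times smooth, and terms of the shape $\tfrac{C+C_1\log|z_i|^2}{\log^2|z_i|^2}$ coming from $\tfrac{\theta_i}{\log|s_i|^2}$. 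Adding $\omega_X$ and collecting leading and subleading contributions yields the expansion claimed in (1), the smooth positive-definite matrix $(g_{\beta\bar\alpha})$ absorbing $\omega_X$ together with all bounded remainders.

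\emph{Step 2 (perturbative inversion).} Set $\Lambda=\mathrm{diag}(\lambda_1,\dots,\lambda_n)$ with $\lambda_i=\bigl(|z_i|\log\tfrac1{|z_i|^2}\bigr)^{-1}$ for $i\le p$, $\lambda_i=|z_i|^{-(1-1/k)}$ for $p<i\le p+q$, and $\lambda_i=1$ for $i>p+q$, and put $\widehat\omega:=\Lambda^{-1}\omega_D\Lambda^{-1}$. By Step 1 (and the normalisation of the smooth block), $\widehat\omega=\mathrm{Id}+R$ with every entry of $R$ of size $O\bigl(\tfrac1{\log(1/|z_i|^2)}+|z_i|^{1/k}+|z|\bigr)$; after shrinking $\Omega$ this gives $\|R\|\le\tfrac13$ on $\Omega\sm D$, hence $\widehat\omega^{-1}=\mathrm{Id}+S$ with $S=\sum_{\ell\ge1}(-R)^\ell$ and $\|S\|\le\tfrac12$. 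Since $\omega_D^{-1}=\Lambda^{-1}\widehat\omega^{-1}\Lambda^{-1}$, one reads off $\omega^{\bar j i}=\lambda_i^{-1}\lambda_j^{-1}(\widehat\omega^{-1})_{i\bar j}$; taking $\psi_i:=S_{i\bar i}$ (so $|\psi_i|\le\tfrac12$) and letting $\psi_{ij}$ be $(\widehat\omega^{-1})_{i\bar j}$ times the bounded smooth unit contained in $\lambda_i^{-1}\lambda_j^{-1}$, one obtains exactly the diagonal formulas $\omega^{\bar i i}=(1+\psi_i)\lambda_i^{-2}$ — that is $(1+\psi_i)|z_i|^2\log^2|z_i|^2$, $(1+\psi_i)|z_i|^{2-2/k}$, or $1+\psi_i$ in the three blocks — and the off-diagonal formulas of (2)--(7), the prefactor $\lambda_i^{-1}\lambda_j^{-1}$ supplying precisely the monomial $z_i\bar z_j$, $z_i$, or $1$ according to which of the three blocks contains $i$ and which contains $j$.

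\emph{Step 3 ($\mathcal C^1$ estimates, and the main obstacle).} Since $\psi_i$ and $\psi_{ij}$ are universal polynomials in the entries of $R$ (and the Neumann series for $S$ converges in $\mathcal C^1$ once $R$ is $\mathcal C^1$-controlled), it suffices to bound the $\omega_D$-differentials of the entries of $R$; this reduces to explicit estimates such as $\bigl|\tfrac{dz_i}{z_i}\bigr|_{\omega_D}\le C\log\tfrac1{|z_i|^2}$ on the Poincaré block, $\bigl|\tfrac{dz_i}{z_i}\bigr|_{\omega_D}\le C|z_i|^{-1/k}$ on the conic block, and $\bigl|dz_i\bigr|_{\omega_D}\le C$ on the smooth block, the key point being that each singular monomial appearing in $R$ is accompanied by enough compensating powers of $\log\tfrac1{|z_i|^2}$ or $|z_i|^{1/k}$ that its $\omega_D$-differential stays bounded — e.g. $d\bigl(\tfrac1{\log(1/|z_i|^2)}\bigr)=\tfrac1{(\log|z_i|^2)^2}\bigl(\tfrac{dz_i}{z_i}+\tfrac{d\bar z_i}{\bar z_i}\bigr)$ has $\omega_D$-norm tending to $0$ along $D_i$. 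This yields $\sup_{\Omega\sm D}(|d\psi_i|_{\omega_D}+|d\psi_{ij}|_{\omega_D})<\infty$ and, after shrinking $\Omega$, the $\mathcal C^1$-smallness of the $\psi$'s asserted in (7). The substantive difficulty is the bookkeeping in Steps 2--3: one must check that in the rescaled frame \emph{every} mixed block — Poincaré$\times$conic, Poincaré$\times$smooth, conic$\times$smooth — contributes only small, $\mathcal C^1$-controlled entries to $R$, so that one uniform perturbation estimate applies, and that conjugating $\widehat\omega^{-1}$ back by $\Lambda^{-1}$ redistributes the monomials $z_i,\bar z_j$ exactly as recorded in the seven cases. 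This is precisely where the particular shape of $\omega_D$ in \eqref{m1}, together with a judicious choice of the local frames for the $\O(D_i)$, is used.
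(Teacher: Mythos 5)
Your Step 1 follows the paper's route for (1): both substitute \eqref{m2} and the Poincar\'e analogue into \eqref{m1} and collect leading terms. For (2)--(7), you take a genuinely different route. The paper inverts $(\omega_{i\bar j})$ via the cofactor formula: it estimates $\det(\omega_{i\bar j})$ as in \eqref{h6} by isolating the diagonal permutation, then computes each $\omega^{\bar j i}$ as a ratio of a minor to the determinant, so that the factor $z_i\bar z_j$ emerges from the cancellation $\frac{|z_i|^2|z_j|^2}{z_j\bar z_i}=z_i\bar z_j$ that occurs on dividing the dominant off-diagonal minor term by the dominant determinant term. Your rescaling $\widehat\omega=\Lambda^{-1}\omega_D\Lambda^{-1}=\mathrm{Id}+R$ followed by the Neumann series handles all blocks at once instead of case-by-case, which is conceptually cleaner; and when one actually multiplies out, e.g.\ $\lambda_1^{-1}\lambda_2^{-1}\cdot(-R_{21})=\frac{|z_1|^2|z_2|^2}{z_2\bar z_1}\psi^0_{21}=z_1\bar z_2\,\psi^0_{21}$, one recovers exactly the paper's formulas, including the correct phase.

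However, two sentences in Step 2--3 are wrong as written and should be fixed, because taken literally they break the argument. First, $\lambda_i^{-1}\lambda_j^{-1}=|z_i||z_j|\log\frac{1}{|z_i|^2}\log\frac{1}{|z_j|^2}$ is \emph{not} a bounded smooth unit times $z_i\bar z_j$: the ratio $\lambda_i^{-1}\lambda_j^{-1}/(z_i\bar z_j)$ has modulus $\log\frac{1}{|z_i|^2}\log\frac{1}{|z_j|^2}$, which blows up near $D$. The boundedness of $\psi_{ij}$ comes from a cancellation: each factor $\log\frac{1}{|z_\ell|^2}$ contributed by $\lambda_i^{-1}\lambda_j^{-1}$ is killed by a factor $1/\log\frac{1}{|z_\ell|^2}$ that the corresponding $R$-entry necessarily carries (since $R_{i\bar j}\to 0$ along $D$). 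Second, for the same reason $\psi_{ij}$ is \emph{not} ``a universal polynomial in the entries of $R$''; it is $\lambda_i^{-1}\lambda_j^{-1}/(z_i\bar z_j)$ times such a polynomial, and one must check term by term in the Neumann series that for every word $R_{i,k_1}R_{k_1,k_2}\cdots R_{k_m,j}$ the chain of intermediate indices $k_1,\dots,k_m$ always delivers the right number of $1/\log$ factors (one for each boundary index of a Poincar\'e component traversed) and the right monomial $z_i\bar z_j$ with no leftover oscillating phase such as $\bar z_i/z_i$. This verification is precisely the content of the ``bookkeeping'' you flag at the end, but as phrased Step 2 hides it rather than addressing it; the paper's cofactor computation makes it visible in the expression $\frac{|z_1|^2|z_2|^2}{z_2\bar z_1}$.
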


\begin{proof} The first point (1) is a simple, direct calculation following from \eqref{m2} and \eqref{m3}, see also
\cite{HenriII}. It follows that for $i= 1,\dots, p$ we have
\begin{equation}\label{h1} 
\omega_{i\ol i}(z)= \frac{1}{|z_i|^2\log^2|z_i|^2}\big(1+ \psi_i(z)\big)
\end{equation}
and for $i= p+1,\dots, p+q$ the corresponding diagonal coefficient can be written as follows
\begin{equation}\label{m4}
\frac{1}{|z_i|^{2-2/k}}\big(1+ \psi_i(z)\big).
\end{equation}

In case $i\geq p+q+ 1$, we have
\begin{equation}\label{h2} 
	\omega_{i\ol i}(z)= g_{i\ol i}(z)\big(1+ \psi_i(z)\big)
\end{equation}
where $g_{i\ol i}$ are smooth, strictly positive.
In the equations above \eqref{h1} and \eqref{h2} we denote by $(\psi_i)$ a set of smooth functions in the complement of $D$, such that 
\[\sup_{\Omega\setminus D}|\psi_i|_{\omega_D}< 1/2, \qquad \sup_{\Omega\setminus D}|d\psi_i|< \infty.\]

\noindent We have similar expression for the coefficients outside the diagonal, as follows.

If $i\neq j$ are both smaller than $p$, then we have
\begin{equation}\label{h3} 
	\omega_{i\ol j}(z)= \frac{\psi_{ij}(z)}{z_i\ol z_j\log^2|z_i|^2\log^2|z_j|^2}
\end{equation} 
with the same type of function $\psi$ as before. This is a consequence of the fact that the differential of
\[z_i\log^2|z_i|^2, \qquad \ol z_i\log^2|z_i|^2\]
for $i= 1, \dots, p$ is bounded on $\Omega\setminus D$ when measured with respect to the Poincaré metric.

If $i\leq p$ and $p+1\leq j\leq p+q$, then we have  
\begin{equation}\label{m6} 
	\omega_{i\ol j}(z)= \frac{|z_j|^{2/k}}{z_i\ol z_j\log^2|z_i|^2}\psi_{ij}(z).
\end{equation} 
Here (as always) the function $\psi_{ij}(z)$ is bounded in $\mathcal C^1$ norm: this is due to the fact that 
\[\sup_{\Omega\setminus D}|d(|z_j|^{-2/k} z_j)|_{\omega_D}< \infty\]
for $p+1\leq j\leq p+q$. It is at this point that the hypothesis $k\geq 3$ is required.

If $i\leq p$ and $j\geq p+q+1$, then we have  
\begin{equation}\label{m7} 
\omega_{i\ol j}(z)= \frac{1}{z_i\log^2|z_i|^2}\psi_{ij}(z)
\end{equation} 

If $i\neq j$ and $p\leq i, j\leq p+q$, then we have  
\begin{equation}\label{m8} 
	\omega_{i\ol j}(z)= \frac{|z_i|^{2/k} |z_j|^{2/k}}{z_i\ol z_j}\psi_{ij}(z)
\end{equation} 

If $p\leq i \leq p+q$ and $j\geq p+q+1$, then we have  
\begin{equation}\label{m9} 
	\omega_{i\ol j}(z)= \frac{|z_i|^{2/k}}{z_i}\psi_{ij}(z).
\end{equation} 

If $i\neq j$ are both greater than $p+ q$, then we have
\begin{equation}\label{h4} 
	\omega_{i\ol j}(z)= \psi_{ij}(z)
\end{equation} 
i.e. a function which is bounded in $\mathcal C^1$-norm.


\noindent From the relations \eqref{h1}--\eqref{h4} it follows that 
\begin{equation}\label{h6} 
	\det\big(\omega_{i\ol j}(z)\big)= \big(1+ \psi(z)\big)\prod_{i=1}^p\frac{1}{|z_i|^2\log^2|z_i|^2}\prod_{j=p+1}^{p+ q}\frac{1}{|z_i|^{2-2/k}}.
\end{equation} 
Indeed, we have
\begin{equation}\label{h7} 
\det\big(\omega_{i\ol j}(z)\big)= \sum_{\sigma} \ep(\sigma)\prod\omega_{i\ol {\sigma_i}}(z).
\end{equation} 
Consider first the set of permutations $\sigma$ such that $\sigma(i)=i$ for all $i=1,\dots, p+q$.
The corresponding sum will be of type
\begin{equation}\label{h8} 
\big(1+ \psi(z)\big)\prod_{i=1}^p\frac{1}{|z_i|^2\log^2|z_i|^2}\prod_{j=p+1}^{p+ q}\frac{1}{|z_i|^{2-2/k}}.
\end{equation} 
as it follows from the formulas \eqref{h1}, \eqref{m4} and \eqref{h2}. 

Let now $\sigma$ be a permutation such that at least one element of the set $\{1,\dots, p\}$ is not fixed.
Then we have 
\begin{equation}\label{h9} 
\prod\omega_{i\ol {\sigma_i}}(z)= \frac{\psi(z)}{\prod_{i=1}^p |z_i|^2\log^{m_i}|z_i|^2\prod_{j=p+1}^{p+q} |z_j|^{2-n_j/k}}
\end{equation}
where each integer $m_i, n_j$ in \eqref{h9} are greater than two, and at least one of them is equal to 4. Therefore, the sum 
\eqref{h8} is the dominant factor in the expression \eqref{h7}, and our claim is proved.
\smallskip

\noindent We will not do the complete calculations in order to establish the points (2)-(7) of the lemma, but simply evaluate 
$\omega^{\ol 1 1}$ and $\omega^{\ol 2 1}$. One can derive the expression for general $\omega^{\ol j i}$ in a similar, tedious manner.

Consider $\omega^{\ol 1 1}$ first, i.e. 
\begin{equation}\label{m10} 
\omega^{\ol 1 1}(z)= - \frac{1}{\det\big(\omega_{i\ol j}(z)\big)}\sum\ep(\tau)\prod\omega_{i\ol \tau_i}(z)
\end{equation}
where $\tau: \{2, 3,\dots, n\}\to \{2, 3,\dots, n\}$ is bijective. The terms appearing in the sum \eqref{m10} are 
\begin{equation}\label{m11} 
\frac{\psi(z)}{\prod_{i\neq 1} |z_i|^2\log^{m_i}|z_i|^2\prod_{j\neq 1} |z_j|^{2- n_j/k}}
\end{equation}
where each of the integers $m_i, n_j$ are greater or equal to two. By the expression of the determinant \eqref{h7}, we obtain 
\[\omega^{\ol 1 1}(z)= \big(1+ \psi(z)\big)|z_1|^2\log^2|z_1|^2.\]

Assume that $p\geq 2$. We have
\begin{equation}\label{h10} 
\omega^{\ol 2 1}(z)= - \frac{1}{\det\big(\omega_{i\ol j}(z)\big)}\sum\ep(\tau)\prod\omega_{i\ol \tau_i}(z)
\end{equation}
where $\tau: \{2, 3,\dots, n\}\to \{1, 3,\dots, n\}$ is bijective. Hence each of the terms of the sum in \eqref{h10}
can be written as
\begin{equation}\label{h11} 
 \frac{1}{z_2\ol z_1\log^2|z_2|^2\log^2|z_1|^2}\frac{\psi(z)}{\prod_{i\neq 1,2} |z_i|^2\log^{m_i}|z_i|^2\prod_{j\geq p+1} |z_j|^{2- n_j/k}}
\end{equation}
where each of the integers $m_i, n_j$ are at least two. The equality 
\begin{equation}\label{h12} 
\omega^{\ol 2 1}(z)= z_1\ol z_2\psi_{12}(z)
\end{equation}
follows, again by using \eqref{h7}. 

If for example $p= 0, q\geq 2$, then the expression \eqref{h11} above is to be replaced with 
\begin{equation}\label{h13} 
 \frac{|z_1|^{2/k}|z_2|^{2/k}}{z_2\ol z_1}\frac{\psi(z)}{\prod_{j\geq 3} |z_j|^{2- n_j/k}}
\end{equation} 
with $n_j\geq 2$. Multiplication with the inverse of the determinant shows that the point (5) of our lemma holds.
\end{proof}
\medskip

\noindent An important invariant associated with a Hermitian metric on a vector bundle is its curvature. In our setting, and despite the singularities of $\omega_D$, one has the next well-known fact.  

\begin{lemme}\label{mixbound}\cite{HenriII} There exists a positive constant $C> 0$ so that the inequality 
	\begin{equation}
		-C\omega_D\otimes \Id_{T_X}\leq \sqrt{-1}\Theta(T_X, \omega_D)\leq C\omega_D\otimes \Id_{T_X}
	\end{equation}
	holds on $X\setminus D$. 	
\end{lemme}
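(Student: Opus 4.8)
The estimate is local near $D$, so I would fix a point of $D$ and holomorphic coordinates $z=(z_1,\dots,z_n)$ on a neighborhood $\Omega$ as in Lemma \ref{coeff}, with $\Delta_2\cap\Omega=(z_1\cdots z_p=0)$ and $\Delta_3\cap\Omega=(z_{p+1}\cdots z_{p+q}=0)$; the components of $\Delta_1$ through $\Omega$ are among the hyperplanes $(z_l=0)$ with $l\ge p+q+1$ and need no special treatment, since $\omega_D$ is a smooth Kähler metric across them. In any holomorphic frame the Chern curvature of $(T_X,\omega_D)$ is the universal expression $\Theta(T_X,\omega_D)=\dbar\bigl((\omega_D)^{-1}\partial\,\omega_D\bigr)$ in the metric matrix, its inverse, and its first and second derivatives; so the first reduction is that it suffices to exhibit a system of \emph{quasi-coordinates} in which $\omega_D$ has bounded geometry, meaning its coefficients and all their derivatives are bounded and the metric is two-sidedly uniformly equivalent to the Euclidean one. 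The asserted inequality $-C\,\omega_D\otimes\Id_{T_X}\le \sqrt{-1}\Theta(T_X,\omega_D)\le C\,\omega_D\otimes\Id_{T_X}$ then follows at once, as it amounts to $|\Theta(T_X,\omega_D)|_{\omega_D}\le C$.

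The quasi-coordinates are the standard ones adapted to $\omega_D$. In the Poincaré directions $i=1,\dots,p$ I would replace $z_i$ by a Bergman-type chart of the universal cover of the punctured disc, e.g. $z_i=\exp\!\bigl(-\tfrac{1+\lambda\zeta_i}{1-\lambda\zeta_i}\bigr)$ with $|\lambda|<1$, so that $\zeta_i$ ranges over a ball of fixed radius and the model factor $\tfrac{\sqrt{-1}\,dz_i\wedge d\bar z_i}{|z_i|^2\log^2|z_i|^2}$ pulls back to a metric uniformly equivalent to the Euclidean one, with all $\zeta$-derivatives of its coefficients bounded uniformly in $\lambda$. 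In the conic directions $j=p+1,\dots,p+q$ I would pass to the $k$-fold branched cover $z_j=\eta_j^{\,k}$ (licit since $k\in\mathbb N$), under which $|z_j|^{2/k-2}\,\sqrt{-1}\,dz_j\wedge d\bar z_j$ becomes the smooth flat metric $\tfrac{1}{k^2}\,\sqrt{-1}\,d\eta_j\wedge d\bar\eta_j$; the remaining variables $z_{p+q+1},\dots,z_n$ stay as they are. One then checks that in the quasi-coordinates $(\zeta_1,\dots,\zeta_p,\eta_{p+1},\dots,\eta_{p+q},z_{p+q+1},\dots,z_n)$ the metric $\omega_D$ has bounded geometry: the model part does by the computations just recalled, while the remaining pieces --- the fixed form $\omega_X$, the smooth summands of $\sqrt{-1}\ddbar|s_i|^{2/k}$ and $-\sqrt{-1}\ddbar\log\log|s_i|^{-2}$ from \eqref{m2}--\eqref{m3}, and all off-diagonal contributions --- pull back to smooth tensors with uniformly bounded derivatives of every order. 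Concretely this is the $\mathcal C^1$ data of Lemma \ref{coeff}(1)--(7), which already isolates the correct normalizing factors $z_i\bar z_j$, $z_i$, $|z_i|^{2/k}$, etc., upgraded to $\mathcal C^\infty$; the upgrade is automatic because every perturbing function is built from $|s_i|^2$, $\log|s_i|^2$ and smooth data, all of which have bounded derivatives of all orders in the $\zeta$, $\eta$ variables.

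Once bounded geometry is established, the curvature tensor is a universal polynomial in the metric, its inverse, and its first two derivatives, hence bounded with respect to $\omega_D$, which is the lemma; alternatively one simply invokes \cite{HenriII} once the model is in place. The step I expect to be the main obstacle is the bounded-geometry verification along the \emph{mixed} strata, where a point lies on several Poincaré and/or conic components at once together with smooth directions: there one must control the off-diagonal coefficients $\omega_{i\bar j}$ with $i,j$ in different groups and confirm that their quasi-coordinate derivatives stay bounded. The product shape of the quasi-coordinates, combined with the precise normalizations recorded in Lemma \ref{coeff}(2)--(7), should reduce this to the routine observation that each such normalizing factor is, in the $\zeta$, $\eta$ variables, a bounded function with bounded derivatives.
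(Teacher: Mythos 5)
Your proposal is correct and takes essentially the same route as the paper: the paper's alternative argument first pulls back by the local ramified cover $\pi(w)=(w_1^k,\dots,w_{p+q}^k,w_{p+q+1},\dots,w_n)$, which removes the conic singularities and leaves $\pi^\star\omega_D$ with only Poincaré singularities, and then appeals to quasi-coordinates as in \cite{Kob84} to conclude bounded curvature. You have merged the two reductions into a single quasi-coordinate chart (Bergman charts in the Poincaré directions, branched covers in the conic ones) and spelled out more of the bounded-geometry verification, including the $\mathcal C^1\to\mathcal C^\infty$ upgrade that the paper leaves implicit, but the underlying idea is the same.
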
 

\begin{proof}
There are many ways in which this statement can be established. For example, one could simply refer to \cite{HenriII} (and set the parameter $\ep$ in \emph{loc. cit.} to zero). An alternative argument is the following. We choose coordinates $z= (z_1,\dots, z_n)$ on an open subset $\Omega\subset X$ as in Lemma \ref{coeff}, and consider the 
finite map 
\begin{equation}\label{ram1}
\pi: \mathbb D\to \Omega, \qquad \pi(w):= (w_1^k,\dots, w_{p+q}^k, w_{p+q+1},\dots,  w_n).
\end{equation}
Then $\pi^\star \omega_D$ is a metric with Poincaré singularities along $\pi^{-1}(\Delta_2)$. If so, the curvature is indeed 
two-sided bounded as one can see by using the quasi-coordinates, see \cite{Kob84}.
\end{proof}
\medskip

\noindent It is easy to see that the Sobolev inequality does not hold for metrics with singularities of Poincar\'e-type -- which is the case for $\omega_D$. However, we have the following substitute.
\medskip

\begin{lemme}\label{sobolev}\cite{Biq97}
There exist positive constants $C> 0, \delta_n> 0$ as well as $a$ such that the following inequality holds true
\begin{equation}\label{end16} 
	\left(\int_X \rho^{2+ 2\delta_n}dV_{\omega_D}\right)^{\frac{1}{1+ \delta_n}}\leq C\int_X(\rho^2+ |d\rho|^2_{\omega_D})d\mu_1
\end{equation}
Here $d\mu_1$ is the measure given by $\prod \log^2|s_i|^2dV_{\omega_D}$, where $\prod s_i= 0$ is the equation of $\Delta_2$.
\begin{proof} It follows from the Euclidean version of this inequality, via the use of quasi-coordinates on the open subset 
	$\mathbb D^{\star p}\times \mathbb D^{n-p}.$
	
	\noindent For each $0< \delta< 1$ we define the map
	\begin{equation}\label{g22}
		\phi_{\delta}: \frac{3}{4}{\mathbb D}\to {\mathbb D}^{\star},\quad \xi\to \exp\left(-\frac{1+ \delta}{1-\delta}
		\frac{1+\xi}{1-\xi}\right).
	\end{equation}
	Then given $\eta= (\delta_1,\dots, \delta_p)$ we have an induced application
	\begin{equation}\label{end25}
		\Phi_{\eta}: \frac{3}{4}{\mathbb D}^n\to {\mathbb D}^{\star p}\times \mathbb D^{n-p},\quad \xi\to \big(\phi_{\delta_1}(\xi_i), \xi_{j}\big),
	\end{equation}
	where $i=1,\dots, p$ and $j=p+ 1,\dots, n$. Here the first $p$ coordinates are supposed to be the local equations of $\Delta_2$.
	
	Now for each $l\geq 1$ we define the parameter $\displaystyle \delta_l:= \frac{k 2^l-1}{k 2^l+ 1}$. The degree of the corresponding quasi-coordinate function $\displaystyle \phi_l:= \phi_{\delta_l}$ is of order $2^l$. In what follows, we will use the same letter $l$ to denote a multi-index \[l= (l_1,\dots, l_p)\]
	and the induced map will be $\Phi_l$.
	\smallskip
	
	\noindent In addition, there exists a constant $C> 0$ such that we have the following inequalities
	\begin{equation}\label{g24}
		C^{-1}\sum_{l}\frac{1}{2^{|l|}}\int F_ld\lambda\leq \int FdV_{\rm Poinc}\leq C\sum_{l}\frac{1}{2^{|l|}}\int F_ld\lambda
	\end{equation}  
	as well as
	\begin{equation}\label{g23}
		C^{-1}\sum_{l}\frac{1}{2^{|l|}}\Vert\nabla^j(f_l)\Vert_{L^2}\leq \Vert\nabla_{\rm Poinc}^j(f)\Vert_{L^2}\leq C\sum_{l}\frac{1}{2^{|l|}}\Vert\nabla^j(f_l)\Vert_{L^2}
	\end{equation}  
	where $F$ is a positive integrable function, $f_l:= f\circ \phi_l$ (by abuse of notation...). Here $f$ is defined on
	$\mathbb D^{\star p}\times \mathbb D^{n-p}$, endowed with the standard metric (Poincaré on the first $p$ components and Euclidean on the remaining ones), and $f_l$ is defined by
	$\frac{3}{4}\mathbb D$. The constant ``$C$'' is uniform as long as $j$ belongs to a finite family of indexes (which will be our case).
	
	\noindent Let now $f$ be a function defined on $\mathbb D^{\star p}\times \mathbb D^{n-p}$, which vanishes near the boundary $\partial \mathbb D^n$.
	If we denote by $\displaystyle \delta_n= \frac{n}{n-1}$, then we have 
	\begin{equation}\label{g25}
		\int_{\mathbb D^{\star p}\times \mathbb D^{n-p}}|f|^{2+ 2\delta_n}dV_{\rm Poinc}\leq C\sum_{l}
		\frac{1}{2^{|l|}}\int_{\frac{3}{4}\mathbb D^n} |f_l|^{2+ 2\delta_n}d\lambda
	\end{equation}  
	which implies the inequality
	\begin{equation}\label{g26}
		\left(\int_{\mathbb D^{\star p}\times \mathbb D^{n-p}}|f|^{2+ 2\delta_1}dV_{\rm Poinc}\right)^{1/1+\delta_n}\leq C\sum_{l}
		\frac{1}{2^\frac{|l|}{1+ \delta_1}}\left(\int_{\frac{3}{4}\mathbb D^n} |f_l|^{2+ 2\delta_1}d\lambda\right)^{1/1+\delta_n}.
	\end{equation}
	For each of the terms of the right hand side of \eqref{g26} we apply the usual
	Sobolev inequality, and we get
	\begin{equation}\label{g27}
		\left(\int_{\mathbb D^{\star p}\times \mathbb D^{n-p}}|f|^{2+ 2\delta_n}dV_{\rm Poinc}\right)^{1/2+2\delta_n}\leq
		C\sum_{l}
		\frac{1}{2^\frac{|l|}{2+ 2\delta_n}}\int_{\frac{3}{4}\mathbb D^n}(|f_l|+ |\nabla f_l|)d\lambda.
	\end{equation}
	We observe now that the following 
	\begin{equation}\label{g28}
		\frac{1}{2^\frac{l}{2+ 2\delta_1}}= \frac{1}{2^l} (2^l)^{\frac{1+ 2\delta_n}{2+ 2\delta_n}}\leq \frac{1}{2^l} \log\frac{1}{|\phi_l(\xi)|}
	\end{equation}
	holds for each $l$ and any $\xi\in \frac{3}{4}{\mathbb D}$, so summing up we get
	\begin{equation}\label{g29}
		\left(\int_{\mathbb D^{\star p}\times \mathbb D^{n-p}}|f|^{2+ 2\delta_n}dV_{\rm Poinc}\right)^{1/2+2\delta_n}\leq
		C\int_{\mathbb D^{\star p}\times \mathbb D^{n-p}}(|f(z)|+ |\nabla f(z)|)d\mu_1,
	\end{equation}
	where we have used the first part of the inequality \eqref{g23}. 
\end{proof}

\end{lemme}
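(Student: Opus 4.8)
The plan is to deduce the inequality from the flat Sobolev inequality on Euclidean balls, by pulling back along the quasi-coordinates that uniformize the ends of $(X\setminus D,\omega_D)$. First I would reduce to a local model: cover $X$ by finitely many coordinate charts $\Omega$ of the type used in \Cref{coeff}, pick a partition of unity subordinate to this cover, and reduce \eqref{end16} to the corresponding inequality on each model polydisc $\mathbb D^{\star p}\times\mathbb D^{n-p}$, equipped with the product of the Poincaré metric on the factors cutting out $\Delta_2$ and the Euclidean metric on the rest. The conic directions along $\Delta_3$ do not affect the Sobolev exponent: after a local $k$-fold ramified cover branched along $\Delta_3$ --- as in the proof of \Cref{mixbound} --- the conic factors $\frac{|dz_j|^2}{|z_j|^{2-2/k}}$ become smooth Euclidean ones, and since Sobolev inequalities are stable under finite covers these directions can be absorbed into the ``$\mathbb D^{n-p}$'' part of the model. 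Also, since $\omega_D$ is complete along $\Delta_2$ and the cut-offs $\log\log\frac{1}{|s_i|^2}$ have $\omega_D$-bounded gradient, a routine truncation reduces \eqref{end16} to $\rho$ with compact support in $X\setminus D$.

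On the model polydisc the key device is a system of quasi-coordinates: the punctured disc with its Poincaré metric is covered, with controlled overlaps and multiplicities, by the images of the fixed disc $\tfrac34\mathbb D$ under the holomorphic maps $\phi_\delta$ of \eqref{g22}, whose derivatives --- and those of their inverses --- are bounded uniformly in $\delta$. Choosing the parameters $\delta_l$ as in the discussion preceding \eqref{g24}, so that the induced maps $\Phi_l$ (indexed by multi-indices $l$) cover $\mathbb D^{\star p}\times\mathbb D^{n-p}$, one obtains the comparison estimates \eqref{g24} and \eqref{g23} expressing a Poincaré integral, resp. a Poincaré Sobolev norm, as a $2^{-|l|}$-weighted sum over $l$ of the corresponding Euclidean quantities on $\tfrac34\mathbb D^n$. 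On each chart one then applies the classical Euclidean Sobolev inequality $\|f_l\|_{L^{2+2\delta_n}(\frac34\mathbb D^n)}\lesssim\|f_l\|_{L^1}+\|\nabla f_l\|_{L^1}$, where $2+2\delta_n=\tfrac{2n}{n-1}$, i.e. $\delta_n=\tfrac1{n-1}$.

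The remaining step is reassembly: raising \eqref{g25} to the power $1/(1+\delta_n)$, distributing the sum, applying the Euclidean Sobolev inequality chart by chart, and summing back up, one reaches \eqref{g27}. The place where the modified measure $d\mu_1=\prod\log^2|s_i|^2\,dV_{\omega_D}$ is \emph{forced} onto the right-hand side is the elementary bound \eqref{g28}: the geometric weight $2^{-|l|/(2+2\delta_n)}$ produced by the Sobolev step is dominated by $2^{-|l|}\log\tfrac1{|\phi_l(\xi)|}$, and it is exactly this extra factor $\log\tfrac1{|z|}$ that, after pulling back to the model and invoking the left half of \eqref{g23}, replaces $dV_{\rm Poinc}$ by $d\mu_1$. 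Summing, and transferring the local inequalities back through the partition of unity, yields \eqref{end16}.

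The only genuine obstacle is the bookkeeping in this last step --- tracking the weights $2^{-|l|}$, the loss coming from the Sobolev exponent, and the logarithmic factor simultaneously and uniformly over all charts and over the finitely many derivative orders that occur; everything else is either classical (Euclidean Sobolev, quasi-coordinates in the style of \cite{Kob84,Biq97}) or a standard partition-of-unity patching argument, and one may alternatively simply invoke \cite{Biq97} directly.
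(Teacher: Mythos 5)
Your proposal follows essentially the same route as the paper's own argument: reduce to the model $\mathbb D^{\star p}\times\mathbb D^{n-p}$, cover it by quasi-coordinate charts $\Phi_l$ weighted by $2^{-|l|}$, apply the flat Sobolev inequality chart by chart, and then use the elementary bound \eqref{g28} to trade the geometric weight loss for the extra $\log\frac{1}{|z|}$ factor that turns $dV_{\rm Poinc}$ into $d\mu_1$. The additional remarks you include about absorbing the conic directions via a ramified cover and the preliminary truncation along $\Delta_2$ are sound and only make explicit steps the paper leaves implicit.
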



 \subsubsection{Metric structure on the logarithmic tangent bundle of $X_0$}\label{metriclogTX}
 In what follows, an $L$-valued $k$-form with log poles along $\Delta_1$ on $X_0$ will simply be a global section of the bundle
\begin{equation}\label{log1}
F_k:= \oplus_{p+q= k}\Omega_X^p(\log\Delta_1)\otimes L\otimes\Lambda^{q}\ol T^\star_X.	
\end{equation}
That is to say, it represents a (formal) sum of $(0,q)$-forms with values in the holomorphic vector bundle $\Omega_X^p(\log\Delta_1)\otimes L$, such that the total degree $p+q$ is equal to $k$. 
\smallskip

\noindent Of course, in general differential forms with log poles are not $L^2$-integrable with respect to $\omega_D$ introduced above, so we first have to agree about the scalar product on $F_k$ we want to define. This will involve a modification of $\omega_D$ by adding so-called "cylindrical singularities" along $\Delta_1$. 
\smallskip 

\noindent On the log tangent bundle $T_X\langle\Delta_1\rangle$ 
we introduce the following Hermitian metric
\begin{equation}\label{log2}
g_D:= \omega_D+ \sqrt{-1}\sum_{i\in I} \frac{D's_i\wedge \ol {D's_i}}{|s_i|^2}
\end{equation}
where the $D'$ in the RHS of \eqref{log2} is induced by an arbitrary, smooth metric on the bundle $\mathcal O(Y_i)$, with $(s_i= 0)= Y_i$ and 
$\Delta_1= \sum_{i\in I} Y_i$. 

\begin{remark} As such, this is only defined in the complement of $\Delta_2+ \Delta_3$, with Poincar\'e singularities and conic singularities along $\Delta_2$ and $\Delta_3$, respectively. \end{remark}
\smallskip

Let $u, v$ be two sections of the bundle $F_k$. Their scalar product is defined as follows
 \begin{equation}\label{log3}
\langle u, v \rangle_{L^2}:= \int_{X_0} \langle u, v \rangle_{g_D, \omega_D}e^{-\varphi_L}dV_{\omega_D}
\end{equation}
where the notation is supposed to suggest that we are using the metric induced by $g_D$ for the "holomorphic vector bundle" part $\Omega_X^p(\log\Delta_1)$
and the metric $\omega_D$ for the conjugate-holomorphic factor of $u$ and $v$. 
\smallskip

\noindent In order to state the analogue of Lemma \ref{coeff} in this new setting, we first need a frame for $T_X\langle\Delta_1\rangle$, as follows.
Let $U\subset X$ be an open coordinate set, $w= (w_1, \dots, w_n)$ are local holomorphic coordinates defined on $U$, such that 
\begin{equation}\label{log222}U\cap \Delta_1= (w_1\dots w_r= 0).\end{equation} 
We also define 
\begin{equation}\label{log2007}
	e_i:= w_i\frac{\partial}{\partial w_i}
\end{equation}
for $ i=1,\dots r$ and 
\begin{equation}\label{log2008}
	e_j:= \frac{\partial}{\partial w_j}
\end{equation}
for $j\geq r+1$. 

Let $(h_{\beta\ol\alpha})$ be the coefficients of the Hermitian metric 
$g_D$ with respect to this frame. As usual we denote by $(h^{\ol\alpha \beta})$ the coefficients of the inverse matrix. We need a detailed understanding of these functions. The good news is that this was already done: $g_D$ is a -singular- Hermitian metric on the vector bundle $T_X\langle \Delta_1\rangle$, and it is the perfect analogue of $\omega_D$, in which $T_X$ was replaced with its logarithmic version. In what follows, we will simply write explicitly the estimates for the coefficients of $g_D$.
\smallskip

\noindent We can assume that the local coordinates $w$ on $U$ have been chosen such that the equalities
\begin{equation}\label{log223}
U\cap \Delta_2= (w_{r+1}\dots w_{r+ p}= 0)\quad U\cap \Delta_3= (w_{r+p+1}\dots w_{r+ p+q}= 0)
\end{equation}
hold, where we recall that we already assume $\displaystyle U\cap \Delta_1= (w_1\dots w_r= 0)$. Notice that these notations are consistent with the ones in the previous subsection, modulo the shift by $r$. With respect to this frame/coordinates, the metric $g_D$ can be written as follows (cf. \eqref{m3} for the analogous expression for $\omega_D$).
\begin{equation}\label{log30}
\begin{split}
h_{i\ol j}(w)= & \frac{\delta_{ij}\delta_{r+1, r+p}^i}{|w_i|^2\log^2|w_i|^2}+ \frac{\delta_{r+1, r+p}^iA_i}{w_i\log^2|w_i|^2}+
\frac{\delta_{r+1, r+p}^jA_{\ol j}}{\ol w_j\log^2|w_j|^2}\\
+ & \frac{\delta_{ij}\delta_{r+p+1, r+p+q}^ie^{\phi_i}}{|w_i|^{2-2/k}}+ \frac{\delta_{r+p+1, r+p+q}^i|w_i|^{2/k}\wt A_i}{w_i}+ 
\frac{\delta_{r+p+1, r+p+q}^j|w_j|^{2/k}\wt A_{\ol j}}{\ol w_j}\\
+ & \sum_{l=r+1}^{r+p}\frac{C_{l0}+ C_{l1}\log|w_l|^2}{\log^2|w_l|^2}+ \sum_{l= r+p+1}^{r+p+q}\wt C_l |w_l|^{2/l}
+  \rho_{i\ol j}(w)\\
\end{split}
\end{equation}
where the matrix $(\rho_{\beta\ol \alpha})$ is positive definite and has smooth coefficients. We denote by $\delta_{ij}$ the Kronecker symbol, and $\delta_{\alpha, \beta}^i$ equals 1 if $i\in \{\alpha,\dots, \beta\}$ and zero if not. 
The functions $A_k, A_{\ol m}, C_l$ and $\wt A_k, \wt A_{\ol m}, \wt C_l$ are smooth.
\medskip

\noindent We see that the expression of the coefficients $(h_{i\ol j})$ in \eqref{log30} is completely identical to \eqref{m3} (of course, the frame with respect to which these coefficients are computed is not the same).
The following statements hold, in which the functions $(\psi)$ are smooth in the complement of $D$, such that 
\begin{equation}\label{log31}\sup_{U\setminus D}|\psi|_{\omega_D}< 1/2, \qquad \sup_{U\setminus D}|d\psi|< \infty.\end{equation}
We will not give any detail about the proof, since it closely follows the one in Lemma \ref{coeff}.
\begin{itemize}

\item[\rm (1)] For $i= r+1,\dots, r+p$ we have
\[
h_{i\ol i}(w)= \frac{1}{|w_i|^2\log^2|w_i|^2}\big(1+ \psi_i(z)\big)
\]
and for $i= r+p+1,\dots, r+p+q$ the corresponding diagonal coefficient can be written as follows
\[
\frac{1}{|w_i|^{2-2/k}}\big(1+ \psi_i(w)\big).
\]

\item[\rm (2)] In case $i\leq r$ or $i\geq p+q+ r+1$, we have
\[ 
h_{i\ol i}(w)= \tau_{i\ol i}(w)\big(1+ \psi_i(w)\big)\]
where $\tau_{i\ol i}$ are smooth, strictly positive.

\item[\rm (3)] If $i\neq j$ are both in $r+1,\dots, r+p$, then we have
\[
h_{i\ol j}(w)= \frac{\psi_{ij}(w)}{w_i\ol w_j\log^2|w_i|^2\log^2|w_j|^2}.
\]

\item[\rm (4)] If $r\leq i\leq p+r$ and $p+r+1\leq j\leq p+q+r$, then we have  
\[ 
h_{i\ol j}(w)= \frac{|w_j|^{2/k}}{w_i\ol w_j\log^2|w_i|^2}\psi_{ij}(w).
\]

\item[\rm (5)] If $r\leq i\leq p+r$ and $j\geq p+q+r+1$ or $j\leq r$ then we have  
\[
h_{i\ol j}(w)= \frac{1}{w_i\log^2|w_i|^2}\psi_{ij}(w)
\] 

\item[\rm (6)] If $i\neq j$ and $p+r\leq i, j\leq p+q+r$, then we have  
\[ 
h_{i\ol j}(w)= \frac{|w_i|^{2/k} |w_j|^{2/k}}{w_i\ol w_j}\psi_{ij}(w)
\] 

\item[\rm (7)] If $p\leq i \leq p+q$ and $j\geq p+q+1$, then we have  
\[
h_{i\ol j}(w)= \frac{|w_i|^{2/k}}{w_i}\psi_{ij}(w).
\] 

\item[\rm (8)] If $i\neq j$ are both greater than $p+q+r+1$ or both smaller than $r-1$, then we have
\[
h_{i\ol j}(w)= \psi_{ij}(w)
\]
i.e. a function which is bounded in $\mathcal C^1$-norm.

\item[\rm (9)] Finally, if $r+1\leq i\leq p+r$ and $j\geq p+r+1$ or $j\leq r-1$ then we have 
\[ 
h_{i\ol j}(w)= \frac{\psi_{ij}(w)}{w_i\log^2|w_i|^2},\qquad  h_{j\ol i}(w)= \frac{\psi_{ji}(w)}{\ol w_i\log^2|w_i|^2}
\]

\item[$(\star)$] It follows that 
\[
\det\big(h_{i\ol j}(w)\big)= \big(1+ \psi(w)\big)\prod_{i=r+1}^{r+p}\frac{1}{|w_i|^2\log^2|z_i|^2}\prod_{j=p+r+1}^{p+q+r}\frac{1}{|w_i|^{2-2/k}}.
\]

\item[\rm (a)] Let $(h^{\ol j i})_{i, j}$ be the inverse of $(h_{i\ol j})$. Then for any $i, j\in \{r+1, \dots, r+p\}$ such that $i\neq j$ we have 
\[h^{\ol i i}(w)= (1+  \psi_i(w))|w_i|^2\log^2|w_i|^2, \qquad h^{\ol j i}(w)= \psi_{ij}(w))w_i\ol w_j.\]
The functions $\psi_i, \psi_{ij}$ are smooth on $U\setminus D$, and verify the condition \eqref{log31}.
\smallskip

\item[\rm (b)] For each $i\in \{r+1, \dots, r+p\}$ and $j\in \{r+p+1,\dots, r+p+q\}$ we have 
\[ h^{\ol j i}(w)= \psi_{ij}(w)w_i\ol w_j.\]
\smallskip

\item[\rm (c)] For each $i\in \{r+1, \dots, r+p\}$ and $j\geq p+q+r+1$ we have 
\[ h^{\ol j i}(w)= \psi_{ij}(w)w_i.\]
\smallskip

\item[\rm (d)] For each $i, j\in \{r+p+1, \dots, r+p+q\}$ such that $i\neq j$ we have 
\[h^{\ol i i}(w)= (1+  \psi_i(w))|w_i|^{2-2/k}, \qquad h^{\ol j i}(w)= \psi_{ij}(w)w_i\ol w_j.\]

\item[\rm (e)] For each $i\in \{r+p+1, \dots, p+q+r\}$ and $j\geq p+q+r+1$ we have 
\[h^{\ol j i}(w)= \psi_{ij}(w)w_i.\]
\smallskip

\item[\rm (f)] Finally, in case both indexes $i\neq j$ are smaller than $r$ or bigger than $r+p+1$, then we have 
\[h^{i\ol i}(w)= 1+  \psi_i(w), \qquad h^{i\ol j}(w)= \psi_{ij}(w)\]
where the functions $\psi$ have the same regularity properties as before.
\end{itemize}
\medskip

\noindent As a consequence, we have the following statement.

\begin{lemme}\label{logmixbound} There exists a positive constant $C> 0$ so that the inequality 
	\begin{equation}\label{form301}
		-C\omega_D\otimes \Id_{T_X\langle\Delta_1\rangle}\leq \sqrt{-1}\Theta(T_X\langle\Delta_1\rangle, g_D)\leq 
		C\omega_D\otimes \Id_{T_X\langle\Delta_1\rangle}
	\end{equation}
	holds on $X\setminus D$. 	
\end{lemme}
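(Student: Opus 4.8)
The plan is to reduce the statement to the already-established two-sided bound for $(T_X, \omega_D)$ in Lemma \ref{mixbound}, together with the explicit coefficient estimates for $g_D$ listed in the items (1)--(f) above. First I would exploit the fact that, just as $\omega_D$ is the pullback of a Poincaré/conic metric under the ramified cover $\pi$ of \eqref{ram1}, the metric $g_D$ on $T_X\langle\Delta_1\rangle$ has the same structure: in the quasi-coordinates of \cite{Kob84} (for the $\Delta_2$-directions) and the orbifold coordinates $w_i \mapsto w_i^k$ (for the $\Delta_3$-directions), the frame $e_i = w_i\partial/\partial w_i$ for $i \le r$ and $e_j = \partial/\partial w_j$ for $j \ge r+1$ turns $g_D$ into a metric whose coefficients and all of whose derivatives are bounded, with the matrix itself bounded above and below by a fixed positive constant times the identity. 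Concretely, the "cylindrical" summand $\sqrt{-1}\sum_{i\in I} D's_i\wedge\overline{D's_i}/|s_i|^2$ in \eqref{log2}, read in the frame $(e_i)$, becomes $\sum_{i\in I}\left(\frac{dw_i}{w_i}\right)\otimes\overline{\left(\frac{dw_i}{w_i}\right)}$ paired against $e_i^\vee$, i.e. it contributes a bounded, smooth, positive-definite block; so $g_D$ in the frame $(e_i)$ is uniformly equivalent to the standard (quasi-coordinate) model metric.

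The key steps, in order, would be: (i) set up the covering/quasi-coordinate charts as in the proof of Lemma \ref{mixbound}, now for the bundle $T_X\langle\Delta_1\rangle$ with its frame $(e_i)$; (ii) verify, using the coefficient list (1)--(f) — which the text has already noted is "completely identical" to that of $\omega_D$ modulo the index shift by $r$ — that in these charts the Christoffel symbols $h^{\overline\ell k}\,\partial h_{k\overline j}$ and hence the curvature components $\Theta_{i\overline j}{}^k{}_\ell$ of $(T_X\langle\Delta_1\rangle, g_D)$ are bounded functions; (iii) translate this boundedness back to $X\setminus D$: a $(1,1)$-form with values in $\mathrm{End}(T_X\langle\Delta_1\rangle)$ whose components in the bounded frame are bounded functions is, by definition, sandwiched between $-C\,\omega_D\otimes\mathrm{Id}$ and $C\,\omega_D\otimes\mathrm{Id}$, because $\omega_D$ itself is uniformly equivalent to the model Poincaré/conic metric in these charts. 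This is exactly the computation performed in \cite{HenriII} for $\omega_D$ (the case $\Delta_1 = 0$, or the parameter $\epsilon = 0$ in \emph{loc. cit.}), and the only genuinely new input is that the extra $\Delta_1$-directions contribute a smooth, bounded curvature block — the curvature of the trivial-type factors $\mathcal O(Y_i)$ enters only through smooth terms $\theta_i$, which are $O(1)$ with respect to $\omega_D$.

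Alternatively, and perhaps more cleanly, I would present the argument as a direct appeal to \cite{HenriII}, \cite{LZ14}: the metric $g_D$ is precisely what \emph{loc. cit.} call a metric with cylindrical singularities along $\Delta_1$ and mixed Poincaré/conic singularities along $\Delta_2+\Delta_3$, and the two-sided curvature bound \eqref{form301} is one of its defining (and established) features. But to keep the paper self-contained I would include the short quasi-coordinate verification sketched above.

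The main obstacle I anticipate is purely bookkeeping: one must check that the \emph{off-diagonal} mixed blocks of $g_D$ — e.g. the interaction between a $\Delta_1$-direction $e_i$ ($i\le r$) and a $\Delta_2$-direction $e_j$ ($r+1\le j\le r+p$), cf. item (9) with its factor $1/(w_i\log^2|w_i|^2)$ — do not spoil the boundedness of the inverse metric and of the connection form. This is where the precise shape of the entries in (1)--(f) matters, and where the hypothesis $k\ge 3$ (needed so that $d(|w_j|^{-2/k}w_j)$ is $\omega_D$-bounded) is used again; the estimates are the exact analogues of those verified for $\omega_D$ in Lemma \ref{coeff}, so no new phenomenon arises, but the verification is the only non-formal part of the argument.
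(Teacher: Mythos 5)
Your proposal is correct and takes essentially the same approach as the paper: reduce to quasi-coordinates via the ramified cover, observe that in the frame $(e_i)$ the coefficients of $g_D$ agree (up to an index shift) with those of $\omega_D$, and invoke the Poincaré-metric curvature bound of \cite{Kob84}/\cite{HenriII}. The paper's own proof is much terser — it simply defers to Lemma~\ref{mixbound} and records the observation about matching coefficients — so your write-up spells out the bookkeeping the paper leaves implicit.
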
 

\begin{proof}
The argument is identical to the one already given in Lemma \ref{mixbound}, so we will not repeat it here. We however add a (hopefully helpful) remark: with respect to the frames \eqref{log2007}, \eqref{log2008}, the coefficients of the metric $g_D$ are practically identical to those of the 
metric $\omega_D$. This explains \eqref{form301}.
\end{proof}


\subsection{A-priori inequalities for $\Delta_K$}\label{apriori}
Let $X$ be a compact Kähler manifold and let $D$ be a snc divisor in $X$. 
We denote by 
$$\nabla: L \to L\otimes \Omega_X ^1 (\log D)$$ 
a flat holomorphic connection, and let $h_L$ be a harmonic metric on $L$. Then $h_L$ is smooth on $X\setminus D$ and $i\Theta_{h_L} (L)=0$ on $X\setminus D$. Also, notice that $\displaystyle -2\theta_0 = D'_{h_L} -\nabla\in H^0 (X, \Omega^1 _X (\log D))$ has nothing to do with any metric on $X$.
\medskip

\noindent The motivation for what will follow is to be able to state all our results in a "uniform" manner, i.e. regardless to the fact that our forms have or haven't log poles along $\Delta_1$. First, consider a covering of $X$ with a finite set of coordinate open subsets $(\Omega_i, w_i)$, see \eqref{log222}, \eqref{log223}. If $(\Omega, w)$ is one of them, we define the local ramified maps
\begin{equation}\label{clog}
\pi: \DD\to \Omega, \qquad \pi(\xi)= (\xi_1,\dots, \xi_r, \xi_{r+1}^k,\dots,\xi_{r+p+q}^k, \xi_{r+p+q+1},\dots, \xi_n).
\end{equation}
\smallskip

\begin{convent}\label{conv}{\rm From this point on, and unless explicitly specified otherwise, we will consistently use the following conventions.
		Recall that $X_0= X\setminus |\Delta_2|$ is the complement of the support of $\Delta_2$.
		
\begin{enumerate}
\smallskip

\item[${\rm C}_0$] We denote by $\mathcal E_k$ one of the following vector bundles
\begin{equation}\label{firstcases}
	\oplus_{r+s= k}\Lambda^{r, s}T^\star_X\otimes L, 
	\end{equation}
	\begin{equation}\label{secondcases}
	\oplus_{r+s= k}\Lambda^{r, s}T^\star_X\langle \Delta_1\rangle\otimes L
\end{equation}

\item[${\rm C}_1$] \emph{An $L$-valued $k$-form on $X_0$} will be a smooth section $C^\infty (X_0, \cE_k)$ for $\cE_k$ equals to \eqref{firstcases}
or \eqref{secondcases}
\smallskip

\item[${\rm C}_2$] \emph{The holomorphic form $\theta_0$ operates naturally on $\cE_k$.} 
\smallskip

\item[${\rm C}_3$] \emph{The adjoint of the operator $\theta_0$ will be denoted by $\theta_0 ^\star$:}  
\begin{itemize}

\item In the case \eqref{firstcases},  its adjoint $\theta_0 ^\star$ is computed by using the metric
$\omega_D$ on $\cE_k$.

\item In the case \eqref{secondcases}, the adjoint $\theta_0 ^\star$ is computed by using the cylindrical metric $g_D$.
\end{itemize}
\smallskip

\item[${\rm C}_4$] \emph{We denote by $D_K= D'_K+ D''_K$ one of the following differential operators:} 
\begin{itemize}
\item In the case \eqref{firstcases}, we introduce
\[D'_K:= D'_{h_L}+ \theta_0+ \ol\theta_0, \qquad D''_K:= \dbar + \theta_0- \ol\theta_0\]
where locally $D'_{h_L}= \partial -\partial\varphi_L$ is the $(1,0)$-part of the Chern connection.

\item In the case  \eqref{secondcases}, our operator will be
\[D'_K:= D'_{h_L}+ \theta_0+ \ol\theta_0, \qquad D''_K:= \dbar + \theta_0- \ol\theta_0\]
where $D'_{h_L}$ is the operator on $\cE_k$ induced by the -twisted- exterior derivative $ \partial -\partial\varphi_L$ cf. Remark \ref{operdif} below. 
In particular, \emph{it is
not} the $(1,0)$-part of the Chern connection induced by the metrics $g_D$ and $h_L$. \end{itemize}
\smallskip

\item[${\rm C}_5$] \emph{An $L$-valued $k$-form $u$ with compact support on $X_0$ is called smooth with respect to the conic structure $\big(X, (1-1/k)\Delta_3\big)$ if:}
\begin{itemize}

\item In the case \eqref{firstcases},  we request that 
$u$ is smooth on $X\setminus D$ and moreover the quotient
\[u_i:= \frac{1}{w^{k\nu}}\pi_i^\star u\]
extends smoothly across the pre-image of $\Delta_1+ \Delta_3$. Here $\nu_\alpha$ are the Lelong numbers of the curvature current of $(L, h_L)$ along $D_\alpha$, and $\displaystyle w^{k\nu}:= \prod_{\alpha= 1}^{p+q} w_{\alpha}^{k\nu_\alpha}$.  Moreover, $\pi$ is  the ramified cover
\eqref{clog}.

\item In the case  \eqref{secondcases}, we request that 
$u$ is smooth on $X\setminus D$ and moreover the quotient
\[u_i:= \frac{1}{w^{k\nu}}\pi_i^\star u\]
defines a form with log-poles on the inverse image of $\Delta_1$. 
\end{itemize}
\end{enumerate}
}
\end{convent}

\medskip

\begin{remark}\label{operdif}
As one can see, in $C_4$, the differential operators $D'_K$ we are using in the smooth/logarithmic case are basically the same, modulo a small identification we now make precise. The $D'_K$ corresponding to forms with log-poles is defined by using the fact that sections of the dual of the vector bundle $T_X\langle \Delta_1\rangle$ identify with 
smooth forms with log poles. The exterior $D'_{h_L} :=\partial-\partial \varphi_L$-derivative, composed with the map 
\[\Omega_X^1\to \Omega_X^1(\log\Delta_1)\]
is our operator.

Moreover, note that this is equally consistent with the $(1,0)$-derivative of forms with log-poles in the sense of distributions (unlike the $(0,1)$-derivative, they don't have any residue on the components of $\Delta_1$). 
\end{remark}
\medskip

\noindent Now let $\Delta_K$ be the commutator $\displaystyle \Delta_K:= [D_K, D_K^\star].$
The key estimate in this article reads as follows.

\begin{thm}\label{keyinequ}
	In the setting of this subsection, there exists a positive constant $C> 0$ such that 
for any $L$-valued form $u$ with compact support in $X\setminus \Delta_2$ and which moreover is smooth in the orbifold sense, 	
	the inequality
	$$\|u\|^2_{L^2}  + \int_X\langle \Delta_K u, u\rangle dV_{\omega_D} \geq \frac{1}{C} \int_X \big(|u|^2 |\theta_0|^2+ \langle [D_{h_L} , D^\star _{h_L}] u, u\rangle \big)dV_{\omega_D} $$
holds. 
\end{thm}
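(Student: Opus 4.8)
The plan is to deduce Theorem~\ref{keyinequ} from a Bochner–Kodaira–Nakano type identity for the Higgs–Laplacian $\Delta_K$, after first reducing to genuinely smooth data. I would begin by pulling everything back along the local ramified covers $\pi$ of \eqref{clog}. By Convention~\ref{conv}, item ${\rm C}_5$, a form $u$ compactly supported in $X\setminus\Delta_2$ and smooth in the orbifold sense becomes, on the cover, an ordinary smooth form (case \eqref{firstcases}) or a smooth form with log poles along $\pi^{-1}(\Delta_1)$ (case \eqref{secondcases}), still compactly supported away from $\pi^{-1}(\Delta_2)$; moreover $\pi^\star\omega_D$ is a \emph{smooth} Kähler metric near $\pi^{-1}(\Delta_3)$, and twisting $h_L$ by $w^{k\nu}$ turns it into a smooth metric of zero curvature there, since $i\Theta_{h_L}(L)=0$ off $D$. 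In this picture every integration by parts used below is legitimate with no boundary term: near $\Delta_2$ the form vanishes, near $\Delta_3$ the metric is smooth and $u$ is compactly supported, and near $\Delta_1$ one argues within the log/cylindrical bundle framework of \S\ref{metriclogTX}; in particular $\int_X\langle\Delta_K u,u\rangle\,dV_{\omega_D}=\|D_K u\|^2+\|D_K^\star u\|^2$.

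The key algebraic observation is that the $\ol\theta_0$-parts of $D'_K$ and $D''_K$ cancel, so that $D_K=D_{h_L}+2\,\theta_0{\wedge}$ and $D_K^\star=D_{h_L}^\star+2\,\theta_0^\star$ with $D_{h_L}=D'_{h_L}+\dbar_L$, whence
\[\Delta_K=[D_{h_L},D_{h_L}^\star]+4[\theta_0{\wedge},\theta_0^\star]+2\big(\{D_{h_L},\theta_0^\star\}+\{\theta_0{\wedge},D_{h_L}^\star\}\big).\]
Here $[\theta_0{\wedge},\theta_0^\star]=|\theta_0|^2\,\mathrm{Id}$ by the pointwise Clifford relation for the $1$-form $\theta_0$, which accounts for the $\||\theta_0|\,u\|^2$ on the right-hand side. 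The two remaining cross terms are formal adjoints of each other; since $\theta_0$ is closed ($\partial\theta_0=0$ by \eqref{partialclose} and $\dbar\theta_0=0$ by harmonicity) one has $\{D_{h_L},\theta_0{\wedge}\}=(d\theta_0){\wedge}=0$, and feeding this, the Kähler identities \eqref{kahleridentity} for $\omega_D$ on $X\setminus\Delta_1$, and the vanishing $[D'_K,D''_K]=0$ into a graded-Jacobi computation expresses these cross terms through $[D_{h_L},\Lambda_{\omega_D}]$ composed with $\ol\theta_0{\wedge}$, i.e. ultimately through the curvature of $(L,h_L)$ — which is zero — and, in case \eqref{secondcases}, the curvature of $(T_X\langle\Delta_1\rangle,g_D)$; torsion terms disappear because $\omega_D$ is Kähler. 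In parallel, the Bochner–Kodaira identity rewrites $[D_{h_L},D_{h_L}^\star]$ as $2[\dbar_L,\dbar_L^\star]$ plus the (vanishing) curvature of $L$ and the (vanishing) torsion of $\omega_D$.

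In the smooth case \eqref{firstcases} these computations collapse to the clean identity
\[\int_X\langle\Delta_K u,u\rangle\,dV_{\omega_D}=\int_X\langle[D_{h_L},D_{h_L}^\star]u,u\rangle\,dV_{\omega_D}+4\,\||\theta_0|\,u\|^2,\]
which already yields the statement with $C=1$. In the logarithmic case \eqref{secondcases} the same identity holds up to error terms arising from (a) the curvature of $(T_X\langle\Delta_1\rangle,g_D)$, which by Lemma~\ref{logmixbound} is two-sidedly bounded by a constant multiple of $\omega_D$, and (b) the discrepancy, noted in Remark~\ref{operdif}, between the log-twisted operator $D'_{h_L}=\partial-\partial\varphi_L$ and the genuine Chern connection of $g_D\otimes h_L$; using the explicit coefficient bounds following \eqref{log30} one checks that these errors are of order zero and $\le\mathrm{const}\cdot|u|^2$ pointwise, so after integration they are absorbed into $\|u\|^2_{L^2}$ at the cost of enlarging $C$.

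I expect the main obstacle to be exactly this bookkeeping in case \eqref{secondcases}: one must make sure that \emph{no uncontrolled first-order term survives} from the failure of the Kähler identities for $g_D$, and that every zeroth-order error is genuinely dominated by $\omega_D$ — this is where the detailed structure of $g_D$ from \S\ref{metriclogTX} together with Lemma~\ref{logmixbound} are indispensable. A secondary technical point is to verify that the reduction to the ramified covers is compatible with all the adjoints, commutators, and the $w^{k\nu}$-twist involved, so that the identities proved upstairs descend to the stated inequality on $X$.
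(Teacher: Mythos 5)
The central step of your argument — the claimed identity
\begin{equation*}
\int_X\langle\Delta_K u,u\rangle\,dV_{\omega_D}=\int_X\langle[D_{h_L},D_{h_L}^\star]u,u\rangle\,dV_{\omega_D}+4\,\||\theta_0|\,u\|^2
\end{equation*}
in case \eqref{firstcases} — does not hold, and this is where the proof breaks. The decomposition $D_K=D_{h_L}+2\theta_0$ gives \eqref{form40}, and the cross terms $2[D_{h_L},\theta_0^\star]+2[\theta_0,D_{h_L}^\star]$ are \emph{not} curvature terms: the paper computes them explicitly (Lemmas \ref{calc,I}, \ref{calc,II} and formula \eqref{form6}), obtaining $[D_{h_L},\theta_0^\star]u=\theta_0^\star\rfloor\nabla^{1,0}u+[\dbar,\theta_0^\star]u$. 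The first piece is a genuinely first-order differential operator acting on $u$, and the second involves $\dbar(\ol\theta_i\,\omega^{\ol ik})$, which does not vanish even in geodesic coordinates because $\dbar\ol\theta_i=\ol{\partial\theta_i}$ is generically nonzero. Your Simpson-style graded-Jacobi reduction would prove $\Delta'_K=\Delta''_K$ for a harmonic metric (which the paper records), but $\Delta_K-[D_{h_L},D_{h_L}^\star]-4[\theta_0,\theta_0^\star]$ is a different quantity whose cross terms are not killed by the vanishing of $i\Theta(L,h_L)$.

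Because these cross terms are first-order in $u$, Theorem \ref{keyinequ} is a genuine a priori \emph{inequality} obtained by absorption, not an identity. The paper's route is: (i) an explicit coefficient analysis (Lemma \ref{coeff}, Poincaré asymptotics near $\Delta_2$) showing $[\dbar,\theta_0^\star]$ grows at most like $\sum|a_i|\log(1/|s_i|)$, which is absorbed by Cauchy--Schwarz against $|\theta_0|^2\sim\log^2(1/|s_i|)$ — this is precisely where the nonvanishing of $\Res\theta_0$ on $\Delta_2$ from Property \ref{property} is used; (ii) Cauchy--Schwarz on $\langle\theta_0^\star\rfloor\nabla^{1,0}u,u\rangle$ against $\|\nabla^{1,0}u\|$ and $\||\theta_0|u\|$; (iii) a Bochner inequality (Lemma \ref{bochner1}/\ref{bochner2}) converting $\|\nabla^{1,0}u\|^2$ into $\langle[D_{h_L},D_{h_L}^\star]u,u\rangle$ plus $\|u\|^2$. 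Steps (i)--(iii) are absent from your proposal; in particular, the requirement that the constant $C$ be uniform over all $u$ compactly supported in $X\setminus\Delta_2$ — even as those supports approach $\Delta_2$ — forces one to control the degeneration of $\omega_D$ and $\theta_0^\star$ quantitatively, which is the substantive content of the theorem and is not delivered by a reduction to ramified covers. Note also that in case \eqref{firstcases} the metric $\omega_D$ is \emph{still} singular along $\Delta_2$, so "smooth case" does not mean all data is smooth.

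Finally, in case \eqref{secondcases} your bound on the discrepancy between $D'_{h_L}$ and the Chern connection of $(T_X\langle\Delta_1\rangle,g_D)$ as "order zero, $\le\mathrm{const}\cdot|u|^2$ pointwise" is incorrect: the correct estimate \eqref{log22} is $\le C|\theta_0|\,|u|$, which is \emph{not} order zero and again requires absorption against $\||\theta_0|u\|^2$ by the same mechanism as above.
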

\medskip

\noindent Mainly, due to the fact that the metric $\omega_D$ is Kähler and $g_D$ is not, we have adopted the following strategy for the proof we present next. We first complete the arguments in the smooth case \eqref{firstcases}. Then we explain how to adapt our arguments in the logarithmic case \eqref{secondcases}: the differences are insignificant, but they exist.
To begin with, we collect next a few facts about some commutator operators which will be needed in what follows.

\subsubsection{Commutators in the smooth case \eqref{firstcases}}
We expand the expression of $\Delta_K$ by using the equality $D_K =D_{h_L} +2\theta_0$, i.e.
\begin{equation}\label{form40}
[D_K, D_K ^\star]= [D_{h_L} , D_{h_L} ^\star] + 4[\theta_0, \theta_0^\star] + 2[D_{h_L}, \theta_0^\star] +2[\theta_0, D_{h_L} ^\star]\end{equation}

\noindent We evaluate next the operators $\theta_0^\star$ and $[D_{h_L}, \theta_0^\star]$ in the case \eqref{firstcases}.
		
\begin{lemme}\label{calc,I} Let $(\Omega, z)$ be a coordinate system of $X$, centred at an arbitrary point $x_0\in X$. We write 
\[\theta_0|_\Omega= \sum \theta_idz_i\]	
Then following formulas hold true
\begin{equation}\label{form1} 
\theta_0^\star= \sum_{i,k}\ol\theta_i \omega^{\ol i k}\frac{\partial}{\partial z_k}\rfloor	
\end{equation}		 
as well as
\begin{equation}\label{form5}
	\begin{split}
[D_{h_L}, \theta_0^\star]u= & \sum \ol\theta_i \omega^{\ol i k}(\partial_k u_{I\ol J}- u_{I\ol J}\partial_k\varphi) dz^I\wedge dz^{\ol J}\otimes e_L\\
+ & \sum u_{I\ol J}d(\ol\theta_i \omega^{\ol i k})\wedge \frac{\partial}{\partial z_k}\rfloor dz^I\wedge dz^{\ol J}\otimes e_L\\
\end{split}
\end{equation}
where $\displaystyle \frac{\partial}{\partial z_k}\rfloor$ means the contraction with respect to the respective vector field.  
\end{lemme}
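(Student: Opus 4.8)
The plan is to treat both \eqref{form1} and \eqref{form5} as purely local pointwise identities on $X\setminus D$, where $\omega_D$ is a genuine smooth Kähler metric, and to verify them by a direct computation in a holomorphic frame; no analysis beyond the Leibniz rule is needed, so there will be no convergence or regularity issue.

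First I would establish \eqref{form1}. Since $\theta_0$ is a \emph{scalar} holomorphic $(1,0)$-form with log poles (by the rank-one hypothesis), the operator $\theta_0\wedge\,\cdot\,$ acts only on the $\Lambda^{\bullet,\bullet}T_X^\star$-factor of $\cE_k$ and not on the $L$-factor; hence its formal adjoint does not see $h_L$, and the weight $e^{-\varphi_L}$ in the $L^2$-pairing is irrelevant. It then suffices to invoke the standard fact that, with respect to a Hermitian metric, interior multiplication by the metric dual vector is the adjoint of exterior multiplication by a $(1,0)$-form: if $\theta_0|_\Omega=\sum_i\theta_i\,dz_i$, then the $\omega_D$-dual $(1,0)$-vector field is $\theta_0^\sharp=\sum_{i,k}\overline{\theta_i}\,\omega^{\overline i k}\,\partial/\partial z_k$, so $\theta_0^\star=\iota_{\theta_0^\sharp}$, which is precisely \eqref{form1}. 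A one-variable check (as in the sanity computation $\langle \theta_1 f\,dz,\;g_1\,dz\rangle=\langle f,\;\theta_0^\star(g_1\,dz)\rangle$) fixes the placement of the conjugation and of the inverse metric coefficients.

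For \eqref{form5} I would expand the graded commutator $[D_{h_L},\theta_0^\star]=D_{h_L}\circ\theta_0^\star+\theta_0^\star\circ D_{h_L}$ of the degree $+1$ operator $D_{h_L}$ (read, as in Convention \ref{conv}, as the $(1,0)$-operator $\partial-\partial\varphi_L$ in a holomorphic frame $e_L$) and the degree $-1$ operator $\theta_0^\star$. Writing $u=\sum u_{I\overline J}\,dz^I\wedge d\bar z^J\otimes e_L$ and using $D_{h_L}(f\,dz^I\wedge d\bar z^J\otimes e_L)=(\partial f-f\,\partial\varphi_L)\wedge dz^I\wedge d\bar z^J\otimes e_L$ together with \eqref{form1}, one computes $D_{h_L}\theta_0^\star u$ and $\theta_0^\star D_{h_L}u$ separately. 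In $D_{h_L}\theta_0^\star u$ the derivative falls either on $u_{I\overline J}$ (and $\varphi_L$), or on the coefficients $\overline{\theta_i}\,\omega^{\overline i k}$ of $\theta_0^\star$ itself; in $\theta_0^\star D_{h_L}u$ the new covector $dz_\ell$ produced by $D_{h_L}$ is contracted by $\partial/\partial z_k$ using $\partial_k\rfloor(dz_\ell\wedge dz^I)=\delta_{k\ell}\,dz^I-dz_\ell\wedge(\partial_k\rfloor dz^I)$. The terms in which the derivative falls on $u_{I\overline J}$ and $\varphi_L$ and the contraction hits $dz_\ell$ occur with opposite signs in the two compositions and cancel; what survives is, on one hand, $\sum\overline{\theta_i}\,\omega^{\overline i k}(\partial_k u_{I\overline J}-u_{I\overline J}\partial_k\varphi)\,dz^I\wedge d\bar z^J\otimes e_L$, the first line of \eqref{form5}, and on the other hand the zeroth-order-in-$u$ term $\sum u_{I\overline J}\,d(\overline{\theta_i}\,\omega^{\overline i k})\wedge(\partial/\partial z_k\rfloor dz^I)\wedge d\bar z^J\otimes e_L$, the second line.

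I would close with two remarks. The formula is independent of the holomorphic frame $e_L$: a change of frame alters $\varphi_L$ by a pluriharmonic term, and the $\partial\varphi_L$ contribution compensates this exactly; and since everything is algebraic in $u$, its first derivatives, and the smooth coefficients of $\omega_D,h_L,\theta_0$ on $X\setminus D$, the identities hold verbatim there, and the logarithmic case \eqref{secondcases} is obtained identically upon replacing $\omega_D$-duality by $g_D$-duality and reading $\theta_0$ as the induced operator on $\cE_k$ as in \Cref{operdif}. The only real point requiring care is the sign bookkeeping coming from the degrees of $D_{h_L}$ and $\theta_0^\star$ and from the order in which wedge and contraction are performed, so that the genuinely first-order terms cancel and the remaining ones assemble with the stated coefficients; I expect this, rather than any conceptual difficulty, to be the main (and essentially the only) obstacle.
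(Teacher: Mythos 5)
Your strategy for both identities --- treat them as pointwise algebraic facts on $X\setminus D$, compute each composition $D_{h_L}\circ\theta_0^\star$ and $\theta_0^\star\circ D_{h_L}$ in a local holomorphic frame, and let Leibniz produce the cancellation --- is exactly what the paper does, and your derivation of \eqref{form1} (adjointness of exterior and interior multiplication, with the observation that $\theta_0$ acts only on the form factor so $h_L$ drops out of the pairing) is correct and matches the paper's one-line justification.

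There is, however, one genuine error in your reading of the statement for \eqref{form5}. You parse $D_{h_L}$ as the $(1,0)$-operator $\partial-\partial\varphi_L$, citing Convention \ref{conv}; but in that convention the operator $\partial-\partial\varphi_L$ is $D'_{h_L}$ (with a prime), while the unprimed $D_{h_L}$ appearing in the lemma is the \emph{full} Chern connection $D'_{h_L}+\dbar = d-\partial\varphi_L$, as the paper's own proof makes explicit by writing $D_{h_L}u=\sum(du_{I\ol J}-u_{I\ol J}\partial\varphi_L)\wedge dz^I\wedge d\ol z^J\otimes e_L$. This distinction is not cosmetic: the zeroth-order term in \eqref{form5} is $\sum u_{I\ol J}\,d(\ol\theta_i\omega^{\ol i k})\wedge(\partial/\partial z_k\rfloor\,\cdots)$, and the $\dbar$-part $\dbar(\ol\theta_i\omega^{\ol i k})$ is not zero, because $\omega^{\ol i k}$ is Hermitian, not anti-holomorphic. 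If you actually carried out the computation with $D'_{h_L}$ alone you would obtain $\partial(\ol\theta_i\omega^{\ol i k})$, which is a different (and, in this paper, insufficient) operator --- indeed the term $[\dbar,\theta_0^\star]u=\sum u_{I\ol J}\dbar(\ol\theta_i\omega^{\ol i k})\wedge(\partial/\partial z_k\rfloor\,\cdots)$ is precisely the piece that must be estimated separately in the a-priori inequality later on. You nonetheless write $d(\ldots)$ in your final answer, so the discrepancy is internal to your proposal: fix the identification of $D_{h_L}$, and then note that the $\dbar$-part contributes only to the zeroth-order line (since the contraction $\partial/\partial z_k\rfloor$ annihilates $d\ol z_\ell$, the $\dbar u_{I\ol J}$ terms cancel exactly as the $\partial u_{I\ol J}$ terms do, and only $\dbar(\ol\theta_i\omega^{\ol i k})$ survives). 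With that repair your argument coincides with the paper's. One further small caution: the sign conventions work out only because $[D_{h_L},\theta_0^\star]$ is read here as the graded \emph{anti}commutator $D_{h_L}\theta_0^\star+\theta_0^\star D_{h_L}$; you do use the plus sign, which is correct, but it is worth stating explicitly since it is the plus sign that produces the cancellation of the $\ol\theta_i\omega^{\ol i k}(du_{I\ol J}-u_{I\ol J}\partial\varphi_L)\wedge(\partial/\partial z_k\rfloor\,\cdots)$ terms.
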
		

\begin{proof}
Indeed, the formula \eqref{form1} is a consequence of the fact that we have
\[\langle u, dz_i\wedge v\rangle= \langle \omega^{\ol i k}\frac{\partial}{\partial z_k}\rfloor u, \wedge v\rangle.\]

In order to establish the second formula, we perform next a calculation in coordinates. Let
\[u= \sum u_{I\ol J}dz^I\wedge dz^{\ol J}\otimes e_L\]
be a $k= |I|+ |J|$-form with values in $L$. Then we have 
\[D_{h_L} u= \sum (d u_{I\ol J}- u_{I\ol J}\partial\varphi_L)\wedge dz^I\wedge dz^{\ol J}\otimes e_L\]
and it follows that 
\[\theta_0^\star\circ D_{h_L} u= \sum \ol\theta_i \omega^{\ol i k}\frac{\partial}{\partial z_k}\rfloor (d u_{I\ol J}- u_{I\ol J}\partial\varphi_L)\wedge dz^I\wedge dz^{\ol J}\otimes e_L\]
and by the Leibniz rule we get
\begin{equation}\label{form3}
\begin{split}
\theta_0^\star\circ D_{h_L} u= & -\sum \ol\theta_i \omega^{\ol i k}(d u_{I\ol J}- u_{I\ol J}\partial\varphi_L)\wedge \frac{\partial}{\partial z_k}\rfloor dz^I\wedge dz^{\ol J}\otimes e_L \\
+ &\sum \ol\theta_i \omega^{\ol i k}(\partial_k u_{I\ol J}- u_{I\ol J}\partial_k\varphi_L) dz^I\wedge dz^{\ol J}\otimes e_L\\ 
\end{split}
\end{equation}
where we remark that the second line preserves the type of the form. Also, we use the notation $\partial_k$ for the holomorphic derivative in the direction 
$\displaystyle \frac{\partial}{\partial z_k}$.
Next we have
\begin{equation}\label{form4}
	\begin{split}
		D_{h_L} \circ \theta_0^\star u= & \sum \ol\theta_i \omega^{\ol i k}(d u_{I\ol J}- u_{I\ol J}\partial\varphi_L)\wedge \frac{\partial}{\partial z_k}\rfloor dz^I\wedge dz^{\ol J}\otimes e_L \\
		+ &\sum u_{I\ol J}d(\ol\theta_i \omega^{\ol i k})\wedge dz^I\wedge dz^{\ol J}\otimes e_L\\ 
	\end{split}
\end{equation}
and by adding the two expressions \eqref{form3} and \eqref{form4} we get
\begin{equation}
	\begin{split}\label{uniform}
[D_{h_L}, \theta_0^\star]u= & \sum \ol\theta_i \omega^{\ol i k}(\partial_k u_{I\ol J}- u_{I\ol J}\partial_k\varphi_L) dz^I\wedge dz^{\ol J}\otimes e_L\\
+ & \sum u_{I\ol J}d(\ol\theta_i \omega^{\ol i k})\wedge \frac{\partial}{\partial z_k}\rfloor dz^I\wedge dz^{\ol J}\otimes e_L\\
\end{split}
\end{equation}
which is what we wanted to prove.
\end{proof}
\smallskip

\noindent The formula \eqref{form5} can be used in order to express the "pure" part 
$[D_h^\prime, \theta^\star]$ of the commutator by using the covariant derivative.
We recall here that in general setting, given a $(p, q)$ form $u$ on a Kähler manifold $(X, \omega)$ 
with values in a line bundle $(L, h_L)$, its (1,0)-covariant derivative $\nabla^{1,0}$ is induced by the operators
\[\nabla^{1,0} e= -\partial\varphi_L\otimes e, \qquad \nabla_{k}^{1,0} dz_i= -\Gamma^{i}_{jk}dz_j, \qquad \nabla_{k}^{1,0} dz_{\ol i}= 0\] 
together with the Leibniz rule. We denote by $\Gamma^{i}_{jk}$ the Christoffel symbols of $(X, \omega_D)$. 

\begin{lemme}\label{calc,II}
Let $\nabla^{1,0}$ be the $(1, 0)$-covariant derivative associated to $(X, \omega_D)$ and $(L, h_L)$. Then we have
\[[D_{h_L}^\prime, \theta_0^\star]u= \theta_0^\star \rfloor \nabla^{1,0} u\]
at each point of $X\setminus D$, where $\nabla^{1,0} u$ is a (1,0)-form with values in the $\mathcal C^\infty$ bundle $E:= \Lambda^pT^\star_X\otimes \Lambda^q\ol T^\star_X\otimes L$ (so that the contraction with $\theta_0^\star$ gives a section of $E$).
\end{lemme}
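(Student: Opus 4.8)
The plan is to deduce the identity $[D_{h_L}^\prime, \theta_0^\star]u = \theta_0^\star \rfloor \nabla^{1,0}u$ directly from the coordinate formula \eqref{form5} established in Lemma \ref{calc,I}, by recognizing that the two terms in \eqref{form5} reassemble into a single covariant expression. First I would restrict attention to the $(1,0)$-part: write $D_{h_L} = D'_{h_L} + \bar\partial$ and note that $\theta_0^\star$ lowers holomorphic degree by one (it is a contraction against the $(1,0)$-form part), so $[\bar\partial, \theta_0^\star]$ contributes nothing of the relevant type; only $[D'_{h_L}, \theta_0^\star]$ survives, and from \eqref{form5} its value is
\begin{equation}\label{form5prime}
[D_{h_L}^\prime, \theta_0^\star]u = \sum \ol\theta_i \omega^{\ol i k}\bigl(\partial_k u_{I\ol J}- u_{I\ol J}\partial_k\varphi_L\bigr)\, dz^I\wedge dz^{\ol J}\otimes e_L + \sum u_{I\ol J}\, \partial(\ol\theta_i \omega^{\ol i k})\wedge \frac{\partial}{\partial z_k}\rfloor\, dz^I\wedge dz^{\ol J}\otimes e_L,
\end{equation}
where I keep only the $(1,0)$-component of the differential $d(\ol\theta_i\omega^{\ol i k})$, consistently with the fact that we are computing the pure part $[D'_{h_L}, \theta_0^\star]$.

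Next I would unwind the right-hand side $\theta_0^\star \rfloor \nabla^{1,0}u$. By definition of the covariant derivative recalled just before the statement, $\nabla^{1,0}u$ is a $(1,0)$-form with values in $E = \Lambda^p T^\star_X \otimes \Lambda^q \ol T^\star_X \otimes L$; its component along $dz_m$ is
\[
\nabla^{1,0}_m u = \sum \bigl(\partial_m u_{I\ol J} - u_{I\ol J}\,\partial_m \varphi_L\bigr)\, dz^I\wedge dz^{\ol J}\otimes e_L - \sum u_{I\ol J}\,\Gamma^{a}_{b m}\, (\text{replace } dz_b \text{ by } dz_a \text{ in } dz^I)\wedge dz^{\ol J}\otimes e_L,
\]
and then $\theta_0^\star \rfloor$ acts by \eqref{form1}, i.e. contracts the $dz_m$ slot against $\sum_{i,k}\ol\theta_i\omega^{\ol i k}\frac{\partial}{\partial z_k}$, which amounts to summing $\ol\theta_i\omega^{\ol i k}\nabla^{1,0}_k u$ over $i,k$. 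Matching the first sum in \eqref{form5prime} with the derivative-of-coefficients term in $\nabla^{1,0}_k u$ is immediate; the content of the lemma is then the identity
\[
\sum u_{I\ol J}\, \partial(\ol\theta_i \omega^{\ol i k})\wedge \frac{\partial}{\partial z_k}\rfloor\, dz^I\wedge dz^{\ol J} \;=\; -\sum \ol\theta_i\,\omega^{\ol i k}\, u_{I\ol J}\, \Gamma^{a}_{b k}\,(dz^I \text{ with } dz_b\mapsto dz_a)\wedge dz^{\ol J},
\]
which I would verify by expanding $\Gamma^{a}_{bk} = \sum_c \omega^{\ol c a}\partial_k \omega_{b\ol c}$ (Christoffel symbols of the Kähler metric $\omega_D$) and using the identity $\partial_k(\omega^{\ol i a}) = -\sum_{b,c}\omega^{\ol i b}(\partial_k\omega_{b\ol c})\omega^{\ol c a}$ obtained by differentiating $\omega^{\ol i a}\omega_{a\ol j} = \delta_{i j}$. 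Since $\theta_0$ is holomorphic ($\partial\theta_0 = 0$ from \eqref{partialclose}), the $\ol\theta_i$ are antiholomorphic, so $\partial(\ol\theta_i\omega^{\ol i k}) = \ol\theta_i\,\partial\omega^{\ol i k}$, and the Leibniz rule plus the two differentiation identities turn the left-hand side into exactly the Christoffel contribution on the right.

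The main obstacle I anticipate is purely bookkeeping: correctly tracking the antisymmetrization when the contraction $\frac{\partial}{\partial z_k}\rfloor$ hits $dz^I$ versus when the Christoffel correction redistributes an index inside $dz^I$, and making sure the wedge $\wedge$ reordering signs cancel. A clean way to avoid sign headaches is to test the identity against decomposable forms and exploit that both sides are $\mathcal C^\infty(X_0)$-linear in the metric data and tensorial in $u$, so it suffices to check it on a Kähler normal coordinate system at a point $x_0 \in X\setminus D$, where $\Gamma^{a}_{bk}(x_0) = 0$ and $\partial_k\omega_{i\ol j}(x_0) = 0$; there both the second term of \eqref{form5prime} and the Christoffel term vanish, and the identity reduces to the trivial matching of first-order terms. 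The general identity then follows because both sides are genuine differential operators (their difference, being tensorial and vanishing at every point, is zero). This reduces the whole proof to the observation that the commutator $[D'_{h_L},\theta_0^\star]$ has no zeroth-order-in-$u$ ambiguity and agrees to first order with $\theta_0^\star\rfloor\nabla^{1,0}$, which is exactly what \eqref{form5} records.
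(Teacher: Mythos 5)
Your proposal is correct and ultimately takes the same route as the paper: both arguments reduce to geodesic (Kähler normal) coordinates at an arbitrary point $x_0\in X\setminus D$, where $\partial\omega^{\ol i k}(x_0)=0$ kills the second sum in \eqref{form5} (since $\ol\theta_i$ is antiholomorphic by \eqref{partialclose}) and the Christoffel symbols kill the correction in $\nabla^{1,0}u$, so the two sides coincide at $x_0$, hence everywhere by tensoriality of the difference. Your excursion into the general-coordinate Christoffel identity is harmless (modulo a swapped index convention in $\Gamma^a_{bm}$) but unnecessary, since you yourself abandon it in favor of the normal-coordinate check, which is exactly what the paper does.
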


\begin{proof}
It is a nice exercise to prove this identity working with an arbitrary coordinate system. However, choosing \emph{geodesic} coordinates at some point $x_0$ is much more efficient. Indeed, at $x_0$ formula \eqref{form5} reads
\[[D_{h_L} ^\prime, \theta_0^\star]u_{x_0}= \sum \ol\theta_i (\partial_i u_{I\ol J} - u_{I\ol J} \partial_i \varphi_L )dz^I\wedge dz^{\ol J}\otimes e_L\]
where we use that $\partial\ol \theta_i=0$ since the coefficients of $\theta_0$ are holomorphic in the complement of $D$. 

On the other hand, we have
\[\nabla^{1,0}u_{x_0}= \sum (\partial_i u_{I\ol J} - u_{I\ol J} \partial_i \varphi_L)dz_i\otimes dz^I\wedge dz^{\ol J}\otimes e_L\]
(because the other terms involve the Christoffel symbols, which vanish at $x_0$) so the lemma follows immediately.
\end{proof}
\smallskip

\noindent In conclusion, we have
\begin{equation}\label{form6}
[D_{h_L}, \theta_0^\star]u= \theta_0^\star \rfloor \nabla^{1,0} u+ [\dbar, \theta_0^\star]u
\end{equation}
and the local expression of the second term of the RHS of \eqref{form6} is equal to
\begin{equation}\label{form7}
[\dbar, \theta_0^\star]u= \sum u_{I\ol J}\dbar(\ol\theta_i \omega^{\ol i k})\wedge \frac{\partial}{\partial z_k}\rfloor dz^I\wedge dz^{\ol J}\otimes e_L.
\end{equation}
\medskip

\noindent Another simple calculation (which we skip) shows that the commutator $[\theta_0, \theta_0 ^\star]$ equals 
\begin{equation}\label{form15}
[\theta_0, \theta_0^\star]= |\theta_0|^2_{\omega_D}	
\end{equation}
that is to say, it is given by the norm of $\theta_0$ with respect to the Poincar\'e
metric $\omega_D$. 
\medskip

\subsubsection{Proof of Theorem \ref{keyinequ}, smooth case}\label{proofsmth} Now we would like to prove Theorem \ref{keyinequ} in the case \eqref{firstcases}. 
We will argue as follows:
\begin{enumerate}

\item[\rm (i)] We first show that there exists a constant $C> 0$ such that global scalar product $\langle [\dbar, \theta_0^\star]u, u\rangle$ is bounded by  
\[\frac{C}{\ep}\Vert u\Vert^2+  C\ep \int_X|\theta_0|_{\omega_D}^2|u|^2dV_{\omega_D},\]
for any positive $\ep> 0$. 

\item[\rm (ii)] For the term $\langle \theta_0^\star \rfloor \nabla^{1,0} u, u\rangle= \langle \nabla^{1,0} u, \theta_0\otimes u\rangle$, by Cauchy, we have
\[2|\langle \nabla^{1,0} u, \theta_0\otimes u\rangle|\leq \frac{3}{4}\Vert \nabla^{1,0} u\Vert^2+ \frac{4}{3}\int_X|\theta_0|^2|u|^2dV\]

\item[\rm (iii)] We finally play the joker: modulo terms of order zero $\Vert u\Vert^2$, we have cf.\cite{Siu82}
\[\Vert \nabla^{1,0} u\Vert^2\simeq \Vert \dbar u\Vert^2+ \Vert \dbar^\star u\Vert^2.\]

\item[\rm (iv)] We have \[\Vert \dbar u\Vert^2+ \Vert \dbar^\star u\Vert^2= \Vert D'_{h_L} u\Vert^2+ \Vert D_{h_L}^{\prime \star}  u\Vert^2= \frac{1}{2}\langle [D_{h_L}, D_{h_L} ^\star]u, u\rangle\] since the curvature of $(L, h_L)$ is non-singular across $\Delta_1$, and $u$ has conic singularities on $\Delta_3$ (see \cite{CP}).
\end{enumerate}

\begin{proof}
	The points $(ii)$ and $(iv)$ are clear. Concerning the point $(iii)$, it is a consequence of the Bochner-type result
	\begin{equation}\label{form31}
		\Vert \dbar u\Vert^2+ \Vert \dbar^\star u\Vert^2+ C\Vert u\Vert^2\geq  \Vert \nabla^{1,0} u\Vert^2 
	\end{equation} 
	for any form $u$ with compact support in $X\setminus \Delta_2$ and smooth in orbifold sense. We will explain it in more details in the subsection \ref{bochner1}. 
\smallskip

For the point $(i)$, we will use the equality \eqref{form7}, together with Lemma \ref{coeff}.
Given the equality \eqref{form7}, we have to find an upper bound for the expression
\begin{equation}\label{form27}
|\dbar(\ol\theta_i \omega^{\ol i j})|_{\omega_D}\left|\frac{\partial}{\partial z_j}\right|_{\omega_D}
\end{equation}
which we next do using a case-by-case analysis. 

\noindent $\bullet$ \emph{We assume that $i\neq j$.} If $\max (i, j)\leq p$, then \eqref{form27} becomes
\[\frac{1}{|z_j|\log1/|z_j|}|\dbar(z_j\psi_{ij})|_{\omega_D},\]
by the third point of Lemma \ref{coeff}, and the fact that the coefficients $\theta_i$ can be written as $f_i(z)/z_i$, with $f_i$ a local holomorphic function. Since the differential of $\psi_{ij}$ with respect to $\omega_D$ is bounded, we have 
\[\sup_{\Omega\setminus D}|\dbar(\ol\theta_i \omega^{\ol i j})|_{\omega_D}\left|\frac{\partial}{\partial z_j}\right|_{\omega_D}< \infty.\]

If $i\leq p$ and $p+q\geq j\geq p+ 1$, then \eqref{form27} is equal to $\displaystyle |\dbar \psi_{ik}|_{\omega_D}|z_j|^{1/k}$ (the $\ol z_i$ is cancelled by the pole of $\ol \theta_i$), so it is clearly bounded.

If $i\leq p$ and $j\geq p+ q+1$, then up to a bounded function the expression \eqref{form27} is equal to $\displaystyle |\dbar \psi_{ij}|_{\omega_D}$, so it is clearly bounded.

If $p+q\geq i, j\geq p+1$, then \eqref{form27} is equal to $\displaystyle \O(|z_j|^{1/k})$, and again, this is bounded.

If $p+q\geq i \geq p+1$, and $j\geq p+ q+1$ then \eqref{form27} is equal to $\displaystyle \O(1)$.

If both $i$ and $k$ are strictly greater than $p+ q$, things are clear.
\medskip

\noindent $\bullet$ \emph{We assume that $i= j$.} In case their common value is strictly greater than $p+q$, there is nothing to prove.  
The remaining cases are $i= j \leq p$, and $p+1\leq i= j \leq p+q$ so we have to deal with the following expressions
\[\frac{1}{|z_i|\log1/|z_i|}\dbar(\ol\theta_i |z_i|^2\log^2|z_i|^2), \qquad \frac{1}{|z_i|^{1-1/k}}\dbar (|z_i|^{2-2/k}\psi_i).\]
The residue $a_i$ of $\theta$ along $z_i= 0$ is non-zero for $i= 1,\dots , p$, the expression is bounded by $|a_i|\log1/|z_i|$ (up to a constant). In the second case a quick calculation shows that it is bounded as soon as $k\geq 2$.
\smallskip

\noindent Expressed in intrinsic terms, we have proved the existence of a constant $C> 0$ such that
\begin{equation}\label{form28}
\int_X\langle [\dbar, \theta_0^\star]u, u\rangle dV\leq C \int_X |u|^2\sum_{i=1}^m |a_i|\log1/|s_i| dV.
\end{equation}
The RHS of \eqref{form28} is clearly bounded by the RHS of (i) above, as consequence of the Cauchy inequality. 

\medskip

\noindent 
 \end{proof}
 
 Now we can prove Theorem \ref{keyinequ} in the case \eqref{firstcases}.
 
 \begin{proof}
 By combining the points $(iii)$ and $(iv)$, we have 
\begin{equation}\label{form32}
\frac{1}{2}(\Vert D_{h_L} u\Vert^2+ \Vert D_{h_L} ^\star u\Vert^2)+ C\Vert u\Vert^2\geq  \Vert \nabla^{1,0} u\Vert^2.
\end{equation} 
Together with $(i)$, $(ii)$ and Lemma \ref{calc,II},  by choosing $0<\ep\ll 1$ in $(i)$, we know that 
\begin{equation}\label{form33}
\begin{split}
\left|\int_X\langle \mathcal [D_{h_L}, \theta_0 ^\star]u, u\rangle dV\right|\leq &
\frac{3}{4}\int_X|\theta_0|_{\omega_D}^2|u|^2dV+ \frac{3}{16}\int_X\langle [D_{h_L}, D_{h_L} ^\star]u, u\rangle dV\\
& + C_\ep \Vert u\Vert^2 
\end{split}
\end{equation} 
for some constant $C_\ep$ depending on $\ep$.  By using the formula \eqref{form40}, we infer that  
\begin{equation}\label{form34}
C\Vert u\Vert^2+ \int_X\langle \Delta_K u, u\rangle dV\geq \frac{1}{4}\int_X\langle [D_{h_L}, D_{h_L} ^\star]u, u\rangle dV+ \int_X|\theta_0|_{\omega_D}^2|u|^2dV,
\end{equation} 
which ends the proof.
\end{proof}	
		
\medskip

\noindent Theorem \ref{keyinequ} admits the following version.

\begin{cor}\label{keylog} In the context of \Cref{keyinequ}, for any $a_\alpha\geq 0$ there exist positive constants $C_i$ such that the inequality
\begin{align}C_0\int_X|u|^2d\mu_a+ \int_X\langle \Delta_K u, u\rangle d\mu_a\geq & C_1 \int_X |\theta_0|_{\omega_D}^2|u|^2d\mu_a \cr 
+ & C_2 \int_X\langle [D_{h_L} , D^\star_{h_L}] u, u\rangle d\mu_a  
\nonumber
\end{align}	
holds. We denote by $d\mu_a$ the measure 
\[d\mu_a:= \prod_{i\in J}\log^{2a_i}(|s_i|^2+ \delta^2)dV_{\omega_D},\]
where $J\subset \{1,\dots m\}$ is an arbitrary subset, $\delta\in [0, 1]$. Notice that the constants above are independent of $\delta$. 	
\end{cor}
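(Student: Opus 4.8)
The plan is to rerun the proof of \Cref{keyinequ} with the weight carried along, and to isolate the single place where it intervenes. Write $w_\delta := \prod_{i\in J}|\log(|s_i|^2+\delta^2)|^{a_i}$, so that $d\mu_a = w_\delta^2\,dV_{\omega_D}$. The operators $D_K$, $D_{h_L}$, $D'_{h_L}$, $\theta_0$ and all pointwise inner products are insensitive to the choice of volume form; what changes when we pass from $dV_{\omega_D}$ to $d\mu_a$ are the formal adjoints $D_K^\star$, $D_{h_L}^\star$, $\theta_0^\star$, $\dbar^\star$, each of which acquires an extra zeroth--order term, namely interior product with $d\log w_\delta^2 = \sum_{i\in J}2a_i\,d\log|\log(|s_i|^2+\delta^2)|$. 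Hence the whole point is the estimate $\sup_{X\setminus D}|d\log w_\delta^2|_{\omega_D}\le C$, with $C$ independent of $\delta\in[0,1]$.

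This uniform bound comes from the explicit shape of $\omega_D$ near $\Delta_2$ (after a harmless normalisation of the fixed metrics on the $\O(D_i)$). If $s_i=z_i=0$ is a Poincaré component, then $\omega_D^{\ol i i}\le C|z_i|^2\log^2|z_i|^2$ by \Cref{coeff}, while $\partial\log(|s_i|^2+\delta^2)=\frac{\ol z_i}{|z_i|^2+\delta^2}\,dz_i+(\text{bounded})$; distinguishing the two regions $|z_i|\le\delta$ and $|z_i|\ge\delta$, one checks that $|\partial\log(|s_i|^2+\delta^2)|_{\omega_D}$ and $|\log(|s_i|^2+\delta^2)|$ are comparable up to a constant that does not depend on $\delta$. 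Away from a neighbourhood of $\Delta_2$ the function $w_\delta$ together with its derivatives is uniformly bounded, so there the claim is the classical (orbifold) elliptic estimate already contained in \Cref{keyinequ}.

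Granting this, one copies the proof of \Cref{keyinequ} word for word, replacing $dV_{\omega_D}$ by $d\mu_a$ throughout. Every term newly created by the discrepancy between the $d\mu_a$-- and the $dV_{\omega_D}$--adjoints is bounded pointwise by $C|u|^2$, respectively $C|u|\,|\nabla^{1,0}u|$ or $C|u|\,|D_{h_L}u|$, and is therefore, after Cauchy--Schwarz with a small parameter, absorbed either into $\int_X|u|^2\,d\mu_a$ --- with a constant that may grow, which merely enlarges $C_0$ --- or into a small multiple of the two good terms $\int_X|\theta_0|^2_{\omega_D}|u|^2\,d\mu_a$ and $\int_X\langle[D_{h_L},D_{h_L}^\star]u,u\rangle\,d\mu_a$, a fixed fraction of which survives on the right. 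This applies in particular to the extra contributions to the expansion \eqref{form40} of $\Delta_K$, to the Bochner inequality \eqref{form31}, and to the step identifying $\|\dbar u\|^2+\|\dbar^\star u\|^2$ with $\tfrac12\langle[D_{h_L},D_{h_L}^\star]u,u\rangle$. Since every constant used is the one of \Cref{coeff} or the one in the displayed uniform bound above, the resulting $C_0,C_1,C_2$ are independent of $\delta$. In the logarithmic case \eqref{secondcases} the argument is identical, with $\omega_D$ replaced by $g_D$ in the norms of forms: $w_\delta$ still involves only the Poincaré components $\Delta_2$, along which $g_D$ has exactly the same Poincaré asymptotics as $\omega_D$ (cf. \Cref{logmixbound} and the coefficient estimates preceding it), so the uniform bound and all the absorptions go through unchanged.

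The main obstacle is precisely this uniform--in--$\delta$ inequality: one must verify that the regularisation $|s_i|^2\mapsto|s_i|^2+\delta^2$ does not spoil the $\omega_D$--boundedness of $d\log|\log|s_i|^2|$, uniformly in $\delta$. Everything else is bookkeeping already carried out in the proof of \Cref{keyinequ}.
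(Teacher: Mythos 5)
Your proposal is correct and ultimately hinges on exactly the same analytic fact as the paper's argument, namely the $\delta$--uniform bound $\sup_{X\setminus\Delta_2}\big|d\log\log(|s_i|^2+\delta^2)\big|_{\omega_D}\le C$. But the route you take is genuinely different in presentation: you change the \emph{measure} and hence the formal adjoints, and then re-run the whole proof of \Cref{keyinequ} step by step, absorbing the extra zeroth-order terms. The paper instead applies \Cref{keyinequ} \emph{as a black box} to the conjugated form $u_J:=u\,\prod_{i\in J}\log^{a_i}(|s_i|^2+\delta^2)$, and then translates each integral back via a single Leibniz identity such as $D_Ku_J=\big(\prod\log^{a_i}\big)D_Ku+d\big(\sum a_i\log\log(|s_i|^2+\delta^2)\big)\wedge u_J$. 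These two arguments are dual (conjugation of operators vs.\ conjugation of the test form by multiplication with $w_\delta$), so the mathematical content is the same; the paper's version is shorter because it does not require revisiting the Bochner step or the commutator computations with the new adjoints, whereas your version makes the absorption mechanism fully explicit, which is its own virtue.

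One small inaccuracy worth flagging: you list $\theta_0^\star$ among the adjoints that ``acquire an extra zeroth--order term'' under the change of measure. In fact $\theta_0$ is algebraic (wedge with a fixed $(1,0)$--form), so its $L^2$--adjoint for the weighted measure $d\mu_a=w_\delta^2\,dV_{\omega_D}$ is still the pointwise adjoint computed from the fibre metric; it does not pick up any extra term. Only the first--order operators $D_K^\star, D_{h_L}^\star, \dbar^\star$ get modified by $\iota$--contraction with $d\log w_\delta^2$. This is a harmless slip (it would only make things easier), and the rest of your scheme goes through.
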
		

\noindent The corollary follows immediately from Theorem \ref{keyinequ}, by applying it to the form
\[u_J:= u\prod_{i\in J}\log^{a_i}(|s_i|^2+ \delta^2).\]
Indeed, for example terms like $\displaystyle \int_X |D_K u_J|^2dV_{\omega_D}$ are handled by writing the formula
\[D_K u_J= \big(\prod_{i\in J}\log^{a_i}(|s_i|^2+ \delta^2)\big) D_Ku+ 
d\big(\sum a_i\log\log(|s_i|^2+ \delta^2)\big)\wedge u_J\]
and observing that the gradient of the function $\log\log^2(|s_i|^2+ \delta^2)$ with respect to $\omega_{D}$
is bounded independently of $\delta$ on $X\setminus \Delta_2$.

\subsubsection{Proof of Theorem \ref{keyinequ}, logarithmic case \eqref{secondcases}} In Subsection \ref{proofsmth}, 
Theorem \ref{keyinequ} was established for sections of the bundle
\[\oplus_{p+q= k}\Lambda^{p, q}T^\star_X\otimes L\]
with conic singularities on $\Delta_3$ and compact support in $X_0:= X\setminus |\Delta_2|$, that is to say, in the complement of the support of 
$\Delta_2$. Here we will do the same in logarithmic context  \eqref{secondcases}.

The next notions were already mentioned in Convention \ref{conv}, but nevertheless we present a few details about the differential operators on $(F_k)_{k\geq 0},$ see \eqref{log1}, which are of interest for us. The first one is the twisted exterior derivative
\begin{equation}\label{log4}
D_{h}: F_k\to F_{k+1}, \qquad D_{h}= D'_{h}+ \dbar 
\end{equation}
where $\dbar$ is interpreted as operator on the vector bundle $\Omega_X^p(\log\Delta_1)$ (therefore the residues along the components of $\Delta_1$ are not taken into account). 

The form $\theta$ equally induces a natural map
\begin{equation}\label{log5}
\theta: F_k\to F_{k+1}, 
\end{equation}
and let 
\begin{equation}\label{log6}
D_K: F_k\to F_{k+1}, \qquad D_K:= D_{h}+ 2\theta_0
\end{equation}
be the sum of \eqref{log4} and \eqref{log5}. Notice that the definition of $D_K$ is absolutely the same as in the absence of log poles. 

Let $D_{h}^\star$ be the formal adjoint of $D_{h}$ with respect to the scalar product 
\eqref{log3}, together with the corresponding Laplace operator
\[\Delta_K:= [D_K, D_K^\star].\]
As in the previous subsection, we evaluate the commutator $\displaystyle [D'_h, \theta_0^\star]$. To begin with, we introduce some notations necessary for the computations in coordinates that will follow. Let $U\subset X$ be an open coordinate set, $w= (w_1, \dots, w_n)$ are local holomorphic coordinates defined on $U$, such that 
\[U\cap \Delta_1= (w_1\dots w_r= 0).\] 
We also define 
\begin{equation}\label{log7}
e_i:= w_i\frac{\partial}{\partial w_i}, \qquad e^i:= \frac{dw_i}{w_i}
\end{equation}
for $ i=1,\dots r$ and 
\begin{equation}\label{log8}
e_j:= \frac{\partial}{\partial w_j}, \qquad e^j:= {dw_j}
\end{equation}
for $j\geq r+1$. For each ordered subset $I= (i_1<\dots < i_p)$ we denote by $e^I$ the exterior product 
$\displaystyle e^{i_1}\wedge\dots\wedge e^{i_p}$, and if we write 
\[\theta|_U= \sum \theta_i e^i, \qquad g_D|_U= \sqrt{-1}\sum g_{\alpha\ol\beta}e^\alpha\wedge \ol e^\beta,\]
then we have 
\begin{equation}\label{log9}
\theta^\star= \sum \ol\theta_i g^{\ol i k}e_k\rfloor .
\end{equation}
Moreover, let $u$ be a section of $F_k$, which we locally write as 
\[u|_U= \sum u_{I\ol J}e^I\otimes d\ol w^J\otimes e_L.\]
We have 
\begin{equation}\label{log10}
D'_h u= \sum (e_k\cdot u_{I\ol J}- u_{I\ol J}e_k\cdot \varphi_L)e^k\wedge e^I\otimes d\ol w^J\otimes e_L,
\end{equation}
and a short calculation which we skip shows that we have
\begin{align}\nonumber \label{log11}
[D'_h, \theta^\star]u= & \sum \ol \theta _i g^{\ol i p}(e_p\cdot u_{I\ol J}- u_{I\ol J}e_p\cdot \varphi_L)e^I\otimes d\ol w^J\otimes e_L\\
+ & \sum \ol\theta_i (e_q\cdot g^{\ol i p}) u_{I\ol J}e^q\wedge (e_p\rfloor e^I)\otimes d\ol w^J\otimes e_L\\
\nonumber \end{align}
We see that the formula \eqref{log11} is very similar to \eqref{form5}, except that the metric $g$ is not Kähler. Nevertheless, we introduce the operator
\begin{equation}\label{nabla1}
\nabla^{1,0}_{\rm log}: F_k\to \Omega^1_X(\log \Delta_1)\otimes F_k 
\end{equation}
by composing the full $(1, 0)$--covariant derivative $\displaystyle \nabla^{1,0}: F_k\to \Omega_X^1\otimes F_k$ with the 
natural map $\Omega_X^1\to \Omega^1_X(\log \Delta_1)$, and
evaluate 
$\displaystyle \theta^\star \rfloor \nabla^{1,0}_{\rm log}u$. The coefficients of the connection for $(\Omega_X(\log \Delta_1), g)$ are as follows
\begin{equation}\label{log12}
\Gamma^i_{jk}= \sum g_{k\ol\alpha} \partial_jg^{\ol\alpha i}
\end{equation}
and we have 
\begin{equation}\label{log13}
\nabla e^i= \sum \Gamma^i_{jk}dw_j\otimes e^k.
\end{equation}
Thus we can write
\begin{align}\nonumber \label{log14}
\nabla^{1,0}_{\rm log} u= &\sum (e_k\cdot u_{I\ol J}- u_{I\ol J}e_k\cdot \varphi_L)e^k\otimes e^I\otimes d\ol w^J\otimes e_L\\
+ & \sum u_{I\ol J}\Gamma^i_{jk} dw_j\otimes e^{I(i, k)}\otimes d\ol w^J\otimes e_L\nonumber  \\
= &\sum (e_k\cdot u_{I\ol J}- u_{I\ol J}e_k\cdot \varphi_L)e^k\otimes e^I\otimes d\ol w^J\otimes e_L\\
+ & \sum u_{I\ol J} g_{q\ol\alpha} (e_k\cdot g^{\ol\alpha p})e^k\otimes e^{I(p, q)}\otimes d\ol w^J\otimes e_L\nonumber  \\
\nonumber  \end{align}
where $I(p, q)$ means that the index $p$ of $I$ was replaced by $q$. Therefore we obtain
\begin{align}\nonumber \label{log15}
\theta_0^\star \rfloor  \nabla^{1,0}_{\rm log} u= &\sum \ol \theta_i g^{\ol ik}(e_k\cdot u_{I\ol J}- u_{I\ol J}e_k\cdot \varphi_L)e^I\otimes d\ol w^J\otimes e_L\\
+ & \sum u_{I\ol J} \theta_i g^{\ol ik} g_{q\ol\alpha} (e_k\cdot g^{\ol\alpha p})e^{I(p, q)}\otimes d\ol w^J\otimes e_L\\
\nonumber  \end{align}
at each point of $U$. 
\smallskip

\noindent In our current situation, we have
\begin{equation}\label{log16}
[D'_h, \theta_0^\star]u- \theta_0^\star \rfloor  \nabla^{1,0}_{\rm log} u= 
\sum \big(e_q\cdot g^{\ol i p}-  g_{q\ol\alpha} g^{\ol ik}e_k\cdot g^{\ol\alpha p}\big) \ol\theta_i u_{I\ol J}e^{I(p, q)}\otimes d\ol w^J\otimes e_L
\end{equation}
as consequence of \eqref{log15} and \eqref{log11}.
One can see that if the metric $g_D$ is Kähler, then the RHS of \eqref{log16} is identically zero. Unfortunately, this may not be the case here, so we will have to 
evaluate the contribution of the linear term \eqref{log16}: this is what we do next. In the first place, 
notice the following string of equalities
\begin{align}\nonumber \label{log17}
e_q\cdot  g^{\ol i p}-  g_{q\ol\alpha} g^{\ol ik}e_k\cdot  g^{\ol\alpha p}= & e_q\cdot g^{\ol i p}+  g^{\ol ik} g^{\ol\alpha p} e_k\cdot g_{q\ol\alpha}\nonumber  \\
= & g^{\ol ik} g^{\ol\alpha p} (e_k\cdot g_{q\ol\alpha}- e_q\cdot g_{k\ol\alpha})+ e_q\cdot g^{\ol i p}+  g^{\ol ik} g^{\ol\alpha p} e_q\cdot g_{k\ol\alpha}\nonumber \\
= & g^{\ol ik} g^{\ol\alpha p} (e_k\cdot g_{q\ol\alpha}- e_q\cdot g_{k\ol\alpha})+ e_q\cdot  g^{\ol i p}-  g^{\ol\alpha p} g_{k\ol\alpha} e_q\cdot g^{\ol ik} \\
= & g^{\ol ik} g^{\ol\alpha p} (e_k\cdot g_{q\ol\alpha}- e_q\cdot g_{k\ol\alpha})\nonumber \\
\nonumber  \end{align}
which will simplify a lot the calculations (given the expression of the metric $g_D$) as we next see.
\smallskip

\noindent The metric $\omega_D$ is Kähler, and we can assume that its restriction to $U$ is 
given by the Hessian of a function $\psi$. We then have 
\begin{equation}\label{log18}
\omega_D|_U= \sqrt{-1}\sum_{\alpha, \beta}e_\beta\cdot(e_{\ol\alpha}\cdot\psi) e^\beta\wedge \ol e^\alpha
\end{equation}
and moreover 
\begin{equation}\label{log19}
\sqrt{-1}\frac{D's_i\wedge \ol{D's_i}}{|s_i|^2}= \big(e^i- \sum_k(e_k\cdot\varphi_i) e^k\big)\wedge \big(\ol e^i- \sum_k(e_{\ol k}\cdot\varphi_i) \ol e^k\big)
\end{equation}
for all $i= 1,\dots, r$. Then the coefficients of the metric $g_D$ can be computed as follows
\begin{equation}\label{log20}
g_{\beta\ol\alpha}= e_\beta\cdot(e_{\ol\alpha}\cdot\psi)+ \delta_{\alpha\beta}\delta_{\alpha, r}
- \delta_{\beta, r} e_{\ol \alpha}\cdot \varphi_\beta - \delta_{\alpha, r} e_\beta\cdot \varphi_\alpha +\sum_i (e_\beta\cdot \varphi_i)(e_{\ol \alpha}\cdot \varphi_i)
\end{equation}
where $\delta_{\alpha\beta}$ is the Kronecker symbol, and $\delta_{\alpha, r}$ is equal to one if $\alpha\leq r$, and zero if not. 

The difference in the last line of \eqref{log17} is then easy to evaluate
\begin{align}\nonumber \label{log21}
e_k\cdot g_{q\ol\alpha}- e_q\cdot g_{k\ol\alpha}= & \delta_{k, r}e_q\cdot (e_{\ol\alpha}\cdot\varphi_k) -\delta_{q, r}e_k\cdot (e_{\ol\alpha}\cdot\varphi_q)
\\
 + & 
\sum_i(e_q\cdot\varphi_i) e_k\cdot (e_{\ol\alpha}\cdot \varphi_i)- (e_k\cdot\varphi_i) e_q\cdot (e_{\ol\alpha}\cdot \varphi_i)
\nonumber  \end{align}
where we see that the function $\psi$ is missing, because
\[e_k\cdot\big(e_q\cdot(e_{\ol\alpha}\cdot\psi)\big)- e_q\cdot\big(e_k\cdot(e_{\ol\alpha}\cdot\psi)\big)=0\]
for any $q, k$ and $\alpha$. 
\medskip

\noindent We will next detail the calculation for the difference $\displaystyle [D'_h, \theta^\star]u- \theta^\star \rfloor  \nabla^{1,0}_{\rm log} u$, since this is basically the only difference between the context here and in Section ??. We assume that the frames of $\mathcal O(Y_j)$ have been chosen so that the first order derivatives of the weights $\varphi_j$ at $w= 0$ vanish. Then we have
\[[D'_h, \theta_0^\star]u- \theta_0^\star \rfloor  \nabla^{1,0}_{\rm log} u= 
\sum g^{\ol ik} g^{\ol\alpha p}\big(\delta_{k, r}e_q\cdot (e_{\ol\alpha}\cdot\varphi_k) -\delta_{q, r}e_k\cdot (e_{\ol\alpha}\cdot\varphi_q)\big) \ol\theta_i u_{I\ol J}e^{I(p, q)}\otimes d\ol w^J\otimes e_L
\]
at $w=0$, and a case-by-case analysis which we next detail will show that 
\begin{equation}\label{log22}
|[D'_h, \theta^\star]u- \theta^\star \rfloor  \nabla^{1,0}_{\rm log} u|_{g_D, \omega_D}\leq C|\theta||u|
\end{equation}
where the constant $C> 0$ is uniform.

Consider first the expression $\sum_i g^{\ol ik}\ol\theta_i$, where $k$ is arbitrary. The inequality
\[|g^{\ol ik}|\leq C |e^i|_g\]
holds true, since the inverse of $(g^{\ol\alpha\beta})$ is positive definite and the diagonal coefficients are bounded from above. We infer that the relation
\begin{equation}\label{log23}
|\sum_i g^{\ol ik}\ol\theta_i|\leq C|\theta|_{g_D}
\end{equation}
is satisfied. We next have to evaluate
\[|g^{\ol\alpha p}|\big|\big(\delta_{k, r}e_q\cdot (e_{\ol\alpha}\cdot\varphi_k) -\delta_{q, r}e_k\cdot (e_{\ol\alpha}\cdot\varphi_q)\big)\big| |e^q||e_p|;\]
this quantity is clearly bounded from above, since the product
\[|g^{\ol\alpha p}|^2g_{p\ol p}\]
is uniformly bounded for any $\alpha, p$.
\medskip

\noindent Given that, the same arguments as in the smooth case of Theorem \ref{keyinequ} apply, finishing the proof. Indeed, the points (i) and (ii) in \ref{proofsmth} are completely identical. Then we write
\[\langle \theta_0^\star \rfloor \nabla^{1,0} u, u\rangle= \langle \theta_0^\star \rfloor \nabla^{1,0} u- [D'_h, \theta_0^\star]u, u\rangle+ \langle [D'_h, \theta_0^\star]u, u\rangle\]
and the first term of the RHS of this equality is estimated by using \eqref{log22}.

Finally, the point (iii) works as well in our context, since the curvature of $(T_X\langle \Delta_1\rangle, g_D)$ is bounded, and so does (iv): this is due to the definition of $D'_h$, see Convention \ref{conv}.
\medskip

\noindent We end this section by a few basic consequences of Bochner-type formulas, which were used in our previous arguments.

\subsubsection{Bochner formulas, I}
In this subsection we assume that $\Delta_1= 0$. The following statement is a direct consequence of \cite{Siu82}, together with the fact that the 
curvature of $(L, h_L)$ equals zero in the complement of $\Delta_2$, and the fact that the full curvature tensor of $(T_X, \omega_D)$ is bounded.

\begin{lemme}\label{bochner1} \cite{Siu82} Let $u$ be a form of total degree $k$, with compact support in $X\setminus D$ and values in $L$. We assume moreover 
that $u\in L^2$, as well as $\dbar u, \dbar^\star u\in L^2$. Then the following inequalities hold.
\begin{enumerate}

\item[\rm (a)] There exists a constant $C> 0$ only depending on $(X, \omega_D)$ and $(L, h_L)$ such that the inequality 
\[\Vert \nabla^{1,0}u\Vert^2\leq C(\Vert u\Vert^2+ \Vert \dbar u\Vert^2+ \Vert \dbar^\star u\Vert^2)\]
holds.

\item[\rm (b)] There exists a constant $C> 0$ only depending on $(X, \omega_D)$ and $(L, h_L)$ such that the inequalities  
\[-C\Vert u\Vert^2+ \Vert \dbar u\Vert^2+ \Vert \dbar^\star u\Vert^2\leq \Vert D'_h u\Vert^2+ \Vert {D'_h}^\star u\Vert^2\leq C\Vert u\Vert^2+ \Vert \dbar u\Vert^2+ \Vert \dbar^\star u\Vert^2\]
hold.
\end{enumerate}
\end{lemme}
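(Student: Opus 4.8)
The plan is to run the classical Bochner–Kodaira–Nakano package on the incomplete Kähler manifold $(X\setminus D,\omega_D)$; recall that in this subsection $\Delta_1=0$, so $D=\Delta_2+\Delta_3$. Every admissible $u$ is compactly supported away from $\Delta_2$, so no completeness or boundary issue arises near the Poincaré locus, and the only genuine technical point is the conic locus $\Delta_3$, along which $\omega_D$ is not smooth and $u$ is smooth only in the orbifold sense. I would dispose of this first: over a coordinate polydisc meeting $\Delta_3$ along $\{w_{p+1}\cdots w_{p+q}=0\}$, pull everything back by the finite branched cover $\pi$ of \eqref{clog}. As in the proof of \Cref{mixbound}, $\pi^\star\omega_D$ is then a \emph{smooth} Kähler metric across the preimage of $\Delta_3$ (with Poincaré singularities only along the preimage of $\Delta_2$, where $u$ vanishes), and after the normalization fixed in \Cref{conv} the form $\pi^\star u$ is an honestly smooth, compactly supported form with values in a line bundle of \emph{zero curvature}. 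Upstairs every identity below is textbook; downstairs the only data that survive are the uniform two-sided bound $-C\omega_D\otimes\Id\le i\Theta(T_X,\omega_D)\le C\omega_D\otimes\Id$ of \Cref{mixbound} (equivalently, boundedness of the full curvature tensor of $(T_X,\omega_D)$) and the vanishing $i\Theta_{h_L}(L)\equiv 0$ from harmonicity of $h_L$.

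For part (b), I would start from the Bochner–Kodaira–Nakano identity for the line bundle $(L,h_L)$ over the Kähler manifold $(X\setminus\Delta_2,\omega_D)$, valid pointwise away from $D$ and, via $\pi$, across $\Delta_3$:
\[\Delta_{\dbar}-\Delta_{D'_h}=\big[\,i\Theta_{h_L}(L),\ \Lambda_{\omega_D}\,\big].\]
Pairing with $u$ and integrating by parts — legitimate since $\operatorname{supp}(u)$ is compact in $X\setminus\Delta_2$ and the cover is finite — gives $\|\dbar u\|^2+\|\dbar^\star u\|^2-\|D'_h u\|^2-\|{D'_h}^\star u\|^2=\langle[\,i\Theta_{h_L}(L),\Lambda_{\omega_D}\,]u,u\rangle$, a zero-order curvature pairing which is in fact identically zero on $\operatorname{supp}(u)$ and in any case bounded by $C\|u\|^2$. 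This is precisely the two-sided estimate in (b).

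For part (a), I would argue in two steps. First, the Weitzenböck identity $\Delta_{\dbar}=(\nabla^{0,1})^\star\nabla^{0,1}+\mathcal{R}_0$, where $\mathcal{R}_0$ is the zero-order operator assembled from $i\Theta_{h_L}(L)$ and $\Theta(T_X,\omega_D)$ acting on the $\Lambda^{p,q}T^\star_X$-part of $u$, yields after integration $\|\nabla^{0,1}u\|^2\le\|\dbar u\|^2+\|\dbar^\star u\|^2+C\|u\|^2$, the error being controlled because $\mathcal{R}_0$ is pointwise bounded on $\operatorname{supp}(u)$ by \Cref{mixbound}. Second, on a Kähler manifold one converts $\nabla^{0,1}$ into $\nabla^{1,0}$: integrating by parts and applying the Ricci identity $[\nabla^{1,0},\nabla^{0,1}]=-\Theta$, the difference $\|\nabla^{1,0}u\|^2-\|\nabla^{0,1}u\|^2$ equals a curvature pairing $\langle\mathcal{R}_1 u,u\rangle$ with $\mathcal{R}_1$ built only from $\Theta(T_X,\omega_D)$ and $i\Theta_{h_L}(L)$, hence $\le C\|u\|^2$ by the same bound. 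Combining the two steps gives $\|\nabla^{1,0}u\|^2\le C(\|u\|^2+\|\dbar u\|^2+\|\dbar^\star u\|^2)$; this is essentially the estimate of \cite{Siu82}, re-derived in the present singular setting.

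The main obstacle is not any single identity — each is standard on a compact smooth Kähler manifold — but making the reduction to that case airtight: one must verify that the pullbacks by $\pi$ of the conic metric and of the orbifold-smooth form $u$ are the correct smooth objects, that the finite branched cover introduces no hidden boundary term (it does not, $u$ being compactly supported in $X\setminus\Delta_2$ and the cover being unramified off $\Delta_3$), and — most importantly — that the curvature endomorphisms $\mathcal{R}_0,\mathcal{R}_1$ and the $\Lambda^{p,q}$-curvature entering the Weitzenböck terms all inherit operator-norm bounds from \Cref{mixbound}. Since only these operator norms ever enter, no explicit formula for the Weitzenböck curvature is needed, which is what keeps the argument short.
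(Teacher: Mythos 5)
Your proposal is correct and coincides with the paper's own (entirely implicit) argument: the paper simply declares the lemma ``a direct consequence of \cite{Siu82}'' plus the vanishing of $i\Theta_{h_L}(L)$ on $X\setminus\Delta_2$ and the two-sided curvature bound of Lemma~\ref{mixbound}, and your proof is exactly the expected unpacking of that citation — pull back through the branched covers $\pi$ of \eqref{clog} to smooth out the conic locus, then run Bochner--Kodaira--Nakano for (b) and Weitzenböck followed by the Ricci commutation $[\nabla^{1,0},\nabla^{0,1}]=-\Theta$ for (a), with every curvature error term absorbed into $C\|u\|^2$. Two small remarks: since $i\Theta_{h_L}(L)\equiv 0$ on the support of $u$, your argument in (b) actually produces equality rather than the two-sided inequality with slack, which is harmless but worth noticing; and the statement as printed requires compact support in $X\setminus D$ while your argument (correctly, in view of how the lemma is invoked for \eqref{form31}) treats the slightly more general case of compact support in $X\setminus\Delta_2$ with orbifold smoothness along $\Delta_3$.
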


\subsubsection{Bochner formulas, II} Here we discuss the logarithmic version of the previous inequalities. They are completely standard, 
but we will provide a complete treatment since unfortunately we were not able to find an appropriate reference. 

\begin{lemme}\label{bochner2} Let $u$ be an $L$-valued smooth form with logarithmic poles along $\Delta_1$, conic singularities on $\Delta_3$, and compact support in $X \setminus \Delta_2$. Then the following inequalities hold.
	
\begin{enumerate}

\item[\rm (a)] There exists a constant $C> 0$ only depending on $(X, \omega_D)$, $(T_X\langle \Delta_1\rangle, g_D)$ and $(L, h_L)$ such that the inequality 
\[\Vert \nabla^{1,0}_{\rm log}u\Vert^2\leq C(\Vert u\Vert^2+ \Vert D'_h u\Vert^2+ \Vert {D'_h}^\star u\Vert^2)\]
holds, where $\nabla^{1,0}_{\rm log}$ was defined in \eqref{nabla1} above.

\item[\rm (b)] Given $\ep_0> 0$, there exists a rescaling constant $\lambda> 0$ and a positive real number $C> 0$ only depending on $(X, \omega_D)$, $(T_X\langle \Delta_1\rangle, \lambda g_D)$ and $(L, h_L)$ such that the inequality  
\[\Vert D'_h u\Vert^2+ \Vert {D'_h}^{\star}u\Vert^2\leq C\Vert u\Vert^2+ \ep_0(\Vert \dbar u\Vert^2+ \Vert \dbar^\star u\Vert^2)\]
holds.
\end{enumerate}
\end{lemme}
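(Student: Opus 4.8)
The plan is to transfer the Bochner--Kodaira--Weitzenböck method of \cite{Siu82} (used in \Cref{bochner1}) to the logarithmic bundle $F_k=\oplus_{p+q=k}\Omega^p_X(\log\Delta_1)\otimes L\otimes\Lambda^q\ol T^\star_X$ with its metrics $g_D$, $h_L$, $\omega_D$, while carefully accounting for the two sources of error: the metric $g_D$ is not Kähler, and the operator $D'_h$ of \Cref{conv} is \emph{not} the Chern $(1,0)$-connection of $F_k$ but its exterior, Christoffel-free analogue (cf. \Cref{operdif}). \emph{First, a reduction.} Since $\Supp u$ is a compact subset of $X\setminus\Delta_2$, the Poincaré part of $\omega_D$ never intervenes; the conic singularities along $\Delta_3$ are removed by pulling back along the local ramified covers \eqref{clog}, after which $\omega_D$ is a genuine smooth Kähler metric and $g_D$ a smooth Hermitian metric with exactly the asymptotics listed after \eqref{log30}. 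The only subtle point of the reduction is the legitimacy of the integrations by parts near $\Delta_1$: here the cylindrical summand $\sum_{i\in I}|s_i|^{-2}D's_i\wedge\ol{D's_i}$ of $g_D$ is decisive, since in the direction normal to $\Delta_1$ it is complete, so a cut-off $\chi_\varepsilon$ with $|d\chi_\varepsilon|_{g_D}\to 0$ kills all boundary terms. After this we must only prove integrated Weitzenböck inequalities for a smooth, compactly supported, $L$-valued form on a Kähler manifold, with values in a holomorphic bundle whose Chern curvature is two-sided bounded by $C\,\omega_D$.

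\emph{Proof of (a).} On $X\setminus D$, where $h_L$ is smooth with $i\Theta_{h_L}(L)=0$, one has $(D'_h)^2=\dbar^2=0$ and $[D'_h,\dbar]=0$. I would expand $\Vert D'_h u\Vert^2+\Vert{D'_h}^\star u\Vert^2=\langle[D'_h,(D'_h)^\star]u,u\rangle$ and compare it with the rough Laplacian $\Vert\nabla^{1,0}_{\rm log}u\Vert^2$ by the Bochner--Kodaira identity. Their difference is a sum of: (i) curvature terms of $\Omega^p_X(\log\Delta_1)\otimes L\otimes\Lambda^q\ol T^\star_X$, which by \Cref{mixbound}, \Cref{logmixbound} and $i\Theta_{h_L}(L)=0$ are bounded endomorphisms, hence contribute $O(\Vert u\Vert^2)$; and (ii) terms built from the Christoffel symbols $\Gamma^i_{jk}=\sum g_{k\ol\alpha}\partial_j g^{\ol\alpha i}$ of $g_D$ (which appear precisely because $D'_h$ omits them), bounded with respect to $\omega_D$ by the estimates following \eqref{log30}. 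The type-(ii) terms have the shape $\langle\Gamma\cdot\nabla^{1,0}_{\rm log}u,u\rangle$; bounding them by $\tfrac12\Vert\nabla^{1,0}_{\rm log}u\Vert^2+C\Vert u\Vert^2$ and moving $\tfrac12\Vert\nabla^{1,0}_{\rm log}u\Vert^2$ to the left gives (a).

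\emph{Proof of (b), and the rescaling.} First pass from $D'_h$ to the Chern $(1,0)$-connection $D'_E$ of $E:=\Omega^p_X(\log\Delta_1)\otimes L$ (the difference being again controlled by the $\Gamma$'s), then invoke the Bochner--Kodaira--Nakano identity $\Delta_{D'_E}=\Delta_{\dbar}-[i\Theta(E),\Lambda_{\omega_D}]$, valid because $\omega_D$ is Kähler; since $i\Theta(E)$ is bounded with respect to $\omega_D$, the commutator is a bounded endomorphism. This already yields $\Vert D'_h u\Vert^2+\Vert{D'_h}^\star u\Vert^2\le C(\Vert u\Vert^2+\Vert\dbar u\Vert^2+\Vert\dbar^\star u\Vert^2)$ with an absolute constant. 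To improve the coefficient of $\Vert\dbar u\Vert^2+\Vert\dbar^\star u\Vert^2$ down to a prescribed $\ep_0$ I would rescale $g_D$ by a large constant $\lambda$: modulo $O(\Vert u\Vert^2)$ and the first-order $\Gamma$-terms, $\Vert D'_h u\Vert^2+\Vert{D'_h}^\star u\Vert^2$ is a curvature pairing $\langle[i\Theta(E),\Lambda_{\omega_D}]u,u\rangle$, and because the cylindrical correction in $g_D$ is flat in the direction normal to $\Delta_1$, both this curvature term and the relevant mixed Christoffel contributions can be made arbitrarily small with respect to $\omega_D$ as $\lambda\to+\infty$; part (a) is then used to absorb the residual first-order terms into $\ep_0(\Vert\dbar u\Vert^2+\Vert\dbar^\star u\Vert^2)+C_{\ep_0}\Vert u\Vert^2$.

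\emph{Expected main obstacle.} The real work lies in the bookkeeping of the previous two steps: since $g_D$ is not Kähler, the commutators $[D'_h,\dbar^\star]$, $[\dbar,(D'_h)^\star]$ and $[D'_h,(D'_h)^\star]$ all pick up torsion terms, and one must verify---case by case, using the asymptotics of $g_D$ and of its inverse recorded after \eqref{log30}, together with the quasi-coordinate picture near $\Delta_2+\Delta_3$---that each such term is bounded against $\omega_D$ (and, in Step 3, made small under the rescaling) and does not develop an uncontrolled pole along $\Delta_1$ or $\Delta_3$. A softer technical nuisance is that $\omega_D$ is incomplete near $\Delta_1$ while $g_D$ is cylindrical there, which forces all the $L^2$ manipulations to be run through cut-offs adapted to $g_D$ rather than to $\omega_D$.
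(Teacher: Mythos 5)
Your proof of part \textbf{(a)} is essentially sound and close in spirit to the paper's, though you and the paper take slightly different routes to control the first-order (torsion) terms: you propose to absorb a $\langle\Gamma\cdot\nabla^{1,0}_{\rm log}u,u\rangle$ contribution by Cauchy--Schwarz into $\tfrac12\Vert\nabla^{1,0}_{\rm log}u\Vert^2+C\Vert u\Vert^2$, whereas the paper writes out both Laplacians in a normal frame at $x_0$ (see \eqref{boch13}--\eqref{boch14}) and observes that the second-\emph{and}-first-order parts coincide exactly, so the discrepancy is purely of order zero and is handled by the two-sided curvature bounds. Your absorption works because the relevant Christoffel-type coefficients are $\mathcal{C}^1$-bounded in the appropriate frame, but the paper's exact cancellation is both cleaner and sharper, and it hinges on the uniform bound $\sup|f^\alpha_i|_{\mathcal{C}^1}\le C$ for the change-of-frame functions $dw^\alpha=\sum f^\alpha_i\xi^i$. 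That bound is a point you should isolate and prove, rather than leave as part of a general ``case-by-case bookkeeping.''

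Part \textbf{(b)} has a genuine gap: the rescaling rationale you give is wrong. The Chern curvature $\Theta(T_X\langle\Delta_1\rangle,g_D)$ is invariant under the constant rescaling $g_D\mapsto\lambda g_D$, and the operator $[i\Theta(E),\Lambda_{\omega_D}]$ preserves the holomorphic log-degree $p$; its pairing $\langle[i\Theta(E),\Lambda_{\omega_D}]u,u\rangle_{\lambda g_D}$ therefore scales exactly like $\Vert u\Vert^2_{\lambda g_D}$, namely by $\lambda^{-p}$, so the ratio is $\lambda$-independent. ``The cylindrical correction is flat normal to $\Delta_1$'' explains why the curvature is \emph{bounded}, not why it becomes small. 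Consequently the curvature term is not diminished at all by rescaling, and your scheme does not produce the $\ep_0$. The mechanism that actually achieves the $\ep_0$-smallness -- and this is what the paper uses -- is a \emph{degree asymmetry}, not a curvature effect: $D'_h$ raises the holomorphic log-degree by one, so
\[
\Vert D'_h u\Vert^2_{\lambda g_D,\omega_D}=\sum_{p+q=k}\lambda^{-(p+1)}\Vert D'_h u_{p,q}\Vert^2_{g_D,\omega_D},
\qquad
{D'_h}^{\star\lambda}=\lambda^{-1}{D'_h}^{\star},
\]
while $\Vert u\Vert^2_{\lambda g_D}$, $\Vert\dbar u\Vert^2_{\lambda g_D}$, $\Vert\dbar^\star u\Vert^2_{\lambda g_D}$ scale only by $\lambda^{-p}$. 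So the left-hand side of (b) picks up an extra global factor of order $\lambda^{-1}$ relative to the right-hand side, and choosing $\lambda$ large gives the $\ep_0$. You should therefore first establish an \emph{unscaled} inequality of the form $\Vert D'_h u\Vert^2+\Vert{D'_h}^\star u\Vert^2\le C_1\Vert u\Vert^2+C_2(\Vert\dbar u\Vert^2+\Vert\dbar^\star u\Vert^2)$ (this follows from part (a) together with the analogous $\dbar$-Weitzenböck comparison and the two-sided curvature bounds), and then apply the degree-sensitive rescaling to drive $C_2$ below $\ep_0$.
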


\begin{remark}
In our current setting, the principal symbols of the commutators $[D'_h, {D'_h}^\star]$ and $[\dbar, \dbar^\star]$ are very different. Indeed, the former equals
\[g^{\ol\beta \alpha} e_\alpha \cdot (e_{\ol \beta} \cdot f)\]
and the later is
\[\omega^{\ol\beta \alpha}\frac{\partial^2 f}{\partial z^\alpha\partial z^{\ol \beta}}.\]
This accounts for the difference between Lemma \ref{bochner1} and Lemma \ref{bochner2}.
\end{remark}

\begin{proof}
We will discuss $(a)$ first. 
Let $x_0\in X\setminus D$ be a point in the complement of the support of the divisor $D$. We evaluate the intrinsic quantities
\begin{equation}\label{boch1}
[D'_h, {D'_h}^{\star}]u, \qquad \nabla^{1,0 \star}_{\rm log}\nabla^{1,0}_{\rm log}u
\end{equation}
by a direct calculation. Consider an arbitrary holomorphic frame $\displaystyle (\xi^{i})_{i=1,\dots, n}$ of the vector bundle $T_X\langle \Delta_1 \rangle$, as well as coordinates $(w^\alpha)_{\alpha= 1,\dots, n}$. Then we can write  
\begin{equation}\label{boch2}
u= \sum u_{I\ol J}\xi^I\otimes dw^{\ol J}\otimes e_L
\end{equation}
and (by the definition of the operator $D'_h$) we have
\begin{equation}\label{boch3}
D'_hu= \sum (\partial_\alpha u_{I\ol J}- u_{I\ol J}\partial_\alpha \varphi_L)dw^\alpha \wedge \xi^I\otimes dw^{\ol J}\otimes e_L,	
\end{equation}	
where the differential $dw^\alpha$ is interpreted as local section of the log cotangent bundle. Given a form $v$ with support near $x_0$, we have 
\[\int_{X}\langle v, D'_hu \rangle e^{-\varphi_L}dV= \int_{X}\langle {D'_h}^{\star}v, u \rangle e^{-\varphi_L}dV\]
by the definition of the formal adjoint. With respect to the coordinates fixed above, the LHS becomes
\[\int_{(X, x_0)}v_{K\ol L}\partial_{\ol \alpha}(e^{-\varphi_L}\ol u_{I\ol J})
\langle \xi^K, dw^\alpha \wedge \xi^I\rangle_g \langle dw^{J}, dw^{L}\rangle_{\omega} \det\omega d\lambda\]
and integration by parts shows that this expression is equal to 
\begin{equation}\label{boch4}
-\int_{(X, x_0)}e^{-\varphi_L}\ol u_{I\ol J}\partial_{\ol \alpha}\big(v_{K\ol L}\langle \xi^K, dw^\alpha \wedge \xi^I\rangle_g \langle dw^{J}, dw^{L}\rangle_{\omega} \det\omega\big) d\lambda.
\end{equation}
Now, in order to obtain the expression of ${D'_h}^{\star} v$ near the point $x_0$, we remark that we can write 
\begin{equation}\label{boch5}
dw^\alpha= \sum_i f^\alpha_i \xi^{i}
\end{equation}
for some locally defined holomorphic functions $(f^\alpha_i)_{i, \alpha= 1, \dots, n}$. 
Equation \eqref{boch3} becomes
\begin{equation}\label{boch11}
D'_hu= \sum (\partial_\alpha u_{I\ol J}- u_{I\ol J}\partial_\alpha \varphi_L)f^\alpha_i \xi^{i} \wedge \xi^I\otimes dw^{\ol J}\otimes e_L,	
\end{equation}
and moreover we have
\[\langle \xi^K, dw^\alpha \wedge \xi^I\rangle_g= \sum_i\ol{f^\alpha_i} \langle \xi^K,  \xi^{i} \wedge \xi^I\rangle_g= 
\sum_i\ol{f^\alpha_i} g^{\ol i p}\langle \xi_p\rfloor \xi^K,  \xi^I\rangle_g\]
where $g^{\ol i p}= \langle \xi^p,  \xi^i \rangle_g$ are the coefficients of the metric $g_D$ with respect to the basis $\displaystyle (\xi^{i})_{i=1,\dots, n}$, and $\displaystyle (\xi_p)_{p=1,\dots, n}$ is the dual basis.

\noindent By expanding the derivative under the integral in \eqref{boch4} we get a first term
\begin{align}\label{boch6}
- & \big(\langle dw^{J}, dw^{L}\rangle_{\omega} \det\omega \sum_i\ol{f^\alpha_i} g^{\ol i p}\langle \xi_p\rfloor \xi^K,  \xi^I\rangle_g \big)\partial_{\ol \alpha}v_{K\ol L}\\ 
- & \sum\big(v_{K\ol L}\langle dw^{J}, dw^{L}\rangle_{\omega} \det\omega g^{\ol i p}\langle \xi_p\rfloor \xi^K,  \xi^I\rangle_g \big)\ol{\partial_{\alpha}{f^\alpha_i}}\nonumber 
\end{align}
as well as the following term
\begin{equation}\label{boch7}
-\sum v_{K\ol L}\ol{f^\alpha_i} \partial_{\ol \alpha}\big(\langle dw^{J}, dw^{L}\rangle_{\omega} \det\omega g^{\ol i p}\langle \xi_p\rfloor \xi^K,  \xi^I\rangle_g\big).
\end{equation}
\noindent We infer that the adjoint of $D'_h$ can be expressed as follows 
\begin{align}\label{boch8}
 {D'_h}^{\star} v\simeq & -\sum \partial_{\ol \alpha}v_{K\ol M}\ol{f^\alpha_i} g^{\ol i p} \xi_p\rfloor \xi^K\otimes dw^{\ol M}\otimes e_L \\
 & -\sum v_{K\ol M}\ol{\partial_{\alpha}{f^\alpha_i}}g^{\ol i p} \xi_p\rfloor \xi^K\otimes dw^{\ol M}\otimes e_L\nonumber 
\end{align}
modulo terms of order zero depending on the anti-holomorphic derivatives of $g^{\ol i p}$ and $\omega^{\ol \mu \rho}$ (which vanish at $x_0$). 
\smallskip

\noindent Next, assume that the frame $\displaystyle (\xi^{i})_{i=1,\dots, n}$ is normal at $x_0$, and that the coordinates $\displaystyle (w^{\tau})_{\tau=1,\dots, n}$ are geodesic at $x_0$. Then as consequence of the fact that $g_D\gg \omega_D$ (where we interpret here $g_D$ as 
singular Hermitian metric) il follows that 
\begin{equation}\label{boch9}
\sup|f^\alpha_i|_{\mathcal C^1(X, x_0)}\leq C
\end{equation}
for some \emph{uniform constant} $C$. Indeed, this is immediately seen by using quasi-coordinates, with respect to which $\omega_D$ becomes 
a standard, smooth metric on $(\mathbb C^n, 0)$.
\smallskip

\noindent At $x_0$ we can write
\begin{align}\label{boch10}
D'_h{D'_h}^{\star}u\simeq  & - \sum \partial^2_{\beta \ol \alpha}u_{K\ol M}\ol{f^\alpha_p}f^\beta_q \xi^q\wedge (\xi_p\rfloor \xi^K)\otimes dw^{\ol M}\otimes e_L \\
& - \sum \partial_{\beta}u_{K\ol M}\ol{\partial_{\alpha}{f^\alpha_p}}f^\beta_q \xi^q\wedge (\xi_p\rfloor \xi^K)\otimes dw^{\ol M}\otimes e_L\nonumber 
\end{align}
modulo a term of order zero involving the curvature forms of $(T_X, \omega_D)$ and $(T_X\langle \Delta_1\rangle, g_D)$ at $x_0$.
\smallskip

\noindent Similarly, we have
\begin{align}\label{boch12}
{D'_h}^{\star}D'_hu\simeq  & - \sum \partial^2_{\beta \ol \alpha}u_{K\ol M}\ol{f^\alpha_p}f^\beta_q \xi_p\rfloor (\xi^q\wedge  \xi^K)\otimes dw^{\ol M}\otimes e_L \\
& - \sum \partial_{\beta}u_{K\ol M}\ol{\partial_{\alpha}{f^\alpha_p}}f^\beta_q \xi_p\rfloor (\xi^q\wedge  \xi^K)\otimes dw^{\ol M}\otimes e_L\nonumber 
\end{align}
which gives
\begin{align}\label{boch13}
{D'_h}^{\star}D'_hu+ D'_h{D'_h}^{\star}u\simeq  & - \sum \partial^2_{\beta \ol \alpha}u_{K\ol M}\ol{f^\alpha_p}f^\beta_p \xi^K\otimes dw^{\ol M}\otimes e_L \\
& - \sum \partial_{\beta}u_{K\ol M}\ol{\partial_{\alpha}{f^\alpha_p}}f^\beta_p \xi^K\otimes dw^{\ol M}\otimes e_L\nonumber 
\end{align}
as before, modulo linear terms only involving the curvature at $x_0$. Notice that the second line in this expression \eqref{boch12} marks an important difference with respect to the usual context (in which we only have a Laplace-like term plus curvature).
\smallskip

\noindent One can evaluate $\nabla^{1,0}_{\rm log}$ and its adjoint in a similar manner, which is what we next do. We have 
\begin{align}\label{boch20}
\nabla^{1,0}_{\rm log}u = & \sum (\partial_{\alpha}u_{K\ol M}- u_{K\ol M}\partial_{\alpha}\varphi_L)f^\alpha_p \xi^p\otimes \xi^K\otimes dw^{\ol M}\otimes e_L \\
+ & \sum u_{K\ol M}\Gamma^i_{\alpha j}f^\alpha_p \xi^p \otimes \xi^{K(i, j)}\otimes dw^{\ol M}\otimes e_L\nonumber
\end{align} 
where $\Gamma^i_{\alpha j}$ are the coefficients of the connection on the log cotangent bundle and $K(i, j)$ is the 
ordered multi-index deduced from $K$, in which $i$ was replaced with $j$.

\noindent Furthermore, if $\displaystyle v:= \sum v_{p I \ol J} \xi^p\otimes \xi^I\otimes dw^J\otimes e_L$, then we have 
\begin{equation}\label{boch21} 
\nabla^{1,0 \star}_{\rm log} v= -\sum \partial_{\ol\alpha}(v_{p I \ol J} \ol{f^\alpha_p})\xi^I\otimes dw^J\otimes e_L
\end{equation}
at $x_0$ and 
a quick calculation shows that we have  
\begin{align}\label{boch14}
\nabla^{1,0 \star}_{\rm log}\nabla^{1,0}_{\rm log}u \simeq & - \sum \partial^2_{\beta \ol \alpha}u_{K\ol M}\ol{f^\alpha_p}f^\beta_p \xi^K\otimes dw^{\ol M}\otimes e_L \\
& - \sum \partial_{\beta}u_{K\ol M}\ol{\partial_{\alpha}{f^\alpha_p}}f^\beta_p \xi^K\otimes dw^{\ol M}\otimes e_L\nonumber
\end{align}
modulo linear terms only involving the curvature at $x_0$. We remark that this is the same expression as in \eqref{boch13}, and we infer that the following holds
\begin{equation}\label{boch15}
\big| \langle {D'_h}^{\star}D'_hu+ D'_h{D'_h}^{\star}u, u \rangle - \langle \nabla^{1,0 \star}_{\rm log}\nabla^{1,0}_{\rm log}u, u \rangle\big|\leq C
\langle u, u\rangle
\end{equation}
at $x_0$, where the constant $C$ is independent of $x_0$ (no matter what is the distance between this point and the support of $D$). This is due to two facts: first one is \eqref{boch9}, and the second one is that if we denote by 
\begin{equation}\label{boch16}
\mathscr R^i_{j \alpha\ol\beta} \xi^j\otimes \xi_i\otimes dw^{\alpha}\wedge dw^{\ol \beta}
\end{equation}
and 
\begin{equation}\label{boch17}
\Theta^i_{j \alpha\ol\beta} \frac{\partial}{\partial w^i}\otimes dw^j \otimes dw^{\alpha}\wedge dw^{\ol \beta}
\end{equation}
the curvature tensors of $(T_X\langle \Delta_1\rangle, g_D)$ and $(T_X, \omega_D)$ at $x_0$, respectively, then we have
\[\max (|\mathscr R^i_{j \alpha\ol\beta}|, |\Theta^i_{j \alpha\ol\beta}|)\leq C\]
for some constant independent of $x_0$. Recall that we have already proved that the curvature forms are two-sided bounded, see Lemma \ref{mixbound} as well as Lemma \ref{logmixbound}. 

\noindent The proof of point $(a)$ is completed by integrating over $X$. 
\smallskip

\noindent As for the point $(b)$, it is a simple rescaling matter. Assume that the form $u$ has total degree $k$, then we write 
\[u= \sum_{p+q= k} u_{pq}\]
and thus the global $L^2$ norm of its derivative equals
\begin{equation}\label{boch18}\Vert D'_h u\Vert^2_{\lambda g_D, \omega_D}= \sum \frac{1}{\lambda^{p+1}}\Vert D'_h u_{pq}\Vert^2_{g_D, \omega_D}.\end{equation}
A quick calculation which we skip shows that we have 
\[{D'_h}^{\star \lambda} u= \frac{1}{\lambda}{D'_h}^{\star} u\]
where, as notation suggests, ${D'_h}^{\star \lambda}$ is the adjoint of $D'_h$ with respect to the rescaled metric. We then have 
\begin{equation}\label{boch19}\Vert {D'_h}^{\star \lambda} u\Vert^2_{\lambda g_D, \omega_D}= \sum \frac{1}{\lambda^{p+1}}\Vert {D'_h}^{\star} u\Vert^2_{g_D, \omega_D}\end{equation}
and the conclusion follows by choosing $\lambda$ in an appropriate manner.
\end{proof}

Now we can prove Theorem \ref{keyinequ} in the log case \eqref{secondcases}.

	\begin{proof}[Proof of Theorem \ref{keyinequ} in the log case \eqref{secondcases}]
	Fix a $\ep_0 \ll 1$. By Lemma \ref{bochner2}, we have 
		\begin{equation}
			\ep_0 (\Vert D_{h_L} u\Vert^2+ \Vert D_{h_L} ^\star u\Vert^2)+ C\Vert u\Vert^2\geq  \Vert \nabla^{1,0} _{log}u\Vert^2.
		\end{equation} 
		Together \eqref{log22}, we know that 
	$$|\int_X\langle \mathcal [D'_{h_L}, \theta_0 ^\star]u, u\rangle dV|\leq 
	C \int_X|\theta_0| \cdot |u|^2dV + |\int_X\langle  \nabla^{1,0} _{log}  u, \theta_0 \wedge u\rangle dV|$$
				$$\leq C \int_X|\theta_0| \cdot |u|^2dV+2 \Vert  \nabla^{1,0} _{log}u\Vert^2 +\frac{1}{8} \int_X|\theta_0|^2 |u|^2 dV$$
		$$\leq 	(\frac{1}{8} +\ep_0) \int_X|\theta_0|^2 \cdot |u|^2dV+2 \ep_0 \int_X\langle [D_{h_L}, D_{h_L} ^\star]u, u\rangle dV
		+ \frac{C}{\ep_0}\Vert u\Vert^2 $$
	Finally, by using the formula \eqref{form40}, we infer that  
		\begin{equation}
			C_{\ep_0}\Vert u\Vert^2+ \int_X\langle \Delta_K u, u\rangle dV\geq \frac{1}{4}\int_X\langle [D_{h_L}, D_{h_L} ^\star]u, u\rangle dV+ (\frac{1}{2}-\ep_0)\int_X|\theta_0|^2|u|^2dV,
		\end{equation} 
		which ends the proof.
	
	\end{proof}

\section{Technicalities II: Hodge theory for $\Delta_K$}\label{HodgeDek}

By using the estimate in Theorem \ref{keyinequ}, we establish in this section the Hodge decomposition for the Laplace operator $\Delta_K$. We first address this in the context of $L^2$-forms. Next, we define the sheaves of smooth forms $\cC^\infty _K (X, L)$ and $\cC^\infty _K (X, \Delta_1, L)$ in the setting of $(X, \omega_D, L, h_L)$, and we show that the Hodge decomposition holds for both $\cC^\infty _K (X, L)$ and $\cC^\infty _K (X, \Delta_1, L)$. An important remark, going back at least to the seminal article of Noguchi \cite{Nog95}, is that a form with logarithmic poles naturally defines a current. In the last subsection, we extend the Hodge decomposition to the space of currents, relying on a technique due to de Rham and Kodaira. This additional result will be crucial in the proof of our version of the $\ddbar$-lemma.

\subsection{Hodge decomposition for $L^2$-forms}

We start this section by some details about the definition of $\Delta_K$ as an unbounded operator.
Recall that $\mathcal E_k$ denotes one of the following vector bundles
\begin{equation}\label{twocases}
	\oplus_{r+s= k}\Lambda^{r, s}T^\star_X\otimes L, \qquad  
	\oplus_{r+s= k}\Lambda^{r, s}T^\star_X\langle \Delta_1\rangle\otimes L .
\end{equation}
 Consider the set
\begin{equation}\label{d1}
\dom(\Delta_K):= \{u\in L^2(X, \cE_k) : D_Ku, D_K^\star u \hbox{ and } \Delta_K u \in L^2(X, \cE_k)\}
\end{equation}
where $L^2$-norm is with respect to the metrics $\omega_D$ and $h_L$ in the smooth case, and 
$(g_D, \omega_D, h_L)$ in the logarithmic case.
  
Notice that for any $u\in \dom(\Delta_K)$, we have the equality
\begin{equation}\label{d2}
\int_X\langle \Delta_Ku, v\rangle dV_{\omega_D}=  \int_X\langle D_Ku, D_Kv\rangle dV_{\omega_D}+ \int_X\langle D_K^\star u, D_K^\star v\rangle dV_{\omega_D},
\end{equation}
provided that $v\in \dom(D_K)\cap \dom(D_K^\star)$. This is a consequence of the definition of $\Delta_K$, the so-called \emph{Friedrichs extension} of the commutator 
$[D_K, D^\star_K]$ acting on smooth forms with compact support. It is self-adjoint, and by elementary Hilbert space theory its domain -defined above in \eqref{d1}- consists in 
$u\in L^2(X, \cE_k)$ such that 
\begin{equation}\label{d8}u\in \dom(D_K)\cap \dom(D_K^\star), \qquad D_Ku\in \dom(D_K^\star),\qquad D_K^\star u\in \dom(D_K)\end{equation} 
see e.g. \cite{bookJP}.
\medskip

\noindent We next consider the following Sobolev-type space.
	
\begin{defn}\label{regularity}
Let $H^i $ be the subspace of $L^2 (X, \cE_k)$ such that the derivatives with respect to $D_K$ and $D^\star _K$ of order $\leq i$ are $L^2$  with respect to the metrics $\omega_D$ and $h_L$ in the smooth case, and $(g_D, \omega_D, h_L)$ in the logarithmic case.
\end{defn}

\smallskip

\noindent In this context, Theorem \ref{keyinequ} admits the following more general version.
\begin{thm}\label{KeyDom}
	There exists a positive constant $C> 0$ such that 
for any $u\in H^1$, the inequality
	$$\|u\|^2_{L^2}  + \int_X(|D_K u|^2+ |D_K^\star u|^2)dV_{\omega_D} \geq \frac{1}{C} \int_X \big(|u|^2 |\theta_0|^2+ |D_h u|^2+ |D_h^\star u|^2 \big)dV_{\omega_D} $$
holds. Moreover, as in Corollary \ref{keylog}, the same inequality is true if we replace the volume element of $\omega_D$ with $d\mu_a$. 
\end{thm}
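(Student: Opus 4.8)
The plan is to deduce \Cref{KeyDom} from \Cref{keyinequ} by a density argument. Write $\mathscr{D}$ for the space of $L$-valued $k$-forms on $X_0=X\setminus|\Delta_2|$ that are smooth in the orbifold sense of \Cref{conv} (with logarithmic poles along $\Delta_1$ in case \eqref{secondcases}) and have compact support in $X_0$; by \Cref{keyinequ} the asserted inequality holds for every $u\in\mathscr{D}$. It therefore suffices to show that $\mathscr{D}$ is dense in $H^1$ for the graph norm $u\mapsto \Vert u\Vert_{L^2}+\Vert D_K u\Vert_{L^2}+\Vert D_K^\star u\Vert_{L^2}$, and that the inequality is stable under such approximation.

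For the density I would argue in two steps. \emph{Step 1 (truncation near $\Delta_2$).} The metric $\omega_D$ has Poincaré singularities along $\Delta_2$, hence is complete there; as in the classical Andreotti--Vesentini argument one can choose cut-off functions $\chi_j$, equal to $1$ outside a $1/j$-neighbourhood of $|\Delta_2|$ and compactly supported in $X_0$, with $\sup_X|d\chi_j|_{\omega_D}\to 0$. Since $D_K=D_h+2\theta_0$ and multiplication operators commute with wedging by $\theta_0$, one has $D_K(\chi_j u)=\chi_j D_K u+d\chi_j\wedge u$ and dually $D_K^\star(\chi_j u)=\chi_j D_K^\star u-d\chi_j\,\lrcorner\,u$; the error terms are dominated pointwise by $|d\chi_j|_{\omega_D}|u|$ and tend to $0$ in $L^2$ by dominated convergence. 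This reduces matters to $u\in H^1$ with compact support in $X_0$. \emph{Step 2 (regularisation near $\Delta_1+\Delta_3$).} In a chart $(\Omega,w)$ as in \eqref{log222}--\eqref{log223} I would pull back along the ramified cover $\pi$ of \eqref{clog}: after this pull-back the conic singularities along $\Delta_3$ become smooth, and a form with log poles along $\Delta_1$ becomes a genuine smooth section of (the pull-back of) $\cE_k$. In the quasi-coordinates of \cite{Kob84} the metrics $\omega_D$, $g_D$ are standard smooth metrics of bounded geometry (\Cref{mixbound}, \Cref{logmixbound}), and $D_K$, $D_K^\star$ are first-order operators whose coefficients — including those of $\theta_0$, a bounded section of $\Omega^1_X(\log\Delta_1)$ — are smooth with bounded derivatives; Friedrichs' commutator lemma $\Vert[P,\rho_j\ast]v\Vert_{L^2}\to 0$ then yields, via a partition of unity, a sequence $u_j\in\mathscr{D}$ with $u_j\to u$, $D_K u_j\to D_K u$, $D_K^\star u_j\to D_K^\star u$ in $L^2$.

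The main obstacle is precisely Step 2: one must regularise so as to control \emph{simultaneously} the graph norms of $D_K$ and of its formal adjoint $D_K^\star$ — the latter computed with the non-Kähler metric $g_D$ in case \eqref{secondcases} — and this forces the mollification to be performed upstairs on the ramified cover, where the geometry is of bounded type, in a way compatible with the log/conic structure along $\Delta_1+\Delta_3$; the completeness of $\omega_D$ only helps at $\Delta_2$, so the two-sided curvature bounds \Cref{mixbound} and \Cref{logmixbound} are genuinely needed here.

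Finally I would pass to the limit. Given $u\in H^1$ and $u_j\in\mathscr{D}$ as above (assume, after extracting a subsequence, $u_j\to u$ almost everywhere), the left-hand side of the inequality in \Cref{KeyDom} is continuous along this convergence. On the right-hand side, $\int_X|\theta_0|^2|u|^2\,dV_{\omega_D}\le\liminf_j\int_X|\theta_0|^2|u_j|^2\,dV_{\omega_D}$ by Fatou; and since $D_h$, $D_h^\star$ are closed as differential operators in the distributional sense, while $\Vert D_h u_j\Vert_{L^2}$ and $\Vert D_h^\star u_j\Vert_{L^2}$ stay bounded by \Cref{keyinequ} applied to $u_j$, we get $D_h u_j\rightharpoonup D_h u$ and $D_h^\star u_j\rightharpoonup D_h^\star u$ weakly in $L^2$, hence $\Vert D_h u\Vert_{L^2}\le\liminf_j\Vert D_h u_j\Vert_{L^2}$ and likewise for $D_h^\star$. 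In particular $|\theta_0|u$, $D_h u$, $D_h^\star u\in L^2$ and the inequality of \Cref{keyinequ} survives, proving \Cref{KeyDom}. The $d\mu_a$-version follows exactly as \Cref{keylog} follows from \Cref{keyinequ}: apply the statement just proved to $u_J:=u\prod_{i\in J}\log^{a_i}(|s_i|^2+\delta^2)$ and use that the gradient of $\log\log(|s_i|^2+\delta^2)$ with respect to $\omega_D$ is bounded on $X\setminus\Delta_2$ uniformly in $\delta\in[0,1]$, so that all error terms produced by the weight are absorbed with constants independent of $\delta$.
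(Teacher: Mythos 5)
Your proposal follows essentially the same route as the paper's proof: density of orbifold-smooth, compactly supported forms in the graph norm of $(D_K, D_K^\star)$, established by cutting off near $\Delta_2$ (Andreotti--Vesentini, using completeness of $\omega_D$ there) and then Friedrichs mollification after pulling back along the ramified cover \eqref{clog} to resolve the conic structure along $\Delta_3$, followed by passage to the limit in \Cref{keyinequ}. The paper packages exactly this in \Cref{dens}, applied chart-by-chart after a partition of unity.

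A few small imprecisions worth flagging, none fatal. First, after the truncation away from $\Delta_2$ you are working on a compact subset of $X\setminus\Delta_2$, so the Kobayashi quasi-coordinates are not actually needed at that stage — the ramified cover already renders $\omega_D$, $g_D$, $h_L$ and $\theta_0$ genuinely smooth there, and Friedrichs only asks for smooth coefficients, not bounded geometry. Second, the "main obstacle" paragraph slightly misattributes the role of the two-sided curvature bounds \Cref{mixbound} and \Cref{logmixbound}: those are used inside \Cref{keyinequ} itself (the a-priori inequality for compactly supported forms), not in the density step. Third, when pulling back you should carry along the normalization $u_1 := w^{-k\nu}\pi^\star u$ from Convention \ref{conv} $C_5$, which is what makes $h_L$ (and hence the coefficients of $D_K, D_K^\star$) smooth upstairs; the proposal invokes \Cref{conv} but does not write this out. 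Finally, the phrase "a form with log poles along $\Delta_1$ becomes a genuine smooth section" is misleading — the cover does not ramify over $\Delta_1$, and such forms are already bona fide smooth sections of $\cE_k$ in the sense of \eqref{secondcases}; what the cover resolves is $\Delta_3$ (and the metric singularity of $h_L$). These are expository slips; the argument itself, including the Fatou/weak-compactness limit step and the reduction of the $d\mu_a$-version to the $\delta$-regularized weights, is sound and coincides with the paper's.
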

\medskip

\begin{remark}
In particular, it follows that for $u\in \dom(\Delta_K)$ we automatically have 
\[D_h u, D^\star _hu \in L^2.\]  
\end{remark}

\begin{proof} 
As expected, we will proceed by approximation. In the first place, we can assume that the form $u\in \dom (\Delta_K)$ has compact support in $X_0= X\setminus \Delta_2$. Indeed, this is due to the fact that the metric $\omega_D$ is complete near the support of the divisor $\Delta_2$. 

Next, we cover $X$ with a finite set of coordinate open subsets $(\Omega_i, z_i)$, as in Lemma \ref{coeff}. If $(\Omega, z)$ is one of them, we assume that the support of $u$ is contained in $\Omega \setminus \Delta_2$ and consider the local ramified maps
\begin{equation}\label{d4}
\pi: \DD\to \Omega, \qquad \pi(w)= (w_1,\dots, w_r, w_{r+1}^k\dots, w_{r+p+q}^k, w_{r+p+q+1},\dots, w_n).
\end{equation}
Recall the following notation
\[u_1:= \frac{1}{w^{k\nu}}\pi ^\star u\]
cf. the point $C_5$ of Convention \ref{conv}.

Corresponding to each ramified map $\pi$ we introduce the operators 
\begin{equation}\label{d6}
d'_K:= \partial - \alpha', \qquad d''_K:= \dbar + \alpha''
\end{equation}
acting on forms defined on $\DD$, where $\displaystyle \alpha':= -\pi^\star(\theta_0+ \ol\theta_0)$ and 
$\displaystyle \alpha'':= \pi^\star(\theta_0- \ol\theta_0)$. We 
set \[d_K:= d_K'+ d_K''\]
and a simple verification shows that the equalities
\begin{equation}\label{d7}
d_K u_1= \frac{1}{w^{k\nu}}\pi^\star D_Ku, \qquad d_K^\star  u_1= \frac{1}{w^{k\nu}}\pi^\star (D_K^\star u)
\end{equation}
hold (see \cite{CP} for a more detailed presentation).
\smallskip

\noindent Moreover, we have the following density result.
\begin{lemme}\label{dens}
Let $u$ be a form with compact support in the set $\Omega$. If we assume that $u$, together with its derivatives 
$D_Ku$ and $D_K^\star u$ are in $L^2(X, \cE_k)$ then we have
\[u_1= \lim_\alpha u_\alpha\]  
where $u_\alpha$ are smooth and have compact support in $\DD\setminus \pi^{-1}(\Delta_2)$. The limit above is in the graph norm corresponding to the operators $d_K, d^\star_K$.
\end{lemme}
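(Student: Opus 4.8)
\textbf{Proof proposal for Lemma \ref{dens}.}

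The plan is to prove the density statement in the graph norm of $(d_K, d_K^\star)$ by a standard two-step cut-off-and-regularize argument, adapted to the Poincar\'e-type geometry near $\pi^{-1}(\Delta_2)$. First I would localize: since $u$ has compact support in $\Omega$ and the asserted limit only involves the behavior near $\pi^{-1}(\Delta_2)$ (elsewhere $\DD$ carries a smooth metric and ordinary Friedrichs mollification applies), it suffices to treat a form $u_1$ supported in a polydisk of the form $\DD^{\star a}\times \DD^{b}$ with the product metric (Poincar\'e on the first $a$ factors, Euclidean on the rest). Here I would use that the pullback $\pi^\star\omega_D$ has Poincar\'e singularities along $\pi^{-1}(\Delta_2)$, as recorded in the proof of Lemma \ref{mixbound}, and that $d_K = d_K' + d_K''$ differs from $\partial + \dbar$ only by the \emph{bounded} zeroth-order operators $\alpha', \alpha''$ (these are pullbacks of $\theta_0\pm\bar\theta_0$, which by Property \ref{property} have at worst $\log$ singularities, hence are bounded when measured against the complete metric $\omega_D$; see the estimate \eqref{form28}). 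Consequently the graph norm of $d_K$ is comparable, up to a bounded zeroth-order perturbation, to that of the untwisted $\partial+\dbar$, so it is enough to establish the density statement for the complete K\"ahler metric without the Higgs twist.

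Next I would carry out the cut-off in the radial direction. Choose a family of Lipschitz functions $\chi_\varepsilon$ on $\DD^{\star a}$ depending on $(\log 1/|w_1|,\dots,\log 1/|w_a|)$, equal to $1$ away from the divisor, vanishing in a neighborhood of $\pi^{-1}(\Delta_2)$, and with $|d\chi_\varepsilon|_{\pi^\star\omega_D}\to 0$ in $L^\infty$ as $\varepsilon\to 0$ — exactly the classical construction (e.g. $\chi_\varepsilon = \max(0,\min(1,\text{something linear in }\log\log 1/|w_i|))$) that works because the metric is complete near $\Delta_2$. Then $\chi_\varepsilon u_1 \to u_1$ in $L^2$, and since $d_K(\chi_\varepsilon u_1) = \chi_\varepsilon\, d_K u_1 + d\chi_\varepsilon \wedge u_1$ (and symmetrically for $d_K^\star$, which produces $d\chi_\varepsilon\,\lrcorner\, u_1$), dominated convergence gives $d_K(\chi_\varepsilon u_1)\to d_K u_1$ and $d_K^\star(\chi_\varepsilon u_1)\to d_K^\star u_1$ in $L^2$. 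This reduces the problem to forms that already vanish near $\pi^{-1}(\Delta_2)$. For such a form, ordinary Friedrichs mollification (convolution in the quasi-coordinates of Lemma \ref{sobolev}, or equivalently in local Euclidean coordinates in the region where the metric is uniformly equivalent to the Euclidean one) produces smooth compactly supported approximants converging in the graph norm, using Friedrichs' commutator lemma to control $d_K$ and $d_K^\star$ of the mollified forms.

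The main obstacle — more a bookkeeping point than a genuine difficulty — is ensuring that the two operations commute cleanly with the ramified cover $\pi$ and with the rescaling by $w^{k\nu}$ that defines $u_1 = w^{-k\nu}\pi^\star u$: one must check that $d_K$, $d_K^\star$ acting on $u_1$ correspond under \eqref{d7} to $D_K$, $D_K^\star$ acting on $u$, so that the $L^2(X,\cE_k)$ hypotheses on $u$, $D_Ku$, $D_K^\star u$ really do translate into $L^2(\DD)$ bounds on $u_1$, $d_Ku_1$, $d_K^\star u_1$ with respect to the \emph{smooth} quasi-coordinate metric upstairs. Once that translation is in place — it is essentially the content of \eqref{d7} together with the fact that $w^{k\nu}$ has constant modulus-growth that is absorbed into the weight — the argument is the classical Andreotti–Vesentini / Demailly density theorem for complete K\"ahler metrics, and the bounded perturbation by $\alpha', \alpha''$ changes nothing. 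I would therefore present the proof as: (1) reduce to a neighborhood of $\pi^{-1}(\Delta_2)$ and to the untwisted operator by the boundedness of $\alpha', \alpha''$; (2) apply the completeness cut-off $\chi_\varepsilon$; (3) mollify; citing \cite{CP} for the compatibility \eqref{d7} and for the analogous computations already carried out there.
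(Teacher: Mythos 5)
Your proposal follows essentially the same route as the paper's proof: translate to $u_1$ via the ramified cover and the compatibility \eqref{d7}, cut off with a completeness cutoff near $\pi^{-1}(\Delta_2)$, then Friedrichs-mollify.

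There is, however, one incorrect claim in your extraneous reduction step. You assert that $\alpha'=-\pi^\star(\theta_0+\ol\theta_0)$ and $\alpha''=\pi^\star(\theta_0-\ol\theta_0)$ are \emph{bounded} with respect to $\omega_D$, citing Property \ref{property} and estimate \eqref{form28}. This is false near $\Delta_2$: by Property \ref{property}, $\theta_0$ has a \emph{nonzero} residue along every component of $\Delta_2$, so locally $\theta_0 \sim a\, dz_1/z_1$ there with $a\neq 0$, and against the Poincar\'e metric one has $|dz_1/z_1|_{\omega_D}\sim |\log|z_1||\to\infty$. This unboundedness is in fact the whole point of the section: it is what makes the term $\int_X |u|^2|\theta_0|^2\,dV_{\omega_D}$ in Theorem \ref{keyinequ} nontrivial, is made explicit in \eqref{d12}, and estimate \eqref{form28} itself exhibits $\log(1/|s_i|)$ growth rather than a bound. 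Consequently the reduction ``to the untwisted $\partial+\dbar$'' does not go through as stated. Fortunately, it is also unnecessary: the cutoff identity $d_K(\chi_\varepsilon u_1)=\chi_\varepsilon\, d_K u_1 + d\chi_\varepsilon\wedge u_1$ that you write down (and its analogue for $d_K^\star$) applies directly to $d_K$, $d_K^\star$ using only $|d\chi_\varepsilon|_{\omega_D}\to 0$ and the hypothesis $u_1, d_K u_1, d_K^\star u_1\in L^2$, without any boundedness of the zeroth-order part. This is precisely what the paper does, invoking the standard completeness cutoffs of \cite[VIII, Lemma 2.4]{bookJP}. Drop step (1) of your summary and the argument coincides with the paper's.
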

\begin{proof} We first remark that, by hypothesis combined with the equality 
	\eqref{d7} the relations
\begin{equation}\label{d3}
u_1\in L^2(\DD, \omega_1),\qquad d_K u_1\in L^2(\DD, \omega_1), \qquad 	d_K^\star u_1\in L^2(\DD, \omega_1)
\end{equation}
hold.  Notice that the pull-back 
\begin{equation}\label{d5}
	\omega_1:= \pi^\star\omega_D
\end{equation}
is a metric with Poincaré singularities along the $\pi$-inverse image of $\Delta_2$. In particular $\omega_1$ is complete near the $\pi$-inverse image of $\Delta_1\cap \Omega$, and the form $u_1$ has compact support in $\DD$. 

By completeness, there exists a family of cutoff functions $(\rho_\ep)_{\ep> 0}$ as in \cite[VIII, Lemma 2.4]{bookJP},  we have 
\begin{equation}
	\rho_\ep u_i\to u_1,\qquad d_K (\rho_\ep u_1)\to d_K u_1, 
	\qquad d_K ^\star(\rho_\ep u_1)\to d_K ^\star u_1
\end{equation}
in $L^2(\DD, \omega_1)$.  Note that $\rho_\ep u_i$ is of compact support in $\DD\setminus \pi^{-1}(\Delta_2)$. 

Finally, on any compact subset in the complement of $\pi^{-1}(\Delta_2)$ the operators $d_K$ and $d^\star_K$ are 
smooth, of first order. The convolution of $\rho_\ep u_i$ with a smoothing kernel together with Friedrichs lemma (see \cite{bookJP}) allow us to conclude. 
\end{proof}	
\medskip

\noindent The proof of Theorem \ref{KeyDom} is ended by applying Theorem \ref{keyinequ} (or better say, its local version) to the regularisation of $\rho_\ep u_i$ and taking the limit.
\end{proof}


\smallskip

\noindent The next statement is a G\aa rding-type inequality. 

\begin{lemme}\label{h2norm} There exists a positive constant $C> 0$ such that 
for any $\cE_k$-valued section $f$ with compact support in $X\setminus \Delta_2$, which is moreover smooth in the conic $(X, \Delta_3)$-sense, we have   
	$$\|f\|_{H^2} \leq  C (\|f\|_{L^2} + \|g\|_{L^2}) $$
where $g:=\Delta_K f$ is the Laplacian of $f$.
\end{lemme}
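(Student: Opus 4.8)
The statement is an interior $H^2$-elliptic estimate for the Friedrichs Laplacian $\Delta_K$, and the natural route is to reduce it, via the ramified covers $\pi$ of Convention~\ref{conv}, to a genuinely \emph{elliptic} second-order estimate upstairs, where $\omega_D$ pulls back to a metric with only Poincaré and conic singularities (the conic ones being absorbed by the local coordinate $w^{k\nu}$ as in \eqref{d7}). First I would localize: cover $X$ by the charts $(\Omega_i, z_i)$ of Lemma~\ref{coeff}, write $f=\sum \rho_i f$ with a partition of unity subordinate to the cover, and reduce to estimating each piece with compact support in a single $\Omega_i\setminus\Delta_2$. Pulling back by $\pi$ and setting $f_1:=\frac{1}{w^{k\nu}}\pi^\star f$, the identities \eqref{d7} turn $\Delta_K f$ into $d_K^\star d_K f_1 + d_K d_K^\star f_1$ computed with the \emph{smooth} (in quasi-coordinates) data $(\omega_1,\alpha',\alpha'')$, where $\alpha',\alpha''$ are the (bounded, after the usual rescaling; cf.\ Lemma~\ref{coeff}) pullbacks of $\pm(\theta_0\pm\bar\theta_0)$. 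So it suffices to prove the Gårding inequality for the scalar-principal-part operator $d_K^\star d_K + d_K d_K^\star$ on forms with compact support in $\mathbb D\setminus\pi^{-1}(\Delta_2)$, uniformly in the chart.

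The core of that reduction is the usual quasi-coordinate device of Cheng–Yau / Kobayashi / Biquard, already invoked in Lemma~\ref{sobolev} and Lemma~\ref{mixbound}: composing with the maps $\Phi_\eta$ of \eqref{end25}, the Poincaré metric $\omega_1$ becomes a metric on a fixed ball $\frac34\mathbb D^n$ which is \emph{uniformly} (in all $C^k$-norms, independently of the multi-index) equivalent to the Euclidean one, and the coefficients of $d_K, d_K^\star$ — which involve $\omega_1$, its inverse, its derivatives, and the bounded forms $\alpha',\alpha''$ — are uniformly bounded with all derivatives. On each such ball the operator $d_K^\star d_K + d_K d_K^\star$ is a second-order operator whose principal symbol is $|\xi|^2_{\omega_1}\cdot\mathrm{Id}$, hence uniformly elliptic, so the classical interior elliptic estimate $\|u\|_{H^2(B_{1/2})}\le C(\|u\|_{L^2(B_{3/4})}+\|\Delta u\|_{L^2(B_{3/4})})$ applies with a constant $C$ depending only on the (uniform) bounds on the coefficients. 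Summing these local estimates against the weights $2^{-|l|}$, exactly as in \eqref{g24}--\eqref{g23}, and using that the $H^2$-norm built from $D_K, D_K^\star$ on $X$ is comparable to the one built from the quasi-coordinate derivatives, returns the global bound $\|f\|_{H^2}\le C(\|f\|_{L^2}+\|\Delta_K f\|_{L^2})$.

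The step I expect to be the main obstacle is \emph{keeping the elliptic constant uniform across the quasi-coordinate charts} in the logarithmic case \eqref{secondcases}, where $g_D\ne\omega_D$ and the relevant first-order operator $D'_h$ is not the Chern connection of $g_D$. There one must check that, in the frame \eqref{log7}--\eqref{log8}, the pulled-back coefficients $h_{i\bar j}$ and their inverses $h^{\bar j i}$ (the estimates (1)--(f) following \eqref{log30}), together with the functions $f^\alpha_i$ of \eqref{boch5}, are bounded \emph{with one derivative} uniformly — this is precisely \eqref{boch9} and is where $k\ge 3$ enters, via \eqref{m6}. Granting these bounds (all already established in Section~\ref{s-tech-I} and in the proof of Lemma~\ref{bochner2}), the operator $D_K^\star D_K + D_K D_K^\star$ still has scalar principal part $|\xi|^2_{\omega_1}$ — the extra curvature and torsion terms in \eqref{boch13}, \eqref{boch14} are of order $\le 1$ with uniformly bounded coefficients — so the same Euclidean interior estimate applies chart by chart. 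A secondary, routine point is the density statement needed to apply the smooth elliptic estimate to $f$ itself: since $f$ is assumed smooth in the conic sense and of compact support in $X\setminus\Delta_2$, after pullback $f_1$ is honestly smooth (with log poles along $\pi^{-1}(\Delta_1)$) on a relatively compact set, so no further mollification beyond that of Lemma~\ref{dens} is required, and the $H^2$-bound passes to the limit.
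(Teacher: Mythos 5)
Your quasi-coordinate/Gårding route is a genuinely different (and much heavier) argument from the one the paper actually uses, and it is worth noting that the paper's proof makes no appeal to elliptic PDE theory at all. The crucial observation you missed is that, with $H^2$ defined as in Definition~\ref{regularity} (i.e.\ via iterated $D_K$ and $D_K^\star$ derivatives, not covariant derivatives), the estimate is pure integration by parts: since $f$ is smooth in the conic sense with compact support in $X\setminus\Delta_2$ one has $f\in\dom(\Delta_K)$, the $H^1$ bound follows from $\|D_Kf\|^2+\|D_K^\star f\|^2=\langle g,f\rangle$, and the $H^2$ bound follows from the commutation of $D_K,D_K^\star$ with $\Delta_K$ (so $D_K g=\Delta_K D_K f$, $D_K^\star g=\Delta_K D_K^\star f$), which yields the identity $\|D_K^\star D_K f\|^2+\|D_K D_K^\star f\|^2=\|g\|^2$ — and there is nothing else to control because $D_K^2=0$. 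Your approach would likely produce a (stronger, covariant-derivative) $H^2$ estimate after careful bookkeeping, but two things in it need repair: first, your assertion that $\Delta_K$ has \emph{scalar} principal symbol $|\xi|^2_{\omega_1}$ is inaccurate in the logarithmic case~\eqref{secondcases}, since the adjoint $D_K^\star$ is computed with $g_D$ on the $\Omega^p_X\langle\Delta_1\rangle$-factor and with $\omega_D$ on the antiholomorphic factor, so the symbol is a genuine mixture of $g^{\bar\beta\alpha}$ and $\omega^{\bar\beta\alpha}$ (this is precisely the content of the Remark preceding Lemma~\ref{bochner2}); ellipticity still holds but the phrase "scalar principal part" does not. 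Second, the comparison "the $H^2$-norm built from $D_K,D_K^\star$ is comparable to the one built from the quasi-coordinate derivatives" is the key technical step of your plan and is left entirely unsubstantiated — the direction you need (quasi-coordinate $H^2$ controls $\|D_KD_K^\star f\|$, $\|D_K^\star D_K f\|$) does hold because the coefficients of $D_K^\star$ and their first derivatives are bounded in quasi-coordinates, but this deserves a proof, whereas the paper's argument sidesteps it completely.
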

	
\begin{proof} In the first place, the hypothesis implies that $f\in \dom(\Delta_K)$. This is immediately seen by using the local ramified maps 
$\pi$ in \eqref{d4}.
It therefore follows that the equality \[\|D_K f\|^2 +\|D^\star _K f\|^2 = \langle g, f\rangle\] 
holds, and therefore we have 
	$$\|f\|_{H^1} \leq  C (\|f\|_{L^2} + \|g\|_{L^2}) .$$
	
	For the second order derivatives, since $D_K, D^\star_K$ commute with $\Delta_K$, and $D_K f$ is still smooth in the conic sense and with compact support in $X\setminus \Delta_2$ we have 
	$$D_K g = \Delta_K D_K f \qquad\text{and} \qquad D^\star _K g = \Delta_K D^\star_K f.$$
It follows that the equalities 
	$$\langle D_K g, D_K f\rangle = \langle \Delta_K D_K f , D_K f\rangle =  \langle D^\star _KD_K f ,D^\star_K D_K f\rangle $$
	and 
		$$\langle D^\star _K g, D^\star _K f\rangle = \langle \Delta_K D^\star_K f , D^\star _K f\rangle =  \langle D_KD^\star_K f ,D_K D^\star_K f\rangle
		$$
	hold.	
		Since $\langle D_K g, D_K f\rangle  + \langle D^\star _K g, D^\star _K f\rangle  = \langle g, \Delta_K f\rangle =\|g\|^2$, we have 
		$$ \langle D^\star _KD_K f ,D^\star_K D_K f\rangle  + \langle D_KD^\star_K f ,D_K D^\star_K f\rangle  = \|g\|^2.$$
In conclusion, we infer that the inequality 
			$$\|f\|_{H^2} \leq  C (\|f\|_{L^2} + \|g\|_{L^2})$$
is true, which ends the proof of our statement.			
		\end{proof}

\medskip

\noindent Our next target is the following estimate, a consequence of Theorem \ref{KeyDom}.

\begin{thm}\label{logpole} 
	Let $\omega_D$ be the metric considered in \eqref{m1}. We suppose that $\Delta_2 =\sum_{i=1}^m D_i$
	and $s_i$ is the canonical section of $D_i$.
Let $g, f$ be two $\cE_k$-valued sections on $X$ such that $f\in \dom(\Delta_K)$ and such that $\Delta_K f =g$. Then the following hold.
\begin{enumerate} 

\item[\rm (1)] The inequalities
$$\int_X (|f|^2 + |D^\star_K f|^2 +|D_K f|^2) (\sum_{i=1}^m \log |s_i|^2 )\ dV_{\omega_D} <+\infty. $$
and
		$$\int_X (|D_K D^\star_K f|^2 + |D^\star_K D_K f|^2 )dV_{\omega_D} <+\infty$$
hold.

\item[\rm (2)] Consider a non-empty subset $I \subset \{1,\cdots, m\}$ as well as positive reals $a_i\geq 0$. If 
\[\int_X |g|^2 \prod_{i\in I} \log^{2a_i} |s_i| dV_{\omega_D} <+\infty\] then we have 
$$\int_X (|f|^2 + |D^\star_K f|^2 +|D_K f|^2) (\sum_{i=1}^m \log |s_i|^2 )\cdot   \prod_{i\in I} \log^{2a_i} |s_i| dV_{\omega_D} <+\infty$$
and 
\begin{equation}\label{addnew}
\int_X (|D^\star_K D_K f|^2 +|D_K D_K ^\star  f|^2)   \prod_{i\in I}   \log^{2a_i} |s_i|  dV_{\omega_D} <+\infty
\end{equation}	

\end{enumerate}	
	\end{thm}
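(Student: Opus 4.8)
The whole statement is a weighted elliptic-regularity result for the self-adjoint operator $\Delta_K$, and the engine is the a-priori inequality of Theorem \ref{KeyDom}, valid both for the volume $dV_{\omega_D}$ and for the weighted measures $d\mu_a$. The strategy is to bootstrap: once we know $f\in\dom(\Delta_K)$ with $\Delta_K f=g$, we apply Theorem \ref{KeyDom} with a carefully chosen weight of the form $\prod_{i\in I}\log^{2a_i}(|s_i|^2+\delta^2)$ (the $\delta$-regularized logs ensure the test form $f\prod\log^{a_i}(|s_i|^2+\delta^2)$ is genuinely admissible and the constants are $\delta$-independent, exactly as in Corollary \ref{keylog}), and then let $\delta\to 0$ using monotone convergence. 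The key structural input that makes the weights propagate is that $D_K$ and $D_K^\star$ commute with $\Delta_K$: this lets us run the same argument on $D_Kf$ and $D_K^\star f$, which again lie in $\dom(\Delta_K)$ with $\Delta_K(D_Kf)=D_Kg$, etc.

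\textbf{Step 1 (part (1), the unweighted improvement).} First I would take the basic energy identity $\|D_Kf\|^2+\|D_K^\star f\|^2=\langle g,f\rangle$ which holds for $f\in\dom(\Delta_K)$, and combine it with Theorem \ref{KeyDom} applied to the measure $d\mu_a$ with a single $a_i=\tfrac12$ and $\delta\to 0$: since the extra factor $\log|s_i|^2$ (negative, but $|\log|s_i|^2|$ grows only logarithmically) is dominated by the gain $|\theta_0|^2$ coming from the residue of $\theta_0$ along $D_i\subset\Delta_2$ — recall $\Res\theta_0\neq 0$ on every component of $\Delta_2$, so $|\theta_0|^2_{\omega_D}\gtrsim \log^2|s_i|/\text{(something)}$ near $D_i$ in Poincaré-type coordinates — the right-hand side absorbs the logarithmic loss. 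Concretely, Theorem \ref{KeyDom} gives
$$\|f\|^2 + \langle g,f\rangle \ge \tfrac1C\int_X\big(|f|^2|\theta_0|^2 + |D_hf|^2+|D_h^\star f|^2\big)dV_{\omega_D},$$
and the term $\int |f|^2|\theta_0|^2$ controls $\int(|f|^2+\cdots)\sum\log|s_i|^2\,dV_{\omega_D}$ because near a component of $\Delta_2$ one has $|\theta_0|^2_{\omega_D}\sim c|s_i|^2\log^2|s_i|^2\cdot |s_i|^{-2}=c\log^2|s_i|^2$ up to bounded factors. The $L^2$-boundedness of $D_KD_K^\star f$ and $D_K^\star D_Kf$ then follows exactly as in the second half of the proof of Lemma \ref{h2norm}: from $\langle D_Kg,D_Kf\rangle+\langle D_K^\star g,D_K^\star f\rangle=\|g\|^2$ together with $D_Kg=\Delta_KD_Kf$, $D_K^\star g=\Delta_KD_K^\star f$ one reads off $\|D_K^\star D_Kf\|^2+\|D_KD_K^\star f\|^2=\|g\|^2<\infty$.

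\textbf{Step 2 (part (2), inserting the extra weight).} Here I would fix the subset $I$ and the exponents $a_i$, assume $\int_X|g|^2\prod_{i\in I}\log^{2a_i}|s_i|\,dV_{\omega_D}<\infty$, and apply the weighted version of Theorem \ref{KeyDom} (the $d\mu_a$ version) with the weight $\prod_{i\in I}\log^{2a_i}(|s_i|^2+\delta^2)$ times an additional factor picking up one power of $\log|s_i|^2$ for each $i\in\{1,\dots,m\}$, testing against $f$ regularized as above. The commutation relations allow the same to be done with $f$ replaced by $D_Kf$ and $D_K^\star f$, which gives the first displayed inequality of (2). For \eqref{addnew}, I would repeat the Lemma \ref{h2norm}-style identity but now pairing $\Delta_KD_Kf=D_Kg$ against $D_Kf$ with the weight $\prod_{i\in I}\log^{2a_i}|s_i|$ inserted; the cross terms produced when the weight is differentiated are handled exactly as in the remark following Corollary \ref{keylog} — the gradient of $\log\log(|s_i|^2+\delta^2)$ with respect to $\omega_D$ is bounded uniformly in $\delta$ on $X\setminus\Delta_2$, so those terms are absorbed into lower-order pieces that are already known to be finite by Step 1 and the unweighted part of (2). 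Throughout, the a priori finiteness needed to justify the integrations by parts comes from working first with $\delta>0$, where all integrands are bounded and $f$ lives in a genuine weighted Sobolev space, then passing to the limit by Fatou/monotone convergence.

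\textbf{Main obstacle.} The delicate point is not the abstract scheme but the bookkeeping of which weights can legitimately be inserted while still controlling the commutator error terms: when one tests Theorem \ref{KeyDom} against $f$ times a product of logs, the operator $D_K$ hitting the weight produces a first-order term $d(\sum a_i\log\log(|s_i|^2+\delta^2))\wedge u_J$, and one must check that its contribution is genuinely lower order and $\delta$-uniformly bounded — this is where the precise asymptotics of $\omega_D$ from Lemma \ref{coeff} and the boundedness of the relevant gradients are used. A secondary subtlety is ensuring that the regularized form $f\prod\log^{a_i}(|s_i|^2+\delta^2)$ is admissible as a test form in Theorem \ref{KeyDom} — i.e. that it lies in $H^1$ — which again follows from $f\in\dom(\Delta_K)$ via the local ramified covers of \eqref{d4} and the density Lemma \ref{dens}, but needs to be stated carefully so the limit $\delta\to0$ is clean.
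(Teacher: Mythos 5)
Your overall scheme — \Cref{KeyDom} for the unweighted bound, then $\delta$-regularized weights $\prod\log^{2a_i}(|s_i|^2+\delta^2)$ with integration by parts and $\delta\to 0$ — is the correct skeleton, and it is what the paper does. But Step~2 as written has a genuine circularity that the paper's proof is precisely designed to avoid.

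The issue is your sentence ``the commutation relations allow the same to be done with $f$ replaced by $D_Kf$ and $D_K^\star f$, which gives the first displayed inequality of~(2),'' and its analogue for \eqref{addnew} (``pairing $\Delta_K D_K f = D_K g$ against $D_K f$ with the weight inserted''). Applying \Cref{KeyDom} to $u=D_Kf$ would require control of $\int |D_K D_K f|^2 d\mu_a$ and $\int |D_K^\star D_Kf|^2 d\mu_a$ on the left-hand side, and the latter is precisely part of \eqref{addnew} — the quantity you are trying to bound. Alternatively, framing things through $\Delta_K(D_Kf)=D_Kg$ needs $D_Kg$ in weighted $L^2$, which the hypothesis does not give you (it only gives $g$ weighted $L^2$). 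The paper sidesteps both obstructions: it first establishes \eqref{d101}, namely
\[
\int_X|D_Kf|^2 e^{\psi_\delta}\,dV+\int_X|D_K^\star f|^2 e^{\psi_\delta}\,dV
\le C\int_X |f|^2 e^{\psi_\delta}\,dV+\int_X|g|^2 e^{\psi_\delta}\,dV,
\]
by a single integration by parts in \eqref{d103}--\eqref{d105}, using only $\Delta_Kf=g$ and the uniform bound on $|d\psi_\delta|_{\omega_D}$; then it feeds this into the weighted \Cref{KeyDom} to reach \eqref{d106}, and — crucially — it uses the fact that $|\theta_0|^2$ dominates the weight near $\Delta_2$ to \emph{remove} the weight from the $|f|^2$ term in \eqref{d107} before taking $\delta\to 0$. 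Without that absorption step your $\delta\to 0$ passage does not close, because $\int|f|^2 e^{\psi_\delta}\,dV$ could a priori diverge. For \eqref{addnew}, the paper's trick \eqref{d111}--\eqref{d113} is to write $|g|^2 = |D_KD_K^\star f + D_K^\star D_Kf|^2$, so that $I_\delta$ is controlled by $\int|g|^2 e^{\psi_\delta}$ (finite by hypothesis) plus a cross term $\int\langle D_KD_K^\star f, D_K^\star D_Kf\rangle e^{\psi_\delta}$ which, after one integration by parts and $(D_K^\star)^2=0$, is bounded by $\|D_K^\star f\|_{e^{\psi_\delta}}\cdot I_\delta^{1/2}$, closing a quadratic inequality. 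No derivative of $g$ ever enters.

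Two smaller remarks. For part (1), your suggestion to invoke $d\mu_a$ with $a_i=\tfrac12$ goes outside the paper's framework (the measures $d\mu_a$ in \eqref{eq:mu} use nonnegative integer multi-indices, so even powers of $\log$); the paper simply observes that the $|\theta_0|^2$ gain in \Cref{KeyDom} already dominates $|\log|s_i|^2|$ since $|\theta_0|^2_{\omega_D}\gtrsim\sum\log^2|s_i|^2$ near $\Delta_2$, so the unweighted estimate suffices. And for the second inequality of (1), you do not need the Lemma~\ref{h2norm}-style energy identity at all: it is immediate from $f\in\dom(\Delta_K)$ via the description \eqref{d8} of the domain, which already states $D_Kf\in\dom(D_K^\star)$ and $D_K^\star f\in\dom(D_K)$.
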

	
\begin{proof} We first remark that the second relation of the first point $(1)$ follows from the fact that $f$ belongs to the domain of $\Delta_K$, together with \eqref{d8}. 
\medskip
	
\noindent Concerning the first part of $(1)$, 
it is a direct consequence of the main estimate in Theorem \ref{KeyDom} and the fact that $\Res \theta_0$ is non-vanishing on every component of $\Delta_2$, cf. Property \ref{property}.
\medskip

\noindent The argument showing that $D_K f$ and $D_K^\star f$ verify a similar estimate is the same, given that $f\in \dom(\Delta_K)$, see \eqref{d8}. That is to say, we simply replace $f$ with $D_Kf$ in Theorem \ref{KeyDom}.
\medskip

\noindent
The second point $(2)$ follows along the same lines as the first one, by using the Corollary \ref{keylog} instead of Theorem \ref{keyinequ}. We have
\begin{equation}\label{d100}\int_X|f|^2e^{\psi_\delta}dV_{\omega_D}  + \int_X|D_Kf|^2e^{\psi_\delta}dV_{\omega_D}+  \int_X|D_K^\star f|^2e^{\psi_\delta}dV_{\omega_D}\geq \frac{1}{C}\int_X |f|^2 |\theta|^2e^{\psi_\delta}dV_{\omega_D}\end{equation} 
where $\displaystyle e^{\psi_\delta}:= \prod_{i\in I} \log^{2a_i}(\delta^2+ |s_i|^2)$. Notice that all terms are convergent since $\psi_\delta$ is smooth.
\smallskip

\noindent We next claim that the inequality 
\begin{equation}\label{d101} \int_X|D_Kf|^2e^{\psi_\delta}dV+  \int_X|D_K^\star f|^2e^{\psi_\delta}dV \leq 
C\int_X|f|^2e^{\psi_\delta}dV+ \int_X |g|^2e^{\psi_\delta}dV \end{equation}
holds, for some constant $C> 0$.

This is easy to verify: integration by parts -justified by the fact that $f\in \dom(\Delta_K)$- shows that we have 
\begin{equation}\label{d103} \int_X|D_Kf|^2e^{\psi_\delta}dV= 
\int_X\langle D_Kf, D_K(e^{\psi_\delta}f)\rangle dV- \int_X\langle D_Kf, d\psi_\delta\wedge f\rangle e^{\psi_\delta} dV
\end{equation}
The norm of $d\psi_\delta$ with respect to $\omega_D$ is pointwise uniformly bounded, so we write 
\begin{equation}\label{d104} \big|\int_X\langle D_Kf, d\psi_\delta\wedge f\rangle e^{\psi_\delta} dV\big|\leq \frac{1}{2}\int_X|D_Kf|^2e^{\psi_\delta}dV+ C\int_X|f|^2e^{\psi_\delta}dV.
\end{equation}
We have similar inequalities for $D_K^\star f$; together with the fact that 
\begin{equation}\label{d105}
\int_X\langle D_Kf, D_K(e^{\psi_\delta}f)\rangle dV+ \int_X\langle D_K^\star f, D_K^\star (e^{\psi_\delta}f)\rangle dV= 
\int_X\langle g, f\rangle e^{\psi_\delta}dV, 
\end{equation}
claim \eqref{d101} is established. 
\medskip

\noindent We therefore obtain the inequality
\begin{equation}\label{d106}C\big(\int_X|f|^2e^{\psi_\delta}dV+ \int_X|g|^2e^{\psi_\delta}dV\big) \geq  \int_X |f|^2 |\theta|^2_{\omega_D}e^{\psi_\delta}dV,\end{equation} 
as consequence of 
 \eqref{d101} and \eqref{d100}. Near the divisor $D$, it is clear that the RHS of \eqref{d106} dominates the norm of $f$ on the LHS, so we infer the inequality 
\begin{equation}\label{d107}C\big(\int_X|f|^2 dV+ \int_X|g|^2e^{\psi_\delta}dV\big) \geq  \int_X |f|^2 |\theta|^2_{\omega_D}e^{\psi_\delta}dV,\end{equation} 
for some other constant $C> 0$. We take the limit $\delta\to 0$ and the first part of (2) follows. 
\smallskip

For the remaining part, notice that we have 
\begin{equation}\label{d108}
\int_X\langle \Delta_Kf, e^{\psi_\delta}f\rangle dV= \int_X\langle g, e^{\psi_\delta}f\rangle dV
\end{equation}
by hypothesis. We infer that the inequality 
\begin{equation}\label{d109}
\int_X |D_Kf|^2 e^{\psi_\delta}dV+  \int_X |D_K^\star f|^2 e^{\psi_\delta}dV\leq 
C\big(\int_X|g|^2e^{\psi_\delta}dV+ \int_X|f|^2e^{\psi_\delta}dV\big)
\end{equation}
holds (here no further details are given, since the argument the same as above). In particular, there exists a positive constant $C> 0$ such that  we have  
\begin{equation}\label{d110}
\int_X |D_Kf|^2 e^{\psi_\delta}dV+  \int_X |D_K^\star f|^2 e^{\psi_\delta}dV\leq 
C
\end{equation}
for every $\delta\geq 0$.   

In order to use Corollary \ref{keylog} with e.g. $u= D_Kf$, we need to obtain a uniform bound for the integrals
\begin{equation}\label{d111}
I_\delta:= \int_X |D_KD_K^\star f|^2 e^{\psi_\delta}dV+  \int_X |D_K^\star D_Kf|^2 e^{\psi_\delta}dV.
\end{equation}

This is done as follows: first we remark that by hypothesis 
\begin{equation}\label{d111}
\int_X |D_KD_K^\star f + D_K^\star D_Kf|^2 e^{\psi_\delta}dV= \int_X|g|^2e^{\psi_\delta}dV \leq  \int_X|g|^2 d\mu< \infty,
\end{equation}
and thus we have 
\begin{equation}\label{d112}
I_\delta\leq C- 2\rel\int_X\langle D_KD_K^\star f,  D_K^\star D_Kf \rangle e^{\psi_\delta} dV.
\end{equation}
On the other hand, integration by parts gives
\begin{equation}\label{d113}
\begin{split}
\int_X\langle D_KD_K^\star f,  D_K^\star D_Kf \rangle e^{\psi_\delta} dV= & \int_X\langle D_K(e^{\psi_\delta} D_K^\star f),  D_K^\star D_Kf \rangle  dV\\ - & \int_X\langle d\psi_\delta\wedge D_K^\star f,  D_K^\star D_Kf \rangle e^{\psi_\delta}dV \\
\end{split}
\end{equation}
and since $\displaystyle \sup_{X\setminus D}|d\psi_\delta|_{\omega_D}< \infty$, we infer that 
\begin{equation}\label{d114}
I_\delta\leq C
\end{equation}
by using \eqref{d110}.

\noindent Finally, we invoke Corollary \ref{keylog} and infer that 
\begin{equation}\label{d115}
\int_X(|D^\star_K f|^2 +|D_K f|^2) (\sum_{i=1}^m \log |s_i|^2 ) d\mu_a <+\infty.
\end{equation}
Accidentally, the proof just finished settles \eqref{addnew} as well.
\end{proof}

\medskip

\noindent Our next statement deals with the Hodge decomposition for the operator $\Delta_K$.

\begin{thm}\label{hodgedec}
Let $L^2 (X, \cE_k)$ be the space of sections, which are $L^2$ integrable with respect to $(\omega_D, h_L)$ in the first case of \eqref{twocases} and $(\omega_D, g_D, h_L)$ in the second case of \eqref{twocases}.	
Then the following decomposition 
	$$L^2 (X, \cE_k) = \ker \Delta_K \oplus \im \Delta_K ,$$
holds, where $\ker \Delta_K$ is the space of forms $u\in \dom(\Delta_K)$ such that $\Delta_K u=0$
and $\im \Delta_K$ is the space of $\Delta_K (v)$ where $v\in \dom(\Delta_K)$. The sum is 
is orthogonal with respect to the natural $L^2$-product.
\end{thm}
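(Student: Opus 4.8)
The plan is to deduce the decomposition from standard functional-analytic principles for the self-adjoint operator $\Delta_K$, the only nontrivial input being the closedness of the range, which will follow from \Cref{KeyDom} together with a compactness argument. I would first record that $\Delta_K$ is the Friedrichs extension of $[D_K,D_K^\star]$ acting on forms smooth in the orbifold sense with compact support in $X_0$; it is therefore self-adjoint and non-negative, so $L^2(X,\cE_k)=\ker\Delta_K\oplus\overline{\im\Delta_K}$ is automatic (kernel and closure of range are orthogonal complements). Thus everything reduces to proving $\im\Delta_K$ is already closed, and in fact that $\im\Delta_K=(\ker\Delta_K)^\perp$.

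First I would establish a \emph{basic estimate with compact error}: for $u\in H^1$ orthogonal to $\ker\Delta_K$, one has $\|u\|_{L^2}\le C\big(\|D_Ku\|^2+\|D_K^\star u\|^2\big)^{1/2}$. The mechanism is the usual one: if this failed there would be a sequence $u_j$ with $\|u_j\|_{L^2}=1$, orthogonal to $\ker\Delta_K$, and $\|D_Ku_j\|^2+\|D_K^\star u_j\|^2\to 0$. By \Cref{KeyDom} the $u_j$ are bounded in the graph norm of $(D_h,D_h^\star)$ and the weight $|\theta_0|^2$ is uniformly positive near $\Delta_2$ (Property \ref{property}(2)), while near $\Delta_1\cup\Delta_3$ the operator $[D_{h_L},D_{h_L}^\star]$ controls the full Poincaré/conic Sobolev norm via \Cref{bochner1}, \Cref{bochner2} and the Rellich-type compactness furnished by the modified Sobolev inequality \Cref{sobolev} (after pulling back by the ramified covers \eqref{clog} and using quasi-coordinates, exactly as in \Cref{dens}). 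Hence a subsequence converges strongly in $L^2$ to some $u_\infty$ with $\|u_\infty\|=1$, $D_Ku_\infty=D_K^\star u_\infty=0$, so $u_\infty\in\ker\Delta_K$; but $u_\infty$ is also an $L^2$-limit of elements orthogonal to $\ker\Delta_K$, contradiction. I would also extend this estimate from $H^1$ to $\dom(D_K)\cap\dom(D_K^\star)$ by the density argument of \Cref{dens}, so that the class of test elements is large enough.

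Given the basic estimate, closedness of $\im\Delta_K$ is standard: for $g\in(\ker\Delta_K)^\perp$ the linear functional $v\mapsto\langle g,v\rangle$ on $\dom(D_K)\cap\dom(D_K^\star)\cap(\ker\Delta_K)^\perp$, equipped with the inner product $\langle D_Ku,D_Kv\rangle+\langle D_K^\star u,D_K^\star v\rangle$, is bounded by the basic estimate, so by Riesz representation there is $f$ in the form domain with $\langle D_Kf,D_Kv\rangle+\langle D_K^\star f,D_K^\star v\rangle=\langle g,v\rangle$ for all such $v$; elliptic regularity (here meaning the interior $H^2$-bound \Cref{h2norm}, localized via the ramified covers, combined with the self-adjointness identity \eqref{d2}) then shows $f\in\dom(\Delta_K)$ and $\Delta_Kf=g$. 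Therefore $(\ker\Delta_K)^\perp\subseteq\im\Delta_K$, and since the reverse inclusion is trivial we get $\im\Delta_K=(\ker\Delta_K)^\perp=\overline{\im\Delta_K}$, which is precisely the asserted orthogonal decomposition $L^2(X,\cE_k)=\ker\Delta_K\oplus\im\Delta_K$.

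The main obstacle I anticipate is the compactness step inside the basic estimate: the metric $\omega_D$ is incomplete along $\Delta_3$ and Poincaré (hence non-Sobolev) along $\Delta_2$, and the cylindrical metric $g_D$ on $T_X\langle\Delta_1\rangle$ is not Kähler, so one cannot quote a Rellich theorem off the shelf. Handling this requires the weighted Sobolev inequality \Cref{sobolev} to absorb the $\Delta_2$-direction, the ramified covers $\pi$ of \eqref{clog} and quasi-coordinates to turn the conic $\Delta_3$-ends and the $\Delta_1$-cylindrical ends into bounded-geometry pieces, and then a diagonal/exhaustion argument splitting $X$ into a compact core (where ordinary Rellich applies) and neighborhoods of $D$ (where the $|\theta_0|^2$-weight and completeness along $\Delta_2$, resp.\ the $[D_{h_L},D_{h_L}^\star]$-term along $\Delta_1\cup\Delta_3$, give smallness). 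Once this qualitative compactness is in place, the rest of the argument is the routine functional analysis sketched above.
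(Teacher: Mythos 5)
Your proposal is essentially correct and follows the same route as the paper: both reduce to the closedness of $\im\Delta_K$ via the general Hilbert-space decomposition for a self-adjoint operator, and both establish the needed a priori bound by the same contradiction argument, using \Cref{keyinequ}/\Cref{KeyDom} to control the graph norm of $(D_h,D_h^\star)$, the uniform positivity $|\theta_0|^2\geq\ep_0\sum\log^2|s_i|^2$ near $\Delta_2$ to rule out mass escaping there, and the conic/cylindrical uniformizations to apply compactness on the rest. Two small points where your account diverges from the paper without affecting the result: (i) the paper finishes directly, by taking a sequence $f_m\in\dom(\Delta_K)$ with $\Delta_K f_m\to g$ and $f_m\perp\ker\Delta_K$, showing $\|f_m\|_{H^1}$ is bounded by the contradiction argument, and passing to the limit — your Riesz-representation-plus-regularity detour reaches the same conclusion but is an extra layer the paper does not need; (ii) near $\Delta_1\cup\Delta_3$ it is not the $[D_{h_L},D_{h_L}^\star]$-term that gives smallness (as you phrase it) but plain Rellich compactness after the ramified local uniformization \eqref{clog} makes $\omega_D$, $g_D$, $h_L$, $\theta$ smooth there — the weight-type smallness is only at work along $\Delta_2$, and the Sobolev inequality \Cref{sobolev} is not actually invoked at this step.
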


\begin{proof}
Since $\Delta_K$ is a densely defined, self-adjoint operator on a Hilbert space, the decomposition 
	$$L^2 (X, \cE_k) = \ker \Delta_K \oplus \overline{\im \Delta_K}$$
holds, by general results in functional analysis (since in general, for any closed, densely defined operator $T$ the orthogonal of $\ker T$ is equal to the $\overline{\im T^\star}$). The completion is with respect to the $L^2$-norm.

Let $g\in L^2(X, \cE_k)$ such that $g$ is orthogonal to $\ker \Delta_K$. In order to conclude, we have to show that $g$ belongs to image of $\Delta_K$. 	
By the general results invoked above, there exists a sequence of forms $f_m \in \dom(\Delta_K)$ such that 
\[\Delta_K f_m \to g\] in $L^2$ and moreover, $f_m$ is orthogonal to  $\ker \Delta_K$.
\smallskip
	
\noindent We claim that \emph{there exists a positive constant $C> 0$ such that
\begin{equation}\label{d10}
\|f_m\|^2_{H_1}:= \|f_m\|^2+ \|D_Kf_m\|^2+ \|D^\star_Kf_m\|^2\leq C
\end{equation}
for all} $m$ (up to the choice of a subsequence).

Indeed, in case \eqref{d10} holds, we infer that $(f_m)$ converges to an element $f_\infty \in \dom(\Delta_K)$ such that $\Delta_K f_\infty =g$ and this would end the proof of our result. 
\smallskip	
	
\noindent Suppose (by contradiction) that we cannot find a constant as in \eqref{d10}.
By passing to a subsequence and renormalisation, it follows that there exists $f_m \in \dom(\Delta_K)$ such that 
\[\Delta_K f_m \to 0, \qquad \|f_m\|_{H^1} =1, \qquad f_m \perp \ker \Delta_K\] 
hold.

We can then assume that $f_m$ converges to some $f_0\in \dom(\Delta_K)$. It follows that $\Delta_K f_0=0$, and on the other hand
 $f_0$ is orthogonal to $\ker\Delta_K$. This can only happen if $f_0 \equiv 0$.
\smallskip
	
\noindent In order to derive a contradiction, notice that the equality
	$$\|D_K f_m\|^2 + \|(D_K)^\star  f_m\|^2= \langle \Delta_K f_m, f_m\rangle$$
holds, since $f_m\in \dom(\Delta_K)$. Given that $\Delta_K f_m \to 0$, we infer that
	$$\|D_K f_m\|^2 + \|(D_K)^\star  f_m\|^2 \to 0$$
is true. Therefore, since $\|f_m\|_{H_1}= 1$ we must have
\begin{equation}\label{d11}
\lim_m\|f_m\|^2= 1.
\end{equation}
On the other hand, notice that $f_m$ together with its Laplacian $\Delta_K$ converges weakly to zero, and in the complement of 
$D$, the metrics $\omega_D, h_L$ and the form $\theta$ are smooth. Therefore, we have
\begin{equation}\label{conve0}
\lim_{m\to +\infty}\|f_m \|_{L^2 (V)} =0
\end{equation}
for any $V\Subset X\setminus D$. The main point for defining the metric $\omega_D$ as we did in \eqref{m1} is here: the relation 
\eqref{conve0} equally holds for 
\[V\Subset X\setminus \Delta_2.\]
(this is immediately verified by using the family of local uniformisations $\pi_i$).

\noindent By Theorem \ref{keyinequ} the inequality
	\begin{equation}\label{ellipest}
	\langle |\theta|_{\omega_D}^2f_m , f_m \rangle + \|D_h f_m\|^2 + \|(D_h)^\star f_m\|^2 \leq C \|f_m\|_{L^2} + C \langle\Delta_K f_m, f_m \rangle.	 
	\end{equation}
holds. Since by Property \ref{property}, we have 
\begin{equation}\label{d12}
|\theta|_{\omega_D}^2\geq \ep_0\sum \log^2|s_i|^2
\end{equation}
for some positive constant $\ep_0> 0$, it follows that for any small enough open subset $W$ containing the support of the divisor $\Delta_2$ we have
\begin{equation}\label{d14}
\lim_{m\to +\infty}\|f_m \|_{L^2 (W)} =0 .
\end{equation}
This, together with \eqref{conve0} and \eqref{d11} gives a contradiction and our result is proved. 
\end{proof}
\medskip

\noindent We derive the following useful consequence. 
	\begin{proposition}\label{effectest}
	Let $\omega_D$ be the metric considered in \eqref{m1}.
	Let $g, f \in L^2 (X, \cE_k)$ such that $f\in \dom(\Delta_K)$ and such that $\Delta_K f =g$. If $f$ is $L^2$-orthogonal to $\ker\Delta_K$, then we have 
	$$\|f\|_{H^2} \leq C \|g\|_{L^2} $$
	for some uniform constant $C$. 
\end{proposition}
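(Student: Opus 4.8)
The plan is to obtain this as a quantitative refinement of Theorem \ref{hodgedec}, combined with the Gårding-type identity underlying Lemma \ref{h2norm}. The whole point is that the ``closed range with bounded partial inverse'' content is already present in the proof of Theorem \ref{hodgedec}, and the second-order derivatives are controlled for free because $D_K^2=0$.

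\emph{Step 1: the $H^1$-bound.} I would first record that $\|f\|_{H^1}:=\big(\|f\|_{L^2}^2+\|D_Kf\|_{L^2}^2+\|D_K^\star f\|_{L^2}^2\big)^{1/2}\le C\|g\|_{L^2}$ for every $f\in\dom(\Delta_K)$ orthogonal to $\ker\Delta_K$ with $\Delta_Kf=g$. This is exactly what the compactness-and-contradiction argument in the proof of Theorem \ref{hodgedec} delivers: that argument — resting on Theorem \ref{KeyDom}, on the completeness of $\omega_D$ near $\Delta_2$, and on the lower bound $|\theta_0|^2_{\omega_D}\ge\ep_0\sum_i\log^2|s_i|^2$ near $\Delta_2$ from Property \ref{property} — shows that $\{f\in\dom(\Delta_K)\cap(\ker\Delta_K)^\perp:\ \|\Delta_Kf\|_{L^2}\le 1\}$ has uniformly bounded $H^1$-norm; homogeneity (apply this to $f/\|g\|_{L^2}$) then yields the linear estimate. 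This is the only substantial ingredient, and it is borrowed wholesale.

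\emph{Step 2: the second-order terms.} Since $\nabla$ is a flat holomorphic connection one has $D_K^2=(\dbar+\nabla)^2=0$ on smooth forms, hence also $(D_K^\star)^2=0$. Using the density statement of Lemma \ref{dens} (which identifies $D_K$, $D_K^\star$ with the closures of the corresponding operators on forms smooth in the conic sense with compact support in $X\setminus\Delta_2$), a standard closed-graph argument gives $D_Kf\in\dom(D_K)$ with $D_K(D_Kf)=0$ for every $f\in\dom(D_K)$, and symmetrically for $D_K^\star$. Consequently, for $f\in\dom(\Delta_K)$ the only possibly non-zero second-order derivatives of $f$ in the sense of Definition \ref{regularity} are $D_KD_K^\star f$ and $D_K^\star D_Kf$, both in $L^2$ by \eqref{d8}; moreover the integration by parts used in the proof of Lemma \ref{h2norm}, now legitimate thanks to Lemma \ref{dens}, gives $\langle D_KD_K^\star f,D_K^\star D_Kf\rangle=\langle D_K(D_KD_K^\star f),D_Kf\rangle=0$, so that
\[
\|D_KD_K^\star f\|_{L^2}^2+\|D_K^\star D_Kf\|_{L^2}^2=\|D_KD_K^\star f+D_K^\star D_Kf\|_{L^2}^2=\|\Delta_Kf\|_{L^2}^2=\|g\|_{L^2}^2.
\]

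\emph{Step 3: conclusion.} Putting the two steps together,
\[
\|f\|_{H^2}^2\ \sim\ \|f\|_{L^2}^2+\|D_Kf\|_{L^2}^2+\|D_K^\star f\|_{L^2}^2+\|D_KD_K^\star f\|_{L^2}^2+\|D_K^\star D_Kf\|_{L^2}^2\ \le\ C\|g\|_{L^2}^2,
\]
which is the assertion. I do not expect any genuine obstacle here: the mathematically delicate part is the $H^1$-estimate of Step 1, already proved inside Theorem \ref{hodgedec}, and the remaining work is bookkeeping with domains, all of which is covered by the density Lemma \ref{dens} together with the vanishing $D_K^2=0$. The two points to execute carefully are the homogeneity rescaling in Step 1 and the verification that the integration by parts in Step 2 indeed falls within the scope of Lemma \ref{dens}.
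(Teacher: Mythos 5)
Your proposal is correct, and the mathematical ingredients are the same ones the paper uses — the compactness-by-contradiction argument embedded in the proof of Theorem \ref{hodgedec}, and the orthogonality $\langle D_K D_K^\star f,\, D_K^\star D_K f\rangle = 0$ that underlies Lemma \ref{h2norm}. The organization, however, is genuinely different: the paper runs a single contradiction argument at the level of the $H^2$-norm (normalize $\|f_m\|_{H^2}=1$ with $g_m\to 0$, apply Lemma \ref{h2norm} to obtain a lower bound $\|f_m\|_{L^2}\ge c>0$, then invoke the Theorem \ref{hodgedec} compactness argument to force $f_m\to 0$ in $L^2$, a contradiction), whereas you unroll the argument into two direct, quantitative steps — first $\|f\|_{H^1}\le C\|g\|_{L^2}$, then $\|D_K D_K^\star f\|^2+\|D_K^\star D_K f\|^2 = \|g\|^2$. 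Your version has the mild advantage of making explicit how the second-order norms are controlled purely by the algebraic fact $D_K^2=0$ (once one has verified, via the density statement, that the closed operator $D_K$ still satisfies $\operatorname{im} D_K\subset\ker D_K$), while the paper is more economical by quoting Lemma \ref{h2norm} as a black box; either way, one must in the end justify the relevant integration by parts for elements of $\dom(\Delta_K)$ rather than just for compactly supported conic-smooth forms, which is the point your Step~2 addresses and the paper leaves implicit. One small terminological remark: what you invoke is not a closed-graph argument but the elementary fact that a closed operator vanishing twice on a core has range contained in its kernel; the substance of the step is nonetheless correct.
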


\begin{proof}
	We assume by contradiction there is no uniform constant. Then we can find a sequence $f_m \in H^2$ with $\|f_m\|_{H^2} =1$ such that $\Delta_K f_m =g_m$, $\|g_m\|_{L^2} \to 0$ and $f_m$ is $L^2$-orthogonal to $\ker\Delta_K$.
	By using Lemma \ref{h2norm}, we know that $\|f_m\|_{L^2} \geq C$ for some uniform constant $C>0$. 
	Then the argument in Theorem \ref{hodgedec} shows that $f_m \to 0$ in $L^2$. We obtain thus a contradiction.
\end{proof}


\subsection{Hodge decomposition in smooth setting}\label{Hodsmo}

As usual, we denote by $\mathcal E_k$ one of the following vector bundles
\begin{equation}\label{end1}
\oplus_{r+s= k}\Lambda^{r, s}T^\star_X\otimes L, \qquad  
\oplus_{r+s= k}\Lambda^{r, s}T^\star_X\langle \Delta_1\rangle\otimes L
\end{equation}
on $X_0$. The log pair $(X, D)$ is endowed with the metric $\omega_D$ defined in  \eqref{m1}, and $L$ with the metric $h_L$ defined in \Cref{specialhar}, whose curvature current is equal to $\sum a_i [D_i]$. We recall that  
the coefficients $a_i$ are rational numbers in $]-1, 0]$. The logarithmic tangent bundle $T_X\langle \Delta_1\rangle$ is endowed with the metric $g_D$.

\noindent These objects induce a connection 
\begin{equation}\label{end2}
	\nabla: \cE_k\to T^\star_X\otimes \cE_k\oplus \ol T^\star_X\otimes \cE_k;
\end{equation}
for example, in the absence of $L$ and in the first case in \eqref{end1} the connection $\nabla$ is induced by Levi-Civita (= Chern, since the metric $\omega_D$ is Kähler).  

Note that \eqref{end2} is not the logarithmic holomorphic connection defined in \eqref{logholoconnecion}. In this section, whenever we use $\nabla$, it always refers to \eqref{end2}.
 
\medskip

\noindent The following space is plays the role of smooth forms in our current setting. For similar considerations (in a simplified context) we refer to the elegant article \cite{Biq97}.
\smallskip 

\begin{defn}\label{defend1}
Let $X$ be a compact K\"ahler manifold and let $D$ be a simple normal crossing divisor on $X$.  Fix any $(L,\nabla)\in M_{\rm DR}(X/D)$ and let $h$ be a harmonic metric satisfying the properties in \Cref{property}. 

We denote by  $C^\infty _{K} (X, \cE_k)$ be the sheaf of $L$-valued forms $u$ of  total degree $k$ such that for any integer $r\geq 0$, the iterated 
	covariant derivative $\nabla^r(u)$ is $L^2$ with respect to any measure 
$d\mu_a$.	
More precisely, this means that 
\[
  \int_X |\nabla^r(u)|^2\, d\mu_a < +\infty
\]
for all \(r \in \mathbb N\) and multi-indices \(a = (a_i)_{i \in I} \in \mathbb N^I\). 
Here we use the notation 
\begin{align}\label{eq:mu}
   d\mu_a := \prod_{i\in I} \log^{2a_i}\!\bigl(|s_i|^2\bigr)\, dV_{\omega_D},
\end{align}
where we decompose \(\Delta_2 = \sum_{i \in I} D_i\) into prime components, and each \(s_i\) denotes the canonical section defining \(H^0(X, \O_X(D_i))\). 
The norm \(|s_i|\) is taken with respect to a fixed smooth metric on \(\O_X(D_i)\).

\medskip

Moreover,  we use the following notations:
\begin{itemize}
	\item If $\displaystyle \cE_k= \oplus_{r+s= k}\Lambda^{r, s}T^\star_X\otimes L$, we set $C^\infty _{k,K}(X,L) := C^\infty _{K} (X, \cE_k)$
	
	\item If $\displaystyle \cE_k = \oplus_{r+s= k}\Lambda^{r, s}T^\star_X\langle \Delta_1\rangle\otimes L$, we set $C^\infty _{k,K}(X, \Delta_1, L) := C^\infty _{K} (X, \cE_k)$.
	\end{itemize}
	We sometimes write simply  $C_{K}^\infty(X,  L)$ (resp. $C_{K}^\infty(X,\Delta_1, L)$) when the degree is not essential. 
\end{defn}

\begin{remark}  In order to motivate the definition above, we have the next observations.
	
\begin{itemize}
	
\item In case $\Delta_2+ \Delta_3= 0$ and $\displaystyle \cE_k= \oplus_{r+s= k}\Lambda^{r, s}T^\star_X\otimes L$, then $\cC^\infty _{\bullet,K}(X,L) $ coincides with the sheaf of smooth forms.

\item In case $\Delta_2+ \Delta_3= 0$ and $\displaystyle \cE_k= \oplus_{r+s= k}\Lambda^{r, s}T^\star_X\langle \Delta_1\rangle\otimes L$, then $\cC^\infty _{\bullet,K}(X,\Delta_1,L) $ coincides with the sheaf of smooth forms with logarithmic poles.

\item Of course, $\cC^\infty _{K}(X, \cE_i) $ contains the smooth forms with compact support in $X\setminus (\Delta_2 +\Delta_3)$. 
\end{itemize}
 \end{remark}
 
 \begin{remark}\label{useful}
 	The sheaf $\cC^\infty_{K}(X,\cE_k)$ appears to be well-adapted to our context. We will next show that the space $C^\infty_{K}(X,\cE_k)$ enjoys several very useful properties.
 	
 	\begin{itemize}
 		\item In Proposition~\ref{smoothhodge}, we show that if $f \in \dom(\Delta_K)$ and $g \in L^2$ with
 		\[
 		\Delta_K f = g,
 		\]
 		and if moreover $g \in C^\infty_K(X, \cE_k)$, then $f \in C^\infty_K(X, \cE_k)$.  
 		In particular, any harmonic form $f \in \ker \Delta_K$ belongs to $C^\infty_K(X, \cE_k)$.
 		
 		\item In \Cref{lem:residue}, we show that $\cC^\infty_{K}(X, \Delta_1, L)$ behaves well with respect to the residue map.
 		
 		\item Assume $u$ is an $L^2$-section of $\cE_k$. In general, the wedge product $\theta_0 \wedge u$ is not $L^2$. However, this is the case if $u \in \cC^\infty_{K}(X, \cE_i)$.  
 		Namely, in \Cref{propend1} we prove that if $u \in \cC^\infty_{K}(X, \cE_i)$, then $\theta_0 \wedge u \in \cC^\infty_{K}(X, \cE_i)$.
 		
 		\item In \Cref{quasi}, we show that $\cC^\infty_{\bullet, K}(X, \Delta_1, L)$ provides a resolution of the complex
 		\begin{equation}
 			\nabla: \cdots \to \mathcal{O}_X(L) \otimes \Omega_X^{\bullet}(\log D) 
 			\to \mathcal{O}_X(L) \otimes \Omega_X^{\bullet}(\log D) \to \cdots 
 		\end{equation}
 	\end{itemize}
 \end{remark}
 
\medskip

\noindent In order to establish part the properties of $\cC^\infty _{K}(X, \cE_i)$ stated in Remark \ref{useful} above, we will 
use coordinates adapted to $(X, D)$, i.e. fix a finite covering 
\[X= \bigcup U_\alpha\]
of $X$ with coordinates sets $(U_\alpha, w_\alpha)$, where the coordinates 
\[w_\alpha= (w_\alpha^1,\dots,  w_\alpha^n)\]
are such that 
\begin{equation}
	D\cap U_\alpha= \big(\prod_1^{p+q+r}w_\alpha^i = 0\big)
\end{equation}
where the first $r$ coordinates are the local equations of $\Delta_1$, the next $p$ are the local equations of $\Delta_2$ and finally $\Delta_3$ is given locally by 
$\prod_{r+p+1}^{r+ p+q}w_\alpha^i = 0$. 
\smallskip

\noindent By considering the usual ramified covering \[z\to (z_1,\dots, z_r, z_{r+1}^k,\dots, z_{r+p+q}^k, z_{r+p+q+1}, \dots, z_n),\] for any local calculations we can assume that the following holds. 
\begin{itemize} 

\item The divisor $\Delta_3= 0$;

\item The metric $h_L$ is smooth;

\item The form $\theta_0$ is holomorphic, with log poles (along the pre-image of $\Delta_2$).
\end{itemize}
\emph{For the rest of this subsection we take this as granted.}
\medskip

\noindent Fix an index $\alpha$ and call $(\Omega, z)$ the corresponding coordinate system. 
We next introduce new frames for the tangent bundle and its log version which are well-suited for the computations to follow.
\smallskip

\noindent $\bullet$ In the case of $T_X|_{\Omega\setminus D}$ we define
\begin{equation}\label{end6}
	e_i:= z^i\log|z^i|\frac{\partial}{\partial z^i},\qquad e_i^\star:= \frac{dz^i}{z^i\log|z^i|},
\end{equation}
for each index $i= r+1,\dots, p+r$ 
whereas for $i\geq p+r+1$ or $i\leq r$ we introduce
\begin{equation}\label{end7}
	e_i:= \frac{\partial}{\partial z^i},\qquad e_i^\star:= {dz^i}.
\end{equation}
\smallskip

\noindent $\bullet$ In the case of $T_X\langle\Delta_1\rangle|_{\Omega\setminus D}$ we introduce
\begin{equation}\label{logend8}
	l_i:= z^i\frac{\partial}{\partial z^i},\qquad l_i^\star:= \frac{dz^i}{z^i},
\end{equation}
for each index $i= 1,\dots, r$ 
and then

\begin{equation}\label{logend6}
	l_i:= z^i\log|z^i|\frac{\partial}{\partial z^i},\qquad l_i^\star:= \frac{dz^i}{z^i\log|z^i|},
\end{equation}
for $i= r+1,\dots, r+p$ 
whereas for $i\geq r+p+1$ we introduce
\begin{equation}\label{logend7}
	l_i:= \frac{\partial}{\partial z^i},\qquad l_i^\star:= {dz^i}.
\end{equation}

The main motivation for this definition is that the vectors $(e_i)_{i=1,...,n}$ induce an orthonormal basis with respect to the model metric with Poincar\'e singularities along $\Delta_2$, and therefore they are 'almost orthogonal' with respect to $\omega_D$ in the former case, and with respect to the metric $g_D$ in the latter. Let
\begin{equation}\label{met1}
a_{\beta\ol\alpha}:= \big\langle e_\beta, e_\alpha\big\rangle_{\omega_D}, \qquad b_{\beta\ol\alpha}:= \big\langle l_\beta, l_\alpha\big\rangle_{g_D}
\end{equation}
be the coefficients of the metric $\omega_D$ and $g_D$ with respect to the frames above. As consequence of the evaluations in 
sections \ref{metricTX} and \ref{metriclogTX} we have the following statement.
\begin{lemme}\label{lemend1}
Corresponding to each indexes $\alpha,\beta$ exists smooth a function $F$ such that we have
\[a_{\beta\ol\alpha}(z)= F\big(\delta_{\beta, p}z^\beta\log|z^\beta|, \delta_{\alpha, p}\ol z^\alpha\log|z^\alpha|, \frac{\delta_{i, p}}{\log|z^i|}, z\big).\] 
A similar statement holds true for the functions $b_{\beta\ol\alpha}$ and their inverses $(a^{\ol\alpha \beta})$ and $(b^{\ol\alpha \beta})$. 
Here $\delta_{i, p}$ is equal to one if $i\in \{r+1,\dots, r+p\}$ and zero if this is not the case.
\end{lemme}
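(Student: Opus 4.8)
The plan is to verify Lemma~\ref{lemend1} by a direct bookkeeping argument: we track exactly which elementary building blocks appear in the coefficients of $\omega_D$ (resp.\ $g_D$) once we pass to the adapted frames \eqref{end6}--\eqref{end7} (resp.\ \eqref{logend8}--\eqref{logend7}), and check that every coefficient is a smooth function of the finite list of quantities
\[
\delta_{\beta,p}\, z^\beta\log|z^\beta|,\qquad \delta_{\alpha,p}\,\overline{z^\alpha}\log|z^\alpha|,\qquad \frac{\delta_{i,p}}{\log|z^i|},\qquad z.
\]
Concretely, I would start from the explicit expansion of $\omega_D$ in the original coordinate frame, namely Lemma~\ref{coeff}(1) (equation \eqref{m3}), where the coefficients $\omega_{i\bar j}$ are written as a smooth background metric $g_{i\bar j}$ plus the singular contributions coming from the Poincar\'e terms $\tfrac{1}{|z_i|^2\log^2|z_i|^2}$ (and their off-diagonal companions $\tfrac{1}{z_i\log^2|z_i|^2}$) together with the conic terms $\tfrac{1}{|z_i|^{2-2/k}}$ — which, after the ramified covering $z\mapsto (z_1,\dots,z_r,z_{r+1}^k,\dots,z_{r+p+q}^k,z_{r+p+q+1},\dots,z_n)$ already invoked in the paragraph preceding the lemma, become smooth. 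So in the local calculation only the Poincar\'e part survives, and the only nonsmooth ingredients are powers of $\log|z^i|$ and $1/\log|z^i|$ for $i\in\{r+1,\dots,r+p\}$.

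The key computational step is then the change of frame. Passing from $\tfrac{\partial}{\partial z^i}$ to $e_i = z^i\log|z^i|\,\tfrac{\partial}{\partial z^i}$ multiplies the $(i,j)$ coefficient by $(z^i\log|z^i|)(\overline{z^j}\log|z^j|)$ exactly on the indices $i,j\le r+p$ where the Poincar\'e singularity sits. One reads off from \eqref{m3}, case by case (diagonal $i=j\le p$; mixed $i\le p<j$; both $>p$; etc.\ — exactly the seven cases of Lemma~\ref{coeff}, shifted by $r$), that the resulting expressions involve only: (a) the smooth functions $A_k,A_{\bar m},C_l,g_{i\bar j}$ etc.\ of $z$; (b) the combinations $z^\beta\log|z^\beta|$ and $\overline{z^\alpha}\log|z^\alpha|$ with the indicated $\delta$-flags; and (c) factors of $1/\log|z^i|$ (coming from the $\tfrac{C_l + C_{l,1}\log|z_l|^2}{\log^2|z_l|^2}$ term of \eqref{m3} once multiplied by $|z^i\log|z^i||^2$, which produces $|z^i|^2$ times $1/\log^2$ — and more to the point from the off-diagonal terms $\tfrac{A_i}{z_i\log^2|z_i|^2}$, where one factor of $z^i\log|z^i|$ cancels leaving $1/\log|z^i|$). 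Thus $a_{\beta\bar\alpha}$ is of the claimed form $F(\delta_{\beta,p}z^\beta\log|z^\beta|,\delta_{\alpha,p}\overline{z^\alpha}\log|z^\alpha|,\delta_{i,p}/\log|z^i|,z)$ with $F$ smooth in all its slots. For the inverse matrix $(a^{\bar\alpha\beta})$ one notes that $\det(a_{\beta\bar\alpha})$ is of the same form with nonvanishing value (indeed it is $1+\psi$ up to smooth positive factors, by the determinant computation \eqref{h6}--\eqref{h9}), so Cramer's rule expresses each $a^{\bar\alpha\beta}$ as a ratio of two such smooth functions, hence again smooth in the same variables. The logarithmic case is literally the same bookkeeping run on \eqref{log30} and the list (1)--(f) following it, since — as stressed in Lemma~\ref{logmixbound} and its proof — the coefficients $b_{\beta\bar\alpha}=h_{i\bar j}$ in the $l$-frame are \emph{formally identical} to the $a_{\beta\bar\alpha}$, the only extra indices $i\le r$ carrying a purely smooth contribution (item (2) and (8)--(f) of that list) that does not disturb anything.

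I expect the only real work — and the main potential obstacle — to be organizational rather than conceptual: making the case analysis genuinely exhaustive, i.e.\ checking that no off-diagonal coefficient accidentally produces a factor like $\log^{3}|z^i|$ or $1/z^i$ that would fall outside the allowed list. The delicate cases are the mixed ones (one index in $\{r+1,\dots,r+p\}$, the other outside), where the frame rescaling is asymmetric; here one must confirm that the single surviving factor $z^i\log|z^i|$ (or its reciprocal) exactly matches the $\delta$-flag prescription in the statement. All of this is a transcription of the estimates already carried out in Lemmas~\ref{coeff} and \ref{logmixbound}, so no new analytic input is needed; I would simply cite those and note that $F$ can be taken smooth because every elementary operation used (sum, product, quotient by a nonvanishing smooth-of-these-variables factor) preserves smoothness in the listed arguments.
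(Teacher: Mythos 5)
Your approach is the same as the paper's: the paper's proof of Lemma~\ref{lemend1} is essentially a one-line remark asserting that the estimates from Lemma~\ref{coeff} (and the parallel list in \Cref{metriclogTX}) transfer immediately to the frames $(e_\alpha)$, $(l_\alpha)$; your proposal spells out exactly that frame-change bookkeeping, plus the Cramer's-rule argument for the inverse entries. Both are correct, and your write-up is actually somewhat more careful in two places where the paper stays silent: you explain why the conic terms may be dropped (the ramified cover, plus the standing convention $\Delta_3=0$ for that subsection), and you justify that $\det(a_{\beta\bar\alpha})$ is a nonvanishing smooth function of the listed variables (via \eqref{h6}, it equals $1+\psi$ up to constants), which is what makes Cramer's rule usable. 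The one slight imprecision in your paragraph is the parenthetical about the $\sum_l \frac{C_l+C_{l,1}\log|z_l|^2}{\log^2|z_l|^2}$ term producing ``$|z^i|^2$ times $1/\log^2$'': the rescaling $\lambda_\beta\bar\lambda_\alpha$ does not cancel $\log|z_l|$ factors when $l\ne\alpha,\beta$, but it doesn't need to — the term is already a smooth function of $\delta_{l,p}/\log|z^l|$, and the product with $\lambda_\beta\bar\lambda_\alpha$ is a product of allowed variables. This does not affect your conclusion.
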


\noindent Indeed, in the sections mentioned above the evaluations are done with respect to different frames, but it is immediate to "switch" to the 
$(e_\alpha)$ or $(l_\alpha)$. Of course the functions generically denoted by $F$ in Lemma \ref{lemend1} depend on the indexes $\alpha,\beta$. 
\smallskip

We use the same notation $\nabla$ for the connection on $(T_X, \omega_D)$ and $(T_X\langle\Delta_1\rangle, g_D)$, and
write
\begin{equation}\label{end8}
	\nabla e_\alpha^\star= \sum_\beta A^\alpha_{\beta} e_\beta^\star, \qquad \nabla l_\alpha^\star= \sum_\beta B_\beta^{\alpha} l_\beta^\star.
\end{equation}
where 
\begin{equation}\label{met2}
A^\alpha_{\beta}:= \sum a_{\beta\ol k}\partial a^{\ol k \alpha},\qquad B^\alpha_{\beta}:= \sum b_{\beta\ol k}\partial b^{\ol k \alpha}.
\end{equation}
The local $1$-forms $A$ and $B$ can be written as follows
\begin{equation}\label{met3}
A^\alpha_{\beta}= \sum A^\alpha_{\beta \gamma} e^\star_\gamma, \qquad B^\alpha_{\beta}= \sum B^\alpha_{\beta \gamma} e^\star_\gamma
\end{equation}
i.e. with respect to the frame we fixed above for $T^\star_X|_\Omega$, and as consequence of Lemma \ref{lemend1} we see that each of the coefficients in \eqref{met3} can be written as follows
\begin{equation}\label{met4}
A^\alpha_{\beta \gamma}= F\big(z^{r+j}\log|z^{r+j}|, \ol z^{r+j}\log|z^{r+j}|, \frac{1}{\log|z^{r+j}|}, z\big)
\end{equation}
where it is understood that $j= 1,\dots, p$.


\begin{defn}\label{pogrow} Consider the following class of functions 
\begin{equation}\label{met5}
z\to F\big(\log|z^{r+j}|, z^{r+j}\log^{k_j}|z^{r+j}|, \ol z^{r+j}\log^{m_j}|z^{r+j}|, \frac{1}{\log|z^{r+j}|}, z\big)
\end{equation}
where $k_j, m_j$ are positive integers. Moreover, $F$ is \emph{smooth and polynomial with respect to the first $p$ variables $\log|z^{r+1}|, \dots, \log|z^{r+p}|$}. In what follows we will refer to such $F$ as a \emph{function with logarithmic growth}.
\end{defn}
\noindent It has the following simple, but very important property.

\begin{lemme}\label{lemend2}
	Let $\Psi$ be a function with logarithmic growth, and consider its differential
	\[d\Psi= \sum_i \Psi_i e_i^\star+ \sum_i \Psi_{\ol i} \ol e_i^\star\]
	expressed in the base $(e_i^\star)_{i=1,\dots, n}$.
	Then each of the coefficients $\Psi_i$ have logarithmic growth.
\end{lemme}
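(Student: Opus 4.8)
The plan is to identify the coefficients $\Psi_i$ with the result of applying to $\Psi$ a first–order derivation $e_i$, and then to show that the class of functions with logarithmic growth is stable under all of these derivations. Concretely, from the definition of the coframe in \eqref{end6}--\eqref{end7} one has $dz^i=(z^i\log|z^i|)\,e_i^\star$ for $r+1\le i\le r+p$ and $dz^i=e_i^\star$ otherwise, together with the conjugate identities for $d\bar z^i$. Writing $d\Psi=\partial\Psi+\bar\partial\Psi$ this gives $\Psi_i=e_i(\Psi)$ and $\Psi_{\bar i}=\bar e_i(\Psi)$, where $e_i=z^i\log|z^i|\,\partial_{z^i}$ when $r+1\le i\le r+p$ and $e_i=\partial_{z^i}$ otherwise; by conjugation it is enough to treat the operators $e_i$.

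Next I would note that the functions with logarithmic growth of Definition \ref{pogrow} form a commutative algebra $\mathcal A$ on $\Omega\setminus D$. The only point to check is stability under products, and this follows after first rewriting two given factors with a common set of exponents: if $K\ge k$ then $z^{r+j}\log^{k}|z^{r+j}|=\bigl(z^{r+j}\log^{K}|z^{r+j}|\bigr)\bigl(1/\log|z^{r+j}|\bigr)^{K-k}$, and $1/\log|z^{r+j}|$ is one of the admissible smooth arguments, so both factors can be presented with the same $k_j,m_j$; a product of polynomials in $\log|z^{r+1}|,\dots,\log|z^{r+p}|$ is again such a polynomial, with smooth coefficients. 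In particular $\mathcal A$ contains $z^l$, $\bar z^l$, $\log|z^{r+j}|$, $1/\log|z^{r+j}|$, and also $z^l\log|z^l|$.

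Since each $e_i$ is a derivation, the chain rule then reduces the assertion $e_i(\Psi)\in\mathcal A$ to two facts: (a) any partial derivative of $F$ in one of its arguments is again of the form described in Definition \ref{pogrow} — a derivative of a polynomial in $\log|z^{r+1}|,\dots,\log|z^{r+p}|$ is such a polynomial, and smoothness in the remaining arguments is preserved — hence evaluates into $\mathcal A$; and (b) $e_i$ applied to each admissible argument lies in $\mathcal A$. The computations behind (b) are short, and their common feature is the cancellation that motivated the very shape of $e_i$: when $i=r+j$ is a ``logarithmic'' index and $e_{r+j}$ hits one of the singular arguments, the pole $1/z^{r+j}$ produced by $\partial_{z^{r+j}}$ is exactly absorbed by the factor $z^{r+j}\log|z^{r+j}|$, at the cost of shifting a power of $\log$, which is harmless since arbitrary integer powers are allowed. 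Typical outcomes are $e_{r+j}(\log|z^{r+j}|)=\tfrac12\log|z^{r+j}|$, $e_{r+j}(z^{r+j}\log^{k_j}|z^{r+j}|)=z^{r+j}\log^{k_j+1}|z^{r+j}|+\tfrac{k_j}{2}z^{r+j}\log^{k_j}|z^{r+j}|$, $e_{r+j}(\bar z^{r+j}\log^{m_j}|z^{r+j}|)=\tfrac{m_j}{2}\bar z^{r+j}\log^{m_j}|z^{r+j}|$, $e_{r+j}(1/\log|z^{r+j}|)=-\tfrac12/\log|z^{r+j}|$, while $e_i(z^l)\in\{0,1,z^l\log|z^l|\}$ and $e_i(\bar z^l)=0$. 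Combining (a), (b) and the algebra property of $\mathcal A$ gives $\Psi_i=e_i(\Psi)\in\mathcal A$, and the conjugate statement is identical.

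I do not expect a genuine obstacle here; the only slightly delicate part is the bookkeeping in step (b) — keeping track of the (possibly shifted) integer exponents and making sure that the dependence of $F$ on $z^{r+j}$ through its last, ``smooth'' argument contributes nothing singular, which indeed it does not, since that contribution is a smooth function times $z^{r+j}\log|z^{r+j}|\in\mathcal A$.
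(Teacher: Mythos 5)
Your proposal is correct and follows essentially the same approach as the paper: both reduce the claim to showing that the derivations $e_i$ map the admissible arguments of $F$ (such as $\log|z^{\alpha}|$ and $z^\alpha\log^{k}|z^\alpha|$) back into the class of logarithmic-growth functions, the paper simply recording the two key formulas $\partial\log|z^\alpha|^2=\log|z^\alpha|\,e_\alpha^\star$ and $\partial\bigl(z^\alpha\log|z^\alpha|^2\bigr)=(1+\log|z^\alpha|^2)z^\alpha\log|z^\alpha|\,e_\alpha^\star$ and leaving the rest implicit. You fill in the surrounding bookkeeping (the identification $\Psi_i=e_i(\Psi)$, stability of the class under products, the chain-rule reduction, and the remaining cases), which is exactly what the paper's ``from which our assertion follows'' compresses.
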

\begin{proof}
	Indeed, for $\alpha = r+1,\dots, r+p$ we have 
	\[\partial \log|z^\alpha|^2= \log|z^\alpha| e_\alpha^\star, \qquad  \partial \big(z^\alpha \log|z^\alpha|^2\big)= (1+ \log|z^\alpha|^2)z^\alpha \log|z^\alpha| e_\alpha^\star\]
from which our assertion follows.
\end{proof}
\medskip

\noindent A first easy consequence of these considerations is the following "denseness" result.

\begin{lemme}\label{lemend5} The space of smooth forms with compact support in $X\setminus \Delta_2$ is dense in $\cC^\infty _{K}(X, \cE_i) $, in the following sense.
	If $u\in\cC^\infty _{K}(X, \cE_i) $ is an arbitrary form, then there exists a family $(u_\ep)\subset \cC^\infty _c (X\setminus \Delta_2, \cE_i)$ such that
	\[\lim_{\ep\to 0}\int_X|\nabla^r u- \nabla^r u_\ep|d\mu_a= 0\]
	for any $r\geq 0$ and for any multi-index $a$.
	
	Here $\cC^\infty _c(X\setminus \Delta_2, \cE_i)$ is the space of the smooth sections (in the conic sense) and of compact support in $X\setminus \Delta_2$.
\end{lemme}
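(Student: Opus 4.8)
The plan is to prove the density statement by the standard two-step truncation-and-regularization procedure, following the argument used in Lemma \ref{dens} and in the proof of Theorem \ref{KeyDom}, but now tracking all the iterated covariant derivatives $\nabla^r$ simultaneously. First I would fix $u\in C^\infty_{K}(X,\cE_i)$ and reduce to a local computation: cover $X$ by coordinate charts $(U_\alpha,w_\alpha)$ adapted to $D$ as above, take a partition of unity subordinate to this cover, and apply the usual ramified covering $z\mapsto(z_1,\dots,z_r,z_{r+1}^k,\dots,z_{r+p+q}^k,z_{r+p+q+1},\dots,z_n)$, which (as recorded in the excerpt) reduces us to the case $\Delta_3=0$, $h_L$ smooth, $\theta_0$ holomorphic with log poles along the preimage of $\Delta_2$, and $\omega_D$ a metric with Poincaré singularities along the preimage of $\Delta_2$. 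On the pulled-back chart $\omega_D$ is \emph{complete} transversally to $\Delta_2$, which is what makes the truncation possible.

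The key step is the cutoff near $\Delta_2$. Using the completeness of $\omega_D$ near $|\Delta_2|$, choose the standard family of cutoff functions $(\rho_\ep)_{\ep>0}$ as in \cite[VIII, Lemma 2.4]{bookJP}, equal to $1$ away from $\Delta_2$ and with $|d\rho_\ep|_{\omega_D}\to 0$ uniformly, and consider $\rho_\ep u$, which has compact support in $X\setminus\Delta_2$. The point to verify is that $\nabla^r(\rho_\ep u)\to\nabla^r u$ in $L^2(d\mu_a)$ for every $r$ and every multi-index $a$. Expanding $\nabla^r(\rho_\ep u)=\sum_{j=0}^r\binom{r}{j}\nabla^j\rho_\ep\otimes\nabla^{r-j}u$, the $j=0$ term converges by dominated convergence, and each $j\ge 1$ term is controlled because the derivatives of $\rho_\ep$ — more precisely the iterated covariant derivatives $\nabla^j\rho_\ep$ measured in $\omega_D$ — are bounded independently of $\ep$ (and supported in a shrinking neighbourhood of $\Delta_2$), while $\nabla^{r-j}u\in L^2(d\mu_a)$ by hypothesis; here one also uses that the extra $\log$-weights in $d\mu_a$ are dominated, near $\Delta_2$, by the factors $|\theta_0|^2\sim\sum\log^2|s_i|^2$ appearing in Theorem \ref{KeyDom}, or more elementarily that $d\mu_a$ is a fixed locally integrable weight against which each $\nabla^{r-j}u$ is square-integrable. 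Since these derivative estimates for $\rho_\ep$ are the standard ones for a complete metric, only the uniform boundedness of $\nabla^j\rho_\ep$ in $\omega_D$ needs to be noted — this follows from the fact that in quasi-coordinates $\omega_D$ becomes a fixed smooth metric on $(\mathbb C^n,0)$ and $\rho_\ep$ becomes a standard cutoff.

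Once $\rho_\ep u$ is a compactly supported section in $X\setminus\Delta_2$ that is still smooth in the conic sense along $\Delta_3$ (this is automatic since $\rho_\ep$ is pulled back from the base of the ramified cover, hence genuinely smooth there), the second step is the usual convolution smoothing: mollify $\rho_\ep u$ with a smoothing kernel in the quasi-coordinates on each chart, patch via the partition of unity, and use the Friedrichs lemma (cf.\ \cite{bookJP}) to conclude that the mollifications converge to $\rho_\ep u$ together with all covariant derivatives $\nabla^r$ in $L^2(d\mu_a)$, uniformly on the fixed compact support. A diagonal argument in $\ep$ then produces the desired approximating family $(u_\ep)\subset C^\infty_c(X\setminus\Delta_2,\cE_i)$. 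The main obstacle — and it is a mild one — is bookkeeping: one must verify that every one of the iterated derivatives $\nabla^r$ behaves well under both the truncation and the mollification, which requires checking that the Christoffel symbols of $(T_X,\omega_D)$ (resp.\ of $(T_X\langle\Delta_1\rangle,g_D)$) and their derivatives have logarithmic growth in the sense of Definition \ref{pogrow}, so that the commutators between $\nabla^r$ and multiplication by $\rho_\ep$ stay bounded. All of this is already implicit in Lemma \ref{lemend1}, Lemma \ref{lemend2} and the estimates of Sections \ref{metricTX}–\ref{metriclogTX}, so no genuinely new analytic input is required.
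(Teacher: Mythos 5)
Your argument is essentially the same truncation argument the paper uses: take $u_\ep = \rho_\ep u$ with $\rho_\ep$ a family of cutoffs vanishing near $\Delta_2$ whose covariant derivatives are controlled by the Poincar\'e metric, exploit that these derivatives are supported in a shrinking neighbourhood of $\Delta_2$, and absorb any logarithmic growth of $\nabla^j\rho_\ep$ into the arbitrary weight $d\mu_a$. Two small remarks. First, your second step (mollification in quasi-coordinates plus Friedrichs) is superfluous: unlike Lemma~\ref{dens}, the hypothesis here is $u\in C^\infty_K(X,\cE_i)$, which already makes $\rho_\ep u$ smooth in the conic sense, so it is already an element of $C^\infty_c(X\setminus\Delta_2,\cE_i)$ and no regularization is required. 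Second, the paper only asserts that $|\nabla^j\rho_\ep|_{\omega_D}$ is bounded by a polynomial in $\log\tfrac{1}{|s_i|^2}$ rather than uniformly bounded as you claim; this makes no difference to the conclusion, since the membership $\nabla^{r-j}u\in L^2(d\mu_a)$ for \emph{every} multi-index $a$ is exactly what lets one absorb those factors, but it is worth stating the bound in the weaker (and safer) form.
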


\noindent Before explaining the proof of this statement, we remark that it has the following consequence. 

\begin{remark}
	Let $T:\cC^\infty _{K}(X,\cE_i)\to \mathbb C$ be a current of finite order, so that $T$ is linear and we moreover have 
\[|T(u)|\leq C\sum_{r=1}^d\Vert \nabla^r u\Vert_{2, a}\]  
for all $u\in \cC^\infty _{K}(X,\cE_i)$. If the support of $T$ is contained in $\Delta_2$, then $T$ is identically zero. Indeed, we have $T(u_\ep)= 0$ for all $\ep> 0$, hence $T(u)= 0$ as consequence of Lemma \ref{lemend5}.
\end{remark}

\begin{proof}
	We consider the truncation functions $\displaystyle \rho_\ep:= \Xi_\ep \big(\log\log\frac{1}{|s_D|^2}\big)$, together with 
	\[u_\ep:= \rho_\ep u.\]
	The norm of any covariant derivative of $\rho_\ep$ with respect to the Poincar\'e metric $\omega_D$ is bounded by a polynomial expression in $\displaystyle \log\frac{1}{|s_i|^2}$,
	where the $s_i$ are the sections defining the components of $D$. Given this, the claim in our lemma is obvious.	
\end{proof}

\noindent Let $u$ be a section of the bundle $\cE_i$, such that $\Supp(u)\subset \Omega$, so that we can write
\begin{equation}\label{end10}
	u= \sum u_{I\ol J}e_I^\star \wedge \ol e_J^\star \otimes e_L, \qquad u= \sum u_{I\ol J}l_I^\star \wedge \ol e_J^\star \otimes e_L
\end{equation}
with respect to the frames introduced in \eqref{end6}, \eqref{end7} or their logarithmic versions. Then we have the following simple criteria for $u$ to belong to the space 
$\cC^\infty _{K}(X,\cE_i)$.
\begin{lemme}\label{lemend3}
	For each couple of multi-indexes $M= (m_i), N= (n_j)$ of finite length and whose coefficients belong to $\{1,\dots, p\}$ we introduce the following functions
	\[z^M\log|z^M|:= \prod z^{m_i}\log|z^{m_i}|, \qquad \ol z^N\log|z^N|:= \prod \ol z^{n_i}\log|z^{n_i}|.\]
	Let $u$ be a form as in \eqref{end10}. Then $u$ defines an element of the space $\cC^\infty _{K}(X,\cE_i)$ if and only if 
	\[z^M\ol z^N\log|z^M| \log|z^N| \partial_{M\ol N} \partial_{P\ol Q}u_{I\ol J}\in L^2(\Omega, e^{-\varphi_L}d\mu_a)\]
	for any $a, M, N, I, J$ and $P, Q\subset \{p+1,\dots, n\}$. In the expression above we denote by $\partial_{M\ol N} u_{I\ol J}$ the partial derivative of $u_{I\ol J}$ of total order 
	$|M|+ |N|$, where we take holomorphic derivatives with respect to the variables corresponding to the indexes of $M$ and anti-holomorphic derivative with respect to the variables corresponding to the indexes of $N$.
\end{lemme}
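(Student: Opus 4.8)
The plan is to reduce everything to the local description of the covariant derivatives worked out above, in particular the formulas \eqref{end8}--\eqref{met4} for the connection $1$-forms $A$ (resp. $B$) and Lemma \ref{lemend2}, which tells us that taking the differential of a function with logarithmic growth again produces functions with logarithmic growth in the frame $(e_i^\star)$. The point is that, by definition, $u$ belongs to $\cC^\infty_{K}(X,\cE_i)$ if and only if $\nabla^r(u)\in L^2(d\mu_a)$ for every $r$ and every $a$, and on $\Omega$ the operator $\nabla$ splits as the "coordinate" part, given by differentiating the coefficients $u_{I\ol J}$, plus a lower-order part given by contracting with the connection forms $A^\alpha_{\beta\gamma}$ and their conjugates. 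By Lemma \ref{lemend1} and \eqref{met4}, all the coefficients $A^\alpha_{\beta\gamma}$ are functions with logarithmic growth, hence so are all their iterated $e$-derivatives by Lemma \ref{lemend2}; consequently the contribution of these terms to $|\nabla^r u|$ is controlled, up to functions with logarithmic growth (which are harmless against the weight $d\mu_a$, since passing from $a$ to $a+c$ for some constant multi-index $c$ only enlarges the family of measures we test against), by the $e$-derivatives of the coefficients $u_{I\ol J}$ of order $\le r$. Thus $u\in\cC^\infty_{K}(X,\cE_i)$ is equivalent to the statement that $e_{\mu_1}\cdots e_{\mu_s}\bar e_{\nu_1}\cdots\bar e_{\nu_t}(u_{I\ol J})\in L^2(\Omega,e^{-\varphi_L}d\mu_a)$ for all $I,J,a$ and all finite strings of indices, where for indices in $\{r+1,\dots,p+r\}$ we recall $e_i=z^i\log|z^i|\,\partial/\partial z^i$ and for the remaining indices $e_i=\partial/\partial z^i$.

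Next I would unwind the vector fields $e_i$ into ordinary partial derivatives. For an index $i\le r$ or $i\ge p+r+1$ there is nothing to do: $e_i$ is an honest coordinate derivative. For an index $i\in\{r+1,\dots,p+r\}$, applying $e_i=z^i\log|z^i|\,\partial_{z^i}$ repeatedly produces, by the Leibniz rule, a sum of terms of the shape $(z^i)^{k}\log^{m}|z^i|\,\partial_{z^i}^{k'}(\cdots)$; an induction on the number of factors — using that $e_i(z^i\log|z^i|)=(1+\tfrac12)z^i\log|z^i|$ up to a constant, more precisely that $\partial_{z^i}(z^i\log|z^i|)$ is again of the form (unit)$\cdot\log|z^i|$ — shows that the leading term, i.e. the one carrying the highest-order coordinate derivative $\partial_{z^i}^{k}$, comes with the weight $(z^i)^{k}\log^{k}|z^i|$, while all the other terms have strictly lower coordinate-derivative order and weights that are dominated by the leading ones relative to the measure $d\mu_a$. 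The same holds for the antiholomorphic factors $\bar e_j$. Running this over all the indices in $M\cup N$ separately (the mixed-order terms coming from different indices multiply cleanly since distinct coordinates are involved) shows that the condition "$e_{M}\bar e_{N}(u_{I\ol J})\in L^2(d\mu_a)$ for all strings and all $a$" is equivalent to "$z^M\bar z^N\log|z^M|\log|z^N|\,\partial_{M\ol N}\partial_{P\ol Q}u_{I\ol J}\in L^2(\Omega,e^{-\varphi_L}d\mu_a)$ for all $a,M,N,I,J$ and all $P,Q\subset\{p+1,\dots,n\}$", which is exactly the stated criterion. (The extra indices $P,Q$ account for the plain coordinate derivatives coming from the indices outside $\{1,\dots,p+r\}$; note the indices in $\{1,\dots,r\}$, i.e. along $\Delta_1$, also yield plain derivatives in the case of $\cE_i=\oplus\Lambda^{r,s}T^\star_X\otimes L$, while in the logarithmic case \eqref{secondcases} the frame $l_i=z^i\partial/\partial z^i$ is a bounded-coefficient change away from $\partial/\partial z^i$, and one checks the same conclusion holds.)

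Finally, to pass from "the leading term is $L^2$" to "every term is $L^2$" in both directions I would observe that all the subleading weights that appear differ from a leading weight only by extra positive powers of $\log|z^i|$ and by the replacement of a coordinate derivative by one of lower order; enlarging the multi-index $a$ absorbs the former, and the latter only makes the integrand smaller once near $\Delta_2$. Conversely, if the displayed condition holds for all $a$, then in particular all lower-order coordinate-derivative terms with all logarithmic weights are $L^2$, so the full covariant derivative $\nabla^r u$ is $L^2(d\mu_a)$ for every $a$. I do not expect a genuine obstacle here; the only slightly delicate bookkeeping is the Leibniz expansion of the iterated $e_i$ and the verification that the leading weight is exactly $z^M\log|z^M|$ rather than a higher power of the logarithm — this is precisely the content of Lemma \ref{lemend2} applied inductively, and the constants $k\ge 2,3$ play no role in this particular statement. $\square$
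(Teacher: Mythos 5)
Your proof is correct and follows essentially the same route as the paper's: both reduce to showing that $\nabla^r u\in L^2(d\mu_a)$ for all $r,a$ is equivalent to $L^2$-bounds on the iterated $e_i$-derivatives of the coefficients $u_{I\bar J}$ (absorbing the connection terms $A^\alpha_{\beta\gamma}$ and their derivatives thanks to their logarithmic growth and Lemma~\ref{lemend2}), and then observe that $e_i = z^i\log|z^i|\,\partial_{z^i}$ unwinds into the weighted coordinate derivatives appearing in the statement. The paper leaves the induction step somewhat implicit; your Leibniz bookkeeping that the leading weight is exactly $z^M\log|z^M|$ with lower-order terms absorbed by enlarging $a$ is the same argument made explicit.
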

\begin{proof}
	For sections $u$ on the LHS of \eqref{end10}, we write
	\[\nabla^{1, 0} u= \sum_{I, J, i} u_{I\ol J i} e_i^\star\otimes \big(e_I^\star \wedge \ol e_J^\star\big)+  \sum_{I, J, \alpha, k}u_{I\ol J}A^{i_\alpha}_{ik}e_i^\star\otimes \big(e_{I(\alpha, k)}^\star \wedge \ol e_J^\star\big), \]
	where if $I=(i_1,\dots, i_r)$, then $I(\alpha, k)$ is the multi-index obtained by replacing $i_\alpha$ by $k$.
	
	\noindent Assume next that $u_{I\ol J}\in L^2(\Omega, e^{-\varphi_L}d\mu_a)$. The previous formula -and the similar one for $\nabla^{0,1}$-show that $\nabla u\in L^2(\Omega, e^{-\varphi_L}d\mu_a)$ for all $a$ if and only if $u_{I\ol J i}$ and $u_{I\ol J \ol i}$ are in $L^2(\Omega, e^{-\varphi_L}d\mu_a)$
	for all $i, a, I, J$. On the other hand, we have $u_{I\ol J i}= z^i\log|z^i|\partial_i u_{I\ol J}$ for $i=1,\dots, p$ and $u_{I\ol J i}= \partial_i u_{I\ol J}$ for $i\geq p+ 1$.
	
	\noindent The proof is completed by iterating this argument (i.e. using induction), where we use the fact that the coefficients $A^i_{jk}$ have logarithmic growth
	(cf. Lemma \ref{lemend1}) combined with Lemma \ref{lemend2}. The same arguments apply in case of sections with log-poles, cf. the RHS of \eqref{end10}.
\end{proof}

 Yet another confirmation that the sheaf $\cC^\infty _{K}(X,\cE_k)$ is \emph{the right one} in our context is provided by the next statement, showing that this space behaves well with respect to the residue map 
(corresponding to the components 
of $\Delta_1$ and to any of their complete intersections).

\begin{lemme}\label{lem:residue}
	 Let  $u\in C_{\bullet,K}^\infty(X,\Delta_1, L)$. Then for any irreducible component $Y$ of $\Delta_1$, the residue 
$u_{Y}\in C_{\bullet-1,K}^\infty(Y, (\Delta_1 -Y)\cap Y, L)$ of $u$ along $Y$.
Equivalently, for $u\in C_{\bullet,K}^\infty(X, L)$, the restriction (trace) $u|_{Y}$ is well defined and $u |_Y \in C_{\bullet,K}^\infty(Y, L)$.
\end{lemme}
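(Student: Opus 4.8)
The statement is local on $X$ in a neighbourhood of $Y$, and there is nothing to prove away from $Y \cap (D - Y)$: at a generic point of $Y$ the metrics $\omega_D$, $g_D$, $h_L$ and the form $\theta_0$ are smooth (this is exactly \Cref{property}\,(1), together with the fact, recorded after \eqref{m1}, that $\omega_D$ is smooth at generic points of $\Delta_1$), so there $u$ is a genuine smooth $L$-valued form with, in the case \eqref{secondcases}, a simple logarithmic pole along $Y$, and its residue $u_Y$ — respectively, in the case \eqref{firstcases}, its pull-back $u|_Y$ along $Y\hookrightarrow X$ — is again smooth, hence lies in the asserted space. So I may work in a chart $(\Omega,w)$ adapted to $(X,D)$ as in Subsection \ref{Hodsmo}, with $Y = (w_1=0)$, the remaining components of $\Delta_1$ given by $w_2\cdots w_r = 0$, $\Delta_2$ by $w_{r+1}\cdots w_{r+p}=0$ and $\Delta_3$ by $w_{r+p+1}\cdots w_{r+p+q}=0$; after the ramified cover \eqref{clog} I may moreover assume $\Delta_3=0$, $h_L$ smooth, and $\theta_0$ holomorphic with poles along $\Delta_2$ only. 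Writing $u=\sum u_{I\ol J}\, l_I^\star\wedge \ol e_J^\star\otimes e_L$ in the logarithmic frame \eqref{logend8} (resp. $u=\sum u_{I\ol J}\, e_I^\star\wedge \ol e_J^\star\otimes e_L$ in the frame \eqref{end6}), the residue is $u_Y=\sum_{1\notin I'}\pm\, u_{(1,I')\ol J}(0,w')\, l_{I'}^\star\wedge \ol e_J^\star\otimes e_L$ with $w'=(w_2,\dots,w_n)$, while $u|_Y$ keeps the coefficients with $1\notin I\cup J$, restricted to $w_1=0$. Thus everything reduces to showing that the coefficient functions $u_{I\ol J}$ admit traces on $\{w_1=0\}$ which, together with the weighted partial derivatives appearing in \Cref{lemend3}, are square integrable for the measures $d\mu_a$ of $Y$.

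Two elementary facts turn this into a one-variable weighted trace problem. First, the index $1$ is a $\Delta_1$-direction, so by the explicit shape of $g_D$ in \eqref{log2} and of $\omega_D$ in \eqref{m1} (see \Cref{lemend1}, \Cref{coeff}) the frame elements $l_1^\star = dw_1/w_1$ and $e_1^\star = dw_1$ have $g_D$- resp. $\omega_D$-norm bounded above and below by positive constants near $Y\cap\Omega$, while all remaining $l_i^\star$, $e_i^\star$ have norm $1$ up to bounded factors; hence $|u|^2_{g_D,\omega_D}\simeq\sum|u_{I\ol J}|^2$ up to bounded factors, and — the connection forms of $(T_X\langle\Delta_1\rangle,g_D)$ and $(T_X,\omega_D)$ being bounded, cf. \Cref{logmixbound}, \Cref{mixbound} — the $(w_1,\ol w_1)$-part of $\nabla u$ controls the plain derivatives $\partial_{w_1}u_{I\ol J}$, $\partial_{\ol w_1}u_{I\ol J}$ up to bounded factors, and similarly for $\nabla^2 u$. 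Second, the weight defining $d\mu_a$ only involves the $\Delta_2$-variables, and $\omega_D$ is smooth in the $w_1$-direction, so near $\{w_1=0\}$ one has $d\mu_a\simeq\widetilde\rho(w')\, d\lambda$ for a nonnegative function $\widetilde\rho$ of $w'$ alone, whose restriction to $\{w_1=0\}$ is (up to bounded factors) exactly the weight defining $d\mu_a$ for $Y$, the components of $\Delta_2$ meeting $Y$ restricting to the corresponding components of $\Delta_2\cap Y$.

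The core estimate is then the elementary trace inequality obtained from the fibrewise Sobolev embedding $W^{2,2}(\mathbb D)\hookrightarrow C^0(\mathbb D)$ for the disk in $\mathbb R^2\cong\mathbb C_{w_1}$: there is a universal constant $C$ with
\[
\int_{\{w_1=0\}}|F(0,w')|^2\,\widetilde\rho(w')\, d\lambda(w')\;\le\; C\int\bigl(|F|^2+|\nabla_{w_1}F|^2+|\nabla^2_{w_1}F|^2\bigr)\,\widetilde\rho(w')\, d\lambda,
\]
where $\nabla_{w_1}$ means differentiation in $(w_1,\ol w_1)$ only (one genuinely loses a whole derivative, since $Y$ has real codimension two, which is why two $w_1$-derivatives are fed in; this is harmless because $\cC^\infty_K$ requires derivatives of all orders). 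Applying this with $F=u_{I\ol J}$ and invoking the comparisons above, the right-hand side is bounded by $C(\|u\|^2+\|\nabla u\|^2+\|\nabla^2 u\|^2)$ in $L^2(X,d\mu_a)$, which is finite by hypothesis; hence every coefficient of $u_Y$ lies in $L^2(Y,d\mu_a)$. For the higher-order condition, by the computation in the proof of \Cref{lemend3} carried out on $Y$, membership $u_Y\in C^\infty_{\bullet-1,K}(Y,(\Delta_1-Y)\cap Y,L)$ is equivalent to the $d\mu_a$-square-integrability on $Y$ of all $w^M\ol w^N\log|w^M|\log|w^N|\,\partial_{M\ol N}\partial_{P\ol Q}\bigl(u_{(1,I')\ol J}(0,w')\bigr)$ with $M,N$ in the $\Delta_2\cap Y$ directions; but the function $w^M\ol w^N\log|w^M|\log|w^N|\,\partial_{M\ol N}\partial_{P\ol Q}u_{(1,I')\ol J}$, viewed on all of $\Omega$, is exactly of the form appearing in \Cref{lemend3} for $X$ (and so is each of its $\nabla_{w_1}$, $\nabla^2_{w_1}$ derivatives, as the weight is $w_1$-independent and differentiating in the $\Delta_1$-direction $w_1$ costs only a bounded factor), hence it and its first two $w_1$-derivatives lie in $L^2(\Omega,d\mu_a)$; a second application of the trace inequality gives the claim. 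The restriction statement for $u\in C^\infty_{\bullet,K}(X,L)$ in the case \eqref{firstcases} is identical, keeping only the coefficients $u_{I\ol J}$ with $1\notin I\cup J$.

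The step requiring the most care is the bookkeeping of the second paragraph — that the $\Delta_1$-directions decouple cleanly from the singular weights, so that both the pointwise norms and the measures factor in the $w_1$-variable near $Y\cap\Delta_2$ and $Y\cap\Delta_3$ — together with the routine but necessary verification that $\omega_D|_Y$ and $g_D|_Y$ are again metrics of the model type \eqref{m1}, \eqref{log2} on $Y$ relative to the divisor $(\Delta_1-Y)\cap Y+\Delta_2\cap Y+\Delta_3\cap Y$, with two-sided bounded curvature, so that $C^\infty_{\bullet,K}(Y,\cdot)$ computed from the restricted data agrees with the intrinsic one. Both follow by restricting the defining formulas \eqref{m1}, \eqref{log2} and re-using \Cref{lemend1}, \Cref{mixbound}, \Cref{logmixbound}.
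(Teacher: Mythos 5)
Your proof is correct, but it takes a genuinely different route from the paper's. The paper's argument is global: it starts from $\int_Y |\nabla^r u_Y|^2\,d\mu_a\le\int_Y |\nabla^r u|^2|_Y\,d\mu_a$, uses the Poincar\'e--Lelong formula to convert the integral over $Y$ into one over $X$ against the current $\Theta(Y)+\sqrt{-1}\,\ddbar\log|s_Y|^2$, handles the $\Theta(Y)$ term via the two-sided bound $\Theta(Y)\le C\omega_D$, treats the singular term by integration by parts (using that $\omega_D$ is K\"ahler), then invokes the integrability of $\log^N(1/|s_Y|^2)$ against $dV_{\omega_D}$, H\"older, and finally the \emph{global} Sobolev inequality of \Cref{sobolev} with a matched exponent $\delta_n$. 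You instead work locally and reduce everything to a fibrewise one-variable trace inequality coming from the elementary embedding $W^{2,2}(\mathbb D)\hookrightarrow C^0(\overline{\mathbb D})$ in the $w_1$-slice, after observing that the $\Delta_1$-direction $w_1$ decouples cleanly from the singular weights and metrics (\Cref{lemend1}, \Cref{coeff}, \eqref{m1}, \eqref{log2}). Both arguments ultimately cost two derivatives (the paper's through the two integrations by parts that bring in $\ddbar\log|s_Y|^2$ and $\Delta''_{\omega_D}|\nabla^r u|^2$; yours through the codimension-two trace), and both rely on the bounded-curvature estimates \Cref{mixbound}, \Cref{logmixbound}. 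The paper's approach avoids coordinate bookkeeping at the price of invoking more global machinery and implicitly restricting to $u$ with compact support in $X\setminus\Delta_2$ before concluding by density; your approach is more elementary and makes the mechanism transparent, at the price of the careful case analysis in your second paragraph verifying that $\omega_D|_Y$, $g_D|_Y$, $h_L|_Y$ and $d\mu_a|_Y$ are again of the model type on $(Y,D_I)$ so that $C^\infty_K$ is unambiguous. One minor caveat: the fibrewise trace inequality needs an implicit cutoff in $w_1$ (or compact support in that variable) to control boundary terms; this is harmless but deserves a word, parallel to the paper's compact-support reduction.
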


\begin{proof} Given that we define $\cC_{\bullet,K}^\infty(X,\Delta_1, L)$ and $\cC_{\bullet,K}^\infty(X, L)$ via $L^2$ conditions, a-priori it is not clear that this space should behave well by restriction to a lower-dimensional sub-manifold. 
	
\noindent Nevertheless, for any $u\in C_{\bullet,K}^\infty(X, L)$, we have the following inequalities
\begin{align}
\int_Y\big|\nabla^r u_Y\big|^2d\mu_a\leq & \int_Y\big|\nabla^r u\big|^2_{|Y}d\mu_a \label{rest1}\\
= & \int_X\big|\nabla^r u\big|^2\log^{2a}|s_{\Delta_2}|^2\omega_D^{n-1}\wedge \big(\Theta(Y)+ \sqrt{-1}\ddbar \log|s_Y|^2\big) \label{rest2}\\
\nonumber
\end{align}
where $u$ is assumed to have compact support in the complement of $\Delta_2$. The first inequality \eqref{rest1} is clear, and in \eqref{rest2} we are using the Poincar\'e-Lelong theorem. 

Since $\Theta(Y)\leq C\omega_D$ for some constant $C> 0$, the first term in \eqref{rest2} is bounded by the following quantity
\[C\int_X\big|\nabla^r u\big|^2d\mu_a\]	
where we stress on the fact that $C$ is independent of the form $u$.

The singular term in \eqref{rest2} is handled via integration by parts: it equals
\begin{equation}\label{rest5}\int_X\log|s_Y|^2\sqrt{-1}\ddbar\big(\big|\nabla^r u\big|^2\log^{2a}|s_{\Delta_2}|^2\big)\wedge\omega_D^{n-1}\end{equation}	
since the metric $\omega_D$ is K\"ahler. 

As $\omega_D$ has Poincar\'e singularities along $\Delta_2$, we have 
\begin{equation}\label{rest3}
\big|d\log^{2a}|s_{\Delta_2}|^2\big|_{\omega_D}\leq C(a)\log^{2a}|s_{\Delta_2}|^2
\end{equation}
and moreover, we have the point-wise inequality
\begin{equation}\label{rest4}
\big|\Delta''_{\omega_D}|\nabla^r u|^2\big|\leq C\sum_{k=r}^{r+2}\big|\nabla^k u|^2.
\end{equation} 
Because the support of $u$ is contained in the complement of $\Delta_2$,
the singular part of $\sqrt{-1}\ddbar \log|s_{\Delta_2}|^2$ can be ignored.
\smallskip

\noindent Therefore, the integral \eqref{rest5} is bounded by 
\begin{equation}\label{rest6}
C\sum_{k=r}^{r+2}\int_X\log|s_Y|^2\big|\nabla^k u|^2d\mu_{a},\end{equation}
as we see from \eqref{rest3}, \eqref{rest4}. 

By the snc condition on the support of $D$, for any positive $N\geq 0$ there exists a constant $C(N)$ such that we have 
\begin{equation}\label{rest7}
	\int_X\log^N\frac{1}{|s_Y|^2}dV_{\omega_D}\leq C(N).
\end{equation}
Combined with the H\"older inequality, this shows that the integrals in \eqref{rest6} are bounded by something like the $\left( \displaystyle \frac{1}{1+\delta_n}\right)^{\rm th}$ root of 
\begin{equation}\label{rest8}
\int_X\big(\big|\nabla^k u\big|^2\log^{2a}|s_{\Delta_2}|\big)^{2+ 2\delta_n}dV_{\omega_D}	
\end{equation}	 
where we adjust $N$ on \eqref{rest7} so that $\delta_n$ in 
\eqref{rest8} equals the one in the Sobolev inequality of Lemma \ref{sobolev}. 
By Lemma \ref{sobolev} and a few more calculations (which we skip since they are completely similar to \eqref{rest3}, \eqref{rest4}), we are done.	
\end{proof}

\medskip

\noindent After all these preparations the following statement is easy to establish.
\begin{proposition}\label{propend1}
	Let  $u\in \cC^\infty _{K}(X,\cE_i)$ and let $f$ be a function on $X$, which has smooth on $X\setminus \Delta_2$ and with logarithmic growth along $\Delta_2$. Then $f \cdot u \in \cC^\infty _{K}(X,\cE_i)$. Moreover, $\theta\wedge u$ and $\theta^\star(u)\in\cC^\infty _{K}(X,\cE_{\bullet})$. 
\end{proposition}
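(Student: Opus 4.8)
The plan is to verify the three claims of Proposition \ref{propend1} by reducing each of them, via Lemma \ref{lemend3}, to an $L^2$-bound on iterated partial derivatives of the coefficients after pulling back by the ramified covers. Throughout I would work in one of the adapted coordinate charts $(\Omega, z)$ from the discussion preceding Lemma \ref{lemend1}, use the ramified covering to reduce to the situation where $\Delta_3 = 0$, $h_L$ is smooth and $\theta_0$ is holomorphic with log poles along the preimage of $\Delta_2$, and expand all sections in the frames $(e_I^\star\wedge \ol e_J^\star\otimes e_L)$ (resp.\ their logarithmic versions $(l_I^\star\wedge \ol e_J^\star\otimes e_L)$) from \eqref{end6}--\eqref{logend7}.

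First I would treat the multiplication by $f$. Writing $u = \sum u_{I\ol J}\, e_I^\star\wedge\ol e_J^\star\otimes e_L$, the coefficients of $f\cdot u$ are $f\, u_{I\ol J}$, and by the Leibniz rule a partial derivative $\partial_{M\ol N}\partial_{P\ol Q}(f u_{I\ol J})$ is a finite sum of products $\partial_{M'\ol N'}\partial_{P'\ol Q'}f \cdot \partial_{M''\ol N''}\partial_{P''\ol Q''}u_{I\ol J}$. The key point is that, by the very definition of ``logarithmic growth'' (Definition \ref{pogrow}) together with Lemma \ref{lemend2}, every iterated derivative of $f$ expressed in the frame $(e_i^\star)$ again has logarithmic growth; hence each factor $z^{M'}\log|z^{M'}|\cdots\, \partial_{\bullet}f$ is bounded on $\Omega\setminus\Delta_2$ by a polynomial in $\log 1/|s_i|^2$. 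Since such polynomial factors can be absorbed into the measures $d\mu_a$ at the cost of raising the index $a$, the criterion of Lemma \ref{lemend3} applied to $u$ (for all multi-indices) yields the same criterion for $fu$; this shows $fu\in \cC^\infty_K(X,\cE_i)$. The statement $\theta^\star(u)\in\cC^\infty_K(X,\cE_\bullet)$ follows by the same mechanism: by Lemma \ref{calc,I} (resp.\ its logarithmic analogue \eqref{log9}) we have $\theta_0^\star u = \sum \ol\theta_i\, \omega^{\ol i k}\, \partial/\partial z_k\rfloor u$, and writing $\ol\theta_i = \ol{f_i}/\ol z_i$ with $f_i$ holomorphic, the coefficients $\ol\theta_i\,\omega^{\ol i k}$ (resp.\ $\ol\theta_i\, h^{\ol i k}$) are, thanks to Lemma \ref{coeff} (resp.\ the list of estimates after \eqref{log30}) and Lemma \ref{lemend1}, exactly of logarithmic-growth type; contracting a frame $e_k$ with $e_I^\star$ stays within the frame calculus. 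So $\theta_0^\star u$ is again a finite sum of frame-coefficients which are logarithmic-growth multiples of the $u_{I\ol J}$, and the previous paragraph applies.

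For $\theta_0\wedge u$ the situation is slightly more delicate because wedging with $e^i$ where $i\le r$ (a genuine log factor $dz^i/z^i$, not a Poincaré-type $dz^i/(z^i\log|z^i|)$) is exactly where one must pass from $\cE_k$ with values in $\Lambda^p T_X^\star$ to the version with values in $\Lambda^p T_X^\star\langle\Delta_1\rangle$. In the logarithmic case \eqref{secondcases} this is harmless: writing $\theta_0|_\Omega = \sum \theta_i e^i$ in the log-frame, the coefficients $\theta_i$ are holomorphic (they have at worst a pole only where $\theta_0$ has a residue, which after the ramified cover is along the preimage of $\Delta_2$, and there the frame $l_i^\star = dz^i/(z^i\log|z^i|)$ already carries a compensating factor), so $\theta_0\wedge u$ has frame-coefficients $\pm\theta_i u_{I\ol J}$, smooth times the $u_{I\ol J}$, and the Lemma \ref{lemend3} criterion is preserved. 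The residue along components of $\Delta_1$ is then controlled by Lemma \ref{lem:residue}, which guarantees that the operation stays inside $C^\infty_{\bullet,K}(X,\Delta_1,L)$. In the smooth case \eqref{firstcases} one observes that the hypothesis, via Property \ref{property}, forces $\Res\theta_0 = 0$ along $\Delta_1$, so $\theta_0$ has no genuine $dz^i/z^i$ pole there and $\theta_0\wedge u$ remains of the claimed type.

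The main obstacle, and the step I would spend the most care on, is making precise that ``multiplication by a function of logarithmic growth, and the operators $\theta_0\wedge$, $\theta_0^\star$, preserve the Lemma \ref{lemend3} norms uniformly,'' i.e.\ tracking how many extra powers of $\log 1/|s_i|^2$ and how many extra partial derivatives are produced, and checking these are always absorbable by passing from $d\mu_a$ to $d\mu_{a'}$ with larger $a'$ and from derivative-order $d$ to $d+c$ for fixed finite $c$ — the whole point being that the definition of $\cC^\infty_K$ quantifies over \emph{all} $a$ and \emph{all} orders $r$, so these shifts cost nothing. Once the bookkeeping is organized around Definition \ref{pogrow} and Lemmas \ref{lemend1}, \ref{lemend2}, \ref{lemend3}, the rest is routine and I would not grind through it in detail.
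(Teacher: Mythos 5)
Your proposal follows the paper's proof essentially verbatim: reduce to an adapted chart, express everything in the frames $(e_i^\star)$ resp.\ $(l_i^\star)$, observe that the coefficients arising from $f$, from $\theta_0\wedge(\cdot)$, and from $\theta_0^\star(\cdot)$ are all functions of logarithmic growth in the sense of Definition \ref{pogrow}, and invoke Lemma \ref{lemend2} and Lemma \ref{lemend3} to close. The one infelicity is in the $\theta_0\wedge u$ paragraph, where you call the coefficients of $\theta_0$ in the $\Delta_2$-directions of the $l_i^\star$-frame ``holomorphic'' or ``smooth'' --- with respect to $l_i^\star = dz^i/(z^i\log|z^i|)$ the residue term along $\Delta_2$ produces a factor $\log|z^i|$, so these coefficients are of logarithmic growth rather than smooth, which is exactly the observation the paper records as $\theta|_\Omega = \sum_{i\le p}\log|z^i|\tau_i\,e_i^\star+\dots$; since you have already argued that multiplication by log-growth functions preserves the Lemma \ref{lemend3} criterion, this slip does not create a gap, and the appeal to Lemma \ref{lem:residue} is unnecessary.
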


\begin{proof} By using a partition of unity, we can assume that $\Supp(u)\subset \Omega$, where $\Omega$ is one of the coordinate subsets $U_\alpha$. 
	The coefficients of $f\cdot u$ are simply $f\cdot u_{I\ol J}$, and by hypothesis, derivatives of any order
	\[e_{i_1}\cdot\dots e_{i_m}\cdot f\]
	of $f$ still have logarithmic growth. It then follows that $f \cdot u \in \cC^\infty _K$.
	
	The same considerations apply for $\theta\wedge u$: locally we have
	\[\theta|_\Omega= \sum_{i\leq p}\log|z^i| \tau_i e_i^\star+ \sum_{i\geq p+1}\tau_i e_i^\star\]
	where the functions $(\tau_i)$ are smooth -actually they are holomorphic, but this is irrelevant here. Since the expressions
	\[z\to \log|z^i| \tau_i(z), \qquad z\to \tau_j(z)\]
	clearly define functions with logarithmic growth for $i= 1,\dots p$ and $j\geq p+1$,
	the conclusion follows from Lemma \ref{lemend3}. 
	
	Finally, we recall that the formula 
	\begin{equation}\label{end11} 
		\theta^\star(u)= \sum_{i,k}\ol\theta_i \omega^{\ol i k}\frac{\partial}{\partial z_k}\rfloor u	
	\end{equation}	
	holds, cf. \eqref{form1}. A closer look at the coefficients above shows that
	\begin{equation}\label{end12} 
		\ol\theta_i \omega^{\ol i k}\frac{\partial}{\partial z_k}= \psi_{ik}e_k	
	\end{equation}
	where $\psi_{ik}$ is a function with logarithmic growth. From this point on, we invoke Lemma \ref{lemend3} again to deduce that $\theta^\star(u)\in\cC^\infty _K (X, \cE_{i-1})$.
	
\end{proof}

\medskip

\noindent The following regularity result will play a key role in the next sections.
\begin{thm}\label{smoothhodge}
	Let $g\in L^2$ and $f\in \dom(\Delta_K)$ such that $\Delta_K f=g$. If $g\in  \cC^\infty _{K}(X,\cE_i)$, then 
 $$f\in  \cC^\infty _{K}(X,\cE_i) ,\quad D_K f \in  \cC^\infty _{K}(X,\cE_{i+1}),\quad D_K ^\star f\in  \cC^\infty _{K}(X,\cE_{i-1}).$$
	In particular, if we consider the Hodge decomposition
	\[u = \mathcal H(u)+ \Delta_K(v)\]
	of a form $u\in  \cC^\infty _{K}(X,\cE_i)$, then its harmonic component $\mathcal H(u)$ as well as $v$ are elements of $ \cC^\infty _{K}(X,\cE_i)$. 
\end{thm}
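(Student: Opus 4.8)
The plan is to prove this via elliptic bootstrapping, iterating the a-priori estimates obtained from Theorem~\ref{keyinequ} together with the $\ddbar$-type regularity of the metrics and the comparison between $\Delta_K$ and the Laplacian of $(L,h_L)$.

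\begin{proof}[Proof sketch of \Cref{smoothhodge}]
The strategy is a local bootstrap argument. By a partition of unity adapted to the covering $X=\bigcup U_\alpha$ and by passing to the ramified covers $z\to(z_1,\dots,z_r,z_{r+1}^k,\dots,z_{r+p+q}^k,\dots,z_n)$, we may assume $\Delta_3=0$, that $h_L$ is smooth, and that $\theta_0$ is holomorphic with log poles along $\Delta_2$; the membership in $\cC^\infty_K$ is tested by the $L^2$-criterion of \Cref{lemend3}. First I would record the base case: since $f\in\dom(\Delta_K)$ and $\Delta_Kf=g\in L^2$, the identity \eqref{d2} and the domain description \eqref{d8} already give $f,D_Kf,D_K^\star f\in L^2$, and by \Cref{KeyDom} we even have $D_hf,D_h^\star f\in L^2$ and $|\theta_0|\,f\in L^2$. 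The upgrade to weighted $L^2$ spaces $d\mu_a$ is precisely the content of \Cref{logpole}(2): knowing $g\in\cC^\infty_K$ means in particular $\int_X|g|^2\,d\mu_a<\infty$ for every multi-index $a$, hence $f,D_Kf,D_K^\star f$ (and the second-order terms $D_KD_K^\star f,\ D_K^\star D_Kf$) are $L^2$ with respect to every $d\mu_a$.

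Next I would run the interior elliptic regularity. The point is that $\Delta_K=[D_h,D_h^\star]+4[\theta_0,\theta_0^\star]+2[D_h,\theta_0^\star]+2[\theta_0,D_h^\star]$ (cf.\ \eqref{form40}); away from $\Delta_2$ and, after pulling back by $\pi$, in the quasi-coordinates of Lemma~\ref{coeff} (or \ref{sobolev}), the principal part of $\Delta_K$ is the genuine scalar Laplacian of a smooth metric with smooth coefficients, so standard elliptic estimates apply on the quasi-balls. The lower-order terms are controlled by \Cref{propend1}: if $u\in\cC^\infty_K$ then $\theta_0\wedge u$ and $\theta_0^\star(u)$ lie in $\cC^\infty_K$, so the commutators do not destroy the bootstrap. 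Concretely, from $\Delta_Kf=g$ one commutes $D_K$ and $D_K^\star$ through $\Delta_K$ (they commute with it by the last Kähler-identities proposition, and in the log case by the rescaled version in \Cref{bochner2}) to get $\Delta_K(D_Kf)=D_Kg$ and $\Delta_K(D_K^\star f)=D_K^\star g$; applying \Cref{logpole} repeatedly to these equations propagates weighted-$L^2$ bounds to all iterated derivatives $\nabla^r(D_K^{\sharp}f)$. The G\aa rding inequality of \Cref{h2norm} (and its weighted analogue implicit in \eqref{addnew}) converts the weighted-$L^2$ control of $\Delta_K$-powers into $H^{2r}_{d\mu_a}$-bounds, and the Sobolev inequality of \Cref{sobolev} in quasi-coordinates then yields the pointwise regularity certifying that $f$, $D_Kf$, $D_K^\star f$ satisfy the defining condition of \Cref{lemend3}, i.e.\ they belong to $\cC^\infty_K(X,\cE_i)$, $\cC^\infty_K(X,\cE_{i+1})$, $\cC^\infty_K(X,\cE_{i-1})$ respectively.

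For the final assertion, given $u\in\cC^\infty_K(X,\cE_i)\subset L^2$ write the Hodge decomposition $u=\mathcal H(u)+\Delta_K(v)$ from \Cref{hodgedec}, with $v\in\dom(\Delta_K)$ orthogonal to $\ker\Delta_K$. Then $\mathcal H(u)=u-\Delta_K(v)$ and $\Delta_K(\mathcal H(u))=0\in\cC^\infty_K$, so the first part of the theorem applied to $\mathcal H(u)$ gives $\mathcal H(u)\in\cC^\infty_K$; consequently $\Delta_K(v)=u-\mathcal H(u)\in\cC^\infty_K$, and applying the first part to $v$ with $g:=\Delta_K(v)\in\cC^\infty_K$ gives $v\in\cC^\infty_K$, as claimed. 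I expect the main obstacle to be the log case \eqref{secondcases}, where $g_D$ is not Kähler: there the usual Bochner-Kodaira identity fails, the principal symbols of $[D_h',D_h'^\star]$ and $[\dbar,\dbar^\star]$ differ (cf.\ the remark after \Cref{bochner2}), and one must lean on the rescaled estimate \Cref{bochner2}(b) and on \eqref{log22} to absorb the extra first-order and curvature terms; handling these while keeping the constants uniform in the weight $d\mu_a$ and in $\delta$ (as in Corollary~\ref{keylog}) is the delicate point of the bootstrap.
\end{proof}
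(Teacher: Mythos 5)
Your overall strategy (bootstrap via \Cref{logpole}, Bochner, then iterate) is the same as the paper's, and your treatment of the final Hodge-decomposition assertion is identical. But there is a genuine gap in the bootstrap step, and you substitute a claim that does not close the loop.

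The definition of $\cC^\infty_K(X,\cE_i)$ requires $\nabla^r u\in L^2(d\mu_a)$ for \emph{all} iterated covariant derivatives $\nabla^r$, i.e.\ control in \emph{every} direction, not just in the directions captured by $D_K$ and $D_K^\star$. Your argument commutes $D_K,D_K^\star$ through $\Delta_K$ to get $\Delta_K(D_Kf)=D_Kg$ and then ``applies \Cref{logpole} repeatedly''. This only ever produces weighted $L^2$ bounds on strings of the form $D_K^{\sharp_1}\cdots D_K^{\sharp_m}f$, and correspondingly the G\aa rding estimate \Cref{h2norm} bounds the $H^2$-norm of \Cref{regularity}, which is defined \emph{only} in terms of $D_K$- and $D_K^\star$-derivatives. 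Passing from that to full $\nabla^r$-control requires a Bochner inequality \emph{at each step}, applied to $\nabla^{r-1}_\xi f$ rather than to $D_K^\sharp f$; and for that you must know that $\Delta_K(\nabla_\xi f)$ is again a form with good weighted $L^2$ bounds. That fact is precisely the content of the paper's Claim \ref{claimend1}: the commutators $[D_K,\nabla_\xi]$, $[D_K^\star,\nabla_\xi]$, $[\Delta_K,\nabla_\xi]$ are differential operators of the appropriate orders whose coefficients are \emph{functions of logarithmic growth}, hence preserve weighted $L^2$. This is the nontrivial structural input that lets the iteration run to all orders, and it requires the explicit coordinate work on the frames $(e_i)$ and on the $\theta_0$-commutators of \S\ref{Hodsmo}. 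Your proposal omits it entirely.

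Relatedly, your invocation of ``standard elliptic estimates on quasi-balls'' because ``the principal part of $\Delta_K$ is a genuine scalar Laplacian with smooth coefficients'' is not sound as stated. The operator $\Delta_K$ contains the terms $[\theta_0,\theta_0^\star]$ and $[D_h,\theta_0^\star]$, whose coefficients (in the natural log-adapted frames) are only of logarithmic growth near $\Delta_2$, not smooth. Standard interior elliptic regularity on a fixed compact subset of $X\setminus\Delta_2$ gives you pointwise smoothness there, which is already known; it does \emph{not} give the weighted $L^2$ finiteness near $\Delta_2$ that $\cC^\infty_K$ membership demands. The passage from local quasi-coordinate estimates to the global weighted $L^2$ criterion has to be made uniform over the family of quasi-coordinate charts (as in the proof of \Cref{sobolev}), and again the log-growth of the lower-order coefficients must be tracked. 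The paper's chosen invariant is exactly the logarithmic-growth class of \Cref{pogrow}, which is closed under differentiation in the $(e_i)$-frame (\Cref{lemend2}) and under multiplication, and which characterizes membership in $\cC^\infty_K$ via \Cref{lemend3}. Without an analogous structural statement, your bootstrap cannot reach the conclusion.
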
	

\begin{proof} A first simple remark is that in case we are able to show 
	that $f\in  \cC^\infty _{K}(X,\cE_i)$, then follows from Proposition \ref{propend1}
	that this is equally the case for its differentials $D_K f$ and $D_K ^\star f$. In what follows we will prove that $f\in  \cC^\infty _{K}(X,\cE_i)$ 
	
	The proof is based on the classical boot-strapping argument.
	In the first place, Theorem \ref{logpole} implies that the following
	\[f, D_K f, D_K^\star f\in L^2\big(X, \cE_{\bullet}, d\mu_a\big)\] holds.
	
The Bochner formulas of Lemma \ref{bochner1} show that we have 
	\[\nabla f\in L^2\big(X,\cE_{\bullet} , d\mu_a\big).\]
	The last part of Theorem \ref{logpole} equally shows that
	\[D_KD_K^\star f, D_K^\star D_K f \in L^2\big(X, \cE_{\bullet},  d\mu_a\big),\]
	from which we infer that 
	\begin{equation}\label{end13} 
		\nabla (D_K^\star f), \nabla (D_K f)\in L^2\big(X, \cE_{\bullet}, d\mu_a\big)\end{equation}
	as well.
	
	\noindent Assume now that we are in the local setting, and let $\xi$ be one of $(e_i)$. Then we have the following statement 
	\begin{claim}\label{claimend1}
		The commutation relations
		\begin{equation}\label{end14} 
			[D_K, \nabla_{\xi}]u= \mathcal P(u, \nabla u), \qquad [D_K^\star, \nabla_{\xi}]u= \mathcal P(u, \nabla u)
		\end{equation}
		as well as
		\begin{equation}\label{end15} 
			[\Delta_K,\nabla_{\xi}]u= \mathcal P(u, \nabla u, \nabla^2 u)	
		\end{equation}
		hold true, where $\mathcal P$ is a differential operator whose coefficients are logarithmic growth functions.
	\end{claim} 
	
	\noindent We can restart the same procedure by replacing $f$ with $\nabla_{\xi} f$, and since we can repeat this indefinitely, we are done, \emph{provided that} Claim \ref{claimend1} holds, which is what we next prove.
	
	\noindent First we consider differential operators of the following type 
	\begin{equation}\label{end20}  
		\mathcal P(f)= \sum_{\Lambda}a_{\Lambda}e_\Lambda \cdot f	
	\end{equation}
	acting on functions $f$, where $\Lambda$ is an ordered collection of indexes among $1,\dots, n$ together with $ \ol 1,\dots, \ol n$, and 
	\[e_\Lambda:= e_{\lambda_1}\cdot \big(\dots (e_{\lambda_d}\cdot f)\big) \]
	with the convention that $e_{\ol \alpha}= \ol e_{\alpha}$. Here we interpret the vectors $e_\alpha, \ol e_\alpha$ as derivations. Since they don't commute, the order is relevant (but not too important). Finally, the coefficients $(a_{\Lambda})$ in the expression \eqref{end20} above are assumed to have logarithmic growth. 
	
	A very simple remark, consequence of Lemma \ref{lemend2} is that in case $\mathcal P_1, \mathcal P_2$ are differential operators as in \eqref{end20}, whose respective orders are $d_1$ and $d_2$, then so is the commutator 
	\[[\mathcal P_1, \mathcal P_2]\]
	and moreover its order is at most $d_1+ d_2- 1$. 
	
	Therefore it is sufficient to show that the operators appearing in the claim are a vector-valued version of \eqref{end20}. The verification is done by writing in coordinates each of the operators involved. If $u= \sum u_{I\ol J}e_I^\star\wedge \ol e_J^\star$, then we have 
	\[\partial u= \sum_{I, J, \alpha} e_\alpha (u_{I\ol J})e_\alpha^\star\wedge e_I^\star\wedge \ol e_J^\star- \sum_{I, J, j\in J} \delta_{j, p}u_{I\ol J} e_j^\star\wedge e_I^\star\wedge \ol e_J^\star\]
where the second term is due to the fact that $\partial \ol e_k^\star= - e_k^\star \wedge \ol e_k^\star$ for $k= r+1,\dots, r+p$, and we denote 
by $\delta_{j, p}= 1$ if $j\in \{r+1,\dots, r+p\}$ and zero otherwise. A similar expression can be obtained for the antiholomorphic derivative
	$\dbar u$. Moreover, any form $\tau$ of the following type
	\[\tau = \sum a_i \frac{dz^i}{z^i}+ \sum b_i \frac{d\ol z^i}{\ol z^i}\]
	can be written as 
	\[\tau = \sum a_i\log|z^i| e_i^\star+ \sum b_i \log|z^i| \ol e_i^\star\]
	which accounts for the difference between $D_K$ and $d$. 
	
	The operator $\nabla_\xi$ is clearly of type \eqref{end20}, as consequence of \eqref{end8}. For the operator
	$D_h^\star$, one can argue either directly or by using the commutation relation $D_h^\star= \sqrt{-1}[\Lambda_\omega, \dbar]$. Claim 
	\ref{claimend1} is therefore established.
	\end{proof}

\medskip

\noindent
Theorem \ref{smoothhodge} has several important consequences.
\begin{proposition}\label{finitdim}
	The kernel $\ker(\Delta_K)$ is a finite dimensional subspace contained in $\cC^\infty _K (X, \cE_i)$. 
\end{proposition}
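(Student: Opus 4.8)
The plan is to combine the elliptic estimate of Theorem \ref{keyinequ} (in its domain version Theorem \ref{KeyDom}) with the regularity result of Theorem \ref{smoothhodge} and a standard Rellich-type compactness argument. First I would recall that by Theorem \ref{smoothhodge}, any $f \in \ker(\Delta_K)$ automatically lies in $\cC^\infty_K(X, \cE_i)$: this is immediate since $0 \in \cC^\infty_K(X, \cE_i)$, so applying the theorem with $g = 0$ gives $f \in \cC^\infty_K(X, \cE_i)$. This settles the smoothness assertion, so the only remaining point is finite dimensionality of $\ker(\Delta_K)$ as a subspace of $L^2(X, \cE_i)$.

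For finite dimensionality, I would argue that the unit ball of $\ker(\Delta_K)$ (with respect to the $L^2$-norm) is compact, which by Riesz's theorem forces the space to be finite dimensional. So let $(f_m)$ be a sequence in $\ker(\Delta_K)$ with $\|f_m\|_{L^2} \le 1$. Since $\Delta_K f_m = 0$, the identity $\|D_K f_m\|^2 + \|D_K^\star f_m\|^2 = \langle \Delta_K f_m, f_m\rangle = 0$ shows $D_K f_m = D_K^\star f_m = 0$, so in particular $\|f_m\|_{H^1} \le 1$ (indeed $\|f_m\|_{H^1}^2 = \|f_m\|_{L^2}^2$). By Theorem \ref{KeyDom}, we then also get a uniform bound
\[
\int_X \big(|f_m|^2 |\theta_0|^2 + |D_h f_m|^2 + |D_h^\star f_m|^2\big)\, dV_{\omega_D} \le C.
\]
Now I would extract a weakly convergent subsequence $f_m \rightharpoonup f_0$ in $L^2$ (and weakly in the relevant $H^1$-type space), and show the convergence is in fact strong in $L^2$. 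On any $V \Subset X \setminus D$, interior elliptic regularity for the (genuinely elliptic, smooth-coefficient) operator $\Delta_K$ combined with Rellich gives strong $L^2(V)$-convergence; since the local uniformizing maps $\pi_i$ of \eqref{d4} (or \eqref{clog}) are proper and $\omega_D$ is a Poincaré-type metric, this extends to strong convergence on any $V \Subset X \setminus \Delta_2$, exactly as in the proof of Theorem \ref{hodgedec} (cf. \eqref{conve0}). Near $\Delta_2$, I would use the lower bound $|\theta_0|^2_{\omega_D} \ge \ep_0 \sum \log^2|s_i|^2$ from Property \ref{property} (i.e. \eqref{d12}): the uniform bound on $\int_X |f_m|^2 |\theta_0|^2\, dV_{\omega_D}$ forces the $L^2$-mass of $f_m$ in a small neighborhood $W$ of $|\Delta_2|$ to be uniformly small, so $\lim_{m} \|f_m\|_{L^2(W)} = 0$ uniformly in the size of $W$ as in \eqref{d14}. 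Combining the interior strong convergence with this tightness near $\Delta_2$ gives strong $L^2(X)$-convergence of the subsequence.

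Strong $L^2$-convergence of every bounded sequence in $\ker(\Delta_K)$ means precisely that the closed unit ball of the Hilbert space $\ker(\Delta_K)$ is compact, hence $\dim \ker(\Delta_K) < \infty$. The main obstacle, and the step deserving the most care, is the passage from interior estimates on $X \setminus D$ to a genuine global compactness statement on $X$: one must control the behavior both near $\Delta_1$ (where $\omega_D$ is only cylindrical/smooth-in-the-orbifold-sense and, in the logarithmic case \eqref{secondcases}, $g_D$ is non-Kähler) and near $\Delta_2 + \Delta_3$ (Poincaré and conic singularities). But this is precisely what Theorem \ref{KeyDom} and the contradiction argument of Theorem \ref{hodgedec} are built to handle, so the proof reduces to invoking those results in sequence; no genuinely new estimate is needed.
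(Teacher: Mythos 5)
Your proposal is correct and follows essentially the same route as the paper: smoothness of harmonic forms via Theorem \ref{smoothhodge}, then finite dimensionality by showing the $L^2$-unit ball of $\ker(\Delta_K)$ is compact, combining Rellich on compact subsets of $X \setminus \Delta_2$ with the tightness near $\Delta_2$ coming from the lower bound $|\theta_0|^2_{\omega_D} \gtrsim \sum \log^2|s_i|^2$ in the key a-priori estimate. One small remark: the paper's proof text has a typo, referring to an $\ep$-neighborhood of $\Delta_1$ where it should be $\Delta_2$ (since the Higgs field has nonzero residue only along $\Delta_2$); your version states this correctly.
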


\begin{proof}
	By  \Cref{smoothhodge}, $\ker(\Delta_K)$ is a subspace in  $\cC^\infty _K (X, \cE_i)$. Now we would like to show that it is of finite dimension.
	Let $f\in \ker \Delta_K$. Thanks to \Cref{keyinequ}, we know that 
	\begin{equation}\label{l2control} \|D_h f\|_{L^2} +  \|D^\star _h f\|_{L^2} +\int_X |\theta|^2 |f|^2 \omega_D ^n  \leq C \|f\|_{L^2} .
	\end{equation}
	Set $\|f\|^2 _{H^1} := \|D_h f\|^2 _{L^2} +  \|D^\star _h f\|^2 _{L^2} +\|f\|^2_{L^2}$.
	
	Then we can find a sequence $f_i\in \ker\Delta_K$  such that $ \|f_i\|^2 _{L^2}=1$ for every $i$. To prove the proposition, it is equivalent to show that by passing to some subsequence, $f_i$ converges  in $L^2$.
	
	Note that by passing to some subsequence,  $f_i$ converges weakly to some $f_0$ in $L^2$. Then $f_i -f_0 $ converges weakly to $0$ in $L^2$.
	Now $\|f_i -f_0\|_{L^2} \leq 2$ for every $i$.
	Then on any open set $K\Subset X\setminus \Delta_2$, thanks to \eqref{l2control} and the compact embedding theorem with respect to smooth metric, we know that $f_i -f_0$ converges to $0$ in the $L^2$-norm on $K$. In a small $\ep$-neighborhood $U_\ep$ of $\Delta_1$, we know that $|\theta|^2 \geq \log^2 \ep$. By applying \eqref{l2control} to $f_i -f_0$, we obtain
	$$\int_{U_\ep} |f_i -f_0|^2 \omega_D ^n \leq \frac{C}{\log^2 \ep}  \|f_i -f_0\|_{L^2} =\frac{C}{\log^2 \ep} . $$
	Therefore, we obtain
	$$\limsup_{i\to \infty} \int_X   |f_i -f_0|^2 \omega_D ^n \leq \frac{C}{\log^2 \ep}  .$$
	By letting $\ep \to 0$, we know that $f_i -f_0 \to 0$ in $L^2$ on $X$. The proposition is thus proved. 
\end{proof}

Following the notations in \cite{Sim92}, we define $D^c _K := D''_K -D'_K$ and we have the relation 
\begin{equation}\label{identity}
	D^{c \star}_K =-\sqrt{-1} [\Lambda_\omega, D_K] \qquad D_K ^\star = \sqrt{-1} [\Lambda, D^c_K] .
\end{equation}
The reason why we introduce it is that, as $D^c _K D_K = D''_K D'_K$, if we would like to solve $D_K$-equation in the above setting,  so we need a $D^c _K D_K$-lemma which plays the same role as the $\ddbar$-lemma.

\noindent We introduce $\Delta_K^c := [D^c_K, D^{c\star}_K]$, i.e., the Laplace operator corresponding to the derivative $D^c_K$,  
and take $\cE_k = \bigoplus_{r+s=k} \Lambda^{r,s} T^\star_X \otimes L$, then we have  
\begin{equation}\label{end18} 
	\Delta_K = \Delta_K^c
\end{equation}
since $[D'_K, D_K^{''\star}] = 0$.  
This gives rise to the following proposition.

\begin{proposition}[$\ddbar$-lemma for $C^\infty_K (X, L)$]\label{ddbar1}
	Let $u \in C^\infty_{\bullet, K} (X, L)$ such that $u$ is $D_K$-closed and $D^c _K$-exact. Then 
	$$u = D_K D^c _K v$$
	for some $v\in C^\infty_{\bullet -2, K} (X, L)$.
	\end{proposition}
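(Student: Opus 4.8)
The plan is to run the standard Hodge-theoretic argument for the $\ddbar$-lemma, with $\ddbar$ replaced by $D_K^c D_K$, using the Hodge decomposition for $\Delta_K$ established in \Cref{hodgedec} together with the smoothness propagation of \Cref{smoothhodge}. First I would write down the Hodge decomposition of $u$ with respect to $\Delta_K$: since $u \in C^\infty_{\bullet,K}(X,L)$, \Cref{smoothhodge} gives
\[
u = \mathcal H(u) + \Delta_K(w), \qquad w \in C^\infty_{\bullet,K}(X,L),
\]
with $\mathcal H(u)\in\ker\Delta_K$. Because $u$ is $D_K$-exact (it is $D_K^c$-exact, and as noted in \eqref{end18} we have $\Delta_K=\Delta_K^c$ so the two exactness notions interact cleanly), $u$ is $L^2$-orthogonal to $\ker\Delta_K = \ker D_K \cap \ker D_K^\star$; hence $\mathcal H(u)=0$ and $u=\Delta_K w = (D_K D_K^\star + D_K^\star D_K)w$.

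Next I would use the harmonicity hypotheses to kill one of the two terms. Since $u$ is $D_K$-closed, applying $D_K$ and using that $D_K^\star D_K$ commutes with the projection onto harmonic forms, one shows $D_K^\star D_K w$ is itself harmonic — it lies in $\ker\Delta_K$ — while on the other hand it is $D_K^\star$-exact, hence orthogonal to $\ker\Delta_K$; therefore $D_K^\star D_K w = 0$ and $u = D_K D_K^\star w$. Now invoke that $u$ is $D_K^c$-exact: writing $u = D_K^c \eta$, and using the Kähler-type identity \eqref{identity}, namely $D_K^\star = \sqrt{-1}[\Lambda_\omega, D_K^c]$, together with the commutation relations among $D_K', D_K'', D_K^\star$ valid for a harmonic metric (from the proposition following \Cref{def:harmonic}), the standard computation (as in \cite{Sim92}) shows that $D_K^\star w$ is both $D_K^c$-closed and $D_K^{c}$-co-exact in the appropriate sense, so a second application of the $\Delta_K^c=\Delta_K$ decomposition produces $v$ with $D_K^\star w = D_K^c v + (\text{harmonic}) + (\text{$D_K^c$-exact piece that dies})$, whence $u = D_K D_K^\star w = D_K D_K^c v$. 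By \Cref{smoothhodge} applied once more, $v \in C^\infty_{\bullet-2,K}(X,L)$, and this is the desired conclusion.

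The one point requiring care — and the main obstacle — is the bookkeeping of which Laplacians and adjoints commute. In the $L^2$ / $\ddbar$-lemma argument one repeatedly uses that $D_K''$, $D_K'$, $(D_K'')^\star$, $(D_K')^\star$ all commute with $\Delta_K$ and with each other up to the relations $[D_K', D_K''] = 0$, $[(D_K')^\star, D_K''] = 0$; these hold here because $h$ is harmonic (as recorded in the proposition after \Cref{def:harmonic}), but one must check they persist at the level of the unbounded Friedrichs extensions on the singular manifold $X_0$ with the metric $\omega_D$. This is where \Cref{smoothhodge} and the a-priori estimate \Cref{keyinequ}/\Cref{KeyDom} do the real work: they guarantee that every intermediate form ($w$, $D_K^\star w$, $v$) lies in $C^\infty_K(X,L)$, so that all integrations by parts and commutations are legitimate and no boundary terms at $\Delta_2$ appear (the density \Cref{lemend5} handles the cutoffs). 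Once smoothness is in place, the algebra is exactly the classical one, so I expect the write-up to be short: cite \Cref{hodgedec}, \Cref{smoothhodge}, the harmonic commutation relations, and \eqref{identity}, and assemble them as above.
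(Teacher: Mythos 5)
Your overall strategy is exactly the paper's: observe $\Delta_K=\Delta^c_K$ on $C^\infty_K(X,L)$, invoke the Hodge decomposition with regularity from \Cref{smoothhodge}, and then run the classical $\partial\bar\partial$-argument. The ingredients you list (the commutation relations for harmonic metrics, the Kähler identity \eqref{identity}, the a-priori estimates and density lemma for legitimating integration by parts on $X_0$) are the correct ones, and the paper itself gives essentially a one-line citation of the standard argument at this point.

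That said, the specific chain in your middle paragraph does not quite close as written. After showing $u = D_K D_K^\star w$, you assert that $D_K^\star w$ is ``$D_K^c$-closed''; this does not follow, because nothing forces $D_K^c w=0$, and $D_K^c D_K^\star w = -D_K^\star D_K^c w$ need not vanish. The standard (and cleaner) route is to Hodge-decompose the \emph{potential} $\eta$ with $u=D_K^c\eta$: writing $\eta=\mathcal H(\eta)+D_KD_K^\star\gamma+D_K^\star D_K\gamma$, the harmonic term dies under $D_K^c$ since $\ker\Delta_K=\ker\Delta_K^c$, the term $D_K^c D_K^\star D_K\gamma=-D_K^\star D_K^c D_K\gamma$ is killed by pairing with $D_K^c D_K\gamma$ after applying $D_K u=0$, and what survives is $u=D_K^c D_KD_K^\star\gamma=-D_KD_K^c(D_K^\star\gamma)$. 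Alternatively, if you insist on decomposing $u$ first and obtaining $u=D_K\alpha$ with $D_K^\star\alpha=0$, you then need the extra observation that a $D_K^c$-exact form is $L^2$-orthogonal to $\operatorname{im}D_K^{c\star}$, which forces the $D_K^{c\star}$-component in the $\Delta_K^c$-decomposition of $\alpha$ to contribute nothing. Either fix is short and all the regularity bookkeeping you describe (everything stays in $C^\infty_K$ by \Cref{smoothhodge}) is indeed what makes the manipulations legal; so the gap is local to that one step rather than in the overall plan.
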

\begin{proof}
Note that we have $\Delta_K = \Delta^c _K$ for $C^\infty _K (X, \cE_k)$ when $\cE_k =\oplus_{r+s= k}\Lambda^{r, s}T^\star_X\otimes L$.
Then \Cref{smoothhodge} together with the proof of standard $\ddbar$-lemma implies this proposition.
\end{proof}
	
\begin{cor}\label{decopols}
Assume that $u\in \cC^\infty _{\bullet,K}(X,\Delta_1,L)$. Then $u$ is $L^2$ with respect to 
$(g_D, \omega_D, h_L)$, and consider the corresponding Hodge decomposition
\[u= \mathcal H(u)+ \Delta_K(v).\]
The components $v, \mathcal H(u) \in \cC^\infty _{\bullet,K}(X,\Delta_1,L)$. 
\end{cor}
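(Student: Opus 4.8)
The plan is to reduce the statement entirely to the results already proved in this subsection in the logarithmic incarnation \eqref{secondcases}, namely the Hodge decomposition \Cref{hodgedec}, the regularity theorem \Cref{smoothhodge}, and the finiteness statement \Cref{finitdim}; no new estimate is required.

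First I would check the $L^2$ assertion. By \Cref{defend1}, membership of $u$ in $\cC^\infty_{\bullet,K}(X,\Delta_1,L)=\cC^\infty_{K}(X,\cE_k)$ (with $\cE_k=\oplus_{r+s=k}\Lambda^{r,s}T^\star_X\langle\Delta_1\rangle\otimes L$) is imposed by requiring $\int_X|\nabla^r u|^2\,d\mu_a<+\infty$ for every $r\ge 0$ and every multi-index $a$; taking $r=0$ and $a=0$ gives $\int_X|u|^2\,dV_{\omega_D}<+\infty$, where the pointwise norm is computed with the data $(g_D,\omega_D,h_L)$. Hence $u\in L^2(X,\cE_k)$ in the precise sense used in \Cref{hodgedec}.

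Next, \Cref{hodgedec} applied in the second case of \eqref{twocases} yields the orthogonal decomposition $u=\mathcal H(u)+\Delta_K(v)$ with $\mathcal H(u)\in\ker\Delta_K$ and $v\in\dom(\Delta_K)$; I would moreover take $v$ to be $L^2$-orthogonal to $\ker\Delta_K$. For the harmonic component, \Cref{finitdim} (again in the logarithmic case) gives $\ker\Delta_K\subset\cC^\infty_{K}(X,\cE_k)=\cC^\infty_{\bullet,K}(X,\Delta_1,L)$, so $\mathcal H(u)$ already lies in the desired space. In particular the difference $g:=u-\mathcal H(u)$ is again an element of $\cC^\infty_{\bullet,K}(X,\Delta_1,L)$, and by construction $\Delta_K v=g$.

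Finally I would invoke the regularity \Cref{smoothhodge}: since $v\in\dom(\Delta_K)$, $g\in L^2$, $\Delta_K v=g$, and $g\in\cC^\infty_{K}(X,\cE_k)$, the theorem gives $v\in\cC^\infty_{K}(X,\cE_k)=\cC^\infty_{\bullet,K}(X,\Delta_1,L)$ (and, as noted there, $D_K v$ and $D_K^\star v$ are smooth as well), which completes the proof. The only point deserving a moment's care — and essentially the only place where the substance of the argument lives — is verifying that the hypotheses of \Cref{hodgedec} and \Cref{smoothhodge} genuinely hold in the cylindrical setting \eqref{secondcases}, i.e.\ that $\Delta_K$ here is the Friedrichs extension of $[D_K,D_K^\star]$ built from $(g_D,\omega_D,h_L)$ and that $g$ is really in $\cC^\infty_K$ and not merely in $L^2$; but both were already arranged carefully in the preceding subsections, so this is a matter of bookkeeping rather than of proof.
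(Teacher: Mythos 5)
Your proof is correct and follows the route the paper implicitly intends: apply \Cref{hodgedec} in the logarithmic case for the decomposition, use \Cref{finitdim} to see that the harmonic component lies in $\cC^\infty_{\bullet,K}(X,\Delta_1,L)$, and then invoke \Cref{smoothhodge} for the potential $v$. In fact the corollary is precisely the ``in particular'' clause of \Cref{smoothhodge} specialized to $\cE_k=\oplus_{r+s=k}\Lambda^{r,s}T^\star_X\langle\Delta_1\rangle\otimes L$, so your argument simply unpacks what the paper leaves implicit.
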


\begin{defn}
	Let $u$ be an $L^2$-section with values in $\cE_i$. There exists a unique form $\mathcal G(u)\in \dom(\Delta_K)$ such that 
	\[u= \mathcal H(u)+ \Delta_K\mathcal G(u)\]
	and such that $\mathcal G(u)$ is orthogonal to the space of $\Delta_K$-harmonic forms. 
\end{defn}	

\begin{remark}
	Let $\cE_i = \oplus_{r+s= i}\Lambda^{r, s}T^\star_X\langle \Delta_1\rangle\otimes L$.
	As consequence of Proposition \ref{propend1} we see that the image of the restriction of the Green operator $\mathcal G$ to the space $\cC^\infty _{\bullet,K}(X,\Delta_1,L)$ is included in $\cC^\infty _{\bullet,K}(X,\Delta_1,L)$, i.e. the map
	\[\mathcal G: \cC^\infty _{\bullet,K}(X,\Delta_1,L)\to \cC^\infty _{\bullet,K}(X,\Delta_1,L)\]
	is well-defined.
\end{remark}	


\subsection{Hodge decomposition for currents and $\ddbar$-lemma} 
In this subsection, we focus on $L$-valued smooth forms $u$ with logarithmic poles along the divisor $\Delta_1$ and fixed degree $k$. We have already seen that $u$ induces a section of the vector bundle 
\[
\bigoplus_{r+s=k} \Lambda^{r,s} T_X^\star\langle \Delta_1 \rangle \otimes L,
\]
and moreover that it is $L^2$ with respect to the metric induced on this bundle by $g_D$. However, the curvature of the bundle
\[
(T_X\langle \Delta_1 \rangle, g_D)
\]
is, in general, nonzero.

However, it would be useful to have an analogue of the $\ddbar$-lemma in this context. But since the adjoint $D_K^\star$ of $D_K$ in the logarithmic case is computed with respect to $g_D$, the commutator $[D_K^c, D_K^\star]$ might not vanish (precisely because of the curvature form). Recall that the proof of the $\ddbar$-lemma relies on the vanishing of the curvature.

In order to establish a $\ddbar$-lemma for forms with log poles along $\Delta_1$, we will consider $u$ as an element in the dual space of $C^\infty _K (X, L^\star)$. This follows the ideas in \cite{Nog95}. It is one of the main motivations for discussing next the following notion.
\begin{defn}\label{currentspace}
An $L$-valued current of degree $k$ on $X$ is a linear form 
\[T: \cC^\infty _{2n-k, K} (X, L^\star)\to \CC\]
which is continuous with respect to the topology induced by the family of semi-norms in Definition \ref{defend1}. Here we denote by 
$\cC^\infty _{2n-k, K} (X, L^\star)$ the space of $L^\star$-valued forms of degree $2n-k$, whose covariant derivative of an arbitrary order 
is $L^2$-integrable with respect to $d\mu_a$ for any $a$. The space of currents of degree $k$ will be denoted by $\mathscr D_K(X, L)$.
\end{defn} 

The continuity requirement we impose here means that there exist $N, a$ such that for any form $\phi\in \cC^\infty _{2n-k, K} (X, L^\star)$ we have
\[|T(\phi)| \leq C\big(\sum_{r=1}^N \Vert \nabla^r \phi\Vert_{L^2, a}\big).\]
\medskip

\noindent Let $u \in C^\infty _K (X, \Delta_1, L)$. We show next that $u$ defines an element in $\mathcal D_K(X, L)$.

\begin{proposition}\label{propend3}
	Let $u \in C^\infty _K (X, \Delta_1, L)$. 
	There exists a positive constant $C> 0$ and a set of natural numbers $a_i$ such that 
	\[\Big|\int_X u\wedge \phi\Big|\leq C(\Vert \phi\Vert_{L^2, a}+ \Vert \nabla \phi\Vert_{L^2, a})\]
	for any $\phi\in \cC^\infty _{K} (X, L^\star)$.  In particular, $u$ defines an element in $\mathcal D_K(X, L)$.
	
	Here denote by $\displaystyle \Vert\cdot \Vert_{L^2, a}$ the $L^2$ norm on forms with respect to $(\omega_D, h_L^\star)$ and the volume form $d\mu_a$. 
\end{proposition}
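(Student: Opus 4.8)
The plan is to reduce everything to a local computation near a point of $\Delta_1$, using a partition of unity subordinate to the covering $X = \bigcup U_\alpha$ introduced above, together with Definition \ref{defend1}. First, I would fix a smooth cutoff decomposition $1 = \sum_\alpha \chi_\alpha$ with $\Supp(\chi_\alpha)\subset U_\alpha$ (which one can take with logarithmic growth, hence multiplication by $\chi_\alpha$ preserves $\cC^\infty_K$ by Proposition \ref{propend1}), and it suffices to estimate $\int_X (\chi_\alpha u)\wedge \phi$ for each $\alpha$, i.e. reduce to the case $\Supp(u)\subset \Omega$ for a single coordinate chart $(\Omega, z)$ with $\Delta_1\cap \Omega = (z^1\cdots z^r = 0)$, $\Delta_2\cap \Omega = (z^{r+1}\cdots z^{r+p}=0)$. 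We may also pass to the ramified cover that makes $\Delta_3$ disappear and $h_L$ smooth, as is done throughout the subsection. In this local model $u$ is a finite sum $\sum u_{I\ol J}\, l_I^\star\wedge \ol e_J^\star\otimes e_L$ where $l^\star$ involves the factors $dz^i/z^i$ for $i\le r$; the pairing $\int_\Omega u\wedge \phi$ thus produces terms $\int_\Omega u_{I\ol J}\, \phi_{I'\ol J'}\, \tfrac{d\lambda}{z^{i_1}\cdots z^{i_s}}$ with the remaining wedge giving the Lebesgue measure (up to a smooth positive factor).

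The second step is to control each such singular integral by an $L^2$ norm against some $d\mu_a$. The key mechanism is: a factor $1/z^i$ ($i\le r$), when paired against $L^2$-functions in the smooth measure $dV_{\omega_D}$, is not integrable, but after one integration by parts in $z^i$ — legitimate because $\Supp(u)\Subset \Omega\setminus\Delta_2$ and, along $\Delta_1$, $u$ has only a logarithmic pole while $\phi\in \cC^\infty_K(X,L^\star)$ is genuinely bounded — the singularity $1/z^i$ is traded for $\log|z^i|$ acting on either a derivative of $u_{I\ol J}$ or a derivative of $\phi_{I'\ol J'}$. Concretely, writing $\tfrac1{z^i} = \partial_{z^i}\log|z^i|^2$ (up to the harmless $\bar z^i$ factor), integration by parts moves $\partial_{z^i}$ onto the product $u_{I\ol J}\phi_{I'\ol J'}$ (times a smooth weight), leaving $\log|z^i|^2$ as an extra multiplier. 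Iterating over all the $z^{i_\ell}$ in the denominator, one lands on an integral of the shape $\int_\Omega \log^{N}|s_{\Delta_1}|\,|u|_{(r)}\, |\phi|_{(1)}\, dV$ where $|u|_{(r)}$ involves derivatives of $u$ of order $\le r$ (here $r = |I\cap\{1,\dots,r\}|\le n$) and $|\phi|_{(1)}$ involves $\phi$ and $\nabla\phi$. Since $\log^N|s_{\Delta_1}|\in L^{q}(dV_{\omega_D})$ for every $q<\infty$ (by \eqref{rest7} or its analogue, the divisor being snc), a Cauchy–Schwarz/Hölder argument bounds this by $C\,\|\,|u|_{(r)}\|_{L^2,a}\cdot(\|\phi\|_{L^2,a'}+\|\nabla\phi\|_{L^2,a'})$ for suitable multi-indices $a, a'$ — and $\|\,|u|_{(r)}\|_{L^2,a}$ is finite precisely because $u\in C^\infty_K(X,\Delta_1,L)$, i.e. $\nabla^j u\in L^2(d\mu_a)$ for all $j$. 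I should be careful that the Poincaré factors $\log^2|s_i|$ ($i\le p$) implicit in $\omega_D$ and in the frame $e^\star$ are absorbed into the $d\mu_a$ on the $\phi$-side; this is routine given Lemma \ref{lemend3} and Lemma \ref{lemend2}, which say that all the structure constants appearing (the $A^\alpha_{\beta\gamma}$, the change-of-frame functions) have logarithmic growth.

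The main obstacle, and the point requiring genuine care rather than bookkeeping, is the integration-by-parts step near the intersection strata of $\Delta_1$ with $\Delta_2$: one must make sure that moving $\partial_{z^i}$ across the cutoff and across the Poincaré-singular metric coefficients does not reintroduce non-$L^2$ terms, and that the boundary terms vanish. The vanishing of boundary terms at $\Delta_2$ is guaranteed by the compact support of $u$ in $X\setminus\Delta_2$; at $\Delta_1$ the would-be boundary term carries a factor $\log|z^i|\cdot(\text{bounded})$ which tends to zero on small tori $|z^i|=\epsilon$ after the $\epsilon$-limit, using that $u_{I\ol J}$ has at worst logarithmic growth transversally — this is exactly the content of "log poles along $\Delta_1$" together with the derivative bounds in the definition of $\cC^\infty_K$. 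Once this is pinned down, the continuity estimate $|T(\phi)|\le C(\|\phi\|_{L^2,a}+\|\nabla\phi\|_{L^2,a})$ holds with $N=1$, which is stronger than the general $N$ allowed in Definition \ref{currentspace}, so $u$ indeed defines an element of $\mathscr D_K(X,L)$.
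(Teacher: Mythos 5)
Your route is genuinely different from the paper's. The paper observes that a form with logarithmic poles along the snc divisor $\Delta_1$ belongs to $L^p(X,\omega_D)$ for every $p<2$ (transversally $|u|\sim\prod 1/|z^i|$, and $1/|z^i|^p$ is locally integrable precisely for $p<2$), and then pairs directly by H\"older: $|\int_X u\wedge\phi|\le \|u\|_{L^p}\,\|\phi\|_{L^q}$ with $q=p/(p-1)>2$, controlling $\|\phi\|_{L^q}$ by the weighted Sobolev inequality of Lemma~\ref{sobolev}, which bounds $\|\phi\|_{L^{2+2\delta_n}}$ linearly by $\|\phi\|_{L^2,1}+\|\nabla\phi\|_{L^2,1}$. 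That gives the estimate immediately, with no partition of unity and no integration by parts.

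The integration-by-parts route you propose has a genuine gap in the derivative count. When $u$ has log poles along a stratum of codimension $s\ge 2$, say $u$ locally contains the factor $\frac{dz^1}{z^1}\wedge\cdots\wedge\frac{dz^s}{z^s}$, the pairing produces an integral with the kernel $1/(z^1\cdots z^s)$. Each integration by parts in one $z^{i_\ell}$ trades one pole for one logarithm \emph{and one derivative of the product $u_{I\ol J}\phi_{I'\ol J'}$}; after clearing all $s$ poles, the total derivative order that has been applied is $s$, distributed between $u$ and $\phi$. There is no mechanism that forces all but one of these derivatives onto $u$: in the model case $u=\frac{dz^1}{z^1}\wedge\frac{dz^2}{z^2}$ paired with $\phi=\phi_{\ol 1\ol 2}\,d\bar z^1\wedge d\bar z^2$, the coefficient of $u$ is constant, so both derivatives land on $\phi$, yielding $\int \partial_{z^1}\partial_{z^2}\phi_{\ol 1\ol 2}\,\log|z^1|^2\log|z^2|^2\,d\lambda$. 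This cannot be bounded by $\|\phi\|_{L^2,a}+\|\nabla\phi\|_{L^2,a}$ alone, so the $N=1$ continuity estimate you claim at the end does not follow from your argument. What your argument does give, once the bookkeeping is completed, is continuity of order $N$ equal to the maximal codimension of a stratum of $\Delta_1$ — which would still suffice to conclude $u\in\mathscr D_K(X,L)$ by Definition~\ref{currentspace}, but not the stated sharp inequality. If you want the sharp version, the paper's $L^p$--$L^q$ duality via Lemma~\ref{sobolev} is both shorter and avoids this issue, since it never differentiates the log-pole factor at all.
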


\begin{proof}
	There are probably many ways in which this result can be established, but here we see it as direct consequence of Sobolev inequality 
	for $(X, \omega_D)$, cf. Lemma \ref{sobolev}
 combined with the Hölder inequality. Indeed, the form $u$ belongs to $L^p$ for any $p< 2$, from which we easily conclude via \eqref{end16}.
\end{proof}

\medskip

\noindent This proposition can be generalised for forms with log-poles defined on an intersection of components of $\Delta_1$.
\begin{proposition}\label{propend4}
	We suppose that $\Delta_1 =\sum_{i=1}^m D_i$ and $\Xi := \cap_{i\in I} D_i$ for some $I\subset \{1, \cdots, m\}$. Let $\eta \in C^\infty _K (\Xi, (\Delta_1 \setminus \sum_{i\in I} D_i) |_\Xi , L)$. We moreover assume that $\eta\in L^{2-\ep}(\Xi, L, \omega_D|_\Xi)$ for any positive $\ep> 0$.
	Then $\eta$ induces a current on $X$.
\end{proposition}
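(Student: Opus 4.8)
The plan is to reduce \Cref{propend4} to \Cref{propend3}, or rather to its method of proof, by pushing forward the current $\eta$ from $\Xi$ to $X$ and controlling the resulting expression with the Sobolev inequality of \Cref{sobolev} and Hölder's inequality. First I would make precise what it means for $\eta$ to induce a current on $X$: for $\phi\in \cC^\infty_{K}(X, L^\star)$ of the appropriate degree, set $T_\eta(\phi):=\int_\Xi \eta\wedge \phi|_\Xi$. By \Cref{lem:residue} (applied iteratively over the components $D_i$, $i\in I$, that cut out $\Xi$), the restriction $\phi|_\Xi$ lies in $C^\infty_{K}(\Xi, L^\star)$, and all its covariant derivatives are $L^2$ on $\Xi$ with respect to any $d\mu_a$; moreover the relevant $L^2$-norms on $\Xi$ are controlled, uniformly in $\phi$, by the norms $\Vert\nabla^r\phi\Vert_{L^2,a}$ on $X$. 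This is exactly the content of the estimates \eqref{rest1}--\eqref{rest8} in the proof of \Cref{lem:residue}, so I would simply quote them.

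Next, I would estimate $|T_\eta(\phi)|$. Since $\eta\in L^{2-\ep}(\Xi, L,\omega_D|_\Xi)$ for every $\ep>0$, Hölder's inequality on $\Xi$ gives
\[
\Big|\int_\Xi \eta\wedge \phi|_\Xi\Big|\leq \Vert \eta\Vert_{L^{2-\ep}(\Xi)}\cdot \Vert \phi|_\Xi\Vert_{L^{q}(\Xi)},
\]
where $q=q(\ep)$ is the conjugate exponent, which is slightly larger than $2$; choosing $\ep$ small enough we can arrange $q\le 2+2\delta_{n-1}$, with $\delta_{n-1}$ the Sobolev exponent attached to $(\Xi,\omega_D|_\Xi)$ (here $\Xi$ has dimension $n-|I|$, and $\omega_D|_\Xi$ still has Poincaré singularities along $(\Delta_2)|_\Xi$ and conic singularities along $(\Delta_3)|_\Xi$, so the Sobolev-type inequality of \Cref{sobolev}, or its analogue, is available on $\Xi$). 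The Sobolev inequality then bounds $\Vert\phi|_\Xi\Vert_{L^{q}(\Xi)}$ by $\Vert\phi|_\Xi\Vert_{L^2}+\Vert\nabla(\phi|_\Xi)\Vert_{L^2}$, both measured with a suitable weight $d\mu_{a}|_\Xi$. Finally I would absorb the restriction via \Cref{lem:residue}: each of these norms on $\Xi$ is $\leq C(\Vert\phi\Vert_{L^2,a}+\Vert\nabla\phi\Vert_{L^2,a}+\Vert\nabla^2\phi\Vert_{L^2,a})$ on $X$, with $C$ and $a$ independent of $\phi$. Combining, $|T_\eta(\phi)|\leq C\sum_{r\le 2}\Vert\nabla^r\phi\Vert_{L^2,a}$, which is precisely the continuity condition in \Cref{currentspace}, so $\eta$ defines an element of $\mathscr D_K(X,L)$.

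The one point requiring a little care — and the main obstacle — is matching up the weights and Sobolev exponents between $X$ and $\Xi$: the integrability hypothesis on $\eta$ is stated for the bare metric $\omega_D|_\Xi$, with no logarithmic weight, whereas the Sobolev inequality on $\Xi$ naturally involves the measure $d\mu_1|_\Xi$ (with $\prod\log^2|s_i|^2$ factors). This is handled, just as in the proof of \Cref{lem:residue}, by the snc bound \eqref{rest7}: powers of $\log\tfrac{1}{|s_Y|^2}$ are integrable against $dV_{\omega_D}$, so one absorbs the discrepancy through another application of Hölder, at the cost of shrinking $\ep$ slightly further. Everything else is a routine repetition of the argument already given for \Cref{propend3} and \Cref{lem:residue}, so I would keep the write-up brief and refer back to those proofs for the recurring computations.
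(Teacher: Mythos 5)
Your argument is correct and reaches the right conclusion, but it is organized around a different conversion mechanism from the one in the paper. The paper passes from the $\Xi$-integral to an $X$-integral in a single step via a codimension-$|I|$ Poincar\'e--Lelong type identity, namely (for $|I|=2$, $\Xi=(w_1=w_2=0)$)
\[
\int_\Xi(\,\cdot\,)\,\omega_D^{\,n-2}=\int_X(\,\cdot\,)\,dd^c\tfrac{1}{|w_1|^2+|w_2|^2}\wedge dd^c(|w_1|^2+|w_2|^2)\wedge\omega_D^{\,n-2},
\]
integrating by parts to move the $dd^c$ onto the integrand and thereby producing the singular weight $\tfrac{1}{|w_1|^2+|w_2|^2}\in L^{2-\ep}(X)$, and then applying \Cref{sobolev} on $X$ to absorb it. You instead iterate \Cref{lem:residue} over the components $D_i$, $i\in I$, to push the relevant Sobolev norms from $\Xi$ up to $X$. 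This buys you a cleaner treatment of general codimension (the paper writes out only $|I|=2$ and leaves the general case implicit), but at the cost of needing the \emph{quantitative} estimates \eqref{rest1}--\eqref{rest8} contained in the proof of \Cref{lem:residue} rather than merely its qualitative conclusion; since those estimates are the codimension-one instance of precisely the Poincar\'e--Lelong/integration-by-parts trick the paper uses directly, the underlying analysis is the same and your approach is best thought of as a factored version of it. The H\"older/Sobolev bookkeeping on $\Xi$ that you make explicit (choosing $\ep$ so that the conjugate exponent $q=\tfrac{2-\ep}{1-\ep}$ lies in the Sobolev range for $\dim_\CC\Xi=n-|I|$, then absorbing the $\log$-weights via \eqref{rest7}) is also present in the paper's proof, just suppressed in the statement of \eqref{end39}. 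One small inaccuracy worth fixing: your final bound $|T_\eta(\phi)|\le C\sum_{r\le 2}\Vert\nabla^r\phi\Vert_{L^2,a}$ underestimates the number of derivatives that actually accrue; each application of \Cref{lem:residue} costs roughly two extra covariant derivatives plus an adjustment of the multi-index $a$, and even for $|I|=2$ the paper's estimate \eqref{end43} runs to order six. This is harmless, because the topology on $\mathscr D_K(X,L)$ in \Cref{currentspace} only requires control by \emph{some} finite family of seminorms, but the precise count in your write-up should be left unspecified or corrected.
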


\begin{proof}
	In order to illustrate the argument, assume that $\eta$ has log-poles and is defined locally near $\Xi= (w_1= w_2= 0)$, and let $u$ be a test form. The linear form induced by $\eta$ is the following
	\[ T(u):= \int_\Xi\eta\wedge u|_\Xi.\] 
	
	\noindent By the Sobolev inequality applied on $\Xi$, we infer that the following
	\begin{equation}\label{end39} 
		|T(u)|^2\leq C\int_\Xi (|u|^2+ |\nabla u|^2)\log^2|s|^2{\omega_D^{n-2}}	
	\end{equation}
	holds, where $C> 0$ is a positive real constant and $\log^2|s|^2$ is an ad-hoc notation for the product $\prod \log^2|s_i|_D|^2$.
	
	Since the metric $\omega_D$ has Poincar\'e singularities, there exists a constant $C> 0$ such that  
	\begin{equation}\label{end40} 
		(|u|^2+ |\nabla u|^2)\log^2|s|^2\leq C(|u\log|s||^2+ |\nabla (u\log|s|)|^2)	
	\end{equation}
	is verified. Therefore, we obtain
	\begin{equation}\label{end37} 
		|T(u)|^2\leq C\int_\Xi(|u_s|^2+ |\nabla (u_s)|^2){\omega_D^{n-2}}	
	\end{equation}
	where $u_s:= u\log|s|$, as consequence of \eqref{end39}. The RHS of this inequality coincides with
	\begin{equation}\label{end41} 
		\int_X(|u_s|^2+ |\nabla (u_s)|^2)dd^c\frac{1}{|w_1|^2+ |w_2|^2}\wedge dd^c (|w_1|^2+ |w_2|^2)\wedge \omega_D^{n-2}
	\end{equation}
	and integration by parts shows that \eqref{end41} is bounded by the integral
	\begin{equation}\label{end38} 
		\int_X \frac{1}{|w_1|^2+ |w_2|^2}\big(\sum_{k= 1}^4 |\nabla ^k(u_s)|^2\big) dV_{\omega_D}	
	\end{equation}
	up to a constant.
	
	The function $\frac{1}{|w_1|^2+ |w_2|^2}$ belongs to the space $L^{2-\ep}$ for every $\ep > 0$, so Sobolev's inequality shows that \eqref{end38} is bounded by
	\begin{equation}\label{end42} 
		\int_X\big(\sum_{k= 0}^6 |\nabla ^k(u_s)|^2\big)d\mu_1.	
	\end{equation}
	again, up to a constant. Recalling the definition of $u_s$, this is smaller than
	\begin{equation}\label{end43} 
		\sum_{k= 0}^6 \int_X|\nabla ^k(u)|^2d\mu_a,	
	\end{equation}
	for some $a$, and the proposition is proved.
\end{proof}

\begin{remark}
	If instead of $L^{2-\ep}$ we have the hypothesis $\displaystyle \frac{1}{\log^a(1/|s|)}\eta\in L^2$, the proof just finished still works and we derive the same conclusion as in Proposition \ref{propend4}.
\end{remark}

\subsubsection{Two applications}

\noindent We derive two important consequences: a De Rham-Kodaira type decomposition for forms with log-poles along $\Delta_1$, and more importantly, the
$D_KD^c_K$ lemma, crucial in the proof of the next result. These two results will be discussed next.
\smallskip

Recall that by Proposition \ref{finitdim}, $\ker(\Delta_K)$ is of finite dimension. Let $(\xi_i)_{i\in I}$ be a basis of $\ker(\Delta_K)$, which moreover is orthonormal with respect to the Hermitian structure on $\cE_k$. 

\noindent Given a current $T\in \mathscr D_K(X, L)$ we introduce the following objects.

\begin{defn} The harmonic projection of $T$ is equal to
	\[\mathcal H(T)= \sum c^i\xi_i,
	\]	
	where the coefficients $c^i\in \CC$ are defined by the equality $\displaystyle c^i:= T(\xi_i)$.
\end{defn}
\medskip

\noindent Recall that given an element $u\in \cC^\infty _{K}(X, L^\star)$,  thanks to \Cref{smoothhodge}, we have the decomposition 
\[u= \mathcal H(u)+ \Delta_K\mathcal G(u)\]
and $\mathcal G(u)\in \cC^\infty _{K}(X,L^\star)$.
Now given current $T\in \mathscr D_K(X, L)$, we define $\mathcal G(T)$ to be the current defined by duality as follows
\begin{equation}\label{greendef}
	\mathcal G(T)(u):= T\big(\mathcal G(u)\big).
	\end{equation}
Notice that this is well-defined, since $\mathcal G(u)\in \cC^\infty _{\bullet,K}(X,L^\star)$.
\smallskip

\noindent We discuss the following very useful statement.
\begin{thm}[De Rham-Kodaira decomposition for currents]\label{currentdecomp}
Given a current $T\in \mathscr D_K(X, L)$, we have the following Hodge decomposition
\begin{equation}\label{end17} 
	T= \mathcal H(T)+ \Delta_K\mathcal G(T)
\end{equation}
and $\mathcal G(T) (\xi)=0$ for every $\xi \in \ker(\Delta_K)$. 
\end{thm}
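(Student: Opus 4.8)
The plan is to mimic the classical de Rham--Kodaira argument, which expresses a current as the sum of its harmonic part and an exact-plus-coexact part via the Green operator, but now working against the test space $\cC^\infty _{2n-k,K}(X,L^\star)$ rather than smooth forms. The key point is that all the operations we need have already been shown to preserve this space: by \Cref{smoothhodge} the Green operator $\mathcal G$ maps $\cC^\infty _{K}(X,L^\star)$ to itself, and by \Cref{propend1} the operators $D_K$, $D_K^\star$ and hence $\Delta_K$ do as well. Thus the dual operators $\mathcal H(T)$, $\mathcal G(T)$ defined in \eqref{greendef} make sense on $\mathscr D_K(X,L)$, and we only have to check that the decomposition \eqref{end17} holds in the sense of currents and that $\mathcal G(T)$ kills the harmonic space.

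First I would recall that for a smooth test form $u\in\cC^\infty _{2n-k,K}(X,L^\star)$ the decomposition $u=\mathcal H(u)+\Delta_K\mathcal G(u)$ holds by \Cref{smoothhodge}, with $\mathcal G(u)$ orthogonal to $\ker\Delta_K$. I would then compute, for $T\in\mathscr D_K(X,L)$,
\begin{equation}\label{endproof1}
(\Delta_K\mathcal G(T))(u)=\mathcal G(T)(\Delta_K u)=T\big(\mathcal G(\Delta_K u)\big),
\end{equation}
using the definition of the action of a differential operator on a current (by duality through the formal self-adjointness of $\Delta_K$ on $\cC^\infty _{K}$, which is legitimate because $\Delta_K$ preserves the test space). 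Now $\mathcal G(\Delta_K u)=\mathcal G(\Delta_K\mathcal G(u))$ (since $\Delta_K$ annihilates the harmonic part), and because $\mathcal G(u)$ is orthogonal to $\ker\Delta_K$ one has $\mathcal G(\Delta_K\mathcal G(u))=\mathcal G(u)$ wait --- more carefully, $\mathcal G\Delta_K=\mathrm{Id}-\mathcal H$ on $\cC^\infty _{K}$, so $\mathcal G(\Delta_K u)=u-\mathcal H(u)$. Substituting into \eqref{endproof1} gives $(\Delta_K\mathcal G(T))(u)=T(u)-T(\mathcal H(u))$. Finally $T(\mathcal H(u))=\sum_i\langle u,\xi_i\rangle T(\xi_i)=\sum_i c^i\langle u,\xi_i\rangle=\mathcal H(T)(u)$ by the definition of $\mathcal H(T)$, where we use that the $(\xi_i)$ form an orthonormal basis of $\ker\Delta_K$ and that $\Delta_K=\Delta_K^\star$ so $\ker\Delta_K$ for $L$ and for $L^\star$ are dual via the pairing. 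Hence $T(u)=\mathcal H(T)(u)+(\Delta_K\mathcal G(T))(u)$ for every test form $u$, which is \eqref{end17}. The identity $\mathcal G(T)(\xi)=0$ for $\xi\in\ker\Delta_K$ follows immediately from $\mathcal G(T)(\xi)=T(\mathcal G(\xi))=T(0)=0$.

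The main obstacle I anticipate is justifying the duality manipulations rigorously: namely that $\Delta_K$, $D_K$, $D_K^\star$ act on currents in $\mathscr D_K(X,L)$ by transposition against the test space $\cC^\infty _{2n-k,K}(X,L^\star)$ in a way compatible with the formal adjunction formulas, and that the harmonic forms $\xi_i$ (which live in $\cC^\infty _{K}(X,L)$ by \Cref{finitdim}) genuinely pair continuously with the test forms. The continuity of $T$ only gives control by finitely many seminorms $\Vert\nabla^r\phi\Vert_{L^2,a}$, so one must check that $\mathcal G(u)$ and $\Delta_K u$ stay within a \emph{fixed} such seminorm scale --- this is exactly the content of \Cref{smoothhodge} together with the elliptic estimate \Cref{logpole} and the Gårding inequality \Cref{h2norm}, so the tools are in place; what remains is to assemble them and verify that no loss of derivatives occurs beyond what the estimates allow. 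Once this bookkeeping is done, the algebraic identity above closes the proof.
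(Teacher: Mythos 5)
Your proof is correct and takes essentially the same approach as the paper's: both rest on the test-form decomposition $u=\mathcal H(u)+\Delta_K\mathcal G(u)$ from \Cref{smoothhodge}, the commutation $[\mathcal G,\Delta_K]=0$, and the duality definition $\mathcal G(T)(u)=T(\mathcal G(u))$, the only difference being that you expand $(\Delta_K\mathcal G(T))(u)$ directly while the paper sets $T_1:=T-\mathcal H(T)$ and shows $T_1=\Delta_K\mathcal G(T_1)$. The two computations are the same rearrangement of the same identity.
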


\begin{proof}
	By Definition \eqref{greendef}, we know that $\mathcal G(T) (\xi)=0$ for every $\xi \in \ker(\Delta_K)$.
	 
	We consider the current $T_1:= T- \mathcal H(T)$.  By construction, $T_1 (\xi)= 0$ for every $\xi \in \ker(\Delta_K)$. So we have
	\[T_1(u)= T_1\big(\Delta_K\mathcal G(u)\big)  \qquad\text{ for every } u\in \cC^\infty _{K}(X, L^\star).\]
	By  $[\mathcal  G, \Delta_K]=0$ and \eqref{greendef}, we have 
	$$T_1\big(\Delta_K\mathcal G(u)\big)= T_1\big(\mathcal G \Delta_K (u)\big) = \mathcal{G} (T_1) (\Delta_K u) = \Delta_K (\mathcal G (T_1)) (u) .$$
	By combining the above two equations, we obtain 
	$$T_1 (u)=  \Delta_K (\mathcal G (T_1)) (u) \qquad\text{ for every } u\in \cC^\infty _{K}(X, L^\star) .$$
	Then $T_1 = \Delta_K (\mathcal G (T_1)) $. This implies \eqref{end17}.
	\end{proof}

Now we would like to prove a $D_KD^c_K$-type lemma for $u \in C^\infty_K(X, \Delta_1, L)$. Recall that in \Cref{ddbar1}, we proved a $D_KD^c_K$-type lemma for $C^\infty_K(X, L)$. The key point in the proof is that we have $\Delta_K = \Delta^c_K$ when $\cE_k = \bigoplus_{r+s=k} \Lambda^{r,s} T^\star_X \otimes L$. If instead 
$$\cE_k = \bigoplus_{r+s=k} \Lambda^{r,s} T^\star_X \langle \Delta_1 \rangle \otimes L ,$$
then we no longer have $\Delta_K = \Delta^c_K$ since the metric $g_D$ is not flat in general.

In order to establish a $D_KD^c_K$-type lemma for the sections in $\cC^\infty_K(X, \Delta_1, L)$, we follow the idea in \cite{Nog95}, making use of the above De Rham--Kodaira decomposition \Cref{currentdecomp}. We then obtain the following result.

\begin{lemme}[A $D_KD^c_K$-type lemma]\label{lemend4} Let $u\in \mathscr D_K(X, L)$ such that $D_K u= 0$ as currents on $X$.
	We assume moreover that $u= D_K^c v$ on $X$ for some current $v$. Then $u$ is in the image of $D_KD_K^c$ on the total space $X$. 
\end{lemme}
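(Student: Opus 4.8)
The plan is to mimic the classical proof of the $\ddbar$-lemma, but carried out at the level of $L$-valued currents using the De Rham--Kodaira decomposition established in \Cref{currentdecomp}. First I would recall the algebraic identities of \eqref{identity}, namely $D_K^\star = \sqrt{-1}[\Lambda_\omega, D_K^c]$ and $(D_K^c)^\star = -\sqrt{-1}[\Lambda_\omega, D_K]$, which hold for operators acting on the honest smooth-form spaces; since the Green operator $\mathcal G$ and the harmonic projection $\mathcal H$ of \Cref{currentdecomp} are defined by duality against $\cC^\infty_K(X, L^\star)$, and $D_K$, $D_K^c$, $D_K^\star$, $(D_K^c)^\star$, $\mathcal G$ all commute on the smooth side by \Cref{smoothhodge}, the corresponding identities pass to currents in $\mathscr D_K(X, L)$ by transposition. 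The key point — already emphasised in the discussion preceding the statement — is that although $[D_K^c, D_K^\star]$ need not vanish on $\cC^\infty_K(X,\Delta_1,L)$, the operators $D_K$, $D_K^c$ both commute with $\Delta_K$ and with $\mathcal G$ when everything is interpreted in the dual (current) picture, because on the smooth test-form side $C^\infty_K(X, L^\star)$ we do have $\Delta_K = \Delta_K^c$ (that is exactly \Cref{ddbar1} and \eqref{end18}).

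Next I would run the standard argument. Since $D_K u = 0$, the harmonic component behaves well: $D_K \mathcal H(u)=0$ and $D_K^c \mathcal H(u) = 0$ because harmonic currents (being paired against harmonic smooth forms) are annihilated by all four operators, so by the decomposition $u = \mathcal H(u) + \Delta_K \mathcal G(u) = \mathcal H(u) + D_K D_K^\star \mathcal G(u) + D_K^\star D_K \mathcal G(u)$, and using $D_K u = 0$ together with $[\mathcal G, D_K]=0$ one gets $D_K^\star D_K \mathcal G(u) = \mathcal G(D_K^\star D_K u)=0$. Hence $u = \mathcal H(u) + D_K D_K^\star \mathcal G(u)$ as currents. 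Then I would use the hypothesis $u = D_K^c v$ to kill the harmonic part: pairing $u = D_K^c v$ against a harmonic form $\xi_i$ and integrating by parts (legitimate since $\xi_i \in \cC^\infty_K$ and $(D_K^c)^\star \xi_i = 0$) shows $\mathcal H(u) = 0$. Therefore $u = D_K(D_K^\star \mathcal G(u))$. To convert the factor $D_K^\star$ into $D_K^c$, I would invoke $D_K^\star = \sqrt{-1}[\Lambda_\omega, D_K^c] = \sqrt{-1}(\Lambda_\omega D_K^c - D_K^c \Lambda_\omega)$ on $D_K^\star\mathcal G(u)$, and again use $D_K u = 0$ and the commutation relations to show that the term where $D_K^c$ is innermost produces exactly a $D_K D_K^c$-exact current, while the complementary term vanishes after pairing — this is the purely formal Weil identity manipulation from the classical proof, now transposed to currents.

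The main obstacle I expect is not the algebra but the \emph{regularity/continuity bookkeeping}: one must check at every step that the currents produced ($D_K^\star \mathcal G(u)$, $\Lambda_\omega D_K^c D_K^\star \mathcal G(u)$, etc.) still lie in $\mathscr D_K(X, L)$, i.e. that they are continuous for the semi-norm system of \Cref{defend1}, and that the integration-by-parts steps pairing a current against a test form in $\cC^\infty_K(X, L^\star)$ are justified. The favourable fact here is that $\mathcal G$ maps $\cC^\infty_K(X, L^\star)$ into itself by \Cref{smoothhodge} and Proposition~\ref{propend1}, and that $\Lambda_\omega$, $D_K$, $D_K^c$ all preserve $\cC^\infty_K$ (for $\Lambda_\omega$ this uses the bounded-geometry estimates on $\omega_D$, for the derivatives this uses Lemma~\ref{lemend3} and the logarithmic-growth calculus), so the transposed operators act continuously on $\mathscr D_K(X, L)$. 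Once this is in place, the candidate primitive is $w := \sqrt{-1}\,\Lambda_\omega D_K^c \mathcal G(u)$ (up to sign and lower-order corrections coming from $[\Lambda_\omega, D_K^c]$), and the identity $u = D_K D_K^c w$ follows by assembling the pieces. I would finish by remarking that, a posteriori, if one wants the primitive $w$ itself to be a form with log poles rather than merely a current, that refinement is the business of the subsequent section (solving the equation with controlled singularities), but for the statement as given the current-level conclusion is all that is asked.
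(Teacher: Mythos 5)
Your strategy is sound in spirit but runs parallel to the paper's shorter argument, and your execution has a concrete error. The paper applies the De Rham--Kodaira decomposition of \Cref{currentdecomp} to the \emph{primitive} $v$ rather than to $u$: writing $v = \mathcal H(v) + D_K D_K^\star \mathcal G(v) + D_K^\star D_K \mathcal G(v)$ and applying $D_K^c$, the harmonic part dies because $\Delta_K = \Delta_K^c$ (so $D_K^c\mathcal H(v)=0$), the commutation $[D_K^c, D_K^\star]=0$ rearranges the last term into $-D_K^\star D_K^c\tau$ with $\tau = D_K\mathcal G(v)$, and this vanishes because $D_K u = 0$. No K\"ahler identity $D_K^\star = \sqrt{-1}[\Lambda_\omega, D_K^c]$ is ever needed, so the regularity bookkeeping you flag as the main obstacle simply does not arise.

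Your variant — decompose $u$, reduce to $u = D_K D_K^\star \mathcal G(u)$, then convert $D_K^\star$ into $D_K^c$ using the commutation identity — can be pushed through, but you have not actually closed it. Expanding $D_K^\star\mathcal G(u) = \sqrt{-1}\Lambda_\omega D_K^c\mathcal G(u) - \sqrt{-1}D_K^c\Lambda_\omega\mathcal G(u)$ gives $u = \sqrt{-1}D_K\Lambda_\omega D_K^c\mathcal G(u) - \sqrt{-1}D_K D_K^c\Lambda_\omega\mathcal G(u)$, and the second term is the one you want. The missing observation, which you replace by the vague phrase \emph{``the complementary term vanishes after pairing''}, is that $D_K^c u = (D_K^c)^2 v = 0$, hence $D_K^c\mathcal G(u) = \mathcal G(D_K^c u) = 0$ by $[\mathcal G, D_K^c]=0$, which annihilates the first term outright. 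This also shows that your proposed primitive $w := \sqrt{-1}\Lambda_\omega D_K^c \mathcal G(u)$ is literally the zero current, not a primitive of $u$; the correct candidate is $w = -\sqrt{-1}\Lambda_\omega\mathcal G(u)$. So the proposal as written does not produce the $D_K D_K^c$-exact representation you claim; once you insert the vanishing $D_K^c u=0$ and correct the primitive, your route goes through, but it remains longer and more fragile than the paper's decomposition of $v$.
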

\begin{proof} 
	By hypothesis $u= D_K^c v$ for some current $v$. Now	
	consider the decomposition 
	\[v= \mathcal H(v)+ D_KD^{\star}_K\mathcal G(v)+ D^{\star}_KD_K\mathcal G(v)\]	
	of $v$. We deduce the equality
	\[u= D_K^c\mathcal H(v)+ D_K^cD_K w + D_K^cD^{\star}_K\tau\]
	where 
	\[w:= D^{\star}_K\mathcal G(v), \qquad \tau:= D_K\mathcal G(v).\]
	By the equality \eqref{end18} we deduce that $D_K^c\mathcal H(v)= 0$. Moreover,
	we have 
	\[[D_K^c, D^{\star}_K]= 0\]
	and therefore the relation
	\begin{equation}\label{end19} 
		u= D_K^cD_K w - D^{\star}_KD_K^c\tau
	\end{equation}
	holds. Finally, the last term in \eqref{end19} is equal to zero, since 
	$D_K u= 0$, so we obtain $u= D_K^cD_K w$, which is the conclusion we wanted to reach.
\end{proof}

\medskip

\noindent In this respect, we have the following observation.

\begin{remark} Let $\Theta$ be an $L$-valued current, such that we have
	\[D_KD_K^c \Theta= \Lambda\]
	in the sense of currents, where $\Lambda$ is an $L$-valued differential form,
	such that the following holds
	\[\Lambda \hbox{ and } D_K^{\star}D_K^{c \star}\Lambda \in L^p(X, \omega_D)\]for any $p< 2$ (for example, a smooth form with log-poles). 
	
	Given the equation satisfied by $\Theta$, we can assume that 
	$\Theta\in \IM(D_K^{\star}D_K^{c \star})$. Indeed, we first write the $D_K$-Hodge 
	decomposition for $\Theta$ and obtain that 
	\[D_KD_K^c \Theta= D_KD_K^c D_K^\star \Theta_1\]
	and then the $D_K^c$-Hodge 
	decomposition for $\Theta_1$. 
	
	If that is the case, we have 
	\[\Delta_K\circ\Delta^c_K\Theta= D_K^{\star}D_K^{c \star}\Lambda\]
	and by our assumption, the RHS of this equality belongs to $L^p$. Then so does $\Theta$. Indeed, we have 
	\[\Delta^c_K\Theta= \mathcal G(D_K^{\star}D_K^{c \star}\Lambda)\]
	and then we argue as follows. In order to simplify the writing, we denote by 
	$\xi:= D_K^{\star}D_K^{c \star}\Lambda$ and write 
	\[\int_X\langle \mathcal G(\xi), u \rangle dV_{\omega_D}= \int_X\langle \xi, \mathcal G(u) \rangle dV_{\omega_D}\]
	for any form $u\in  \cC^\infty _{\bullet,K}(X,L)$, by the definition of the Green operator. 
	
	H\"older's inequality -combined with Sobolev's inequality- implies that the square of the absolute value of the LHS of the previous equality is bounded by  
	\[\int_X(|\mathcal G(u)|^2+ |\nabla \mathcal G(u)|^2)dV_{\omega_D}.\]
	Now Theorem \ref{logpole} implies that we have 
	\[\int_X(|\mathcal G(u)|^2+ |\nabla \mathcal G(u)|^2)dV_{\omega_D}\leq C\int_X |u|^2d\mu_a,\]
	taking into account the fact that $\Delta_K\mathcal G(u)= u- \mathcal H(u)$, together with \[ \int_X |\mathcal H(u)|^2d\mu_a\leq C(a)\int_X |\mathcal H(u)|^2dV_{\omega_D}\] again by Theorem \ref{logpole}. 
	\medskip
	
	\noindent Thus, all in all, we have
	\begin{equation}\label{end40} 
		\big|\int_X\langle \mathcal G(\xi), u \rangle dV_{\omega_D}\big|^2\leq C\int_X |u|^2d\mu_a
	\end{equation}
	Since this is true for all $u\in  \cC^\infty _{\bullet,K}(X,L)$, it implies that $\displaystyle \frac{1}{\log^a(1/|s|)}\mathcal G(\xi)\in L^2$. 
	The same argument shows that \[ \frac{1}{\log^a(1/|s|)}\Theta\in L^2.\] 
\end{remark}

\medskip


\section{Resolution of Hypercohomology and Harmonic Representatives} \label{hypercoho}

\noindent We consider a pair $(L, \nabla) \in M_{\rm DR}(X/D)$. The aim of this section is to construct a natural resolution of the complex 
\begin{equation}\label{derhamcom}
	\nabla:\ \cdots \to \Omega^\bullet_X(\log D) \otimes L \to \Omega^{\bullet+1}_X(\log D) \otimes L \to \cdots 
\end{equation}	
induced by the flat connection $\nabla$. 

Let $h_L$ be a harmonic (possibly singular) metric on $L$ satisfying \Cref{property}. Recall that we have a decomposition
\[
D = \Delta_1 + \Delta_2 + \Delta_3
\]
as in \Cref{property}, such that
\[
\theta := D'_{h_L} - \nabla \in H^{0}\big(X, \Omega_X(\log D)\big)
\]
has nonzero residue along $\Delta_2$ and vanishing residue along $\Delta_1 + \Delta_3$. 
Let $\cC^\infty_{\bullet,K}(X,\Delta_1,L)$ be the space defined in \Cref{defend1}.
\medskip

\noindent One of the main results of this section is the following.
\begin{thm}\label{quasi}
	Let $(X, D, L, \nabla)$ be as above. 
	Then the following two complexes
\[
\begin{tikzcd}[row sep=large, column sep=small, ampersand replacement=\&]
	\Omega^\bullet_X(\log D)\otimes L 
	\arrow[r, "\nabla"] 
	\arrow[d, "i"'] 
	\& \Omega^{\bullet+1}_X(\log D)\otimes L 
	\arrow[r, "\nabla"] 
	\arrow[d, "i"'] 
	\& \cdots 
	\arrow[r] 
	\& \Omega^n_X(\log D)\otimes L 
	\arrow[d, "i"'] 
	\arrow[r] 
	\& 0 
	\arrow[d, "i"'] \\
	\cC^\infty_{\bullet,K}(X, \Delta_1, L) 
	\arrow[r, "D_K"] 
	\& \cC^\infty_{\bullet+1,K}(X, \Delta_1, L) 
	\arrow[r, "D_K"] 
	\& \cdots 
	\arrow[r] 
	\& \cC^\infty_{n,K}(X, \Delta_1, L) 
	\arrow[r, "D_K"] 
	\& \cC^\infty_{n+1,K}(X, \Delta_1, L) 
	\arrow[r] 
	\& \cdots
\end{tikzcd}
\]
	are quasi-isomorphic via the natural inclusion $i$. 
\end{thm}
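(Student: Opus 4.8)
The plan is to prove the quasi-isomorphism by checking it at the level of sheaves, i.e. by showing that the inclusion $i$ induces an isomorphism on hypercohomology, which in turn follows from a local statement plus a "fineness" statement. More precisely, I would introduce the sheafified version $\cC^\infty_{\bullet,K}$ of the complex $C^\infty_{\bullet,K}(X,\Delta_1,L)$: for an open set $U\subset X$, the sections over $U$ are the $L$-valued forms whose covariant derivatives are locally $L^2$ with respect to all the measures $d\mu_a$. Then the theorem reduces to two claims: (a) the sheaf complex $(\cC^\infty_{\bullet,K}(X,\Delta_1,L), D_K)$ is a complex of acyclic (e.g. soft/fine) sheaves, and (b) the map of complexes $i:\Omega^\bullet_X(\log D)\otimes L\to \cC^\infty_{\bullet,K}$ is a quasi-isomorphism of complexes of sheaves, i.e. induces an isomorphism on cohomology sheaves. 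Given (a) and (b), the hypercohomology of the top row equals $H^\bullet$ of the bottom row of global sections, and since the bottom row of sheaves is a resolution of the same cohomology sheaves as the top complex, we get the desired quasi-isomorphism.

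For fineness in (a): the sheaves $\cC^\infty_{\bullet,K}$ are modules over the sheaf of functions that are smooth on $X\setminus\Delta_2$ with logarithmic growth along $\Delta_2$ (Definition~\ref{pogrow}), and this sheaf of functions admits partitions of unity subordinate to any open cover (the cutoff functions $\rho_\ep$ and the standard smooth partitions of unity have logarithmic growth, trivially). By Proposition~\ref{propend1}, multiplication by such functions preserves $\cC^\infty_{\bullet,K}$, so these sheaves are fine, hence acyclic. For (b) the point is a local $D_K$-Poincaré lemma: near any point of $X$, a $D_K$-closed section of $\cC^\infty_{\bullet,K}$ of degree $k\geq 1$ is locally $D_K$-exact, and in degree $0$ the kernel of $D_K$ is exactly $i$ of the kernel $\ker(\nabla)$ in $\Omega^0_X(\log D)\otimes L = \mathcal{O}_X(L)$, which is the locally constant sheaf of flat sections on $X\setminus D$, extended appropriately. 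The degree-zero statement is essentially the fact that $D_K$-flat sections are $\nabla$-flat (since $D_K = \nabla+\dbar$ and the $\dbar$-part forces holomorphy). The exactness in positive degrees near a point $x_0$ is where I would use the harmonic-metric machinery locally: either a direct construction using the explicit local model, or — more in the spirit of the paper — a local Hodge-theoretic argument. Actually the cleanest route is to note that the cohomology sheaves of $(\cC^\infty_{\bullet,K}, D_K)$ can be computed, away from $D$, by the usual smooth de Rham/Dolbeault Poincaré lemma for the flat bundle $(L,D_K)$ (where $D_K^2=0$ on $X_0$), and along $D$ by the local structure encoded in Lemma~\ref{lemend3} together with the (local) $\ddbar$-lemma and the local computation of harmonic forms; the residue compatibility Lemma~\ref{lem:residue} is what lets one induct on the number of components of $\Delta_1$ passing through $x_0$.

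A more structural way to organize (b), which I would prefer to write up, is to compare both complexes with Deligne's logarithmic de Rham complex directly. We already know from Theorem~\ref{Deli} that $\mathbb{H}^p(X,\Omega_X^\bullet(\log D)\otimes L)\simeq H^p(X\setminus D,\tau)$ under the residue hypothesis, and the monodromy normalization in Proposition~\ref{specialhar} guarantees we are in that situation. So it suffices to show that $H^p$ of the global complex $(C^\infty_{\bullet,K}(X,\Delta_1,L),D_K)$ also computes $H^p(X\setminus D,\tau)$. For this I would set up the spectral sequence or, better, argue: the inclusion of the $D_K$-harmonic forms $\ker(\Delta_K)$ into the complex $(C^\infty_{\bullet,K},D_K)$ is a quasi-isomorphism, because by Theorem~\ref{smoothhodge} every class has a unique harmonic representative and the harmonic forms are $D_K$- and $D_K^\star$-closed (using $\Delta_K=2\Delta_K''$, harmonic $\Rightarrow$ $D_K$-closed and $D_K^\star$-closed); combined with the Hodge decomposition this gives $H^p(C^\infty_{\bullet,K})\cong\ker(\Delta_K)$ in each degree. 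Then I would identify $\ker(\Delta_K)$ with the hypercohomology of the top complex by a standard argument: the inclusion $i$ of $\Omega^\bullet_X(\log D)\otimes L$ into the bottom sheaf complex, together with the soft resolution, gives a natural map $\mathbb{H}^p\to H^p(C^\infty_{\bullet,K})$, and one checks it is an isomorphism by comparing dimensions (both sides being finite-dimensional, with the harmonic side finite by Proposition~\ref{finitdim}) via a degeneration/filtration argument, or by checking the local quasi-isomorphism of cohomology sheaves as above.

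The main obstacle I anticipate is precisely the local $D_K$-Poincaré lemma near the divisor $D$, in particular along the $\Delta_1$-components where forms genuinely have log poles and the metric $g_D$ is not flat. Away from $D$ this is classical (the flat bundle $(L,D_K)$ is locally trivial, so $D_K$-cohomology is local de Rham cohomology), but across $\Delta_1$ one must check that the $L^2$/growth conditions defining $\cC^\infty_{\bullet,K}$ are neither too weak (so that the complex has the right cohomology, matching $\Omega^\bullet(\log D)$ and not, say, the full de Rham complex of $X\setminus D$) nor too strong (so that local primitives actually exist within the sheaf). Lemma~\ref{lemend3} and Lemma~\ref{lem:residue} are the technical tools that make this work, and the induction on the number of components of $\Delta_1$ through a point — using the residue exact sequence $0\to\Omega^\bullet_X(\log(D-Y))\otimes L\to\Omega^\bullet_X(\log D)\otimes L\xrightarrow{\Res_Y}\Omega^{\bullet-1}_Y(\log(D-Y)|_Y)\otimes L\to 0$ and its counterpart for $\cC^\infty_{\bullet,K}$ provided by Lemma~\ref{lem:residue} — is, I expect, where the real work lies. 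The $\Delta_3$ (conic) directions are handled by pulling back under the local ramified covers $\pi$ of \eqref{clog}, as elsewhere in the paper, so they do not add essential difficulty.
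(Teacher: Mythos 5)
Your overall framing is right: the theorem is a statement about complexes of sheaves (recall the paper's convention that calligraphic $\cC$ denotes a sheaf), so what must be proved is exactly that $i$ induces an isomorphism on cohomology sheaves, i.e. a local $D_K$-Poincar\'e lemma in positive degrees together with the identification of the degree-$0$ kernel with $\ker\nabla$. You also correctly identify this local Poincar\'e lemma as the crux. However, that is where your proposal stops being a proof and starts being a list of plausible tools, and the tools you list are not the ones that actually make the argument work in the paper.

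The paper's proof (Theorem~\ref{quasiiso}, drawing on Proposition~\ref{lelongnumb}, Lemma~\ref{fujiki}, Proposition~\ref{debareq}, Lemma~\ref{locreg}, and Corollary~\ref{localsolution}) runs on a mechanism you do not mention. It introduces an intermediate sheaf $\mathcal{L}^2(U,\cE_k)$ interpolating between ordinary $L^2$-sections and $\cC^\infty_K$, proves a weighted $L^2$-Dolbeault estimate \`a la Fujiki on Stein opens that also controls $D'_{h_L}$ of the primitive, and then performs a type-by-type reduction: for a $D_K$-closed $f$ of total degree $s$, one solves a $\dbar$-equation for its $(0,s)$-component, subtracts $D_K$ of the primitive, and iterates until either $f$ is killed (if $s>n$) or one is left with a holomorphic log form (if $s\le n$); a regularity step (Lemma~\ref{locreg}) upgrades $\mathcal L^2$-primitives to $\cC^\infty_K$. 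Finally, the passage from small Steins to arbitrary Steins uses the classical exhaustion trick from the Dolbeault lemma. None of these ingredients appear in your sketch. Your suggested substitute --- induction on the number of components of $\Delta_1$ through a point via the residue short exact sequence, using Lemma~\ref{lem:residue} --- is a genuinely different idea, and it is not at all clear it goes through: the residue map does not fit $\cC^\infty_{\bullet,K}(X,\Delta_1,L)$ into a short exact sequence with $\cC^\infty_{\bullet,K}(X,\Delta_1-Y,L)$ as kernel (the growth and integrability conditions near $Y$ are not captured that way), and you would in any case still need an analogue of Fujiki's $L^2$ estimate to get local primitives inside the sheaf. So the induction does not replace the analytic input; it pushes it under the rug.

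Two further points. First, your alternative route (combine Theorem~\ref{Deli} with the Hodge decomposition and a dimension count) cannot prove the theorem as stated: at best it yields an isomorphism of total hypercohomology groups, not a quasi-isomorphism of complexes of sheaves, and the latter is strictly stronger (and is what Theorem~\ref{quasi} claims). Second, your step (a) --- softness of the sheaves $\cC^\infty_{\bullet,K}$ --- is not needed for Theorem~\ref{quasi} at all (it matters only for the derived consequence Theorem~\ref{L2cohoisom}); and to the extent you invoke it, you assert rather than prove that a partition of unity subordinate to an arbitrary cover can be chosen with the right growth properties in the $d\mu_a$-norms, which is a nontrivial verification in the Poincar\'e directions.
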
		

\begin{remark}\label{realpol} 
	Before giving the proof, we make a few comments about this statement.
	\begin{enumerate}
		
		\item[(1)]  The inclusion $i$ is well defined: let 
		\[
		\frac{dz_I}{z_I} \wedge \frac{dz_J}{z_J} \otimes e_L
		\]
		be a logarithmic form, where $z_I = 0$ defines some components of $\Delta_1$, and $z_J = 0$ defines some components of $\Delta_2 + \Delta_3$. By the definition of $\cC^\infty_{\bullet,K}(X,L)$, we know that 
		\[
		\frac{dz_J}{z_J} \otimes e_L \in \cC^\infty_{\bullet,K}(X,L).
		\]
		Thus, the wedge product
		\[
		\frac{dz_I}{z_I} \wedge \frac{dz_J}{z_J} \otimes e_L \in \cC^\infty_{\bullet,K}(X,\Delta_1,L).
		\]
		In summary, a form with poles along $\Delta_2 + \Delta_3$ is in fact “smooth’’ in our setting. The only logarithmic poles we need to consider are those along $\Delta_1$.
		
		\item[(2)] As a consequence of Theorem~\ref{quasi}, we obtain an explicit description of the hypercohomology groups, namely
		\[
		\mathbb H^{\bullet}\big(X, \Omega^{\bullet}_X(\log D) \otimes L\big) \simeq 
		\frac{\ker D_K}{\im D_K},
		\]
		where 
		\[
		D_K: \cC^\infty_{\bullet,K}(X,\Delta_1,L) \longrightarrow \cC^\infty_{\bullet+1,K}(X,\Delta_1,L)
		\]
		is considered on $X \setminus \Delta_1$ (ignoring the divisorial part). Together with \Cref{decopols}, each element of 
		\[
		\mathbb H^{\bullet}\big(X, \Omega^{\bullet}_X(\log D) \otimes L\big)
		\]
		admits a unique harmonic representative.
	\end{enumerate}
\end{remark}

\medskip

Before proving Theorem~\ref{quasi}, we need some preparations. Throughout this section, we denote
\[
\cE_{p,q} := \Lambda^{p,q}T^\star_X\langle \Delta_1\rangle \otimes L ,\qquad 
\cE_k := \bigoplus_{p+q=k}\Lambda^{p,q}T^\star_X\langle \Delta_1\rangle \otimes L .
\]
The log pair $(X, D)$ is endowed with the metric $\omega_D$ defined in~\eqref{m1}.  
Let $g_D$ be the metric on $T^\star_X\langle \Delta_1\rangle$ defined in~\eqref{log2}.  
Then $\cE_{p,q}$ is equipped with a smooth Hermitian metric: $g_D$ on $\Lambda^p T^\star_X\langle \Delta_1\rangle$ and $\omega_D$ on $(\Lambda^q T_X^{0,1})^\star$.  
Let $L^2(X, \cE_{p,q})$ denote the space of $L^2$-sections with respect to the above data, and let $C^\infty_K(X, \cE_k)$ be the space defined in \Cref{defend1}.  

For technical reasons, we also consider the following subsheaf, which, loosely speaking, interpolates between $L^2(X, \cE_k)$ and $C^\infty_K(X, \cE_k)$.

\begin{defn}\label{ssheaf}
	Let $\Delta_2 = \sum_{i=1}^m D_i$ be the sum of the components on which the residue of $\theta$ is nonzero.  
	Assume that each hypersurface $D_i$ is locally defined by $z_i=0$. 
	
	Let $U \subset X$ be an open set.  
	We denote by $\mathcal{L}^2(U, \cE_k)$ the subsheaf of $L^2(U, \cE_k)$ whose sections can be written locally as
	\[
	f = \sum_{|I|+|J|=s} f_{I,J}\, dz_I \wedge d\overline{z}_J \otimes e_L,
	\]
	where the coefficients $f_{I,J}$ satisfy the condition
	\[
	\int_V \big| f_{I,J}\, dz_I \wedge d\overline{z}_J \big|^2_{h_L, g_D, \omega_D}
	\prod_{i \in \{1,\dots,m\}\setminus I} \log^2|z_i|\, dV_D < +\infty
	\]
	for every relatively compact open set $V \Subset U$, where $dV_D := \omega_D^n$.
\end{defn}
\medskip

There are several elementary properties of the space $\mathcal{L}^2(X, \cE_k)$.

\begin{proposition}\label{comefromdefn}
	If $f \in \mathcal{L}^2$, then $\theta \wedge f \in \mathcal{L}^2$.
\end{proposition}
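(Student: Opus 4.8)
The statement asserts that the subsheaf $\mathcal L^2 \subset L^2(X,\cE_k)$ introduced in \Cref{ssheaf} is preserved under wedging with the holomorphic logarithmic $1$-form $\theta = D'_{h_L} - \nabla$. The plan is to reduce the claim to a purely local, term-by-term computation and exploit the precise nature of the weight $\prod_{i\notin I}\log^2|z_i|$ appearing in the definition of $\mathcal L^2$, together with the fact that the residues of $\theta$ along $\Delta_2$ are bounded. First I would fix a coordinate chart $U$ in which $\Delta_2\cap U = (z_1\cdots z_m = 0)$ (and $\Delta_1,\Delta_3$ are given by further coordinate hyperplanes), and write $\theta|_U = \sum_{i=1}^m \theta_i\,\frac{dz_i}{z_i} + \sum_{i>m}\theta_i\,dz_i$, where each $\theta_i$ is a \emph{holomorphic} function on $U$ — this is the content of \eqref{partialclose} and Property~\ref{property}, the key point being that along $\Delta_2$ the coefficient $\theta_i$ need not vanish but is bounded, while along $\Delta_1+\Delta_3$ the residue is zero so no genuine pole in those variables is introduced.

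Next, for a section $f = \sum_{|I|+|J|=s} f_{I,J}\,dz_I\wedge d\bar z_J\otimes e_L \in \mathcal L^2(U,\cE_k)$, I would expand $\theta\wedge f$ into the sum over $i$ and over multi-indices of terms of the shape $\theta_i\, z_i^{-1}\, f_{I,J}\, dz_i\wedge dz_I\wedge d\bar z_J\otimes e_L$ (for $i\le m$) and $\theta_i\, f_{I,J}\, dz_i\wedge dz_I\wedge d\bar z_J\otimes e_L$ (for $i>m$), the latter being trivially harmless. The crucial cases are the former, and they split according to whether $i\in I$ or $i\notin I$. If $i\in I$ then $dz_i\wedge dz_I = 0$ and the term vanishes, so no pole at all. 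If $i\notin I$, then the new multi-index $I' := \{i\}\cup I$ contains $i$, so the $\mathcal L^2$-integrability condition required of the coefficient $\theta_i z_i^{-1} f_{I,J}$ with respect to $dz_{I'}$ uses the weight $\prod_{j\in\{1,\dots,m\}\setminus I'}\log^2|z_j|$, which is the weight for $I$ \emph{divided by} $\log^2|z_i|$. I would then verify the pointwise estimate
\[
\bigl|\theta_i\, z_i^{-1} f_{I,J}\bigr|^2_{h_L,g_D,\omega_D}\prod_{j\notin I'}\log^2|z_j|
\;\le\; C\,\bigl|f_{I,J}\bigr|^2_{h_L,g_D,\omega_D}\prod_{j\notin I}\log^2|z_j|,
\]
using that $\theta_i$ is bounded on $V\Subset U$, that the norm of $\frac{dz_i}{z_i}$ in the cylindrical metric $g_D$ (resp.\ the Poincaré metric $\omega_D$, depending on whether $D_i$ is a $\Delta_1$- or $\Delta_2$-component — here $i\le m$ so it is $\Delta_2$ and $\omega_D$ has Poincaré singularities there) is comparable to $|z_i|\log|z_i|$, whence $|z_i^{-1}\,dz_i|^2$ absorbs exactly one factor of $\log^2|z_i|$ — matching the one removed from the weight. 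Summing over the finitely many $i$ and multi-indices and integrating over $V$ gives $\theta\wedge f\in\mathcal L^2(U,\cE_k)$; glueing over a finite cover of $X$ finishes the argument.

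The main obstacle I anticipate is bookkeeping the metric factors correctly: the bundle $\cE_{p,q}$ carries $g_D$ on the logarithmic holomorphic part and $\omega_D$ on the antiholomorphic part, and $g_D$ differs from $\omega_D$ precisely by the cylindrical terms along $\Delta_1$, so I must be careful that wedging $\frac{dz_i}{z_i}$ for $i\le m$ (a $\Delta_2$-index) is measured by the Poincaré part of $g_D$, which by the estimates of Lemma~\ref{logmixbound} and the explicit coefficient formulas \eqref{log30} behaves exactly like in $\omega_D$, i.e.\ $\bigl|\tfrac{dz_i}{z_i}\bigr|_{g_D}^2 \sim \log^2|z_i|$ near $D_i$. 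Once this comparison is pinned down, the weight is engineered in \Cref{ssheaf} so that the single $\log^2|z_i|$ gained from the pole is precisely the single $\log^2|z_i|$ lost from the weight, and the inequality above closes with a uniform constant $C$ depending only on $\sup_V|\theta_i|$ and the fixed metrics. No global analysis or density argument is needed; this is genuinely a local estimate, which is why the proof in the paper is presumably very short.
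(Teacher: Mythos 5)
Your proof is correct and makes explicit precisely what the paper's one-line proof (``direct consequence of the definition'') implicitly relies on: the weight $\prod_{i\notin I}\log^2|z_i|$ in \Cref{ssheaf} is tuned so that wedging with the residue term $\theta_i\,dz_i/z_i$ (which moves $i$ from the weight-set into the multi-index) trades a factor of $\log^2|z_i|$ from the weight against the factor $|dz_i/z_i|^2_{g_D}\sim\log^2|z_i|$ picked up by the pole. The decomposition, the vanishing when $i\in I$, and the comparison $|dz_i/z_i|^2\sim\log^2|z_i|$ from the Poincaré estimates are exactly the right ingredients.
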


\begin{proof}
	This is a direct consequence of the definition of the sheaf $\mathcal{L}^2$.
\end{proof}

\smallskip

\noindent The next result is a characterization of the sheaf of holomorphic forms in terms of $\mathcal{L}^2$.

\begin{proposition}\label{lelongnumb}
	Let $U \subset X$ be an open set and let $f \in H^0(U, L \otimes \Omega^p_X(\log D))$.  
	Then $f \in \mathcal{L}^2$.  
	
	Conversely, let $f$ be an $L$-valued $(p,0)$-form on $X \setminus D$ such that $\dbar f = 0$ on $U \setminus D$.  
	If $f \in \mathcal{L}^2$, then $f \in H^0(U, L \otimes \Omega^p_X(\log D))$.
\end{proposition}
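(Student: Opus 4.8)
\textbf{Proof proposal for \Cref{lelongnumb}.}

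The plan is to argue locally and reduce everything to a one–variable estimate combined with the Lelong number information encoded in the metric $h_L$. First I would fix an open set $U$ equipped with adapted coordinates $z = (z_1,\dots, z_n)$ so that $D\cap U = (z_1\cdots z_N = 0)$, with $\Delta_2 = (z_1\cdots z_m=0)$, and moreover (after the usual ramified cover $\pi$) we may assume $h_L$ is smooth, $\theta$ holomorphic with log poles along $\Delta_2$, and $\Delta_3$ absent, as granted in \Cref{defend1} and the surrounding discussion. For the direct implication, write $f = \sum_{|I|=p} f_I \, \frac{dz_I}{z_{I\cap\{1,\dots,N\}}}\wedge(\hbox{smooth})$ with $f_I\in\O(U)$; one then has to check the integrability condition of \Cref{ssheaf}. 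The point is that for an index $i\in\{1,\dots,m\}$ \emph{not} appearing in $I$, the corresponding factor $dz_i/z_i$ does \emph{not} occur in $f$, and the coefficient $f_I$ is merely bounded near $z_i=0$, so the weight $\log^2|z_i|$ only costs a term like $\int_0^{1/2}\log^2 r \, \frac{r\,dr}{r^{2-2/k}}<\infty$ after passing to $\omega_D$ and $g_D$; for indices $i\in I$ one uses instead the Poincar\'e volume element which makes $|dz_i/z_i|^2_{\omega_D}dV_D$ locally integrable. Combined with the fact that the Lelong numbers $a_i = \nu(h_L)$ along $\Delta_2+\Delta_3$ lie in $]-1,0[$ by \Cref{property}(3), i.e. $e^{-\varphi_L}$ is $L^1_{loc}$ against $dV_D$ after rescaling, this gives the required finiteness. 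This direction is essentially a bookkeeping computation, using the coefficient estimates of Lemma~\ref{coeff} (and its logarithmic counterpart) for the norms $|dz_I\wedge d\bar z_J|_{g_D,\omega_D}$.

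For the converse — the substantive part — I would argue as follows. Let $f$ be an $L$-valued $(p,0)$-form on $U\setminus D$ with $\dbar f = 0$ there, and assume $f\in\mathcal L^2$. Since $\dbar f = 0$ away from $D$, $f$ is holomorphic on $U\setminus D$; the issue is to bound the order of its poles along each component of $D$. Write $f = \sum_I g_I(z)\, dz_I$ with $g_I$ holomorphic on $U\setminus D$. The $\mathcal L^2$-condition of \Cref{ssheaf} says precisely that $g_I\, dz_I$ is $L^2$ against the measure $\bigl(\prod_{i\notin I,\, i\le m}\log^2|z_i|\bigr)e^{-\varphi_L}\,dV_D$. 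The key observation is the following dichotomy for each prime divisor $D_i = (z_i=0)$: (a) if $i\in I$, then the basis form $dz_i/z_i$ already has a log pole built into it, and $L^2$-integrability against $dV_D$ (Poincar\'e along $\Delta_2$, conic along $\Delta_3$, smooth elsewhere) forces $g_I\cdot z_i^{1_{i\in I}}$ to have at worst the allowed log pole, i.e. the corresponding component of $f$ genuinely lies in $\Omega^p(\log D)$; (b) if $i\notin I$, then the \emph{extra} weight $\log^2|z_i|$ (which blows up at $z_i=0$) together with the Poincar\'e/conic volume element is enough to force $g_I$ to extend holomorphically across $z_i=0$ — there can be no pole at all in that slot, consistent with the shape of a genuine logarithmic form. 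Concretely, I would isolate one variable, say $z_1$, freeze the others, and perform a Laurent expansion $g_I(z) = \sum_{\nu\in\Z} c_\nu(z')\, z_1^\nu$; the $L^2$-bound against the relevant one–variable measure ($\tfrac{dz_1\wedge d\bar z_1}{|z_1|^2\log^2|z_1|}$ if $1\in\Delta_2$, $\tfrac{dz_1\wedge d\bar z_1}{|z_1|^{2-2/k}}$ if $1\in\Delta_3$, smooth if along $\Delta_1$ or off $D$, and with the additional $\log^2|z_1|$ factor if $1\notin I$) kills all Laurent coefficients $c_\nu$ with $\nu$ too negative. A short computation of the model integrals $\int_{|z_1|<1/2}|z_1|^{2\nu}\,\frac{r\,dr}{r^{2-2/k}\,\log^{2\epsilon}(1/r)}$ shows exactly that: divergence unless $\nu\ge 0$ (when $1\notin I$) or $\nu\ge -1$ (when $1\in I$), which is the pole order prescribed by $\Omega^p_X(\log D)$. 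Running this over all coordinate hyperplanes and all index sets $I$, and invoking Riemann's extension theorem in the remaining codimension-$\ge 2$ locus, yields $f\in H^0(U, L\otimes\Omega^p_X(\log D))$.

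The main obstacle I anticipate is not any single model integral but the interaction between the \emph{cross terms} in the norm $|dz_I\wedge d\bar z_J|^2_{g_D,\omega_D}$ and the off-diagonal coefficients $\omega^{\bar j i}$, $h^{\bar j i}$ of the (inverse) metrics, which are not diagonal. One must be careful that a Laurent coefficient which would be "borderline non-integrable" in a naive diagonal estimate is not secretly rescued — or conversely spuriously killed — by a cross term. Here the precise asymptotics of Lemma~\ref{coeff} points (2)–(7) and their logarithmic analogues (a)–(f) are exactly what is needed: they show the cross terms $\omega^{\bar j i}$ are $O(z_i\bar z_j)$ or smaller, hence strictly subdominant, so the diagonal estimate is sharp and the dichotomy above is clean. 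A secondary subtlety is bookkeeping the $e^{-\varphi_L}$ weight: because $\nu(h_L)\in\,]-1,0[$ along $\Delta_2+\Delta_3$ and $=0$ along $\Delta_1$, this weight only \emph{helps} integrability (it is bounded above near $\Delta_1$ and integrable near $\Delta_2+\Delta_3$), so it never obstructs the converse; but one should record this explicitly so the reader sees that no pole is artificially introduced by the metric on $L$.
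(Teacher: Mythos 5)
Your overall architecture matches the paper's: the direct implication is reduced to a case-by-case estimate of $\bigl|dz_I/z_I \wedge dz_J/z_J\bigr|^2_{g_D,\omega_D,h_L}\,\omega_D^n$ against the weight in \Cref{ssheaf}, and the converse is reduced to a meromorphic extension plus a local pole-order computation for the coefficients, which is exactly the "direct calculation" the paper alludes to. The identification of the key inputs (Poincar\'e/conic asymptotics of $\omega_D$ and $g_D$ from \Cref{coeff} and its logarithmic analogue, Riemann extension across the codimension $\geq 2$ locus, and the fact that cross terms in the inverse metrics are strictly subdominant) is correct.

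However, there is a concrete error in the converse computation as written. Your displayed model integral
\[
\int_{|z_1|<1/2}|z_1|^{2\nu}\,\frac{r\,dr}{r^{2-2/k}\,\log^{2\epsilon}(1/r)}
\]
omits the essential factor $e^{-\varphi_L}\sim |z_1|^{-2a_1}$ coming from the harmonic metric $h_L$, where $a_1=\nu(h_L)\in\,]-1,0[\cap\Q$ along $\Delta_2+\Delta_3$ by \Cref{property}(3). This factor is not optional: after the cancellation between $|dz_1|^2_{\omega_D}$ and the diagonal factor of $\omega_D^n$, the local integrand in the case $1\in I$ reduces to $|g_I|^2\,e^{-\varphi_L}\,r\,dr\sim r^{2\nu-2a_1+1}\,dr$, which converges precisely for $\nu>a_1-1$, i.e.\ $\nu\geq -1$. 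Without $e^{-\varphi_L}$ the exponent is $2\nu+1$ and the threshold becomes $\nu\geq 0$, which wrongly forbids the simple pole that $\Omega^p_X(\log D)$ is supposed to allow — it would also make the \emph{direct} implication fail. So the dichotomy you state ($\nu\geq -1$ when $i\in I$, $\nu\geq 0$ when $i\notin I$) is correct, but it does not follow from the integral you wrote; the Lelong-number condition $a_i\in\,]-1,0[$ enters exactly here, shifting the admissible exponent by one unit, and must appear explicitly in the one-variable model. A secondary, smaller slip: the extra weight $\prod_{i\in\{1,\dots,m\}\setminus I}\log^2|z_i|$ is attached only to the $\Delta_2$ (Poincar\'e) components, so the denominator should carry $r^2\log^2 r$, not the conic factor $r^{2-2/k}$; mixing the two obscures which asymptotic regime is being tested.
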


\begin{proof}
	Write $D = \sum D_i$, where each $D_i$ is locally defined by $z_i=0$.  	
	Consider the logarithmic form
	\[
	  \frac{dz_J}{z_J} \wedge \frac{dz_I}{z_I},
	\]
	where $J$ corresponds to the components in $\Delta_1$ and $I$ corresponds to the components in $\Delta_2 + \Delta_3$.  
	
	Since the Lelong number of $h_L$ is strictly negative along $\Delta_2 + \Delta_3$, and $g_D$ is a smooth metric on $T_X\langle \Delta_1 \rangle$, we have
	\[
	\Bigg|\frac{dz_I}{z_I} \wedge \frac{dz_J}{z_J}\Bigg|^2_{g_D, \omega_D, h_L} \, \omega_D^n 
	\leq \frac{dV_\omega}{\prod |z_i|^{2-\varepsilon}},
	\]
	where $dV_\omega$ is a smooth volume form and $\varepsilon > 0$ is a small positive constant.  
	Thus $f \in \mathcal{L}^2$.  
	\medskip
	
	Conversely, let $f \in \mathcal{L}^2(U)$ such that $\dbar f = 0$ on $U \setminus D$.  
	Then $f$ extends to a meromorphic form on $U$ with poles along $D$.  
	Now, since the Lelong number of $h_L$ is greater than $-1$, $f$ can only have logarithmic poles of type $\Omega^p_X(\log D)$.  
	In fact, for a meromorphic form of type $\frac{dz_I}{z_i}$ with $i \notin I$, a direct calculation shows that $\frac{dz_I}{z_i}$ is not $L^2$ near $D_i$.  
\end{proof}

\noindent 

The following $L^2$-estimate is crucial in the proof of Theorem~\ref{quasi}.  
We first fix some notation.  
Assume that $\Delta_2 = \sum_{i=1}^m D_i$, where each $D_i$ is locally defined by $z_i = 0$.  
Let $J \subset \{1,\dots, m\}$ be a subset, and denote by $d\mu_J$ the measure
\begin{equation}\label{fuj1}
	d\mu_J := e^{-\varphi_L} \Bigg( \prod_{i \in J} \log^2|z_i|^2 \Bigg) dV_{\omega_D},
\end{equation}
where $\varphi_L$ is the local weight of $h_L$, and $\omega_D$ is the model Poincaré/conic metric.  
Using a result of A.~Fujiki~\cite{Fuj}, we obtain the following statement.

\begin{lemme}\label{fujiki}  
	Let $v \in L^2(U, \cE_{p,q}, g_D, \omega_D, d\mu_J)$ with $\dbar v = 0$, where $J \subset \{1,\dots, m\}$.  
	Then there exists a $(p,q-1)$-form $u$ such that
	\begin{equation}\label{L2fujiki}
		\dbar u = v, \qquad 
		\int_U \Big( |u|^2_{g_D, \omega_D} + |D'_{h_L}u|^2_{g_D, \omega_D} \Big) d\mu_J < \infty.
	\end{equation}
	Here $D'_{h_L}$ is the $(1,0)$-part of the Chern connection on a $L$-valued form with respect to $h_L$.
\end{lemme}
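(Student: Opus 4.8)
The strategy is to reduce the statement to the classical $L^2$-existence theorem for $\dbar$ on the punctured polydisk equipped with a Poincaré/conic model metric, for which the weights $d\mu_J$ are the natural ones, and to use A. Fujiki's result \cite{Fuj} precisely at the point where one needs to control the coefficients that carry no logarithmic weight. First I would localize: since the assertion is local over $U$ and $U$ may be covered by coordinate polydisks adapted to $(X,D)$ as in Lemma~\ref{coeff}, it suffices to solve the equation on such a polydisk $\Omega$, where $\Delta_2 = (z_1\cdots z_m = 0)$, $\Delta_3$ is a further product of coordinate hyperplanes, and $\Delta_1$ is yet another; on $\Omega$ the metric $h_L$ can be taken smooth after the ramified cover $\pi$ of \eqref{clog}, and the form $\theta_0$ holomorphic with log poles along the preimage of $\Delta_2$ (this is the standard reduction granted in Section~\ref{Hodsmo}). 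Pulling back by $\pi$ replaces the conic singularities of $\omega_D$ along $\Delta_3$ by Poincaré-type or even smooth ones, so after this reduction the ambient metric is quasi-isometric to a product of Poincaré metrics on the $\Delta_2$-directions and a smooth metric elsewhere, and the weight $\prod_{i\in J}\log^2|z_i|^2$ is a genuine smooth strictly positive function away from $\Delta_2\cap J$.

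Next I would treat the holomorphic-vector-bundle structure: a $(p,q)$-form with log poles along $\Delta_1$ is a $(0,q)$-form with values in the holomorphic bundle $\Omega^p_X(\log\Delta_1)\otimes L$, on which $g_D$ (restricted to the log part) and $h_L$ together give a smooth metric on $\Omega\setminus\Delta_2$, and $D'_{h_L}$ is the $(1,0)$-part of the corresponding Chern connection. The equation $\dbar v = 0$ is then the ordinary $\dbar$ on this bundle. On the model polydisk $\mathbb D^{\star m}\times\mathbb D^{n-m}$ with the product Poincaré metric, the curvature of $\Omega^p_X(\log\Delta_1)\otimes L$ (measured against $g_D,h_L$) is two-sided bounded by $\omega_D$ by Lemma~\ref{logmixbound}, which is exactly the positivity input needed to run Hörmander/Demailly-type $L^2$-estimates with the complete Poincaré metric; one gets a solution $u$ with $\int|u|^2 d\mu_J<\infty$. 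The extra integrability $\int|D'_{h_L}u|^2 d\mu_J<\infty$ then follows by applying the Bochner formula of Lemma~\ref{bochner1}(b) (or its logarithmic counterpart Lemma~\ref{bochner2}) together with $\dbar u = v\in L^2(d\mu_J)$ and $\dbar^\star u = 0$ (taking $u$ of minimal norm so that $u\perp\ker\dbar$): one controls $\Vert\nabla^{1,0}u\Vert$, hence $\Vert D'_{h_L}u\Vert$, by $\Vert u\Vert + \Vert\dbar u\Vert + \Vert\dbar^\star u\Vert$, all in the $d\mu_J$-norm.

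The subtle point, and where Fujiki's theorem enters, is the behaviour of the solution near the components $D_i$ with $i\notin J$: there the weight $\log^2|z_i|^2$ is \emph{absent}, yet the metric $\omega_D$ still has a Poincaré singularity along $D_i$, so the naive $L^2$-theory with the complete metric would produce a solution that is only $L^2$ against $\prod_{i}\log^2|z_i|^2 dV_{\omega_D}$ (one logarithm per divisor), not against $d\mu_J$. Fujiki's result provides exactly the gain of these missing logarithmic factors for $\dbar$-cohomology classes: one solves the equation first on a product where all $\Delta_2$-directions are punctured disks, obtains a solution with all logarithms, and then uses Fujiki's statement (or the Dolbeault isomorphism $H^q(\mathbb D^{\star},\O)$ vanishing for $q\ge 1$ together with the precise $L^2$-weight bookkeeping in \emph{loc. cit.}) to correct the solution by a $\dbar$-closed term so that the corrected solution no longer needs the logarithms indexed by $\{1,\dots,m\}\setminus J$. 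Concretely: on the coordinates $z_i$, $i\notin J$, one expands in Fourier/Laurent series and discards the terms producing the unwanted $\log$-growth; the estimate that the discarded part is itself $\dbar$-closed and $L^2$ in the right weight is what Fujiki supplies. Finally I would patch the local solutions on $\Omega_\alpha$ by a partition of unity subordinate to the covering: the differences $u_\alpha - u_\beta$ are $\dbar$-closed on overlaps, lie in the appropriate $\mathcal L^2$-space, hence define a Čech $1$-cocycle with values in the fine resolution, which one splits to obtain a global $u$ on $U$ with the asserted estimate \eqref{L2fujiki}. I expect the patching and the precise verification that Fujiki's statement applies verbatim after the ramified cover $\pi$ (conic $\to$ Poincaré/smooth) to be the main technical obstacle; the $L^2$-existence and the Bochner control of $D'_{h_L}u$ are routine given the two-sided curvature bounds already established.
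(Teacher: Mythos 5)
Your overall skeleton — reduce the conic case to the Poincar\'e case via the ramified cover, produce a solution of $\dbar u = v$ in $L^2(d\mu_J)$, then control $D'_{h_L}u$ via a Bochner identity with the minimal-norm (hence $\dbar^\star$-closed) solution — matches the paper. But the account of where Fujiki's result enters is backwards, and as written the argument has a gap there.

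You suggest first running Hörmander/Demailly $L^2$-estimates with the complete Poincar\'e metric, obtaining a solution that is $L^2$ against $\prod_i\log^2|z_i|^2\,dV_{\omega_D}$ with \emph{all} logarithms present, and then using Fujiki to ``gain the missing logarithmic factors'' for the indices $i\notin J$. This is not coherent: since $\prod_i\log^2|z_i|^2\ge \prod_{i\in J}\log^2|z_i|^2$ near $\Delta_2$, a form that is $L^2$ against the full product is automatically $L^2$ against $d\mu_J$, so there would be nothing to gain. In fact the whole point of Fujiki's Proposition~2.1 in \cite{Fuj} is that it \emph{is} the weighted $L^2$ existence theorem adapted to the measure $d\mu_J$: given $v\in L^2(d\mu_J)$ with $\dbar v=0$ it produces a solution $u_0$ already lying in $L^2(d\mu_J)$, with no correction by discarded Fourier modes needed. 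The paper applies it directly on $U$ — there is no Čech patching step, since $U$ is the domain and the result is local. Your ``discard the Laurent modes producing unwanted $\log$-growth'' mechanism would in any case require separate justification that the discarded piece is $\dbar$-closed and in the right space; this is exactly what one would have to re-prove, and Fujiki's theorem already packages it.

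Two further points you should supply if you want a complete argument. First, to justify $\dbar^\star u=0$ and the Bochner estimate you must first integrate by parts; since $\omega_D$ is complete near $\Delta_2$, this is done with cutoff functions $\Psi_\varepsilon$ vanishing near the $\varepsilon$-neighborhood of $\Delta_2$, with gradient uniformly bounded in $\omega_D$. Second, the weight in the adjoint is $\varphi_L + \Xi$ with $\Xi := -\log\prod_{i\in J}\log^2|z_i|^2$, and the Bochner inequality of \Cref{bochner2}(b) acquires an extra curvature term $[dd^c\Xi,\Lambda_{\omega_D}]$; you must check (by an explicit Hessian computation in the Poincar\'e coordinates) that this operator is pointwise bounded, which is what allows the constant in the final estimate to be uniform. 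Your write-up waves at ``two-sided curvature bounds'' but does not isolate this term, and without it the passage $\Vert\nabla^{1,0}u\Vert\lesssim\Vert u\Vert+\Vert\dbar u\Vert+\Vert\dbar^\star u\Vert$ does not transfer to the $d\mu_J$-weighted norms.
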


\begin{proof} 
	First assume that $\omega_D$ has only Poincaré singularities (so that $\Delta_3 = 0$).  
	As a consequence of \cite[Prop.~2.1]{Fuj}, there exists a solution $u_0$ of the equation $\dbar u_0 = v$ such that 
	$u_0 \in L^2(U, \cE_{p-1,q}, g_D, \omega_D, d\mu_J)$. 
	\smallskip
	
	\noindent Let $u \in (\ker \dbar)^\perp$ be the projection of $u_0$ onto the orthogonal complement of $\ker \dbar$, 
	where the scalar product is defined by the measure $d\mu_J$ together with $(g_D,\omega_D)$ on $\cE_{p-1,q}$.  
	We now show that this particular solution of the $\dbar$-equation satisfies \eqref{L2fujiki}.
	
	\medskip 
	
	Note first that $\dbar^\star u = 0$, where the adjoint is taken with respect to the weight $\varphi_L + \Xi$ on $L$, with 
	\[
	\Xi := -\log\!\Bigg(\prod_{i\in J} \log^2|z_i|^2\Bigg).
	\]
	Let $\Psi_\varepsilon$ be the standard cutoff function on $U \setminus \Delta_2$, which is equal to zero near the 
	$\varepsilon$-neighborhood of $\Delta_2$.  
	Since the norm of $\nabla \Psi_\varepsilon$ with respect to $\omega_D$ is uniformly bounded, we have
	\begin{equation}\label{dbarpart}
		\int_U |\dbar(\Psi_\varepsilon u)|^2 d\mu_J 
		+ \int_U |\dbar^\star(\Psi_\varepsilon u)|^2 d\mu_J 
		\leq C \Bigg( \int_U |\dbar u|^2 d\mu_J + \int_U |u|^2 d\mu_J \Bigg),
	\end{equation}
	for some constant $C$ independent of $\varepsilon$.
	
	\medskip
	We next control the $L^2$-norm of $D'_{h_L} u$.  
	Compared with the situation in \Cref{bochner2}, there is an additional curvature term 
	\[
	[dd^c \Xi, \Lambda_{\omega_D}],
	\]
	which is pointwise bounded, as shown by the following calculation of the Hessian of $\Xi$.  
	Indeed, we have
	\begin{equation}\label{fuj2}
		\dbar \Xi = -2 \sum_{k\in J} \frac{d\overline{z}_k}{\overline{z}_k \log |z_k|^2},	
	\end{equation}
	which implies
	\begin{equation}\label{derest}
		\sup_{U\setminus |D|} |\partial \Xi|_{\omega_D} < \infty.
	\end{equation}
	Moreover, $dd^c \Xi$ contains terms of the form
	\begin{equation}\label{fuj3}
		\sum_{k\in J} \frac{\sqrt{-1}\,dz_k \wedge d\overline{z}_k}{|z_k|^2 \log^2 |z_k|^2}, 
	\end{equation}
	each of which is uniformly bounded in the $L^\infty$ norm relative to $\omega_D$.  
	In particular, the eigenvalues of $dd^c \Xi$ with respect to $\omega_D$ are uniformly bounded, and hence the curvature operator above is bounded.
	
	\medskip
	Applying point (b) of \Cref{bochner2}, together with the above uniform estimate, we obtain
	\begin{equation}\label{barpart}
		\int_U \big|D'_{h_L}(\Psi_\varepsilon u) - \partial \Xi \wedge (\Psi_\varepsilon u)\big|^2 d\mu_J   
		\leq 
		C \Bigg( \int_U |\dbar(\Psi_\varepsilon u)|^2 d\mu_J 
		+ \int_U |\dbar^\star(\Psi_\varepsilon u)|^2 d\mu_J \Bigg).
	\end{equation}
	Combining with \eqref{dbarpart}, we obtain
	\[
	\int_U \big|D'_{h_L}(\Psi_\varepsilon u) - \partial \Xi \wedge (\Psi_\varepsilon u)\big|^2 d\mu_J  \leq C,
	\]
	for some constant $C$ independent of $\varepsilon$.  
	Then, together with \eqref{derest}, letting $\varepsilon \to 0$ shows that 
	$D'_{h_L} u \in L^2(U, \cE_{p,q}, g_D, \omega_D, d\mu_J)$.  
	This proves \eqref{L2fujiki}.
	
	\medskip
	The general case reduces to this one: via the local ramified cover $\pi$ defined in \eqref{d4},
	the metric $\pi^\star \omega_D$ has only Poincaré singularities.  
	We construct a solution $\widetilde{u}_0$ of the equation $\dbar \widetilde{u}_0 = \pi^\star v$, 
	and by the usual averaging procedure we may assume that
	\begin{equation}\label{d15}
		\widetilde{u}_0 = \pi^\star u_0
	\end{equation}
	for a form $u_0$ defined on $U$, such that $\dbar u_0 = v$, together with the required integrability properties.
\end{proof}

\medskip

\noindent We next derive the following consequence.

\begin{proposition}\label{debareq}
	Let $U$ be a Stein open set and let $f \in \mathcal{L}^{2}(U, \cE_k)$.
	If $\dbar f = 0$ on $U \setminus D$, then there exists a $g \in \mathcal{L}^{2}(U, \cE_{k-1})$ such that 
	$f = \dbar g$ on $U \setminus D$ and $D_K g \in \mathcal{L}^{2}(U, \cE_k)$.
\end{proposition}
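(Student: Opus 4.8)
\textbf{Proof proposal for Proposition \ref{debareq}.}

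The plan is to reduce the statement to the $L^2$-solvability result of Lemma \ref{fujiki} by a bootstrap on the total degree $k$, and to produce the extra regularity $D_K g \in \mathcal{L}^2$ by combining the $\dbar$-estimate there with control of the $\theta$-part coming from Proposition \ref{comefromdefn}. First I would decompose $f = \sum_{p+q=k} f_{p,q}$ into its components of pure type $(p,q)$; since $\dbar$ preserves type and $f$ is $\dbar$-closed on $U\setminus D$, each $f_{p,q}$ is separately $\dbar$-closed, so it suffices to treat a single $(p,q)$-component, and without loss of generality $q\ge 1$ (if $q=0$ then $f$ is holomorphic and already lies in $H^0(U, L\otimes\Omega^p_X(\log D))$ by Proposition \ref{lelongnumb}, so we may take $g=0$). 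Next, observe that membership of $f_{p,q}$ in $\mathcal{L}^2(U,\cE_{p,q})$ means precisely that for each choice of the holomorphic multi-index $I$ appearing in $f_{p,q}$, the relevant coefficient is $L^2$ with respect to a measure of the form $d\mu_J$ with $J = \{1,\dots,m\}\setminus I$ (up to the bounded smooth factors from $g_D$); so, splitting $f_{p,q}$ further according to $I$ and applying Lemma \ref{fujiki} with the corresponding $J$, I obtain for each piece a $(p,q-1)$-form $u$ with $\dbar u = f_{p,q}$ and $\int_U (|u|^2 + |D'_{h_L}u|^2)\,d\mu_J < \infty$. Summing these solutions (over $I$ and over the components of $f$) gives $g\in \mathcal{L}^2(U,\cE_{k-1})$ with $\dbar g = f$ on $U\setminus D$.

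It remains to check $D_K g \in \mathcal{L}^2(U,\cE_k)$. Recall from Convention \ref{conv} that $D_K = D'_{h_L} + \dbar + 2\theta_0$ (with the understanding that $\theta_0$ operates by wedge product on the relevant components). We then estimate the three contributions separately. The term $\dbar g = f$ lies in $\mathcal{L}^2$ by hypothesis. The term $\theta_0\wedge g$ lies in $\mathcal{L}^2$ by Proposition \ref{comefromdefn} applied to $g$ (note $\theta$ and $\theta_0$ differ by a bounded factor, so this is harmless). The remaining, and most delicate, term is $D'_{h_L}g$: here I would argue piece by piece, observing that each solution $u$ produced by Lemma \ref{fujiki} satisfies $\int_U |D'_{h_L}u|^2\, d\mu_J < \infty$ for exactly the weight $d\mu_J$ that matches the $\mathcal{L}^2$-condition for a $(p,q)$-form with holomorphic index equal to that of $u$. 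A slight subtlety is that $D'_{h_L}u$ picks up an extra $dz^i$-factor for indices $i\notin I$, so one must verify that differentiating shifts the holomorphic index set in a way compatible with the bookkeeping in Definition \ref{ssheaf}; this is the same type of $\log$-weight matching already used in the proof of Lemma \ref{fujiki}, and the cutoff/limit argument there (with $\Psi_\varepsilon$ and the bounded Hessian of $\Xi$) carries over verbatim to give the required uniform bound before passing to $\varepsilon\to 0$.

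The main obstacle, as suggested above, is the correct matching of the $\prod_{i} \log^2|z_i|$ weights between the various pieces: the measure $d\mu_J$ for which Lemma \ref{fujiki} gives the $D'_{h_L}u$-estimate depends on the holomorphic index set $J^c = I$ of the piece being solved, while the $\mathcal{L}^2$-condition for $D'_{h_L}g$ (which is a $(p,q)$-form, but with holomorphic index set enlarged by the differentiated coordinate) requires a weight indexed by a \emph{different} subset. I would resolve this by being careful to solve the $\dbar$-equation \emph{separately} for each fixed holomorphic multi-index $I$ occurring in $f$, choosing in each case $J = \{1,\dots,m\}\setminus I$, so that the $D'_{h_L}$-estimate of Lemma \ref{fujiki} delivers integrability against precisely the weight demanded by Definition \ref{ssheaf}; the coordinate $z^i$ introduced when $D'_{h_L}$ acts in the direction $i\notin I$ contributes a factor $|z^i|^2$ which, against $\log^2|z^i|^2$, is more than enough to absorb the transition. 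Finally, the hypothesis that $U$ is Stein is used (as in Lemma \ref{fujiki}, via \cite{Fuj}) to guarantee solvability of the local $\dbar$-equations with the weighted $L^2$-bounds, and the hypothesis that the Lelong numbers of $h_L$ along $\Delta_2 + \Delta_3$ lie in $]-1,0[$ (Property \ref{property}) ensures the compatibility of the weights used here with those implicit in the definition of $\mathcal{L}^2$.
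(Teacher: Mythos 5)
Your proposal follows essentially the same route as the paper's proof: decompose $f$ according to the holomorphic multi-index $I$, apply Lemma~\ref{fujiki} with the weight $\prod_{i\notin I}\log^2|z_i|$ to solve the $\dbar$-equation piece by piece together with the $D'_{h_L}$-estimate, assemble the pieces, and then verify $D_K g\in\mathcal{L}^2$ by combining $\dbar g=f$, $\theta_0\wedge g\in\mathcal{L}^2$ via Proposition~\ref{comefromdefn}, and $D'_{h_L}g\in\mathcal{L}^2$ from the lemma's conclusion (the weight-matching you discuss is automatic, since differentiating only enlarges the holomorphic index set, which can only drop factors from the $\log^2$ weight). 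The one genuine misstep is your treatment of $q=0$: taking $g=0$ does not give $\dbar g=f$ for a nonzero holomorphic $(k,0)$-form, and in fact no $(p,-1)$-form $g$ exists; the statement is simply not applicable to that component, but this is harmless because the only downstream use (Corollary~\ref{localsolution}) always feeds in components of bidegree $(p,q)$ with $q\geq 1$.
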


\begin{proof}
	Write $f = \sum f_{I,J} \, dz_I \wedge d\overline{z}_J$. By definition,
	\[
	\int_{U} 
	\big| f_{I,J} \, dz_I \wedge d\overline{z}_J \big|^2_{h_L, g_D, \omega_D} 
	\prod_{i \in \{1,\dots, m\} \setminus I} \log^2 |z_i| \, dV_D 
	< +\infty .
	\]
	
	For each fixed index $I$, since $\dbar f = 0$ on $U \setminus D$, we know that 
	\[
	\sum_J f_{I,J} \, dz_I \wedge d\overline{z}_J
	\]
	is $\dbar$-closed on $U \setminus D$.  
	Applying Lemma~\ref{fujiki} to this form, with respect to the weight 
	$\prod_{i \in \{1,\dots,m\} \setminus I} \log^2 |z_i|$, we obtain 
	$g_I \in \mathcal{L}^2(U, \cE_{k-1})$ such that 
	\[
	\sum_J f_{I,J} \, dz_I \wedge d\overline{z}_J = \dbar g_I \qquad \text{on } U,
	\]
	and moreover $D'_h g_I \in \mathcal{L}^2(U, \cE_k)$.  
	Set $g := \sum_I g_I$. Then $f = \dbar g$ and $g \in \mathcal{L}^2(U, \cE_{k-1})$.
	
	Finally, we check that $D_K g \in \mathcal{L}^2(U, \cE_k)$.  
	Since $\dbar g = f$, we already have $\dbar g \in \mathcal{L}^2(U, \cE_k)$.  
	By Proposition~\ref{comefromdefn}, $\theta \wedge g \in \mathcal{L}^2(U, \cE_k)$.  
	Together with $D'_h g \in \mathcal{L}^2(U, \cE_k)$, this shows that indeed 
	$D_K g \in \mathcal{L}^2(U, \cE_k)$.
\end{proof}

\noindent The next regularity lemma is a consequence of Theorem \ref{smoothhodge}.
\begin{lemme}\label{locreg}
	Let $U \subset X$ be some open subset and let $f\in \cC^\infty _{\bullet,K}(U, \Delta_1, L)$. Assume that there exists $g_0\in L^2 (U, \cE_{\bullet-1})$ such that the equality $f= D_K g_0$ holds on $X\setminus \Delta_1$. Then there exists a  $g\in \cC^\infty _{\bullet-1,K}(U, \Delta_1, L)$ such that
	$$f= D_K g \qquad\text{on } X\setminus \Delta_1 .$$ 
\end{lemme}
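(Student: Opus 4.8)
The statement is a local regularity result: if an element $f$ of the "smooth" sheaf $\cC^\infty_{\bullet,K}(U,\Delta_1,L)$ is $D_K$-exact with an $L^2$ primitive $g_0$, then it is $D_K$-exact with a \emph{smooth} primitive in the same sheaf. The plan is to run a Hodge-theoretic argument on a shrinking of $U$, reducing the problem to the already established regularity theorem \Cref{smoothhodge}. First I would localise: since the assertion concerns the germ of $f$ along any point, it suffices to treat a small coordinate polydisc $U$ adapted to $(X,D)$ as in the covering fixed before \Cref{lemend1}, and — using a cutoff function of the type appearing in \Cref{lemend5} together with the completeness of $\omega_D$ near $\Delta_2$ — I may assume $f$ has compact support in $X\setminus\Delta_2$, at the cost of correcting by a $D_K$-exact term supported away from $\Delta_2$. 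Near $\Delta_2$ the statement is then vacuous (no $\Delta_1$ there), so the content is concentrated along $\Delta_1+\Delta_3$.

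The core step is to solve the $D_K$-equation with a smooth solution. By hypothesis $f=D_Kg_0$ on $X\setminus\Delta_1$ with $g_0\in L^2$; in particular $D_Kf=0$, and $f$ is $D_K$-exact. I would feed this into the $L^2$-theory of Section~\ref{HodgeDek}: by the Hodge decomposition \Cref{hodgedec} for $\Delta_K$ on $L^2(U,\cE_\bullet)$ (applied on a slightly smaller polydisc, after the reduction to compact support so that the global $L^2$-theory applies), write $f=\mathcal H(f)+\Delta_K\mathcal G(f)$. Since $f$ is $D_K$-exact and $D_K$-closed, orthogonality of harmonic forms to the image of $D_K$ forces $\mathcal H(f)=0$, whence $f=\Delta_K\mathcal G(f)=D_K\bigl(D_K^\star\mathcal G(f)\bigr)+D_K^\star\bigl(D_K\mathcal G(f)\bigr)$; and $D_K\mathcal G(f)$ is $D_K$-closed and (again by the $L^2$ Hodge decomposition applied to it, together with $D_Kf=0$) actually vanishes, giving $f=D_Kg$ with $g:=D_K^\star\mathcal G(f)$. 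The decisive point is regularity: because $f\in\cC^\infty_{\bullet,K}(U,\Delta_1,L)$, \Cref{smoothhodge} gives $\mathcal G(f)\in\cC^\infty_{\bullet,K}$ and hence $g=D_K^\star\mathcal G(f)\in\cC^\infty_{\bullet-1,K}(U,\Delta_1,L)$ by \Cref{propend1}.

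The remaining issue is to undo the reduction: the solution $g$ produced lives on a possibly smaller polydisc and may differ from a genuine primitive over all of $U$ by a $D_K$-closed smooth form. Here I would invoke the quasi-isomorphism package of this section in its local form — more precisely, \Cref{debareq} together with \Cref{locreg}'s companion statements (i.e. the $\dbar$-solvability in $\mathcal L^2$ and \Cref{lelongnumb} identifying $\dbar$-closed $\mathcal L^2$ forms with logarithmic ones) — to patch: a $D_K$-closed element of $\cC^\infty_{\bullet,K}$ on $U$ which is $L^2$-exact is, after a further smooth correction, exact in the logarithmic de Rham complex, and that correction can be absorbed. Concretely, on a Stein $U$ one filters $D_K=\dbar+D'_h+2\theta$ by the $\dbar$-direction, solves the $\dbar$-part inside $\mathcal L^2$ via \Cref{debareq}, reducing to a $D_h+2\theta$-problem on the holomorphic logarithmic complex $\Omega_X^\bullet(\log D)\otimes L$, which is solvable on the Stein set; a diagram chase then yields the smooth primitive on all of $U$.

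The main obstacle I anticipate is precisely this gluing step: \Cref{hodgedec} and \Cref{smoothhodge} are clean on $X$ but only give an $L^2$/smooth primitive after passing to a relatively compact subset and exploiting completeness near $\Delta_2$, so one must carefully track that the corrections introduced (cutoff near $\Delta_2$, shrinking of $U$, the harmonic ambiguity) all lie in $\cC^\infty_{\bullet,K}(U,\Delta_1,L)$ and are themselves $D_K$-exact there — which is where the interplay of \Cref{propend1}, \Cref{debareq}, and \Cref{lelongnumb} is essential, and where the non-Kähler nature of $g_D$ (so $\Delta_K\neq\Delta_K^c$) prevents a naive one-line $\ddbar$-lemma argument.
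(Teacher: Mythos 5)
There is a genuine gap in the localisation step. You propose to cut off $f$ so that it has compact support in $X\setminus\Delta_2$ and then feed the result into the global Hodge decomposition \Cref{hodgedec}. But multiplying $f$ by a cutoff destroys the relation $f=D_Kg_0$ (the commutator produces a term $d\chi\wedge g_0$ that is neither $D_K$-closed nor $D_K$-exact in any controlled way), so $\mathcal H(\chi f)$ need not vanish and the subsequent orthogonality argument collapses. The phrase ``at the cost of correcting by a $D_K$-exact term supported away from $\Delta_2$'' is not justified and, as far as I can see, false as stated. Moreover, you later invoke ``the Hodge decomposition \Cref{hodgedec} on $L^2(U,\cE_\bullet)$,'' but \Cref{hodgedec} is proved on the whole of $X$ (where the Poincar\'e-type completeness is available); it is not established for the open set $U$, whose boundary away from $\Delta_2$ is a genuine non-complete boundary with unspecified $L^2$ boundary conditions. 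Your final paragraph essentially concedes that the patching is the unresolved difficulty — and it is.

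The paper's proof sidesteps all of this by cutting off the \emph{primitive} rather than $f$. One first replaces $g_0$ by its projection $g$ onto $\ker(D_K|_U)^{\perp}$, i.e.\ the minimal-norm solution; since $(\ker D_K)^\perp=\overline{\operatorname{im}D_K^\star}\subset\ker D_K^\star$ (using $(D_K^\star)^2=0$), this forces $D_K^\star g=0$. Then for a cutoff $\psi$ supported in $U$, the form $\psi g$ lies in $\dom\Delta_K$ globally on $X$, and one computes: every term of $\Delta_K(\psi g)$ either carries a factor of $\psi$ or a derivative of $\psi$ contracted against $D_Kg=f$, $D_K^\star g=0$, or $D_K^\star f$ — all of which lie in $\cC^\infty_{\bullet,K}$ — except for one zeroth-order term in $g$ coming from the double commutator with $\psi$. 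Thus $\Delta_K(\psi g)-\Delta(\psi)g\in\cC^\infty_{\bullet,K}(X,\Delta_1,L)(U)$, and the usual interior elliptic boot-strap (using \Cref{smoothhodge} at each step, starting from $g\in L^2$ and shrinking the cutoff) yields $g\in\cC^\infty_{\bullet-1,K}$ on a slightly smaller neighbourhood. This keeps everything local, keeps $D_K$-exactness intact, and never requires an $L^2$ Hodge decomposition on the open set $U$.

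Your ``core step'' (the Hodge argument giving $g=D_K^\star\mathcal G(f)$ and the use of \Cref{smoothhodge}) is sound \emph{if} one is already on the closed manifold $X$ with a globally defined, $D_K$-exact $f$; it is the reduction to that situation that fails. You would need to replace the cut-off-of-$f$ idea with the cut-off-of-$g$ idea, together with the observation that projecting $g_0$ to the minimal-norm solution makes $D_K^\star g=0$, which is precisely what makes $\Delta_K(\psi g)$ controllable.
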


\begin{proof}
	Let $g$ be the projection of $g_0$ on $\ker(D_K|_U)^{\perp}$, the orthogonal of the kernel of the operator $D_K$ restricted to $U$. Then we automatically have 
	\[D_K^{\star}g=0\]
	and as consequence, a quick calculation shows that for any cutoff function $\psi$ 
	with support in $U$ we have 
	\[\psi g\in \dom \Delta_K, \qquad \Delta_K(\psi g)- \Delta(\psi) g\in \cC^\infty _{\bullet,K}(X,\Delta_1,L)(U).\]
	The conclusion follows by the usual boot-strapping procedure, using Theorem \ref{smoothhodge}.  
\end{proof}
By combining \Cref{debareq} and \Cref{locreg}, we obtain the following consequence.

\begin{cor}\label{localsolution}
	Let $U \subset X$ be a small Stein open set and let $f \in C^\infty_{s,K}(U, \Delta_1, L)$ such that $D_K f = 0$ and $f \in \mathcal{L}^2(U)$.  
	
	\begin{enumerate}
		\item If $s > n$, there exists $g \in C^\infty_{s-1,K}(U, \Delta_1, L)$ such that $g \in \mathcal{L}^2(U)$ and $D_K g = f$ on $U$.
		\item If $s \leq n$, there exists $g \in C^\infty_{s-1,K}(U, \Delta_1, L)$ such that $g \in \mathcal{L}^2(U)$ and
		\[
		f - D_K g \in H^0(U, \Omega^s_X(\log D) \otimes L), \qquad \nabla(f - D_K g) = 0.
		\]
	\end{enumerate}
\end{cor}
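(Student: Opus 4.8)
\textbf{Proof plan for Corollary \ref{localsolution}.}
The strategy is to combine the local solvability of the $D_K$-equation in the $\mathcal{L}^2$-category (Proposition \ref{debareq}) with the elliptic regularity for $\Delta_K$ (Theorem \ref{smoothhodge}, packaged for local purposes in Lemma \ref{locreg}), and then to run the usual homological bookkeeping that compares $D_K$-exactness with the de Rham complex of $\nabla$. First I would treat case (1), where $s > n$. Since $U$ is a small Stein open set and $f \in \mathcal{L}^2(U, \cE_s)$ with $D_K f = 0$, in particular $\dbar f = 0$ on $U \setminus D$; by Proposition \ref{debareq} there exists $g_0 \in \mathcal{L}^2(U, \cE_{s-1})$ with $\dbar g_0 = f$ and $D_K g_0 \in \mathcal{L}^2(U, \cE_s)$. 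The difference $f - D_K g_0$ is then a $D_K$-closed $\mathcal{L}^2$-form of degree $s$ which is moreover $\dbar$-closed \emph{and} $\dbar$-exact; because $s-1 \geq n$, its $(p,q)$-components all have $q \geq 1$, so iterating Proposition \ref{debareq} (peeling off one more $\dbar$) and using that $\Omega^p_X(\log D) = 0$ for $p > n$ shows $f - D_K g_0$ is itself $D_K$-exact by an $\mathcal{L}^2$-form. Shrinking $U$ if necessary and invoking Lemma \ref{locreg} upgrades the $\mathcal{L}^2$-primitive to one in $\cC^\infty_{s-1,K}(U,\Delta_1,L)$, giving the desired $g$ with $D_K g = f$ on $U$.

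For case (2), where $s \leq n$, the same first move applies: Proposition \ref{debareq} produces $g_0 \in \mathcal{L}^2(U, \cE_{s-1})$ with $\dbar g_0 = f$ on $U \setminus D$ and $D_K g_0 \in \mathcal{L}^2(U, \cE_s)$. Consider $f' := f - D_K g_0$. Then $D_K f' = 0$ and, crucially, $\dbar f' = \dbar f - \dbar D_K g_0 = \dbar f - \dbar(\dbar + \theta)g_0 = -\theta \wedge \dbar g_0 + (\text{terms killed by }\dbar f = 0)$; a short computation using $\dbar g_0 = f$, $\dbar f = 0$ and $\dbar \theta = 0$ (Proposition \ref{specialhar}, $\Res$ properties, and $\theta$ holomorphic with log poles) shows the $\dbar$-closed part of $f'$ can be arranged so that $f'$ becomes a $\dbar$-closed form. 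More precisely: one peels off the positive-$q$ components of $f'$ one at a time by repeated application of Proposition \ref{debareq}, at each stage subtracting a $D_K$-coboundary built from an $\mathcal{L}^2$-primitive; since there are at most $s \leq n$ such components and $U$ is Stein, after finitely many steps (and finitely many shrinkings of $U$) what remains is a $D_K$-closed $\mathcal{L}^2$-form $f''$ of degree $s$ whose only nonzero component is of type $(s,0)$. For such a form $D_K f'' = 0$ forces $\dbar f'' = 0$, so by the second half of Proposition \ref{lelongnumb} we get $f'' \in H^0(U, \Omega^s_X(\log D) \otimes L)$; and then $D_K f'' = 0$ reduces to $\nabla f'' = 0$ because the $(1,0)$-part of $D_K$ on a holomorphic log form is exactly $\nabla$ (up to the harmonic metric identification, cf. \eqref{e1} and Convention \ref{conv}). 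Collecting all the coboundaries subtracted so far into a single $g \in \mathcal{L}^2(U, \cE_{s-1})$ with $D_K g \in \mathcal{L}^2$, and then applying Lemma \ref{locreg} to replace $g$ by a genuine element of $\cC^\infty_{s-1,K}(U,\Delta_1,L)$ (note $f - D_K g = f''$ is smooth, so the hypothesis of Lemma \ref{locreg} is met on the exact piece), yields exactly the statement.

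The step I expect to be the main obstacle is the bookkeeping in case (2) that separates the $\dbar$-cohomology contribution from the holomorphic log part while staying inside the $\mathcal{L}^2$-class at every intermediate stage — in particular making sure that each application of Proposition \ref{debareq} is legitimate (the relevant component is genuinely $\dbar$-closed on $U \setminus D$ and lies in the right weighted $L^2$ space with the prescribed $\prod \log^2|z_i|$ weights) and that the accumulated primitives still satisfy $D_K g \in \mathcal{L}^2(U)$, which requires Proposition \ref{comefromdefn} to control the $\theta \wedge g$ term at each step. A secondary technical point is that Lemma \ref{locreg} is stated for $f \in \cC^\infty_{\bullet,K}(U,\Delta_1,L)$, so one must verify that the residual form to which it is applied (after subtracting the $\mathcal{L}^2$-coboundaries) is in fact smooth in our sense — this is where one uses that $f'' \in H^0(U, \Omega^s_X(\log D)\otimes L) \subset \cC^\infty_{s,K}(U,\Delta_1,L)$ by part (1) of Remark \ref{realpol}, so that $f - D_K g = f''$ is smooth and Lemma \ref{locreg} applies to the genuinely exact remainder. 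The Stein hypothesis on $U$ and the freedom to shrink are what make the finitely-many-steps argument terminate without cohomological obstructions.
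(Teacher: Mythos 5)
There is a genuine gap at the very first step, and it propagates through both cases. You assert that $D_K f = 0$ implies $\dbar f = 0$ on $U \setminus D$, and on that basis apply Proposition~\ref{debareq} to the \emph{whole} form $f$ to obtain a primitive $g_0$ with $\dbar g_0 = f$. This implication is false when $f$ has mixed bidegree. Writing $f = \sum_{p+q=s} f^{p,q}$ and recalling that $D_K = \dbar + \nabla$ with $\dbar$ of type $(0,1)$ and $\nabla = D'_{h_L} + 2\theta_0$ of type $(1,0)$, the $(p,q+1)$-component of $D_K f = 0$ reads
\[
\dbar f^{p,q} + \nabla f^{p-1,q+1} = 0 .
\]
Only for $p=0$ does this give $\dbar f^{0,s} = 0$; for $p \geq 1$ one has $\dbar f^{p,q} = -\nabla f^{p-1,q+1}$, which is nonzero in general. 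So $\dbar f \neq 0$, the hypothesis of Proposition~\ref{debareq} is not met for $f$ itself, and the equality $\dbar g_0 = f$ you use downstream (for instance in the computation of $\dbar f'$ in case (2)) is not available.

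The correct move --- which is what the paper does --- is to apply Proposition~\ref{debareq} only to the top $\dbar$-degree component $f^{0,s}$, which \emph{is} $\dbar$-closed, obtain a $(0,s-1)$-primitive $g^{0,s-1} \in \mathcal L^2(U)$, and subtract $D_K g^{0,s-1}$ to kill the $(0,s)$-part. After that subtraction the $(1,s-1)$-component of the remainder becomes $\dbar$-closed (the offending $\nabla f^{0,s}$ term is now gone), so Proposition~\ref{debareq} applies again, and one iterates through $(p,s-p)$ for increasing $p$. You do gesture at exactly this peeling procedure midway through case (2), but as written your argument rests on the incorrect starting claim rather than on the degree bookkeeping that actually drives the induction. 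Once the iteration is set up component by component, the remainder of your outline --- termination after $\min\{s,n\}$ steps because $\Omega^p_X(\log D) = 0$ for $p > n$, identification of the residual $(s,0)$-form as a $\nabla$-flat holomorphic log form via Proposition~\ref{lelongnumb}, and the final upgrade of the $\mathcal L^2$-primitive to $\cC^\infty_{s-1,K}(U,\Delta_1,L)$ via Lemma~\ref{locreg} --- is sound and matches the paper.
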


\begin{proof}
	Write $f = \sum_{p+q = s} f^{p,q}$, where $f^{p,q}$ is an $L$-valued $(p,q)$-form.  
	Then
	\[
	f^{0,s} \in \mathcal{L}^2(U), \qquad \dbar f^{0,s} = 0 \text{ on } U \setminus D.
	\]  
	Applying Proposition~\ref{debareq}, there exists a $(0,s-1)$-form $g^{0,s-1} \in \mathcal{L}^2(U)$ such that
	\[
	\dbar g^{0,s-1} = f^{0,s} \quad \text{on } U \setminus D, \qquad D_K g^{0,s-1} \in \mathcal{L}^2(U).
	\]
	Then $f - D_K g^{0,s-1}$ is $D_K$-closed and belongs to $\mathcal{L}^2(U)$.  
	
	\medskip
	We repeat the above argument for $f - D_K g^{0,s-1}$. By construction, its $(0,s)$-part is zero.  
	Then $D_K$-closeness implies that the $(1,s-1)$-part is $\dbar$-closed.  
	Applying Proposition~\ref{debareq} again, we find a $(1,s-2)$-form $g^{1,s-2} \in \mathcal{L}^2(U)$ such that
	\[
	f - D_K g^{0,s-1} - D_K g^{1,s-2} \in \mathcal{L}^2(U)
	\]
	has no $(0,s)$ or $(1,s-1)$ component.  
	Iterating this process $\min\{s,n\}$ times, we reach one of the following two situations:
	
	\begin{enumerate}
		\item If $s > n$, we obtain $g \in \mathcal{L}^2(U)$ such that $f = D_K g$ on $U$.  
		By \Cref{locreg}, we can choose $g \in C^\infty_{s-1,K}(U, \Delta_1, L)$, proving the corollary.
		\item If $s \leq n$, we obtain $g \in \mathcal{L}^2(U)$ such that $f - D_K g$ is a $D_K$-closed $(s,0)$-form.  
		In particular, $f - D_K g \in H^0(U, \Omega^s_X(\log D) \otimes L)$ and $\nabla(f - D_K g) = 0$. 
		Then $D_K g \in C^\infty_{s,K}(U, \Delta_1, L)$.
		By applying \Cref{locreg} to $D_K g$, there exists $g_0 \in C^\infty_{s-1,K}(U, \Delta_1, L)$ such that $D_K g_0 = D_K g$.  
		Then $f - D_K g_0 \in H^0(U, \Omega^s_X(\log D) \otimes L)$ and $\nabla(f - D_K g_0) = 0$, which proves the corollary.
	\end{enumerate}
\end{proof}

\begin{remark}
The previous two statements, i.e. Proposition \ref{debareq} and Lemma \ref{locreg} together with their consequence we have just established show clearly the relevance of the space of forms $\mathcal{L}^2$. Indeed, it would be more difficult (but not impossible) to establish for example Proposition \ref{debareq} directly in the context of 
the spaces $\cC^\infty_{\bullet,K}$.\end{remark}
\medskip

\noindent Now we are ready to prove the main theorem of this section.

\begin{thm}\label{quasiiso}
	Consider the natural inclusion morphism $i$ between the following complexes:	
\[
\begin{tikzcd}[row sep=large, column sep=small, ampersand replacement=\&]
	\Omega^\bullet_X(\log D)\otimes L 
	\arrow[r, "\nabla"] 
	\arrow[d, "i"'] 
	\& \Omega^{\bullet+1}_X(\log D)\otimes L 
	\arrow[r, "\nabla"] 
	\arrow[d, "i"'] 
	\& \cdots 
	\arrow[r] 
	\& \Omega^n_X(\log D)\otimes L 
	\arrow[d, "i"'] 
	\arrow[r] 
	\& 0 
	\arrow[d, "i"'] \\
	\cC^\infty_{\bullet,K}(X, \Delta_1, L) 
	\arrow[r, "D_K"] 
	\& \cC^\infty_{\bullet+1,K}(X, \Delta_1, L) 
	\arrow[r, "D_K"] 
	\& \cdots 
	\arrow[r] 
	\& \cC^\infty_{n,K}(X, \Delta_1, L) 
	\arrow[r, "D_K"] 
	\& \cC^\infty_{n+1,K}(X, \Delta_1, L) 
	\arrow[r] 
	\& \cdots
\end{tikzcd}
\]

	Then $i$ induces an isomorphism of sheaves:
	\begin{equation}\label{inducedmor1}
		[i]: \frac{\ker \nabla}{\im \nabla} \longrightarrow \frac{\ker D_K}{\im D_K}.
	\end{equation}
	In other words, the two complexes above are quasi-isomorphic.
\end{thm}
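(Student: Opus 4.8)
The plan is to deduce the theorem from \Cref{localsolution} by the standard sheaf-theoretic mechanism: a morphism of complexes of sheaves is a quasi-isomorphism precisely when the induced maps on cohomology sheaves are isomorphisms, and this may be checked on a basis of the topology. Concretely I would fix a point $x\in X$ together with a polydisk $U$ adapted to $D$ (so $D\cap U$ is a union of coordinate hyperplanes; these $U$ are Stein and form a neighbourhood basis of $x$), and show that for every degree $s$ the map $[i]$ on sections over $U$, from $\ker\nabla/\im\nabla$ to $\ker D_K/\im D_K$, is bijective; since the terms of $\Omega^\bullet_X(\log D)\otimes L$ are coherent, hence $\nabla$-acyclic computations on the Stein set $U$ are legitimate. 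Throughout one uses that $C^\infty_{s,K}(U,\Delta_1,L)\subset\mathcal{L}^2(U)$, which is immediate from the definitions: the $L^2$ conditions defining $\cC^\infty_K$ carry positive powers of $\log|s_i|^2$ along \emph{all} of $\Delta_2$, and these dominate the weight appearing in \Cref{ssheaf}; consequently \Cref{localsolution} applies to any $D_K$-closed element of $C^\infty_{s,K}(U,\Delta_1,L)$.

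For surjectivity of $[i]$ I would argue by degree. Given $f\in C^\infty_{s,K}(U,\Delta_1,L)$ with $D_Kf=0$: if $s>n$ then $\Omega^s_X(\log D)=0$ and \Cref{localsolution}(1) gives $f=D_Kg$, so both cohomologies vanish; if $1\le s\le n$, \Cref{localsolution}(2) produces $g$ for which $\alpha:=f-D_Kg$ is a $\nabla$-closed logarithmic form, so $[f]=[i(\alpha)]$ lies in the image of $[i]$; and the case $s=0$ is elementary, as $D_Kf=0$ forces $\dbar f=0$, whence the $L^2$ bound gives holomorphicity across $\Delta_1$ by Riemann extension, with $\nabla f=0$. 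For injectivity — which is \emph{not} a formal consequence of \Cref{localsolution} as stated — I would re-run the descent used in its proof, but starting from an equation $D_Kg=i(\alpha)$ with $\alpha$ a $\nabla$-closed logarithmic form of degree $s$ (the cases $s=0$ and $s>n$ being trivial). Comparing bidegrees forces $\dbar g^{0,s-1}=0$ on $U\setminus D$; solving this $\dbar$-equation inside $\mathcal{L}^2$ by \Cref{debareq} and subtracting $D_K(\text{solution})$ lowers the antiholomorphic degree of $g$ without changing $D_Kg$; iterating $s-1$ times leaves a $\dbar$-closed $(s-1,0)$-form $\beta\in\mathcal{L}^2(U)$ with $D_K\beta=i(\alpha)$. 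By \Cref{lelongnumb}, $\beta\in H^0(U,\Omega^{s-1}_X(\log D)\otimes L)$, and since $D_K\beta=\dbar\beta+\nabla\beta=\nabla\beta$, the bidegree-$(s,0)$ part of $D_K\beta=i(\alpha)$ reads $\nabla\beta=\alpha$.

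I expect the only genuine subtlety — the hard analytic work having already been done in Sections~\ref{s-tech-I}--\ref{HodgeDek} and packaged into \Cref{localsolution} — to be organisational: one must ensure that the injectivity descent stays inside $\mathcal{L}^2$ at every stage so that \Cref{debareq} (hence \Cref{fujiki}) applies, and that the terminal $\dbar$-closed piece is identified as a genuine section of $\Omega^\bullet_X(\log D)\otimes L$ through the Lelong-number characterisation of \Cref{lelongnumb}, rather than merely as an $L^2$ holomorphic form with uncontrolled poles. Once $[i]$ is shown to be an isomorphism on section-cohomology over every such $U$, it is an isomorphism of cohomology presheaves on a basis, hence of cohomology sheaves, which is the desired quasi-isomorphism.
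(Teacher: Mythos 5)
Your argument for injectivity reproduces the paper's almost verbatim: a descent lowering the antiholomorphic degree of $g$ via iterated applications of \Cref{debareq}, followed by identification of the residual $\dbar$-closed $(s-1,0)$-form as a holomorphic logarithmic section via \Cref{lelongnumb}. No meaningful difference there.

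For surjectivity the two proofs diverge. You apply \Cref{localsolution} \emph{once}, on the fixed Stein polydisk $U$, to produce from $f\in\ker D_K$ a correction $g$ and a $\nabla$-closed logarithmic form $\alpha=f-D_Kg$ defined on all of $U$, and you read surjectivity off from that. The paper does not do this. It only applies \Cref{localsolution} on an increasing exhaustion $\{U_m\}$ of $U$ by relatively compact Stein opens, getting $g_m$ on each $U_m$, and then patches them by a Mittag--Leffler procedure (correcting each $g_m$ by a $D_K$-exact term so that consecutive approximants agree on $U_{m-1}$); in degrees $s\le n$ this further requires a Runge-type approximation, exploiting coherence of $\ker\nabla$, because the holomorphic logarithmic residuals arising at different stages need not be compatible and one must replace them by a single $\nabla$-closed section over $U$. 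Your shortcut is cleaner, but it presumes that \Cref{localsolution} yields a solution \emph{globally on $U$}. Whether that is automatic depends on the $L^2$-solvability step underlying \Cref{debareq}: the $\mathcal{L}^2(U)$ hypothesis, as stated in \Cref{ssheaf}, is an integrability condition on relatively compact subsets, while the solution from \Cref{fujiki} is obtained by an argument that one would want to control uniformly up to $\partial U$. The fact that the paper applies \Cref{localsolution} strictly on the $U_m\Subset U$ and then runs the exhaustion strongly suggests the authors judged that step necessary. If you keep your direct argument, you should either justify that the weighted $\dbar$-solution of \Cref{fujiki} exists on $U$ itself under the $\mathcal{L}^2(U)$ hypothesis (e.g.\ through an exhaustion internal to that lemma, in the spirit of H\"ormander's $L^2$ theory), or else reinsert the Mittag--Leffler argument that the paper uses. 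Apart from this, your containment $C^\infty_{s,K}(U,\Delta_1,L)\subset\mathcal{L}^2(U)$ is indeed what the paper uses implicitly, and the overall structure matches the paper's.
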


\begin{proof} 
	As a consequence of Proposition~\ref{lelongnumb}, a holomorphic log form $f$ belongs to $\cC^\infty_{\bullet, K}(X, \Delta_1, L)$.  
	Therefore, the inclusion $i$ is well-defined.
	\medskip
	
	We first prove the surjectivity of \eqref{inducedmor1}. Let $U \subset X$ be a small Stein open set and let $f \in C^\infty_{k, K}(U, \Delta_1, L)$ such that $D_K f = 0$.  
	We follow the argument used in the proof of the usual Dolbeault lemma (see, e.g., \cite[Cor.~1.3.9]{Huy}).  
	Let $\{U_i\}_{i=1}^\infty$ be an increasing sequence of Stein open subsets of $U$ with $\bigcup_{i=1}^\infty U_i = U$.
	
	\medskip
	Consider first the case $k \geq n+1$. For each $m \in \mathbb{N}$, since $f\in \mathcal{L}^2 (U_m)$, by \Cref{localsolution} we can find $g_m \in \cC^\infty_{k-1,K}(U_m, \Delta_1, L)$ such that
	\[
	f = D_K g_m \qquad \text{on } U_m.
	\] 
	We claim that we can choose a sequence $\widetilde{\beta}_m \in \cC^\infty_{k-1,K}(U_m, \Delta_1, L)$ such that
	\[
	f = D_K \widetilde{\beta}_m \qquad \text{on } U_m, \qquad
	\widetilde{\beta}_m = \widetilde{\beta}_{m+1} \quad \text{on } U_{m-1}.
	\]
	
	\medskip
	To prove the claim, assume by induction that $\widetilde{\beta}_1, \dots, \widetilde{\beta}_{m-1}$ have already been constructed.  
	Consider $\widetilde{\beta}_{m-1} - \beta_m$. By construction,
	\[
	D_K(\widetilde{\beta}_{m-1} - \beta_m) = 0 \quad \text{on } U_{m-1}.
	\]  
	Since $k \geq n+1$, applying \Cref{localsolution} to $\widetilde{\beta}_{m-1} - \beta_m$ on $U_{m-1}$, we can find $\gamma \in \cC^\infty_{k-1,K}(U, \Delta_1, L)$ such that
	\[
	\widetilde{\beta}_{m-1} - \beta_m = D_K \gamma \quad \text{on } U_{m-2}.
	\]  
	Set
	\[
	\widetilde{\beta}_m := \beta_m + D_K \gamma.
	\]  
	Then $\widetilde{\beta}_m = \widetilde{\beta}_{m-1}$ on $U_{m-2}$ and $D_K \widetilde{\beta}_m = f$ on $U_m$. The claim is proved.
	
	\medskip
	Thanks to the claim, the sequence $\widetilde{\beta}_m$ converges to some $\beta \in \cC^\infty_{k-1,K}(U, \Delta_1, L)$ such that
	\[
	D_K \beta = f \quad \text{on } U.
	\]  
	This proves surjectivity for $k > n$.
	
	\medskip
	If $k \leq n$, the only difference is that the $D_K$-closeness of $\widetilde{\beta}_{m-1} - \beta_m$ on $U_{m-1}$ implies
	\[
	\widetilde{\beta}_{m-1} - \beta_m = D_K \gamma + F_m \quad \text{on } U_{m-2},
	\]  
	for some $F_m \in H^0(U_{m-2}, \Omega^{s-1}_X(\log \sum D_i) \otimes L)$ with $\nabla F_m = 0$.  
	Since $\ker \nabla$ is coherent, there exists $F \in H^0(U, \Omega^{s-1}_X(\log \sum D_i) \otimes L)$ with $\nabla F = 0$ such that $\|F - F_m\| \leq 2^{-m}$ on $U_{m-2}$.  
	Then set
	\[
	\widetilde{\beta}_m := \beta_m + D_K \gamma + (F_m - F).
	\]  
	Then $D_K \widetilde{\beta}_m = 0$ on $U_m$, and $\widetilde{\beta}_m$ converges on compact subsets of $U$.
	
	\medskip
	For injectivity, let $F \in H^0(U, L \otimes \Omega^k_X(\log D))$ satisfy $F = D_K g$ for some $g \in C^\infty_{k-1,K}(U, \Delta_1, L)$.  
	As $F$ is a $(k,0)$-form, all $(p,q)$-components of $D_K g$ vanish for $q \geq 1$.  
	Using the previous argument, we can find $\gamma \in C^\infty_{k-2,K}(U, \Delta_1, L)$ such that $g - D_K \gamma$ is a $(k-1,0)$-form.  
	Then
	\[
	F = D_K(g - D_K \gamma),
	\]  
	so $g - D_K \gamma$ is holomorphic on $U \setminus D$.  
	By the second part of Proposition~\ref{lelongnumb}, we conclude $g - D_K \gamma \in H^0(U, L \otimes \Omega^{k-1}_X(\log D))$.  
	Hence $F$ is $\nabla$-exact.
\end{proof}

\noindent For further reference, we record the following consequence.

\begin{thm}\label{L2cohoisom} 
	The following spaces are isomorphic: 
	\[
	\ker \Delta_K \simeq 
	\frac{\ker (\dbar + \nabla): \cC^\infty_{\bullet,K}(X, \Delta_1, L) \to \cC^\infty_{\bullet+1,K}(X, \Delta_1, L)}
	{\im (\dbar + \nabla): \cC^\infty_{\bullet-1,K}(X, \Delta_1, L) \to \cC^\infty_{\bullet,K}(X, \Delta_1, L)}
	\simeq \mathbb{H}^k(X, L \otimes \Omega_X^\bullet(\log \sum D_i)).
	\]
\end{thm}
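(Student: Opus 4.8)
The plan is to chain together the two isomorphisms already available from the results above. The right-hand isomorphism
\[
\frac{\ker D_K}{\im D_K}\big|_{\text{global sections}}\simeq \mathbb H^k\big(X, L\otimes \Omega_X^\bullet(\log D)\big)
\]
is exactly the global-section consequence of the quasi-isomorphism established in Theorem \ref{quasiiso} (see also Remark \ref{realpol}(2)): since each sheaf $\cC^\infty_{\bullet,K}(X,\Delta_1,L)$ is a module over $\cC^\infty$, it is fine, hence acyclic, so the hypercohomology of the bottom complex is computed by its complex of global sections; and $[i]$ being a quasi-isomorphism of complexes of sheaves identifies this with $\mathbb H^k(X,L\otimes\Omega_X^\bullet(\log D))$. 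I would spell this out in one line, invoking Theorem \ref{quasiiso} and the standard fact that fine resolutions compute hypercohomology.

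The substance is therefore the left-hand isomorphism,
\[
\ker\Delta_K \simeq \frac{\ker(\dbar+\nabla)}{\im(\dbar+\nabla)}\quad\text{on }C^\infty_{\bullet,K}(X,\Delta_1,L),
\]
i.e. a Hodge-theoretic identification of the $D_K$-cohomology of the global complex $\big(C^\infty_{\bullet,K}(X,\Delta_1,L),D_K\big)$ with the space of $\Delta_K$-harmonic forms. First I would note $D_K=\dbar+\nabla$ by Definition \ref{def:connection}, so the two descriptions of the quotient agree. Then I would run the usual Hodge-theoretic argument: any $u\in C^\infty_{\bullet,K}(X,\Delta_1,L)$ is $L^2$ (this is built into Definition \ref{defend1}, cf. Proposition \ref{lelongnumb}), so by Theorem \ref{hodgedec} and Theorem \ref{smoothhodge} we get a decomposition $u=\mathcal H(u)+\Delta_K\mathcal G(u)$ with $\mathcal H(u),\mathcal G(u)\in C^\infty_{\bullet,K}(X,\Delta_1,L)$ — this is precisely Corollary \ref{decopols}. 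Writing $\Delta_K=D_KD_K^\star+D_K^\star D_K$ gives $u=\mathcal H(u)+D_K(D_K^\star\mathcal G u)+D_K^\star(D_K\mathcal G u)$, where by Proposition \ref{propend1}/Theorem \ref{smoothhodge} both $D_K^\star\mathcal G u$ and $D_K\mathcal G u$ lie in the appropriate $C^\infty_{\bullet,K}$ spaces. If moreover $D_Ku=0$, then applying $D_K$ and using $D_K^2=0$ on $X_0$ shows $D_K D_K^\star(D_K\mathcal G u)=0$, hence (pairing against $D_K\mathcal G u$, which is legitimate since all the relevant forms are in $\dom\Delta_K$) $D_K^\star D_K\mathcal G u=0$, so the last term drops and $u=\mathcal H(u)+D_K(\text{something in }C^\infty_{\bullet,K})$. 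Conversely $\mathcal H(u)=0$ whenever $u=D_Kv$ with $v\in C^\infty_{\bullet,K}$, because $\langle D_Kv,\xi\rangle=\langle v,D_K^\star\xi\rangle=0$ for every harmonic $\xi$ (harmonic forms are $D_K^\star$-closed), and this pairing is well defined by the integrability packaged into $C^\infty_{\bullet,K}$. This yields the well-defined bijection $[u]\mapsto\mathcal H(u)$ between $\ker D_K/\im D_K$ and $\ker\Delta_K$.

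The main technical obstacle, and the point I would be most careful about, is the legitimacy of all the integrations by parts and orthogonal-projection arguments: forms in $C^\infty_{\bullet,K}(X,\Delta_1,L)$ are genuinely singular along $\Delta_2+\Delta_3$, so statements like "$\langle D_Kv,\xi\rangle=\langle v,D_K^\star\xi\rangle$" and "$u\in\dom\Delta_K$" are not automatic. These are, however, exactly what Theorem \ref{logpole}, Theorem \ref{smoothhodge}, Proposition \ref{propend1}, and the density Lemma \ref{lemend5} are designed to supply: density of compactly supported forms in the graph norms, membership in $\dom\Delta_K$, and the regularity $\mathcal G(u),D_K\mathcal G(u),D_K^\star\mathcal G(u)\in C^\infty_{\bullet,K}$. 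So in the write-up I would be explicit that each such manipulation is justified by approximating via Lemma \ref{lemend5} and invoking the completeness of $\omega_D$ along $\Delta_2$ together with the boundedness estimates of Section \ref{s-tech-I}. Once these justifications are in place the argument is the classical one, and assembling it with the hypercohomology identification from Theorem \ref{quasiiso} completes the proof.
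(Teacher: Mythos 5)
Your proposal is correct and follows essentially the same route as the paper, whose proof is simply the one-line citation ``This follows directly from Corollary \ref{decopols} and Theorem \ref{quasiiso}.'' You are unpacking precisely those two references: the right-hand isomorphism from the quasi-isomorphism of Theorem \ref{quasiiso} (plus the fact that the sheaves $\cC^\infty_{\bullet,K}(X,\Delta_1,L)$ are fine), and the left-hand isomorphism from the Hodge decomposition with preserved regularity of Corollary \ref{decopols}, together with the standard orthogonality argument justified by the $L^2$ machinery of the preceding sections.
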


\begin{proof}
	This follows directly from \Cref{decopols} and \Cref{quasiiso}.
\end{proof}

Finally, we point out a variant of \Cref{quasiiso}. Nothing that the complex \eqref{derhamcom} does not change if we replace $\nabla$ by
$D_K = \nabla + \dbar$. Therefore, we have the complex
\begin{equation}\label{derhamcom11}
	\cdots \to \Omega^\bullet_X(\log D) \otimes L \stackrel{D_K}{\to} \Omega^{\bullet+1}_X(\log D) \otimes L \to \cdots.
\end{equation}	

Now, for each $p \geq 0$, we have the following complex of sheaves:
\begin{equation}\label{derhamcom2}
	\dbar : C^\infty_K(X, \Lambda^{p,q} \Omega_X(\log \Delta_1) \otimes L) \to C^\infty_K(X, \Lambda^{p,q+1} \Omega_X(\log \Delta_1) \otimes L),
\end{equation}	
which is locally exact. This follows from a version of Fujiki's lemma in the context of $C^\infty_K$, by considering solutions with minimal norm and applying the Bochner formula. The consequence is the same: the hypercohomology of \eqref{derhamcom11} is computed by the cohomology of the double complex \eqref{derhamcom2}.

\medskip

 \section{An extension theorem}\label{sec:solve}
\subsection{Main results}\label{subsec:main}
\noindent In this section we prove the extension Theorem \ref{solveequs} below, which plays an important role in the "jumping of the cohomology locus" statement. The usual notation/conventions here are the usual ones fixed in Section ???




		
%
%
\smallskip

We introduce a few more notations: consider the decomposition
	\[
	\Delta_1 := \sum_{i=1}^{\ell} Y_i
	\] 
of $\Delta_1$ as sum of prime divisors. For any subset 
	\(I=\{i_1,\ldots,i_p\}\subset \{1,\ldots,\ell\}\), we denote 
	\[
	Y_I := Y_{i_1}\cap \cdots\cap Y_{i_p},
	\] 
	which is a smooth closed submanifold of \(X\) of codimension \(p\). We set 
	\[
	\Delta_{I;1} := \sum_{i\notin I} (Y_i \cap Y_I), \quad  
	\Delta_{I;2} := \Delta_2 \cap Y_I, \quad  
	\Delta_{I;3} := \Delta_3 \cap Y_I.
	\]
	Then 
	\[
	D_I := \Delta_{I;1} + \Delta_{I;2} + \Delta_{I;3}
	\] 
	is a simple normal crossing divisor on \(Y_I\). We define 
	\[
	C^\infty_{i,K}(Y_I, \Delta_{I;1},L)
	\] 
	the space of smooth forms in \Cref{defend1} for the log pair \((Y_I,D_I)\).  
	
	Note that, by \Cref{specialhar}, for any 
	\(\beta \in H^0\big(X, \Omega_X^1(\log D)\big)\), we have 
	\(\Res_{Y_i}\beta = 0\) for each \(i\). Hence, 
	\[
	\iota_I^*\beta \in H^0\big(Y_I, \Omega_{Y_I}^1(\log D_I)\big),
	\] 
	where \(\iota_I:Y_I\hookrightarrow X\) is the inclusion map. Therefore, 
	\(\iota_I^*(L,\nabla)\) defines a well-defined object in 
	\(M_{\mathrm{\rm DR}}(Y_I/D_I)\). Moreover, the metric \(\iota_I^*h\) is a harmonic metric for \(\iota_I^*L\), verifying the conditions of \Cref{specialhar}.  
	
	In what follows, we write \((L,h)\) for \((\iota_I^*L,\iota_I^*h)\) on \(Y_I\) by abuse of notation.
\medskip

\noindent Our next result states as follows.

\begin{thm}\label{solveequs} 
	In the above setting,  let	\[\beta\in H^0 \big(X, \Omega_X^1 (\log D)\big)\] be a log form and let $\xi $ be a smooth $(0,1)$-form on $X$ such that $d\xi=0$. 
	Let $u_0 \in  C^\infty _{k,K}(X,\Delta_1,L)$ such that the equality
	\[D_K u_0 =0 \qquad\text{ on } X\setminus D .\] 
	We suppose that the following hold.
	\begin{itemize}
		
		\item  The form $u_0$ extends to order one, i.e. there exists a log form $v_1\in C^\infty _{k,K}(X,\Delta_1,L)$ such that 
		\[D_K v_1 = -(\xi +\beta ) \wedge u_0 \qquad\text{ on } X\setminus D\]
		
		\item For every non-empty $I\subset \{1,\ldots,\ell\}$,  any $D_K$-closed form in  $C^\infty_{k-|I|,K}(Y_I, L)$  extends to order one, i.e, for any  $s\in C^\infty_{k-|I|,K}(Y_I,L)$   if $D_K s =0$, then $-\iota_I^*(\xi +\beta ) \wedge s$ is $D_K$-exact on $Y_I$.  Here $|I|$ denotes the cardinality of $I$.
	\end{itemize}
	Then for any $k\geq 0$ we can solve the equations
	\begin{equation}\label{simp4611}
		\begin{cases}
			D_K(u_1) =- (\xi +\beta ) \wedge u_0\\
			D_K (u_2)=- (\xi +\beta ) \wedge u_1\\
			\ldots\\
			D_K (u_{k+1})=- (\xi +\beta ) \wedge  u_k \\
		\end{cases} 
	\end{equation} 
	on $X\setminus D$ such that $u_i \in  C^\infty _{\bullet,K}(X,\Delta_1,L)$ and such that  
the series $ \sum_{i\geq 0} t^i u_i$ is convergent for $|t| \ll 1$.
\end{thm}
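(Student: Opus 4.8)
The strategy is the by-now familiar one of solving the system \eqref{simp4611} term by term, the main point being that the obstruction to passing from order $i$ to order $i+1$ is, at bottom, always an instance of the extension-to-order-one hypothesis, applied either on $X$ or on one of the strata $Y_I$. First I would fix a Hodge-theoretic normalization: using \Cref{decopols} and the Green operator $\mathcal G$ for $\Delta_K$ on $\cC^\infty_{\bullet,K}(X,\Delta_1,L)$, I would always choose the solution $u_{i+1}$ of $D_K u_{i+1}=-(\xi+\beta)\wedge u_i$ which is orthogonal to $\ker\Delta_K$, i.e.\ $u_{i+1}=-D_K^\star\mathcal G\big((\xi+\beta)\wedge u_i\big)$, which is canonical and gives uniform control. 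The real work is to show inductively that $(\xi+\beta)\wedge u_i$ is $D_K$-closed \emph{and} $D_K$-exact on $X\setminus D$, so that such a $u_{i+1}$ exists; once existence at every order is established, convergence follows from the a-priori estimates (Proposition~\ref{effectest}, Theorem~\ref{logpole}) by the same geometric-series argument used in the proof of \Cref{twistedversion1}, after an initial rescaling of $\xi+\beta$ so that the relevant operator norm is $\le\frac12$.

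The inductive step splits into two claims. Closedness: $D_K\big((\xi+\beta)\wedge u_i\big)=-(\xi+\beta)\wedge D_K u_i=(\xi+\beta)\wedge(\xi+\beta)\wedge u_{i-1}=0$, using $D_K(\xi+\beta)=0$ (here $\xi$ is $d$-closed and $\beta$ is a $\nabla$-flat log $1$-form with zero residue along $\Delta_1$, by \Cref{specialhar}, so it defines a genuine $D_K$-closed element of $\cC^\infty_{1,K}$) and $(\xi+\beta)\wedge(\xi+\beta)=0$ since $\xi+\beta$ is an odd-degree form. Exactness is the heart of the matter: I would run an induction on the codimension of the strata, exactly mirroring the structure of the second hypothesis. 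Decompose $u_i=\sum_{I} u_i^{(I)}$ according to the "log type" along the components $Y_1,\dots,Y_\ell$ of $\Delta_1$; the residue $\Res_{Y_i}$ maps $\cC^\infty_{\bullet,K}(X,\Delta_1,L)$ to $\cC^\infty_{\bullet-1,K}(Y_i,(\Delta_1-Y_i)\cap Y_i,L)$ by \Cref{lem:residue}, and commutes with $D_K$ and with wedging by $\iota_I^*(\xi+\beta)$. So taking residues of the $D_K$-closed, $L^2$-type form $(\xi+\beta)\wedge u_i$ along each $Y_I$ produces $D_K$-closed forms on $Y_I$, which by the second hypothesis (and its descent to higher-codimension strata) are $D_K$-exact there; feeding these back via \Cref{quasiiso}/\Cref{L2cohoisom}, one shows that the class of $(\xi+\beta)\wedge u_i$ in $\mathbb H^{k+1}\big(X,\Omega_X^\bullet(\log D)\otimes L\big)\simeq\ker\Delta_K$ vanishes. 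Concretely, I expect the cleanest route is: the harmonic projection $\mathcal H\big((\xi+\beta)\wedge u_i\big)$ is detected by pairing against $\ker\Delta_K$; using the De Rham--Kodaira decomposition for currents (\Cref{currentdecomp}) and the $D_KD_K^c$-lemma (\Cref{lemend4}) one reduces the vanishing of this pairing to the vanishing of the analogous pairings on the strata $Y_I$, which is precisely the content of the hypotheses. The base case $I=\varnothing$ is the extension-to-order-one assumption for $u_0$ on $X$ itself, promoted to all $u_i$ via \Cref{simp26}-type reasoning (any $D_K$-closed form whose wedge with $\xi+\beta$ is closed is automatically exact, once the stratified obstructions vanish).

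For the regularity upgrade — ensuring each $u_{i+1}$ actually lies in $\cC^\infty_{\bullet,K}(X,\Delta_1,L)$ and not merely in some $L^2$-type space — I would invoke \Cref{smoothhodge}: since $(\xi+\beta)\wedge u_i\in\cC^\infty_{\bullet,K}$ by Proposition~\ref{propend1} (wedging with the log form $\beta$ and the smooth form $\xi$ preserves the class), the solution $u_{i+1}=-D_K^\star\mathcal G\big((\xi+\beta)\wedge u_i\big)$ is again in $\cC^\infty_{\bullet,K}$. Finally, for convergence: from Theorem~\ref{logpole} and Proposition~\ref{effectest} one gets $\|u_{i+1}\|_{L^2}\le C\,\|(\xi+\beta)\wedge u_i\|_{L^2}\le C'\|\xi+\beta\|_\infty\,\|u_i\|_{L^2}$ with a constant independent of $i$; after rescaling so that $C'\|\xi+\beta\|_\infty\le\frac12$ (which is harmless, as it only reparametrizes the disk in $t$), the series $\sum t^i u_i$ converges in $L^2$ for $|t|\le1$, and then the higher-norm estimates of Theorem~\ref{logpole} (with the measures $d\mu_a$) upgrade this to convergence in each Sobolev-type norm, so the limit is a genuine element of $\cC^\infty_{\bullet,K}(X,\Delta_1,L)$ depending holomorphically on $t$.

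\textbf{Main obstacle.} The delicate step is the stratified exactness argument for $(\xi+\beta)\wedge u_i$: one must set up the induction on the codimension of the $Y_I$ so that the residue maps, the local $D_K$-Poincaré lemma on each stratum (\Cref{localsolution}, \Cref{quasiiso}), and the current-theoretic $\ddbar$-lemma (\Cref{lemend4}) interlock without circularity, and one must check that the descent of the hypothesis "$D_K$-closed $\Rightarrow$ extends to order one" to the deeper strata $Y_I$ is consistent with the inductive data produced at earlier stages. Handling the interaction between the $\Delta_1$-residues (genuine poles, treated via \Cref{lem:residue}) and the $\Delta_2+\Delta_3$-directions (where forms are "smooth" in the sense of $\cC^\infty_{\bullet,K}$) is where the bookkeeping is heaviest, but no new analytic input beyond Sections~\ref{s-tech-I}--\ref{hypercoho} should be required.
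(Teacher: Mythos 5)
Your general outline (solve order by order, use a $\ddbar$-type lemma, descend to strata, control norms for convergence) is the right one, and you correctly identify most of the relevant tools from the earlier sections. But there are two concrete gaps, and together they mean the proposal as written does not close.

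\textbf{First gap: residues of a $D_K$-closed form are not $D_K$-closed.} You write that taking residues along $Y_I$ of the $D_K$-closed form $(\xi+\beta)\wedge u_i$ produces $D_K$-closed forms on $Y_I$, to which the second hypothesis can be applied. This is false except on the deepest stratum. If $u$ is $D_K$-closed on $X\setminus D$ then the operator $\cD_K$ (which incorporates the residue contribution, cf.\ \eqref{end3}) satisfies $\cD_K u=\sum_i \CR_{i}$ with $\CR_i$ supported on $Y_i$, and applying $\cD_K^2=0$ gives $\cD_K\CR_i=\sum_j\CR_{ji}$ — so $\CR_i$ is \emph{not} $D_K$-closed on $Y_i$ unless all its own residues on the deeper strata vanish. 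This is exactly why the paper's \Cref{initiald} and \Cref{solucurrent} build the solution starting from the deepest intersection $Y_{1\dots m}$ and climbing up through codimension, solving the coupled system $\cD_K\CR_{i_1\dots i_p;k}=\alpha\wedge\CR_{i_1\dots i_p;k-1}+\sum_j\CR_{j i_1\dots i_p;k}$ with the skew-symmetry constraints baked in. Your phrase "induction on codimension" gestures at this, but the argument you give — detect the harmonic part by pairing, reduce to strata via residues — doesn't set up the cascade correctly; the objects on the strata are not pre-existing $D_K$-closed forms to which the hypothesis applies, they are unknowns solved for simultaneously with the $u_k$'s. You also omit the mechanism by which the current picture is converted into a genuine log-smooth solution: this is the map $\wt\cI$ of \Cref{currentcI} together with \Cref{vanish} (which annihilates currents supported on $\Delta_1$) and \Cref{cumm}, assembled in \Cref{regularprop}. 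Without this step, exactness of the residues on the strata gives you exactness of a current, not of a form in $C^\infty_{\bullet,K}(X,\Delta_1,L)$.

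\textbf{Second gap: the minimal-norm solution need not preserve the inductive hypothesis.} You normalize $u_{i+1}=-D_K^\star\mathcal G\bigl((\xi+\beta)\wedge u_i\bigr)$ and argue convergence from an estimate $\|u_{i+1}\|\le C\|u_i\|$. But the induction also requires that $(\xi+\beta)\wedge u_{i+1}$ be $D_K$-exact, and the minimal-norm solution has no reason to satisfy this — it is a different property than orthogonality to $\ker\Delta_K$. The paper's \Cref{mainpropind} is precisely the statement that one can pick a \emph{single} solution $g$ that simultaneously achieves (i) $\|g\|_{H^1}\le C\|f\|_{H^1}$ and (ii) $(\xi+\beta)\wedge g$ exact. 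Its proof compares the two natural candidates (one coming from \Cref{maincor1}, one of minimal norm via \Cref{estiell}), decomposes the harmonic space $\mathcal H=\mathcal H_1\oplus\mathcal H_2$ according to whether wedging with $\xi+\beta$ lands in $\im D_K$, and shows the $\mathcal H_2$-correction is bounded by an equivalence-of-norms argument on finite-dimensional spaces. This reconciliation is not automatic and cannot be skipped; as written your convergence step would produce a series with the right growth but whose successive terms do not a priori satisfy the exactness needed to continue the iteration.
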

\medskip

\begin{remark}\label{realsmooth}
	By the first point of Proposition \ref{specialhar}, we know that $\beta$ may have a logarithmic pole along $\Delta_2$ and is smooth along $\Delta_1 + \Delta_3$. Together with the definition of $\cC^\infty_{\bullet,K}(X,L)$ (resp.\ $\cC^\infty_{\bullet,K}(X,\Delta_1,L)$), it follows that if 
	\[
	u \in \cC^\infty_{\bullet,K}(X,L) \quad \text{(resp.\ } u \in \cC^\infty_{\bullet,K}(X,\Delta_1,L)\text{)},
	\] 
	then the product
	\[
	(\xi + \beta) \wedge u
	\]
	belongs to $\cC^\infty_{\bullet,K}(X,L)$ (resp.\ $\cC^\infty_{\bullet,K}(X,\Delta_1,L)$).
\end{remark}

\begin{remark}\label{equivthm}
	For each $t\in \CC$ such that $|t|\ll 1$ we consider the twisted operator 
	\begin{equation}\label{par1}
		D_{t, K}:= D_K+ t(\xi +\beta )\wedge \cdot 
	\end{equation}
	and notice that $D_{t, K}^2= 0$ for each $t$. We will be interested in the dimension of the kernel of $D_{t, K}$ acting on the space of $k$-forms $u$ with log poles along $\Delta_1$ as the parameter $t$ varies. 
	\medskip
    
	\noindent In this setting, Theorem \ref{solveequs} can be reformulated as follows.
	
	\begin{thm} Assume that an $L$-valued $D_{0, K}$-closed logarithmic form $u_0$ on $X$ extends to order one in the direction $\xi +\beta$, and that the same is true for every $L$-valued $D_{0,K}$-closed smooth form on every stratum of $\Delta_1$. Then 
    $u_0$ extends (as element of the kernel of $D_{t, K}$). 
	\end{thm}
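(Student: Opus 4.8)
The strategy is a Taylor-expansion argument of the type already implemented in Sections 1--4, but now carried out in the logarithmic quasi-compact setting. Writing $U = \sum_{i\ge 0} t^i u_i$, the equation $D_{t,K}U = 0$ is equivalent to the infinite system \eqref{simp4611}. The first equation is solvable by the hypothesis that $u_0$ extends to order one. The heart of the matter is the inductive step: assuming $u_0,\dots,u_k$ have been constructed as elements of $C^\infty_{\bullet,K}(X,\Delta_1,L)$ satisfying the first $k$ equations, one must show that $-(\xi+\beta)\wedge u_k$ is $D_K$-exact within $C^\infty_{\bullet,K}(X,\Delta_1,L)$. First I would verify that $-(\xi+\beta)\wedge u_k$ is $D_K$-closed: indeed $D_K\big((\xi+\beta)\wedge u_k\big) = -(\xi+\beta)\wedge D_K u_k = (\xi+\beta)\wedge(\xi+\beta)\wedge u_{k-1} = 0$ since $d\xi=0$, $\nabla\beta=0$, $\theta\wedge\beta=0$ (as both are holomorphic log $1$-forms on the appropriate stratum, using Property~\ref{property} and Proposition~\ref{specialhar}), and $\xi\wedge\xi=\beta\wedge\beta=\xi\wedge\beta+\beta\wedge\xi=0$. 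Also $-(\xi+\beta)\wedge u_k$ lies in $C^\infty_{\bullet,K}(X,\Delta_1,L)$ by Remark~\ref{realsmooth}.

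The main obstacle --- and the only place where the structure of $\Delta_1$ really enters --- is to upgrade ``$D_K$-closed'' to ``$D_K$-exact''. Here the plan is to use the De Rham--Kodaira Hodge decomposition for currents (Theorem~\ref{currentdecomp}) together with the $D_KD_K^c$-lemma (Lemma~\ref{lemend4}), in the spirit of Noguchi. Concretely: since $-(\xi+\beta)\wedge u_k$ is $D_K$-closed, by Theorem~\ref{L2cohoisom} and Remark~\ref{realpol}(2) its class in $\mathbb H^\bullet(X,L\otimes\Omega_X^\bullet(\log D))$ is represented by its harmonic projection $\mathcal H\big(-(\xi+\beta)\wedge u_k\big)\in\ker\Delta_K$. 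I must show this harmonic projection vanishes. This is where the residue maps to the strata $Y_I$ come in: one argues that $\mathcal H\big(-(\xi+\beta)\wedge u_k\big)$ is detected by its residues along the components $Y_i$ of $\Delta_1$ and their intersections $Y_I$, via Lemma~\ref{lem:residue}. On each $Y_I$ the residue of $-(\xi+\beta)\wedge u_k$ is, up to sign, $-\iota_I^*(\xi+\beta)\wedge(\text{residue of }u_k)$, the residue of $u_k$ being a $D_K$-closed form in $C^\infty_{\bullet,K}(Y_I,L)$; the second hypothesis of the theorem says precisely that such a form, wedged with $-\iota_I^*(\xi+\beta)$, is $D_K$-exact on $Y_I$, hence has vanishing harmonic projection on $Y_I$. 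An inductive argument on $|I|$ (from the deepest strata upward, or using the compatibility of harmonic forms with residues) then forces $\mathcal H\big(-(\xi+\beta)\wedge u_k\big)=0$.

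Once the harmonic projection vanishes, $-(\xi+\beta)\wedge u_k$ is $D_K$-exact: writing the Hodge decomposition $-(\xi+\beta)\wedge u_k = \Delta_K\mathcal G\big(-(\xi+\beta)\wedge u_k\big) = D_K D_K^\star\mathcal G(\cdots) + D_K^\star D_K\mathcal G(\cdots)$, and noting that the second term vanishes because $-(\xi+\beta)\wedge u_k$ is $D_K$-closed (so $D_K\mathcal G$ of it is harmonic, hence killed by $D_K^\star$, or one invokes Lemma~\ref{lemend4} directly since $-(\xi+\beta)\wedge u_k = D_K^c v_1'$ for a suitable $v_1'$ after the order-one hypothesis is spread over the strata), we set $u_{k+1} := D_K^\star\mathcal G\big(-(\xi+\beta)\wedge u_k\big)$. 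By Theorem~\ref{smoothhodge} (regularity) and Corollary~\ref{decopols}, $u_{k+1}\in C^\infty_{\bullet,K}(X,\Delta_1,L)$. This closes the induction.

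\textbf{Convergence.} Finally, I would address convergence of $\sum t^i u_i$. Using Proposition~\ref{effectest} one has $\|u_{k+1}\|_{H^2}\le C\|(\xi+\beta)\wedge u_k\|_{L^2}\le C'\|u_k\|_{H^2}$ for a constant $C'$ depending only on $\|\xi+\beta\|_{L^\infty}$ and the geometry; after an a priori rescaling of $\xi+\beta$ (equivalently, shrinking the disk in $t$) one arranges $C'\le 1/2$, so that $\|u_k\|_{H^2}\le 2^{-k}\|u_0\|_{H^2}$ and the series converges in $H^2$, hence defines an honest element of $C^\infty_{\bullet,K}(X,\Delta_1,L)$ in the kernel of $D_{t,K}$ for $|t|\ll 1$. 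I expect the genuinely delicate point to be the stratum-by-stratum vanishing of the harmonic projection; the rest is a careful but routine assembly of the Hodge-theoretic machinery developed in Sections~\ref{s-tech-I}--\ref{hypercoho}.
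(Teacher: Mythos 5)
Your proposal is on the right track in spirit — Taylor-expand, solve equation by equation, use Hodge theory and regularity — but it has a genuine gap at the central step, and the gap is precisely where the paper's Sections~8.2--8.3 (the detour through currents) become unavoidable.

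\textbf{The main gap.} You claim that to upgrade $-(\xi+\beta)\wedge u_k$ from $D_K$-closed to $D_K$-exact it suffices to show that the harmonic projection $\mathcal H\big(-(\xi+\beta)\wedge u_k\big)$ vanishes, and that this vanishing is ``detected by the residues along $Y_I$.'' Neither half of this is justified. First, the harmonic projection for the log Hodge decomposition (computed with $g_D$ on $\Lambda^{r,s}T^\star_X\langle\Delta_1\rangle\otimes L$) does not commute with the residue maps $\Res_{Y_I}$; there is no statement in the paper (and no reason to expect) that $\Res_{Y_I}\circ\mathcal H = \mathcal H_{Y_I}\circ\Res_{Y_I}$. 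A nonzero harmonic log form can perfectly well have vanishing residues along all strata — e.g.\ any harmonic $1$-form with no poles along $\Delta_1$ that is nonzero in $H^1$. Second, the residue of $u_k$ along $Y_I$ is \emph{not} $D_K$-closed (for $k\ge 1$), since $D_K u_k=-(\xi+\beta)\wedge u_{k-1}$ has nonzero residue; so the second hypothesis of the theorem cannot be applied directly to $\Res_{Y_I}(u_k)$ as you do. The residues of the $u_k$ themselves satisfy a coupled deformation system on all the strata, and this system has to be solved compatibly and with skew-symmetry constraints.

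\textbf{Why the paper's route is unavoidable.} In the compact case ($\Delta_1=0$) the exactness of $(\xi+\beta)\wedge u_k$ is obtained by arranging $u_k\in\im D_K^c$ inductively and applying the $D_KD_K^c$-lemma, which rests on the identity $\Delta_K=\Delta_K^c$. For the \emph{log} complex this identity fails: the curvature of $(T_X\langle\Delta_1\rangle,g_D)$ enters and $[D_K^c,D_K^\star]\neq 0$. The paper's way out is to trade log forms for $L$-valued \emph{currents} $\mathscr D_K(X,L)$ (duals of $C^\infty_K(X,L^\star)$, no log poles), on which the de Rham--Kodaira decomposition of Theorem~\ref{currentdecomp} holds \emph{and} the $D_KD_K^c$-lemma (Lemma~\ref{lemend4}) is valid because the underlying Laplacian is the flat one on $\cE_k=\oplus\Lambda^{r,s}T^\star_X\otimes L$. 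The price is that the residue structure of $u_0,u_1,\dots$ must now be tracked explicitly as currents $\CR_{i_1,\dots,i_p;k}$ supported on all strata, corrected stratum by stratum (Lemma~\ref{initiald}, Proposition~\ref{solucurrent}), and then regularized back to $C^\infty_{\bullet,K}(X,\Delta_1,L)$ via the map $\wt\cI$ and Propositions~\ref{cumm}--\ref{regularprop}. Your proposal elides all of this, and without it the key exactness claim does not hold.

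\textbf{Convergence.} The convergence part of your proposal is closer to the paper's (Proposition~\ref{mainpropind} and Theorem~\ref{thm:convergence}), but the naive contraction estimate $\|u_{k+1}\|\le \tfrac12\|u_k\|$ by rescaling $(\xi+\beta)$ is not available here, since one cannot rescale the direction in the jump locus statement; the paper instead works with the fixed constant $C$ from Proposition~\ref{effectest} and the finite-dimensionality of $\mathcal H_2\subset\ker\Delta_K$ to get a geometric series in $t$, which is what you ultimately want. This is a minor adjustment, but as written your estimate is not quite what the situation permits.
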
	
\end{remark}
\medskip

\noindent The proof of Theorem \ref{solveequs} will take us quite some time. The main steps are as follows.  We show first that the system \eqref{simp4611} can be solved in the space of currents, and then we show that we can "convert" (so to speak) currents to forms with log-poles. After that, the convergence issue is addressed by using the elliptic estimates in Section \ref{HodgeDek}.  
\medskip

\textbf{Notations and conventions}
To begin with, we fix some notations. Let $X$ be a compact K\"ahler manifold endowed with the metric $\omega_D$ as above. We introduce
\[\alpha:= -\xi -\beta\]
in order to simplify the notations, and let 
\[X_0:= X\setminus |\Delta_2|\]
be the complement of the divisor on which the singularities of $\omega_D$ are of Poincaré-type. In the absence of the divisor $\Delta_1$, the operator $D_K$ acts as follows
\begin{equation}
	D_K: \cC^\infty _{\bullet,K}(X,L)\to \cC^\infty _{\bullet+1,K}(X,L)
\end{equation} 
where $r$ represents the total degree of the forms under considerations. The extension of this operator to forms with log poles 
will be denoted by the same symbol
\begin{equation}
	D_K: \cC^\infty _{\bullet,K}(X,\Delta_1,L)\to \cC^\infty _{\bullet+1,K}(X,\Delta_1,L)
\end{equation} 
by choosing to ignore the residues along the components of $\Delta_1$. 

Next, each element $u$ of $\cC^\infty _{\bullet,K}(X,\Delta_1,L)(X_0)$ defines a twisted, smooth form with log poles along $\Delta_1$. By \Cref{lem:residue},  the residue of $u$ along the component $Y_i$ of $\Delta_1$ is well defined and belongs to $C^\infty _K (Y_i, \Delta_1 , L)$. 
We denote it by $\CR_i(u)$.
We define
\begin{equation}\label{end3}
	\cD_K(u):= D_K(u)+ \sum_i \CR_i(u)
\end{equation} 
The RHS of \eqref{end3} is a current on $X_0$. 
Notice that we have 
\begin{equation}\label{end4}
	D_K^2= 0, \qquad \cD_K^2= 0 \qquad\text{on } X_0. 
\end{equation} 
\bigskip

To illustrate the idea, we first present the proof of \Cref{solveequs} under the assumption that $\Delta_1 = 0$, which is essentially the same as in the compact case.
\begin{proof}[Proof of Theorem \ref{solveequs} if $\Delta_1= 0$ ]
\hspace{5pt}
	\medskip
	
	\noindent $\bullet$ Let $u_0$ be a $D_K$-closed form with values in $L$, of total degree equal to $r$. By changing the representative (notice that this does not change anything to the problem we have to solve) we can assume that $u_0$ is harmonic
	--and therefore, we equally have $D_K^cu_0= 0$.
	We now use the hypothesis, and infer the existence of a $r$-form $v_1 \in  C^\infty _{\bullet,K}(X,L)$ so that the equality 
	\begin{equation}\label{par3}
		D_Kv_1- \alpha \wedge u_0= 0  . 
	\end{equation}
	\smallskip
	
	\noindent $\bullet$
	We show next that we can find a solution $v_1 \in  C^\infty _{\bullet,K}(X,L)$ of \eqref{par3} which is actually $D_K^c$-exact.  
	Let $v_1$ be a solution of \eqref{par3}. We now consider $D_K^c v_1$. It is $D_K ^c$-exact. Moreover, $D_K^c v_1$ is $D_K$--closed, since
	\begin{equation}\label{par4}
		D_K(D_K^cv_1)= -D_K^c(D_K v_1)= D_K^c(\alpha\wedge u_0)= 0,
	\end{equation}
	where the last equality comes from the fact that $\alpha$ and $u_0$ belong to the kernel of $D_K^c$. By using the $D^c _K D_K$-lemma  \Cref{ddbar1}, we see that there exists a form $w_1 \in C^\infty _{\bullet,K}(X,L)$ that
	\begin{equation}\label{par5}
		v_1- D_K w_1 \in \ker D_K^c
	\end{equation}
	holds. 
	Therefore, we can assume that the solution $v_1$ in \eqref{par3} is $D_K^c$-closed. Finally, by  \Cref{smoothhodge}, we have the Hodge decomposition
	\begin{equation}\label{par6}
		v_1= \mathcal H(v_1)+ D_K^cw_2
	\end{equation}
	of $v_1$, where $\mathcal H(v_1)$ is the harmonic part of $v_1$ and $w_2 \in C^\infty _{\bullet,K}(X,L)$.
	 Set $u_1 := D^c _K w_2$. Then $u_1$ is $D_K^c$-exact and 
	$$D_Ku_1- \alpha \wedge u_0= 0 \qquad\text{ on } X . $$
	\medskip
	
	\noindent$\bullet$ Now we solve the remaining equations. Consider next the exterior product
	\begin{equation}\label{par7}
		\alpha\wedge u_1.
	\end{equation}
	By Remark \ref{realsmooth},  $\alpha\wedge u_1 \in  C^\infty _{\bullet+1,K}(X,L)$.
	Since $u_1$ is $D_K^c$-exact,  $\alpha\wedge u_1$ is also $D_K^c$-exact. $\alpha\wedge u_1$ is equally $D_K$-closed, since the derivative of $u_1$ is a multiple of $\alpha$. By the  $D^c _K D_K$-lemma \Cref{ddbar1}, we can therefore find a $v_2 \in  C^\infty _{\bullet,K}(X,L)$ such that
	\begin{equation}\label{par8}
		D_KD_K^c v_2= \alpha\wedge u_1 .
	\end{equation}
	In particular we found a  $u_2 \in C^\infty _K (X) $ such that
	\begin{equation}
		D_Ku_2= \alpha\wedge u_1, \qquad u_2= D_K^c v_2.
	\end{equation}
	
	\noindent By this procedure we obtain successively
	\begin{equation}
		D_Ku_m= \alpha\wedge u_{m-1}, \qquad u_m= D_K^c v_m
	\end{equation}
	for all $m\geq 1$. This means precisely that $u_0$ can be extended to an arbitrary order.  Finally, thanks to Proposition \ref{mainpropind}, we have the convergence of $\sum_i t^i u_i$ for $|t|\ll 1$.
\end{proof}

\begin{remark}\label{induct}
	In particular we see that if $u_0$ is $D_K$ \emph{and} $D_K^c$--closed and moreover it deforms to the first order, then 
	the equations \eqref{simp4611} can be solved, with the additional constraint $u_m\in \im D_K^c$ for any $m\geq 1$. 
\end{remark}

In the remaining part of this section, we will give a proof of Theorem \ref{solveequs} in the general case. The structure is as follows.
In subsection \ref{currentsol}, we will solve the equations \eqref{simp4611} in the sense of current. In subsection \ref{regular} , we will establish a regularity proposition, which tells us that we can transform the current solution to a log smooth solution. In the subsection \ref{finshproof}, we will complete the proof of Theorem \ref{solveequs}.
\subsection{Solutions in the sense of currents}\label{currentsol}
Note that  in the proof of Theorem \ref{solveequs} when $\Delta_1 =0$, the conclusion of the first bullet is a consequence of the fact that we have $\Delta''_K =\Delta' _K$ on $X$.
In the case when $\Delta_1 \neq 0$, the following lemma plays a similar role.
\medskip

\begin{lemme}\label{initiald}
	In the setting of Theorem \ref{solveequs},  we suppose that $\Delta_1 =\sum_{i=1}^m Y_i$. Then there exists a current $u_{\Delta_1}$ with support in $\Delta_1$ of order $0$ and a current $v_0$ on $X$ such that $\wh u_0$ defined by
	\[\wh u_0= u_0+ \cD_K v_0+ u_{\Delta_1}\]
	satsifes the following properties:
	
	\begin{itemize}
		\item We have 
		\begin{equation}\label{par10}
			\mathcal D_K \wh u_0= \sum_i \CR_{i; 0}\qquad \Supp(\CR_{i; 0})\subset Y_i
		\end{equation}
		together with the successive relations
		\begin{equation}\label{par12}
			\mathcal D_K \CR_{i_1, i_2, \cdots, i_p; 0}= \sum_{j}\CR_{j, i_1, i_2, \cdots, i_p;  0}, \qquad \Supp(\CR_{ i_1, i_2, \cdots, i_p; 0})\subset Y_{i_1, i_2, \cdots, i_p }
		\end{equation}
		
		\item For any $p \geq 1$ and any multi-index $i_1, i_2, \cdots, i_p$, we have 
		\[\cD_K^c \wh u_0= 0, \quad \cD_K^c \CR_{i_1\dots i_p, 0}= 0.\]
		Moreover, the currents $\CR_{i_1, i_2, \cdots, i_p; 0}$ satisfy the skew-symmetry relations.
		
		\item For every $p$, as a current on $X$, $\CR_{i_1, i_2, \cdots, i_p; 0}$ is supported in $\Delta_1$ of order $0$.
	\end{itemize}	
\end{lemme}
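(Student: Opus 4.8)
\textbf{Plan of proof for Lemma \ref{initiald}.}

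The idea is to perform a \emph{finite descending induction on the codimension of the strata of} $\Delta_1$, successively removing the "anomalous" part of $\mathcal D_K u_0$ one stratum at a time. Recall that $\mathcal D_K u_0 = D_K u_0 + \sum_i \CR_i(u_0)$, and since $D_K u_0 = 0$ on $X\setminus D$ by hypothesis, the current $\mathcal D_K u_0$ is already supported on $\Delta_1$, being the sum of the residue currents $\CR_i(u_0)$ along the components $Y_i$ (these are smooth forms with log poles on the strata $D_i$ of $Y_i$, by \Cref{lem:residue}). The first step is thus to observe that $\mathcal D_K u_0 = \sum_i \CR_{i;0}^{(0)}$ with $\CR^{(0)}_{i;0} := \CR_i(u_0)$ supported on $Y_i$; this already gives \eqref{par10} with $v_0 = 0$ and $u_{\Delta_1} = 0$ at "stage $0$". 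However, we must also arrange the $\mathcal D_K^c$-closedness and the skew-symmetry and the higher cascade \eqref{par12}. The $\mathcal D_K^c$-closedness of $u_0$ (hence of $\wh u_0$) is arranged exactly as in the $\Delta_1 = 0$ case: by \Cref{L2cohoisom} and \Cref{decopols} we may replace $u_0$ by its harmonic representative $\mathcal H(u_0)\in \ker\Delta_K$, which by \eqref{identity} and the compatibility of the Laplacians satisfies $D_K u_0 = D_K^c u_0 = 0$ on $X\setminus D$; passing to the residue currents and using that $\CR_i$ commutes appropriately with $D_K^c$ gives $\mathcal D_K^c\wh u_0 = 0$ and $\mathcal D_K^c\CR_{i_1\dots i_p;0} = 0$.

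The heart of the argument is the inductive step, for which I would use the De Rham–Kodaira decomposition for currents (\Cref{currentdecomp}) and the $D_K D_K^c$-lemma for currents (\Cref{lemend4}) \emph{applied on each stratum} $Y_I$. Suppose, descending on $p = |I|$, that we have arranged $\mathcal D_K \wh u_0 = \sum_{|I|=p}\CR_{I;0}$ (the lower-codimension anomalous terms having been cancelled), where each $\CR_{I;0}$ is a current supported on $Y_I$ of order $0$, is $\mathcal D_K^c$-closed, and is skew-symmetric in its indices. On a fixed stratum $Y_I$, the restriction/residue of $\CR_{I;0}$ (still denoted $\CR_{I;0}$) is a $\mathcal D_K$-closed current by \eqref{end4}, and it is $\mathcal D_K^c$-closed; one then wants to show it is $\mathcal D_K$-cohomologous, \emph{on} $Y_I$, to something whose further residues along the deeper strata $\Delta_{I;1}$ are of the prescribed form. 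Concretely: $\iota_I^*(L,\nabla)\in M_{\rm DR}(Y_I/D_I)$ with its harmonic metric $\iota_I^* h$ satisfying \Cref{property}, so all the machinery of Sections \ref{HodgeDek}–\ref{hypercoho} applies verbatim on $Y_I$. Because $\CR_{I;0}$ is $\mathcal D_K$-closed on $Y_I$, one may (after correcting by a $\mathcal D_K$-exact current on $Y_I$ — this correction is pushed forward to $X$ and absorbed into $\mathcal D_K v_0$ and into $u_{\Delta_1}$, since it is supported in $\Delta_1$) assume it has a harmonic part plus a term handled by \Cref{currentdecomp}; the $D_K D_K^c$-lemma then converts the non-harmonic part into the image of $\mathcal D_K$, whose own residues along $\Delta_{I;1}$ furnish the next layer of currents $\CR_{j,I;0}$. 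Carrying out $\mathcal D_K$ once more produces \eqref{par12}, and the skew-symmetry is automatic from the skew-symmetry of the residue operation (residues of residues along transverse smooth divisors anticommute). The order-$0$ and support statements are preserved because residues of $C^\infty_K$-forms along smooth divisors are again $C^\infty_K$-forms on the divisor (\Cref{lem:residue}), hence currents of order $0$ when viewed on $X$.

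The process terminates after at most $\ell = \dim H^0$-many steps since $\Delta_1$ has only finitely many components and the strata $Y_I$ become empty once $|I|$ exceeds $\dim X$; at each finite stage the corrections accumulate into a single current $v_0$ on $X$ and a single order-$0$ current $u_{\Delta_1}$ with support in $\Delta_1$, yielding $\wh u_0 = u_0 + \mathcal D_K v_0 + u_{\Delta_1}$ with all the asserted properties. \textbf{The main obstacle} I anticipate is bookkeeping the compatibility between the residue operation $\CR_i$ and the operators $D_K^c$, $\mathcal D_K$ across nested strata — that is, verifying that taking residues genuinely commutes (up to sign) with the Hodge-theoretic decompositions on each $Y_I$, and that the "corrections" produced by \Cref{lemend4} on a deeper stratum can be coherently lifted back to currents on $X$ supported in $\Delta_1$ without destroying the structure already achieved on the shallower strata. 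This requires a careful use of \Cref{lem:residue} (residues of $C^\infty_K$-forms stay in $C^\infty_K$) together with the fact, implicit in \Cref{propend3} and \Cref{propend4}, that such residue forms define currents on $X$ of order $0$; making the induction hypothesis precise enough to close — including the skew-symmetry clause — is the delicate part, while each individual application of the $\ddbar$-type lemmas is routine.
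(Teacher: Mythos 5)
Your general strategy---descending induction on the codimension of the strata of $\Delta_1$, applying the De Rham--Kodaira decomposition and a $D_KD_K^c$-lemma on each stratum $Y_I$, with residue compatibility via \Cref{lem:residue}---is indeed the approach the paper takes. However, there are two concrete errors that prevent the argument from closing.

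First, the step that is supposed to arrange $\mathcal D_K^c\wh u_0 = 0$ does not work. You propose to replace $u_0$ by its harmonic representative $\mathcal H(u_0)\in\ker\Delta_K$ and deduce $D_K u_0 = D_K^c u_0 = 0$. This is valid in the $\Delta_1 = 0$ setting (where $\cE_k = \bigoplus_{r+s=k}\Lambda^{r,s}T^\star_X\otimes L$ and $\Delta_K = \Delta_K^c$ by \eqref{end18}), but $u_0$ here is a \emph{log} form in $C^\infty_{k,K}(X,\Delta_1,L)$, i.e.\ a section of $\bigoplus_{r+s=k}\Lambda^{r,s}T^\star_X\langle\Delta_1\rangle\otimes L$, and the paper explicitly warns (just before \Cref{lemend4}) that in this case $\Delta_K \neq \Delta_K^c$ because the cylindrical metric $g_D$ on $T_X\langle\Delta_1\rangle$ is not flat. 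So harmonicity of $u_0$ does \emph{not} give $D_K^c u_0 = 0$, and a fortiori does not give $\mathcal D_K^c$-closedness at the level of currents with residue terms, nor does it say anything about $\mathcal D_K^c\CR_{i_1\dots i_p;0}$ on the strata. The paper instead obtains $\mathcal D_K^c\wh u_0 = 0$ at the \emph{end}: after all the residue corrections $\Xi_I$ have been produced, one computes $\mathcal D_K(u_0 - \sum_i\Xi_i) = \sum_i\CR_{i;0}$, notes the RHS is $\mathcal D_K^c$-closed, and applies \Cref{currentdecomp} (whose Green operator is built from the smooth $C^\infty_K(X,L)$-Laplacian on the dual side, where $\Delta_K = \Delta_K^c$ does hold) to split $u_0 - \sum_i\Xi_i = \mathcal D_K v_0 + \wh u_0$ with $\mathcal D_K^c\wh u_0 = 0$.

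Second, your inductive hypothesis ``we have arranged $\mathcal D_K\wh u_0 = \sum_{|I|=p}\CR_{I;0}$'' is not what the recursion preserves: the equality \eqref{par10} involves only the codimension-one residues $\CR_{i;0}$, always, while the cascade \eqref{par12} descends through the strata. What the bottom-up correction actually does, floor by floor, is replace the naive residues $\wt\CR_{I;0}$ (which satisfy $\mathcal D_K\wt\CR_{I;0} = \sum_j\wt\CR_{j,I;0}$ and skew-symmetry automatically, but are not $\mathcal D_K^c$-closed) by $\CR_{I;0} := \wt\CR_{I;0} - \sum_j\Xi_{j,I} + \mathcal D_K\Xi_I$. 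The subtraction of $\sum_j\Xi_{j,I}$---the cross-terms produced one floor \emph{below}---is the mechanism that makes the cascade close; without naming it explicitly, the argument is incomplete, and this is precisely the ``bookkeeping'' you flag as an obstacle. Relatedly, the skew-symmetry of $\CR_{I;0}$ and $\Xi_I$ after correction is \emph{not} automatic from the skew-symmetry of the residue operation: it is \emph{imposed} by defining these currents first for strictly increasing multi-indices and then extending by the sign of the permutation, and one must then check that the defining identities are still satisfied for all orderings.
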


\begin{proof}
	Note first that we have the relation
	\begin{equation}\label{par133}
		\cD_K u_0= \sum_i \wt\CR_{i; 0},
	\end{equation}
	where $\wt\CR_{i; 0}$ are supported in $Y_i$, and in general for each $p\geq 1$, we have
	\begin{equation}\label{par134new}
		\cD_K \wt \CR_{i_1 \dots i_p; 0}= \sum_j \wt \CR_{j i_1 \dots i_p; 0},
	\end{equation}
	where the currents $ \wt \CR_{i_1 \dots i_p; 0}$ are supported in $ \cap_{j=1}^pY_{i_j}$.
	Notice that the skew-symmetry equality $\wt \CR_{ \dots i j \dots; 0}=- \wt \CR_{ \dots j i \dots; 0}$ is automatic.
	
	\medskip
	
  Here we discuss the construction of $\widetilde{R}_{i;0}$.   
	By \Cref{lem:residue}, we note that for any 
	\[
	u \in C^\infty _{k,K}(X,\Delta_1,L)
	\]  
	we have 
	\[
	\Res_{Y_i}(u) \in C^\infty _{k-1,K}(Y_i,\Delta_{i;1},L).
	\]   	We then define 
	\[
	\widetilde{R}_{i;0} := (\iota_i)_*\bigl(\Res_{Y_i}(u)\bigr),
	\]  
	which is a current on $X$ of degree $k+1$ with coefficients in $L$.  
	
	Moreover, by \Cref{lem:residue} again,  we have 
	\[
	\Res_{Y_i \cap Y_j}\Res_{Y_i}(u) \in 
	C^\infty _{k-2,K}(Y_{i,j},\Delta_{i,j;1},L).
	\]   
	Since $ D_K$ has no poles along $\Delta_1$, its restriction to each $Y_i$ is well defined.  
	In this case, using $D_K^2=0$, we obtain
	\[
	\cD_K \widetilde{R}_{i;0} 
	= \sum_{j;j\neq i} (\iota_i)_*(\tilde{\iota}_{ji})_*\bigl(\Res_{Y_i \cap Y_j}\Res_{Y_i}(u)\bigr)=  ({\iota}_{ji})_*\bigl(\Res_{Y_i \cap Y_j}\Res_{Y_i}(u)\bigr)
	\]  
	where each $\tilde{\iota}_{ji}: Y_i \cap Y_j \hookrightarrow Y_i$ and $\iota_{ji}:Y_i\cap Y_j\hookrightarrow X$  are inclusions. 
	
	We then set
	\[
	\widetilde{R}_{ji;0} := (\iota_{ji})_*\bigl(\Res_{Y_i \cap Y_j}\Res_{Y_i}(u)\bigr),
	\]  
	which is also a current of degree $k+2$ on $X$ with coefficients in $L$.   By construction, 	$\widetilde{R}_{ji;0} $ is supported in $Y_i\cap Y_j$, and is skew-symmetric; i.e. we have
	$$
	\widetilde{R}_{ji;0}=-\widetilde{R}_{ij;0}. 
	$$
	By iterating this procedure, for any $I=\{i_1,\ldots,i_p\}\subset \{1,\ldots,m\}$,  we obtain $\widetilde{R}_{i_1 \dots i_p;0}$, which is 
	\begin{itemize}
		\item  a current of degree $k+p$ with coefficients in $L$;
		\item  supported in $Y_{i_1,\ldots,i_p}:=Y_{i_1}\cap\cdots\cap Y_{i_p}$ for distinct $i_1,\ldots,i_p$. 
		\item skew-symmetric.
	\end{itemize}

	By using \eqref{par134new}, we have 
	\[\cD_K\wt \CR_{1\dots m; 0}= 0,\]
	and so we can write
	\begin{equation}\label{par136new}
		\wt \CR_{1\dots m; 0}= \cD_K \Xi_{1 \dots m}+ \CR_{1 \dots m; 0}, \qquad \cD_K^c \CR_{1 \dots m; 0}= 0.
	\end{equation}
	For any permutation $(i_1,\dots, i_m)$ of the set $\{1, \dots, m\}$ we define  
	$\CR_{i_1, \dots, i_m;0}$ and $\Xi_{i_1, \dots, i_m}$ by skew-symmetry.
	
	\noindent Then we have 
	$$\cD_K  \CR_{i_1,\dots, i_m; 0} =0 \qquad\text{and } \qquad\CR_{i_1,\dots, i_m; 0}  \in \ker \cD_K^c$$
	with the additional condition that $\CR_{i_1,\dots, i_m; 0} $ and $\Xi_{i_1,\dots, i_m}$ are skew-symmetric.
	\medskip
	
	Now we move the next "floor": considering the currents supported on the complete intersection of $(m-1)$-components $Y_{i_1, \dots, i_{m-1}}$. By combining \eqref{par136new} and \eqref{par134new} we obtain
	\begin{equation}\label{par37new}
		\cD_K \big(\wt \CR_{i_1 \dots i_{m-1}; 0}- \sum_j \Xi_{j i_1, \dots i_{m-1}}\big) = \sum_j \CR_{j i_1 \dots i_{m-1}; 0}, 
	\end{equation}
	for which we notice that the RHS  is $\cD_K^c$-closed. Then we can write 
	\begin{equation}\label{par381}
		\wt \CR_{i_1\dots i_{m-1}; 0} = \sum_j \Xi_{j i_1 \dots i_{m-1} } + \cD_K \Xi_{i_1 \dots i_{m-1}}+ \CR_{i_1 \dots  i_{m-1}; 0} .
	\end{equation}
	where $\CR_{i_1 \dots  i_{m-1}; 0} \in \ker \cD_K^c $. 
	Note that $\wt \CR_{i_1 \dots i_{m-1}; 0}$ and $\Xi_{j i_1 \dots i_{m-1} }$ are skew-symmetric. Then we can assume that this is also the case for $\Xi_{i_1, \dots,  i_{m-1}}$ and $\CR_{i_1, \dots,  i_{m-1}; 0}$. Indeed, this can be seen by first defining these objects for
	\[i_1< \dots < i_{m-1}\]
	and then imposing the skew-symmetry condition (precisely as we did in the proof of the first point).
	\smallskip
	
	\noindent We obtain
	\begin{equation}\label{end30}
		\cD_K  \CR_{i_1 \dots i_{m-1}; 0}= \sum_j  \CR_{j i_1 \dots i_{m-1}; 0}, \qquad \CR_{i_1, \dots, i_{m-1}; 0} \in \ker \cD_K^c
		\end{equation}
	as it follows from \eqref{par37new} and \eqref{par381}.
	\medskip

	\noindent Consider next $\displaystyle \sum_j \CR_{j i_1 \dots i_{m-2};0}$ as current on $Y_{i_1, \dots, i_{m-2}}$. By construction, we know that 
	\[\sum_j \CR_{j, i_1, \dots, i_{m-2};0}\in \ker \cD_K^c\] on $Y_{i_1 \dots i_{m-2}}$. 
	
	Moroever, by \eqref{par381}, we have
	\begin{align}\nonumber
		\sum_j \CR_{j i_1 \dots i_{m-2}; 0} = & \sum_j \wt \CR_{j i_1 \dots i_{m-2}; 0} - \sum_{k,j}\Xi_{k j i_1 \dots i_{m-2} } - 
		\cD_K \big (\sum_j \Xi_{j i_1 \dots i_{m-2} }\big) \cr
		= & \sum_j \wt \CR_{j i_1 \dots i_{m-2}; 0} - \cD_K \big(\sum_j \Xi_{j i_1 \dots i_{m-2} }\big).\cr
	\end{align}
	The equality
	\begin{equation}\label{end31}
		\cD_K \big(\wt \CR_{i_1 \dots i_{m-2}; 0}- \sum_j \Xi_{j i_1 \dots i_{m-2} }\big) = \sum_j \CR_{ji_1 \dots i_{m-2}; 0} 
	\end{equation}
	follows.
	As the RHS of \eqref{end31} is $\cD_K^c$-closed, we can write 
	\begin{equation}\label{par3811}
		\wt \CR_{i_1 \dots i_{m-2}; 0} = \sum_j \Xi_{ji_1 \dots i_{m-2} } + \cD_K \Xi_{i_1 \dots  i_{m-2}}+ \CR_{i_1 \dots  i_{m-2}; 0} 
	\end{equation}
	such that $\CR_{i_1 \dots  i_{m-2}; 0}  \in \cD_K^c$. As usual, we can impose that 
	$\Xi_{i_1 \dots  i_{m-2}}$ and $\CR_{i_1, \dots,  i_{m-2}; 0}$ are 
	skew-symmetric. \medskip
	
	By iterating this procedure (i.e. using induction), we can thus construct $\Xi_{i_1 \dots  i_p}$ such that 
	\begin{equation}\label{induct11}
		\CR_{i_1 \dots i_p; 0} := \wt \CR_{i_1 \dots i_p; 0} - \sum_j \Xi_{j i_1 \dots i_p } + \cD_K \Xi_{i_1 \dots  i_p}
	\end{equation}
	satisfies 
	\begin{equation}\label{induct121}
		\cD_K  \CR_{i_1 \dots i_p; 0}= \sum_j  \CR_{j i_1 \dots i_p; 0}, \qquad \CR_{i_1 \dots i_p; 0} \in \ker \cD_K^c
	\end{equation}
	such that moreover $\Xi_{i_1 \dots  i_p}$ and $\CR_{i_1 \dots i_p; 0}$ are skew-symmetric.
	\medskip
	
	\noindent Finally, we consider the current
	\begin{equation}\label{par471}
		u_0- \sum_{i=1} \Xi_{i}
	\end{equation}
	and evaluate its differential, which equals
	\begin{equation}\label{par481}
		\cD_K\big(u_0- \sum_i \Xi_{i}\big)=\sum_i \wt\CR_{i;0} -  \sum_i  \cD_K\Xi_{i}= \sum_i \CR_{i; 0},
	\end{equation} 
	where the last equality is a consequence of \eqref{induct11} together with the skew-symmetry of $\Xi_{i,j}$.
	The RHS of \eqref{par481} is $\cD_K^c$-closed, so we can write 
	\begin{equation}\label{par491}
		u_0- \sum_i \Xi_{i}= \cD_K(v_0)+ \wh u_0, \qquad \cD_K^c \wh u_0= 0.	
	\end{equation}
	\medskip
	
	\noindent In conclusion, from the relations above we get 
	\begin{equation}\label{par501}
		\cD_K \wh u_0 = \sum_{i=1} \CR_{i; 0},
	\end{equation} 
	and 
	\begin{equation}\label{par511}
		\cD_K \CR_{i_1\dots i_k; 0} = \sum_j \CR_{j i_1\dots i_k; 0}
	\end{equation}
	where $\wh u_0$ and $ \CR_{i_1 \dots i_k; 0} $ are $\cD_K^c$-closed and $\CR_{i_1 \dots i_k; 0}$ are skew-symmetric.  This ends the proof of the second point.
	
	\medskip
	
	For the last point, it comes from the fact that every current $\CR_{i_1 \dots i_k; 0}$ is constructed as a current on $Y_{i_1 \dots i_k}$. Therefore, if we consider $\CR_{i_1 \dots i_k; 0}$ as a current on $X$, it is supported in $\Delta_1$ of order $0$.
\end{proof}

Now we would like to solve \eqref{simp4611} in the sense of currents. More precisely, we have 

\begin{proposition}\label{solucurrent}
	We are in the setting of Theorem \ref{solveequs}. Let $\wh u_0$ and $\CR_{i_1\dots i_p; 0}$ be the currents constructed in Lemma \ref{initiald}. Then for every $p$ and $k\geq 1$, there exists currents $u_k$ on $X$ and currents $\CR_{i_1\dots i_p; k}$ supported on $\cap_{s=1}^p Y_{i_s} $ such that 
	
	\begin{itemize}
		\item For every $p$ and $k\geq 1$, we have 
		$$\cD_K \CR_{i_1, \dots i_{p};  k}= \alpha\wedge \CR_{i_1, \dots i_{p};  k-1}+ \sum_j \CR_{j, i_1, \dots i_{p};  k},$$
		with $\CR_{i_1, \dots i_{p};  k} \in \im \cD_K^c$ on $\cap_{s=1}^p Y_{i_s}$ and the currents $\CR_{i_1, \dots i_{p};  k}$ are skew-symmetric.
		
		\item We have 
		\begin{equation}
			\mathcal D_K u_1= \alpha\wedge \wh u_0+ \sum_j \CR_{j, 1}
		\end{equation}
		and	for every $k\geq 2$, we have
		\begin{equation}
			\mathcal D_K u_k= \alpha\wedge u_{k-1}+ \sum_j \CR_{j, k}.
		\end{equation}
		Moreover, for every $k\geq 1$, we have $u_k \in \im \cD_K^c$.
		
		\item  For every $k, j$, $\CR_{j; k}$ is supported in $\Delta_1$ of order $0$.
	\end{itemize}
\end{proposition}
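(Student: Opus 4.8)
The plan is to establish Proposition \ref{solucurrent} by a double induction, organized exactly like the construction in Lemma \ref{initiald} but carrying the extra source terms $\alpha\wedge(\text{level }k-1)$: an outer induction on the order $k\ge 1$, and, for each fixed $k$, an inner \emph{descending} induction on the codimension $p$ of the stratum $Y_I$, running from $p=m$ (the deepest stratum $Y_{1\dots m}$) down through $p=1$ and finally to the ``stratum'' $X$ itself, where the object produced is $u_k$. The outer base $k=0$ is precisely the output of Lemma \ref{initiald}. So I fix $k\ge 1$ and assume that $u_{k-1}$ and all the currents $\CR_{I;k-1}$ ($1\le |I|\le m$) have already been constructed and satisfy the three bullet points at level $k-1$; in particular $u_{k-1}\in\im\cD_K^c$ and each $\CR_{I;k-1}\in\im\cD_K^c$ on $Y_I$.

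For the inner step, suppose we sit on a stratum $Y_I$ with $|I|=p$ and have already produced, on the deeper strata $Y_{jI}$ ($j\notin I$), the currents $\CR_{jI;k}$ with the required properties. One first checks that the candidate source $\Lambda_{I;k}:=\iota_I^*\alpha\wedge\CR_{I;k-1}+\sum_{j\notin I}\CR_{jI;k}$ is $\cD_K$-closed on $Y_I$: since $\alpha\wedge\alpha=0$ and $\alpha$ is $d$-closed on $X_0$ (the condition making $D_{t,K}^2=0$), one has $\cD_K(\iota_I^*\alpha\wedge\CR_{I;k-1})=-\iota_I^*\alpha\wedge\cD_K\CR_{I;k-1}=-\sum_j\iota_I^*\alpha\wedge\CR_{jI;k-1}$, while $\sum_j\cD_K\CR_{jI;k}=\sum_j\iota_I^*\alpha\wedge\CR_{jI;k-1}$, the residual double sum cancelling by skew-symmetry; the two contributions cancel. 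Next one checks $\Lambda_{I;k}$ is $\cD_K^c$-exact on $Y_I$: $\iota_I^*\alpha$ is $D_K^c$-closed, $\CR_{I;k-1}\in\im\cD_K^c$ by the outer hypothesis so $\iota_I^*\alpha\wedge\CR_{I;k-1}\in\im\cD_K^c$ as in Remark \ref{induct}, and each $\CR_{jI;k}$, being a push-forward of a $\cD_K^c$-exact current from $Y_{jI}$, is $\cD_K^c$-exact on $Y_I$. With these two facts, the current version of the $D_KD_K^c$-lemma (Lemma \ref{lemend4}) on $Y_I$ gives $\Theta$ with $\cD_K\cD_K^c\Theta=\Lambda_{I;k}$; the De Rham--Kodaira decomposition Theorem \ref{currentdecomp} together with the regularity Theorem \ref{smoothhodge} on $Y_I$ (the bootstrapping in the remark following Lemma \ref{lemend4}) then lets one replace the solution $\cD_K^c\Theta$ by a genuine $s\in C^\infty_{\bullet,K}(Y_I,\Delta_{I;1},L)$ with $s\in\im\cD_K^c$ and $\cD_K s=\Lambda_{I;k}$, that is $D_Ks=\iota_I^*\alpha\wedge\CR_{I;k-1}$ and $\CR_i(s)=\CR_{iI;k}$. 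One sets $\CR_{I;k}:=s$ for $i_1<\dots<i_p$ and extends by skew-symmetry; the order-zero support along $\Delta_1$ when viewed on $X$ is inherited as in Lemma \ref{initiald}. The bottom step $p=0$ is identical with $X$ in place of $Y_I$ and $u_k$ (resp.\ $\wh u_0$ when $k=1$) in place of $\CR_{I;k}$.

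The first-order-extension hypotheses of Theorem \ref{solveequs}, on $X$ and on every stratum $Y_I$, are needed only to prime the pump at $k=1$: there the obstruction on $Y_I$ is $\iota_I^*\alpha\wedge\CR_{I;0}$, and $\CR_{I;0}$, being the iterated residue of $u_0\in C^\infty_{k,K}(X,\Delta_1,L)$, lies in $C^\infty_{\bullet,K}(Y_I,\Delta_{I;1},L)$ by Lemma \ref{lem:residue}; replacing it by its $D_K$- and $D_K^c$-closed harmonic representative via Theorem \ref{smoothhodge}, the hypothesis on $Y_I$ says exactly that $\iota_I^*\alpha\wedge\CR_{I;0}$ is $D_K$-exact, which is the ingredient that lets the $\Delta_1=0$-style argument (Proposition \ref{ddbar1}, now in its current form Lemma \ref{lemend4}) run one floor at a time. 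For $k\ge 2$ the obstructions are automatically $\cD_K$-closed and $\cD_K^c$-exact by the inductive structure ($u_{k-1},\CR_{I;k-1}\in\im\cD_K^c$), so no additional hypothesis is invoked.

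The genuine difficulty is not any single application of the $D_KD_K^c$-lemma but the \emph{assembly of the floors}: the descending induction must be arranged so that the currents $\CR_{jI;k}$ coming from the deeper strata $Y_{jI}$ are mutually compatible and skew-symmetric in precisely the way that makes them the residues of one and the same smooth log form $\CR_{I;k}$ on $Y_I$, with the resulting $\cD_K$- and $\cD_K^c$-relations propagating correctly one dimension up. This is the phenomenon already handled in Lemma \ref{initiald}, and I would handle it the same way, carrying along the primitives $\Xi_I$ produced by the $D_KD_K^c$-lemma at each floor and feeding them into the obstruction at the next floor up, the only new feature being the extra source $\iota_I^*\alpha\wedge(\text{level }k-1)$. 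A secondary technical point to keep under control throughout is regularity: each newly constructed $\CR_{I;k}$ and $u_k$ must be smooth enough, in the $C^\infty_K$ sense, to be a legal input both to the first-order-extension hypothesis at the next value of $k$ and to the wedge-product estimates of Remark \ref{realsmooth}; this is supplied uniformly by Theorem \ref{smoothhodge} together with Proposition \ref{propend1}.
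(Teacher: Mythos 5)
Your transposition of the two inductions (outer on the order $k$, inner descending on the codimension $p$, versus the paper's outer on $p$ with all orders $k$ solved floor by floor) is harmless: the dependency structure $\CR_{I;k}\leftarrow(\CR_{I;k-1},\,\CR_{jI;k})$ is respected by either scheduling. However, there are two points that do not hold up as written.

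First, for $k=1$ your claim that ``the hypothesis on $Y_I$ says exactly that $\iota_I^*\alpha\wedge\CR_{I;0}$ is $D_K$-exact'' is wrong as stated. The current $\CR_{I;0}$ produced by Lemma \ref{initiald} lies only in $\ker\cD_K^c$, not in $\im\cD_K^c$ and not harmonic. One must first decompose $\CR_{I;0}=a+b$ with $a\in\im\cD_K^c$ and $b$ harmonic; the first-order extension hypothesis applies to $b$ alone, giving $\iota_I^*\alpha\wedge b\in\im\cD_K$, while $\iota_I^*\alpha\wedge a\in\im\cD_K^c$ because $\alpha$ is $\cD_K^c$-closed. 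The total source is therefore only in $\im\cD_K+\im\cD_K^c$, and one then needs the fact that a current that is both $\cD_K$- and $\cD_K^c$-closed and lies in $\im\cD_K+\im\cD_K^c$ is $\cD_K\cD_K^c$-exact. Collapsing this to ``$D_K$-exact'' skips the step that actually invokes the hypothesis.

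Second, and more substantially, your claim to upgrade the current solution $\cD_K^c\Theta$ to a genuine $s\in C^\infty_{\bullet,K}(Y_I,\Delta_{I;1},L)$ at each floor is not justified by the results you cite. Theorem \ref{smoothhodge} and the remark following Lemma \ref{lemend4} require the source of the Laplace (or $\cD_K\cD_K^c$) equation to be a form in $\cC^\infty_K$, i.e. at worst with logarithmic poles; but your source $\Lambda_{I;k}=\iota_I^*\alpha\wedge\CR_{I;k-1}+\sum_j\CR_{jI;k}$ contains the currents $\CR_{jI;k}$ supported on the proper sub-strata $Y_{jI}\subset Y_I$, which are genuinely singular. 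None of the regularity machinery applies to such a source, so the smoothness of $\CR_{I;k}$ is unsupported. Fortunately the proposition only claims existence of \emph{currents}: the paper deliberately separates the current-level construction (this proposition) from the upgrade to log-smooth forms, which happens afterwards via an entirely different mechanism, namely Proposition \ref{regularprop}, applied floor by floor only \emph{after} all the currents have been produced. If you drop the smoothness claim and keep every $\CR_{I;k}$ as a current, using the current-level $D_KD_K^c$-lemma (Lemma \ref{lemend4}) and the De Rham--Kodaira decomposition (Theorem \ref{currentdecomp}), your argument goes through and matches the paper's.
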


\begin{proof}
	
	\bigskip
	
	Recall that by Lemma \ref{initiald}, we have  
	\begin{equation}\label{par110}
		\mathcal D_K \wh u_0= \sum_i \CR_{i; 0}\qquad \Supp(\CR_{i; 0})\subset Y_i
	\end{equation}
	together with the successive relations: for every $p\geq 1$
	\begin{equation}\label{par112}
		\mathcal D_K \CR_{i_1 \cdots i_p; 0}= \sum_{j}\CR_{j, i_1, \cdots, i_p ;0}, \qquad \Supp(\CR_{i_1 \cdots i_p; 0})\subset \cap_{s=1}^p Y_{i_s}
	\end{equation}
	and the relation $ \CR_{\cdots ij \cdots; 0}=  -\CR_{\cdots ji \cdots; 0}$. 
	
	\noindent Note that our second assumption concerning the first order extension translates as follows: for every $p\geq 1$, and every \emph{harmonic form} $u_{i_1 \cdots i_p}$ on $\cap_{s=1}^p Y_{i_s}$ the following holds 
	\begin{equation}\label{addjumpingas}
		\alpha \wedge u_{i_1, \cdots, i_p} \in \im \cD_K.
	\end{equation}
	\medskip

	We start with the "bottom floor" $Y_{1\dots m}:= \cap_{i=1}^m  Y_i$. 
	Since \eqref{par112} implies that $\CR_{1\dots m; 0} \in \ker \cD_K$,
	by applying \eqref{addjumpingas}, we infer that
	$\alpha \wedge \CR_{1\dots m; 0}$ is $\cD_K$-exact on $Y_{1\dots m}$.
	The case $\Delta_1= \emptyset$ being already settled, we obtain solutions of the following equations
	\begin{equation}\label{par116}
		\cD_K\CR_{1\dots m; k}= \alpha \wedge \CR_{1\dots m; k-1}, \qquad \CR_{ 1\dots m;  k}\in \im \cD_K^c
	\end{equation}
	for any $k\geq 1$.
	
	Since the initial family of residues $(\CR_{i_1\dots i_m; 0})$ satisfies the skew-symmetry property, we can assume that the same is true for $(\CR_{i_1\dots i_m; k})$, for any $k\geq 1$. This is immediate: we first construct 
	$\CR_{1\dots m; k}$, for any $k\geq 1$, then impose the relation
	\[\CR_{\dots ij \dots; k}=  -\CR_{\dots ji \dots; k}\] 
	and remark that the corresponding equalities \eqref{par116} are satisfied for 
	any permutation $(i_1\dots i_m)$ of $\{1,\dots, m\}$. 
	\medskip
	
	Now we move to the next floor and consider next the equations 
	
	\begin{equation}\label{par117}
		\cD_K \CR_{ i_1 \dots i_{m-1}; 1}= \alpha\wedge \CR_{i_1\dots i_{m-1}; 0}+ \sum_j \CR_{j i_1 \dots i_{m-1}; 1}.
	\end{equation}
	We intend to show that the RHS is $\cD_K\cD_K^c$-exact. 
	
	First of all, we claim that the RHS of \eqref{par117} belongs to $\im \cD_K +\im \cD_K^c$.  
	In fact we have $\CR_{j i_1 \dots i_{m-1}; 1} \in \im \cD_K^c$ by  \eqref{par116}. 
	To treat the first term in RHS of \eqref{par117}, by Lemma \ref{initiald}, we have $\CR_{i_1 \dots i_{m-1}; 0}  \in \ker \cD_K^c$. Therefore we can write 
	$$\CR_{i_1 \dots i_{m-1}; 0}= a+b ,$$ 
	where $a \in \im \cD_K^c$ and $b$ is harmonic. As $\alpha \wedge b$ is $\cD_K$-exact by \eqref{addjumpingas}, we know that 
	$$\alpha\wedge \CR_{i_1, \dots i_{m-1}; 0} \in \im \cD_K +\im \cD_K ^c .$$
	As a consequence, the RHS of \eqref{par117} belongs to $\im \cD_K +\im \cD_K^c$.  
	\smallskip
	
	Secondly, we claim that the RHS of \eqref{par117} is $\cD_K$ and $\cD_K^c$-closed. This is verified by a direct calculation, cf.
	\begin{align}\nonumber \cD_K (\alpha\wedge \CR_{i_1 \dots i_{m-1}; 0}+ \sum_j \CR_{j i_1 \dots i_{m-1}; 1})
		& = - \alpha \wedge \sum_j  \CR_{j i_1 \dots i_{m-1}; 0} + \alpha \wedge \sum_j  \CR_{j i_1 \dots i_{m-1}; 0}
		 = 0\end{align}
	as well as 
	\begin{align}\nonumber
		\cD_K^c \big(\alpha\wedge \CR_{i_1 \dots i_{m-1}; 0}+ \sum_j \CR_{j i_1 \dots i_{m-1}; 1}\big)& = -\alpha\wedge \cD_K^c\CR_{i_1 \dots i_{m-1}; 0}+ \sum_j \cD_K^c\CR_{j i_1 \dots i_{m-1}; 1}= 0.\end{align}
	
As a consequence of the above two claims, by using the $\ddbar$-lemma, the RHS of \eqref{par117} is $\cD_K \cD_K^c$-exact on $Y_{ i_1, \dots i_{m-1}}$, for each multi-index $(i_1,\dots, i_{m-1})$ and it follows that we can find a solution $\CR_{ i_1 \dots i_{m-1}; 1}$ of \eqref{par117} such that
	\[\CR_{ i_1, \dots i_{m-1}; 1}\in \im \cD_K^c .\]
	\smallskip
	
	\noindent Our next claim is that we can determine $\CR_{i_1, \dots, i_{m-1};1}$
	in such a way that they are skew-symmetric. Indeed, for each ordered multi-index
	\[j_1<\dots< j_{m-1}\]
	we take an arbitrary solution $\CR_{j_1, \dots, j_{m-1};0}$ of \eqref{par117}, which belongs to the image of $\cD_K^c$. Then, for each permutation $\sigma$ of 
	$\{1, \dots, m-1\}$ we define
	\[\CR_{j_{\sigma(1)}\dots j_{\sigma(m-1)}; 1}:= \epsilon(\sigma)\CR_{j_1\dots j_{m-1}; 1}\]
	where $\epsilon(\sigma)$ is the signature of $\sigma$. We then have
	\begin{align}\nonumber
		\cD_K \CR_{j_{\sigma(1)}\dots j_{\sigma(m-1)}; 1}= & \epsilon(\sigma)\cD_K \CR_{j_1\dots j_{m-1}; 1}\cr
		= & \epsilon(\sigma)\alpha\wedge \CR_{j_1\dots j_{m-1}; 0} + \epsilon(\sigma)\sum_j \CR_{j j_1 \dots j_{m-1}; 1}\cr
		= & \alpha\wedge \CR_{j_{\sigma(1)}\dots j_{\sigma(m-1)}; 0}+ \sum_j \CR_{jj_{\sigma(1)}\dots j_{\sigma(m-1)}; 1}.\cr
	\end{align}
	In conclusion, we can solve the equation \eqref{par117} in such a way that the solution belongs to the image of $\cD_K^c$ and it is skew-symmetric with respect to the indexes $(i_1,\dots, i_{m-1})$.
	\medskip
	
	\noindent Now we solve the successive equations 
	\begin{equation}\label{moreeq}
		\cD_K \CR_{i_1, \dots i_{m-1};  k}= \alpha\wedge \CR_{i_1, \dots i_{m-1};  k-1}+ \sum_j \CR_{j, i_1, \dots i_{m-1};  k},
	\end{equation}
	for $k\geq 2$. In this case (i.e. $k\geq 2$) the RHS of \eqref{moreeq} is $\cD_K^c$-exact by induction. Moreover, it is $\cD_K$-closed by the following calculation, where we argue by induction on $k$ in \eqref{moreeq}. 
	
	\begin{align}\nonumber \cD_K ( \alpha\wedge \CR_{i_1, \dots i_{m-1};  k-1}+ \sum_j \CR_{j, i_1, \dots i_{m-1};  k}) = & 
		-\alpha \wedge  \sum_{j} \CR_{j, i_1, \dots i_{m-1};  k-1} \cr  & + \alpha\wedge  \sum_{j} \CR_{j, i_1, \dots i_{m-1};  k-1} + \sum_{s,j} \CR_{s,j, i_1, \dots i_{m-1};  k}\cr
		= & 0.
	\end{align} 
	By the $\ddbar$-lemma we infer the existence of $\CR_{i_1, \dots i_{m-1};  k}$ such that
	$$\cD_K \CR_{i_1, \dots i_{m-1};  k}= \alpha\wedge \CR_{i_1, \dots i_{m-1};  k-1}+ \sum_j \CR_{j, i_1, \dots i_{m-1};  k},$$
	with $\CR_{i_1, \dots i_{m-1};  k} \in \im \cD_K^c$. By the same argument as before, we can assume that the currents
	$\CR_{ i_1, \dots i_{m-1}; k}$ are skew-symmetric for all $k\geq 2$. 
	\medskip
	
Induction and the same arguments as above show that we can find the solutions $\CR_{i_1\dots i_p, k}$ for any $1\leq p\leq m$ and any $k$. The first point of the proposition is proved.

	\bigskip

For the second point of the proposition, we  determine first $u_1$ so that 
	\begin{equation}\label{par311}
		\mathcal D_K u_1= \alpha\wedge u_0+ \sum_j \CR_{j, 1}
	\end{equation}
	holds. As before, the RHS is $\cD_K^c \cD_K$-exact. Then we can find a solution $u_1$ of \eqref{par311} such that $u_1 \in \im \cD_K^c$. To finish we remark that we can iterate this and solve the equations
	\begin{equation}
		\mathcal D_K u_k= \alpha\wedge u_{k-1}+ \sum_j \CR_{j; k}
	\end{equation}
	for all $k\geq 2$. Since the support of the currents $\CR_{j; k}$ is contained in $\Delta_1$, the second point is completely established. 
	\bigskip
	
	For the last point, it comes from the fact that every current $\CR_{i_1 \dots i_j; k}$ is constructed as a current on $Y_{i_1 \dots i_j}$. Therefore, if we consider $\CR_{i_1 \dots i_j; k}$ as a current on $X$, it is supported in $\Delta_1$ of order $0$.
	
\end{proof}


\subsection{The regularity statement}\label{regular} 

By Proposition \ref{solucurrent},  Theorem \ref{solveequs}
is almost proved, modulo the fact that instead of being forms with log poles, the solutions $(u_k)_{k\geq 1}$ are currents. We explain next how this last issue is addressed.
In order to illustrate the method we will use, we first analyze the following particular case. 
\begin{lemme}\label{initalcase}
	Let $X$ be a Kähler manifold, and let $\Delta$ be a snc divisor on $X$. We denote by $V$ a holomorphic vector bundle on $X$, and
	 let $u_X$ be a $V$-valued smooth $(p,q)$-form with log poles along $\Delta$. We suppose that there exists a $V$-valued current $T$ on $X$ such that the following equality
	\begin{equation}\label{eqcond}
		\dbar T= u_X+ T_\Delta	
	\end{equation}
holds, where $T_\Delta$ is a $V$-current with support in $\Delta$ of order $0$, namely  
$$\int_X T_\Delta\wedge \phi =0$$ 
for any test $V^\star$- valued form $\phi$ such that $\phi|_\Delta =0$. Then there exists a smooth $V$-valued $(p, q-1)$-form with log poles $v$ such that the equality
	\begin{equation}\label{reg2}
		\dbar v= u_X
	\end{equation}
holds on $X\setminus \Delta$.
\end{lemme}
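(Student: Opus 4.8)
The plan is to turn the hypothesis into the vanishing of a coherent cohomology class and then to use the order‑$0$ condition on $T_\Delta$ through an explicit construction of logarithmic primitives, globalised by a Čech argument for which the current $T$ supplies the needed zero‑cochain. First I would record the elementary observation that $u_X$ is automatically $\dbar$‑closed on $X\setminus\Delta$: from $\dbar_{\mathrm{curr}}(u_X+T_\Delta)=\dbar_{\mathrm{curr}}\dbar_{\mathrm{curr}}T=0$ and the fact that $\dbar_{\mathrm{curr}}T_\Delta$ is supported on $\Delta$, the diffuse ($L^1_{\mathrm{loc}}$) part of $\dbar_{\mathrm{curr}}u_X$ must vanish. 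Thus $u_X$ is a global $\dbar$‑closed section, in bidegree $(p,q)$, of the complex of smooth forms with logarithmic poles along $\Delta$, which by the local Dolbeault lemma with logarithmic poles (cf.\ Lemma~\ref{fujiki} and Proposition~\ref{debareq}, here with $\Delta_2=\Delta_3=0$, so $\mathcal L^2$ is the usual log‑$L^2$ space by Proposition~\ref{lelongnumb}) is a fine resolution of the coherent sheaf $V\otimes\Omega^p_X(\log\Delta)$. Hence it suffices to show that the class of $u_X$ in $H^q\!\big(X,V\otimes\Omega^p_X(\log\Delta)\big)$ vanishes, i.e.\ that $u_X=\dbar v$ for a \emph{global} smooth form $v$ with logarithmic poles along $\Delta$; this automatically gives $\dbar v=u_X$ on $X\setminus\Delta$.

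The crucial point, and where the order‑$0$ hypothesis on $T_\Delta$ enters, is that even though $T$ itself need not have logarithmic singularities, the equation $\dbar_{\mathrm{curr}}T=u_X+T_\Delta$ can be \emph{regularised} locally into one of the same shape with a logarithmic‑pole potential. Working on a small Stein $U$ with $\Delta\cap U=\{z_1\cdots z_\ell=0\}$, I would invoke Proposition~\ref{debareq} for a log primitive $w_1$ of $u_X$ on $U$, so that $\dbar_{\mathrm{curr}}w_1=u_X+\sum_ic_i[\{z_i=0\}]\wedge\Res_i(w_1)$ with $\Res_i(w_1)$ itself smooth with log poles by Lemma~\ref{lem:residue}; then $S:=T-w_1$ satisfies $\dbar_{\mathrm{curr}}S=R$ with $R$ again a current of order $0$ supported on $\Delta$ and $\dbar_{\mathrm{curr}}R=0$. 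By the structure theorem for currents of order $0$ supported on the simple normal crossing divisor $\Delta$, one has $R=\sum_I(\iota_I)_*R_I$ over the strata $Y_I=\bigcap_{i\in I}\{z_i=0\}$, and $\dbar_{\mathrm{curr}}R=0$ forces each $R_I$ to be $\dbar$‑closed up to the contributions of the deeper strata and of the same regularity as residues of log forms along $Y_I$. Proceeding by induction on $\dim X$, one solves the residue‑level equations on the $Y_I$ by smooth forms with log poles and lifts them to $U$ by wedging with $\tfrac{1}{(2\pi i)^{|I|}}\bigwedge_{i\in I}\tfrac{dz_i}{z_i}$; a residue computation using $dd^c\log|z_i|^2=[\{z_i=0\}]+C^\infty$ shows that the resulting $w_2$ is smooth with logarithmic poles along $\Delta$ and $\dbar_{\mathrm{curr}}w_2=R+(\text{log form})$. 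Consequently $T-w_1-w_2$ satisfies $\dbar_{\mathrm{curr}}(T-w_1-w_2)=(\text{log form with no divisorial part})$, i.e.\ on $U$ the class of $u_X$ is represented by the $\dbar$ of a current with logarithmic poles, and in particular $w_1+w_2$ is, up to a genuinely smooth $\dbar$‑closed correction, a local log‑smooth primitive of $u_X$.

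It then remains to globalise. The local data $w_1+w_2$ differ on overlaps by $\dbar$‑closed forms with logarithmic poles, which after the usual Čech–Dolbeault reduction become sections of the coherent sheaf $\ker\nabla\subset V\otimes\Omega^\bullet_X(\log\Delta)$; the resulting Čech cocycle is the obstruction to a global log‑smooth $v$, and the single \emph{global} current $T$ — whose $\dbar$ is $u_X$ modulo the order‑$0$ term $T_\Delta$ that we have just absorbed — provides the global $0$‑cochain trivialising it. This is exactly the patching argument used in the proof of Theorem~\ref{quasiiso}. One concludes $u_X=\dbar v$ for a global smooth $(p,q-1)$‑form $v$ with logarithmic poles along $\Delta$, hence $\dbar v=u_X$ on $X\setminus\Delta$.

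The main obstacle I expect is the middle step: the structure‑theoretic description of the order‑$0$ current $T_\Delta$ over the stratification of $\Delta$, the descent of the $\dbar$‑equations to each stratum together with the induction on dimension, and the residue bookkeeping (controlled by Lemma~\ref{lem:residue}) ensuring that all the auxiliary forms produced along the way remain in the class of smooth forms with logarithmic poles. Everything else — the Dolbeault lemma with log poles, the residue formulae, the coherence of $\ker\nabla$, and the Čech patching — is either established earlier in the paper (Sections~\ref{HodgeDek}--\ref{hypercoho}) or classical.
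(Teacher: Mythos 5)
Your proposal takes a genuinely different route from the paper, and while the opening reduction is sound, the core of your plan has gaps that I do not think can be easily closed.

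The reduction you set up is correct: from $\dbar_{\mathrm{curr}}^2T=0$ and the fact that $T_\Delta$ is supported on $\Delta$ one gets $\dbar\,\cI(u_X)=0$ (equivalently $\dbar u_X=0$ on $X\setminus\Delta$), so the problem is to kill the Dolbeault class of $\cI(u_X)$ in $H^q\!\bigl(X,V\otimes\Omega^p_X\langle\Delta\rangle\bigr)$. After that, though, your plan diverges sharply from the paper's. You propose a local-to-global scheme: choose local log primitives $w_1$, peel off the order-$0$ current, decompose $R=\dbar(T-w_1)$ over the strata of $\Delta$, solve residue-level $\dbar$-equations by induction on dimension, lift by wedging with $\bigwedge_{i\in I}\tfrac{dz_i}{z_i}$, and finally patch the local primitives by a \v{C}ech argument with $T$ as a trivialising $0$-cochain. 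Several of these steps are not justified and, as stated, would fail. The ``structure theorem'' you invoke only gives that an order-$0$ current supported on $\Delta$ is pushed forward from the strata with \emph{measure-coefficient} currents $R_I$; nothing forces the $R_I$ to be smooth with log poles, so the inductive $\dbar$-equations on the $Y_I$ are not of the form to which your local $\dbar$-lemma applies. The induction itself has no precise statement. And the final step is the most serious gap: $T$ is a global current, not a $0$-cochain of sections of the coherent sheaf $\ker\nabla$, so it cannot be used to trivialise a \v{C}ech cocycle for that sheaf; your appeal to ``the patching argument in Theorem~\ref{quasiiso}'' is not applicable (that theorem is a quasi-isomorphism statement, not a cocycle-trivialisation by a current).

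What the paper does instead avoids all of this in one global step, and it is worth seeing why, because it shows exactly where the order-$0$ hypothesis on $T_\Delta$ enters. Fix any smooth metric on $\Omega^p_X\langle\Delta\rangle\otimes V$ and a K\"ahler metric on $X$, write the Hodge decomposition $\cI(u_X)=\mathcal H+\dbar v$, and form the conjugate-linear form $\sharp\mathcal H$ with values in $\Lambda^pT_X\langle\Delta\rangle\otimes V^\star$. Two facts do all the work: (i) the contraction $\iota(\sharp\mathcal H)$ \emph{vanishes on $\Delta$} (this is \cite[Lemma~2.1]{LRW}, the identity $\iota(\gamma)|_{D_i}=0$ for any $\Lambda^pT_X\langle\Delta\rangle$-valued form); (ii) $\mathcal H$ harmonic implies $\dbar^\star\mathcal H=0$, hence $\dbar(\sharp\mathcal H)=0$. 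Then
\[
\|\mathcal H\|^2=\int_X\cI(u_X)\wedge\sharp\mathcal H=\int_X u_X\wedge\iota(\sharp\mathcal H)
=\int_X(u_X+T_\Delta)\wedge\iota(\sharp\mathcal H)=\int_X\dbar T\wedge\iota(\sharp\mathcal H)
=\pm\int_X T\wedge\iota(\dbar\sharp\mathcal H)=0,
\]
where the insertion of $T_\Delta$ is free because of (i) and the last equality uses (ii). So $\mathcal H=0$ and $\cI(u_X)=\dbar v$, i.e.\ $u_X=\dbar w$ for a smooth log-pole $(p,q-1)$-form $w$. Note that your first step already noticed $\dbar u_X=0$ off $\Delta$, but you then tried to exploit the order-$0$ hypothesis locally by decomposing $T_\Delta$, whereas the correct mechanism is the global duality pairing: order $0$ means $T_\Delta$ pairs to zero against any test form vanishing on $\Delta$, and $\iota(\sharp\mathcal H)$ is exactly such a test form.
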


\begin{proof}
	One of the key points in the next arguments is taken from the paper by K. Liu, S. Rao and X. Wan \cite[Prop 2.4]{LRW}. 
	In order to avoid any risk of confusion, we denote by \[C^\infty_{0, q}(X, \Omega_X^p(\log \Delta) \otimes V)\] the space of $V$-valued smooth $(p,q)$-form with log poles along $\Delta$. 
	
	Let
	\[\cI: C^\infty_{0, q}(X, \Omega_X^p(\log \Delta) \otimes V  )\to C^\infty_{0, q}(X, \Omega^p_X\langle\Delta\rangle \otimes V)\]
be the map which identifies a logarithmic $(p,q)$-form with a 
$(0,q)$-form with values in the holomorphic vector bundle $\Omega^p_X\langle\Delta\rangle \otimes V$.	
Let $ \Lambda^pT_X\langle\Delta\rangle $ be the dual of $\Omega^p_X\langle\Delta\rangle $.
We equally introduce the natural map
\[\iota: C^\infty_{n, n-q}(X, \Lambda^pT_X\langle\Delta\rangle \otimes V^\star  )\to  C^\infty_{n-p, n-q}(X, V^\star)\]
obtained by contracting with the vector fields.
	
	\noindent  We suppose that $\Delta= \sum D_i$. We recall the statements Lemma 2.1 and Lemma 2.2 in \cite{LRW}: we have 
	\begin{equation}\label{reg3}
		\iota(\gamma)|_{D_i}= 0, \qquad \int_X \alpha\wedge \iota(\beta)= \int_X \cI(\alpha)\wedge \beta
	\end{equation}
	where $\gamma$ is a $\Lambda^pT_X\langle\Delta\rangle \otimes V^\star$-valued form of type $(n, n-q)$, and the second equality in \eqref{reg3} has the following meaning:
	
	\noindent $\bullet$ On the LHS, $\alpha$ is a $V$-valued smooth $(p,q)$--form with log poles which is wedged with the $V^\star$-valued $(n-p, n-q)$ form $\iota(\beta)$.
	
	\noindent $\bullet$ On the RHS, $\cI(\alpha)$ is a $(0,q)$-form with valued in $\Omega^p_X\langle\Delta\rangle \otimes V$, and $\beta$ is a $(n,n-q)$
form with values in the dual bundle (so the wedge makes perfectly sense).	
\medskip
	
	\noindent We remark that by \eqref{eqcond}, we have $\dbar \cI(u_X)= 0$. 
	 Let us fix an arbitrary, Hermitian metric $h$ on $\Omega^p_X\langle\Delta\rangle \otimes V$, as well as a Kähler metric on $X$. The Hodge decomposition (for the Laplace operator associated to $\dbar$) induced by this data gives 
	\begin{equation}\label{reg4}
		\cI(u_X)= \mathcal H+ \dbar v
	\end{equation}
	where $\mathcal H$ is $\Delta''$-harmonic and $v$ corresponds to a form with log poles $w$. 
	
	\medskip
	
	We introduce the following antilinear-operator
	$$\sharp : \cC^\infty_{(0,q), K} (X, \Omega^p_X\langle\Delta\rangle \otimes V )  \to \cC^\infty_{(n,n-q), K} (X, \Lambda^pT_X\langle\Delta\rangle \otimes V^\star )$$
	such that the equality
	\begin{equation}
		\int_X \langle s_1, s_2 \rangle_h \omega_X ^n = \int_X s_1 \wedge \sharp s_2
		\end{equation}
	holds for every $s_1, s_2 \in  C^\infty_{(0,q), K} (X, \Omega^p_X\langle\Delta\rangle \otimes V )$.

	We first show that 
	\begin{equation}\label{harmpart}
		 \dbar ( \sharp \mathcal H) =0 .
		\end{equation}
		In fact, for any $s' \in  C^\infty_{(0,q), K} (X, \Omega^p_X\langle\Delta\rangle \otimes V )$, we have
	$$ \int_X s' \wedge \dbar \sharp ( \mathcal H) = (-1)^\bullet \int_X \dbar s' \wedge \sharp \mathcal H   = (-1)^\bullet \int_X  \langle \dbar s' , \mathcal H \rangle_{h} \omega^n_X$$
	$$ = (-1)^\bullet  \int_X  \langle s' , \dbar^\star \mathcal H\rangle_{h} \omega^n_X =0 .$$ 
	Therefore we obtain \eqref{harmpart}.
	\medskip
	
	To prove the Lemma, we need show that the harmonic part $\mathcal H$ in \eqref{reg4} must vanish. 
	We have
	\begin{equation}\label{reg5}
		\int_X|\mathcal H|^2dV= \int_X\langle \cI(u_X), \mathcal H \rangle dV= \int_X \cI(u_X) \wedge \sharp \mathcal H
	\end{equation}
	which equals 
	\begin{equation}\label{reg6}
		\int_X u_X\wedge \iota( \sharp \mathcal H)
	\end{equation} 
	by \eqref{reg3}. The main point is that we have 
	\begin{equation}\label{reg7}
		\int_X u_X\wedge \iota(\sharp \mathcal H)= \int_X (u_X+ T_\Delta)\wedge \iota(\sharp \mathcal H)
	\end{equation}
	by the first part of \eqref{reg3}, and the properties of the current $T_\Delta$. By hypothesis, the RHS of \eqref{reg7} is equal to 
	\begin{equation}\label{reg8}
		\int_X \dbar T\wedge \iota(\sharp \mathcal H)= (-1)^\bullet \int_X T\wedge \dbar \big( \iota \circ \sharp \mathcal H \big) = 
		 (-1)^\bullet \int_X T\wedge \iota \big( \dbar ( \sharp \mathcal H) \big) =0
	\end{equation}
where the last equality is a consequence of \eqref{harmpart}.
\end{proof}
\medskip

\subsubsection{A few more notations} We are next aiming at the version of \Cref{initalcase} in our setting and as a first step we introduce/recall some notations.
We consider the holomorphic vector bundles
$$E_p := \Omega^p _X\langle\Delta_1\rangle \otimes L, \qquad E^\star _p := \wedge^p T_X\langle\Delta_1\rangle \otimes L^\star.$$
The metric $g_D$ on $T_X\langle\Delta_1\rangle$ together with $h_L$ induce a Hermitian structure on $E_p$ and its dual. 
Furthermore, the space $X_0$ is endowed with the metric $\omega_D$ and let
$$\cC^\infty_{(0,q), K} (X, E_p),\qquad \cC^\infty_{(n, q), K} (X, E^\star _p)$$
be the spaces obtained as in Definition \ref{defend1}.

\medskip
Let $C^\infty _{\bullet, K}(X, \Omega^\bullet _X (\log \Delta_1)\otimes L)$ be the space of $L$-valued log smooth form and let 
$\cD_K (X, L)$ be the space of $L$-currents, which is dual to $\cC^\infty _K (X, L^\star)$ cf. \Cref{currentspace}.
The identification map
\begin{equation}\label{identi}
\cI:   C^\infty _{\bullet , K}(X, \Omega^\bullet _X (\log \Delta_1)\otimes L)  \simeq \oplus_{p+q =\bullet }\cC^\infty_{(0,q), K} (X, E_p)  
	\end{equation}
is defined by applying the $"\cI"$ in the proof of Lemma \ref{initalcase} component-wise.
If the $\bullet$ on the RHS of \eqref{identi} is equal $i$, then this coincides with $\mathcal E_i$ in \ref{secondcases} of the Convention \ref{conv}.
\smallskip

\noindent We introduce a differential on the space $\oplus_{p+q =i }\cC^\infty_{(0,q), K} (X, E_p)$ via the operator $\cI$	
\begin{equation}\label{diffE}
D_{K,E}:  \oplus_{p+q =i }\cC^\infty_{(0,q), K} (X, E_p) \to \oplus_{p+q =i +1}\cC^\infty_{(0,q), K} (X, E_p) 
\end{equation}	
by imposing that the equality
\begin{equation}\label{diffE1}
D_{K,E} \circ \cI= \cI\circ D_K
\end{equation}
holds.
	
\noindent In a similar way, the operator 
\begin{equation}\label{diffE2}
D_{K, E^\star}: \oplus_{p+q =i}\cC^\infty_{(n,q), K} (X, E^\star _p) \to \oplus_{p+q =i-1}\cC^\infty_{(n,q), K} (X, E^\star _p)\end{equation}
is defined 
so that we have
\begin{equation}\label{diffE3}
\int_X D_{K,E}  (s)\wedge t = (-1)^{q+1} \int_X s\wedge D_{K, E^\star} (t) \end{equation} 
for any $s\in  \cC^\infty_{(0,q), K} (X, E_p), t \in  \cC^\infty_{(n,n-q), K} (X, E^\star _p)$.
\smallskip

\noindent It will be useful to introduce the following anti-linear-operator
$$\sharp : \cC^\infty_{(0,q), K} (X, E_p)  \to \cC^\infty_{(n,n-q), K} (X, E^\star_p)$$
so that the equality
\begin{equation}\label{diffE4}\int_X \langle s_1, s_2 \rangle_{g_D, h_L} \omega_D ^n = \int_X s_1 \wedge \sharp s_2\end{equation}
is verified for any $s_1, s_2 \in  \cC^\infty_{(0,q), K} (X, E_p)$. Note that although the two metrics $g_D$ and $\omega_D$ are 
not smooth, the following holds.

\begin{lemme} We consider a smooth form $u \in \cC^\infty_{(0,q), K} (X, E_p)$. Then we automatically have $\sharp u\in \cC^\infty_{(n,n-q),K}(X, E_p^\star)$.
\end{lemme}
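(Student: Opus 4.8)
The plan is to verify that the operation $\sharp$, which is defined via the pointwise Hermitian pairing on $E_p$ together with the volume form $\omega_D^n$, does not destroy the $L^2$-regularity conditions that characterize membership in the sheaf $\cC^\infty_{K}$. First I would reduce to a local computation using a partition of unity subordinate to the coordinate covering $(U_\alpha, w_\alpha)$ adapted to $(X, D)$, so that $\Supp(u)$ is contained in a single coordinate chart $\Omega$ on which we have the frames $(l_i^\star)$ of $\Omega^1_X\langle\Delta_1\rangle$ from \eqref{logend8}--\eqref{logend7} and the dual frames $(l_i)$ of $T_X\langle\Delta_1\rangle$. Writing $u = \sum u_{I\ol J}\, l_I^\star\wedge \ol e_J^\star\otimes e_L$, the form $\sharp u$ is obtained by raising and lowering indices with the metric coefficients $b_{\beta\ol\alpha} = \langle l_\beta, l_\alpha\rangle_{g_D}$, the conjugate-tangent coefficients coming from $\omega_D$, and the weight $e^{-\varphi_L}$, then contracting against the canonical $(n,n)$-form $\omega_D^n$. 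Concretely, $\sharp u = \sum \ol u_{I\ol J}\, b^{\ol I I'}\, (\text{stuff})\, l_{I'}\rfloor(\text{vol})\otimes e_L^\star$ up to combinatorial constants, where the coefficient functions involved are exactly the entries of $(b_{\beta\ol\alpha})$, $(b^{\ol\alpha\beta})$, the analogous objects for $\omega_D$, and $\det(g_D)$, $\det(\omega_D)$.

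The key step is then to observe, exactly as in Lemma \ref{lemend1} and the discussion following Definition \ref{pogrow}, that all of these metric coefficient functions are \emph{functions with logarithmic growth} in the sense of Definition \ref{pogrow}: this is precisely the content of the expansions \eqref{log30} for the $h_{\beta\ol\alpha}$ (equivalently $b_{\beta\ol\alpha}$) and their inverses, together with the corresponding statement for $\omega_D$ in Lemma \ref{coeff}, once rewritten in the frames $(l_i)$, $(e_i)$. Hence multiplying the coefficients $u_{I\ol J}$ by such functions and reindexing keeps us in a position to apply Proposition \ref{propend1}: indeed $\sharp$ is, up to the antilinear conjugation of coefficients (which is harmless for $L^2$-type estimates) and a finite reindexing of multi-indices, an operation of the type ``multiply by a logarithmic-growth function and contract with a fixed frame element'', which is precisely what Proposition \ref{propend1} handles. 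Combined with Lemma \ref{lemend2} (derivatives of logarithmic-growth functions still have logarithmic growth) and the criterion Lemma \ref{lemend3}, the iterated covariant derivatives $\nabla^r(\sharp u)$ are controlled by finite sums of $\nabla^k(u)$ with logarithmic-growth coefficients, so $\sharp u$ satisfies the same $L^2(d\mu_a)$ bounds as $u$ for every $r$ and every multi-index $a$. Therefore $\sharp u\in \cC^\infty_{(n,n-q),K}(X, E_p^\star)$.

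The main obstacle I anticipate is bookkeeping rather than anything deep: one must check that the non-Kähler curvature of $g_D$ does not enter at this stage — and it does not, because $\sharp$ is an algebraic (zeroth-order) operation on $u$, so only the metric coefficients themselves (not their second derivatives or the associated connection forms) appear in $\sharp u$, and those coefficients are covered by Lemma \ref{lemend1}. The only subtlety is to confirm that the dual metric on $E_p^\star$ — built from $(b^{\ol\alpha\beta})$ rather than $(b_{\beta\ol\alpha})$ — also has logarithmic-growth coefficients, which is exactly items (a)--(f) in the list following \eqref{log30} for the inverse matrix $(h^{\ol\alpha\beta})$, and the analogous points (2)--(7) of Lemma \ref{coeff} for $\omega_D^{-1}$. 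Once this is in place the proof is a direct invocation of Proposition \ref{propend1} applied coefficient-by-coefficient, and no further analytic input is needed.
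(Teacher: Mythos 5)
Your proof is correct and takes essentially the same route as the paper: write $u$ in the local frames adapted to $\Delta_1$, observe via Lemma \ref{lemend1} (equivalently the expansions after \eqref{log30} and Lemma \ref{coeff}) that the metric coefficients $a_{\beta\ol\alpha}$, $b_{\beta\ol\alpha}$, their inverses, and the determinants are all functions with logarithmic growth, and then close with the criterion of Lemma \ref{lemend3}. One small caveat: your intermediate appeal to Proposition \ref{propend1} is a slight over-reach, since that proposition covers multiplication by a scalar logarithmic-growth function and the operators $\theta_0\wedge\cdot$, $\theta_0^\star(\cdot)$, not the full index-raising/reindexing/conjugation that $\sharp$ performs; but this does not create a gap, because the direct invocation of Lemma \ref{lemend3} on the explicit coefficient formula is what actually carries the argument, exactly as in the paper.
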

\begin{proof} The matter is very easy: by using the coordinates/frames as in \eqref{end10}, we write locally
\begin{equation}\label{diffE5}
u= \sum_{|I|=p, |J|= q} u_{I\ol J}l_I^\star \wedge \ol e_J^\star \otimes e_L.
\end{equation}
Then we have the formula 
\begin{equation}\label{diffE6}
\sharp u= \sum_{|I|=p, |J|= n-q} \ol u_{K\ol L}a^{\ol K I} b^{\ol {\mathcal C}_J L}\det (a_{\beta\ol\alpha})l_I \wedge e\wedge \ol e_J^\star \otimes e_L
\end{equation}
where we use the notations in Section \ref{Hodsmo}. This quickly follows from the definition \eqref{diffE4}. The criteria of Lemma \ref{lemend3} together with the fact that the coefficients $a_{\beta\ol\alpha}, b_{\beta\ol\alpha}$ and each of their inverses are functions with polynomial growth 
finish the proof.
\end{proof}
\smallskip

\begin{remark}
The three operators we have just defined are not unrelated. One can immediately verify that the equality 
\[\sharp \circ D_{K, E}^\star = D_{K, E^\star}\circ \sharp \]
holds (possibly up to a sign...), where $D_{K, E}^\star$ is computed with respect to the metrics $\omega_D, g_D$ on $X_0$ and $\mathcal E_\bullet$. We will not discuss this equality here, because it follows from the proof of Proposition \ref{listprop} below. 
\end{remark}

\medskip

\subsubsection{Proof of the regularity result}
Consider next $u \in C^\infty _{\bullet , K}(X, \Omega^\bullet _X (\log \Delta_1)\otimes L)$, a smooth form with log poles.
Then as consequence of \Cref{propend3}, the form $u$ induces a current which will be denoted by  
\begin{equation}\label{diffE7}
T_u\in \cD_K (X, L).
\end{equation}
Recall that we have a natural contraction map $\iota$ defined on the space $C^\infty_{(n, n-q), K}(X,  E_p ^\star)$ as follows 
\begin{equation}\label{diffE8} 
\iota: \cC^\infty_{(n, n-q), K}(X,  E_p ^\star)= \cC^\infty _{(n,n-q), K} (X, \Lambda^p T_X \langle\Delta_1 \rangle \otimes L^\star)\to  \cC^\infty_{(n-p, n-q), K}(X, L^\star).
\end{equation}

\noindent 
\begin{remark}
Let $u \in C^\infty _{\bullet , K}(X, \Omega^\bullet _X (\log \Delta_1)\otimes L)$ be a smooth form with log poles. Then with respect to the notations above we can write
\[\cD_K \circ T_u = T_{D_K u} + \Res_{\Delta_1} u.\]
\end{remark}

\medskip

\noindent Let $\cD_{K}$ and $\cD_{K, E}$ be the operators on the spaces of currents $\cD_{\bullet, K} (X, L)$ and $\cD_{\bullet , K} (X, E_{\bullet})$
induced by $D_K$ and $D_{K, E}$.
We list next several properties which will be used later.

\begin{proposition}\label{listprop}
	 Let $s\in \cC^\infty_{(0,q), K} (X, E_p) $ and $t\in \cC^\infty_{(n,n-q), K} (X, E^\star_p) $.  Then we have
	 
	 \begin{itemize}
\item[\rm (1)] $ D_K  \circ \iota (t) = \iota  \circ D_{K, E^\star} (t) $.

\item[\rm (2)] If $D^\star_{K,E} s =0$,  then $D_{K, E^\star} \circ \sharp s =0$.
\end{itemize}
\end{proposition}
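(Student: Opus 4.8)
The plan is to obtain both identities formally from the defining relations \eqref{diffE1}, \eqref{diffE3} and \eqref{diffE4}, together with the two pairing formulas recorded in \eqref{reg3}, which hold in our $L$-twisted, higher-codimension situation by exactly the argument of \cite{LRW}: for a logarithmic form $\alpha$ and a $T_X\langle\Delta_1\rangle$-valued form $\beta$ one has
\[
\int_X \alpha \wedge \iota(\beta) = \int_X \cI(\alpha) \wedge \beta, \qquad \iota(\beta)\big|_{Y_i} = 0 \ \text{ for every component } Y_i \text{ of } \Delta_1 .
\]
The only genuine analytic input will be that the various integrations by parts below produce no boundary or residue term; this is guaranteed by completeness of $\omega_D$ along $\Delta_2$, by the $\cC^\infty_K$-integrability conditions (via the density of compactly supported forms, Lemma \ref{lemend5}), and -- for (1) -- precisely by the vanishing $\iota(t)|_{Y_i}=0$.

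For part (1), I would test the asserted identity against an arbitrary $L$-valued logarithmic test form $\phi \in \cC^\infty_{\bullet,K}(X,\Delta_1,L)$ and compute, using \eqref{reg3} and \eqref{diffE1},
\[
\int_X D_K\phi \wedge \iota(t) = \int_X \cI(D_K\phi) \wedge t = \int_X D_{K,E}\big(\cI(\phi)\big) \wedge t = \pm\int_X \cI(\phi) \wedge D_{K,E^\star}(t) = \pm\int_X \phi \wedge \iota\big(D_{K,E^\star}(t)\big),
\]
where the third equality is \eqref{diffE3}. On the other hand, integrating by parts for $D_K$ directly gives $\int_X D_K\phi \wedge \iota(t) = \pm\int_X \phi \wedge D_K\big(\iota(t)\big)$; the point is that the residue current hidden in $\cD_K T_\phi = T_{D_K\phi} + \Res_{\Delta_1}(\phi)$ contributes nothing, because it is supported on $\Delta_1$ while $\iota(t)$ vanishes there. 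Since $\phi$ is arbitrary and the wedge pairing is non-degenerate on smooth forms, this yields $D_K \circ \iota(t) = \iota \circ D_{K,E^\star}(t)$ once the degree-dependent signs coming from \eqref{diffE3} have been bookkept.

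For part (2), assume $D^\star_{K,E}s = 0$. For any $s'$ in $\cC^\infty_{\bullet,K}$ of the appropriate degree I would compute, using \eqref{diffE3} and then \eqref{diffE4},
\[
\int_X s' \wedge D_{K,E^\star}(\sharp s) = \pm\int_X D_{K,E}(s') \wedge \sharp s = \pm\int_X \langle D_{K,E}(s'),\, s\rangle_{g_D,h_L}\,\omega_D^n = \pm\int_X \langle s',\, D^\star_{K,E}s\rangle_{g_D,h_L}\,\omega_D^n = 0 .
\]
Here the passage from $D_{K,E}$ to its formal adjoint $D^\star_{K,E}$ causes no boundary term because $s,s'\in\cC^\infty_{\bullet,K}$ and $\sharp s \in \cC^\infty_{\bullet,K}$ by the lemma just established, so $D_{K,E^\star}(\sharp s)$ is again a bona fide smooth element of $\cC^\infty_{\bullet,K}$ and every term is $L^2$ with respect to the measures $d\mu_a$; no residue issue arises since all adjoints are taken in the smooth category on $X_0$. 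Letting $s'$ range over, say, smooth forms with compact support in $X_0 \setminus \Delta_1$ and recalling that $D_{K,E^\star}(\sharp s)$ is smooth, we conclude $D_{K,E^\star}(\sharp s)=0$. Run without specializing $D^\star_{K,E}s=0$, the very same computation also establishes the operator identity $D_{K,E^\star}\circ\sharp = \pm\,\sharp\circ D^\star_{K,E}$ announced in the remark above.

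The main obstacle is not conceptual but a matter of careful bookkeeping: pinning down all the bidegree-dependent signs in \eqref{diffE3}--\eqref{diffE4} so that (1) and (2) come out sign-free as stated, and -- more substantively -- checking that every integration by parts above is legitimate, i.e. that the completeness of $\omega_D$ along $\Delta_2$ together with the $\cC^\infty_K$ conditions really does rule out boundary contributions, while the vanishing $\iota(t)|_{Y_i}=0$ along the components of $\Delta_1$ kills the only residue term that could otherwise appear in the proof of (1).
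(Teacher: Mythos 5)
Your proof is correct and follows essentially the same route as the paper's: both parts are established by duality, running the same chain of equalities built from the pairing formula $\int \alpha\wedge\iota(\beta)=\int\cI(\alpha)\wedge\beta$, the defining relation $D_{K,E}\circ\cI=\cI\circ D_K$, the adjoint formula \eqref{diffE3}, and, for part (2), the definition \eqref{diffE4} of $\sharp$. The only cosmetic difference is in part (1): the paper pairs against a test form $w\in C^\infty_{\bullet,K}(X,L)$, i.e.\ without logarithmic poles along $\Delta_1$, so the residue-annihilation point you spell out is built in silently; you pair against $\phi\in\cC^\infty_{\bullet,K}(X,\Delta_1,L)$ and must then invoke $\iota(t)|_{Y_i}=0$ to kill the residue term, which is a valid (and slightly more informative) variant of the same computation.
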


\begin{proof} We establish the first point by duality. Note that $\iota (t)  \in C^\infty _{\bullet ,K} (X, L^\star)$. 
	Let $w \in C^\infty_{\bullet ,K} (X, L)$. We have
	$$\int_X w \wedge D_K  \circ \iota (t) =  (-1)^\bullet \int_X D_K w \wedge  \iota (t) = (-1)^\bullet \int_X  D_{K, E} \cI (w) \wedge t =  \int_X   \cI (w) \wedge D_{K, E^\star} (t) .$$
	Then we obtain $ D_K  \circ \iota (t) = \iota  \circ D_{K, E^\star} (t) $.
\smallskip

For the second point, note that for every $s' \in  \oplus_{p+q =\bullet}\cC^\infty_{(0,q), K} (X, E_p)$, we have
	$$ \int_X s' \wedge D_{K, E^\star} \circ \sharp s = (-1)^\bullet \int_X D_{K, E}s' \wedge \sharp s  = (-1)^\bullet \int_X  \langle D_{K, E}s' , s\rangle_{g_D, h_L} \omega^n_D$$
	$$ = (-1)^\bullet  \int_X  \langle s' , D^\star_{K,E} s\rangle_{g_D, h_L} \omega^n_D =0 .$$ 
	Then $D_{K, E^\star} \circ \sharp s =0$. 
\end{proof}	
\medskip

\noindent Let $\oplus_{p+q =\bullet }\cD_{ (0,q), K}(X, E_p)$ be the dual space of $\oplus_{p+q =\bullet }\cC^\infty _{(n,n-q), K} (X, E^\star _p)$. We show next that there exists a natural morphism
\begin{equation}\label{diffE9} 
\wt \cI: \cD_{\bullet, K} (X, L) \to \oplus_{p+q =\bullet }\cD_{ (0,q), K}(X, E_p)
\end{equation}
induced by the identification map $\cI$ (which justifies the use of almost the same notation).

\begin{proposition}\label{currentcI}
	Let $\tau \in \cD_{\bullet, K} (X, L)$ be a $L$-valued current. 
	Then it induces an element $\wt \cI (\tau) \in \oplus_{p+q =\bullet }\cD_{ (0,q), K}(X, E_p)$ by the formula
	$$\int_X \wt \cI (\tau) \wedge \phi := \int_X \tau \wedge \iota (\phi),$$
	for every $\phi \in C^\infty _{(n,q) ,K} (X, E^\star_p )$.
	Moreover, we have
	$$\int_X \cD_{K, E} (\wt\cI (\tau)) \wedge \phi = (-1)^{\bullet }\int_X \tau \wedge D_{K} \circ \iota (\phi)$$
so that we can write 
	$$ \cD_{K, E} \circ \wt \cI (\tau)= \wt \cI \circ \cD_K (\tau).$$
	\end{proposition}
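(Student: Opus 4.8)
\textbf{Proof plan for Proposition \ref{currentcI}.}

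The plan is to define $\wt\cI(\tau)$ directly by the prescribed pairing and then check that it is a legitimate element of $\oplus_{p+q=\bullet}\cD_{(0,q),K}(X,E_p)$, i.e.\ that the linear functional it determines is continuous with respect to the semi-norms of \Cref{defend1}. For a test form $\phi\in C^\infty_{(n,n-q),K}(X,E^\star_p)$, the contraction $\iota(\phi)$ lies in $C^\infty_{(n-p,n-q),K}(X,L^\star)$ by \eqref{diffE8}, and moreover by Lemma \ref{lemend3} (applied exactly as in the proof that $\sharp$ preserves the $C^\infty_K$-spaces) the covariant derivatives of $\iota(\phi)$ of any order are controlled by those of $\phi$ times functions of logarithmic growth. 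Hence $|T(\iota(\phi))|\le C\sum_{r\le N}\Vert\nabla^r\iota(\phi)\Vert_{L^2,a}\le C'\sum_{r\le N}\Vert\nabla^r\phi\Vert_{L^2,a'}$ for suitable $N,a,a'$, where $T=\tau$; this is precisely the continuity required by \Cref{currentspace} transported to the $E_p$-side. So $\wt\cI(\tau)$ is well defined as a current of the required type. First I would record this, including the remark that in the smooth case $\wt\cI$ restricts to $\cI$, which follows from the second formula in \eqref{reg3} (the Liu--Rao--Wan identity $\int_X\cI(\alpha)\wedge\beta=\int_X\alpha\wedge\iota(\beta)$) since currents represented by forms pair against test forms by integration.

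Next I would establish the compatibility with the differentials. By definition of $\cD_K$ on currents (the Friedrichs-type extension / duality definition used throughout Section \ref{HodgeDek}), for any $\phi\in C^\infty_{(n,n-q-1),K}(X,E^\star_p)$ one has
\[
\int_X \cD_{K,E}(\wt\cI(\tau))\wedge\phi = (-1)^{\bullet}\int_X \wt\cI(\tau)\wedge D_{K,E^\star}(\phi),
\]
where the sign is dictated by \eqref{diffE3}. Unwinding the definition of $\wt\cI(\tau)$, the right-hand side equals $(-1)^{\bullet}\int_X \tau\wedge\iota\big(D_{K,E^\star}(\phi)\big)$. Now I would invoke point (1) of Proposition \ref{listprop}, namely $D_K\circ\iota = \iota\circ D_{K,E^\star}$, to rewrite this as $(-1)^{\bullet}\int_X \tau\wedge D_K(\iota(\phi))$. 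Finally, by the duality defining $\cD_K$ on $\cD_{\bullet,K}(X,L)$, $(-1)^{\bullet}\int_X\tau\wedge D_K(\iota(\phi)) = \int_X \cD_K(\tau)\wedge\iota(\phi) = \int_X \wt\cI(\cD_K(\tau))\wedge\phi$, the last equality again being the defining property of $\wt\cI$ applied to the current $\cD_K(\tau)$. Since $\phi$ is arbitrary, $\cD_{K,E}\circ\wt\cI(\tau)=\wt\cI\circ\cD_K(\tau)$.

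The bookkeeping I expect to be the main obstacle is keeping the sign conventions and degree shifts coherent: the operator $D_{K,E^\star}$ lowers degree and carries the sign $(-1)^{q+1}$ in \eqref{diffE3}, the differential on currents is defined by transposition against test forms, and the identification $\cI$ reshuffles a $(p,q)$-form into a $(0,q)$-form with values in $E_p$, all of which must be tracked simultaneously. A secondary, more technical point is the verification that $\iota$ and the iterated covariant derivatives interact well, i.e.\ that $\iota(D_{K,E^\star}\phi)$ and more generally $\nabla^r\iota(\phi)$ remain $L^2$ against $d\mu_a$; this is not deep but relies on the logarithmic-growth estimates for the coefficients $a_{\beta\ol\alpha}, b_{\beta\ol\alpha}$ and their inverses from Lemma \ref{lemend1} and Lemma \ref{lemend3}, exactly as in the proof that $\sharp$ preserves $C^\infty_K$. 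Once continuity is in hand, the identity $\cD_{K,E}\circ\wt\cI=\wt\cI\circ\cD_K$ is a formal consequence of Proposition \ref{listprop}(1) and the definitions, so the substance of the argument is the well-definedness in the first paragraph.
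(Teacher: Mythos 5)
Your proposal is correct and follows essentially the same route as the paper: establish continuity of $\wt\cI(\tau)$ via the metric comparison $g_D \geq C\,\omega_D$ on $T_X\langle\Delta_1\rangle$ (you route this through Lemma~\ref{lemend3} and the logarithmic-growth machinery, while the paper states the pointwise bound $|u(v)|_{\omega_D}\leq C|v|_{\omega_D}|u|_{g_D}$ directly for $r=0$ and defers $r\geq 1$ to the coordinates of Section~\ref{Hodsmo}, but both amount to the same estimate \eqref{conj3}), then deduce $\cD_{K,E}\circ\wt\cI=\wt\cI\circ\cD_K$ formally from the duality definitions of the current differentials together with Proposition~\ref{listprop}(1). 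The paper compresses that second step into one sentence; your explicit chain of identities is a faithful unpacking of it.
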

	
	\begin{proof}
		It is clear that $\wt \cI(\tau)$ is linear. The continuity follows from inequalities of type
		\begin{equation}\label{conj3}
			\sum_{r=0}^N\Vert \nabla^r \iota(\phi)\Vert^2_{a}\leq C(N, a)\sum_{r=0}^N\Vert \nabla^r \phi\Vert^2_{a}	
		\end{equation}
	In fact, when $r=0$, by the construction of $g_D$, there exists some constant $C>0$ such that
	$$ |e|_{g_D} \geq C |e|_{\omega_D} \qquad\text{for every } e \in T_X \langle \Delta_1 \rangle .$$
As a consequence, for any $u\in \Lambda^p T_X \langle \Delta_1 \rangle$ and $v\in K_X$, we have
	$$|u (v)|_{\omega_D} \leq C |v|_{\omega_D} \cdot |u|_{g_D} . $$
We obtain thus \eqref{conj3} for $r=0$. If $r\geq 1$, this is immediately verified by using the coordinates as in Section \ref{Hodsmo}, so we don't provide any further details.
		
The last part of our statement is a direct consequence of Proposition \ref{listprop}.
		\end{proof}
		
\begin{remark}
Note that the right-hand side of \eqref{conj3} cannot be controlled by the left-hand side of \eqref{conj3}. As a matter of fact, we will show in \Cref{vanish} that $\wt \cI$ is not injective, so the inverse of $\wt \cI$ does not exist. 
\end{remark}
\medskip

\noindent Before proving the regularity result Proposition \ref{regularprop}, we need a few auxiliary statements. The first of them states as follows.

\begin{proposition}\label{cumm}
We consider the map
$$\rho: \oplus_{p+q=\bullet }\cC^\infty _{(0,q) , K} (X, E_p) \to \oplus_{p+q=\bullet }\cD^\infty _{(0,q), K} (X, E_p),$$
as in \eqref{diffE7}, 
so that $\rho(s)$ is the current associated to a section $s\in \oplus_{p+q=\bullet }\cC^\infty _{(0,q) , K} (X, E_p)$.
Let $s\in \cC^\infty _{(0,q) , K} (X, E_p)$. If $\rho (s)$ is $\cD_{K, E}$-exact, then $s$ is $D_{K, E}$-exact.

In particular, let $u \in C^\infty _{\bullet, K}(X, \Omega^\bullet _X (\log \Delta_1)\otimes L)$. If $\wt \cI \circ T_u$ is $\cD_{K, E}$-exact, then there exists a $v \in C^\infty _{\bullet -1, K}(X, \Omega^\bullet _X (\log \Delta_1)\otimes L)$ such that 
$$u  = D_K v \qquad\text{ on }  X\setminus \Delta_1 .$$
\end{proposition}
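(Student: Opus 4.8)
The strategy is to combine the De Rham--Kodaira decomposition for currents (Theorem \ref{currentdecomp}) with the duality trick of Lemma \ref{initalcase}, transported to the setting of $E_p$-valued forms. Suppose $\rho(s) = \cD_{K,E}\sigma$ for some current $\sigma \in \oplus \cD_{(0,q'),K}(X,E_{p'})$. I want to produce a genuine smooth (log) solution. By Theorem \ref{smoothhodge} (and Theorem \ref{hodgedec} applied to the bundle $\cE_\bullet$ of case \eqref{secondcases}), $s$ admits the Hodge decomposition $s = \mathcal H(s) + \Delta_K \mathcal G(s)$ with $\mathcal G(s) \in \cC^\infty_K$; writing $\Delta_K = D_K D_K^\star + D_K^\star D_K$ and using that $\rho(s)$ is $\cD_{K,E}$-exact (hence $\cD_{K,E}$-closed), the term $D_K^\star D_K \mathcal G(s)$ is automatically $D_{K,E}$-exact. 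So the whole question reduces to showing that the harmonic component $\mathcal H(s)$ vanishes as soon as $\rho(s)$ is $\cD_{K,E}$-exact.

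\textbf{The vanishing of the harmonic part.} To kill $\mathcal H := \mathcal H(s)$ I would imitate Lemma \ref{initalcase}: compute
\[
\int_X |\mathcal H|^2_{g_D,\omega_D}\,\omega_D^n = \int_X \langle s, \mathcal H\rangle\,\omega_D^n = \int_X s \wedge \sharp\,\mathcal H,
\]
the first equality because $s - \mathcal H$ is $L^2$-orthogonal to harmonic forms. Next use the pairing identities $\int_X \cI(w)\wedge t = \pm\int_X w \wedge \iota(t)$ and $\int_X \rho(s')\wedge \iota(\phi) = \int_X \wt\cI(\rho(s'))\wedge \phi$ to rewrite this integral in terms of the current $\rho(s)$ paired against $\iota(\sharp \mathcal H)$. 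Then plug in $\rho(s) = \cD_{K,E}\sigma$ and integrate by parts at the level of currents (legitimate because $\wt\cI$ intertwines $\cD_{K,E}$ with $\cD_K$, Proposition \ref{currentcI}, and because $\sharp\mathcal H$ and $\iota(\sharp\mathcal H)$ lie in the appropriate $\cC^\infty_K$ spaces so that the pairing and Stokes are valid against the topology of Definition \ref{currentspace}). One is left with $\pm\int_X \sigma \wedge D_{K,E^\star}(\sharp \mathcal H)$ (after applying Proposition \ref{listprop}(1)), and since $D_K^\star \mathcal H = 0$ we get $D_{K,E^\star}(\sharp\mathcal H) = 0$ by Proposition \ref{listprop}(2), forcing the integral to be $0$. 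Hence $\mathcal H = 0$.

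\textbf{From $E_p$-forms back to log forms.} For the ``in particular'' clause, apply the first part to each component $s = \cI(u)$, where $u \in C^\infty_{\bullet,K}(X,\Omega^\bullet_X(\log\Delta_1)\otimes L)$ and $\wt\cI(T_u) = \rho(\cI(u))$ (this identification is exactly how $\wt\cI$ and $\cI$ are set up, via $\int_X \wt\cI(T_u)\wedge\phi = \int_X T_u \wedge \iota(\phi) = \int_X u \wedge \iota(\phi) = \pm\int_X \cI(u)\wedge\phi$). So $\rho(\cI(u))$ being $\cD_{K,E}$-exact gives $\cI(u) = D_{K,E}(\cI(v))$ for some $v \in \cC^\infty_K$; using the defining relation $D_{K,E}\circ\cI = \cI\circ D_K$ and that $\cI$ is injective, we conclude $u = D_K v$ on $X\setminus\Delta_1$, and $v$ is a genuine log-smooth form by Theorem \ref{smoothhodge} (or Lemma \ref{locreg}).

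\textbf{Expected main obstacle.} The delicate point is the integration-by-parts step at the level of currents: one must be sure that $\iota(\sharp\mathcal H)$ is an admissible test form (i.e., lies in $\cC^\infty_{\bullet,K}(X,L^\star)$, which follows from the lemma immediately preceding this proposition together with Proposition \ref{propend1}), and that the identities $\int \cI(w)\wedge t = \pm\int w\wedge\iota(t)$ extend from smooth compactly supported forms to the $\cC^\infty_K$ classes. This requires the density statement Lemma \ref{lemend5} and the continuity estimate \eqref{conj3} of Proposition \ref{currentcI}, plus care that all the boundary terms near $\Delta_1\cup\Delta_2\cup\Delta_3$ genuinely vanish — which is precisely where the choice of $\omega_D$ with Poincaré/conic singularities and the finiteness guaranteed by Theorem \ref{logpole} are used. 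Everything else is a routine transcription of Lemma \ref{initalcase}.
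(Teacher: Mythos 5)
Your overall strategy (Hodge decomposition of $s$, kill the harmonic part by pairing against $\sharp\mathcal H$, then invoke the $\cD_{K,E}$-exactness of $\rho(s)$) is exactly the paper's, and the "in particular" reduction via $\cI^{-1}$ is also the same. But the middle computation as written has a type mismatch that would stop a literal transcription of Lemma \ref{initalcase}.

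The problem is in the step where you propose to "rewrite this integral in terms of the current $\rho(s)$ paired against $\iota(\sharp\mathcal H)$." The current $\rho(s)$ lives in $\oplus\cD_{(0,q),K}(X,E_p)$, i.e.\ it eats $E_p^\star$-valued test forms from $\cC^\infty_{(n,n-q),K}(X,E_p^\star)$; the form $\iota(\sharp\mathcal H)$ is $L^\star$-valued of type $(n-p,n-q)$, so that pairing is not defined. The detour through $\iota$ and $\wt\cI$ is what Lemma \ref{initalcase} and Proposition \ref{regularprop} need because \emph{there} one starts with an $L$-valued current $T$, and $\iota$ is the bridge from the $E_p^\star$-world to the $L^\star$-world. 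In the first statement of Proposition \ref{cumm} everything already lives in the $E_p$-world, so no bridge is needed: after $\int_X|\mathcal H|^2 = \int_X s\wedge\sharp\mathcal H = \int_X\rho(s)\wedge\sharp\mathcal H$, you simply substitute $\rho(s)=\cD_{K,E}\sigma$, integrate by parts at the current level, and use that $\Delta_{K,E}\mathcal H=0$ implies $D_{K,E^\star}(\sharp\mathcal H)=0$ — this is Proposition \ref{listprop}\emph{(2)}, not (1) as you cite. The integral then vanishes and $\mathcal H=0$. (Also, a small inaccuracy: once you know $D_{K,E}s=0$, the term $D_{K,E}^\star D_{K,E}\mathcal G(s)$ in the Hodge decomposition is not merely "$D_{K,E}$-exact" — it is zero, because $D_{K,E}$ commutes with $\mathcal G$; so $s-\mathcal H = D_{K,E}\bigl(D_{K,E}^\star\mathcal G(s)\bigr)$ directly.)

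Once you drop the $\iota$ detour and swap (1) for (2), your argument coincides with the paper's.
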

			
			\begin{proof}
Note first that since $\rho (s)$ is $\cD_{K, E}$-exact, the equality 
\[D_{K, E} s= 0\]
follows. The Laplace operator $\Delta_{K, E}= [D_{K, E}, D_{K, E}^\star]$ is elliptic (where the adjoint is computed with respect to the usual metrics) and we have already seen that the Hodge decomposition holds.
We can thus write
				$$s= \mathcal{H} + D_{K, E} v$$
				where $\mathcal{H}$ is $\Delta_{K, E}$-harmonic and  $v\in \oplus_{p+q=\bullet }\cC^\infty _{(0,q) , K} (X, E_p)$. We show next that the harmonic part $\mathcal{H}$ is equal to zero.
				
\noindent To this end, remark that the equalities
\begin{align}				
\int_X \langle \mathcal{H}, \mathcal{H} \rangle_{g_D, h_L} \omega^n_D = & \int_X \langle s, \mathcal{H} \rangle_{g_D, h_L} \omega^n_D = \int_X s \wedge \sharp \mathcal{H} \nonumber \\
= & \int_X \rho (s) \wedge \sharp \mathcal{H} 
\nonumber
\end{align}
hold. Since $\Delta_{K, E}  \mathcal{H}=0$, by $(2)$ of \Cref{listprop} the form $\sharp \mathcal{H} $ is $D_{K, E^\star}$-closed. Together with the fact that $\rho (s)$ is $\cD_{K, E}$-exact, we obtain
				$$\int_X \rho (s) \wedge \sharp \mathcal{H}  =0 .$$
				Therefore $\mathcal{H}=0$ and $s$ is $D_{K, E}$-exact.
				
				\medskip
				
				For the second part, note that we have the equality $\rho \circ \cI (u) = \wt \cI \circ T_u$. Then $\rho \circ \cI (u)$ is $\cD_{K, E}$-exact. 
				By the first part, we know that $ \cI (u)$ is $D_{K, E}$-exact, namely
				$$ \cI (u) = D_{K, E} f$$
				for some $f \in \oplus_{p+q=\bullet }\cC^\infty _{(0,q) , K} (X, E_p)$.  Set $v:= \cI^{-1} (f)$. Then we have
				$$u  = D_K v \qquad\text{ on }  X\setminus \Delta_1,$$
which concludes the proof.				
				\end{proof}
\smallskip
			
\noindent The following simple remark will equally be useful. 
\begin{proposition}\label{vanish}
				Let $\tau \in \cD_{\bullet, K} (X, L)$ be a $L$-valued current of order zero. Assume moreover that the support of $\tau$ is contained in $\Delta_1$. Then we have 
				$$\cI (\tau)=0 \in \oplus_{p+q =\bullet }\cD_{ (0,q), K}(X, E_p) .$$
				\end{proposition}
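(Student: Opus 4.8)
The plan is to show that $\cI(\tau)$, which by definition is an element of the dual of $\oplus_{p+q=\bullet}\cC^\infty_{(n,n-q),K}(X,E^\star_p)$, annihilates every test form. So fix $\phi \in \cC^\infty_{(n,n-q),K}(X,E^\star_p)$; by the defining formula in \Cref{currentcI} we have
\[\int_X \cI(\tau)\wedge \phi = \int_X \tau \wedge \iota(\phi),\]
where $\iota(\phi)\in \cC^\infty_{(n-p,n-q),K}(X,L^\star)$ is obtained by contracting $\phi$ with the logarithmic vector fields. The key structural fact is the one recorded in the proof of \Cref{initalcase} (the first relation of \eqref{reg3}, due to \cite{LRW}): the contraction $\iota$ kills the logarithmic poles, i.e. $\iota(\phi)|_{Y_i} = 0$ for every component $Y_i$ of $\Delta_1$. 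In other words, although $\phi$ has log poles along $\Delta_1$ coming from the bundle $\Lambda^pT_X\langle\Delta_1\rangle$, after contracting we obtain a genuine $L^\star$-valued form that vanishes along each $Y_i$.

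Now I would invoke the hypothesis that $\tau$ is a current of order zero supported in $\Delta_1$. By definition (cf. the discussion before and the way such currents appear in \Cref{initiald} and \Cref{solucurrent}), a current of order zero supported in $\Delta_1$ pairs to zero against any test form restricting to zero on $\Delta_1$ — this is exactly the local analogue of the statement that a measure supported on a hypersurface annihilates continuous functions vanishing there, and it is the property singled out in \eqref{eqcond}. Since $\iota(\phi)|_{\Delta_1}=0$, we conclude $\int_X \tau\wedge \iota(\phi) = 0$, and therefore $\int_X \cI(\tau)\wedge\phi = 0$. As $\phi$ was arbitrary, $\cI(\tau) = 0$ as an element of $\oplus_{p+q=\bullet}\cD_{(0,q),K}(X,E_p)$.

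The only genuinely delicate point is to make precise what "order zero, supported in $\Delta_1$" means for a current in the class $\mathscr D_K(X,L)$ of \Cref{currentspace}, whose continuity is measured against the Sobolev-type seminorms $\sum_r \|\nabla^r\cdot\|_{L^2,a}$ rather than the usual $C^0$-seminorms; and to check that the vanishing $\iota(\phi)|_{\Delta_1}=0$ combined with the smoothness of $\iota(\phi)$ in the sense of $\cC^\infty_{\bullet,K}$ is enough to force the pairing to vanish. Here one argues exactly as in the Remark following \Cref{lemend5}: one approximates $\iota(\phi)$ on $X\setminus\Delta_1$, or rather one uses that near a component $Y_i=(w_i=0)$ the vanishing $\iota(\phi)|_{Y_i}=0$ lets one write $\iota(\phi)$ with a factor of $w_i$ (or $\bar w_i$), which together with the order-zero hypothesis on $\tau$ and a cutoff/limit argument kills the pairing. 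This is the step I expect to require the most care, but it is entirely parallel to the arguments already carried out in \Cref{initiald} and in the Remark after \Cref{lemend5}, so no new ideas are needed.
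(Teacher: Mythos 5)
Your proof is correct and takes essentially the same route as the paper's: both pair $\cI(\tau)$ against a test form $\phi$, reduce to $\int_X \tau\wedge\iota(\phi)$ via \Cref{currentcI}, and conclude from the fact that $\iota(\phi)$ vanishes on $\Delta_1$ while $\tau$ is of order zero with support in $\Delta_1$. The paper's proof is in fact shorter than yours: it treats ``order zero supported in $\Delta$'' as definitionally meaning that $\tau$ annihilates test forms restricting to zero on $\Delta$ (this is precisely how the notion is introduced after \eqref{eqcond} in \Cref{initalcase}), so the vanishing is immediate and no cutoff or approximation argument of the kind you sketch in your last paragraph is needed.
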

				
				\begin{proof}
					Let $\phi \in C^\infty _{(n,q) ,K} (X, E^\star_p )$. By definition we have
					$$\int_X \cI (\tau) \wedge \phi = \int_X \tau \wedge \iota (\phi)  =0,$$
					where the last equality is a consequence of the fact that $\iota (\phi)$ vanishes on $\Delta_1$.
					 We deduce that $\cI (\tau)=0$.
					\end{proof}
\medskip

\noindent Finally, we establish the following version of Lemma \ref{initalcase} in our setting.

\begin{proposition}\label{regularprop}
	Let $u_X \in C^\infty _{K}(X, \Omega^\bullet _X (\log \Delta_1)\otimes L)$ be an $L$-valued, smooth form with log poles along $\Delta_1$. We suppose that there exists a current $T$ on $X$ with the property that 
	\begin{equation}\label{reg1}
		\cD_K T= T_{u_X}+ T_{\Delta_1}
	\end{equation}
	where $T_{\Delta_1}$ is a current with support in $\Delta_1$ of order $0$. 
	Then there exists a form $v\in C^\infty _{K}(X, \Omega^\bullet _X (\log \Delta_1)\otimes L)$ such that the equality
	\begin{equation}\label{reg2}
		 u_X =D_K v 	\qquad\text{ on } X\setminus \Delta_1.
	\end{equation}
\end{proposition}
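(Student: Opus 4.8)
The strategy is to reduce Proposition~\ref{regularprop} to Proposition~\ref{cumm} by converting the current-level equation \eqref{reg1} into a statement about $\cD_{K,E}$-exactness of $\wt\cI(T_{u_X})$. First I would apply the morphism $\wt\cI$ of \eqref{diffE9} to both sides of \eqref{reg1}. By the last part of Proposition~\ref{currentcI} we have $\cD_{K,E}\circ\wt\cI(T)=\wt\cI\circ\cD_K(T)=\wt\cI(T_{u_X})+\wt\cI(T_{\Delta_1})$. Since $T_{\Delta_1}$ is a current of order zero supported in $\Delta_1$, Proposition~\ref{vanish} gives $\wt\cI(T_{\Delta_1})=0$ (strictly speaking one should note $\wt\cI$ restricted to such currents agrees with the map $\cI$ of Proposition~\ref{vanish}, since both are defined by pairing against $\iota(\phi)$, which vanishes on $\Delta_1$). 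Hence $\cD_{K,E}\circ\wt\cI(T)=\wt\cI(T_{u_X})$, which shows that $\wt\cI(T_{u_X})=\rho\circ\cI(u_X)$ is $\cD_{K,E}$-exact.

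Next I would invoke the second part of Proposition~\ref{cumm} directly: since $\rho\circ\cI(u_X)=\wt\cI\circ T_{u_X}$ is $\cD_{K,E}$-exact, there exists $v\in C^\infty_{\bullet-1,K}(X,\Omega^\bullet_X(\log\Delta_1)\otimes L)$ with $u_X=D_Kv$ on $X\setminus\Delta_1$, which is exactly \eqref{reg2}. So at the formal level the proposition is an immediate corollary of the machinery already assembled (Propositions~\ref{currentcI},~\ref{vanish},~\ref{cumm}), and the role of Proposition~\ref{regularprop} is mainly to package these together into the form needed in Section~\ref{finshproof}.

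\textbf{The main obstacle.} The delicate point is verifying the compatibility of the identification $\wt\cI$ with $\cD_K$ in the presence of residue terms. Concretely, when $T=T_w$ is the current attached to a log-smooth form $w$, we have $\cD_K T_w=T_{D_Kw}+\Res_{\Delta_1}w$ (the residue current), so the equation \eqref{reg1} genuinely carries a nonzero $\Delta_1$-supported piece, and one must be careful that applying $\wt\cI$ really does kill \emph{all} of it, i.e. that Proposition~\ref{vanish} applies to $T_{\Delta_1}$ and not merely to the residue of a smooth form. Since $T_{\Delta_1}$ is assumed to be of order zero with support in $\Delta_1$, pairing it against $\iota(\phi)$ with $\phi\in C^\infty_{(n,q),K}(X,E^\star_p)$ gives zero because $\iota(\phi)|_{\Delta_1}=0$ by the log-contraction identity (the analogue of \eqref{reg3} for $T_X\langle\Delta_1\rangle$); this is the one place requiring genuine care, and it is where the cylindrical structure of $g_D$ (which forces $\iota(\phi)$ to vanish along $\Delta_1$) is used. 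Beyond this, one should also record that $v$ obtained from $\cI^{-1}(f)$ indeed lies in the right space, which is automatic since $\cI$ is an isomorphism of sheaves of log-smooth forms by construction.

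\textbf{Remark on presentation.} In writing this up I would phrase it as: apply $\wt\cI$ to \eqref{reg1}, use Proposition~\ref{currentcI} to move $\cD_K$ past $\wt\cI$, use Proposition~\ref{vanish} to discard the boundary term, and conclude by the second statement of Proposition~\ref{cumm}. No further analysis is needed, since the elliptic regularity and Hodge-theoretic content has already been absorbed into Proposition~\ref{cumm} (via Theorem~\ref{smoothhodge} and the Hodge decomposition for $\Delta_{K,E}$). The proof is therefore short; the substance lies entirely in the earlier subsections.
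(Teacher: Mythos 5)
Your proof is correct and follows exactly the paper's argument: apply $\wt\cI$ to \eqref{reg1}, commute it past $\cD_K$ via Proposition~\ref{currentcI}, kill the $\Delta_1$-supported term with Proposition~\ref{vanish}, and conclude from the second part of Proposition~\ref{cumm}. Your additional remarks on why $\wt\cI(T_{\Delta_1})=0$ are accurate but merely unpack what Proposition~\ref{vanish} already established.
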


\begin{proof}
We have $\displaystyle T_{u_X} \in \cD_{\bullet, K} (X, L)$ -cf. \eqref{diffE7}- and via the map $\wt \cI$ in \eqref{diffE9} we have
$$ \wt \cI \circ T_{u_X} \in \oplus_{p+q =\bullet }\cD_{ (0,q), K}(X, E_p).$$ 
On the other hand, by applying $\wt \cI$ in \eqref{reg1}, we see that the equality 
$$\wt \cI \circ \cD_K T= \wt \cI \circ T_{u_X} + \wt \cI (T_{\Delta_1})$$
holds.
Thanks to the second part of \Cref{currentcI} and \Cref{vanish}, we obtain
$$ \cD_{K, E} \circ \wt \cI (T) = \wt \cI \circ T_{u_X}.$$
Therefore $\wt\cI \circ T_{u_X}$ is $\cD_{K, E}$-exact in the sense of currents.  
 Thanks to the second part of \Cref{cumm}, we obtain that
	$$u_X = D_K (v )  \qquad\text{ on } X\setminus \Delta_1$$
	for some $v\in C^\infty _{K}(X, \Omega^\bullet _X (\log \Delta_1)\otimes L)$.
The proof is finished.
\end{proof}



\subsection{End of the proof of Theorem \ref{solveequs}}\label{finshproof} 
In order to make the presentation more fluid, we will next use the following terminology. Given a smooth form $u\in C^\infty _K(X, \Delta_1, L)$, we say that it is $D_K$-closed if $D_K u =0$ on $X\setminus \Delta_1$. 
Similarly, we say 
that $u$ is $D_K$-exact (resp. $D^c _K$-exact, $D_K D^c _K$-exact) if $u= D_K v$ (resp. $u= D^c _K v$, $u= D_K D^c _K v$) on  $X\setminus \Delta_1$ for some $v\in C^\infty _K(X, \Delta_1, L)$.
\smallskip

\noindent We first \emph{convert} the currents obtained at Section
\ref{currentsol} into log smooth forms, disregarding the convergence issues.  
Let $ \wh u_0$ and $\{u_k\}_{k=1}^\infty \subset \cD_K (X, L)$ be the currents constructed in \Cref{solucurrent} so that we have 
$$\cD_K u_1= \wh u_0 \wedge \alpha + T_0$$
and for $k \geq 1$
\begin{equation}\label{conj0}
\cD_K u_k = u_{k-1} \wedge \alpha + T_k
\end{equation}
(up to a sign), where the support of $\{T_k\} \subset  \cD_K (X, L)$ are currents supported in $\Delta_1$ of order $0$. 
Recall that by \Cref{currentcI}, we have a natural morphism
	\[
\wt \cI : \cD_{\bullet, K}(X, L) \to \cD_{\bullet, K}(X, E_\bullet).
\]
\smallskip

\noindent We claim that there exist a set of $E_\bullet$-valued currents
	$\{v_k\}_{k=0}^\infty \subset \cD_{\bullet, K}(X, E_\bullet)$ such that the form
	\begin{equation}\label{indution1}
	s_0:= \wt\cI (\wh u_0) + \cD_{K,E} v_0  \in \oplus_{p+q =\bullet }C^\infty _{(0,q), K} (X, E_p) = \cI\big(C^\infty _{\bullet ,K} (X, \Delta_1, L)\big)
	\end{equation}
	as well as   
	\begin{equation}\label{indution2}
	s_k:= \wt\cI (u_k) + \cD_{K, E} v_k  +  v_{k-1}\wedge \alpha \in \oplus_{p+q =\bullet }C^\infty _{(0,q), K} (X, E_p)  = \cI\big(
	C^\infty _{\bullet ,K}(X, \Delta_1, L)\big),
	\end{equation}
	where $k\geq 1$.
\smallskip
	
\noindent Before providing the arguments for this claim, note that these assumptions imply that $\{s_k\}$ solve the equations \eqref{simp4611}. Indeed, we have the string of equalities
\begin{align}\nonumber
D_K s_k =  & \cD_K (u_k + \cD_k v_k + v_{k-1}\wedge \alpha )|_{X\setminus \Delta_1}\nonumber \\
= & (\cD_K (u_k) + \cD_K v_{k-1}\wedge \alpha)  |_{X\setminus \Delta_1}\nonumber \\ = & (u_{k-1} \wedge \alpha +\cD_K v_{k-1}\wedge \alpha ) |_{X\setminus \Delta_1} \nonumber \\
= & ((u_{k-1} + \cD_K v_{k-1} +v_{k-2} \wedge \alpha) \wedge \alpha)  |_{X\setminus \Delta_1} \nonumber \\= &
s_{k-1} \wedge \alpha \nonumber
\end{align}
as required. Next we go ahead and prove the claim by induction.

\begin{proof} For $k=0$, we just take the $v_0$ to be the $\wt \cI (v_0)$ (after changing sign) for the $v_0$ in \Cref{initiald}. Note that by \Cref{vanish}, $\cI (u_{\Delta_1})=0$. In other words, $s_0$ equals to the original log smooth form $u_0$.
	
We assume that $v_0, \cdots, v_k$ have been constructed and satisfy \eqref{indution2}. To obtain $v_{k+1}$, we use the equality \eqref{conj0}. Together with \Cref{currentcI} and \Cref{vanish}, this implies that the equality 
	\begin{equation}\label{addeqq}
		\cD_{K,E} (\wt\cI (u_{k+1}) +  v_k \wedge \alpha ) = (\wt \cI (u_k) +  \cD_{K,E} v_k  + v_{k-1} \wedge \alpha) \wedge  \alpha =s_k \wedge \alpha
	\end{equation}
holds. Then $s_k \wedge \alpha$ is $\cD_{K,E}$-exact in the sense of currents. The induction hypothesis says that $s_k \in \oplus_{p+q= \bullet }C^\infty _{(0,q), K} (X, E_p)$ and the second part of \Cref{cumm}, implies the existence of some form 
$f  \in  \oplus_{p+q= \bullet -1 }C^\infty _{(0,q), K} (X, E_p)$  such that  
\begin{equation}
	\cD_{K,E} f =s_k \wedge \alpha. 
\end{equation}
This equality and \eqref{addeqq} imply that $\wt\cI (u_{k+1}) +  v_k \wedge \alpha -f $ is $\cD_{K, E}$-closed on $X$, hence $\cD_{K, E}$-exact up to a smooth form, as it follows from Hodge decomposition of this current. 
In particular, there exists a current $v_{k+1} \in  \cD_{\bullet, K}(X, E_\bullet)$ and a smooth form $f_1$ such that
$$\wt\cI (u_{k+1}) +  v_k \wedge \alpha -f_1 = -  \cD_{K,E} v_{k+1} $$
and thus the equality
$$ \cI (u_{k+1}) +  \cD_{K,E} v_{k+1} +  v_k \wedge \alpha = f_1  \in  \oplus_{p+q= \bullet }C^\infty _{(0,q), K} (X, E_p)$$
follows. Our claim is thus proved.
\end{proof}
\medskip

\noindent In this setting, we propose the following problem.
\begin{conjecture}
	Let $u\in C^\infty _K(X, \Delta_1, L)$ such that $u$ is $D_K$-closed and $D^c _K$-exact. 
	Then $u$ is $D_KD_K^c$-exact if and only if $\Res_{Y_I} u$ is $D_K$-exact on $Y_I$ for every index $I$, where $Y_I :=\cap_{i\in I} Y_i $ and $\Delta_1 =\sum Y_i$.
\end{conjecture}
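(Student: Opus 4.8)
The plan is to treat the two implications separately, the substantive one being ``$\Leftarrow$''.

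First I would establish ``$\Rightarrow$''. By \Cref{property}, $\theta_0$ has vanishing residue along $\Delta_1$ and $h_L$ is smooth at a generic point of each component of $\Delta_1$, so neither $D_K$ nor $D_K^c$ has a pole along $\Delta_1$. Hence, for a log form $w$ along $\Delta_1$ and a component $Y_i$ of $\Delta_1$, one has $\Res_{Y_i}(D_Kw)=-D_K(\Res_{Y_i}w)$ and $\Res_{Y_i}(D_K^cw)=-D_K^c(\Res_{Y_i}w)$ on $Y_i$; iterating and using the skew-symmetry of iterated residues gives $\Res_{Y_I}(D_KD_K^cv)=\pm\,D_KD_K^c(\Res_{Y_I}v)$ on $Y_I$, where $\Res_{Y_I}v\in C^\infty_{\bullet,K}(Y_I,\Delta_{I;1},L)$ by \Cref{lem:residue}. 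Thus if $u=D_KD_K^cv$ with $v\in C^\infty_{\bullet,K}(X,\Delta_1,L)$, then $\Res_{Y_I}u$ is $D_KD_K^c$-exact, in particular $D_K$-exact, on $Y_I$ for every $I$.

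For the converse, I would argue by induction on the number of components of $\Delta_1$, proving the statement simultaneously for all the pairs $(Y_I,\Delta_{I;1},\iota_I^*(L,\nabla),\iota_I^*h)$, which again satisfy the running hypotheses (see \Cref{subsec:main}). The base case $\Delta_1=\varnothing$ is the $\ddbar$-lemma \Cref{ddbar1}, since then $\Delta_K=\Delta_K^c$ and $u$ is $D_K$-closed and $D_K^c$-exact. For the inductive step, write $u=D_K^cw$ with $w\in C^\infty_{k-1,K}(X,\Delta_1,L)$, where $k$ is the degree of $u$. For each $i$, \Cref{lem:residue} gives $\Res_{Y_i}u\in C^\infty_{k-1,K}(Y_i,\Delta_{i;1},L)$; it is $D_K$-closed on $Y_i$ (because $D_K\Res_{Y_i}u=\pm\Res_{Y_i}(D_Ku)=0$), it is $D_K^c$-exact on $Y_i$ (with primitive $-\Res_{Y_i}w$), and its iterated residues $\Res_{Y_{I'}}(\Res_{Y_i}u)=\pm\Res_{Y_{\{i\}\cup I'}}u$ are $D_K$-exact by hypothesis. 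The inductive hypothesis on $Y_i$ therefore provides $\sigma_i\in C^\infty_{k-3,K}(Y_i,\Delta_{i;1},L)$ with $\Res_{Y_i}u=D_KD_K^c\sigma_i$; in the same way one gets $\sigma_I$ with $\Res_{Y_I}u=D_KD_K^c\sigma_I$ on every stratum $Y_I$.

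The remaining task is to glue the $\sigma_I$ into a single $v\in C^\infty_{k-2,K}(X,\Delta_1,L)$ with $u=D_KD_K^cv$ on $X\setminus\Delta_1$, and here I would follow the scheme of \Cref{initiald} and \Cref{solucurrent}. Passing to the current $T_u\in\mathscr D_K(X,L)$ of \Cref{propend3}, I would record the residue currents $\widetilde\CR_{i_1\cdots i_p}=(\iota_{i_1\cdots i_p})_*\Res_{Y_{i_1\cdots i_p}}u$, which satisfy $\cD_K\widetilde\CR_{i_1\cdots i_p}=\sum_j\widetilde\CR_{ji_1\cdots i_p}$ together with the skew-symmetry relations, and then, working from the deepest stratum upward, use the primitives $\sigma_I$ together with the De Rham--Kodaira decomposition \Cref{currentdecomp} and the $D_KD_K^c$-lemma for currents \Cref{lemend4} to write each $\widetilde\CR_I$, modulo $\operatorname{Im}\cD_K+\operatorname{Im}\cD_K^c$, as $\cD_K\cD_K^c$ of a push-forward of a current on $Y_I$ --- choosing representatives first on ordered multi-indices and imposing skew-symmetry afterwards, exactly as in the proof of \Cref{initiald}. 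This yields a current $S$ such that $\cD_K\cD_K^cS=T_u+T_{\Delta_1}$ for some current $T_{\Delta_1}$ supported on $\Delta_1$ of order $0$. Finally, the regularity argument of \Cref{regularprop}, in its $D_KD_K^c$ form --- obtained by composing the $D_K$- and $D_K^c$-versions of \Cref{cumm} and the remark after \Cref{lemend4} --- would upgrade $S$ to a smooth form $v\in C^\infty_{k-2,K}(X,\Delta_1,L)$ with $u=D_KD_K^cv$ on $X\setminus\Delta_1$. In contrast with \Cref{solveequs} there is no series to sum here, so no convergence step is needed. The hard part will be exactly these two technical points: the Čech-type bookkeeping across the strata (it is the $D_K$-exactness of all the residues $\Res_{Y_I}u$ that feeds the inductive applications of the $\ddbar$-lemma on the $Y_I$), and the passage from currents back to smooth forms with logarithmic poles along $\Delta_1$, i.e.\ establishing the $D_KD_K^c$-analogue of \Cref{regularprop}.
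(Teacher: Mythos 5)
This statement appears in the paper as a \emph{Conjecture}, not a theorem: the authors explicitly introduce it with ``we propose the following problem,'' and they only offer a short remark establishing the ``easy'' implication ($u$ being $D_KD_K^c$-exact implies the residues are $D_K$-exact). The converse direction is left open. So there is no paper proof to compare against for the substantive implication, and you should be suspicious if you find yourself ``proving'' something the authors deliberately leave as a conjecture.

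Your ``$\Rightarrow$'' argument is essentially the same as the paper's remark (they write $u = D_K v$, decompose $v$ locally over $\frac{dz_I}{z_I}$, and read off the residue), though you phrase it through a commutation identity for $\Res_{Y_I}$ and $D_KD_K^c$; both are fine.

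Your ``$\Leftarrow$'' argument has a genuine gap at the final regularity step. You propose to pass through currents, get a current $S$ with $\cD_K\cD_K^cS = T_u + T_{\Delta_1}$, and then invoke ``the $D_KD_K^c$-analogue of \Cref{regularprop}, obtained by composing the $D_K$- and $D_K^c$-versions of \Cref{cumm}.'' But no such analogue exists in the paper, and the composition does not come for free. The whole reason the paper routes through currents in the first place is that \emph{$\Delta_K \ne \Delta_K^c$ on the log bundle $\cE_k = \bigoplus\Lambda^{r,s}T_X^\star\langle\Delta_1\rangle\otimes L$}, because $g_D$ is not flat (this is stated explicitly just before \Cref{lemend4}). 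Consequently the $\ddbar$-lemma fails for $C^\infty_K(X,\Delta_1,L)$, so even if \Cref{cumm} tells you separately that something is $D_{K,E}$-exact and that something is $D_{K,E}^c$-exact, you cannot conclude $D_KD_K^c$-exactness: that inference is precisely an application of the $\ddbar$-lemma you don't have. Proposition~\ref{regularprop} delivers only $u = D_K v$ with $v\in C^\infty_K(X,\Delta_1,L)$; it gives you no control on the $D_K^c$-structure of $v$. Unless you can show independently that the primitive $v$ so produced can be corrected to be $D_K^c$-exact (which is exactly the content of the conjecture one dimension down, and already nontrivial), your induction does not close. This is the real obstruction, and it is not addressed by the Čech-type bookkeeping over the strata, which you acknowledge you have not filled in either.
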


\noindent Notice that one direction is almost immediate: the form $u$ is in particular $D_K$-exact, thus 
$$u = D_K v$$
for some $v\in C^\infty _K(X, \Delta_1, L)$. We write locally 
$v= \sum_I \frac{dz_I}{z_I} \wedge v_I$ for some $v_I \in C^\infty _K(X ,L)$.
Then 
$$u=  \sum_I \ep(I)\frac{dz_I}{z_I}\wedge D_K v_I,$$
from which we infer that $\Res_{Y_I} u$ is $D_K$-exact for any multi-index
$I$.

\noindent We show next that we can take $k= \infty$, so to speak and end the proof of the main theorem. 
First, Theorem \ref{solveequs} has the following consequence.
\begin{cor}\label{maincor1}
	In the setting of Theorem \ref{solveequs},  let $u \in  C^\infty _K (X, \Omega^\bullet _X (\log \Delta_1) \otimes L)$. If $- (\xi +\beta ) \wedge u$ is $D_K$-exact,  then there exists a form $v\in C^\infty _K (X, \log (\Delta_1))$ such that $- (\xi +\beta ) \wedge u = D_K v$ on $X\setminus D$ and $(\xi +\beta )  \wedge v$ is $D_K$-exact. 
\end{cor}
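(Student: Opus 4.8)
The statement is essentially a restatement of Theorem \ref{solveequs} applied to the very first equation of the system \eqref{simp4611}: we want to show that if the "order-one extension" hypothesis holds for $u$, then we can produce a solution $v$ of $D_K v = -(\xi+\beta)\wedge u$ which itself again satisfies the order-one extension hypothesis, namely $(\xi+\beta)\wedge v \in \im D_K$. The plan is to trace carefully through the construction of the $u_i$'s in the proof of Theorem \ref{solveequs} and observe that $v := u_1$ does the job: the construction was built so that each $u_{k}$ (for $k \ge 1$) lies in $\im \cD_K^c$ up to a smooth form, and the very next equation of the system, $D_K u_2 = -(\xi+\beta)\wedge u_1$, is exactly the assertion that $(\xi+\beta)\wedge v$ is $D_K$-exact.

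More precisely, first I would set $u_0 := u$ and check that the hypotheses of Theorem \ref{solveequs} are met: we are given that $-(\xi+\beta)\wedge u_0$ is $D_K$-exact, which is the first bullet; for the second bullet we need that every $D_K$-closed form on every stratum $Y_I$ of $\Delta_1$ extends to order one — but this is precisely the standing hypothesis we are allowed to invoke (it is the hypothesis of \Cref{solveequs}, which is in force throughout this subsection). Next I would apply Theorem \ref{solveequs} to produce forms $(u_i)_{i\ge 0} \subset C^\infty_{\bullet,K}(X,\Delta_1,L)$ solving \eqref{simp4611} with the series $\sum t^i u_i$ convergent for $|t|\ll 1$. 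Then I would simply read off: $D_K u_1 = -(\xi+\beta)\wedge u_0 = -(\xi+\beta)\wedge u$, so $v := u_1$ solves the equation in the statement; and $D_K u_2 = -(\xi+\beta)\wedge u_1 = -(\xi+\beta)\wedge v$, which shows $(\xi+\beta)\wedge v \in \im D_K$ (up to sign, which is harmless since $D_K(-u_2)$ works equally well).

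There is a subtlety I would address explicitly: the equations in \eqref{simp4611} and in Theorem \ref{solveequs} are stated "on $X\setminus D$" (or on $X\setminus\Delta_1$), using the convention fixed at the start of Subsection \ref{finshproof} that $D_K$-exactness means exactness on $X\setminus\Delta_1$ with a form in $C^\infty_{\bullet,K}(X,\Delta_1,L)$. So the conclusion "$-(\xi+\beta)\wedge u = D_K v$ on $X\setminus D$" and "$(\xi+\beta)\wedge v$ is $D_K$-exact" in \Cref{maincor1} must be understood in exactly this sense, and I would make this matching of conventions clear. I would also note, via \Cref{realsmooth}, that $(\xi+\beta)\wedge u_i$ genuinely lies in $C^\infty_{\bullet,K}(X,\Delta_1,L)$, so that the equations make sense in the stated function spaces.

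\textbf{Main obstacle.} Honestly, there is no serious obstacle here: the corollary is a direct packaging of Theorem \ref{solveequs}, and essentially all the work has already been done in Sections \ref{currentsol}--\ref{finshproof}. The only point requiring a little care is bookkeeping — making sure that the "$v_1$" in the hypothesis of \Cref{solveequs} (the order-one extension of $u_0$) and the "$u_1$" produced by the theorem are not conflated, and that one uses the output $u_1$ (not the input $v_1$) as the $v$ of the corollary, since it is $u_1$ (living in $\im D_K^c$ up to harmonic/smooth correction) that is guaranteed to extend to the next order. Writing this out is a matter of a short paragraph quoting \Cref{solveequs} and the two consecutive equations of \eqref{simp4611}.
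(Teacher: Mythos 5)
There is a gap: Theorem~\ref{solveequs} requires its initial datum $u_0$ to satisfy $D_K u_0 = 0$, and you set $u_0 := u$ without verifying this. Corollary~\ref{maincor1} does not impose $D_K u = 0$; its hypothesis, that $-(\xi+\beta)\wedge u$ is $D_K$-exact, only forces $(\xi+\beta)\wedge D_K u = 0$ (because $d(\xi+\beta)=0$ and hence $D_K\big((\xi+\beta)\wedge u\big)=-(\xi+\beta)\wedge D_K u$), which is strictly weaker than $D_K u=0$. This omission is not harmless, because the corollary is subsequently applied, via Proposition~\ref{mainpropind} iterated inside Theorem~\ref{thm:convergence}, to forms $v_k$, $k\geq 1$, satisfying $D_K v_k = -(\xi+\beta)\wedge v_{k-1}\neq 0$; so the corollary is needed precisely for forms that are not $D_K$-closed, and the blanket invocation of Theorem~\ref{solveequs} breaks down.

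The right move is to run the one-step inductive engine inside the proof of Theorem~\ref{solveequs} rather than quote its conclusion, and this makes the missing ingredient visible. Already for $\Delta_1=0$: one wants a solution $v$ of $D_K v=-(\xi+\beta)\wedge u$ that lies in $\im D_K^c$, for then $(\xi+\beta)\wedge v$ is $D_K^c$-exact and $D_K$-closed (since $D_K\big((\xi+\beta)\wedge v\big)=-(\xi+\beta)\wedge D_K v=(\xi+\beta)\wedge(\xi+\beta)\wedge u=0$), hence $D_KD_K^c$-exact by Proposition~\ref{ddbar1}. But to produce such a $v$ one corrects an arbitrary solution $v_1$ by noting that $D_K^c v_1$ is $D_K$-closed, and this requires $D_K(D_K^c v_1)=-D_K^c\big((\xi+\beta)\wedge u\big)=(\xi+\beta)\wedge D_K^c u=0$, i.e.\ $D_K^c u = 0$. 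This $D_K^c$-closedness is exactly the invariant that propagates through the iteration (cf.\ Remark~\ref{induct}, which keeps $u_m\in\im D_K^c\subset\ker D_K^c$ for $m\geq 1$). So you should either add $u\in\ker D_K^c$ to the hypotheses and record in Proposition~\ref{mainpropind} that its output can be arranged to lie in $\im D_K^c$ up to a harmonic piece, or, in the general case $\Delta_1\neq 0$, trace the analogous $\cD_K^c$-invariant and the compatible residue data through the current and regularity machinery of Sections~\ref{currentsol}--\ref{regular}.
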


\noindent A quick remark: given a form $u \in C^\infty _K (X, \Omega^\bullet _X (\log \Delta_1) \otimes L)$, we say that $u$ is $D_K$-exact, if the equality $\displaystyle u=D_K v$ holds on $X\setminus D$, where $v\in C^\infty _K \big(X, \log (\Delta_1)\big)$.
The following statement is as consequence of the results in Section \ref{HodgeDek}.

\begin{proposition}\label{estiell}
There exists a uniform constant $C$ such that given any $D_K$-exact form \[u\in C^\infty _K (X, \Omega^\bullet _X (\log \Delta_1) \otimes L),\]  there exists $v\in C^\infty _K (X, \Omega^\bullet _X (\log \Delta_1) \otimes L)$ such that the equality
$$u =D_K v $$
holds, and moreover $\|v\|_{H^1} \leq C\|u\|_{H^1}$.
\end{proposition}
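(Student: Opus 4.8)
The plan is to deduce Proposition~\ref{estiell} from the Hodge-theoretic machinery of Section~\ref{HodgeDek}, specifically from Theorem~\ref{smoothhodge}, Corollary~\ref{decopols}, Theorem~\ref{KeyDom} (or rather its consequence in the form of Theorem~\ref{logpole}) and Proposition~\ref{effectest}. The idea is to solve the equation $D_K v = u$ by the canonical (minimal-norm) solution and then control that solution's $H^1$-norm by the $H^1$-norm of $u$, using the fact that the Green operator of $\Delta_K$ is bounded in the appropriate norms.

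First I would fix $u \in C^\infty_K(X,\Omega^\bullet_X(\log\Delta_1)\otimes L)$ which is $D_K$-exact in the sense explained in the paragraph preceding the statement. By Corollary~\ref{decopols}, $u$ is $L^2$ with respect to $(g_D,\omega_D,h_L)$, and we have its Hodge decomposition $u = \mathcal H(u) + \Delta_K \mathcal G(u)$ with $\mathcal G(u) \in C^\infty_K(X,\Delta_1,L)$. Since $u = D_K w$ for some $w \in C^\infty_K(X,\Delta_1,L) \subset \dom(D_K)$, the harmonic part $\mathcal H(u)$ must vanish: indeed $\langle u, \xi\rangle = \langle D_K w, \xi\rangle = \langle w, D_K^\star \xi\rangle = 0$ for every $\xi \in \ker\Delta_K$, because harmonic forms are $D_K^\star$-closed (as $\langle D_K\xi, D_K\xi\rangle + \langle D_K^\star\xi, D_K^\star\xi\rangle = \langle\Delta_K\xi,\xi\rangle = 0$). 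Hence $u = \Delta_K\mathcal G(u) = D_K(D_K^\star\mathcal G(u)) + D_K^\star(D_K\mathcal G(u))$. A standard argument shows the second summand vanishes when $u$ is $D_K$-exact — more directly, since $\mathcal G(u)$ is orthogonal to $\ker\Delta_K$ and $u = D_K w$, one checks $D_K^\star D_K\mathcal G(u) = 0$ by taking inner product with $D_K\mathcal G(u)$ and using $\langle u, D_K\mathcal G(u)\rangle = \langle D_K w, D_K\mathcal G(u)\rangle$, then integrating by parts and using that $D_K^\star u = D_K^\star\Delta_K\mathcal G(u)$ while $\mathcal G(u) \perp \ker\Delta_K$. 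Thus $v := D_K^\star\mathcal G(u)$ solves $D_K v = u$ on $X\setminus\Delta_1$, and by Theorem~\ref{smoothhodge}, $v = D_K^\star\mathcal G(u) \in C^\infty_K(X,\Delta_1,L)$.

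Next I would establish the norm estimate. By Proposition~\ref{effectest} applied to $f = \mathcal G(u)$ (which lies in $\dom(\Delta_K)$, satisfies $\Delta_K f = u$, and is $L^2$-orthogonal to $\ker\Delta_K$), we get $\|\mathcal G(u)\|_{H^2} \le C\|u\|_{L^2} \le C\|u\|_{H^1}$. Since $v = D_K^\star\mathcal G(u)$ and $D_K^\star$ is a first-order operator in the $H^\bullet$-scale of Definition~\ref{regularity}, one has $\|v\|_{H^1} = \|D_K^\star\mathcal G(u)\|_{H^1} \le \|\mathcal G(u)\|_{H^2} \le C\|u\|_{H^1}$. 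Here I am using that the spaces $H^i$ of Definition~\ref{regularity} are defined precisely by $L^2$-boundedness of $D_K$- and $D_K^\star$-derivatives, so passing one $D_K^\star$ costs one order. This gives $\|v\|_{H^1}\le C\|u\|_{H^1}$ with $C$ depending only on $(X,\omega_D)$, $(T_X\langle\Delta_1\rangle,g_D)$ and $(L,h_L)$, which is the uniform constant claimed.

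The main obstacle I expect is the careful bookkeeping of norms: one must check that $\|u\|_{H^1}$ for a \emph{log-smooth} form $u$ (as an element of $C^\infty_K(X,\Omega^\bullet_X(\log\Delta_1)\otimes L) = \cC^\infty_K(X,\Delta_1,L)$ via the identification $\cI$) is finite and compatible with the $H^1$-norm used in Proposition~\ref{effectest}, and that all the integrations by parts above are legitimate — this is exactly where membership in $\dom(\Delta_K)$ and the density Lemma~\ref{dens} are needed, together with Theorem~\ref{logpole} to guarantee the relevant quantities $D_KD_K^\star\mathcal G(u)$, $D_K^\star D_K\mathcal G(u)$ are $L^2$. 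A secondary subtlety is verifying $D_K^\star D_K\mathcal G(u) = 0$; if a direct argument is awkward, one can instead invoke the orthogonal decomposition $\im\Delta_K \cap \ker D_K = \im D_K$ on the $H^1$-completion, which follows from the same functional-analytic setup used in the proof of Theorem~\ref{hodgedec}. Once these points are in place the estimate is immediate.
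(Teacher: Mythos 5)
Your proof is correct and takes essentially the same approach as the paper: both reduce the estimate to Proposition~\ref{effectest} via the Hodge decomposition for $\Delta_K$. The paper is slightly more direct --- it selects the minimal-norm solution $v \perp \ker D_K$, observes that then $D_K^\star v = 0$ so $\Delta_K v = D_K^\star u$, and applies Proposition~\ref{effectest} to $(f,g)=(v, D_K^\star u)$ --- whereas you apply it to $(f,g)=(\mathcal G(u), u)$; the two routes produce the same $v = D_K^\star\mathcal G(u)$ and the same conclusion.
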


\begin{proof} This is a direct consequence of   \Cref{effectest}, as we will see next. We select the solution $v$ 
of the equation above such that $v\perp \ker(D_K)$. Then we have
\[D_K^\star u= \Delta_K v,\]
and Proposition \ref{effectest} with $f:= v$ and $g:= D_K^\star u$ gives the result.
\end{proof}
\medskip	

\noindent By combining the two proposition above, we have the following statement.
\begin{proposition}\label{mainpropind}
	In the setting of Theorem \ref{solveequs},	let $f \in   C^\infty _{k,K}(X,\Delta_1,L)$. 
	If $(\xi +\beta ) \wedge f $ is $D_K$-exact, then there exists a form $g\in  C^\infty _{k,K}(X,\Delta_1,L)$ such that 
the following hold.	
\begin{itemize}
		\item $D_K g  =- (\xi +\beta ) \wedge f$ on $X\setminus D$.
		
		\item  $(\xi +\beta )  \wedge g$ is $D_K$-exact.
		
		\item $\|g\|_{H^1} \leq C\|f\|_{H^1}$ for some uniform constant $C$.  
	\end{itemize} 
	Here $C$ depends on $X$ and $(\xi+\beta)$, but is independent of $f$.
	\end{proposition}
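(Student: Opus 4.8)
The plan is to combine the two results established just before this proposition --- namely \Cref{maincor1} (which produces a solution $g$ of $D_K g = -(\xi+\beta)\wedge f$ such that $(\xi+\beta)\wedge g$ is again $D_K$-exact, so the construction can be iterated) and \Cref{estiell} (which provides the uniform a-priori bound $\|v\|_{H^1}\le C\|u\|_{H^1}$ for the $H^1$-minimal solution of a $D_K$-exact equation) --- and observe that the two can be made compatible. The only subtlety is that the solution $g$ coming from \Cref{maincor1} is not a priori the $H^1$-minimal one, so one cannot directly quote the estimate for it; the remedy is to replace $g$ by its orthogonal projection onto $(\ker D_K)^\perp$.

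First I would invoke \Cref{maincor1} to obtain some $g_0\in C^\infty_{k,K}(X,\Delta_1,L)$ with $D_K g_0 = -(\xi+\beta)\wedge f$ on $X\setminus D$ and with $(\xi+\beta)\wedge g_0$ being $D_K$-exact. Then I would let $g$ be the component of $g_0$ orthogonal to $\ker\Delta_K = \ker D_K\cap\ker D_K^\star$; by \Cref{smoothhodge} and the Hodge decomposition in \Cref{decopols}, the harmonic projection $\mathcal H(g_0)$ and the Green-operator part both lie in $C^\infty_{k,K}(X,\Delta_1,L)$, and one has $g = g_0 - \mathcal H(g_0) - D_K D_K^\star \mathcal G(g_0) + (\text{something in }\ker D_K)$; more cleanly, write $g_0 = g + h$ with $h\in\ker D_K$ (using that $\ker D_K \supset \ker\Delta_K$ and the decomposition of $\ker D_K$), so $D_K g = D_K g_0 = -(\xi+\beta)\wedge f$ still holds, and $g\perp\ker D_K$ implies $D_K^\star g = 0$. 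Since $(\xi+\beta)\wedge g = (\xi+\beta)\wedge g_0 - (\xi+\beta)\wedge h$ and both terms are $D_K$-exact --- the first by the property of $g_0$, the second because $h\in\ker D_K$ is a $D_K$-closed log form of degree $k$ and by the "extends to order one" hypothesis (applied to each stratum and to $X$ itself, exactly as in the hypotheses of \Cref{solveequs}) $(\xi+\beta)\wedge h$ is $D_K$-exact --- the second bullet point is preserved.

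For the norm estimate, now that $g\perp\ker D_K$ satisfies $D_K^\star g = 0$, the equation $D_K g = u := -(\xi+\beta)\wedge f$ gives $\Delta_K g = D_K^\star D_K g = D_K^\star u$, and $g$ is orthogonal to $\ker\Delta_K$; by \Cref{effectest} we get $\|g\|_{H^2}\le C\|D_K^\star u\|_{L^2}\le C\|u\|_{H^1}$, hence in particular $\|g\|_{H^1}\le C\|u\|_{H^1}$. Finally $\|u\|_{H^1} = \|(\xi+\beta)\wedge f\|_{H^1}\le C'\|f\|_{H^1}$, where the constant $C'$ depends only on $X$, the metrics $\omega_D,g_D,h_L$ and the fixed form $\xi+\beta$ (using \Cref{realsmooth} and \Cref{propend1}, which guarantee that wedging by $\xi+\beta$ is a bounded operator on these spaces --- the coefficients of $\beta$ have logarithmic growth along $\Delta_2$ and are smooth along $\Delta_1+\Delta_3$, and $\xi$ is genuinely smooth). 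Composing the two inequalities yields $\|g\|_{H^1}\le C\|f\|_{H^1}$ with $C$ independent of $f$, which is the third bullet point.

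The main obstacle I anticipate is purely bookkeeping: making sure that the replacement $g_0\rightsquigarrow g$ does not destroy the "$(\xi+\beta)\wedge g$ is $D_K$-exact" property, i.e. controlling the correction term $h\in\ker D_K$. This requires knowing that $(\xi+\beta)\wedge h$ is $D_K$-exact for \emph{every} $D_K$-closed log form $h$, not just for the specific $u_0$ in \Cref{solveequs}; but this is precisely what the hypotheses of \Cref{solveequs} (extension to order one on $X$ and on every stratum $Y_I$) guarantee, via the residue argument used in the proof of \Cref{solucurrent} --- decompose $h$ by its residues, apply the first-order extension hypothesis stratum by stratum, and reassemble. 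Once that is in place the rest is a routine application of the elliptic estimates of \Cref{s-tech-I}--\ref{HodgeDek} already quoted above.
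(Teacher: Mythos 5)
Your overall architecture is sound (use \Cref{maincor1} for solvability and the second bullet, use \Cref{estiell}/\Cref{effectest} for the norm bound), but there is a genuine gap at the precise point you flag as a potential obstacle, and the fix you propose does not close it.

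The problem: you replace $g_0$ by its projection $g$ onto $(\ker D_K)^\perp$, and then need $(\xi+\beta)\wedge h$ to be $D_K$-exact where $h=g_0-g\in\ker D_K$. After absorbing the $D_K$-exact part of $h$, what remains is the harmonic component $\mathcal H(g_0)$, and you need $(\xi+\beta)\wedge \mathcal H(g_0)$ to be $D_K$-exact. You assert this holds for \emph{every} $D_K$-closed log form because of "extension to order one on $X$ itself." But the hypotheses of \Cref{solveequs} guarantee first-order extension for all $D_K$-closed forms only on the proper strata $Y_I$ with $I\neq\emptyset$; on $X$ itself the hypothesis concerns only the single form $u_0$. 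There is no residue argument that upgrades this to arbitrary closed forms on $X$: after stripping away residues, the "top" piece is a closed form in $C^\infty_{k,K}(X,L)$ whose first-order extendability is exactly what one would be trying to prove, not a hypothesis. In fact the paper's own proof implicitly records that this can fail, since it introduces the orthogonal splitting $\mathcal H=\mathcal H_1\oplus\mathcal H_2$ precisely because $\mathcal H_2$ (harmonic forms whose wedge with $\xi+\beta$ is \emph{not} $D_K$-exact) may be nonzero.

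The paper's route avoids this. Take $g_1$ from \Cref{maincor1} (good exactness property, no norm control) and $g_2$ from \Cref{estiell} (good norm, no exactness property). Decompose $g_1-g_2 = a_1+a_2+D_K u$ with $a_i\in\mathcal H_i$ and set $g:=g_2+a_2$. Then $(\xi+\beta)\wedge g = (\xi+\beta)\wedge g_1 - (\xi+\beta)\wedge a_1 - (\xi+\beta)\wedge D_K u$, and all three terms on the right are $D_K$-exact — note that $(\xi+\beta)\wedge a_2$ need \emph{not} be exact in isolation; the choice of $g$ is precisely designed so that the offending piece of $g_2$ is cancelled by $a_2$. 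The norm estimate $\|a_2\|_{H^1}\le C\|g_2\|_{H^1}$ then follows from a finite-dimensional linear-algebra argument: the map $\varphi(u)=(\xi+\beta)\wedge u$ is injective on $\mathcal H_2$ with $\operatorname{Im}\varphi\cap\operatorname{Im}D_K=0$, so one can transport the norm and compare to the harmonic representative of $(\xi+\beta)\wedge g_2$. This finite-dimensional comparison, rather than a wholesale projection onto $(\ker D_K)^\perp$, is the missing ingredient in your write-up.
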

	
	\begin{proof}
		Let $\mathcal{H}$ be the space of harmonic forms of total degree $k$: notice that it is finite dimensional. 
		Let $\mathcal{H}_1 \subset \mathcal{H}$ be the space such that $(\xi +\beta) \wedge u$ is $D_K$-exact for every $u\in \mathcal{H}_1$.
		We have an orthogonal decomposition  $\mathcal{H} =\mathcal{H}_1 \oplus \mathcal{H}_2$ with respect to the $L^2$-norm. 
		
		Thanks to Corollary \ref{maincor1}, there exists $g_1$ such that 
		$$D_K g_1  =- (\xi +\beta ) \wedge f$$ 
		on $X\setminus D$ and $(\xi +\beta )  \wedge g_1$ is $D_K$-exact.  Now by Proposition \ref{estiell}, we can find a $g_2$ such that  
		$$D_K g_2  =- (\xi +\beta ) \wedge f$$ 
		on $X\setminus D$ and 
		\begin{equation}\label{ellest}
			\|g_2\|_{H^1} \leq C\|f\|_{H^1} .
			\end{equation}

		Then $g_1 -g_2$ is $D_K$-closed and we have a decompoisiton
		 $$g_1 =g_2 + a_1 +a_2 + D_K u $$where $a_1 \in \mathcal{H}_1$, $a_2 \in \mathcal{H}_2$ and $u\in  C^\infty _{k,K}(X,\Delta_1,L)$.
		 We claim that 
		 $$g:= g_2 +a_2$$ satisfies the three points in our statement.
		 
		By the definition of $\mathcal{H}_1$, the solution $g$ satisfies the first two points.  For the third point, we would like to $\|a_2\|_{H^1} \leq C \|g_2\|_{H^1}$ for some uniform constant $C$. It is a consequence of the following argument.
		
		We define a linear morphism 
		$$\varphi:  \mathcal{H}_2 \to  \ker  C^\infty _{\bullet,K}(X,\Delta_1,L)$$
		by taking $\varphi (u) :=(\xi +\beta )  \wedge u$ for $u\in \mathcal{H}_2$. By the definition of $\mathcal{H}_2$,  $\varphi$ is injective. By the construction of $\mathcal{H}_1$, we know that $Im (\varphi)  \cap Im D_K = 0$.  Then we can equip the finite linear space $Im (\varphi)$ with the following norm $\|\cdot\|_2$:  for every $a \in Im (\varphi)$, $\|a\|_2$ is defined to be the $L^2$-norm of the harmonic representation of $a$. $\|\cdot\|_2$ is norm on $Im (\varphi)$ since we have 
		$$Im (\varphi) \subset \ker D_K \qquad\text{and } \qquad Im (\varphi)\cap Im D_K = 0 .$$
		
		Note that $\varphi$ induces an isomorphism between two finite dimensional spaces $ \mathcal{H}_2 $ and $Im (\varphi)$. 
		Then any two norms on these two spaces are equvalent. Therefore,  there is a constant $C$ such that for every $u \in  \mathcal{H}_2$, we have
		$$\| u\|_{H_1} \leq C \|\varphi (u)\|_2  .$$
	By applying it to $a_2$, we know that 
		\begin{equation}\label{comparnorm}
			 \|a_2\|_{H_1} \leq C \| \varphi (a_2) \|_2  .
			 \end{equation}
		On the other hand, as $\varphi (a_1)$ is $D_K$-exact, we know that  $\varphi (a_2) - \varphi (g)$ is $D_K$-exact. Then the harmonic reprensetaive of $\varphi (a_2)$ and $\varphi (g)$ coincide. The $L^2$-norm of the harmonic representative of $\varphi (a_2) $ equals to $\|\varphi (a_2)\|_2$ by the definition of $\|\cdot\|_2$. Moreover, the $L^2$-norm of the harmonic representation of $\varphi (g)$ is less than the $L^2$-norm of $\varphi (g)$. As these two harmonic representations coincides, we have thus 
		$$ \|\varphi (a_2)\|_2  \leq \| (\xi +\beta ) \wedge g_2\|_{L^2}$$
	Together with \eqref{comparnorm}, we have 
		$$ \|a_2\|_{H_1}  \leq C \|(\xi +\beta ) \wedge g_2\|_{L_2} \leq C' \|g_2\|_{H_1} . $$
		Together with \eqref{ellest}, we obtain 
		$$\|g\|_{H_1} = \|g_2 +h_2\|_{H^1} \leq C\|f\|_{H^1} .$$ The proposition is proved.
		\end{proof}
\medskip
	
\noindent We infer the following consequence.
	
	\begin{thm}\label{thm:convergence}
		In the setting of Theorem \ref{solveequs}, we can find a sequence 
		$$\{v_k\}_{k=0}^{+\infty} \subset  C^\infty _{\bullet,K}(X,\Delta_1,L)$$ 
		such that $v_0 =u_0$ and for every $k$, we have  
		$$D_K v_{k+1} =- (\xi +\beta ) \wedge v_k \qquad\text{on }X\setminus D$$ and 
		$$\|v_{k+1}\|_{H^1} \leq C\|v_k\|_{H^1}$$ for some uniform constant $C$. 
		
		In particular, $\sum_{k \geq 0} t^k v_k$ is convergent when $|t| \ll 1$. 
		\end{thm}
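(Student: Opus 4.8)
The proof of Theorem \ref{thm:convergence} combines the algebraic construction of currents from Section \ref{currentsol}, the regularity machinery of Section \ref{regular}, and the uniform elliptic estimate of Proposition \ref{mainpropind}. The plan is as follows. First I would invoke Proposition \ref{solucurrent} to produce the currents $\wh u_0$ and $\{u_k\}_{k\geq 1}\subset \mathscr D_K(X,L)$ solving \eqref{conj0}, together with the residue currents $\CR_{i_1\dots i_p;k}$. Then, by the induction carried out at the end of Section \ref{finshproof} (culminating in \eqref{indution1}--\eqref{indution2}), one converts these currents into genuine log-smooth forms $s_k\in C^\infty_{\bullet,K}(X,\Delta_1,L)$ satisfying the system \eqref{simp4611}, i.e.\ $D_K s_{k+1}=-(\xi+\beta)\wedge s_k$ on $X\setminus D$ with $s_0=u_0$. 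At this stage existence of solutions of any finite order is established, but there is no control on the $H^1$-norms, so a priori the formal series $\sum t^k s_k$ need not converge.

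The point is then to re-select the solutions so that they obey a geometric growth bound. I would argue by induction on $k$, applying Proposition \ref{mainpropind} at each step: given $v_k\in C^\infty_{k,K}(X,\Delta_1,L)$ already constructed with the property that $(\xi+\beta)\wedge v_k$ is $D_K$-exact — a property we must propagate along the induction — Proposition \ref{mainpropind} yields a new solution $v_{k+1}$ of $D_K v_{k+1}=-(\xi+\beta)\wedge v_k$ such that $(\xi+\beta)\wedge v_{k+1}$ is again $D_K$-exact and $\|v_{k+1}\|_{H^1}\leq C\|v_k\|_{H^1}$ with $C$ uniform (depending only on $X$ and $\xi+\beta$, not on $k$). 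The base of the induction is the hypothesis of Theorem \ref{solveequs}: $u_0$ extends to order one, which is precisely the statement that $-(\xi+\beta)\wedge u_0$ is $D_K$-exact, so we may set $v_0:=u_0$ and the first application of Proposition \ref{mainpropind} applies. The crucial structural observation making the induction run is exactly the second bullet of Proposition \ref{mainpropind}: it not only solves the equation but guarantees that $(\xi+\beta)\wedge v_{k+1}$ remains $D_K$-exact, which is what is needed to feed $v_{k+1}$ back into Proposition \ref{mainpropind} at the next step.

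Granted the bound $\|v_{k+1}\|_{H^1}\leq C\|v_k\|_{H^1}$, one gets $\|v_k\|_{H^1}\leq C^k\|u_0\|_{H^1}$, hence the series $\sum_{k\geq 0}t^k v_k$ converges in the $H^1$-norm for $|t|<1/C$. By Theorem \ref{KeyDom} (or rather Theorem \ref{logpole}, together with the boot-strapping of Theorem \ref{smoothhodge}) the $H^1$-limit is actually in $C^\infty_{\bullet,K}(X,\Delta_1,L)$, and passing to the limit in the relations $D_K v_{k+1}=-(\xi+\beta)\wedge v_k$ shows that $U_t:=\sum_{k\geq 0}t^k v_k$ satisfies $D_{t,K}U_t=0$ on $X\setminus D$ with $U_t|_{t=0}=u_0$; this is the conclusion of Theorem \ref{complete}/\ref{introcomp} via Theorem \ref{L2cohoisom}. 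The main obstacle is not the convergence bookkeeping, which is routine once Proposition \ref{mainpropind} is in hand, but rather verifying that the output $v_{k+1}$ of Proposition \ref{mainpropind} indeed inherits the ``$(\xi+\beta)\wedge\,\cdot\,$ is $D_K$-exact'' property in a way compatible with the finite-order solvability already proved — concretely, that the solution furnished by the uniform-estimate argument can be chosen to coincide, modulo harmonic forms killed by $(\xi+\beta)\wedge\,\cdot$, with the formal solution $s_{k+1}$, so that the inductive hypothesis is genuinely preserved. This is precisely the content of the $\mathcal H=\mathcal H_1\oplus\mathcal H_2$ decomposition in the proof of Proposition \ref{mainpropind}, and it is the place where all the earlier technical work (the $D_KD_K^c$-lemma for currents, the De Rham--Kodaira decomposition, and the smoothing regularity) is really used.
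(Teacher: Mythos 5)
Your proposal is correct and follows essentially the same route as the paper: the paper's own proof of Theorem~\ref{thm:convergence} consists of two sentences invoking Proposition~\ref{mainpropind} inductively with $f=v_k$, and your reconstruction identifies exactly the same mechanism (the $\mathcal H_1\oplus\mathcal H_2$ decomposition inside Proposition~\ref{mainpropind} propagating both the first-order extendability and the uniform $H^1$-bound). The extra material you include — the current-theoretic construction from Proposition~\ref{solucurrent}, the conversion to log-smooth forms, and the regularity of the $H^1$-limit — is accurate context that the paper establishes beforehand and then compresses into the single appeal to Proposition~\ref{mainpropind}, so there is no discrepancy, only a difference in how much of the supporting chain is re-exposed.
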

		
		\begin{proof}
It is a direct consequence of Proposition \ref{mainpropind}. We can construct $v_k$ by using an induction on $k$ and by taking $f= v_k$ in Proposition \ref{mainpropind}.
			\end{proof}

\section{Jumping loci: main result and its proof}
The main goal of this section is to provide a new proof of \Cref{complete2}, based on the techniques developed in the preceding sections.  
We note that \Cref{complete2} was previously established by Budur–Wang \cite{BW15,BW20} using entirely different methods.   

\subsection{Some properties of De Rham moduli spaces and the Riemann-Hilbert map}\label{modulispace}
In this subsection, we recall the definition of De Rham moduli spaces $M_{\rm DR}(X/D)$ and the Riemann-Hilbert map and $D=D_1+\cdots+D_k$.
Let $\Res$ be the residue map of a logarithmic connection.   Let $\ell_1,\ldots,\ell_k$ be small loops around $D_i$. Let ${\rm Ev}:M_{\rm B}(X\setminus D)\to \mathbb{C}^k$ be the evaluation of any representation given by $\varrho\mapsto (\varrho(\ell_1),\ldots,\varrho(\ell_k))$.  For $M_{\rm DR}(X/D)$, it can be defined explicitly as follows. 
\begin{equation}\label{diag}
	\begin{tikzcd}
		&&0\arrow[d]&0\arrow[d]\\
		&&\mathbb{Z}^k\arrow[d]\arrow[r,equal]&\mathbb{Z}^k\arrow[d]&\\
		0\arrow[r]& M_{\rm DR}(X)\arrow[d,"{\rm RH}"]\arrow[r,hook]& M_{\rm DR}(X/D)\arrow[d,"{\rm RH}_D"]\arrow[r,"{\rm Res}"]&\CC^k\arrow[d,"\exp"]\\
		0\arrow[r]&M_{\rm B}(X)\arrow[r,hook]&M_{\rm B}(X\setminus D)\arrow[r,"{\rm Ev}"]\arrow[d]&(\CC^\star)^k\arrow[d]\\
		&&0&0
	\end{tikzcd}
\end{equation}
Here  $M_{\rm DR}(X)$, $M_{\rm DR}(X/D)$ are complex Lie groups if $X$ is a compact K\"ahler manifold. Note that $M_{\rm DR}(X)$ and $M_{\rm DR}(X/D)$ are complex algebraic groups   if $X$ is projective. In any case,  $M_{\rm B}(X\setminus D)$  and $M_{\rm B}(X)$ are  complex algebraic tori.  The Riemannian-Hilbert maps ${\rm RH}:M_{\rm DR}(X)\to M_{\rm B}(X)$ and ${\rm RH}_D:M_{\rm DR}(X/D)\to M_{\rm B}(X\setminus D)$ are defined by the monodromy representation, which are holomorphic maps. Let us denote by $M_{{\rm DR}}^0(X/D) $ and $M_{\rm B}^0(X\setminus D)$ the identity components of the corresponding complex Lie groups. We shall give an explicit description of $M^0_{\rm DR}(X/D)$. 

First, we note the following fact.
\begin{lemme}\label{lem:surj}
	For any $(L,\nabla)\in M_{\rm DR}(X/D)$, there exists some $a_1,\ldots,a_k\in \Q$ such that  $$c_1(L) = a_1c_1(D_1)+\cdots+ a_kc_1(D_k).$$  Conversely, if $c_1(L) = b_1c_1(D_1)+\cdots+ b_kc_1(D_k)$ for some $b_1,\ldots,b_k\in \CC$, then there exists a logarithmic connection $\nabla$ of $L$ such that $(L,\nabla)\in M_{\rm DR}(X/D)$. 
\end{lemme}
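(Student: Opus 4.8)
\textbf{Proof plan for Lemma \ref{lem:surj}.}

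The strategy is to reduce both directions to the two ingredients already developed in the excerpt: the characterization of curvature classes of harmonic metrics in \Cref{residuerelation}/\Cref{existence}, and the fact (stated after \eqref{surjj}) that $\rho\colon M_{\rm DR}(X/D)\to M_{\rm B}(X\setminus D)$ is surjective with kernel $\mathbb Z^k$. First I would treat the forward direction. Given $(L,\nabla)\in M_{\rm DR}(X/D)$, fix any smooth metric $h_0$ on $L$ and set $\theta_0:=D'_{h_0}-\nabla$, a $(1,0)$-form with log poles along $D$; as in the proof of \Cref{residuerelation}, near $D_i=\{z_i=0\}$ one has $\theta_0=-a_i\,\frac{dz_i}{z_i}+C^\infty$ where $a_i=\Res_{D_i}(L,\nabla)$, and the divisorial part of the exact current $i\dbar\theta_0=i\Theta_{h_0}(L)$ is $-\sum a_i[D_i]$. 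Taking cohomology classes, \eqref{class} gives $c_1(L)=\sum\rel(a_i)[D_i]$; since a logarithmic connection has rational residues (this is where I invoke \Cref{lem:auxiliary}, or more simply the standard fact that the residue is determined modulo $\mathbb Z$ by the monodromy eigenvalue around $D_i$, whose conjugacy class is quasi-unitary for a rank-one flat bundle, forcing $\rel(a_i)\in\Q$), we may take $a_i:=\rel(a_i)\in\Q$, proving the first assertion.

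For the converse, suppose $c_1(L)=\sum b_i c_1(D_i)$ with $b_i\in\CC$; I must produce a logarithmic connection $\nabla$ on $L$ with $\nabla^2=0$. One clean route: by the kernel-$\mathbb Z^k$ structure of $\rho$, it suffices to produce, for the given topological type $c_1(L)=\sum b_i[D_i]$, \emph{some} holomorphic structure on the underlying $C^\infty$ bundle admitting a logarithmic flat connection, and then to transport it to $L$ via the twist by $\prod\mathcal I_{D_i}^{a_i}$. Concretely I would build $\nabla$ directly: choose a smooth metric $h_0$ on $L$ whose Chern curvature represents $\sum b_i[D_i]$ — this is possible since that class equals $c_1(L)\in H^{1,1}(X,\R)$ when the $b_i$ are real, and for general complex $b_i$ one first notes $\sum\rel(b_i)[D_i]=c_1(L)$ and $\sum\im(b_i)[D_i]=0$ in $H^{1,1}(X,\R)$, exactly the numerical constraints that appeared in \eqref{class} and \eqref{class2}. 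Then, using the $\ddbar$-lemma exactly as in the proof of \Cref{existence}, solve $i\ddbar\phi=\sum\rel(b_i)[D_i]-i\Theta_{h_0}(L)$ to get a (singular) harmonic metric $h$ with prescribed Lelong numbers $\rel(b_i)$ along $D_i$; set $\nabla:=D'_h$, which by construction is a holomorphic connection $L\to L\otimes\Omega_X(\log D)$ with $\nabla^2=0$ (its $(1,1)$-curvature vanishes because $i\Theta_h=0$ on $X_0$ and the Lelong numbers are the residues). To realize the prescribed imaginary parts $\im(b_i)$ one twists further by the harmonic metric $e^{-\sum\im(b_i)\log|s_i|^2}$ on the trivial bundle, exactly as in the first point of the proof of \Cref{specialhar}. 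This yields $(L,\nabla)\in M_{\rm DR}(X/D)$ with $\Res_{D_i}(L,\nabla)=b_i$, which is even stronger than required.

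The main obstacle I anticipate is the \emph{rationality} step in the forward direction: showing $\rel(\Res_{D_i}(L,\nabla))\in\Q$ rather than merely in $\R$. For a genuine harmonic metric the Lelong numbers are rational because the curvature current is a rational combination of the $[D_i]$ (this is \Cref{lem:auxiliary} together with $c_1(D_i)\in H^2(X,\mathbb Z)$), but one must first know such a harmonic metric exists in the given component, which is \Cref{existence}; threading these citations correctly, and being careful that $\Res$ need not land in $\Q$ for an \emph{arbitrary} $(L,\nabla)$ until one has passed to the rational Lelong-number normalization, is the delicate point. Everything else — the curvature computation, the $\ddbar$-lemma, the twist by $\prod\mathcal I_{D_i}^{a_i}$ — is routine given the machinery already in place, so the write-up should be short, essentially a reorganization of the proofs of \Cref{residuerelation}, \Cref{existence} and \Cref{specialhar}.
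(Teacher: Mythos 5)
Your converse is essentially correct and close in spirit to the paper's argument, though over-engineered: the lemma only asks for the \emph{existence} of a logarithmic connection, so one may simply take $\nabla := D'_h$ for a singular Hermitian metric $h$ with $i\Theta_h(L)=\sum\rel(b_i)[D_i]$ (which makes the curvature vanish on $X\setminus D$), without the further twist you propose to realize the imaginary parts as residues.

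The forward direction, however, has a genuine gap, and you flagged the trouble spot yourself. The claim that $\rel\bigl(\Res_{D_i}(L,\nabla)\bigr)\in\Q$ for an arbitrary $(L,\nabla)\in M_{\rm DR}(X/D)$ is false, and neither of your two proposed fixes repairs it. A rank-one local system on $X\setminus D$ is a representation $\pi_1(X\setminus D)\to\CC^\star$; this is not unitary (nor ``quasi-unitary'') in general, so the monodromy $\lambda_i$ around $D_i$ may be any nonzero complex number, and the residue $a_i$ -- which satisfies $\exp(-2\pi i a_i)=\lambda_i$ -- may have arbitrary real part, rational or not. Lemma~\ref{lem:auxiliary} does not help either: it asserts the existence of \emph{some} rational tuple with $\sum a_ic_1(D_i)=0$, not that residues of connections are rational, and in the paper's logic it is deduced \emph{after} the present lemma, not before.

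The correct argument is more elementary than what you attempt and bypasses rationality of the residues entirely. The curvature computation gives $c_1(L)=\sum\rel(\Res_{D_i})\,c_1(D_i)$ in $H^2(X,\R)$, so the $\Q$-vector $c_1(L)\in H^2(X,\Q)$ lies in the $\R$-span of the $\Q$-vectors $c_1(D_i)$. A linear system with $\Q$-coefficients that is consistent over $\R$ is consistent over $\Q$ (the rank criterion is insensitive to the field extension $\Q\subset\R$), so there exist $a_i\in\Q$ with $c_1(L)=\sum a_ic_1(D_i)$. These $a_i$ need not be, and in general are not, the residues of $\nabla$; they are merely witnesses to membership in a rational span. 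Once this is the point, the forward direction is a one-line observation, and the converse goes through exactly as you wrote it via the $\ddbar$-lemma applied to the real parts.
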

\begin{proof} 
	If $(L,\nabla)\in M_{\rm DR}(X/D)$, then the curvature current of the connection $\nabla+\db_L$ is  $\frac{i}{2\pi}[\db_L,\nabla]=-{\rm Res}(\nabla,D_i)[D_i]$.  Therefore, we have
	$$
	c_1(L)=-{\rm Res}(\nabla,D_i)[D_i].
	$$ 
	Note that the $c_1(L)$ and $c_1(D_i)$ are $\Q$-points in $H^2(X,\Q)$. The above equality implies that  $c_1(L)\in  \Q c_1(D_1)+\cdots+\Q c_1(D_k)$. 
	
	Conversely, if $c_1(L) = b_1c_1(D_1)+\cdots+ b_k c_1 (D_k)$ for some $b_1,\ldots,b_k\in \CC$, then we have
	$$
	c_1(L)=\rel (b_1)c_1(D_1)+\cdots+ \rel (b_k) c_1 (D_k) 
	$$
	as $c_1(L)\in H^{1,1}(X,\R)$. 
    This can be sen as a family of rational equations, which has a solution over $\mathbb R$. Then it has a solution over $\mathbb Q$, so we can write 
    $$
	c_1(L)= a_1c_1(D_1)+\cdots+ a_k c_1 (D_k) 
	$$
    for some rational numbers $a_i$.
    Moreover,
	there exists a singular hermitian metric $h$ for  $L$ such that the curvature current
	$$
	i\Theta_h(L)=\rel (b_1)[D_1]+\cdots+ \rel (b_k)[D_k].
	$$
	In particular, $i\Theta_h(L)=0$ on $X\setminus D$. Then $(L,D_h')\in M_{\rm DR}(X/D)$. 
	The lemma is proved.
\end{proof}

Let $(L, \nabla) \in M_{\rm DR} (X/ D)$ and let $(L' , \nabla') \in M_{\rm DR} (X/D)$ close to $(L, \nabla)$. Let $\dbar$ be the complex structure of $L$. Since $c_1 (L)= c_1 (L')$ on $X$, there exists a $\xi \in H^{0,1} (X)$ such that $\dbar + \xi$ defines the complex structure of $L'$.  We can let $\xi$ be a harmonic form. Then we have $d\xi=0$. Now $\nabla' = \nabla + \beta$ where $\beta$ is a $(1,0)$-form with log pole along $D$.
Moreover, the flat and holomorphic conditions  imply that 
\begin{equation}
	(\nabla + \beta)^2 =0  \qquad \text{and} \qquad [\nabla + \beta, \dbar +\xi] =0 \text{ on } X_0 .
\end{equation}
The first condition means that $\partial \beta= 0$ on $X_0$. For the second condition, since $\partial \xi=0$, we have $\dbar \beta =0$. In particular, we we know that $\beta \in H^0 (X, \Omega_X ^1 (\log D))$.  
To conclude, we have 

\begin{proposition}\label{identif}
	Fix a point $(L, \nabla) \in M_{\rm DR} (X/D)$ and let $(L' , \nabla') \in M_{\rm DR} (X/D)$ close to $(L, \nabla)$.  Then 
	$$(L', \nabla' ) \simeq (L, \dbar + \xi, \nabla + \beta )$$
	for some $\xi \in H^{0,1 } (X)$ smooth form with $d\xi=0$ and $\beta \in H^0 (X, \Omega_X ^1 (\log D))$.
\end{proposition}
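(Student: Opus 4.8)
The statement is essentially a local chart description of $M_{\rm DR}(X/D)$ near a fixed point $(L,\nabla)$, and the plan is to unravel the definitions and use classical Hodge theory on the compact Kähler manifold $X$. First I would observe that since $(L',\nabla')$ is close to $(L,\nabla)$ in $M_{\rm DR}(X/D)$, both sit in the same connected component, so in particular $c_1(L')=c_1(L)$ in $H^{1,1}(X,\mathbb R)$. Consequently the holomorphic line bundle $L'$ is obtained from $L$ by tensoring with a topologically trivial line bundle, i.e. with an element of $\Jac(X)=H^1(X,\mathcal O_X)/\mathrm{Im}(\iota)$. Choosing a $C^\infty$ identification of $L'$ and $L$ as complex line bundles, the Dolbeault operator $\dbar_{L'}$ differs from $\dbar_L$ by a $(0,1)$-form $\xi$, and the integrability $\dbar_{L'}^2=0$ is automatic since we are in rank one. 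By Hodge theory on $(X,\omega_X)$ I may replace $\xi$ by its harmonic representative modulo an exact form (absorbing the exact part into the $C^\infty$ gauge change); since harmonic $(0,1)$-forms on a compact Kähler manifold are $d$-closed — their conjugates being holomorphic — we get $d\xi=0$, which is one of the two assertions.

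Next I would treat the connection. Write $\nabla'=\nabla+\beta$ where a priori $\beta$ is a $C^\infty$ $(1,0)$-form with logarithmic poles along $D$ (the log pole structure being common to all logarithmic connections on the bundle). The content to extract is that $\beta$ is in fact holomorphic, i.e. $\beta\in H^0(X,\Omega^1_X(\log D))$. This comes from the two conditions defining a point of $M_{\rm DR}(X/D)$: flatness $(\nabla')^2=0$ and holomorphicity of the connection with respect to the complex structure $\dbar+\xi$, i.e. $[\nabla',\dbar+\xi]=0$ on $X_0$. Expanding flatness gives $\nabla^2+\nabla\beta+\beta\nabla+\beta\wedge\beta=0$; using $\nabla^2=0$ and that in rank one $\beta\wedge\beta=0$ (it is a $(1,0)$-form valued in $\mathrm{End}(L)=\mathcal O_X$), this reduces to $\partial\beta=0$ on $X_0$ (here $\partial$ is the $(1,0)$-part of $\nabla$ acting on forms, which on functions is the ordinary $\partial$ since $L$ is a line bundle and the $\mathrm{End}$-connection is trivial). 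Expanding the holomorphicity condition $[\nabla+\beta,\dbar+\xi]=0$ and using $[\nabla,\dbar]=0$ (holomorphicity of $\nabla$ with respect to the original $\dbar$) together with $\partial\xi=0$ (which follows from $d\xi=0$ combined with $\xi$ being of type $(0,1)$, so $d\xi=\partial\xi$ and this vanishes), the cross terms collapse to $\dbar\beta=0$ on $X_0$. Thus $\beta$ is a $\dbar$-closed $(1,0)$-form with log poles along $D$ on $X$, hence by the Deligne/Noguchi theory (cf. the proposition stated just before \Cref{lem:auxiliary} in the excerpt, which shows $\dbar$ of a log $1$-form is $d$-closed and $\dbar$-exact) it is a holomorphic logarithmic $1$-form: $\beta\in H^0(X,\Omega^1_X(\log D))$.

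\textbf{Main obstacle.} The routine part is the bookkeeping of the commutator identities; the one point requiring a little care is justifying that $\dbar\beta=0$ \emph{on all of $X$} (equivalently that $\beta$ has no residue contribution forcing it outside $\Omega^1_X(\log D)$), rather than merely on $X_0=X\setminus D$. This is handled precisely as in the proposition preceding \Cref{lem:auxiliary}: the current $\dbar\beta$ on $X$ is $d$-closed and $\dbar$-exact, and the only $(1,0)$-current with these vanishing properties and log-pole growth is an element of $H^0(X,\Omega^1_X(\log D))$. Finally, I would note the resulting identification $(L',\nabla')\simeq(L,\dbar+\xi,\nabla+\beta)$ is literally the gauge change we performed, so no further argument is needed; the word "close" in the statement is used only to ensure we land in the same component and hence $c_1$ is preserved, and everything else in the proof is exact.
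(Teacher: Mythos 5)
Your proposal is correct and follows essentially the same route as the paper: use equality of first Chern classes to write the new Dolbeault operator as $\dbar_L+\xi$ with $\xi$ harmonic (hence $d$-closed), write $\nabla'=\nabla+\beta$ for a $(1,0)$-form $\beta$ with log poles, and then extract $\partial\beta=0$ from flatness and $\dbar\beta=0$ from holomorphicity. Two small remarks: your parenthetical ``so $d\xi=\partial\xi$'' is off, since for a $(0,1)$-form $d\xi=\partial\xi+\dbar\xi$ splits into a $(1,1)$- and a $(0,2)$-part; the correct reason $\partial\xi=0$ is that $d\xi=0$ forces both type components to vanish separately. And the final appeal to the Noguchi/Deligne closedness result is unnecessary (and as phrased, slightly circular): once $\beta$ is known to have log poles by construction and to satisfy $\dbar\beta=0$ on $X_0$, membership in $H^0(X,\Omega^1_X(\log D))$ is immediate from the local description of logarithmic forms, which is exactly the step the paper takes.
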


\begin{proposition}
Let $(L, \nabla) \in M_{\rm DR} (X/D)$ and let $(L' , \nabla') \in M_{\rm DR} (X/D)$ close to $(L, \nabla)$.  Let $h_L$ be a harmonic metric on $L$ in Proposition \ref{specialhar}.  Then there exists a unique harmonic metric (up to a constant) $h_{L'}$ on $(L' , \nabla')$ such that 
\begin{equation}\label{samecurv}
	i\Theta_{h_L} (L) = i\Theta_{h_{L'}} (L') \qquad\text{ on } X .
	\end{equation} 

Morevoer, if $(L, \nabla, h_L)$ satisfies Proposition \ref{specialhar}, then  $(L' , \nabla' , h_{L'})$ satisfes the same properties in Proposition \ref{specialhar}. In particular,  the following two complexes 
\begin{equation}
	\nabla' :	\cdots \to \Omega^\bullet _X (\log D) \otimes L' \to \Omega^{\bullet+1} _X (\log D) \otimes L' \to \cdots 
\end{equation}
	\[D_{K, L'}: \cdots \to C^\infty _{\bullet, K} (X,  \Delta_1, L' )\to C^\infty _{\bullet +1, K} (X,  \Delta_1, L') \to \cdots \]
	are quasi-isomorphic.
\end{proposition}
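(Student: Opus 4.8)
The plan is to split the statement into its two parts: first the existence and uniqueness of a harmonic metric $h_{L'}$ with prescribed curvature \eqref{samecurv}, and then the propagation of Property~\ref{property} along with the quasi-isomorphism of the two complexes. For the first part I would use Proposition~\ref{identif} to write $(L',\nabla')\simeq(L,\dbar+\xi,\nabla+\beta)$ with $\xi\in H^{0,1}(X)$ a smooth $d$-closed $(0,1)$-form and $\beta\in H^0(X,\Omega^1_X(\log D))$; by \Cref{lem:surj} we know $c_1(L')=c_1(L)$, so both bundles have the same Chern class and in particular the same $\mathbb{R}$-linear combination of the $[D_i]$. Then the curvature condition \eqref{samecurv} amounts to finding a real function $\phi$ on $X$ solving $i\ddbar\phi = i\Theta_{h_L}(L)-i\Theta_{h_0}(L')$ for a fixed smooth background metric $h_0$ on $L'$, and this is solvable by the $\ddbar$-lemma precisely because the right-hand side is $d$-closed (both connections are flat/holomorphic away from $D$, so the relevant currents are $d$-closed in the sense of currents on $X$, using \cite{Del71, Nog95} as in the proof of \Cref{existence}) and cohomologically trivial (same Chern class). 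Uniqueness modulo a constant follows exactly as in \Cref{existence}: two such $\phi$ differ by a pluriharmonic, hence constant, function. The harmonicity of $h_{L'}$ in the sense of \Cref{def:harmonic} then follows from the proof of the proposition preceding \Cref{existence}: since $i\Theta_{h_{L'}}(L')=0$ on $X_0$ (the Lelong numbers being those of $h_L$, which vanish off $D$), the operator $D''_{K,L'}$ squares to zero.

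For the second part, I would observe that the decomposition $D=\Delta_1+\Delta_2+\Delta_3$ in \Cref{specialhar} depends only on the \emph{connected component} $A$ of $M_{\rm DR}(X/D)$ (equivalently, of $M_{\rm B}(X_0)$) containing $(L,\nabla)$: the trichotomy is governed by whether $\Res_{D_i}$ is locally constant on $A$ and, if so, whether the common value is an integer. Since $(L',\nabla')$ is close to $(L,\nabla)$, it lies in the same component $A$, so it inherits the very same decomposition $D=\Delta_1+\Delta_2+\Delta_3$. Now the three bullet points of Property~\ref{property} for $(L',\nabla',h_{L'})$ are immediate: by \eqref{samecurv} the Lelong numbers of $h_{L'}$ along each component agree with those of $h_L$, so point~(3) holds and $h_{L'}$ is smooth at a generic point of $\Delta_1$ (point~(1)); and for $\theta' := D'_{h_{L'}}-\nabla'$ one has $\Res_{D_i}\theta' = \Res_{D_i}(L',\nabla') - \nu_i(h_{L'}) = \Res_{D_i}(L,\nabla) - \nu_i(h_L) = \Res_{D_i}\theta$ on $\Delta_1+\Delta_3$ by the invariance of the residue on $A$ together with \eqref{samecurv}, so $\Res\theta'=0$ there; along $\Delta_2$ one checks $\Res\theta'\neq0$ on every component, which is where the construction of \Cref{specialhar} genuinely entered (the generic-$\epsilon$ twist), and here I would note that by continuity and the discreteness of the "bad" locus, if $(L,\nabla)$ was chosen with $\Res\theta\neq0$ on every component of $\Delta_2$, then the nearby $(L',\nabla')$ with the metric $h_{L'}$ satisfying \eqref{samecurv} has the same property.

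Once Property~\ref{property} is verified for $(L',\nabla',h_{L'})$, the desired quasi-isomorphism
\[
\nabla':\ \Omega^\bullet_X(\log D)\otimes L'\ \xrightarrow{\ \sim\ }\ C^\infty_{\bullet,K}(X,\Delta_1,L'),\qquad D_{K,L'}:\ C^\infty_{\bullet,K}(X,\Delta_1,L')\to C^\infty_{\bullet+1,K}(X,\Delta_1,L')
\]
is an \emph{immediate} consequence of \Cref{quasiiso} (equivalently \Cref{quasi}), applied verbatim to the datum $(X,D,L',\nabla')$ together with the harmonic metric $h_{L'}$: all the analytic input — the elliptic estimate \Cref{keyinequ}, the Hodge decompositions of \Cref{hodgedec} and \Cref{smoothhodge}, and the local solvability results \Cref{debareq}, \Cref{locreg}, \Cref{localsolution} — is stated for an arbitrary pair satisfying \Cref{property}, so nothing new has to be proved.

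\textbf{Main obstacle.} The genuinely delicate point is the claim that $\Res\theta'\neq0$ on every component of $\Delta_2$ for the nearby $(L',\nabla')$, i.e.\ that the construction of \Cref{specialhar} is \emph{stable under small deformations within the component}. A priori, moving $(L,\nabla)$ a little while keeping the curvature of the metric fixed could make some residue $\Res_{D_i}\theta'$ vanish. I expect one handles this by the same mechanism as in \Cref{specialhar}: over the component $\Delta_2$ the residues vary holomorphically and non-trivially in the parameter, so the locus where some residue vanishes is a proper analytic subset, and the deformation argument should only require choosing the original base point $(L,\nabla)$ (and the twist parameter $\epsilon$) generically — which is exactly what \Cref{specialhar} already does. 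The cleanest formulation is probably to prove the stronger statement that \Cref{specialhar} produces a harmonic metric varying holomorphically over a neighborhood of the generic point of each component of $M_{\rm B}(X_0)$, and then everything above follows; this refinement is where I would concentrate the effort.
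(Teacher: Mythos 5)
Your proof is correct and follows essentially the same route as the paper's. For the first part, the paper simply invokes the second statement of \Cref{existence} (existence and uniqueness of a harmonic metric with prescribed Lelong numbers once $c_1(L') = \sum a_i[D_i]$); you unwind the argument by solving the $\ddbar$-equation directly, which amounts to the same thing. For the second part, the only real difference is the route to $\Res\theta' = 0$ along $\Delta_1 + \Delta_3$: you compare the residues of $\nabla'$ and $\nabla$ via their invariance on the component $A$ together with \eqref{samecurv}, while the paper applies the first bullet of \Cref{specialhar} directly to the holomorphic log form $\theta' = \nabla' - D'_{h_{L'}} \in H^0(X,\Omega^1_X(\log D))$; both derivations are valid and give the same conclusion. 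The quasi-isomorphism is indeed an immediate application of \Cref{quasiiso} once Property~\ref{property} is verified for $(L',\nabla',h_{L'})$.

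The \emph{main obstacle} you raise at the end is not one, and your proposed resolution is more machinery than the situation calls for. Parametrizing $(L',\nabla') = (L,\dbar_L+\xi,\nabla+\beta)$ by \Cref{identif} and noting that $\nu(h_{L'}) = \nu(h_L)$ is forced by \eqref{samecurv}, you see that $\Res_{D_i}\theta'$ is an affine-linear (in particular continuous) function of $(\xi,\beta)$ which equals $\Res_{D_i}\theta$ at the base point. Since $\Res_{D_i}\theta \neq 0$ on every component of $\Delta_2$ by hypothesis, this non-vanishing persists for all $(L',\nabla')$ in a sufficiently small neighborhood; that is the entire content. No holomorphic-family strengthening of \Cref{specialhar}, no genericity of the twist parameter $\epsilon$, and no discreteness argument are needed at this step — those were used once, in \Cref{specialhar}, to choose the base harmonic metric $h_L$, and here the word \emph{close to} in the hypothesis does the rest. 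The paper handles it in a single sentence: ``As $\nabla'$ is close to $\nabla$ and $\nu(h_{L'}) = \nu(h_L)$, then $\Res\theta'$ is close to $\Res\theta$, \dots so $\Res\theta' \neq 0$ on $\Delta_2$.''
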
	

\begin{proof}
Since $h_L$ is harmonic, we have $i\Theta_{h_L} (L)  =\sum a_i [D_i]$ on $X$. Now as $(L', \nabla') \in M_{\rm DR} (X/D)$ is in the same component of $(L, \nabla)$, we have $c_1 (L') =\sum a_i [D_i] $. Then there exists a unique harmonic (after a constant) $h_{L'}$ on $(L' , \nabla')$ such that 
$$ i\Theta_{h_{L'}} (L') =\sum a_i [D_i]  = i\Theta_{h_L} (L) .$$
The first part is proved.  

For the second part, thanks to \eqref{samecurv}, we know that $ \nu (h_{L'}) = \nu (h_L)$ for every component of $D$.  Then the third point of Proposition \ref{specialhar} is satisified for $(L', \nabla')$.  Set $\theta' := \nabla' - D' _{h_{L'}}$.  By the first point of Proposition \ref{specialhar}, we know that $Res \theta' = 0$ on $\Delta_1 +\Delta_3$.  As $\nabla'$ is close to $\nabla$ and $ \nu (h_{L'}) = \nu (h_L)$,  then $\Res \theta'$ is close to $\Res \theta$.   Note that $\Res \theta \neq 0$ on $\Delta_2$. Then $\Res \theta \neq 0$ on $\Delta_2$ when $\nabla'$ is close to $\nabla$.  The second point of Proposition \ref{specialhar} is thus verified for $(L', \nabla')$.  
\end{proof}	

The objects in $M^0_{\rm DR}(X/D)$ correspond to a pair $(L,\nabla) \in M_{\rm DR}(X/D)$ with $c_1(L)=0$.  By Deligne's mixed Hodge theory, we have an isomorphism 
$$
H^1(X\setminus D,\CC)\simeq H^0(X,\Omega_X^1(\log D))\oplus H^1(X,\mathcal{O}_X)\simeq H^0(X,\Omega_X^1(\log D))\oplus \mathcal{H}^{0,1}(X). 
$$
For any $\xi\in H^1(X\setminus D,\CC)$, let $(\xi_1,\xi_2)\in H^0(X,\Omega_X^1(\log D))\oplus \mathcal{H}^{0,1}(X)$ be the corresponding element of $\xi$. This gives rise to a pair $(L,\nabla)$ as follows. The line bundle $L$ is defined by $\bar{\d}+\xi_2$.  Namely, locally there exists a function $f$ such that $\xi_2=\bar{\d}f$. Then  a local holomorphic section of $L$  is given by $e^{-f}$ as 
$$
(\bar{\d}+\xi_2)(e^{-f})=0.
$$
Then $\nabla:=\partial+\xi_1$  is a holomorphic connection for $L$. Indeed, one has
$$
(\partial+\xi_1)(e^{-f})=(-\d f+\xi_1)e^{-f},
$$ 
and by $\db(-\d f+\xi_1)=0$, it follows that $(-\d f+\xi_1)e^{-f}$ is a local holomorphic section of $\Omega_X^1(\log D)\otimes L$.

Note that $(L,\nabla)$ is isomorphic to $(\mathcal{O}_X,d)$ if   there is a nowhere vanishing smooth function $f \in C^0(X)$  that is  holomorphic  with respect to $L$ and flat with respect to $\nabla$ (i.e. $f$ is gauge transformation). Therefore, for any local smooth function $e$, one has  $$de=f^{-1}\circ (d+(\xi_1+\xi_2))\circ (f\cdot e)=de+f^{-1}df+(\xi_1+\xi_2).$$ This implies that $\xi_1\in H^0(X,\Omega_X^1)$ (i.e. it does not have log poles along $D$), and  $(d+\xi_1+\xi_2)(f)=0$.  If we impose the condition that $f\left(x_0\right)=1$, the solution is necessarily given by the integral

$$
f=\exp \left(-\int_{x_0}^x (\xi_1+\xi_2)\right)
$$

This is well-defined on $X$ if and only if all periods of $\xi_1+\xi_2$ belong to $ 2 \pi i  \mathbb{Z}$. Therefore, we have
\begin{align}\label{eq:DR}
	M_{\rm DR}^0(X/D)\simeq \frac{H^0(X,\Omega_X^1(\log D))\oplus \mathcal{H}^{0,1}(X)}{\{(\xi_1,\xi_2)\in H^0(X,\Omega_X^1)\oplus \mathcal{H}^{0,1}(X)\mid (\xi_1,\xi_2) \mbox{ has periods in }2\pi i\Z\}}.
\end{align} 
Let $\gamma_1,\ldots,\gamma_m\in \pi_1(X_0,x_0)$ such that they generate a basis for $H_1(X_0,\mathbb{Z})/{\rm torsion}$.   Then $M^0_{\rm B}(X_0)\simeq (\CC^\star)^m$ with the isomorphism  being $\varrho\mapsto (\varrho(\gamma_1),\ldots,\varrho(\gamma_m))$. Then the Riemann-Hilbert map is given by the holonomy representation.  It can be made explicitly as follows.  Take $(\xi_1,\xi_2)\in H^0(X,\Omega_X^1(\log D))\oplus \mathcal{H}^{0,1}(X)$.  We denote by $[(\xi_1,\xi_2)]$  its image in $M_{\rm DR}^0(X/D)$. Then we have
\begin{align}\nonumber
	M_{\rm DR}^0(X/D)&\to M_{\rm B}^0(X_0)\\\label{eq:RH}
	{\rm RH}_D:[(\xi_1,\xi_2)]&\mapsto(\exp(-\int_{\gamma_1}(\xi_1+\xi_2)),\ldots,\exp(-\int_{\gamma_m}(\xi_1+\xi_2))). 
\end{align}

\begin{lemme}\label{lem:iso}
	${\rm RH}_D|_{M_{\rm DR}^0(X/D)}$ is a surjective group morphism   between complex Lie groups, that is a covering map. 
\end{lemme}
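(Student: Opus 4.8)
\textbf{Proof plan for Lemma \ref{lem:iso}.}

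The plan is to derive everything from the explicit description \eqref{eq:DR} of $M_{\rm DR}^0(X/D)$ together with the formula \eqref{eq:RH} for ${\rm RH}_D$. First I would observe that both $M_{\rm DR}^0(X/D)$ and $M_{\rm B}^0(X_0)$ are connected complex Lie groups, so it suffices to work at the level of Lie algebras together with a lattice (period) analysis. Concretely, $M_{\rm DR}^0(X/D)$ is the quotient of the vector space $V := H^0(X,\Omega_X^1(\log D))\oplus \mathcal H^{0,1}(X)$ by the discrete subgroup $\Lambda_{\rm DR}$ of those $(\xi_1,\xi_2)$ with $\xi_1\in H^0(X,\Omega_X^1)$ and all periods in $2\pi i\Z$, while $M_{\rm B}^0(X_0)\simeq(\CC^\star)^m$ has universal cover $\CC^m$ with deck group $(2\pi i\Z)^m$. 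The map ${\rm RH}_D$ is induced by the linear \emph{period map}
\[
\Pi: V\longrightarrow \CC^m,\qquad \Pi(\xi_1,\xi_2) := \Big(-\int_{\gamma_1}(\xi_1+\xi_2),\ldots,-\int_{\gamma_m}(\xi_1+\xi_2)\Big),
\]
composed with the exponential $\CC^m\to(\CC^\star)^m$. So I would first check that $\Pi$ is a well-defined $\CC$-linear map (independence of the representing forms, using that the $\gamma_i$ are closed loops and $\xi_1+\xi_2$ is $d$-closed on $X_0$), that it descends to a holomorphic group morphism ${\rm RH}_D$ on the quotient (this is exactly the definition of $\Lambda_{\rm DR}$: periods in $2\pi i\Z$ get killed by $\exp$), and that it is a group morphism because the source and target are abelian and the construction is additive.

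Next I would prove \textbf{surjectivity}. By \eqref{eq:RH} it is enough to show that the period map $\Pi: V\to\CC^m$ is surjective, since then ${\rm RH}_D = \exp\circ\,\Pi$ is surjective onto $(\CC^\star)^m$. Surjectivity of $\Pi$ follows from Deligne's mixed Hodge theory: the composite $H^1(X_0,\CC)\xrightarrow{\sim} V$ (the isomorphism recalled just before \eqref{eq:DR}) followed by integration over $\gamma_1,\dots,\gamma_m$ is, up to sign, the natural pairing $H^1(X_0,\CC)\times H_1(X_0,\Z)\to\CC$, and since $\gamma_1,\dots,\gamma_m$ form a basis of $H_1(X_0,\Z)/\mathrm{torsion}$ this pairing is a perfect duality; in particular it is surjective onto $\CC^m$. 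Hence ${\rm RH}_D|_{M^0_{\rm DR}(X/D)}$ is surjective.

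Finally, the \textbf{covering map} assertion. Since ${\rm RH}_D$ is a surjective homomorphism of connected complex Lie groups, it is a covering map if and only if its kernel is discrete, equivalently the induced Lie algebra map is an isomorphism; here the Lie algebra map is $\Pi: V\to\CC^m$, which is surjective by the previous step, so I only need $\dim_\CC V = m$, i.e. $b_1(X_0) = \dim H^0(X,\Omega_X^1(\log D)) + \dim H^1(X,\O_X)$. This is precisely the rank of $H^1(X_0,\CC)$ and equals $m$ by the choice of the $\gamma_i$, so $\Pi$ is a linear isomorphism and $\ker{\rm RH}_D = \Pi^{-1}((2\pi i\Z)^m)/\Lambda_{\rm DR}$ is discrete. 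Therefore ${\rm RH}_D|_{M_{\rm DR}^0(X/D)}$ is a covering homomorphism, as claimed. I expect the only genuinely delicate point to be bookkeeping with the mixed Hodge structure — making sure the isomorphism $H^1(X_0,\CC)\simeq H^0(X,\Omega_X^1(\log D))\oplus \mathcal H^{0,1}(X)$ is compatible with integration over loops and with the lattice $\Lambda_{\rm DR}$ — but this is standard (it is the classical description of the generalized Jacobian / semi-abelian variety attached to $(X,D)$), and no new analysis is needed.
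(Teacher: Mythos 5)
Your proposal is correct and follows essentially the same route as the paper: both arguments identify $M_{\rm DR}^0(X/D)$ with $V/\Lambda_{\rm DR}$ via Deligne's isomorphism $H^1(X_0,\CC)\simeq H^0(X,\Omega_X^1(\log D))\oplus\mathcal H^{0,1}(X)$, deduce surjectivity from the perfect pairing between $H^1(X_0,\CC)$ and $H_1(X_0,\Z)/\mathrm{torsion}$ composed with $\exp$, and conclude that the map is a covering from the fact that the induced Lie-algebra map is an isomorphism. You simply spell out more explicitly what the paper leaves terse — in particular that the Lie-algebra map is the period map $\Pi$ and that $\dim_\CC V = m = b_1(X_0)$ — but this is a clarification rather than a different argument.
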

\begin{proof}
	It is easy to see that it is a group morphism of complex Lie groups.  Since their Lie algebras are isomorphic, it is an covering map.  Choose any $(a_1,\ldots,a_m)\in (\CC^*)^m$. Since we have the isomorphisms $$(\CC^*)^m\simeq {\rm Hom}(H_1(X_0,\CC),\CC)\simeq H^1(X_0,\CC)\simeq H^0(X,\Omega_X^1(\log D))\oplus \mathcal{H}^{0,1}(X),$$
	there exists $(\xi_1,\xi_2)\in H^0(X,\Omega_X^1(\log D))\oplus \mathcal{H}^{0,1}(X)$ such that 
	$$
	(\exp(-\int_{\gamma_1}(\xi_1+\xi_2)),\ldots,\exp(-\int_{\gamma_m}(\xi_1+\xi_2)))=(a_1,\ldots,a_m).
	$$
	Therefore, ${\rm RH}_D|_{M_{\rm DR}^0(X/D)}$ is surjective. 
\end{proof}

\begin{remark}
	Note that in general, ${\rm RH}_D|_{M_{\rm DR}^0(X/D)}$ is not injective and it can be an infinite covering. For example, let $X=\P^1$ and $D=\{0,\infty\}$. Then $\pi_1(X_0)\simeq \Z$. Consider the pair $(\O_X,d+\log z)$. Then   ${\rm RH}_D([(\O_X,d+\log z)])=1$, that  is the trivial representation.  However, $(\O_X,d+\log z)$ is not isomorphic to $(\O_X,d)$. Note that $M_{\rm DR}(X/D)=H^0(X,\Omega_{X}^1(\log D))\simeq \CC$, and $M_{\rm B}(X_0)\simeq \CC^*$, with ${\rm RH}_D$ being the exponential map. Hence ${\rm RH}_D|_{M_{\rm DR}^0(X/D)}$ is an infinite covering map. 
\end{remark}

Let us state the following criterion for the  translate of subtori. 
\begin{lemme}\label{lem:criteria}
	Let $X\subset (\CC^*)^m=:T$ be an irreducible Zariski closed subset of the complex algebraic torus $T$. Assume that there exists a Zariski open subset $X^\circ\subset X$ satisfying the following properties
	\begin{itemize}
		\item $X^\circ$ is smooth,
		\item For any $p\in X^\circ$, and any $\xi\in T_{p}X$, there exists some $  \ep$ such that 
		\begin{align}\label{eq:tori}
			\{z\in \DD_\ep \mid\exp(z\xi_0)p\}\subset X.
		\end{align}  Here $\xi_0\in \CC^m$ is uniquely  defined so that \begin{align}\label{eq:xi0}
			\frac{d}{dz}|_{z=0}\exp(z\xi_0)p=\xi 
		\end{align}
	\end{itemize}
	Then $X$ is a translate of a subtorus in $T$.  
\end{lemme}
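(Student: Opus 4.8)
\textbf{Plan for the proof of Lemma \ref{lem:criteria}.}

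The plan is to reduce the statement to the classical fact that an algebraic subgroup of an algebraic torus is again a subtorus, combined with the observation that an integral subvariety of the foliation by cosets of a fixed subtorus which passes through a generic point must be a full coset. First I would work on the universal cover $\mathbb{C}^m \to (\CC^*)^m$ given by the exponential map $\exp(z_1,\dots,z_m) = (e^{z_1},\dots,e^{z_m})$, and let $\widetilde{X}$ denote a (local, or global multivalued) lift of $X$; the tangent space condition \eqref{eq:tori}--\eqref{eq:xi0} translates into the assertion that for $p\in X^\circ$ and any $\xi\in T_pX$, the affine line $\{\widetilde p + z\xi_0 : |z|<\ep\}$ lies in $\widetilde X$, where $\xi_0$ is the left-translate of $\xi$ to the Lie algebra $\mathbb{C}^m$ of $T$. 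In other words, near a generic point $X$ is a union of pieces of cosets of the connected subgroups $\exp(V)$ where $V = T_{\widetilde p}\widetilde X \subset \mathbb{C}^m$ is a fixed linear subspace (fixed because $X$ is irreducible, so $\dim T_pX$ is constant on $X^\circ$ and, by a standard argument using that $X^\circ$ is connected and the distribution $p\mapsto$ (left-translate of $T_pX$) is integrable, the subspace $V$ is locally, hence globally, constant along $X^\circ$).

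Next I would upgrade this to a global statement. Fix $p_0\in X^\circ$. By \eqref{eq:tori}, for every $\xi\in T_{p_0}X$ a small arc $\exp(z\xi_0)p_0$ lies in $X$; since $X$ is Zariski closed and $\exp(z\xi_0)p_0$ is (the restriction of) an entire curve, the whole one-parameter subgroup coset $\{\exp(z\xi_0)p_0 : z\in\CC\}$ lies in $X$ (an algebraic subset containing an arc of an entire curve $\CC\to T$ contains its Zariski closure; and the Zariski closure of a one-parameter subgroup coset in $T$ is a coset of a subtorus). Varying $\xi$ over a basis of $V=T_{p_0}X$, we conclude that the coset $p_0\cdot T_1 \subset X$, where $T_1 := \overline{\exp(V)}$ is the smallest subtorus of $T$ whose Lie algebra contains $V$; note $\dim T_1 \ge \dim V = \dim X$. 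On the other hand $p_0 \cdot T_1$ is irreducible of dimension $\ge \dim X$ and is contained in the irreducible variety $X$, so $X = p_0\cdot T_1$ and in particular $\dim T_1 = \dim X$, i.e. $V$ was already the Lie algebra of a subtorus. This exhibits $X$ as a translate of the subtorus $T_1$, which is exactly the conclusion.

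The step I expect to be the main obstacle is the passage from ``a small arc of the one-parameter curve $z\mapsto \exp(z\xi_0)p$ lies in $X$'' to ``the whole coset lies in $X$'', together with the identification of the Zariski closure of a one-parameter subgroup with a coset of a subtorus. The first half is a Zariski-density argument: the entire map $z\mapsto \exp(z\xi_0)p_0$ from $\CC$ to $T$ has image whose Zariski closure is an irreducible algebraic subset $Z\subset T$; since the map is analytic and an arc of it meets $X$ in a set with an accumulation point, and $X$ is analytically (hence Zariski) closed, $Z\subset X$. The second half is the structure theory of algebraic subgroups of tori: the closure of $\exp(\CC\xi_0)\cdot e$ is a connected algebraic subgroup, hence a subtorus $H$ of $T$, and then $Z = p_0 H$. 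Once these are in hand, the constancy of the tangent distribution on $X^\circ$ (using irreducibility and Frobenius integrability of the translated distribution, the latter automatic since $T$ is abelian so any translation-invariant subbundle is integrable) finishes the argument. I would also remark that this lemma is precisely what is needed to conclude in the jumping-loci theorem: the locus $\{\tau : \dim H^p(X\setminus D,\tau)\ge k\}$ is Zariski closed in $M_{\rm B}(X\setminus D)$ by semicontinuity, and Theorem \ref{thm:convergence} (via Theorem \ref{complete2}) supplies exactly the tangent-direction extension property \eqref{eq:tori} at generic smooth points, after transporting through the covering map ${\rm RH}_D$ of Lemma \ref{lem:iso}.
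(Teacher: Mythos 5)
Your proof follows essentially the same strategy as the paper's: extend the local arcs to entire one-parameter orbits using Zariski-closedness, then span the tangent directions to exhibit $X$ as a subtorus coset. The one real slip is the sentence ``Varying $\xi$ over a basis of $V=T_{p_0}X$, we conclude that the coset $p_0\cdot T_1\subset X$'': knowing that $\exp(\CC\,\xi_{i,0})p_0\subset X$ for a basis $\xi_1,\dots,\xi_k$ does not by itself give that products $\exp(\CC\xi_{1,0})\cdots\exp(\CC\xi_{k,0})p_0$ lie in $X$, since the individual orbits are distinct curves through $p_0$ and you have no control of tangent spaces at points of $X$ away from $p_0$. The correct move --- which you in fact set up one sentence earlier --- is to use the hypothesis for \emph{every} $\xi\in T_{p_0}X$, so that for each fixed $\xi\in V$ one has $\exp(\xi_0)p_0\in X$ (evaluate the full orbit at $z=1$), giving $\exp(V)p_0\subset X$ directly, and then $\overline{\exp(V)}\,p_0=T_1 p_0\subset X$. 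This is precisely what the paper encodes by the map $f(z_1,\dots,z_k)=\exp\bigl(\sum z_i\xi_{i,0}\bigr)p$: setting $\xi:=\sum z_i\xi_i\in T_pX$ and taking $z=1$ in \eqref{eq:tori2}. With that fix your dimension count $\dim T_1\geq\dim V=\dim X$ together with irreducibility finishes the argument, just as in the paper. The preliminary discussion of constancy of the (translated) tangent distribution and its integrability is unnecessary for the proof as actually carried out, since you only use the tangent space at a single generic $p_0$.
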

\begin{proof} 
	By \eqref{eq:tori}, there exists some $\ep>0$ such that 
	\begin{align}\label{eq:tori3}
		\{z\in \DD_\ep  \mid\exp(z\xi_{0})p\}\subset X 
	\end{align}  Since $X$ is Zariski closed, it follows that 
	\begin{align}\label{eq:tori2}
		\{z\in  \CC \mid\exp(z\xi_{0})p\}\subset X 
	\end{align}
	for any $\xi\in T_pX$ with $\xi_0$ defined by \eqref{eq:xi0}. 
	
	Let $\xi_1,\ldots,\xi_k\in T_pX$ be  a basis for $T_pX$. Since $p$ is a smooth point of $X$, then $\dim X=k$.  $\xi_{i,0}\in \CC^m$ is uniquely  defined so that $\frac{d}{dz}|_{z=0}\exp(z\xi_{i,0})p=\xi_i$. Then by \eqref{eq:tori2}, we have
	\begin{align}\label{eq:tori1}
		\{(z_1,\ldots,z_k)\in  \CC^k \mid\exp(\sum_{i=1}^{k}z\xi_{i})p\}\subset X. 
	\end{align}
	Consider a group morphism $f$ defined by 
	\begin{align}
		f:	 (\CC^k,+)&\to (\CC^*)^m\\\nonumber
		(z_1,\ldots,z_k)&\mapsto \exp(\sum_{i=1}^{k}z\xi_{i})p
	\end{align} 
	By construction, $f$ is a \emph{local} immersion at the origin.  By \eqref{eq:tori1},  the image  $f(\CC^k)$ is contained in an irreducible Zariski closed subset $X\cdot p^{-1}$ of dimension $k$. Therefore,  $f(\CC^k)$ is Zariski closed, and a subtorus of $(\CC^*)^m$.  and thus $X=f(\CC^k)\cdot p$ is a translate of subtori.  
\end{proof}


\subsection{A fine resolution for relative log De Rham complex}\label{subsec:universal}
Fix a  $(L,\nabla)\in M_{\rm DR}(X/D)$.   Let $(\xi, \omega):\DD\to \mathcal H^{0,1}(X)\times H^0(X,\Omega_{X}(\log D))$ be a holomorphic map such that $(\xi(0),\omega(0))=(0,0)$.  Let $p_1:X\times\DD \to X$ be the projection map. 
 We consider the local universal  holomorphic line bundle 
$
\mathcal L \to X\times \DD
$  defined by 
$$\mathcal L := (p_1^*L, p_1^*\db_L+   \xi(z)) .$$
Namely, the complex structure on $p_1^*L$ is given by $p_1^*\db_L+   \xi(z)$.  The aim of this subsection is to generalise the constructions in Section \ref{hypercoho} to $\mathcal L$.

\medskip

To begin with, we first construct a metric on $\mathcal L$.
Let us equip $L$ with a harmonic metric $h$ defined in \Cref{specialhar}. We equip  $\Lie$ the pullback metric $p_1^*h$. Then 
\begin{lemme} \label{lem:perturb}
	There exists some $\ep>0$ such that  for any $z\in \DD_\ep$, $p_1^*h |_{X\times \{z\}}$ is  a harmonic metric for  $(L,\db_L+ \xi(z), \nabla+ \omega(z))\in M_{\rm DR}(X/D)$    satisfying the properties in \Cref{specialhar}. 
\end{lemme}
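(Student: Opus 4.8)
The plan is to verify, for $|z|$ sufficiently small, the three bullet points of \Cref{specialhar} for the perturbed triple $(L,\dbar_L+\xi(z),\nabla+\omega(z))$ equipped with the fixed metric $h$; the only step that is not bookkeeping is the observation that the Chern curvature of $h$ is unaffected by a $d$-closed perturbation of the complex structure.

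First I would record that for $|z|$ small the perturbed triple is a genuine object of $M_{\rm DR}(X/D)$ lying in the connected component of $(L,\nabla)$. As recalled around \Cref{identif}, $\xi(z)\in\mathcal H^{0,1}(X)$ is harmonic on a compact K\"ahler manifold and hence $d$-closed ($\dbar\xi(z)=\partial\xi(z)=0$), while $\omega(z)\in H^0(X,\Omega^1_X(\log D))$ is a closed holomorphic logarithmic form; thus $(\dbar_L+\xi(z))^2=\dbar_L\xi(z)=0$, $(\nabla+\omega(z))^2=\partial\omega(z)=0$, and the compatibility $[\dbar_L+\xi(z),\nabla+\omega(z)]=0$ reduces to $\dbar\omega(z)=\partial\xi(z)=0$. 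The family depends continuously on $z$ and equals $(L,\nabla)$ at $z=0$, so it stays in the component of $(L,\nabla)$; in particular the decomposition $D=\Delta_1+\Delta_2+\Delta_3$, which depends only on this component (\Cref{lem:auxiliary}), is unchanged, and the first bullet of \Cref{specialhar}---a statement purely about $H^0(X,\Omega^1_X(\log D))$ and this decomposition---continues to hold.

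The key point is the curvature computation. Over a contractible patch I would fix a holomorphic frame $e_0$ for the old structure, solve $\dbar f_z=\xi(z)$, and note that $e_z:=e^{-f_z}e_0$ is holomorphic for $\dbar_L+\xi(z)$; the local weight of $h$ in the frame $e_z$ is $\varphi+f_z+\overline{f_z}$, where $e^{-\varphi}=|e_0|^2_h$. Since $\partial\xi(z)=0$ one has $\ddbar(f_z+\overline{f_z})=0$, so the Chern curvature current of $h$ for $\dbar_L+\xi(z)$ coincides with the one for $\dbar_L$. Because $h$ is harmonic for $(L,\nabla)$ this curvature vanishes on $X_0$, so by the Proposition following \Cref{def:harmonic} the metric $h$ is again harmonic for the perturbed triple; moreover the divisorial part of the curvature current, hence every Lelong number $\nu(h)$ along the components of $D$, is unchanged, which is exactly the third bullet of \Cref{specialhar}. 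The same local computation also gives $D'_{h,z}=D'_h-\overline{\xi(z)}$ for the $(1,0)$-part of the Chern connection, so the associated logarithmic form is $\theta_z:=(\nabla+\omega(z))-D'_{h,z}=\theta+\omega(z)+\overline{\xi(z)}$.

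It then remains to control $\Res\theta_z$. The summand $\overline{\xi(z)}$ is smooth, so it contributes no residue along any component of $D$, and $\omega(z)$ has vanishing residue along $\Delta_1+\Delta_3$ by the first bullet already established; hence $\Res\theta_z=\Res\theta=0$ on $\Delta_1+\Delta_3$. On each of the finitely many components $Y$ of $\Delta_2$ we have $\Res_Y\theta_z=\Res_Y\theta+\Res_Y\omega(z)$, with $\Res_Y\theta\neq0$ fixed and $\Res_Y\omega(z)\to0$ as $z\to0$ (holomorphic dependence, $\omega(0)=0$); since there are only finitely many such $Y$, a single $\ep>0$ ensures $\Res_Y\theta_z\neq0$ for all of them whenever $|z|<\ep$, which is the second bullet. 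The genuinely delicate features are that the curvature invariance really needs the $d$-closedness of $\xi(z)$ (not merely its $\dbar$-closedness), and that the perturbed objects do not leave the connected component of $(L,\nabla)$, so that the combinatorial data $\Delta_1,\Delta_2,\Delta_3$ stays stable; the shrinking of the disk is needed only to keep $\Res\theta_z$ non-zero along $\Delta_2$.
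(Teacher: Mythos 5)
Your proof is correct and follows essentially the same route as the paper's: the identity $D'_{h,z}=D'_h-\overline{\xi(z)}$ (which you derive via the local frame $e_z=e^{-f_z}e_0$), the resulting invariance of the Chern curvature $i\Theta_h(L_z)=i\Theta_h(L)$, the formula $\theta_z=\theta_0+\tfrac{1}{2}(\omega(z)+\overline{\xi(z)})$ (up to the paper's $-2\theta$ normalization), and the continuity of $\Res\theta_z$ along $\Delta_2$ are exactly the paper's chain of reasoning. The extra preliminaries you add (integrability of the perturbed pair, stability of the connected component and hence of the decomposition $D=\Delta_1+\Delta_2+\Delta_3$) are correct but are details the paper takes for granted.
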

\begin{proof}
	Write $L_z$ for the holomorphic line bundle $(L,\db_L+ \xi(z))$. 	Let $\theta_z$ be the corresponding logarithmic Higgs field of $(L_z, \nabla+ \omega(z))$ with respect to the metric $h$, and let $D_z'$ be the $(1,0)$-part of the Chern connection of $(L,\db_L+ \xi(z),h)$.   By the construction in \Cref{subsec:harmonic}, we have
	$$
	D_z'=D_0'- \overline{\xi(z)},
	$$
	and 
	$$
	-2\theta_z=D_0'-\overline{\xi(z)}-\nabla-\omega(z)=-2\theta_0-\overline{\xi(z)}-\omega(z).
	$$
	Then 
	$$
	i\Theta_{h}(L_z)=i[\db_L+\xi(z), D_0'-\overline{\xi(z)}]=i\Theta_h(L),
	$$ 
	and 
	$$
	{\rm Res}_{D_i}\theta_z={\rm Res}_{D_i}\theta_0+\frac{1}{2}{\rm Res}_{D_i}\omega(z)
	$$ 
	Hence there are some $\ep>0$ such that the three conditions in \Cref{specialhar} are fulfilled  for any $z\in \DD_\ep$.
\end{proof}

Now we would like to construct a relative logarithmic De-Rham complex on $\mathcal{L}$.
We shall abusively write $\omega(z)$ and $\xi(z)$ for $p_1^*\omega(z)$ and $p_1^*\xi(z)$.  Set $\cX:=X\times \DD $.   Note that $p_1^*(\nabla+\db_L)+ \omega(z)+\xi(z)$ is a  connection for the holomorphic line bundle $\Lie$.  Note that this connection is  not flat, since its curvature is $dz\wedge  (\frac{\d \omega} {\d z}+\frac{\d \xi} {\d z})$.  However, if we consider the relative logarithmic complex, it will be flat.

Set $\cD:=D\times \DD$.  We define  $\Omega^1_{\cX/\DD}(\log \cD)$ to be the quotient
$$
p_2^*\Omega_\DD^1\to \Omega^1_{\cX}(\log \cD) \to \Omega^1_{\cX/\DD}(\log \cD) 
$$ 
where $p_2:X\times \DD\to \DD$ is the projection on the second factor.  	Since $\Omega^1_{\cX}(\log \cD)=p_1^*\Omega_X^1(\log D)\oplus p_2^*\Omega_\DD^1$, it follows that  $$\Omega^1_{\cX/\DD}(\log \cD)=p_1^*\Omega^1_X(\log D).  $$ 
Then 
$$
\Omega^i_{\cX/\DD}(\log \cD):= \Lambda^i \Omega^1_{\cX/\DD}(\log \cD)=p_1^*\Omega^i_X(\log D).
$$
Note that
$$
\Omega^i_{\cX}(\log \cD)=\oplus_{k+\ell=i}p_1^*\Omega_X^k(\log D)\otimes_{\mathcal O_\cX} p_2^*\Omega_\DD^\ell.
$$ 	
Hence $\Omega^i_{\cX/\DD}(\log \cD)$ is a direct factor of  $	 \Omega^i_{\cX}(\log \cD)$, and there is thus a natural quotient
$$
q:	 \Omega^i_{\cX}(\log \cD)\to  \Omega^i_{\cX/\DD}(\log \cD).
$$For any open subset $U$ of $\cX$, and any holomorphic section 
$$e\in \Lie\otimes \Omega_{\cX/\DD}^i(U)= \Lie\otimes \Omega_{X}^i(U),$$ we define 
$$
\tilde{\nabla}(e):=q((p_1^*\nabla+  \omega(z))(u)). 
$$  
Then we have $\tilde{\nabla}^2=0$.   Let $\gamma:\DD\to M_{\rm DR}(X/D)$ be the holomorphic map defined by $\gamma(z):=(L,\db_L+\xi(z),\omega(z))$.  
We can thus define a \emph{relative logarithmic De-Rham complex}  by
\begin{align}\label{eq:relative}
	\mathcal L^\bullet_{\gamma}:=\mathcal L \stackrel{\tilde{\nabla}}{\to  }  \mathcal L\otimes \Omega_{\cX/\DD}^1(\log D)\to \cdots\stackrel{\tilde{\nabla}}{\to  }   \mathcal L\otimes \Omega_{\cX/\DD}^n(\log D)  
\end{align} 

\medskip

As in Section \ref{hypercoho},  we would like construct a fine resolution of the complex \eqref{eq:relative} . To begin with, we have the following easy lemma. 
\begin{lemme}\label{lem:invariant}
Set $L_z:=\Lie|_{X\times \{z\}}$.  	Consider the complex  of sheaves
	\begin{align}\label{eq:complex}
	\cC^\infty _{\bullet,K}(X,\Delta_1,L): \cC^\infty _{0,K}(X,\Delta_1,L)\stackrel{D_K}{\to}\cdots \stackrel{D_K}{\to} \cC^\infty _{2n,K}(X,\Delta_1,L)
	\end{align} 
	defined in \Cref{defend1}.    	For any $z\in \DD$, we have the identification
	$$
  \cC^\infty _{\bullet,K}(X,\Delta_1,L)= \cC^\infty _{\bullet,K}(X,\Delta_1,L_z)
	$$
\end{lemme}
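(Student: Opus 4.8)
\textbf{Proof plan for \Cref{lem:invariant}.}

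The plan is to observe that the content of this lemma is essentially tautological once one unravels the definitions, and to pin down why the relevant data is genuinely independent of $z$. Recall that the sheaf $\cC^\infty_{\bullet,K}(X,\Delta_1,L)$ of \Cref{defend1} is built from three ingredients: (i) the underlying $C^\infty$ bundle $\oplus_{p+q=\bullet}\Lambda^{p,q}T^\star_X\langle\Delta_1\rangle\otimes L$, together with (ii) the metric data $(\omega_D, g_D, h_L)$ used to define the $L^2$-norms with respect to the measures $d\mu_a$, and (iii) the iterated covariant derivatives $\nabla^r$ appearing in the finiteness condition $\int_X|\nabla^r u|^2\, d\mu_a < \infty$. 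First I would note that the $C^\infty$ bundle in (i) depends only on $X$, on $\Delta_1$, and on the \emph{smooth} bundle $L$ — and since $\Lie|_{X\times\{z\}}$ is, by construction, $(L, \db_L + \xi(z))$, i.e.\ the \emph{same smooth bundle $L$ equipped with a perturbed complex structure}, the underlying $C^\infty$ object is literally unchanged. The Dolbeault bundles $\Lambda^{p,q}T^\star_X\langle\Delta_1\rangle$ likewise depend only on the complex manifold $X$ and the divisor $\Delta_1$, not on $z$.

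Next I would address the metric data (ii). The key input is \Cref{lem:perturb}: for $z\in\DD_\ep$, the pulled-back metric $p_1^\star h|_{X\times\{z\}}$ is a harmonic metric for $(L, \db_L+\xi(z), \nabla+\omega(z))$ which still satisfies the three conditions of \Cref{specialhar}, and moreover (as computed there) $i\Theta_h(L_z) = i\Theta_h(L)$, so the Lelong numbers of $h$ along the components of $D$ — hence the decomposition $D=\Delta_1+\Delta_2+\Delta_3$ — are the \emph{same} for every $z\in\DD_\ep$. In particular the metric $\omega_D$ from \eqref{m1} and the cylindrical metric $g_D$ from \eqref{log2}, which are defined purely in terms of $X$, the $D_i$, and fixed smooth reference metrics on the $\O(D_i)$, do not depend on $z$ at all; and the weight $\varphi_{L}$ entering the $L^2$-norm is the weight of the \emph{fixed} metric $h$, again independent of $z$. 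Therefore the family of seminorms $u\mapsto \|\nabla^r u\|_{L^2,a}$ defining the topology is literally the same for all $z\in\DD_\ep$, and the connection $\nabla$ of \eqref{end2} — the Chern/Levi-Civita connection of $(T_X\langle\Delta_1\rangle, g_D)$ twisted by $h_L$ — is unchanged. Hence the \emph{sheaf} $\cC^\infty_{\bullet,K}(X,\Delta_1,L)$, as a sheaf of vector spaces with its topology, coincides with $\cC^\infty_{\bullet,K}(X,\Delta_1,L_z)$ for every $z\in\DD_\ep$; this is the identification asserted.

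I do not expect any real obstacle here: the statement is a bookkeeping consequence of \Cref{defend1} and \Cref{lem:perturb}, and the proof amounts to checking item by item that nothing in the construction of the sheaf actually sees the complex structure on $L$ except through the data $(\Delta_1,\Delta_2,\Delta_3,\omega_D,g_D,h_L)$, all of which \Cref{lem:perturb} holds constant along the disk. The one point worth spelling out explicitly in the write-up is that the differential $D_K$ does \emph{not} respect this identification: $D_K$ incorporates $\db_{L_z}$ and the Higgs field $\theta_z = \theta_0 + \tfrac12(\ol{\xi(z)} + \omega(z))$, both of which genuinely vary with $z$, and it is precisely this $z$-dependence of the differential on a $z$-independent complex of sheaves that will let us treat the family \eqref{eq:relative} as a single object with a varying twisted differential $D_{z,K}$ in the sense of \Cref{equivthm}. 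I would therefore conclude the proof with one sentence emphasizing that the lemma identifies only the underlying graded sheaf and its topology, not the complex structure of the resolution.
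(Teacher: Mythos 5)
Your proposal identifies the right structure, but it contains one factually incorrect claim that leaves a small logical gap which the written argument does not close. You assert that "the connection $\nabla$ of \eqref{end2} — the Chern/Levi-Civita connection of $(T_X\langle\Delta_1\rangle, g_D)$ twisted by $h_L$ — is unchanged." This is not true: the connection of \eqref{end2} on the $L$-factor is the Chern connection of $(L_z, h)$, which is determined by the metric \emph{and} the complex structure $\db_{L_z}=\db_L+\xi(z)$. As \Cref{lem:perturb} itself records, the $(1,0)$-part of the Chern connection becomes $D'_h - \overline{\xi(z)}$, so the full Chern connection shifts by the imaginary smooth $1$-form $\xi(z)-\overline{\xi(z)}$. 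Hence $\nabla$ genuinely depends on $z$, and the family of seminorms $u\mapsto\|\nabla^r u\|_{L^2,a}$ is not \emph{literally} the same for all $z$, contrary to what you wrote.

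What is true, and what you should say instead, is that the two connections differ by a globally defined smooth $1$-form on $X$ whose coefficients in the adapted frame $(e_i^\star)$ have logarithmic growth in the sense of \Cref{pogrow} (for a smooth form this follows from the coefficient estimates of \Cref{coeff}, or simply from $\omega_D\geq \tfrac12\omega_X$ and \Cref{lemend2}). Consequently, writing $\nabla_z = \nabla_0 + A_z$ with $A_z := \xi(z)-\overline{\xi(z)}$, the iterated covariant derivative $\nabla_z^r u$ equals $\nabla_0^r u$ plus lower-order terms of the form (logarithmic-growth coefficient)$\times\nabla_0^{s} u$ with $s<r$. \Cref{lemend3} (or \Cref{propend1}) then gives the equivalence of the finiteness conditions $\int_X|\nabla_z^r u|^2\,d\mu_a<\infty$ and $\int_X|\nabla_0^r u|^2\,d\mu_a<\infty$, for all $r$ and $a$ simultaneously. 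With this correction your argument is complete and closely tracks the paper's intent; the paper itself dismisses the point with a one-line "follows from the definition," which tacitly assumes exactly the bookkeeping you are making explicit here. Your closing remark about $D_K$ varying with $z$ is correct and worth keeping.
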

\begin{proof}
This follows from the definition of \Cref{defend1}.    
\end{proof}
We now replace $\cX$ by $X\times \DD_\ep$.   Write $\tilde{\Delta}_1:=\Delta_1\times \DD_\ep$.  Consider the sheaf   $ \cC^\infty _{\bullet,K}(\cX,\tilde{\Delta}_1,\Lie)$ defined \eqref{defend1}.  By \Cref{lem:invariant}, we have
a splitting
\begin{align}\label{eq:split}
	\cC^\infty _{\bullet,K}(\cX,\tilde{\Delta}_1,\Lie)=p_2^* dz\wedge    	\cC^\infty _{\bullet,K}(\cX,\tilde{\Delta}_1,\Lie) 
\oplus p_2^* d\bar{z}\wedge    p_1^* \cC^\infty _{\bullet-1,K}(X,\Delta_1,L)\oplus  p_1^* \cC^\infty _{\bullet-1,K}(X,\Delta_1,L)
\end{align} 
where 
$$
p_1^* \cC^\infty _{\bullet,K}(X,\Delta_1,L)=p_1^{-1}\cC^\infty _{\bullet,K}(X,\Delta_1,L)\otimes_{p^{-1}C^\infty_X}C^\infty_{\cX} 
$$ 
Let 
$ \cC^\infty _{\bullet,K}(\cX/\DD_\ep,\tilde{\Delta}_1,\Lie)$ be the quotient of  $ \cC^\infty _{\bullet,K}(\cX,\tilde{\Delta}_1,\Lie)$ by  $p_2^* dz\wedge    	\cC^\infty _{\bullet,K}(\cX,\tilde{\Delta}_1,\Lie) $. Then it is isomorphic to 
\begin{align}\label{eq:iso}
p_2^* d\bar{z}\wedge    p_1^* \cC^\infty _{\bullet-1,K}(X,\Delta_1,L)\oplus  p_1^* \cC^\infty _{\bullet-1,K}(X,\Delta_1,L).
\end{align} 

For the connection $\tilde{D}_K:=p_1^*(\nabla+\db_L)+  \omega(z)+\xi(z) $,   we note that 
\begin{align}\label{eq:preserve2}
	\tilde{D}_K:p_2^* d\bar{z}\wedge    p_1^* \cC^\infty _{\bullet-1,K}(X,\Delta_1,L)\to p_2^* d\bar{z}\wedge    p_1^* \cC^\infty _{\bullet,K}(X,\Delta_1,L),\\\label{eq:preserve}
	\db_\Lie:  p_2^* d\bar{z}\wedge    p_1^* \cC^\infty _{\bullet-2,K}(X,\Delta_1,L)\oplus  p_1^* \cC^\infty _{\bullet-1,K}(X,\Delta_1,L)\to \\\nonumber
p_2^* d\bar{z}\wedge    p_1^* \cC^\infty _{\bullet-1,K}(X,\Delta_1,L)\oplus  p_1^* \cC^\infty _{\bullet,K}(X,\Delta_1,L)
\end{align}
and 
$$
\tilde{D}_K^2= dz\wedge  (\frac{\d \omega} {\d z}+\frac{\d \xi} {\d z}).
$$
It thus induces an integrable connection on $ \cC^\infty _{\bullet,K}(\cX/\DD_\ep,\tilde{\Delta}_1,\Lie)$, denoted abusively by $\tilde{D}_K$. Hence we have a complex 
\begin{align}\label{eq:complex2}
	 \cC_{K,\gamma}^\bullet := \cC^\infty _{\bullet,K}(\cX/\DD_\ep,\tilde{\Delta}_1,\Lie)\stackrel{\tilde{D}_K}{\to}\cdots  \stackrel{\tilde{D}_K}{\to} \cC^\infty _{\bullet,K}(\cX/\DD_\ep,\tilde{\Delta}_1,\Lie)
\end{align} 

As an analogue of Theorem \ref{quasi}, we have the following relative version.
\begin{proposition}\label{prop:qis}
	The two complexes  \eqref{eq:relative} and \eqref{eq:complex2} are quasi-isomorphic via the  natural morphism.
\end{proposition}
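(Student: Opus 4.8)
The plan is to reduce \Cref{prop:qis} to the absolute statement \Cref{quasi} (i.e. \Cref{quasiiso}), applied fibrewise over the disk $\DD_\ep$, together with the fact that both complexes are built from $X$-objects tensored with smooth functions on the base. More precisely, both complexes in \eqref{eq:relative} and \eqref{eq:complex2} are, after the identifications \eqref{eq:iso} and $\Omega^i_{\cX/\DD}(\log\cD) = p_1^\star\Omega^i_X(\log D)$, obtained from the corresponding complexes on $X$ by the operation $p_1^{-1}(-)\otimes_{p_1^{-1}C^\infty_X}C^\infty_\cX$ in the smooth case, and by $p_1^{-1}(-)\otimes_{p_1^{-1}\O_X}\O_\cX$ in the holomorphic case — but with the differentials twisted by the holomorphically varying data $\xi(z),\omega(z)$. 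So the first step is to observe that the natural morphism $i$ between the two relative complexes is well defined (a holomorphic log form on a fibre lies in $\cC^\infty_{\bullet,K}$, by \Cref{lelongnumb} and \Cref{lem:invariant}, and the identification of sheaves is independent of $z\in\DD_\ep$).

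The second step is local exactness of the cone of $i$, which I would check fibrewise and then promote to the relative statement. Fix $z_0\in\DD_\ep$ and a small Stein polydisk $U\times V\subset \cX$ with $V\ni z_0$. On the central slice the data $(\xi(z_0),\omega(z_0))$ defines a point $\gamma(z_0)\in M_{\rm DR}(X/D)$, for which $p_1^\star h$ restricts to a harmonic metric satisfying \Cref{specialhar} by \Cref{lem:perturb}; hence \Cref{quasiiso} (more precisely the fibrewise version of \Cref{localsolution} and the Dolbeault-type patching in the proof of \Cref{quasiiso}) applies. The key point is that the relevant $\dbar$- and $D_K$-solving lemmas — \Cref{fujiki}, \Cref{debareq}, \Cref{locreg}, \Cref{localsolution} — can be run with holomorphic dependence on the base parameter: one solves the $\dbar$-equation on $\cX$ for the operator $\db_\Lie$ using \eqref{eq:preserve}, chooses the minimal-norm solution, and the averaging/boot-strapping arguments go through uniformly. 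Concretely, given a $\tilde D_K$-closed relative form $f$ on $U\times V$ of degree $k$, one expands $f=\sum f^{p,q}$ and peels off the top antiholomorphic degree using a relative version of \Cref{debareq} applied to $\db_\Lie$ — here \eqref{eq:split} and \eqref{eq:preserve} guarantee that $\db_\Lie$ does not mix in the $dz$-direction — and then iterates exactly as in \Cref{localsolution} and \Cref{quasiiso}, ending either with a relative $D_K$-exact correction (if $k>n$) or with a $\tilde\nabla$-closed relative holomorphic log form. The injectivity direction is identical: a relative holomorphic log form that is $\tilde D_K$-exact is, after removing higher $(p,q)$-parts as in the proof of \Cref{quasiiso}, equal to $\tilde D_K$ of a relative $(k-1,0)$-log form, which must be $\db_\Lie$-closed and hence, by \Cref{lelongnumb} applied on each fibre, holomorphic with log poles, so it is $\tilde\nabla$-exact.

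The main obstacle I anticipate is \emph{uniformity in the base parameter} of the $L^2$ solving operators. In the fibrewise proofs of \Cref{fujiki} and \Cref{localsolution} one uses the model Poincaré/conic metric $\omega_D$ and the harmonic metric $h$; here one wants a single metric $p_1^\star\omega_D$ (resp. $p_1^\star h$) on $\cX$ and solutions of the $\db_\Lie$-equation whose $\mathcal L^2$-norms, together with those of $D'_{h_L}$ applied to them, are controlled on $U\times V'$ for $V'\Subset V$. Because $\xi(z),\omega(z)$ vary holomorphically and $V$ is relatively compact, the Bochner-type estimates of \Cref{bochner2} and the curvature bounds of \Cref{mixbound}, \Cref{logmixbound} hold with constants independent of $z\in V'$; the extra curvature term coming from the non-flatness $\tilde D_K^2 = dz\wedge(\partial_z\omega+\partial_z\xi)$ is a smooth $(1,1)$-form on $\cX$, hence bounded, and does not affect the relative estimates since it is killed by the quotient $q\colon\Omega^\bullet_\cX(\log\cD)\to\Omega^\bullet_{\cX/\DD}(\log\cD)$. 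Once this uniformity is in place, the increasing-exhaustion patching argument from \Cref{quasiiso} produces convergent corrections on all of $U\times V$, and coherence of $\ker\tilde\nabla$ (which is $p_1^\star\ker\nabla$, coherent by base change) takes care of the $k\le n$ case exactly as before. I would therefore organize the write-up as: (i) well-definedness of $i$; (ii) a relative solving lemma extending \Cref{debareq}/\Cref{localsolution}, proved by the averaging trick over the fibre and the uniform Bochner estimate; (iii) the exhaustion argument for surjectivity and the degree-lowering argument for injectivity, both verbatim from the proof of \Cref{quasiiso}.
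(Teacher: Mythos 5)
Your proposal is correct and its core is the same as the paper's: decompose a $\tilde D_K$-closed relative form by type, peel off the top antiholomorphic degree by solving a relative $\db_\Lie$-equation, and iterate — the paper's proof of \Cref{prop:qis} is essentially your ``concretely\dots'' paragraph, written out in a small polydisk in $\cX$ with \Cref{debareq} invoked for the relative solving step. Two small discrepancies in framing are worth flagging. First, quasi-isomorphism of complexes of sheaves is a stalk-local statement on $\cX$, so there is no need for the exhaustion/coherence argument you carry over from the proof of \Cref{quasiiso}; that machinery is for computing global sections, whereas here a single Stein polydisk suffices, which is exactly how the paper proceeds. Second, ``check fibrewise and then promote to the relative statement'' is not quite what happens: the slice $X\times\{z_0\}$ is not open in $\cX$, so exactness on fibres does not formally give local exactness on $\cX$. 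What one actually proves is local exactness on $\cX$ directly, with the $d\bar z$-direction handled by the ordinary Dolbeault–Grothendieck lemma (since $\db_\Lie$ does not mix in the $dz$-direction, as you correctly note via \eqref{eq:preserve}); the fibrewise statement is a consequence, not the starting point. Your discussion of uniformity of the $L^2$-estimates in $z$ is a legitimate concern that the paper glosses over when citing \Cref{debareq} (stated for $U\subset X$) in the relative setting, and your justification via \Cref{lem:perturb} and the boundedness of the curvature on $\DD_\ep$ is the right one.
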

\begin{proof}
	Fix any $x\in X$, we take an open neighborhood $U$ of $x$, that is biholomorphic to some polydisk. Pick any $e\in \cC_{K,\gamma}^i(U) $ such that $\tilde{D}_K(e)=0$.  By \eqref{eq:split}, we can decompose $$e=\sum_{k=0,\ldots,\ell} e^{i-k,k}\quad  {\rm modulo }\quad  p_2^* dz\wedge   \cC^\infty _{i-1,K}(\cX,\tilde{\Delta}_1,\Lie)$$ such that each $e^{k,i-k}$ are in \eqref{eq:iso}. Then $\tilde{D}_K(e)=0$ together with \eqref{eq:preserve} and \eqref{eq:preserve2} implies that 
	$$
	\db_\Lie(e^{i-\ell,\ell})=0.
	$$ 
	By \Cref{debareq}, there exists $f$ such that $\db_\Lie f=-e^{i-\ell,\ell}$. So we have
	$$e+\tilde{D}_K(f)=\sum_{k=0,\ldots,\ell-1} e_1^{i-k,k}   \quad  {\rm modulo }\quad  p_2^* dz\wedge   \cC^\infty _{i-1,K}(\cX,\tilde{\Delta}_1,\Lie),$$
	with each $e_1^{i-k,k}$ lying in \eqref{eq:iso}.   
	Keep this algorithm in this manner, we finally find some smooth $f$ such that
	$$
	e+\tilde{D}_K(f)=e_0^{i,0}     \quad  {\rm modulo }\quad  p_2^* dz\wedge   \cC^\infty _{i-1,K}(\cX,\tilde{\Delta}_1,\Lie),$$
	such that $\tilde{D}_K(e_0^{i,0})=0$, and $e_0^{i,0}$ is a smooth $(i,0)$-form lying in \eqref{eq:iso}. This implies that $\db_{\Lie}(e_0^{i,0})=0$, and $\tilde{\nabla}(e_0^{i,0})=0$. Hence we prove the quasi-isomorphism between the two complexes. 
\end{proof}

\subsection{Cohomology jumping loci property}
Consider the subset
\[
\Sigma_k^i(X/D)_{\rm DR}
:= \left\{ (L,\nabla) \in M_{\rm DR}(X/D) \;\middle|\; 
\dim \mathbb{H}^i \!\left(X, \Omega_X^\bullet(\log D) \otimes L\right) \geq k \right\}.
\]
It is an analytic subset of $M_{\rm DR}(X/D)$. 
Note that we have the descending chain
\[
\Sigma_k^i(X/D)_{\rm DR} \supset \Sigma_{k+1}^i(X/D)_{\rm DR} \supset \cdots.
\]
Therefore, for a \emph{generic} point $(L,\nabla) \in \Sigma_k^i(X/D)_{\rm DR}$, there exists a neighborhood $U \subset \Sigma_k^i(X/D)_{\rm DR}$ such that 
\[
\dim \mathbb{H}^i \!\left(X, \Omega_X^\bullet(\log D) \otimes L'\right)
\]
remains constant for all $(L',\nabla') \in U$.  

Let $\Delta_1=\sum_{i=1}^{m}Y_i$ and we consider any $I=\{i_1,\ldots,i_p\}\subset \{1,\ldots,m\}$.   We shall adopt the notations in \Cref{subsec:main}.   Then for a \emph{generic} point $(L,\nabla) \in \Sigma_k^i(X/D)_{\rm DR}$, there exists a neighborhood $U \subset \Sigma_k^i(X/D)_{\rm DR}$ such that  \[
\dim \mathbb{H}^i \!\left(Y_I, \Omega_{Y_I}^\bullet(\log (D_I \setminus \Delta_1)) \otimes L'\right)
\]
remains constant for all $(L',\nabla') \in U$.

\begin{thm}\label{nearbygeneric}
	Let  $(L,\nabla) \in \Sigma_k^i(X/ D)_{\rm DR}$ be a generic point and we suppose that 
	$$\rel (\Res_{D_k} (L, \nabla)) \in ]-1,0]$$
	for every component $D_k$ of $D$.
Let $\mathbb D$ be the unit disc in $\mathbb C$ and	we consider a  holomorphic map 
$$\gamma: \mathbb D \to \Sigma_k^i(X/ D )_{\rm DR}$$ 
such that $\gamma(0)= (L,\nabla)$. 
	We denote by $\gamma'(0)\in \mathcal H^{0,1}(X)\times H^0(X,\Omega_{X}(\log D))$ the first order deformation of  $\gamma$.  Then 
	\begin{equation} \label{uppertorus}
		\dim \mathbb H^i (X , \Omega_X ^\bullet (\log D )\otimes L_{\gamma (0) +z \gamma'(0)})\geq k
	\end{equation}
	for every $ |z|\ll 1$.   Here $\Omega_X ^\bullet (\log D )\otimes L_{\gamma (0) +z \gamma'(0)}$ is the logarithmic De Rham complex induced by $\gamma (0) +z \gamma'(0) \in M_{\rm DR} (X/D) $. 
\end{thm}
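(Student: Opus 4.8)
The plan is to deduce Theorem~\ref{nearbygeneric} from the machinery built in Sections~\ref{sec:solve} and \ref{subsec:universal}, by combining the resolution result \Cref{prop:qis}, the convergence of the formal solution \Cref{thm:convergence}, and a standard semicontinuity/base-change argument. First I would set $(L,\nabla)=\gamma(0)$ and write $\gamma'(0)=(\xi,\omega)$ with $\xi\in \mathcal H^{0,1}(X)$ and $\omega\in H^0(X,\Omega_X^1(\log D))$, so that the linear path $z\mapsto \gamma(0)+z\gamma'(0)$ corresponds exactly to the family $\mathcal L\to X\times\DD_\ep$ from \Cref{subsec:universal} with the linear choice $\xi(z)=z\xi$, $\omega(z)=z\omega$. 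Since $(L,\nabla)\in\Sigma^i_k(X/D)_{\rm DR}$ is generic, there is a neighbourhood in $\Sigma^i_k(X/D)_{\rm DR}$ on which $\dim\mathbb H^i$ is constant, and in particular $\dim\mathbb H^i(X,\Omega_X^\bullet(\log D)\otimes L_{\gamma(t)})\ge k$ is constant $=k'\ge k$ along the curve $\gamma(t)$ for $|t|\ll1$; moreover, by the discussion preceding the theorem, the same constancy holds along $\gamma(t)$ on every stratum $Y_I$ of $\Delta_1$ (this is where genericity of $(L,\nabla)$ is used — it gives us the hypotheses needed on all strata at once).

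Next I would translate this cohomological input into the hypotheses of \Cref{solveequs}. Using \Cref{specialhar}, fix the harmonic metric $h$ adapted to $(X,D)$, and by \Cref{lem:perturb} the pulled-back metric works simultaneously for all nearby connections. By \Cref{quasiiso} (or rather \Cref{L2cohoisom}) together with \Cref{decopols}, a class in $\mathbb H^i(X,\Omega_X^\bullet(\log D)\otimes L)$ is represented by a $D_K$-harmonic form $u_0\in C^\infty_{i,K}(X,\Delta_1,L)$. The constancy of $\dim\mathbb H^i$ along $\gamma$ means precisely that $u_0$ extends to first order along $\gamma$; that is, $-(\xi+\omega)\wedge u_0$ is $D_K$-exact on $X\setminus D$ — this is the usual Kodaira–Spencer identification of the obstruction to first-order deformation of a $D_K$-closed form, read off from differentiating $D_{t,K}U_t=0$ as in \eqref{simp46} or \eqref{par3}. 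The analogous statement on every stratum $Y_I$ — namely that every $D_K$-closed form in $C^\infty_{k'-|I|,K}(Y_I,L)$ extends to first order in the direction $\iota_I^*(\xi+\omega)$ — follows from the constancy of $\dim\mathbb H^{i}$ of the restricted complex on $Y_I$, again by the same first-order argument. So both bullets of the hypothesis of \Cref{solveequs} are met.

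Then \Cref{solveequs} (equivalently \Cref{thm:convergence}) produces, for each harmonic representative $u_0$ of a class in $\mathbb H^i(X,\Omega_X^\bullet(\log D)\otimes L)$, a convergent series $U(z)=\sum_{j\ge0}z^j u_j$ with $u_j\in C^\infty_{\bullet,K}(X,\Delta_1,L)$ solving $D_{z,K}U(z)=0$ on $X\setminus D$, i.e. a section of the relative complex $\cC^\bullet_{K,\gamma}$ in \eqref{eq:complex2} that is $\tilde D_K$-closed and restricts to $u_0$ at $z=0$. By \Cref{prop:qis} this gives an element of $\mathbb R^i(p_1)_*(\mathcal L^\bullet_\gamma)(\DD_\ep)$ lifting $u_0$. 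Running this over a basis of $\mathbb H^i(X,\Omega_X^\bullet(\log D)\otimes L)$ (choosing harmonic representatives, which are linearly independent), we obtain $k'\ge k$ sections of $\mathbb R^i(p_1)_*(\mathcal L^\bullet_\gamma)$ over $\DD_\ep$ whose values at $0$ are linearly independent; hence $\mathbb R^i(p_1)_*(\mathcal L^\bullet_\gamma)\otimes\kappa(0)\to\mathbb H^i(X,\Omega_X^\bullet(\log D)\otimes L)$ is surjective, and the cohomology-and-base-change theorem — after shrinking $\DD_\ep$ — shows $\mathbb R^i(p_1)_*(\mathcal L^\bullet_\gamma)$ is locally free of rank $\ge k'$, so that $\dim\mathbb H^i(X,\Omega_X^\bullet(\log D)\otimes L_{\gamma(0)+z\gamma'(0)})\ge k'\ge k$ for $|z|\ll1$, which is \eqref{uppertorus}.

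The main obstacle is the verification that the cohomological hypotheses needed for \Cref{solveequs} on \emph{every} stratum $Y_I$ genuinely follow from the genericity of $(L,\nabla)\in\Sigma^i_k(X/D)_{\rm DR}$: one must check that, at a generic point of the jumping locus, constancy of $\dim\mathbb H^i$ of the full log complex along the curve $\gamma$ forces constancy of the relevant restricted cohomologies on all $Y_I$ along $\gamma$ — this requires knowing that the restriction maps and the residue exact sequences interact well with the stratification, and is the point where the structure of $\Sigma^i_k(X/D)_{\rm DR}$ and the genericity hypothesis are essential. A secondary technical point is to make sure the cohomology-and-base-change argument applies in this analytic, non-proper (quasi-compact) setting — here one leans on the coherence of $\mathbb R^i(p_1)_*$ of the resolution and the fact, already exploited in the excerpt, that $\cC^\infty_{\bullet,K}$ gives a \emph{fine} resolution so that hypercohomology is computed fibrewise.
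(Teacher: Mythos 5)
Your proposal is essentially correct and follows the same overall strategy as the paper: use genericity of $(L,\nabla)$ to get constancy of the relevant cohomology dimensions on $X$ and on every stratum $Y_I$, establish the first-order extension hypotheses, invoke \Cref{solveequs} and \Cref{thm:convergence} to extend along the linear path, and conclude via \Cref{prop:qis} and cohomology-and-base-change.

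The one place where you are less careful than the paper is the step where you assert that constancy of $\dim\mathbb H^i$ along $\gamma$ gives the first-order extension of the harmonic representative $u_0$ via ``the usual Kodaira--Spencer identification.'' In the compact K\"ahler case this is standard, because one can differentiate a smoothly varying family of harmonic representatives of a flat bundle of cohomology. In the present quasi-compact setting, the existence of a holomorphic family $u(z)$ with $D_{K,z}u(z)=0$ and $u(0)$ representing the given class is \emph{not} automatic: it is supplied by exactly the same base-change plus quasi-isomorphism mechanism (\Cref{prop:qis} together with the cohomology-and-base-change theorem applied to $\mathbb R^i(p_1)_*(\mathcal L^\bullet_\gamma)$) that you invoke at the end of your argument. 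The paper therefore uses that machinery twice --- once along the given curve $\gamma$ to produce the section $u$ over $\DD_\ep$ whose first-order expansion yields the extension data on $X$ and on each $Y_I$ (via \eqref{eq:strata}), and once along the linear path $\gamma_2(z)=\gamma(0)+z\gamma'(0)$ to convert the convergent formal solution from \Cref{solveequs} into the statement $\dim\mathbb H^i(X,L_{\gamma_2(z)}^\bullet)\ge k$. You do flag the base-change concern as a ``secondary technical point'' at the end, but it in fact has to be resolved already at the first-order step, and the fine-resolution property of $\cC^\infty_{\bullet,K}$ plus the coherence of the relative complex is precisely what makes both applications legitimate. With that clarification, your argument matches the paper's proof.
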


\begin{proof} 
By the above arguments, for a generic point $(L,\nabla)\in \Sigma_k^{i}(X/D)_{\rm DR}$,  there exists a neighborhood $U \subset \Sigma_k^i(X/D)_{\rm DR}$ of $(L,\nabla)$  such that 
	\[
	\dim \mathbb{H}^i \!\left(X, \Omega_X^\bullet(\log D) \otimes L'\right)
	\]
	and 
	\begin{align} \label{eq:strata}
	\dim \mathbb{H}^{i-|I|} \!\left(Y_I, \Omega_{Y_I}^\bullet(\log (D_I\setminus \Delta_1)) \otimes L'\right)
	\end{align}
all remain  constant for all $(L',\nabla') \in U$ and any non-empty $I\subset \{1,\ldots,m\}$.

  As $\gamma:\DD\to \Sigma_k^i(X/D)_{\rm DR}$ is a holomorphic curve with $\gamma(0)=(L,\nabla)$, there exists some $\ep>0$ and holomorphic maps $(\alpha,\beta):\DD_\ep\to \mathcal H^{0,1}(X)\times H^0(X,\Omega_{X}(\log D))$ such that $(\alpha(0),\beta(0))=(0,0)$,   $\gamma(z)=(L,\db_L+\alpha(z),\nabla+\beta(z))$, and $\gamma(\DD_\ep)\subset U$.      Let $\Lie_{\gamma}^\bullet$ and $C^\bullet_{K,\gamma}$ be the two complexes defined in \Cref{eq:relative} and \eqref{eq:complex2}.   By applying \Cref{prop:qis}, after shrinking $\ep$, we can assume that $\Lie_{\gamma}^\bullet\to C^\bullet_{K,\gamma}$ is a quasi-isomorphism.  
	
	As $(L,\nabla)$ is a generic point in $\Sigma$, we know that the restiction 
	$$ \Gamma (\mathbb D_\ep, \mathbb R^i _* (\mathcal{L}^\bullet _\gamma )) \to \mathbb H^i(X, \mathcal{L}^\bullet_{K,\gamma (0)} ) $$
	is surjective by the cohomology and base change theorem.  Note that we have
	\begin{equation*}
		\begin{tikzcd}
		\Gamma(\mathbb D_\ep, \mathbb R^i _* (\mathcal{L}^\bullet _\gamma ))\arrow[d,"\simeq"] \arrow[r] & \mathbb H^i(X, \mathcal{L}^\bullet_{K,\gamma (0)} )\arrow[d,equal] \\
		\bH^i(\cX, \mathcal{L}^\bullet _\gamma )\arrow[r]\arrow[d,"\simeq"]&  \mathbb H^i(X, \mathcal{L}^\bullet_{K,\gamma (0)} )\arrow[d,"\simeq","\varphi"']	 \\
H^i(\cX, C^\bullet_{K,\gamma})\arrow[r,"\psi"] &  H^i( C^\infty _{\bullet,K}(X, {\Delta}_1,L))
		\end{tikzcd}
	\end{equation*}
	where the first vertical isomorphism comes from the definition. 
		Note that the relative complexes $\Lie_{\gamma}^\bullet$ and $C^\bullet_{K,\gamma}$ are quasi-isomorphic. Hence  the vertical arrows in the above diagram are all isomorphisms.  It follows that the bottom arrow is also surjective. Therefore,  for an element $e\in \mathbb H^i(X, \mathcal{L}^\bullet_{K,\gamma (0)} ) $, we can find a section  $u \in  C^\infty _{\bullet,K}(\cX/\DD_\ep,\tilde{\Delta}_1,\Lie)$ such that $\psi(u)=k(e)$.  

	Now we denote $u (z) :=  u |_{X\times \{z\}}$. Then  $u (0)$ represents $e$ and $D_K u =0$ in the sense of the relative complexe. In particular, we have
 $D_{K,z}u(z)=0$  for each $z\in \DD_\ep$. Here $D_{k,z}:=\nabla+\db_L+\alpha(z)+\omega(z)$.   Set $\xi:=\frac{\d \alpha}{\d z}(0)$ and $\omega:=\frac{\d \beta}{\d z}(0)$. If we take the first order expansion for $
	D_{K,z}u(z)=0
	$, it follows that
\begin{align}\label{eq:extension}
	 D_Ku(0)=0, \quad 	D_K \frac{\d u }{\d z}(0) =-(\xi+\omega)\wedge u(0).
\end{align} 
	Moreover, by \eqref{eq:strata} and the same arguments as above, we can show that, for any non-empty $I=\{i_1,\ldots,i_p\}\subset \{1,\ldots,m\}$,    \eqref{eq:extension} holds for any $Y_I$. In other words,   every $D_K$-closed form on $C_{i-|I|, K}^\infty(Y_I, L)$ can be extended   to order one. Then the second condition in \Cref{solveequs} is verified. Hence we can apply \Cref{solveequs}. 
	
	Let us consider another holomorphic map $\gamma_2:\DD_\ep\to M_{\rm DR}(X/D)$ defined by $\gamma_2(z)=(L,\db_L+z\xi,\nabla+z\omega)$.   We can shrink $\ep$ such that for the two complexes $\Lie_{\gamma_2}^\bullet$,  $C_{K,\gamma_2}^\bullet$, the inclusion $C_{K,\gamma_2}^\bullet\to \Lie_{\gamma_2}^\bullet$  is also a quasi-isomorphism by \Cref{prop:qis}.

	By \Cref{solveequs}, there exists   $u_j\in C^\infty_{i,K}(X,  \Omega^{\bullet}_X (\log \Delta_1 ) \otimes L)$ such that  they satisfy \eqref{simp4611}. Moreover, after possibly shrinking $\DD_\ep$, 
	for 	$$
	\sigma:=	u(0)+zu_1+\cdots+\cdots+z^ju_j+\cdots, 
	$$ 
	we have  $\sigma\in \cC_{K,\gamma_2}^i(\cX)$ by \Cref{thm:convergence}. 
	Let $\tilde{D}_{K}:C_{K,\gamma_2}^{\bullet}\to C_{K,\gamma_2}^{\bullet+1}$ be the  relative differential defined in  \Cref{subsec:universal}.  By \eqref{simp4611}, we have 
	$$
	\tilde{D}_K(\sigma)=0.
	$$ 
	By \Cref{prop:qis}, this implies that	 
	$$
	\mathbb R^i (p_1)_*(\mathcal L^\bullet_{\gamma_2})(\DD_\ep)\to  \bH^i(X, \Lie_{\gamma_2}^\bullet|_{X\times\{0\}})=\bH^i(X,L^\bullet)
	$$
	is surjective. Set $L_z:=(L,\db_L+z\xi)$ and $\nabla_z:=\nabla+z\omega$. Then $(L_z,\nabla_z)=\gamma_2(z)=\gamma(0)+z\gamma'(0)$. Let $L_z^\bullet$ be the logarithmic De Rham complex
	$$
	L_z\stackrel{\nabla_z}{\to}	L_z\otimes \Omega_{X}(\log D)\stackrel{\nabla_z}{\to}\cdots \stackrel{\nabla_z}{\to} L_z\otimes \Omega^{\dim X}_{X}(\log D).
	$$  Hence, by the \emph{cohomology and base change theorem}, after possibly shrinking $\ep$,  we have
	$\mathbb R^i (p_1)_*(\mathcal L^\bullet_{\gamma})$ is locally free over $\DD_\ep$, and $\bH^i(X, L_z^\bullet)$  is constant for $z\in \DD_\ep$.     
	   The theorem is thus proved.
\end{proof}

Now we would like to prove the main theorem in this section.

\begin{thm}\label{complete2}
	Let $X$ be a compact K\"ahler manifold and let $D=\sum_{i=1}^{k}D_i$ be a simple normal crossing divisor.    Then the jumping locus $\Sigma_k^i(X\setminus D)_{\rm B}$ is a finite union of translate of subtori. 
\end{thm}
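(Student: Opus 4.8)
\textbf{Proof strategy for Theorem \ref{complete2}.}
The plan is to transfer the statement about the Betti jumping locus $\Sigma_k^i(X\setminus D)_{\rm B}$ to the De Rham side and there apply Theorem \ref{nearbygeneric}, using the criterion for translates of subtori provided by Lemma \ref{lem:criteria}. First I would recall that the Riemann--Hilbert map ${\rm RH}_D: M_{\rm DR}(X/D)\to M_{\rm B}(X\setminus D)$ is holomorphic and, by Deligne's theorem \cite[II, 6.10]{Del} (see the diagram \eqref{diag} and Lemma \ref{lem:surj}), surjective; moreover on the identity components ${\rm RH}_D|_{M^0_{\rm DR}(X/D)}$ is a covering map of complex Lie groups by Lemma \ref{lem:iso}. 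Since $\dim_{\CC} H^p(X\setminus D,\tau)$ depends only on the local system $\tau$, and since by Theorem \ref{Deli} one has $\mathbb{H}^i(X,\Omega^\bullet_X(\log D)\otimes L)\simeq H^i(X\setminus D,\tau)$ whenever the residues of $(L,\nabla)$ along the $D_i$ are not positive integers, we get that $\Sigma_k^i(X\setminus D)_{\rm B}$ is, up to a translation and restriction to a suitable locus of residues, the image under ${\rm RH}_D$ of $\Sigma_k^i(X/D)_{\rm DR}$. Concretely: every connected component of $M_{\rm B}(X\setminus D)$ is a translate of $M^0_{\rm B}(X\setminus D)\simeq (\CC^\star)^m$; fix such a component and a base point $\tau_0$, choose by Proposition \ref{specialhar} a lift $(L_0,\nabla_0)\in M_{\rm DR}(X/D)$ with $\rel(\Res_{D_j}(L_0,\nabla_0))\in\,]-1,0]$ for every $j$ (so in particular no residue is a positive integer, and Theorem \ref{Deli} applies in a whole neighbourhood by the continuity of residues), and observe that translation by $\tau_0^{-1}\cdot{\rm RH}_D(L_0,\nabla_0)$ reduces us to studying a Zariski closed subset $Z:=\tau_0^{-1}\cdot\big(\Sigma_k^i(X\setminus D)_{\rm B}\cap(\text{component})\big)\subset M^0_{\rm B}(X\setminus D)=(\CC^\star)^m$.

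Next I would verify that $Z$ satisfies the hypotheses of Lemma \ref{lem:criteria}. It is an irreducible (after passing to components) Zariski closed subset of the torus $T=(\CC^\star)^m$; take $Z^\circ$ to be its smooth locus minus the proper closed subsets where the jump index strictly increases, i.e. the locus where $(L,\nabla)$ (a local lift, well-defined up to the covering) is a \emph{generic} point of the corresponding De Rham jumping stratum $\Sigma_k^i(X/D)_{\rm DR}$ in the sense used in Theorem \ref{nearbygeneric}. This is a Zariski dense open subset because the descending chain $\Sigma_k^i\supset\Sigma_{k+1}^i\supset\cdots$ stabilises and the same holds on each stratum $Y_I$. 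For $p\in Z^\circ$ and a tangent vector $\xi\in T_pZ$, lift $p$ to $(L,\nabla)\in M_{\rm DR}(X/D)$; since ${\rm RH}_D$ is a local biholomorphism near $(L,\nabla)$, the tangent vector $\xi$ corresponds via the identifications \eqref{eq:RH}, \eqref{eq:DR} to an element $\gamma'(0)\in \mathcal H^{0,1}(X)\times H^0(X,\Omega_X(\log D))$, realised by a holomorphic curve $\gamma:(\CC,0)\to\Sigma_k^i(X/D)_{\rm DR}$ with $\gamma(0)=(L,\nabla)$ — such a curve exists because $p\in Z$ means the analytic germ of the jumping locus at $p$ is positive-dimensional in the direction $\xi$ (and $Z$ is the image of the De Rham jumping locus under the covering ${\rm RH}_D$). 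Then Theorem \ref{nearbygeneric} gives $\dim \mathbb{H}^i(X,\Omega^\bullet_X(\log D)\otimes L_{\gamma(0)+z\gamma'(0)})\geq k$ for $|z|\ll 1$; pushing forward by ${\rm RH}_D$ and using Theorem \ref{Deli} (valid along the whole small disc, again by the residue condition), this says exactly that $\{\exp(z\xi_0)p : |z|<\varepsilon\}\subset Z$ in the notation of Lemma \ref{lem:criteria}, because the linear path $\gamma(0)+z\gamma'(0)$ in $M^0_{\rm DR}(X/D)$ maps under ${\rm RH}_D$ to a one-parameter subgroup orbit $z\mapsto\exp(z\xi_0)p$ by the explicit formula \eqref{eq:RH}. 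Hence Lemma \ref{lem:criteria} applies and each component of $Z$ is a translate of a subtorus of $T$; translating back, $\Sigma_k^i(X\setminus D)_{\rm B}$ is a finite union of translates of subtori.

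The main obstacle — and the point I would treat most carefully — is the compatibility of the three "jumping loci" pictures near a generic point: the Betti jumping locus, the De Rham jumping locus, and the linear-path/exponential-orbit structure coming from Theorem \ref{nearbygeneric}. Specifically one must (i) ensure the residue normalisation of Proposition \ref{specialhar} can be maintained in a full neighbourhood, so that Theorem \ref{Deli} identifies hypercohomology of the log complex with $H^\bullet(X\setminus D,\tau)$ not just at the base point but along the entire deformation (this is where the condition $\rel(\Res_{D_j})\in\,]-1,0]$, hence not a positive integer, together with the continuity of residues established in Proposition \ref{specialhar} and the discussion around \eqref{samecurv}, is essential); (ii) check that the covering ${\rm RH}_D|_{M^0_{\rm DR}}$, which may be an infinite covering, does not cause the image of an algebraic subset to fail to be algebraic — here one uses that ${\rm RH}_D$ restricted to the (finitely many) connected components meeting a fixed component of $M_{\rm B}$ is a covering onto that component, that $\Sigma_k^i(X/D)_{\rm DR}$ is analytic, and that Lemma \ref{lem:criteria} only needs the local exponential-orbit property plus Zariski closedness, which is automatic since $\Sigma_k^i(X\setminus D)_{\rm B}$ is a priori a closed algebraic subset of the algebraic torus $M_{\rm B}(X\setminus D)$ by general deformation theory; and (iii) finiteness: a priori only that it is a translate of a subtorus \emph{locally} near each generic point, and finitely many such (one per irreducible component, of which there are finitely many since $M_{\rm B}$ is of finite type). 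I expect (i) to be the genuinely delicate part; the rest is bookkeeping with the diagram \eqref{diag} and the formulas \eqref{eq:DR}, \eqref{eq:RH}.
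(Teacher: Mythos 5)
Your proposal is correct and follows essentially the same route as the paper: pick a general point of an irreducible component of $\Sigma_k^i(X\setminus D)_{\rm B}$, lift to $M_{\rm DR}(X/D)$ with residues normalized to $]-1,0]$ so that Theorem \ref{Deli} and Theorem \ref{nearbygeneric} both apply in a neighbourhood, observe via the explicit formula \eqref{eq:RH} that the linear path in $M_{\rm DR}$ pushes forward to an exponential orbit in $M_{\rm B}$, and finish with Lemma \ref{lem:criteria}. The extra checkpoints you flag — residue stability along the deformation, the covering not being an isomorphism, and finiteness from finitely many irreducible components — are all genuine and are handled (more tersely) in the paper's argument exactly as you predict.
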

\begin{proof}
	Let $\Sigma$ be an irreducible component of  $\Sigma_k^i(X\setminus D)_{\rm B}$. 	We pick a general  representation $\varrho\in \Sigma$, that is contained in the smooth locus of $\Sigma$.  Let $(L, \nabla) \in M_{\rm DR} (X/D)$ such that $\varrho = {\rm RH}_D( (L, \nabla))$ and $\rel (\Res (L,\nabla))\in ]-1,0]$ for every component of $D$. 
	
	Pick any $\xi\in T_\varrho\Sigma$. By \Cref{lem:iso},   there exists  $(\xi_1,\xi_2)\in H^0(X,\Omega_X^1(\log D))\oplus \mathcal{H}^{0,1}(X)$ such that, for the   family of representations $\varrho_t:\pi_1(X_0)\to \CC^*$ defined by  $$\varrho_t(\gamma)=\varrho(\gamma)\cdot \exp(-z\int_{\gamma}(\xi_1+\xi_2)), \quad \ \forall  \gamma\in \pi_1(X_0), $$ we have 
	$$
	\frac{d}{d t}|_{t=0}\varrho_t=\xi\in T_\varrho\Sigma.  
	$$ We also consider a family of log flat connections parametrized by $z\in \CC$ defined by $$(E_z,\nabla'_z):= (\db+z\xi_2,\d+z\xi_1).$$ Then $${\rm RH}_D(E_z,\nabla'_z)(\gamma_i)= \exp(-z\int_{\gamma_i}( \xi_1+ \xi_2))$$ by \eqref{eq:RH}.  We define a family of pairs $(L_z,\nabla_z):=(L,\nabla)\otimes  (E_z,\nabla_z')$ parametrized by $z\in \CC$. Then we have  
	$$
	{\rm RH}_D(L_z,\nabla_z)(\gamma)=\varrho (\gamma)\cdot \exp(-z\int_{\gamma}( \xi_1+ \xi_2))=\varrho_t(\gamma)\quad \forall\ \gamma\in \pi_1(X_0)
	$$
	with $t=\exp z$.  This implies that 
	$$
	{\rm RH}_D(L_z,\nabla_z)=\varrho_t. 
	$$
	Note that the residue map $M_{\rm DR}(X)\to \CC^k$ is a holomorphic map. Then for any representation $(L',\nabla')$ in a small neighborhood of $(L,\nabla)$ in $\Sigma_k^i(X/ D)_{\rm DR}$,  its   residues  are not positive integers along every component of $D$.  
	Therefore, by Theorem \ref{Deli}, we have
	$$
	\dim \mathbb H^{i} (X, \Omega_X ^\bullet (\log D)\otimes L') = \dim   H^{i} (X_0 ,  \mathcal L'),
	$$
	where $\mathcal L'$ is the local system associated with $(L',\nabla')$.  It follows from \Cref{lem:iso} that 
	$$
	(\xi_1,\xi_2)\in T_\varrho \Sigma_k^i(X/ D)_{\rm DR}.
	$$ 
	Therefore, we conclude from Theorem \ref{nearbygeneric}   that, there exists some $\ep>0$ such that $\varrho_t\in \Sigma$ for any $t\in \DD_\ep$. By \Cref{lem:criteria}, we conclude that   $\Sigma_k^i(X\setminus D)_{\rm B}$  is a translate of subtori of $M_{\rm B}(X\setminus D)$.   
\end{proof}
\begin{proof}[Proof of Theorem \ref{complete}] Let $W$ be an irreducible component of $\Sigma ^p_k(X\setminus D)_{\rm B}$ containing $\gamma(\mathbb C)$. By Theorem \ref{complete2} $W$ is a translate of a sub-torus of $M_{\rm B}(X\setminus B)$ whose tangent space contains $\dot\alpha=d\gamma (0)$. In particular $\tau +z\dot\alpha\subset W$ and the result follows. 
    
\end{proof}


	 \section{Generic vanishing Theorem}
	 Based on the above ideas, in this section, we can give    straightforward proofs for two  generic vanishing theorems by Green-Lazarsfeld \cite{GL87}. 
	 
	 The first one is the Kodaira type vanishing theorem. 
	 \begin{thm}[Green-Lazarsfeld]\label{thm:GL}
	 	Let $X$ be  a compact K\"ahler $n$-fold with maximal Albanese dimension.  
        Let \[V^i(\mathcal O _X)=\{L\in {\rm Pic}^0(X) \ s.t.\ h^i(X,L)\ne 0\},\] then ${\rm codim}_{{\rm Pic}^0(X)}\mathcal W\geq n-i$ for any irreducible component of $V^i(\mathcal O _X)$.
        In particular, for a general $L\in {\rm Pic}^0(X)$,  we have $H^{j}(X,\O_X(K_X+ L))=0$ for any $j>0$. 
	 \end{thm}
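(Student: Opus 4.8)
The plan is to deduce Theorem~\ref{thm:GL} from the cohomology-jumping-loci statement \Cref{complete2} specialized to the case $D=\varnothing$, combined with the classical Green--Lazarsfeld generic vanishing philosophy. First I would recall that when $D=\varnothing$ the space $M_{\rm B}(X)$ is just ${\rm Pic}^0(X)$ (rank one unitary local systems), $M_{\rm DR}(X)$ is the space of flat line bundles with trivial Chern class, and the hypercohomology $\mathbb H^i(X,\Omega_X^\bullet\otimes L)$ computes $H^i(X_0,\mathcal L)$; the loci $V^i(\mathcal O_X)$ are exactly the $\Sigma^i_1(X)_{\rm B}$. By \Cref{complete2} (with $D=0$), each $V^i(\mathcal O_X)$ is a finite union of translates of subtori of ${\rm Pic}^0(X)$.

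Next I would carry out the dimension estimate. Let $\mathcal W$ be an irreducible component of $V^i(\mathcal O_X)$, a torsion translate of a subtorus, and pick a generic point $\tau\in\mathcal W$. The key is the tangent space description: the derivative complex argument of Green--Lazarsfeld shows that if $v\in T_\tau\mathcal W=T_0(\text{subtorus})\subset H^1(\mathcal O_X)$, then cup product with $v$ annihilates the image of $H^{i-1}(X,L_\tau)\to H^i(X,L_\tau)$ to the appropriate order; equivalently, using the extension-to-all-orders principle already proved for the compact case (Theorem~\ref{fs} and its higher-rank analogues, or directly \Cref{complete2}), the deformation is unobstructed along $\mathcal W$, so the complex
\[
H^0(\Omega_X^\bullet\otimes L_\tau)\xrightarrow{\ \wedge v\ } H^1(\Omega_X^\bullet\otimes L_\tau)\xrightarrow{\ \wedge v\ }\cdots
\]
is exact except in the bottom degree for $v$ generic in $T_\tau\mathcal W$. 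This is the point where the analytic machinery of the paper ($\ddbar$-lemma, Hodge decomposition for $\Delta_K$) directly gives the needed exactness of the iterated wedge. Then the standard linear-algebra computation (the ``$E$-dimension'' argument of \cite{GL87}, cf. the proof of Lemma~\ref{lem:criteria}'s style of reasoning) forces
\[
\dim\mathcal W\leq n - i,
\]
since the map defined by a generic tangent vector must drop rank by at least the codimension. I would phrase this via the Albanese map: maximal Albanese dimension guarantees that a generic $v$ (viewed as a $1$-form pulled back from the Albanese torus) has no common zeros on the generic fiber direction, so the Koszul-type complex built from $v$ is exact in low degrees, and counting ranks yields $\operatorname{codim}_{{\rm Pic}^0(X)}\mathcal W\geq n-i$.

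Finally, the Kodaira-type consequence $H^j(X,\O_X(K_X+L))=0$ for $j>0$ and general $L\in{\rm Pic}^0(X)$ follows by Serre duality: $H^j(X,K_X+L)\cong H^{n-j}(X,-L)^\vee=H^{n-j}(X,L^{-1})^\vee$ (using $L$ unitary so $L^{-1}\cong\bar L\in{\rm Pic}^0(X)$), and the locus where this is nonzero, being $-V^{n-j}(\mathcal O_X)$, has codimension $\geq n-(n-j)=j>0$, hence is a proper closed subset; taking $L$ outside the finite union of these translated subtori over all $j>0$ gives the vanishing.

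The main obstacle will be establishing the exactness of the derivative (Koszul) complex at a generic point with the required precision, i.e. translating the ``extension to all orders'' conclusion of \Cref{complete2}/\Cref{thm:convergence} into the statement that the iterated cup product $\wedge v$ is exact in degrees $<n-\dim\mathcal W$ — this is where one must combine the flatness of the jumping locus (so that $\mathcal W$ is genuinely a subtorus and $v$ integrates) with the maximal-Albanese-dimension hypothesis (so that a generic $v$ has the expected behavior on $\Omega_X^\bullet$), and carefully keep track of which component of $V^i$ one is on; the rank count itself is routine once this exactness is in hand.
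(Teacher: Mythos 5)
Your outline correctly isolates the two ingredients (cup product with tangent directions of $\mathcal W$ vanishes; maximal Albanese dimension), but the dimension estimate — the heart of the theorem — is never actually carried out, and the mechanisms you gesture at for it are not the ones that close the argument. The statement that cup product $H^{0,i}(X,L)\otimes W\to H^{0,i+1}(X,L)$ vanishes, with $W=T_L\mathcal W\subset H^{0,1}(X)$, is a statement about Dolbeault cohomology classes; you cannot directly ``count ranks pointwise'' from it, and it is not obvious what the ``$E$-dimension argument'' or ``the map drops rank by the codimension'' means at this stage. The step you are missing is the \emph{conjugation}: since $L$ carries a hermitian flat metric, Hodge theory identifies $H^{0,i}(X,L)$ with $\overline{H^0(X,\Omega_X^i\otimes L^\star)}$, and under conjugation $W\subset H^{0,1}(X)$ becomes $\overline W\subset H^0(X,\Omega_X^1)$. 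The vanishing then reads: for every nonzero $v\in H^0(X,\Omega_X^i\otimes L^\star)$ and every $\omega\in\overline W$ one has $v\wedge\omega=0$ in $H^0(X,\Omega_X^{i+1}\otimes L^\star)$ — and \emph{now} this is a pointwise vanishing of holomorphic sections of a vector bundle. Maximal Albanese dimension gives a basis $\omega_1,\dots,\omega_w$ of $\overline W$ that is pointwise independent at a generic $x$, and the linear algebra fact that a nonzero $i$-covector $v(x)$ with $v(x)\wedge\omega_j(x)=0$ for $j=1,\dots,w$ must be divisible by $\omega_1(x)\wedge\cdots\wedge\omega_w(x)$ forces $w\le i$, i.e.\ $\operatorname{codim}\mathcal W=n-w\ge n-i$. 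Without the conjugation you have no pointwise statement, so the proof stalls.

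A secondary mismatch: the machinery you cite is not what the paper uses here. Appealing to \Cref{complete2}, the $\ddbar$-lemma, and the $\Delta_K$-Hodge decomposition is legitimate but heavier than needed; the paper's proof of this statement only uses the first-order consequence (constancy of $h^i$ along directions in $T_L\mathcal W$ plus Kodaira's deformation theorem) and is essentially elementary. Moreover, ``exactness of the Koszul complex built from $v$'' is the device the paper deploys for the \emph{Nakano-type} statement, Theorem~\ref{thm:Nakano}, via \Cref{lem:vanish}; it is aimed at a different conclusion and would not on its own yield the $n-i$ bound in the form you sketched. Your Serre-duality derivation of the last assertion is fine.
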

	 \begin{proof}
	 	Fix any $i>0$ then $V^i(\mathcal O _X)$ is an analytic subset of ${\rm Pic }^0(X)$. Let $L\in \mathcal W\subset V^i(\mathcal O _X)$ be a general point of the component $\mathcal W$,  then for any $[\xi] \in T_L\mathcal W\subset T_L {\rm Pic }^0(X)=H^1(\mathcal O _X)$ we pick a representative $\xi\in A^{0,1}(X)$ such that $\db \xi=0$.
        For any $t\in \mathbb C$, consider the holomorphic line bundles $L_{t\xi}$ with  complex structure defined by $\db_t:=\db_L+t\xi$.   Then there exists $\ep>0$ such that if $t\in \DD_\ep$, $\dim H^i(X,L_{t\xi})$ is constant.  
	 	
	 	We choose any $u\in A^{0,i}(X,L)$ such that $\db u=0$. By  \cite[Theorem 7.4]{Kod86}, there exists $u(t)\in A^{0,i}(X,L_{t\xi})$ for any $t\in \DD_\ep$, such that $u(0)=u$, $\db_t u(t)=0$, that deforms smoothly. Therefore, one can write 
	 	$$
	 	u(t)=u_0+t u_1+t^2 u_2+\cdots, 
	 	$$
	 	and we have
	 	$$
	 	\db u_1+\xi \wedge u_0=0. 
	 	$$
	 	Equivalently,  the first order deformation of $u_0$ is always unobstructed along any direction $\xi$ in $W:=T_L \mathcal W\subset H^{0,1}(X)$.   Therefore, the natural map
	 	$$
	 	H^{0,i}(X,L)\times W\to H^{0,i+1}(X,L)
	 	$$
	 	defined by the cup product is always zero. 
	 	Since $L$ has a  hermitian flat metric, if we take the conjugate $\bar W\subset H^{0}(\Omega^1_X)$, the map 
	 	$$
	 	H^{0}(X,\Omega_X^i\otimes L^\star)\times \bar W\to H^{0}(X,\Omega_X^{i+1}\otimes L^\star)
	 	$$
	 	is trivial.  Since $X$ has maximal Albanese dimension, there exists $\omega_1,\ldots,\omega_n\in H^0(X,\Omega_X^1)$ such that $\omega_1\wedge\cdots\wedge\omega_n\neq 0$. If $w=\dim W$, then we may assume that $\omega _1,\ldots,\omega _w$ is a basis of $\bar W$. If $v\in H^{0}(X,\Omega_X^i\otimes L^\star)$  is a non-zero section, it follows that, at a general point $x\in X$,  that $v\wedge\omega_i(x)=0 $ for $i=1,\ldots, w$ and so $i\geq w$, i.e. 
        \[{\rm codim}_{{\rm Pic}^0(X)}\mathcal W=n-w\geq n-i.\] The first assertion is thus proved.  The second assertion follows from hard Serre duality.  The theorem is thus proved.  
	 \end{proof}
	 
	 
	 In \cite{GL87}, they also proved a Nakano type generic vanishing theorem as follows. 
	 \begin{thm}[Green-Lazarsfeld]\label{thm:Nakano}
	 Let $X$ be  a compact K\"ahler $n$-fold.  Define 
	 $$
	 w(X):={\rm max}\{{\rm codim}_X{Z(w)} \mid 0\neq \omega\in H^0(X,\Omega_X^1) \},
	 $$ 
	 where $Z(\omega)$ is the zero locus of $\omega$.  Then for a general $L\in {\rm Pic}^0(X) $, we have
	 $
	 H^i(X,\Omega_X^j\otimes L)
	 $ 
	 for $i+j<w(X)$.  If there is a nowhere vanishing one-form on $X$, then the Theorem holds with $w(X)=\infty$. 
	 \end{thm}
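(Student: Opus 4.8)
The plan is to mimic the proof of the Kodaira-type statement, Theorem~\ref{thm:GL}, replacing the structure sheaf $\mathcal O_X$ with the sheaf of holomorphic $j$-forms $\Omega_X^j$, and using the logarithmic Hodge theory (here with $D=\varnothing$) developed in Sections~\ref{s-tech-I}--\ref{hypercoho}. First I would fix $j$ and an irreducible component $\mathcal W$ of the jumping locus $V^{i,j}(X):=\{L\in \mathrm{Pic}^0(X)\mid H^i(X,\Omega_X^j\otimes L)\neq 0\}$; this is an analytic subset of $\mathrm{Pic}^0(X)$ by the semicontinuity theorem (see \cite{Kod86}), and I may take $L$ to be a general point of $\mathcal W$, sitting in its smooth locus, so that $\dim H^i(X,\Omega_X^j\otimes L')$ is locally constant on $\mathcal W$ near $L$. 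For a tangent vector $[\xi]\in T_L\mathcal W\subset H^1(\mathcal O_X)$ represented by a $\dbar$-closed $(0,1)$-form $\xi$, I would consider the deformed $\dbar$-operators $\dbar_t=\dbar_L+t\xi$ on $L$ and invoke the constancy of the cohomology dimension to deform any class $[u]\in H^i(X,\Omega_X^j\otimes L)$ (equivalently $[u]\in H^{j,i}$ with values in the flat bundle $L$) to first order: there is $u_1$ with $\dbar u_1+\xi\wedge u_0=0$. Concretely this is exactly the ``extension to order one'' situation already encoded in Theorems~\ref{twistedversion1} and \ref{HR}; here, since $D=\varnothing$, we are in the smooth compact K\"ahler case and the relevant tool is the strong Hodge decomposition / the $\ddbar$-lemma for $L$-valued forms with unitary flat structure.

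The key consequence to extract is that the cup-product map
\begin{equation}\label{eq:cupvanish}
H^i(X,\Omega_X^j\otimes L)\otimes W\longrightarrow H^i(X,\Omega_X^{j+1}\otimes L),\qquad W:=T_L\mathcal W\subset H^{0,1}(X),
\end{equation}
vanishes identically: the relation $\dbar u_1=-\xi\wedge u_0$ says precisely that $\xi\wedge u_0$ is $\dbar$-exact for every $\xi\in W$, i.e. $[\xi]\cup[u_0]=0$. Dualizing via the hermitian flat metric on $L$ (so $L^\star$ carries the conjugate flat structure and $\overline W\subset H^0(X,\Omega_X^1)$), Serre duality turns \eqref{eq:cupvanish} into the triviality of the wedge map
\begin{equation}\label{eq:wedgevanish}
H^0(X,\Omega_X^{n-j}\otimes K_X^{-1}\otimes?)\ \cdots
\end{equation}
— more cleanly: it gives that for a nonzero $v\in H^{n-i}(X,\Omega_X^{n-j}\otimes L^\star)$ (which exists if $H^i(X,\Omega_X^j\otimes L)\neq 0$), wedging with any element of $\overline W$ kills the corresponding harmonic representative, hence $v\wedge\bar\omega=0$ in cohomology for all $\omega\in W$. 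Then, as in Green--Lazarsfeld, one passes from a cohomological statement to a pointwise one: choose $0\neq \omega\in H^0(X,\Omega_X^1)$ with $\operatorname{codim}_X Z(\omega)=w(X)$, note $\bar\omega$ spans (after base change) a line we may arrange to lie in $\overline W$ when $\dim W\ge 1$, and run the argument with all of $\overline W$: at a general point $x\in X$ outside the zero loci, the harmonic section $v$ restricted to the fiber must be annihilated by wedging with $\dim W$ generic covectors $\omega_1,\dots,\omega_{\dim W}$, forcing $(n-j)+\dim W > n$ is impossible, i.e. one gets a numerical constraint $i+j\ge w(X)$ unless $v\equiv 0$. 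The conclusion is that $H^i(X,\Omega_X^j\otimes L)=0$ for general $L$ once $i+j<w(X)$; and if there is a nowhere-vanishing $1$-form, $w(X)=\infty$ and all these groups vanish.

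The main obstacle I anticipate is the careful bookkeeping in the pointwise step: translating the cohomological vanishing of the wedge map into a genuine codimension bound on the zero locus of a holomorphic $1$-form. One has to exhibit the tangent space $W$ of the component $\mathcal W$ as containing the conjugates of enough $1$-forms, and then apply a linear-algebra argument on the generic fiber of $\Omega_X^{n-j}\otimes L^\star$ showing that a nonzero vector wedged to zero against $\dim W$ independent covectors lives in a subspace of rank $\le \binom{n-\dim W}{n-j}$, which is zero precisely when $n-j > n-\dim W$, i.e. $\dim W > j$, and combined with $\operatorname{codim}\mathcal W = \dim\mathrm{Pic}^0(X)-\dim W = h^{0,1}(X)-\dim W \ge i$ (the standard codimension estimate from the smoothness of $\mathcal W$ and upper semicontinuity, together with the deformation argument) yields $i+j\ge w(X)$ on the nose. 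This is exactly the argument of \cite{GL87} and is routine once the deformation statement \eqref{eq:cupvanish} is in hand; the only genuinely analytic input is the first-order deformability of cohomology classes along $\mathcal W$, which the techniques of Sections~\ref{s-tech-I}--\ref{hypercoho} (specialized to $D=\varnothing$, where they reduce to classical Hodge theory) supply directly. I would also remark, in parallel to Theorem~\ref{thm:GL}, that the statement about $\operatorname{codim}_{\mathrm{Pic}^0(X)}$ of components of $V^i$ can be read off from the same linear-algebra inequality, and that the ``nowhere vanishing $1$-form'' case is immediate since then one may take $\dim W$ arbitrarily large in the wedge argument.
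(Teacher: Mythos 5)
Your proposal correctly reproduces the setup and the deformation argument that yields the vanishing of the cup product
\[
H^i(X,\Omega_X^j\otimes L)\otimes W\longrightarrow H^i(X,\Omega_X^{j+1}\otimes L),
\]
and the dual statement after conjugation. But the step you flag at the end as the ``main obstacle'' and then dismiss as routine is, in fact, where the argument breaks. The pointwise linear-algebra argument you invoke works for the Kodaira-type statement Theorem~\ref{thm:GL} because there the relevant dualized objects live in $H^0(X,\Omega_X^\bullet\otimes L^\star)$, i.e.\ they are \emph{holomorphic sections}, and the vanishing of a wedge product in $H^0$ is a pointwise statement. In the Nakano case you land, after conjugation, in $H^{q,p}(X,L^\star)$ with both $p,q$ potentially positive. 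There the vanishing of the cup product $[v]\cup[\omega]=0$ only says that a harmonic representative of $v$ wedged with the holomorphic form $\omega$ is $\dbar$-\emph{exact}; it does not say it is pointwise zero. Indeed wedging a harmonic $(q,p)$-form (with $p>0$) by a holomorphic $1$-form does not preserve harmonicity (the $\dbar^\star = -i[\Lambda,\partial]$ operator does not kill $v\wedge\omega$ unless $\Lambda(v\wedge\omega)$ happens to vanish), so there is no canonical pointwise object to which the linear-algebra count applies. For exactly this reason, Green--Lazarsfeld, and the paper, do not argue pointwise.

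What the paper does instead is set up the Koszul/Dolbeault complex $L^\bullet_\omega := L \stackrel{\wedge\omega}{\to} L\otimes\Omega_X^1 \to\cdots$ and first prove (\Cref{lem:vanish}) that $\mathbb{H}^i(X,L^\bullet_\omega)=0$ for $i<{\rm codim}_X Z(\omega)$, via the exactness of the complex in low degrees and the hypercohomology spectral sequence $E_2^{p,q}=H^p(X,\mathcal H^q)$. The degenerate fiber $z=0$ of the family $L^\bullet_{z\omega}$ has zero differential, so $\mathbb H^i(X,L^\bullet_0)=\bigoplus_{p+q=i}H^q(X,\Omega_X^p\otimes L)$, which is exactly the object we want to kill. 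The cup-product vanishing you derived is then fed into a $\ddbar$-lemma iteration to build a formal power-series solution $u=\sum z^k u_k$ of the deformation equations for the relative complex; combined with the fine resolution from Section~\ref{hypercoho} and cohomology-and-base-change, this shows $\dim\mathbb H^i(X,L^\bullet_{z\omega})$ is constant for small $z$. Since it vanishes for $z\neq 0$ by \Cref{lem:vanish}, it vanishes at $z=0$, giving the contradiction. This deformation-of-the-complex mechanism is the genuine content of the proof; it replaces the pointwise step that does not exist in the Nakano case, and your write-up currently lacks it.
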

	 We will  prove \Cref{thm:Nakano} in the rest of the section. We first prove the fllowing lemma. 	
	 \begin{lemme}\label{lem:vanish}
	 Let $\omega\in H^0(X,\Omega_X^1)$ be a holomorphic form such that ${\rm codim}_X{Z(w)}=m$, and let $L\in {\rm Pic}^0(X)$. Then the hypercohomology $\mathbb H^i(X,L^\bullet_\omega)$ of the Dolbeault complex 
	 \begin{align}\label{eq:Dol}
	 	L^\bullet_\omega:= 	  L\stackrel{\wedge\omega}{\to} L\otimes \Omega_X^1\to\cdots\to \cdots  \stackrel{\wedge\omega}{\to}L\otimes \Omega_X^n, 
	 \end{align} 
	  vanishes for $i<m$. 
	 \end{lemme}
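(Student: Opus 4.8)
The plan is to establish Lemma~\ref{lem:vanish} by a combination of a Koszul-type argument on the stalks and the (by now standard) machinery of the $\ddbar$-lemma, exactly along the lines of the extension results proved earlier in the paper. First I would give the complex $L^\bullet_\omega$ a concrete resolution: pick a Hermitian flat metric $h_L$ on $L$ (which exists since $L\in{\rm Pic}^0(X)$), so that $\dbar_L$ together with the $(1,0)$-connection $D'_{h_L}$ makes $L$ into a flat unitary bundle, and replace each term $L\otimes\Omega^p_X$ by the Dolbeault complex $\bigoplus_{p+q=\bullet}A^{p,q}(X,L)$ with total differential $\dbar_L+\omega\wedge$. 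This is a fine resolution of $L^\bullet_\omega$, so $\mathbb H^i(X,L^\bullet_\omega)$ is computed by the cohomology of $(A^\bullet(X,L),\dbar_L+\omega\wedge)$. Since the curvature of $(L,h_L)$ vanishes and $\dbar\omega=\partial\omega=0$, this is precisely the setting of a "twisted $\dbar$-operator" $\dbar_t=\dbar_L+t\omega\wedge$ (here with $t=1$, but with $\omega$ a \emph{holomorphic} $1$-form rather than an antiholomorphic one), and the Laplace operator $\Delta=[\,\dbar_L+\omega\wedge,(\dbar_L+\omega\wedge)^\star\,]$ splits, via the flatness of $h_L$ and the Kähler identities, in the usual way.

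Next I would run the pointwise Koszul computation. Away from the zero locus $Z(\omega)$, the operator "wedge with $\omega$" on $\Lambda^\bullet T^\star_X\otimes L$ is, fiberwise, the Koszul differential of a nonzero covector, hence exact in every degree; more precisely, on the fiber at $x\notin Z(\omega)$ the complex $(\Lambda^\bullet T^\star_{X,x},\ \omega(x)\wedge\cdot)$ is acyclic. Therefore the cohomology sheaves $\mathcal H^q$ of $(\Omega^\bullet_X\otimes L,\wedge\omega)$ are supported on $Z(\omega)$, which has codimension $m$. A spectral sequence / hypercohomology dimension estimate then shows that $\mathbb H^i$ can only be nonzero once $i$ is large enough to "reach" the support: concretely, $\mathcal H^q$ is a coherent sheaf supported on $Z(\omega)$, so $H^j(X,\mathcal H^q)=0$ for $j>\dim X-m = n-m$, and by an Euler-characteristic / depth argument on the Koszul complex one gets that $\mathcal H^q=0$ for $q<m$ as well; feeding this into the hypercohomology spectral sequence $E_2^{j,q}=H^j(X,\mathcal H^q)\Rightarrow\mathbb H^{j+q}(X,L^\bullet_\omega)$ forces $\mathbb H^i=0$ for $i<m$. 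This is the "algebraic" half of the argument and is essentially formal once the resolution is in place.

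Alternatively — and this is the route I would actually write out, since it parallels the analytic philosophy of the paper — one can argue directly with harmonic forms: a class in $\mathbb H^i$ is represented by a $(\dbar_L+\omega\wedge)$-harmonic form $u=\sum_{p+q=i}u_{p,q}$, and the a priori inequality coming from the Bochner formula (the flat case of Theorem~\ref{keyinequ}, which is classical) shows $\|u\|^2_{L^2}$ is controlled by $\int_X|u|^2|\omega|^2\,dV$ up to a harmless term, which after the Koszul estimate near $Z(\omega)$ — where $|\omega|$ is bounded below away from a codimension-$m$ set — forces $u$ to be concentrated on $Z(\omega)$; a degree count on $\Lambda^i T^\star_X$ restricted to the transverse directions then shows this is impossible for $i<m$. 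The last step, the statement that a nowhere-vanishing $1$-form gives vanishing in all degrees (so $w(X)=\infty$), is then immediate: if $\omega$ has no zeros, $\wedge\omega$ is fiberwise exact everywhere, so the complex $L^\bullet_\omega$ is quasi-isomorphic to zero and all $\mathbb H^i$ vanish.

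I expect the main obstacle to be the bookkeeping in the spectral-sequence / degree count that converts "the cohomology sheaves are supported in codimension $m$" into the sharp bound $i<m$: one must verify both the vanishing $\mathcal H^q=0$ for $q<m$ (Koszul depth along $Z(\omega)$, which needs $Z(\omega)$ to be cut out with the expected codimension rather than merely having codimension $m$ — here the hypothesis ${\rm codim}_X Z(\omega)=m$ is exactly what is used) and the cohomological-dimension vanishing $H^j(X,\mathcal H^q)=0$ for $j>n-m$, and then check that the two together kill $E_2^{j,q}$ for all $j+q<m$. The analytic variant sidesteps the first of these but replaces it with a careful localization estimate near $Z(\omega)$; either way this transverse/Koszul degree argument is where the real content sits, everything else being a routine application of the flat-metric $\ddbar$-lemma already developed in the paper.
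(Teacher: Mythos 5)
Your spectral-sequence argument is essentially the paper's proof: it invokes Green--Lazarsfeld's result that the Koszul complex $L^\bullet_\omega$ has vanishing cohomology sheaves $\mathcal H^q=0$ for $q<m$ (the paper cites \cite[Prop.~3.4]{GL87} for exactly this depth statement, which you rediscover as the "Euler-characteristic / depth argument"), and then feeds this into the second hypercohomology spectral sequence $E_2^{p,q}=H^p(X,\mathcal H^q)\Rightarrow\mathbb H^{p+q}$. Since for $i<m$ every term $E_\infty^{p,i-p}$ with $0\le p\le i$ has $i-p<m$, all graded pieces vanish and $\mathbb H^i=0$. You are right that this is where the content sits; the Dolbeault resolution you set up at the start is not needed for this and the extra cohomological-dimension bound $H^j(X,\mathcal H^q)=0$ for $j>n-m$, while true, plays no role for the claimed range $i<m$. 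Note one small imprecision: the lemma only needs $\mathcal H^q=0$ for $q<m$, so "supported on $Z(\omega)$" by itself is not enough; it is the vanishing of the \emph{low}-degree cohomology sheaves that is the essential input, as the paper makes explicit.

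The "analytic variant" you say you would actually write out has a genuine gap and, as stated, would prove too much. You assert that a $(\dbar_L+\omega\wedge)$-harmonic form $u$ must be "concentrated on $Z(\omega)$", implicitly relying on a splitting $\Delta_{D''}=\Delta_{\dbar}+[\omega\wedge,(\omega\wedge)^\star]$ of the Laplacian, which would force the pointwise algebraic conditions $\omega\wedge u=0$ and $(\omega\wedge)^\star u=0$ away from $Z(\omega)$, hence $u\equiv 0$ by the pointwise Koszul--Hodge decomposition. But this splitting fails: the cross terms $[\dbar_L,(\omega\wedge)^\star]$ and $[\omega\wedge,\dbar_L^\star]$ are first-order operators (by the Kähler identity one computes $[\dbar_L,(\omega\wedge)^\star]=[\partial_L^\star,\bar\omega\wedge]$, which does not vanish even when $\omega$ is holomorphic and $L$ flat). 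Moreover, if the splitting held you would conclude $\mathbb H^i(X,L^\bullet_\omega)=0$ for \emph{all} $i$, which is false --- for a genus-$2$ curve with $L=\mathcal O_X$ and a nonzero $\omega$ (so $m=1$) one has $\mathbb H^1\cong\mathbb C^{2}$, coming from $H^0$ of the skyscraper sheaf $\mathcal H^1=\Omega_X^1/\omega\mathcal O_X$. A smooth harmonic form cannot be "supported" on a positive-codimension set without vanishing identically, so the degree count you propose in the transverse directions never gets off the ground. The spectral-sequence route is the one that actually works, and it is the one the paper takes.
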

	 \begin{proof}
	  By \cite[Proposition 3.4]{GL87},  we know that $L^\bullet_\omega$ is exact for $i<m$. By \cite[Section 2.4.1]{EZT14}, there exists a natural spectral sequence for $L^\bullet_\omega$ such that $E_2^{p,q}=H^p(X,\mathcal H^q)$, where $\mathcal H^q$ is the $p$-th cohomology sheaf of the complex of $L^\bullet_\omega$.  Therefore, $E_2^{p,q}=0$ if $q<m$. Note that $E_3^{p,q}$ is given by  $$\frac{\ker d_2:E^{p,q}\to E^{p+2,q-1}}{{\rm Im} d_2:E^{p-2,q+1}\to E^{p,q}},$$  thus is zero  for $q<m$. By virtue of the same manner,  we can show that $E_\infty^{p,q}=0 $ for $q<m$. Note that 
	 $$
	 F^p\mathbb H^{i}(X,L^\bullet_\omega)/	F^{p+1}\mathbb H^{i}(X,L^\bullet_\omega)=E_\infty^{p,i-p}.
	 $$  
	 for any $p\in \{0,\ldots,i\}$. It follows that if $i<m$, they all vanish. Hence $\mathbb H^i(X,L^\bullet_\omega)=0$ for $i<m$. The lemma is proved. 
	 \end{proof}
	 \begin{proof}[Proof of \Cref{thm:Nakano}] 
We will establish a relative Dolbeaut complex as \Cref{subsec:universal}, but without boundary divisors.  Let $\cX:=X\times \CC$ and $\Lie:=p_1^*L$, where $p_1:\cX\to X$ is the projection map.  Set $\tilde{\omega}:=zp_1^*\omega$, where $z$ is the coordinate for the second factor $\CC$ of $X\times \CC$. We define a morphism 
$$
\theta:\Lie\to \Lie\otimes \Omega_{\cX/\CC}^1
$$
defined by the composition 
$$
\Lie\stackrel{\wedge\tilde{\omega}}{\to} \Lie\otimes \Omega_{\cX}^1\to   \Lie\otimes \Omega_{\cX/\CC}^1
$$ 
where the second map is induced by the quotient $\Omega_{\cX}^1\to   \Omega_{\cX/\CC}^1$. Then $\theta\wedge\theta=0$ and we have a holomorphic (relative Dolbeaut) complex
$$
{\rm Dol}(\Lie,\theta):=		\Lie\stackrel{\theta}{\to}   \Lie\otimes \Omega_{\cX/\CC}^1\stackrel{\theta}{\to} \cdots   \stackrel{\theta}{\to}   \Lie\otimes \Omega_{\cX/\CC}^n. 
$$ 
Let us consider a fine resolution for this complex. Let $\mathcal A^k(\cX,\Lie)$ be the sheaf of $\Lie$-valued smooth $k$-forms on $\cX$.  By our construction, we have 
\begin{align}\label{eq:split2}
	\mathcal A^k(\cX,\Lie)=p_2^* dz\wedge  	p_1^{*}\cA^{k-1}(X,L)
	\bigoplus p_2^* d\bar{z}\wedge  	p_1^{*}\cA^{k-1}(X,L)\bigoplus  p_1^{*}\cA^k(X,L)
\end{align} 
where we denote by
$$
p_1^{*}\cA^k(X,L):=p_1^{-1}\cA^k(X,L)\otimes_{p^{-1}\cC^\infty_X}\cC^\infty\cX. 
$$
We denote by 
$$\cA^k(\cX/\CC,\Lie):=	\mathcal A^k(\cX,\Lie)/ p_2^* d\bar{z}\wedge  	p_1^{*}\cA^{k-1}(X,L)\bigoplus  p_1^{*}\cA^k(X,L).$$
Let   
$$
D''_{\omega}:\Lie\to \Lie\otimes  \mathcal A^k(\cX/\CC,\Lie)
$$
be the composition
$$
\Lie\stackrel{	 \db_{\Lie}+\tilde{\omega}}{\to} \Lie\otimes  \mathcal A^k(\cX,\Lie) \to \Lie\otimes  \mathcal A^k(\cX/\CC,\Lie). 
$$ 
Note that $D_\omega''^2=0$. Hence we have another complex
$$
\cA^0(\cX/\CC,\Lie)\stackrel{D_\omega''}{\to}    \cA^1(\cX/\CC,\Lie)\stackrel{D_\omega''}{\to} \cdots   \stackrel{D_\omega''}{\to}    \cA^{2n}(\cX/\CC,\Lie). 
$$ 
\begin{claim}
	The natural morphism ${\rm Dol}(\Lie,\theta)\to (\cA^\bullet(\cX/\CC,\Lie),D_\omega'')$ is a quasi-isomorphism.  
\end{claim}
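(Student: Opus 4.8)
The statement is local on $\cX$, so the plan is to verify that the natural inclusion $[i]$ induces an isomorphism between the cohomology sheaves of ${\rm Dol}(\Lie,\theta)$ and of the complex $(\cA^\bullet(\cX/\CC,\Lie),D''_\omega)$ over a basis of polydiscs $U=V\times\Delta\subset X\times\CC$, with $V$ a coordinate polydisc trivialising $L$. On such a $U$ the splitting \eqref{eq:split2} endows $\cA^\bullet(\cX/\CC,\Lie)$ with a bigrading by the relative holomorphic form degree $p$ and the $X$-antiholomorphic degree $q$, under which $D''_\omega=\db+\tilde\omega\wedge(\cdot)$ with $\db$ raising $q$ by one and $\tilde\omega\wedge(\cdot)$ raising $p$ by one; recall also that $\Omega^p_{\cX/\CC}=p_1^*\Omega^p_X$ is a holomorphic vector bundle, so that $\db$-theory with values in it is available.

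The idea is to run exactly the descending induction on $q$ used in the proof of \Cref{prop:qis}. Given $e\in\cA^i(\cX/\CC,\Lie)(U)$ with $D''_\omega e=0$, one writes $e=\sum_{q=0}^{\ell}e^{i-q,q}$ with $e^{i-\ell,\ell}$ the component of top antiholomorphic degree, and comparing the $(i-\ell,\ell+1)$-components of $D''_\omega e=0$ forces $\db\,e^{i-\ell,\ell}=0$. Here one invokes the only analytic input of the whole argument: the Dolbeault--Grothendieck lemma on the polydisc, with coefficients in the holomorphic bundle $p_1^*\Omega^{i-\ell}_X\otimes\Lie$ and with smooth dependence on the extra parameter $z\in\Delta$ --- this plays the role that \Cref{debareq} plays in the logarithmic case. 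It yields a smooth $f$ on $U$ with $\db f=-e^{i-\ell,\ell}$, and then $e+D''_\omega f$ has antiholomorphic degree at most $\ell-1$. Iterating down to $q=0$ produces $e_0\in\cA^{i}(\cX/\CC,\Lie)(U)$ of pure type $(i,0)$, cohomologous to $e$, with $\db e_0=0$ and $\tilde\omega\wedge e_0=0$; thus $e_0$ is a holomorphic section of $\Lie\otimes\Omega^i_{\cX/\CC}$ annihilated by $\theta$, i.e.\ a local section of the $i$-th cohomology sheaf of ${\rm Dol}(\Lie,\theta)$. This gives surjectivity of $[i]$ on cohomology sheaves.

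For injectivity one takes a local holomorphic section $F$ of $\Lie\otimes\Omega^i_{\cX/\CC}$ with $\theta F=0$, assumes $F=D''_\omega g$ for some $g\in\cA^{i-1}(\cX/\CC,\Lie)(U)$, and notes that since $F$ is of pure type $(i,0)$ the components of $D''_\omega g$ of antiholomorphic degree $\ge 1$ vanish; the same step-by-step $\db$-solving as above lets one subtract from $g$ a $D''_\omega$-exact form so as to reduce to $g$ of pure type $(i-1,0)$. Then comparing types in $F=D''_\omega g=\db g+\tilde\omega\wedge g$ gives $\db g=0$, so $g$ is holomorphic and $F=\tilde\omega\wedge g=\theta g$ is $\theta$-exact in ${\rm Dol}(\Lie,\theta)$. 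Hence $[i]$ is an isomorphism of cohomology sheaves and the two complexes are quasi-isomorphic.

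Everything beyond this is formal double-complex bookkeeping, literally as in \Cref{prop:qis}. The one point that deserves a line of care is the assertion that the local $\db$-primitives can be taken genuinely smooth on $\cX$ (not merely fibrewise) and smoothly varying in $z$; this is standard, since the classical homotopy operator for $\db$ on a polydisc is an integral against a locally integrable kernel, is linear, and commutes with differentiation in auxiliary smooth parameters. I do not expect any real obstacle here: unlike the logarithmic setting of \Cref{prop:qis}, there is no boundary divisor, so no weighted $L^2$ input is needed and the ordinary $\db$-Poincaré lemma suffices.
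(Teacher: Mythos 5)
Your argument is correct and follows the same route the paper intends when it says the proof is "exactly the same as \Cref{prop:qis}": filter by total anti-holomorphic degree (your $q$, which should be understood to include the $d\bar z$-contribution from \eqref{eq:split2}) and reduce step by step by solving a relative $\db$-equation, the sole change from the logarithmic setting being the replacement of the weighted $L^2$ input of \Cref{debareq} by the classical Dolbeault--Grothendieck lemma on the polydisc $V\times\Delta$. The injectivity argument you spell out is the same descending reduction applied to a primitive $g$ of a pure-type holomorphic form, exactly as in the proof of \Cref{quasiiso}.
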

\begin{proof}
	The proof is exactly the same as \Cref{prop:qis}. We omit it. 
\end{proof}
Note that we have	 \begin{align} 
{\rm Dol}(\cX,\theta)|_{X\times\{z\}}=	L^\bullet_{zw}:= 	  L\stackrel{\wedge z\omega}{\to} L\otimes \Omega_X^1\stackrel{\wedge z\omega}{\to}\cdots  \stackrel{\wedge z\omega}{\to}L\otimes \Omega_X^n, 
\end{align} 	 
We simply write $L^\bullet$ for $L^\bullet_{0}$. 
	 By the definition, there exists $\omega\in H^0(X,\Omega_X^1)$ such that ${\rm codim}_XZ(w)=w(X)=:m$.  
	 Fix any $i<m$.  We shall prove that $\mathbb H^i(X,L^\bullet)=0$ for a general $L\in {\rm Pic}^0(X)$. We assume by contradiction that, $\mathbb H^i(X,L^\bullet)\neq 0$  for any $L\in {\rm Pic}^0(X)$. It follows that, for a general $L\in {\rm Pic}^0(X)$, the dimension of $\mathbb H^i(X,L'^\bullet)$ is constant for line bundles $L'$ in a neighborhood of $L$ in $\ {\rm Pic}^0(X)$.   Therefore,  by the same arguments as in the proof of \Cref{thm:GL},  for any $\eta\in A^{0,1}(X)$ such that $\db \eta=0$, and any $u\in A^{p,q}(X,L)$ with $p+q=i$,  
	 $\eta\wedge u$ is $\db_L$-exact. Hence
	 the cup product
	 $$
	 H^{p,q}(X,L)\otimes H^{0,1}(X)\to H^{p,q+1}(X,L)
	 $$ 
	 vanishes identically.  If we take the conjugate, then  
\begin{align}\label{eq:cup}
	  H^{q,p}(X,L^\star)\otimes H^{0}(X,\Omega_{X}^1)\to H^{q+1,p}(X,L^\star),
\end{align} 
	 vanishes identically, where $L^\star$ is the dual of $L$. 
In what follows, we replace $L$ by $L^\star$ for notational simplicity. 

For any $e\in H^{p,q}(X,L)$, we choose a harmonic form $u_0\in \mathcal H^{p,q}(X,L)$ representing $e$. Then by \eqref{eq:cup}, we have
$$
u_0\wedge \omega=\db_L v
$$
for some $v\in A^{p+1,q-1}$. Let $\d_L$ be the $(1,0)$-part of the Chern  connection $\nabla_h$ for $(L,h)$. Note that $\nabla_h^2=0$.  Then we have
$$
\d_L(u_0\wedge \omega)=0=\db_L(u_0\wedge \omega).
$$
By the $\d\dbar$-lemma, there exists some $\gamma_1\in A^{p,q-1}(X,L)$ such that
$$
u_0\wedge \omega=\db_L\d'_L \gamma_1.
$$
Let $u_1:=-\d'_L \gamma_1$ and we consider $u_1\wedge \omega$. It is also $\d'_L$-exact, and both $\d_L'$ and $\db_L$ closed. Therefore, by  the  $\d\dbar$-lemma again, there exists some $\gamma_2\in A^{p,q-1}(X,L)$ such that
$$
u_1\wedge \omega=\db_L\d'_L \gamma_2 
$$
and we set $u_2:=-\d'_L \gamma_2$.  We iterate this algorithm, and obtain $u_1,\ldots,u_k,\ldots,$ such that for any $i\geq 0$, 
\begin{align}\label{eq:inductive}
	 u_i\wedge\omega=-\db_L u_{i+1}.
\end{align}   Based on the same arguments as in \Cref{thm:convergence}, we can assume that there exists some $\ep>0$ such that 
$$
u:=\sum_{i\geq 0}z^iu_i\in A^{k}(\cX_\ep,\Lie),
$$
where $\cX_\ep:=X\times \DD_\ep$.   By \eqref{eq:inductive}, we have
$$
(\db_\Lie+\tilde{\omega})\sigma=0. 
$$
Let $\sigma\in \Gamma(\cX_\ep, \cA^k(\cX/\CC,\Lie))$ be the image of $u$ under the quotient $\cA^k(\cX,\Lie)\to \cA^k(\cX/\CC,\Lie)$. Then we have
$$
D_\omega''(\sigma)=0. 
$$
Note that we have
\begin{equation*}
	\begin{tikzcd}
		\Gamma(\mathbb D_\ep, \mathbb R^i p_{2,*} ({\rm Dol}(\cX,\theta)))\arrow[d,"\simeq"] \arrow[r] & \mathbb H^i(X, L^\bullet )\arrow[d,equal] \\
		\bH^i(\cX_\ep, {\rm Dol}(\cX,\theta) )\arrow[r]\arrow[d,"\simeq"]&  \mathbb H^i(X, L^\bullet )\arrow[d,"\simeq","\varphi"']	 \\
		H^i( ( A^\bullet(\cX_\ep/\DD_\ep,\Lie),D_\omega''))\arrow[r,"\psi"] &  H^i(A^\bullet(X,L),\db_L)
	\end{tikzcd}
\end{equation*}
	 where the vertical arrows are isomorphic.  The above arguments imply that $\psi$ is surjective. It follows that
	 $$
	 \mathbb R^i p_{2,*} ({\rm Dol}(\cX,\theta)(\DD_\ep)\to  \mathbb H^i(X, L^\bullet )
	 $$
	 is surjective. By the cohomolgy and base change theorem, if we shrink $\ep$, 
$z\mapsto \dim \mathbb H^i(X, L^\bullet_{z\omega})$ is constant for $z\in \DD_\ep$. Therefore, by the assumption, $\mathbb H^i(X, L^\bullet_{z\omega})\neq 0$.  Note that ${\rm codim}_X Z(z\omega)={\rm codim}_X Z(\omega)>i$ if $z\neq 0$. By \Cref{lem:vanish}, we obtain that $\mathbb H^i(X, L^\bullet_{z\omega})= 0$, which yields a contradiction. The theorem is thus proved.  
	 \end{proof}




   
\providecommand{\bysame}{\leavevmode ---\ }
\providecommand{\og}{``}
\providecommand{\fg}{''}
\providecommand{\smfandname}{\&}
\providecommand{\smfedsname}{\'eds.}
\providecommand{\smfedname}{\'ed.}
\providecommand{\smfmastersthesisname}{M\'emoire}
\providecommand{\smfphdthesisname}{Th\`ese}

\end{document}